\newtheorem{thm}{Theorem}[section]
\newtheorem{prop}[thm]{Proposition}
\newtheorem{cor}[thm]{Corollary}
\newtheorem{lem}[thm]{Lemma}
\newtheorem{conj}[thm]{Conjecture}
\newtheorem{prob}[thm]{Problem}
\theoremstyle{definition}
\newtheorem{defn}[thm]{Definition}
\newtheorem{notn}[thm]{Notation}
\newtheorem{rmk}[thm]{Remark}
\newtheorem{exmp}[thm]{Example}
\let\c@equation\c@thm
\numberwithin{equation}{section}
\newcommand{\conf}{\mathrm{Conf}}
\newcommand{\PGL}{\mathrm{PGL}}
\newcommand{\SL}{\mathrm{SL}}
\newcommand{\Hom}{\mathrm{Hom}}
\newcommand{\DT}{\mathrm{DT}}
\newcommand{\ad}{\mathrm{ad}}
\newcommand{\diag}{\mathrm{diag}}
\newcommand{\codim}{\mathrm{codim}}
\renewcommand{\sc}{\mathrm{sc}}
\newcommand{\ord}{\mathrm{ord}}
\newcommand{\up}{\mathrm{up}}
\newcommand{\doverline}[1]{\overline{\overline{#1}}}
\newcommand{\fr}{\mathrm{fr}}
\newcommand{\uf}{\mathrm{uf}}
\newcommand{\Frac}{\mathrm{Frac}}
\newcommand{\prin}{\mathrm{prin}}
\newcommand{\lcm}{\mathrm{lcm}}
\newcommand{\spec}{\mathrm{Spec} \ }
\newcommand{\Za}{\mathrm{Za}}
\newcommand{\Id}{\mathrm{Id}}
\newcommand{\A}{\mathsf{A}}
\newcommand{\B}{\mathsf{B}}
\newcommand{\C}{\mathsf{C}}
\newcommand{\D}{\mathsf{D}}
\newcommand{\G}{\mathsf{G}}
\renewcommand{\L}{\mathsf{L}}
\newcommand{\N}{\mathsf{N}}
\renewcommand{\P}{\mathsf{P}}
\newcommand{\Q}{\mathsf{Q}}
\renewcommand{\S}{\mathsf{S}}
\newcommand{\T}{\mathsf{T}}
\newcommand{\U}{\mathsf{U}}
\newcommand{\W}{\mathsf{W}}
\newcommand{\Z}{\mathsf{Z}}
\newcommand{\Br}{\mathsf{Br}}
\newcommand{\E}{\mathsf{E}}
\renewcommand{\vec}[1]{\mathbf{#1}}
\newcommand{\inprod}[2]{\left\langle#1,#2\right\rangle}
\setlist[enumerate]{itemsep=0mm}
\begin{document}

\begin{Frontmatter}

\title{Cluster Structures on Double Bott-Samelson Cells}

\author[1]{Linhui Shen}\orcid{0000-0001-7424-8621} \author[2]{Daping Weng}\orcid{0000-0002-7858-5323}

\authormark{Linhui Shen \textit{et al}.}

\address[1]{\orgname{Michigan State University}, \orgaddress{\street{619 Red Cedar Road, East Lansing, MI}, \postcode{48824}, \country{U.S.}};\email{linhui@math.msu.edu}}
\address[2]{\orgname{Michigan State University}, \orgaddress{\street{619 Red Cedar Road, East Lansing, MI}, \postcode{48824}, \country{U.S.}};\email{wengdap1@msu.edu}}

\received{30 Jul 2020}
\accepted{14 Aug 2021}

\keywords{Cluster algebras, Bott-Samelson cells, Donaldson-Thomas transformation, link invariants}

\keywords[MSC Codes]{\codes[Primary]{13F60, 14M15}; \codes[Secondary]{14N35, 57K14}}

\abstract{Let $\C$ be a symmetrizable generalized Cartan matrix. We introduce four different versions of double Bott-Samelson cells for every pair of positive braids in the generalized braid group associated to $\C$. We prove that the decorated double Bott-Samelson cells are smooth affine varieties, whose coordinate rings are naturally isomorphic to upper cluster algebras. 

\hspace{0.25cm} We explicitly describe the Donaldson-Thomas transformations on double Bott-Samelson cells and prove that they are cluster transformations. 
As an application, we complete the proof of the Fock-Goncharov  duality conjecture in these cases.
We discover a periodicity phenomenon of the Donaldson-Thomas transformations on a family of double Bott-Samelson cells.
We give a (rather simple) geometric proof of Zamolodchikov's periodicity conjecture in the cases of $\Delta\square \mathrm{A}_r$.

\hspace{0.25cm} When $\C$ is of type $\mathrm{A}$, the double Bott-Samelson cells are isomorphic to Shende-Treumann-Zaslow's moduli spaces of microlocal rank-1 constructible sheaves associated to Legendrian links. By counting their $\mathbb{F}_q$-points  we obtain rational functions which are  Legendrian link invariants. 
}

\end{Frontmatter}

\localtableofcontents

\vspace*{14pt}

\section{Introduction}

\subsection{Cluster Structures on Double Bott-Samelson Cells}

Bott-Samelson varieties were introduced by Bott and Samelson \cite{BS} in the context of compact Lie groups and were reformulated by Hansen \cite{Han} and Demazure \cite{Dem} independently in the reductive algebraic group setting. Bott-Samelson varieties give resolutions of singularities of Schubert varieties, and have many applications in geometric representation theory.
Webster and Yakimov \cite{WY} considered the product of two Bott-Samelson varieties and gave a stratification whose strata are parametrized by a triple of Weyl group elements and observed that a family of strata are isomorphic to double Bruhat cells introduced by Fomin and Zelevinsky \cite{FZ}. Lu and Mouquin \cite{LM} introduced a Poisson variety called \emph{generalized double Bruhat cells}, which is defined by a conjugate class in a semisimple Lie group together with two $n$-tuples of Weyl group elements. Elek and Lu \cite{EL} further studied the special case of \emph{generalized Bruhat cells} where one of the $n$-tuples was trivial, and proved that their coordinate rings, as Poisson algebras, are examples of symmetric Poisson CGL extension defined by Goodearl and Yakimov \cite{GoY}.

Motivated by the positivity phenomenon on double Bruhat cells, Fomin and Zelevinsky \cite{FZI} introduced a class of commutative algebras called \emph{cluster algebras}. Fock and Goncharov \cite{FGensemble} introduced \emph{cluster varieties} as the geometric counterparts of cluster algebras, and conjectured that the coordinate rings of cluster varieties admit canonical bases parametrized by the integral tropical set of their dual cluster varieties. The cluster structures on double Bruhat cells have been studied extensively in \cite{BFZ, FGamalgamation, GSVcp}.

\vskip 1mm

In this paper, we introduce a new family of varieties called \emph{double Bott-Samelson cells}, as a natural generalization of double Bruhat cells, and study their cluster structures. 

Our generalization goes in two directions: first, we extend the groups from semisimple types to Kac-Peterson groups, whose double Bruhat cells have been studied by Williams \cite{Wilkacmoody}; second, we replace a pair of Weyl group elements $(u,v)$ by a pair of positive braids $(b,d)$, which we believe is a new construction. In particular, our double Bott-Samelson cells further generalize Lu and Mouquin's generalized double Bruhat cells associated to the identity conjugacy class \cite{LM} by dropping the additional data of partitioning the positive braids $b$ and $d$ as two $n$-tuples of Weyl group elements and extending the family to include the Kac-Peterson cases.

We present three versions of double Bott-Samelson cells, an undecorated one  
$\conf^b_d(\mathcal{B})$, and two decorated ones $\conf^b_d\left(\mathcal{A}_\sc\right)$ and 
$\conf^b_d\left(\mathcal{A}_\ad\right)$. The difference between the two decorated versions are similar to the 
difference between double Bruhat cells associated to simply connected forms and adjoint forms. There is one more version of double Bott-Samelson cell  $\conf^b_d\left(\mathcal{A}_\sc^\fr\right)$, but it will not play a significant role in the present paper.

We prove the following result on cluster structures of double Bott-Samelson cells.

\begin{thm}[Theorems \ref{affineness of conf},  \ref{3.42}, $\&$ \ref{3.43}]
\label{mainlcsafv}
 The double Bott-Samelson cells $\conf^b_d\left(\mathcal{A}_\sc\right)$ and $\conf^b_d\left(\mathcal{A}_\ad\right)$ are smooth affine varieties. The coordinate ring $\mathcal{O}\left(\conf^b_d\left(\mathcal{A}_\sc\right)\right)$ is an upper cluster algebra, and  $\mathcal{O}\left(\conf^b_d\left(\mathcal{A}_\ad\right)\right)$ is a cluster Poisson algebra\footnote{See Definition \ref{cdvneq0}}. The pair $\left(\conf^b_d\left(\mathcal{A}_\sc\right),\conf^b_d\left(\mathcal{A}_\ad\right)\right)$ form a cluster ensemble.
\end{thm}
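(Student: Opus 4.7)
My proposed strategy follows the Berenstein-Fomin-Zelevinsky blueprint developed for double Bruhat cells, adapted to the flag-chain / braid setting and extended to Kac-Peterson groups along the lines of Williams \cite{Wilkacmoody}. The first step is to fix reduced words $\vec{i}=(i_1,\ldots,i_m)$ for $b$ and $\vec{j}=(j_1,\ldots,j_n)$ for $d$, and to read off a wiring / string diagram from the pair $(\vec{i},\vec{j})$. This combinatorial datum should simultaneously produce a toric chart inside $\conf^b_d(\mathcal{A}_\sc)$, whose coordinates are generalized minors attached to the crossings and strands of the diagram, and a quiver $Q_{\vec{i},\vec{j}}$ whose vertex set matches these coordinates and whose signed arrows encode both an exchange matrix and a log-canonical Poisson structure on the corresponding torus inside $\conf^b_d(\mathcal{A}_\ad)$.

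The smoothness and affineness of the two decorated cells are then extracted from this chart: solving the transversality and decoration conditions in unipotent coordinates realizes each cell as an explicit affine scheme fibered over the chart torus, and affineness of the global cell follows because regular functions glue across the various reduced-word charts. Passage between any two reduced-word charts is governed by elementary braid moves (commutations $s_is_j=s_js_i$ and the braid relations associated to $\C$). The central technical lemma I would aim to establish is that \emph{every such braid move corresponds to a quiver mutation at an explicit vertex}, together with the predicted birational exchange relation on the associated minors. This lemma simultaneously produces a cluster seed for $\mathcal{A}_\sc$ and a cluster Poisson seed for $\mathcal{A}_\ad$ from each reduced word, and matches the two via the natural ensemble map $p\colon \conf^b_d(\mathcal{A}_\sc)\to \conf^b_d(\mathcal{A}_\ad)$, so the cluster ensemble claim of the theorem then follows formally.

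To upgrade the evident inclusion of the upper cluster algebra into $\mathcal{O}(\conf^b_d(\mathcal{A}_\sc))$ to an equality, I plan to invoke the starfish / codimension-two criterion of Berenstein-Fomin-Zelevinsky: one verifies normality of the cell, regularity of all cluster variables obtained by mutation at each non-frozen vertex of a single seed, and that the complement of the union of the corresponding toric charts has codimension at least two in $\conf^b_d(\mathcal{A}_\sc)$. Regularity of the cluster variables reduces to the standard regularity of generalized minors, whereas the codimension-two bound requires showing that each cluster divisor is irreducible and that any two distinct cluster divisors meet only in codimension two.

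I expect the main obstacle to be exactly this codimension-two step. In the Kac-Peterson and non-simply-laced settings one must verify uniformly in $(b,d)$ that the vanishing loci of cluster minors do not acquire spurious components, which forces a careful case-by-case analysis of the interaction between braid moves and the decoration data $\mathcal{A}_\sc$ versus $\mathcal{A}_\ad$. A subsidiary difficulty is the bookkeeping of the frozen variables at the two endpoints of the flag chain: these frozens are precisely what distinguishes $\mathcal{A}_\sc$ from $\mathcal{A}_\ad$, and they enter crucially in verifying the compatibility of the ensemble map $p$ with the two seed structures.
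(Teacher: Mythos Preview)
Your proposal follows essentially the same blueprint as the paper: string diagrams attached to the pair of words, generalized minors $\Delta_{\omega_i}$ as cluster $\mathrm{K}_2$ coordinates, Lusztig-type factorization coordinates on the Poisson side, braid moves realized as mutations, and the BFZ starfish/codimension-two criterion to identify the coordinate ring with the upper cluster algebra. You have also correctly located the main technical burden in the codimension-two step.

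A few points where the paper's execution differs from your sketch are worth flagging. First, the seeds are indexed not merely by the pair of words $(\vec{i},\vec{j})$ but by \emph{triangulations} of the trapezoid; diagonal flips inside a quadrilateral give additional mutations beyond braid moves, and the $\mathrm{A}_1\times\mathrm{A}_1$ moves give no mutation at all. Second, affineness is not extracted from the torus chart: the paper first uses the reflection isomorphisms to reduce to $b=e$, then exhibits $\conf^e_d(\mathcal{A})$ directly as the non-vanishing locus of a single function on $\T\times\mathbb{A}^n$, which simultaneously yields smoothness and the crucial fact that $\mathcal{O}^b_d$ is a \emph{UFD} (a localization of a polynomial ring). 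This UFD property, rather than mere normality, is what drives the irreducibility arguments: each unfrozen cluster variable $A_c$ is shown to be irreducible by an induction along the triangulation, and the transposition involution $(-)^t$ is used to rule out spurious factors. The codimension-two bound then follows from irreducibility plus an explicit check that each once-mutated variable $A'_c$ is regular and that $D_c\cap U_c$ is nonempty. Finally, the Poisson-side equality $\mathcal{O}(\conf^b_d(\mathcal{A}_\ad))\cong\up(\mathscr{X}^b_d)$ is deduced formally from the $\mathrm{K}_2$ result via surjectivity of $\pi:\conf^b_d(\mathcal{A}_\sc)\to\conf^b_d(\mathcal{A}_\ad)$ and a short diagram chase, rather than by a separate starfish argument.
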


Since double Bruhat cells are special cases of double Bott-Samelson cells, it follows from our result that the cluster structures on double Bruhat cells in the symmetrizable cases are canonical in the sense that they do not depend on the choice of reduced words (initial seeds), solving a conjecture of Berenstein, Fomin, and Zelevinsky \cite[Remark 2.14]{BFZ}.

\vskip 1mm

Inside a given upper cluster algebra, the subalgebra generated by all cluster variables is called its \emph{cluster algebra}\footnote{In this paper, we always assume that frozen cluster variables are invertible.} \cite{FZI,BFZ}. An interesting question to ask is whether an upper cluster algebra coincides with its cluster algebra. Sufficient conditions to prove this equality include acyclicity \cite{BFZ}, local acyclicity \cite{Muller_acyclic}, and CGL extensions \cite{GoY}. In this paper, we provide a new family of cluster varieties for which this equality holds.

\begin{thm}[Theorem \ref{up=ord}]\label{upper=ordinary} The upper cluster alegbra $\mathcal{O}(\conf^b_d(\mathcal{A}_\sc))$ coincides with its cluster algebra.
\end{thm}

\subsection{Donaldson-Thomas Transformation and Periodicity Conjecture}

On every cluster variety there is  a special formal automorphism called the \emph{Donaldson-Thomas transformation}, which is closely related to the Donaldson-Thomas invariants of certain 3d Calabi-Yau category with stability conditions considered by Kontsevich and Soibelman \cite{KS}. Following the work of Gross, Hacking, Keel, and Kontsevich \cite{GHKK}, if the Donaldson-Thomas transformation is a cluster transformation, then the Fock-Goncharov cluster duality conjecture holds. The cluster nature of Donaldson-Thomas transformations have been verified on many examples of cluster ensembles, including moduli spaces of $\G$-local systems \cite{GS2}, Grassmannians \cite{Weng}, and double Bruhat cells \cite{Wengdb}. As a direct consequence, the cluster duality conjecture holds in those cases.

In the present paper we  explicitly realize the Donaldson-Thomas transformation of the double Bott-Samelson cell $\conf^b_d(\mathcal{B})$ as a sequence of \emph{reflection maps} followed by a \emph{transposition map} (see Section \ref{simplereflection} for their definitions). We prove the following statement.

\begin{thm}[Theorems \ref{4.9}, \ref{4.11}] The  Donaldson-Thomas transformation of $\conf^b_d(\mathcal{B})$ is a cluster transformation. The Fock-Goncharov  duality conjecture\footnote{See Conjecture \ref{duality conjecture}} holds for  $\left(\conf^b_d\left(\mathcal{A}_\sc\right), \conf^b_d\left(\mathcal{A}_\ad\right)\right)$. 
\end{thm}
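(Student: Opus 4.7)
The plan is to treat the theorem in two stages. First I would establish the cluster nature of the Donaldson--Thomas transformation of $\conf^b_d(\mathcal{B})$ by exhibiting it explicitly as a composition of a sequence of \emph{reflection maps} with a single \emph{transposition map}, and proving that each factor is already a cluster transformation. Second, granted the first stage, the Fock--Goncharov duality conjecture for the ensemble $\bigl(\conf^b_d(\mathcal{A}_\sc),\conf^b_d(\mathcal{A}_\ad)\bigr)$ would follow formally from the Gross--Hacking--Keel--Kontsevich criterion: once DT is known to be a cluster transformation, and noting that the upper cluster algebra coincides with the coordinate ring by Theorem~\ref{mainlcsafv}, the duality conjecture for the associated ensemble is automatic.

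For the first stage I would fix a convenient initial seed of $\conf^b_d(\mathcal{B})$ built from a choice of reduced word data for $(b,d)$, using the cluster structure of Theorem~\ref{mainlcsafv}. Then I would analyze one elementary reflection map $r_i$ in isolation and show that, on the cluster variables, it is computed by an explicit short sequence of mutations localized at the vertices associated with the braid letter being reflected out, followed by a permutation of the index set. A reflection changes the pair $(b,d)$ into a new pair, so the target seed lives on a different double Bott--Samelson cell; the verification therefore has two ingredients, namely the mutation computation on cluster variables and a matching of quivers and exchange matrices across the reflection. The transposition map, which swaps the roles of the top and bottom braids, should reduce either to a combinatorial relabeling of the seed or to a very short mutation sequence, and in either case is manifestly a cluster transformation.

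The composition of the reflection sequence with the transposition is by construction a cluster transformation on $\conf^b_d(\mathcal{B})$. The remaining task is to verify that this composition actually equals the DT transformation. The most efficient route is to check the characterizing property of DT on a single dense cluster torus: DT sends each initial cluster variable to an explicit Laurent monomial prescribed by the $c$-vectors of the seed, equivalently it realises the sign-reversing involution on tropical points. Since both sides of the desired equality are birational self-maps of a cluster variety, agreement on one initial seed is enough.

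The main obstacle will be the reflection step, specifically the combinatorial bookkeeping of quivers, frozen data and sign conventions on the two boundary sides as one performs an elementary braid reflection, and the careful identification of the target cell together with its seed. Once this is settled, the second statement follows at once: the GHKK criterion combined with the cluster ensemble structure of Theorem~\ref{mainlcsafv} yields the Fock--Goncharov duality conjecture for $\bigl(\conf^b_d(\mathcal{A}_\sc),\conf^b_d(\mathcal{A}_\ad)\bigr)$ without further work.
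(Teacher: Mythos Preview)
Your proposal is essentially the paper's approach: the paper likewise realizes $\DT$ as a composition of reflection maps (which are cluster isomorphisms on the unfrozen level) with the transposition, verifies the characterizing $c$-matrix $=-\mathrm{id}$ condition by exhibiting an explicit maximal green sequence, and then invokes the GHKK criterion together with the surjectivity of $p$ (established in proving Theorem~\ref{mainlcsafv}) for the duality conjecture. The only tactical difference is that the paper first reduces to the case $b=e$ via conjugation by a sequence of right reflections and Goncharov--Shen's centrality result for $\DT$, which makes the maximal green sequence computation (your ``check the characterizing property'' step, and the real technical heart of the argument) considerably cleaner.
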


The key ingredients for constructing Donaldson-Thomas transformations  are four reflection maps, $^ir$, $_ir$, $r^i$, and $r_i$, which are biregular isomorphisms between double Bott-Samelson cells that differ by the placement of  $s_i$:
\begin{equation}
\label{refl.maps}
\xymatrix{\conf^{s_ib}_d(\mathcal{A}) \ar@<0.5ex>[r]^{^ir} & \conf^b_{s_id}(\mathcal{A}) \ar@<0.5ex>[l]^{_ir}}  \quad \quad \quad \xymatrix{\conf^{bs_i}_d(\mathcal{A}) \ar@<0.5ex>[r]^{r^i} & \conf^b_{ds_i}(\mathcal{A}) \ar@<0.5ex>[l]^{r_i}}.
\end{equation}
We prove the following result on these reflection maps.

\begin{thm}[Corollary \ref{4.13}] Reflection maps are quasi-cluster transformations and hence are Poisson maps.
\end{thm}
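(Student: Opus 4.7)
The plan is to analyze each reflection map in terms of explicit seed data on the two sides. By the cluster structure theorem already established, each cell $\conf^b_d(\mathcal{A}_\sc)$ carries an upper cluster algebra structure whose seeds are read off from combinatorial data (``wiring diagrams'' or an analogous triangulation) associated to the braid pair $(b,d)$. My first step is to choose, on both $\conf^{s_ib}_d(\mathcal{A})$ and $\conf^b_{s_id}(\mathcal{A})$ (and analogously on the right-hand pair for $r^i$ and $r_i$), initial seeds whose combinatorial data differ minimally. Moving the letter $s_i$ from $s_ib$ to $s_id$ should alter the quiver only in a small neighborhood of the vertex corresponding to this $s_i$, with the rest of the quiver, including most frozen vertices, preserved verbatim.

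Next, I would translate the defining birational formula for $^ir$ from Section \ref{simplereflection} into cluster coordinates on the chosen seeds. My expectation is that, in these coordinates, $^ir$ is realized either by the identity on the exchange quiver followed by a monomial transformation of a pair of frozen variables at the ends of the diagram, or by a single mutation at the vertex affected by the $s_i$ move composed with such a frozen monomial transformation. The definition of quasi-cluster transformation allows exactly these ingredients: a sequence of cluster mutations composed with an automorphism of the frozen torus. By the left-right and top-bottom symmetry of the construction, the same local computation applies to $_ir$, $r^i$, and $r_i$.

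Once each reflection map has been exhibited in this form, it is a quasi-cluster transformation by definition. The Poisson statement follows immediately, since quasi-cluster transformations preserve the canonical cluster Poisson structure on the $\conf^b_d(\mathcal{A}_\ad)$ side of the cluster ensemble. The main obstacle is the explicit local matching: one must verify that the birational formula for the reflection map, restricted to the initial cluster chart, agrees on the nose with the prescribed short mutation sequence, and this requires careful bookkeeping of the quiver arrows adjacent to the $s_i$ vertex (including any arrows into frozen vertices at the boundary of the wiring diagram) together with the precise monomial prefactors on the affected frozens. Signs and these boundary monomials are where errors typically hide, so I would verify the computation first in rank one or in a small example before pushing it through in general.
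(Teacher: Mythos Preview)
Your approach is correct and essentially the same as the paper's: reduce to one reflection (say $r^i$) by symmetry and inverses, pick the natural seeds on the two sides (whose string diagrams differ only in the sign of one boundary node), and compute directly that the map fixes all unfrozen cluster Poisson coordinates while acting by a monomial transformation on the rightmost frozen variables ($X_i\mapsto X_i^{-1}$ and $X_j\mapsto X_jX_i^{-\C_{ij}}$ for $j\neq i$), which is then checked to preserve the Poisson form; the $\mathcal{A}_\sc$ side is handled by the dual lattice computation matched against the geometric formula $A_i\mapsto A_i^{-1}\prod_{j\neq i}A_j^{-\C_{ji}}$. Your hedge about a possible single mutation is unnecessary: the unfrozen seeds on the two sides are already isomorphic (the boundary sign change does not touch unfrozen--unfrozen entries of $\epsilon$) and the reflection acts as the identity on unfrozen coordinates, so no mutation enters.
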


In Section \ref{PeriodicityConje}, we investigate the periodicity of  Donaldson-Thomas transformations for a class of double Bott-Samelson cells associated to semisimple algebraic groups. We prove 

\begin{thm}[Theorem \ref{5.1}] 
\label{1.3thm}
If $\G$ is semisimple and the positive braids $(b,d)$ satisfy $\left(db^\circ\right)^m=w_0^{2n}$, then the Donaldson-Thomas transformation of ${\conf^b_d(\mathcal{B})}$ is of a finite order dividing $2(m+n)$.
\end{thm}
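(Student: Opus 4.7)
The plan is to exploit the explicit description of the Donaldson--Thomas transformation given by Theorem~\ref{4.9}---as a composition of reflection maps followed by a transposition map---and to read off periodicity by tracking the combinatorial effect of this composition on the pair of braid words. Since reflection maps and the transposition map are biregular isomorphisms (Corollary~\ref{4.13}), the geometric identity $\DT^{2(m+n)} = \Id$ will follow from a combinatorial identity at the braid level, once we know that the ambient moduli is invariant under the relevant central element.

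First, I would unpack the formula for $\DT$ on $\conf^b_d(\mathcal{B})$ and compute its effect on the braid pair $(b,d)$. Writing $\DT = t \circ r$, where $r$ is the composition of reflection maps that successively migrates the letters of $b$ past the transposition locus into $d$, and $t$ is the transposition swapping top and bottom up to the Dynkin involution $\circ$, one should find that a single application of $\DT$ sends $(b,d)$ to a pair $(b',d')$ whose product $d'(b')^\circ$ is $db^\circ$ cyclically rotated by one positive-braid slot. This is the key combinatorial input, obtained by feeding the definitions of the reflection maps through the braid word letter by letter.

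Second, I would iterate this rotation. The hypothesis $(db^\circ)^m = w_0^{2n}$ endows $db^\circ$ with a cyclic symmetry: its $m$-th power equals the central element $\Delta^{2n}$ in the braid group, where $\Delta^2 = w_0^2$ is the full twist. Consequently, after $m+n$ iterations the pair $(b,d)$ is returned to itself up to conjugation by $\Delta^{2n}$ (which, being central, acts trivially on the braid words) composed with the Dynkin involution coming from the $n$ half-twists $\Delta$; one more full cycle of $m+n$ iterations cancels the involution and yields $\DT^{2(m+n)} = \Id$. This last step uses the natural identification $\conf^b_d(\mathcal{B}) \cong \conf^{\Delta^{2n} b \Delta^{-2n}}_{\Delta^{2n} d \Delta^{-2n}}(\mathcal{B})$ to pass from the braid-level periodicity to the level of the moduli.

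The main obstacle will be the bookkeeping in the second step: one must verify that each $\DT$ rotates exactly one slot of $db^\circ$, track the accumulation of the Dynkin involution precisely (so that it appears $m+n$ times rather than some other count), and confirm that the natural isomorphism above genuinely identifies $\Delta^{2n}$-conjugation with the identity automorphism. Once these three pieces of combinatorial bookkeeping are in place, the divisibility statement follows immediately, and the factor of $2$ in $2(m+n)$ is seen to account precisely for squaring out the Dynkin involution.
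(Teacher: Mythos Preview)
Your high-level instinct---that the reflection-and-transposition description of $\DT$ from Theorem~\ref{4.9} should translate the periodicity into a braid-group identity involving the central element $\Omega=w_0^2$---matches the paper's strategy. But two of your concrete claims are wrong, and the gap is real.

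First, the assertion that a single $\DT$ ``rotates $db^\circ$ by one positive-braid slot'' is incorrect. On $\conf^e_{db^\circ}(\mathcal{B})$ the transformation $\DT$ reflects the \emph{entire} word to the top and then transposes; nothing singles out one letter. The paper avoids this by working with $\DT^2$ rather than $\DT$: applying $\DT$ twice produces a map that can be described without transposition, and the paper shows concretely that $\DT^2$ shifts a certain infinite sequence of flags $\B_{(k)}$ by two, where consecutive $\B_{(k)}$ are joined alternately by a $b$-chain and a $w_0$-arrow. The relevant ``rotation'' is thus by a full $b\Omega$-block, not by one letter, and the count that emerges is exactly $m+n$ applications of $\DT^2$, not $m+n$ one-letter shifts.

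Second, and more seriously, your plan stays at the braid-word level: you want periodicity of configurations to follow from a braid identity plus the observation that conjugation by $\Omega^n$ is trivial. But $\DT$ is an automorphism of a fixed space $\conf^b_d(\mathcal{B})$; the braid pair never changes, so there is nothing to ``return to $(b,d)$.'' What must be shown is that a specific composite of reflection maps is the identity on \emph{configurations of flags}. The paper does this by building a long concatenated chain of flags, rewriting it in two ways via braid moves (one way pushes $\Omega$'s to the right using $\DT^2$, the other pushes $b$'s to the right using $\DT^{-2}$), invoking the hypothesis $b^m=\Omega^n$ to make the two rewritings land on chains with the same word, and then appealing to Theorem~\ref{bott-samelson uniqueness} to conclude the underlying flags coincide. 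That last geometric step---uniqueness of $b$-chains under braid moves---is the essential ingredient your outline omits, and without it a braid-group identity alone cannot force an identity of moduli points.
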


Zamolodchikov's periodicity conjecture asserts that the solution of the $Y$-system associated to a pair of Dynkin diagrams is periodic with period relating to the Coxeter numbers of the two Dynkin diagrams. Keller gave a categorical proof of the conjecture in full generality in \cite{Kelperiod}. 

Let $\Delta$ be a Dynkin diagram of finite type and let $\G$ be a group of type $\Delta$.
In this paper we relate the product $\Delta\square \mathrm{A}_n$ to a double Bott-Samelson cell associated to  $\G$, and give a new geometric proof of Zamolodchikov's periodicity conjecture (Corollary \ref{5.9}).

As explained in \cite[Section 5.7]{Keldilog}, Zamolodchikov's periodicity implies a result on the periodicity of the Donaldson-Thomas transformation. Weng \cite{Weng} gave a direct geometric proof of the periodicity of $\DT$ in the case of $\mathrm{A}_m\square \mathrm{A}_n$ by realizing the Donaldson-Thomas transformation as a biregular automorphism on a configuration space of lines. 

Theorem \ref{1.3thm} gives a new geometric proof of the periodicity of $\DT$ in the cases of $\Delta\square \mathrm{A}_n$.

\begin{thm}[Corollary \ref{5.3}] Let $\Delta$ be a Dynkin quiver of finite type. Then $\DT_{\Delta\square \mathrm{A}_n}$ is of a finite order dividing $\frac{2(h+n+1)}{\gcd(h,n+1)}$ where $h$ is the Coxeter number of $\Delta$.
\end{thm}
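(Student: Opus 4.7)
The plan is to specialize Theorem~\ref{5.1} to a particular double Bott-Samelson cell whose cluster structure matches Zamolodchikov's $Y$-system of type $\Delta\square\mathrm{A}_n$. Let $\G$ be a semisimple algebraic group of type $\Delta$ with Weyl group $\W$ and Coxeter number $h$, and fix a Coxeter element $c\in\W$. Using the identification alluded to in the paragraph preceding the corollary, I take the pair of positive braids
\[
b = c^{n+1}, \qquad d = 1,
\]
so that $\conf^b_d(\mathcal{B})$ is the cluster variety underlying the $Y$-system of type $\Delta\square\mathrm{A}_n$ and $\DT_{\conf^b_d(\mathcal{B})}$ realizes the $Y$-system dynamics. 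Bounding the order of $\DT_{\Delta\square\mathrm{A}_n}$ therefore reduces to bounding the order of $\DT$ on this cell.

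To apply Theorem~\ref{5.1}, I exhibit positive integers $M, K$ satisfying $(db^\circ)^M = w_0^{2K}$. With $d = 1$, this simplifies to $(c^\circ)^{M(n+1)} = w_0^{2K}$, where $c^\circ$ is again a Coxeter element since the $\circ$-involution is induced by conjugation by $w_0$. The classical braid identity $c^h = w_0^2$, valid in the positive braid monoid of any finite-type Coxeter group (both sides being positive lifts of the full twist, of the same length), reduces the problem to the arithmetic equation
\[
M(n+1) = h\,K,
\]
whose minimal positive solution is $M = h/\gcd(h, n+1)$ and $K = (n+1)/\gcd(h, n+1)$. Plugging these into the conclusion of Theorem~\ref{5.1} gives
\[
\ord\bigl(\DT_{\Delta\square\mathrm{A}_n}\bigr) \;\Big|\; 2(M + K) \;=\; \frac{2(h + n + 1)}{\gcd(h, n + 1)},
\]
which is precisely the bound claimed in the corollary.

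I expect the main obstacle to lie in the identification step: one must show that the seed quiver attached to a reduced word of $c^{n+1}$ (with $d$ trivial), extracted via the Section~3 construction, is mutation-equivalent to the standard square-product quiver $\Delta\square\mathrm{A}_n$, and that $\DT$ implements one step of the $Y$-system evolution. This is a combinatorial check using the explicit rules of Section~3 and carries the geometric content of the corollary. The braid-monoid identity $c^h = w_0^2$ is classical finite-type Coxeter combinatorics and poses no serious difficulty; once the identification is in place, the arithmetic above completes the proof.
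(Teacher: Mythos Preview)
Your approach is the paper's: both apply Theorem~\ref{5.1} via the braid identity $c^h=\Omega$ for a Coxeter element $c$ and solve $M(n+1)=hK$ for the minimal positive $M,K$; the paper merely packages the cell as $\conf^{p}_{q}(\mathcal{B})$ with $p,q$ built from the bipartite factorization $c=bw$ (so that $qp^\circ=c^{n+1}$), which is precisely what makes the preceding identification of the seed with $\Delta\square\mathrm{A}_n$ explicit. One small correction: the involution $b\mapsto b^\circ$ is word reversal, not conjugation by $w_0$; your conclusion that $c^\circ$ is again a Coxeter element remains valid (it is a product of all simple reflections in reversed order), so the argument is unaffected.
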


\subsection{Positive Braids Closures}

Let $(b,d)$ be a pair of positive braids in the braid group of type $\mathrm{A}_r$.  Every word $(\vec{i},\vec{j})$ of $(b, d)$ encodes two sequences of crossings at the top and at the bottom of a \emph{Legendrian link} $\Lambda^\vec{i}_\vec{j}$ embedded in the standard contact $\mathbb{R}^3$ (see Section \ref{cadsbhvo}). 
Legendrian links obtained from different words of $(b,d)$ are related by Legendrian Reidemeister moves and therefore are Legendrian isotopic. Abusing notations we denote the corresponding isotopic class of Legendrian links by $\Lambda_d^b$. 

The reflection maps \eqref{refl.maps} correspond to Legendrian isotopies that move a crossing from top to bottom or vice versa at the two ends of the link diagram. Below is a picture depicting such a move for the reflection maps $^1r\circ r^2:\conf^{s_1s_2}_{s_1}(\mathcal{A})\rightarrow \conf_{s_1s_1s_2}^e(\mathcal{A})$ of Dynkin type $\mathrm{A}_2$.

\[
\begin{tikzpicture}[scale=.8]
\begin{knot}[
consider self intersections, 
ignore endpoint intersections=false, 
]
\strand (0,0) -- (4,0) to [out=0,in=180] (7,1.5) to [out=180,in=0] (4,3) to [out=180,in=0] (2,2.5) to [out=180, in=0] (0,2) to [out=180, in=0] (-1,1.5) to [out=0,in=180] (0,1) to [out=0,in=180] (2,0.5) to [out=0, in=180] (4,0.5) to [out=0, in =180] (6,1.5) to [out=180,in=0] (4,2.5) to [out=180, in=0] (2,3) to [out=180, in =0] (0,3) to [out=180, in=0] (-3,1.5) to [out=0,in=180] (0,0);
\strand (0,0.5) to [out=0,in=180] (2,1) to (4,1) to [out=0, in=180] (5,1.5) to [out=180, in=0] (4,2) to (2,2) to [out=180, in=0] (0,2.5) to [out=180, in=0] (-2,1.5) to [out=0,in=180] (0,0.5);
\flipcrossings{2,3};
\end{knot}
\node at (10,2.5) [] {$b=s_1s_2$};
\node at (10,0.5) [] {$d=s_1$};
\node at (1,0.75) [above] {$s_1$};
\node at (1,2.25) [below] {$s_1$};
\node at (3,2.75) [above] {$s_2$};
\end{tikzpicture}
\]
\[
\downarrow \quad \quad \quad \quad \quad \quad \quad \quad
\]
\[
\begin{tikzpicture}[scale=.8]
\begin{knot}[
consider self intersections, 
ignore endpoint intersections=false, 
]
\strand (0,1) -- (1,1) to [out=0, in=180] (2,0.5) to [out=0, in=180] (3,1) -- (5,1) to [out=0, in=180] (6,1.5) to [out=180, in=0] (5,2) -- (0,2) to [out=180,in=0] (-1,1.5) to [out=0,in=180] (0,1);
\strand (0,0.5) -- (1,0.5) to [out=0, in=180] (2,1) to [out=0, in=180] (3,0.5) to [out=0, in=180] (4,0) -- (5,0) to [out=0, in=180] (8,1.5) to [out=180,in=0] (5,3) -- (0,3) to [out=180,in=0] (-3,1.5) to [out=0,in=180] (0,0) -- (3,0) to [out=0, in=180] (4,0.5) -- (5,0.5) to [out=0,in=180] (7,1.5) to [out=180,in=0] (5,2.5) -- (0,2.5) to [out=180,in=0] (-2,1.5) to [out=0,in=180] (0,0.5);
\flipcrossings{3};
\end{knot}
\node at (11,2.5) [] {$b=e$};
\node at (11,0.5) [] {$d=s_1s_1s_2$};
\node at (1.5,0.75) [above] {$s_1$};
\node at (2.5,0.75) [above] {$s_1$};
\node at (3.5,0.25) [below] {$s_2$};
\end{tikzpicture}
\]

Shende, Treumann, and Zaslow \cite{STZ} introduced a moduli space of microlocal rank-1 sheaves $\mathcal{M}_1\left(\Lambda\right)$ associated to any Legendrian link $\Lambda$. By a result of Guillermou, Kashiwara, and Schapira \cite{GKS}, the moduli spaces $\mathcal{M}_1\left(\Lambda\right)$ and $\mathcal{M}_1\left(\Lambda'\right)$ are isomorphic if $\Lambda$ and $\Lambda'$ are Legendrian isotopic \cite[Theorem 1.1]{STZ}. However, one should keep in mind that the isomorphisms between such moduli spaces depends on the Legendrian isotopies. 

By comparing the definitions of $\mathcal{M}_1\left(\Lambda^\vec{i}_\vec{j}\right)$ and $\conf^b_d(\mathcal{B})$ we obtain the following result.

\begin{thm}[Theorem \ref{caosdbvco}] 
\label{vfqve1} There is a natural isomorphism $\mathcal{M}_1\left(\Lambda^\vec{i}_\vec{j}\right)\cong \conf^b_d(\mathcal{B})$.
\end{thm}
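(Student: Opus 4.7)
The plan is to construct the isomorphism by reading off the combinatorial data encoded in the front diagram $\Lambda^\vec{i}_\vec{j}$ and matching it term-by-term with the flag configurations parametrizing $\conf^b_d(\mathcal{B})$. First I would recall the explicit model for $\mathcal{M}_1(\Lambda)$ given by Shende-Treumann-Zaslow in type $\mathrm{A}$: a microlocal rank-$1$ constructible sheaf on the front complement assigns to each region a finite-dimensional vector space, with corestriction maps across the strands, subject to the microlocal rank-$1$ condition at smooth points and the standard conditions at crossings and cusps. Normalizing the sheaf on the unbounded region to be zero, the region directly below the $k$-th strand (counting from the bottom) acquires a $k$-dimensional vector space, and the corestrictions from the lower regions endow it with a canonical complete flag.

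Next I would translate the local conditions of the STZ sheaf into the relative-position conditions of flags. The microlocal rank-$1$ axiom at an $s_i$-crossing forces the two flags on the horizontally adjacent regions to agree in every step except the $i$-th, i.e., to be $s_i$-related in the flag variety $\mathcal{B}$. Reading left-to-right across the top of the diagram, the sequence of crossings labeled by $\vec{i}$ therefore produces a chain of flags whose consecutive relative positions assemble into the positive braid $b$; the analogous reading across the bottom produces a chain of flags whose relative positions assemble into $d$. The two end-cusps on the left and the right identify the flags at the corresponding ends, producing exactly the cyclic datum of a point of $\conf^b_d(\mathcal{B})$ as defined in Section~\ref{cadsbhvo}. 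I would package this assignment as a morphism of functors of points from $\mathcal{M}_1(\Lambda^\vec{i}_\vec{j})$ to $\conf^b_d(\mathcal{B})$, which is manifestly natural in the base scheme.

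To show this morphism is an isomorphism I would construct the inverse directly: given a configuration of flags in $\conf^b_d(\mathcal{B})$, I would build a constructible sheaf on the front complement region-by-region by placing the natural subquotients of the ambient vector space on each region, and defining the corestriction maps using the inclusions dictated by the flags. The Legendrian conditions at smooth points, crossings, and cusps are satisfied by construction since each positive crossing $s_i$ is encoded by an $s_i$-related pair of flags, and the two constructions are inverse to each other. Both maps commute with base change, so the bijection promotes to an isomorphism of moduli stacks; a dimension/smoothness check (using the smoothness established in Theorem~\ref{mainlcsafv}) confirms that no stackiness or non-reducedness is introduced on either side.

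The main obstacle I anticipate is the careful bookkeeping around the left and right cusps and the convention, flagged in the paper, that the braid $b$ is drawn upside down at the top: one must verify that the identifications of flags imposed by the cusps correspond to the correct pair of flags at the endpoints of the configuration (rather than, say, their duals), so that the braids $b$ and $d$ appear in the orientation used in the definition of $\conf^b_d(\mathcal{B})$. Once this convention is fixed, the crossing-by-crossing argument is mechanical, and the fact that different words $(\vec{i},\vec{j})$ representing the same $(b,d)$ yield the same $\conf^b_d(\mathcal{B})$ matches the fact that the STZ moduli spaces for Legendrian-isotopic links are canonically isomorphic after the isotopies are performed via braid moves.
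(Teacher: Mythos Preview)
Your approach is essentially the paper's: extract flags from the region-by-region vector spaces of the STZ sheaf, read crossings as $s_i$-relations, and build the inverse by placing subquotients on regions. Two points deserve sharpening, and they are exactly the content of the paper's argument that you leave implicit.

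First, you say the cusps ``identify the flags at the corresponding ends.'' They do not. In the paper's Definition~\ref{conf(B)}, the endpoint condition is that $\B^0$ and $\B_0$ (resp.\ $\B^m$ and $\B_n$) are in \emph{general position}, not equal. The cusp axiom $\phi_{fg}\circ\phi_{gf}=\mathrm{id}$ translates to transversality: with $U_i=\mathrm{im}(V_i\to V_{r+1})$ and $W_i=\ker(V_{r+1}\to V_i)$, one gets $U_i\cap W_i=0$, hence the two flags are opposite. This is the non-obvious verification, and calling it an ``identification'' or ``cyclic datum'' obscures it.

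Second, and relatedly, your description ``the region below the $k$-th strand acquires a $k$-dimensional space and the corestrictions endow it with a flag'' only works for one half of the diagram. For the bottom word $\vec{j}$ the maps are injections and one takes images, landing in $\mathcal{B}_-$; for the top word $\vec{i}$ the maps are surjections (because $b$ is drawn upside down, with $f_{\mathrm{in}}$ below and $f_{\mathrm{out}}$ above) and one takes kernels, landing in $\mathcal{B}_+$. You anticipate this as a bookkeeping obstacle, and it is, but its resolution is precisely what makes the cusp argument go through: the two flags at each end live in the two different flag varieties and the cusp forces them to be transverse. Once you make the images/kernels dichotomy explicit, the rest of your sketch matches the paper's proof line for line; the stackiness check you mention at the end is unnecessary since both sides are honest varieties and the maps are explicit inverses on points.
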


Theorem \ref{vfqve1} implies that the automorphisms on the moduli spaces $\mathcal{M}_1\left(\Lambda^b_d\right)$ induced by braid moves are all trivial; therefore  one can canonically identify $\mathcal{M}_1\left(\Lambda^\vec{i}_\vec{j}\right)$ for different choices of words for $(b,d)$ and define the moduli space $\mathcal{M}_1\left(\Lambda^b_d\right)$ for a pair of positive braids $(b,d)$.

\vskip 2mm

The cells $\conf^b_d(\mathcal{A})$ associated to any generalized Cartan matrices are well defined over any finite field $\mathbb{F}_q$. Let
\[
f^b_d(q):= \left|\conf^b_d(\mathcal{A})\left(\mathbb{F}_q\right)\right|.
\]
In Section \ref{computation}, we provide an algorithm for computing $f^b_d(q)$.
The cell  $\conf^b_d(\mathcal{B})$ is isomorphic to $\conf^b_d(\mathcal{A})$ modulo a $\T\times \T$ action. Let $r$ be the rank of the Cartan subgroup $\T$. The orbifold counting of $\mathbb{F}_q$-points of  $\conf^b_d(\mathcal{B})$ is 
\[
g^b_d(q):= \left|\conf^b_d(\mathcal{B})\left(\mathbb{F}_q\right)\right|=\frac{\left|\conf^b_d(\mathcal{A})\left(\mathbb{F}_q\right)\right|}{\left|\T\times \T \left(\mathbb{F}_q\right)\right|}=\frac{f^b_d(q)}{(q-1)^{2r}}.
\]
In general $g^b_d(q)$ is a rational function, with possible poles  at $q=1$.

\begin{thm}[Corollary \ref{link_inv'}] Let $(b,d)$ be a pair of positive braids in the braid group of type $\mathrm{A}_r$. The double Bott-Samelson cell $\conf^b_d(\mathcal{B})$ (as an algebraic stack) and the rational function $g^b_d(q)$  are Legendrian link invariants for the positive braid closure $\Lambda^b_d$.
\end{thm}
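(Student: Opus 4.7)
The plan is to deduce the corollary directly from Theorem~\ref{vfqve1} together with the Legendrian invariance of the Shende--Treumann--Zaslow moduli spaces $\mathcal{M}_1(\Lambda)$, which was established in \cite[Theorem~1.1]{STZ} using the sheaf-theoretic input of \cite{GKS}. There is no new geometric content to produce; the work lies in chaining together two already-stated isomorphism theorems and checking that orbifold counting sees the result.

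First I would record what Theorem~\ref{vfqve1} gives: for any word $(\vec i,\vec j)$ representing $(b,d)$ there is a natural isomorphism of algebraic stacks $\mathcal{M}_1(\Lambda^{\vec i}_{\vec j})\cong \conf^b_d(\mathcal{B})$. The remark immediately following Theorem~\ref{vfqve1} already notes that braid moves between different words for the same pair $(b,d)$ induce the trivial automorphism on both sides, so these isomorphisms patch into a well-defined identification $\mathcal{M}_1(\Lambda^b_d)\cong \conf^b_d(\mathcal{B})$ depending only on the pair $(b,d)$.

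Next, by \cite[Theorem~1.1]{STZ}, Legendrian isotopic links $\Lambda$ and $\Lambda'$ give isomorphic algebraic stacks $\mathcal{M}_1(\Lambda)\cong \mathcal{M}_1(\Lambda')$. Applying this to two pairs of positive braids $(b,d)$ and $(b',d')$ whose closures are Legendrian isotopic and stringing the resulting isomorphisms together yields
\[
\conf^b_d(\mathcal{B})\;\cong\;\mathcal{M}_1\!\left(\Lambda^b_d\right)\;\cong\;\mathcal{M}_1\!\left(\Lambda^{b'}_{d'}\right)\;\cong\;\conf^{b'}_{d'}(\mathcal{B}),
\]
which is the first assertion. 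For the second assertion, observe that the orbifold $\mathbb{F}_q$-point count of an algebraic stack is an isomorphism invariant; since $g^b_d(q)=|\conf^b_d(\mathcal{B})(\mathbb{F}_q)|$ by construction, the stack invariance above transports verbatim to invariance of the rational function $g^b_d(q)$.

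The only delicate point, which I expect to be the main obstacle in writing the argument cleanly, is bookkeeping of dependence on choices: the isomorphism in Theorem~\ref{vfqve1} is attached to a word rather than to a braid pair, and the STZ/GKS isomorphism is attached to a specific Legendrian isotopy rather than to an isotopy class. Both ambiguities evaporate for our statement because we assert invariance only up to (not necessarily canonical) isomorphism, and the orbifold counting $g^b_d(q)$ is blind to any such non-canonical choice; but these remarks should be spelled out explicitly to justify the chain of isomorphisms above.
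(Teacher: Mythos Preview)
Your proposal is correct and matches the paper's approach exactly: the paper states Corollary~\ref{link_inv'} immediately after Theorem~\ref{caosdbvco} with the phrase ``As a direct consequence, we obtain that'', relying on precisely the chain $\conf^b_d(\mathcal{B})\cong \mathcal{M}_1(\Lambda^b_d)$ together with the STZ/GKS invariance of $\mathcal{M}_1(\Lambda)$ under Legendrian isotopy. Your write-up is in fact more detailed than the paper's, which gives no further argument.
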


\subsection{Further Questions}

\noindent\textbf{Comparison with Generalized Double Bruhat Cells.} Let $\vec{u}=\left(u_1,u_2,\dots, u_n\right)$ and $\vec{v}=\left(v_1,v_2,\dots, v_n\right)$ be two $n$-tuples of Weyl group elements and let $C$ be a conjugacy class in $\G$. Define
\[
\B_+\vec{u}\B_+:=\left\{\left[x_1,\dots, x_n\right]\in \G\underset{\B_+}{\times}\dots \underset{\B_+}{\times}\G \ \middle| \ x_i\in \B_+u_i\B_+\right\}
\]
and define $\B_-\vec{v}\B_-$ similarly. Lu and Mouquin \cite{LM} defined a \emph{generalized double Bruhat cell} as
\[
\G^{\vec{u},\vec{v}}_C:=\left\{\begin{array}{c}
[x_1,\dots, x_n,\\
y_1,\dots, y_n]\end{array} \ \middle| \ \begin{array}{c} \left[x_i\right]\in \B_+\vec{u}\B_+, \left[y_i\right]\in \B_-\vec{v}\B_-, \\ \left(x_1\dots x_n\right)\left(y_1\dots y_n\right)^{-1}\in C\end{array}\right\}.
\]
Note that when $C=\{e\}$ and $n=1$, it coincides with the ordinary double Bruhat cells. 

Let us lift $u_i$ and $v_j$ to positive braid elements and set  
$b=u_1\dots u_n$ and $d=v_1\dots v_n$. The generalized double Bruhat cell $\G_{\{e\}}^{\vec{u},\vec{v}}$ is biregularly isomorphic to our decorated double Bott-Samelson cell $\conf^b_d\left(\mathcal{A}\right)$ via the following map\footnote{The isomorphism is pointed to us by J.H. Lu.}
\begin{align*}
\G^{\vec{u},\vec{v}}_{\{e\}}&\longrightarrow \conf^b_d(\mathcal{A})\\
\begin{array}{c}
[x_1,\dots, x_n,\\
y_1,\dots, y_n]\end{array} & \longmapsto 
\left[\vcenter{\vbox{
\xymatrix{\U_+ \ar[r]^{u_1} \ar@{-}[d] & x_1\B_+ \ar[r]^{u_2} & x_1x_2\B_+\ar[r]^{u_3} & \dots \ar[r]^(0.4){u_n} & x_1\dots x_n\B_+ \ar@{-}[d] \\
\B_- \ar[r]_{v_1} & y_1\B_- \ar[r]_{v_2} & y_1y_2\B_- \ar[r]_{v_3} & \dots \ar[r]_(0.4){v_n} & y_1\dots y_n\U_-}}}
\right]
\end{align*}
In particular, this isomorphism shows that the generalized double Bruhat cells $\G_{\{e\}}^{\vec{u},\vec{v}}$ admit natural cluster structures. It further implies the following new result on generalized double Bruhat cells.

\begin{cor} Let $\vec{u}=(u_1,\ldots,  u_n)$, $\vec{v}=(v_1, \ldots, v_n)$, $\vec{u'}=(u_1', \ldots, u_m')$ and $\vec{v'}=(v_1', \ldots, v_m')$. If $u_1\dots u_n=u'_1\dots u'_m$ and $v_1\dots v_n=v'_1\dots v'_m$ in the braid group, then there is a canonical isomorphism between $\G_{\{e\}}^{\vec{u},\vec{v}}$ and  $\G_{\{e\}}^{\vec{u}',\vec{v}'}$. \end{cor}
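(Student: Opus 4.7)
The plan is to reduce the corollary to the canonical nature of the double Bott-Samelson cells $\conf^b_d(\mathcal{A})$ as invariants of the positive braid pair $(b,d)$, using the isomorphism with generalized double Bruhat cells stated just above.

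First I would fix reduced expressions for each Weyl group element $u_i$, $v_j$, $u_k'$, $v_\ell'$ and use the displayed biregular isomorphism $\G_{\{e\}}^{\vec{u},\vec{v}}\xrightarrow{\sim}\conf^b_d(\mathcal{A})$, where $b=u_1\cdots u_n$ and $d=v_1\cdots v_n$ are regarded as positive braids by concatenating the chosen reduced words. Applying the same construction to $(\vec{u}',\vec{v}')$ yields an isomorphism $\G_{\{e\}}^{\vec{u}',\vec{v}'}\xrightarrow{\sim}\conf^{b'}_{d'}(\mathcal{A})$ with $b'=u_1'\cdots u_m'$ and $d'=v_1'\cdots v_m'$. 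The hypothesis says exactly that $b=b'$ and $d=d'$ as elements of the braid group.

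Next I would invoke the fact, developed earlier in the paper, that the double Bott-Samelson cell $\conf^b_d(\mathcal{A})$ depends only on the underlying positive braid pair $(b,d)$ and not on the word used to present it: different reduced words are related by braid moves, and the paper has established (cf.\ the discussion following Theorem \ref{vfqve1} and the treatment of braid moves via the reflection maps in \eqref{refl.maps}) that these braid moves induce canonical biregular isomorphisms between the corresponding models $\conf^{\vec{i}}_{\vec{j}}(\mathcal{A})$ and $\conf^{\vec{i}'}_{\vec{j}'}(\mathcal{A})$. Composing these braid-move isomorphisms gives a canonical identification $\conf^b_d(\mathcal{A})\cong \conf^{b'}_{d'}(\mathcal{A})$.

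Finally, concatenating the three canonical isomorphisms
\[
\G_{\{e\}}^{\vec{u},\vec{v}}\;\xrightarrow{\sim}\;\conf^b_d(\mathcal{A})\;=\;\conf^{b'}_{d'}(\mathcal{A})\;\xrightarrow{\sim}\;\G_{\{e\}}^{\vec{u}',\vec{v}'}
\]
produces the desired canonical biregular isomorphism. The only non-routine point is the assertion of canonicity, that is, independence of the identification from the auxiliary reduced word choices; this is precisely the content of the braid-move invariance of $\conf^b_d(\mathcal{A})$ already established, so it is the step that does the real work, while everything else is formal.
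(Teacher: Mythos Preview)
Your approach is correct and matches the paper's (implicit) reasoning: the corollary is stated immediately after the displayed isomorphism $\G_{\{e\}}^{\vec{u},\vec{v}}\cong\conf^b_d(\mathcal{A})$ precisely because the right-hand side depends only on the positive braid pair $(b,d)$, so the argument is exactly the composition you wrote down. One small correction: the canonicity of the braid-move identifications is the content of Theorem~\ref{bott-samelson uniqueness} (which shows $\tau_{\vec{i}}^{\vec{j}}$ is independent of the sequence of braid moves), not the reflection maps of~\eqref{refl.maps}, which move letters between $b$ and $d$ rather than performing braid moves within them.
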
 

\begin{rmk}
We conjecture that the same statement holds for other conjugacy classes $C\neq \{e\}$.
In \cite{LM}, Lu and Mouquin defined a Poisson structure on $\G_{\{e\}}^{\vec{u},\vec{v}}$ by pushing forward the Poisson structure on products of flag varieties. In the adjoint form cases $\G=\G_\ad$, the space $\conf^b_d\left(\mathcal{A}_\ad\right)$ carries a natural Poisson structure from its cluster Poisson structure. We believe these two Poisson structures coincide, but a detailed checking is needed before we draw any definite conclusion.

In a recent work, Mouquin \cite{Mou} proved that the generalized double Bruhat cell $\G^{\vec{u},\vec{u}}_{\{e\}}$ is a Poisson groupoid over the generalized Bruhat cell $\G^{\vec{e},\vec{u}}_{\{e\}}$. We believe that the Poisson groupoid structure coincides with Fock-Goncharov's \emph{symplectic double} for cluster varieties \cite{FGrep}. 
We further observe that the inverse map of this Poisson groupoid resembles the Donaldson-Thomas transformation on $\conf^e_b\left(\mathcal{A}_\ad\right)$, and we would like to see a further investigation in these directions.
\end{rmk}

In general the decorated double Bott-Samelson cells do not cover the cases when $C \neq \{e\}$. Therefore we post the following question.

\begin{prob} Is there a way to generalize the decorated double Bott-Samelson construction to include all generalized double Bruhat cells? If yes, how do the Poisson structures arisen from the two approaches compare to each other?
\end{prob}

We expect that such a generalization (if it exists) is related to the \emph{braid cell} defined below.

\vspace{11pt}

\noindent\textbf{Braid Cell.} Let $\G$ be a split semisimple algebraic group. The general position condition between Borel subgroups $\xymatrix{\B \ar@{-}[r] & \B'}$ can be rewritten as $\xymatrix{\B \ar[r]^{w_0} & \B'}$ (see Notation \ref{tits distance notation}). In this case, a double Bott-Samelson cell can be defined as a configuration space of Borel subgroups satisfying the following relative position relation 
\[
\xymatrix{\B_0 \ar@{-->}[r]^b  \ar[d]_{w_0} & \B_2 \ar[d]^{w_0} \\ \B_1 \ar@{-->}[r]_d & \B_3}
\]
where the top and bottom dashed arrows represent a chain of flags with relative position conditions imposed by the positive braids $b$ and $d$ respectively.
When the words of $b$ and $d$ are reduced, then the double Bott-Samelson cell $\conf^u_v(\mathcal{A})$ is naturally isomorphic to the double Bruhat cells $\G^{u,v}$.

In \cite{WY}, Webster and Yakimov introduced a variety $\mathcal{P}^u_{v,w}$ associated to a triple of Weyl group elements $(u,v,w)$, which can be defined as the configuration space of  Borel subgroups satisfying the following relative position relation
\[
\xymatrix{\B_0 \ar[d]_{w_0} \ar[r]^v & \B_2\ar[d]^{uw_0} \\
\B_1 \ar[r]_{w^*} & \B_3}
\]
where $w^*:=w_0ww_0^{-1}$ in the Weyl group. Since $\xymatrix{\B_1\ar[r]^{w^*}& \B_3}$ is equivalent to $\xymatrix{\B_1 & \B_3\ar[l]_{w^{*-1}}}$, the above relative position relation diagram is equivalent to the following one
\[
\xymatrix{\B_0 \ar[d]_{w_0} \ar[r]^v & \B_2\ar[d]^{uw_0} \\
\B_1  & \B_3 \ar[l]^{w^{*-1}}}
\]
The chain $\xymatrix{\B_0 \ar[r]^v & \B_2 \ar[r]^{uw_0} & \B_3 \ar[r]^{w^{*-1}} & \B_1}$ can be treated as a chain of Borel subgroups with relative position condition imposed by a braid $b$, where $b$ is the concatenation of any triple of reduced words of $v$, $uw_0$, and $w^{*-1}$. The above relative position relation diagram reduces to the following one
\begin{equation}
\label{acsafvdf}
\xymatrix{\B_0 \ar[d]_{w_0} \ar@{-->}@/^10ex/[d]^b \\ \B_1}
\end{equation}

Let us take one step further by allowing $b$ to be  any positive braid. The moduli space parametrizing the configurations \eqref{acsafvdf} is called a \emph{braid cell} $\conf_b(\mathcal{B})$. By putting decorations on $\B_0$ and $\B_1$ we can define its decorated version $\conf_b(\mathcal{A})$. The cells $\conf_b(\mathcal{A})$ generalize the \emph{open Richardson varieties} \cite{LamS}. Following the proof of Theorem \ref{mainlcsafv}, one can show that $\conf_b(\mathcal{A})$ is an affine variety. We make the following conjecture.

\begin{conj} The coordinate ring of $\conf_b(\mathcal{A}_{\rm sc})$ is an upper cluster algebra.
\end{conj}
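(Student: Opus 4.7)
My plan is to reduce the conjecture to Theorem~\ref{mainlcsafv} by realizing the braid cell as an amalgamation of a double Bott-Samelson cell and transporting the upper cluster algebra structure along this amalgamation. Concretely, I would exhibit $\conf_b(\mathcal{A}_{\sc})$ as a quotient (or fiber) of $\conf^b_{\vec{i}_0}(\mathcal{A}_{\sc})$ for some reduced word $\vec{i}_0$ of the longest element $w_0$. Geometrically, the braid cell configuration~\eqref{acsafvdf} should be obtained from the rectangular configuration defining $\conf^b_{\vec{i}_0}(\mathcal{A}_{\sc})$ by ``closing up'' along the two vertical $w_0$-sides, identifying $\B_2$ with $\B_1$ and $\B_3$ with $\B_0$. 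On the open locus where the Demazure product of the chain realizing $b$ equals $w_0$ --- which I expect is the entire braid cell --- this identification should determine an isomorphism between $\conf_b(\mathcal{A}_{\sc})$ and a slice or torus quotient of $\conf^b_{\vec{i}_0}(\mathcal{A}_{\sc})$.

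The second step is to translate the identification into cluster language. Theorem~\ref{mainlcsafv} provides an upper cluster algebra structure on $\conf^b_{\vec{i}_0}(\mathcal{A}_{\sc})$ with frozen vertices along the four sides of the rectangle. The closing-up procedure merges the frozen vertices on the two vertical $w_0$-sides, which is precisely a Fock-Goncharov amalgamation of cluster seeds. Since amalgamation preserves upper cluster algebras, amalgamating the seed of $\conf^b_{\vec{i}_0}(\mathcal{A}_{\sc})$ yields a candidate upper cluster algebra $\mathcal{U}_b$ for $\conf_b(\mathcal{A}_{\sc})$, in which some previously frozen variables become mutable.

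The third step is to prove the equality $\mathcal{O}(\conf_b(\mathcal{A}_{\sc})) = \mathcal{U}_b$. Affineness of the braid cell can be established by an analog of Theorem~\ref{affineness of conf}, using a Bott-Samelson style resolution. The inclusion $\mathcal{O} \subseteq \mathcal{U}_b$ reduces to the Laurent phenomenon for the generalized minors appearing in an initial cluster, which should descend by restriction from Theorem~\ref{3.42}. The reverse inclusion $\mathcal{U}_b \subseteq \mathcal{O}$ then follows from the standard codimension-two argument once smoothness of $\conf_b(\mathcal{A}_{\sc})$ is in hand.

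The hardest step will be the first: precisely matching the braid cell with the amalgamation of $\conf^b_{\vec{i}_0}(\mathcal{A}_{\sc})$ and controlling the action of the Cartan torus on the decorations. One must in particular verify that the amalgamation is compatible with the $\mathcal{A}_{\sc}$ decoration (as opposed to only the $\mathcal{A}_{\ad}$ decoration), a subtlety paralleling the distinction between simply-connected and adjoint forms encountered throughout the paper. A secondary concern is invariance under braid moves: different choices of word for $b$ should produce the same cluster structure on the nose, which would follow formally once the amalgamation picture is set up correctly.
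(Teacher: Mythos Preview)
The statement you are attempting to prove is presented in the paper as an open \emph{conjecture}; it appears in the ``Further Questions'' portion of the introduction, and the paper offers no proof. The only remark is that affineness of $\conf_b(\mathcal{A})$ can be shown by the method of Theorem~\ref{affineness of conf}. There is therefore no argument in the paper against which to compare your proposal.

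On the merits of the proposal itself: your first step is not unreasonable, since for a reduced word of $w_0$ the chain of flags between two opposite Borels is unique (Lemma~\ref{unique}), so after imposing the diagonal identifications $\B_2=\B_1$, $\B_3=\B_0$ the bottom $w_0$-chain carries no additional moduli. The real gaps lie in steps two and three. The identification you describe is not a standard Fock--Goncharov amalgamation: it matches the top-right corner with the bottom-left and the bottom-right with the top-left, a M\"obius-type gluing rather than an identification of two parallel frozen boundaries, and on the decorated level it forces a decoration in $\mathcal{A}_+$ to agree with one in $\mathcal{A}_-$ over the same underlying flag. Even granting that some seed can be produced this way, the assertion ``amalgamation preserves upper cluster algebras'' is not a theorem one can invoke; it must be proved in each instance. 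Here that would require establishing, for the newly unfrozen vertices, analogues of Lemma~\ref{irreducible}, Lemma~\ref{oncemutation}, and Proposition~\ref{codimension 2} from scratch. Those arguments in the paper depend on the specific left-to-right ordering of triangles in the rectangular picture and on the auxiliary flag $\A^{-1}$, neither of which survives the closing-up. The special case where $b$ is a reduced word already yields open Richardson varieties, whose cluster structures were a well-known open problem at the time --- which is exactly why the authors record this as a conjecture rather than deduce it from Theorem~\ref{mainlcsafv}.
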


\vspace{11pt}

\noindent\textbf{Legendrian Link Invariants.} As stated earlier, the double Bott-Samelson cell $\conf^b_d(\mathcal{B})$ associated to Dynkin type $\mathrm{A}_r$ is a Legendrian link invariant for positive braids closures. Furthermore, Legendrian isotopies between the positive braid closures $\Lambda_d^b$ and $\Lambda_{d'}^{b'}$ give rise to isomorphisms between $\conf^b_d(\mathcal{B})$ and $\conf^{b'}_{d'}(\mathcal{B})$. We propose the following conjecture.

\begin{conj} \label{vqnbrwb} The isomorphisms $\conf^b_d(\mathcal{B})\overset{\cong}{\longrightarrow}\conf^{b'}_{d'}(\mathcal{B})$ associated to Legendrian isotopies between $\Lambda^b_d$ and $\Lambda^{b'}_{d'}$ are cluster Poisson transformations.
\end{conj}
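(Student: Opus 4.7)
The plan is to decompose any Legendrian isotopy between two positive braid closures into a sequence of elementary local moves on their front projections, verify that each such move induces either the identity or a reflection map on the associated double Bott-Samelson cell, and then invoke Corollary \ref{4.13} together with the fact that compositions of cluster Poisson transformations are again cluster Poisson transformations.

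First I would identify a generating set of local moves among front projections of positive braid closures. These come in two classes: (a) \emph{braid moves} on the top word $\vec{i}$ or bottom word $\vec{j}$, namely far-commutation $s_is_j=s_js_i$ for $|i-j|\geq 2$ and the braid relation $s_is_{i+1}s_i=s_{i+1}s_is_{i+1}$, both of which preserve the underlying pair of positive braids $(b,d)$; and (b) \emph{cusp moves} that slide a crossing at one end of the diagram through a cusp between the top and bottom strands, which are precisely the moves realized by the four reflection maps in \eqref{refl.maps}. The picture following Theorem \ref{vfqve1} already illustrates the composite ${}^1r\circ r^2$ as such a move.

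For moves of type (a), the canonical identification in Theorem \ref{vfqve1} and the discussion that follows it show that the induced automorphism of $\conf^b_d(\mathcal{B})$ is the identity, which is trivially a cluster Poisson transformation. For moves of type (b), Corollary \ref{4.13} asserts that the corresponding reflection maps are quasi-cluster transformations on the decorated cell $\conf^b_d(\mathcal{A}_\sc)$. Since $\conf^b_d(\mathcal{B})$ is obtained from $\conf^b_d(\mathcal{A})$ by quotienting by the frozen torus $\T\times\T$, and since the frozen-variable monomial factors that distinguish quasi-cluster from cluster transformations are absorbed by this quotient, each reflection map descends to an honest cluster Poisson transformation on the $\mathcal{B}$-side.

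The main obstacle is showing that every Legendrian isotopy between $\Lambda^b_d$ and $\Lambda^{b'}_{d'}$ can be realized by a sequence staying within positive braid closure front projections, so that only moves of type (a) and (b) appear. A generic Legendrian Reidemeister sequence can introduce transient cusps or negative crossings, and either ruling these out by a normal-form argument for Legendrian isotopies among positive braid closures, or enlarging the cluster-theoretic framework to a broader class of Legendrian fronts (for example via the braid cell $\conf_b(\mathcal{B})$ discussed in the previous subsection, or via Casals-Zaslow style Legendrian weaves), is the principal difficulty, and the reason the statement appears as a conjecture. A secondary check is that two decompositions of the same isotopy class yield the same cluster Poisson transformation, which should follow from the canonicity of the Shende-Treumann-Zaslow–Guillermou-Kashiwara-Schapira correspondence underlying Theorem \ref{vfqve1}.
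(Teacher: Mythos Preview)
This statement is labeled as a \emph{conjecture} in the paper; the authors do not prove it. What they offer instead is a proposed strategy in the paragraph immediately following the conjecture: equip $\mathcal{M}_1(\Lambda)$ with a cluster Poisson structure for \emph{every} Legendrian link $\Lambda$, and then verify that each of the three Legendrian Reidemeister moves preserves this structure. They note that the third Reidemeister move (braid moves) is already handled, while defining the cluster structure for general $\Lambda$ and checking the remaining two Reidemeister moves ``still seem to be quite difficult tasks.''

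Your strategy is genuinely different. Rather than enlarging the class of Legendrian fronts, you try to stay inside the world of positive braid closures and reduce every Legendrian isotopy to braid moves and reflection maps, both of which you correctly identify as cluster Poisson on the $\mathcal{B}$-level (the paper establishes this in and around Proposition~\ref{procedure} and the discussion preceding Theorem~\ref{4.9}). The advantage of your route is that it avoids constructing cluster structures on arbitrary $\mathcal{M}_1(\Lambda)$; the cost is that it requires a Legendrian Markov-type theorem guaranteeing that any isotopy between two positive braid closures can be factored through positive braid closures using only these moves. You correctly flag this as the main obstacle, and it is a real one: generic Legendrian isotopies pass through Reidemeister~I and~II moves, which create cusps and take you outside the positive braid closure class. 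The paper's strategy sidesteps this by absorbing those moves into a larger framework, at the price of needing new cluster constructions.

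One additional point you should be aware of: even granting a decomposition into braid moves and reflections, you would still need to check that the reflection map on $\conf^b_d(\mathcal{B})$ agrees with the isomorphism on $\mathcal{M}_1$ induced via Guillermou--Kashiwara--Schapira by the corresponding Legendrian isotopy. The paper only asserts that reflections ``correspond to'' such isotopies pictorially; the compatibility of the two isomorphisms under Theorem~\ref{caosdbvco} is not established and is part of what the conjecture is asking.
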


One strategy to prove this conjecture is to equip $\mathcal{M}_1(\Lambda)$ with a cluster Poisson structure for any Legendrian link $\Lambda$, and then show that the Legendrian versions of the Reidemeister moves induce isomorphisms that preserve the cluster Poisson structures. Note that this is already true for the third Legendrian Reidemeister moves, i.e., the braid moves; but defining a cluster Poisson structure on $\mathcal{M}_1(\Lambda)$ and showing the cluster-ness of the remaining two Legendrian Reidemeister moves still seem to be quite difficult tasks.

Theorem \ref{vfqve1} naturally induces a cluster Poisson structure on $\mathcal{M}_1\left(\Lambda^b_d\right)$. In \cite{STWZ}, Shende, Treumann, Williams, and Zaslow studied cluster Poisson structures on $\mathcal{M}_1(\Lambda)$ for Lengendrian links $\Lambda\subset T^\infty \mathbb{R}^2$ that come from conormal lifts of immersed curves in $\mathbb{R}^2$. Although their ambient contact manifold $T^\infty \mathbb{R}^2$ is different from ours (which is the standard contact $\mathbb{R}^3$), it is still worthwhile to compare these two set-ups and the resulting cluster Poisson structures. Therefore we post the following question.

\begin{prob} How much does $\mathcal{M}_1(\Lambda)$ depend on the ambient contact manifold of $\Lambda$? Do the cluster Poisson structures obtained from double Bott-Samelson cells coincide with those in \cite{STWZ} for Legendrian links that can be embedded in both ways?
\end{prob}

Shende, Treumann, and Zaslow \cite{STZ} introduced a category ${\bf Sh}_{\Lambda}^\bullet(\mathbb{R}^2)$ of constructible  sheaves with singular support controlled by $\Lambda$, which can be viewed as a ``categorification'' of $\conf^b_d(\mathcal{B})\cong \mathcal{M}_1\left(\Lambda^b_d\right)$. 
Conjecture \ref{vqnbrwb} implies that the cluster $\mathrm{K}_2$ counterpart of $\conf^b_d(\mathcal{B})$, namely $\conf^b_d\left(\mathcal{A}_\sc^\fr\right)$, is a Legendrian link invariant as well. We further ask 

\begin{prob} Is there a categorification of $\conf^b_d\left(\mathcal{A}_\sc^\fr\right)$ associated to $\Lambda_d^b$?
\end{prob}

As observed from examples, we conjecture that the number of components in $\Lambda^b_d$ is equal to  $1-\ord_{q=1}g^b_d(q)$. In particular, $g^b_d(q)$ is  a polynomial when $\Lambda^b_d$ is a knot.

\section{Double Bott-Samelson Cells}

\subsection{Flags, Decorated Flags, Relative Position, and Compatibility} \label{flags}

In this Section, we fix notations and investigate several elementary properties of flag varieties.

Let $\C$ be an $r\times r$ symmetrizable generalized Cartan matrix of corank $l$. Let $\tilde{r}:=r+l$.

Let $(\G, \B_+, \B_-, \N, \S)$ be a \emph{twin Tits system}\footnote{We include a summary of twin Tits system in the appendix; see \cite{Ab} and \cite{Kum} for more details. } associated to $\C$. Here $\G$ is a Kac-Peterson group, $\B_+$ and $\B_-$ are opposite Borel subgroups of $\G$, $\N$ is the normalizer of $\T:=\B_+\cap \B_-$ in $\G$, and $\S$ is a set of Coxeter generators for the Weyl group $\W:=\N/\T$. 

Let $\varepsilon\in \{+,-\}$. We define two \emph{flag varieties} 
\[
\mathcal{B}_\varepsilon:=\left\{\text{Borel subgroups of $\G$ that are conjugate to $\B_\varepsilon$}\right\}.
\]
The group $\G$ acts transitively on $\mathcal{B}_\varepsilon$ by conjugation, with $\B_\varepsilon$ self-stabilizing. Therefore, we obtain natural isomorphisms
\[
\mathcal{B}_\varepsilon \cong \G/\B_\varepsilon \cong \B_\varepsilon\backslash \G.
\]
which identify Borel subgroups conjugate to $\B_\varepsilon$ with left and right cosets of $\B_\varepsilon$. When switching left and right cosets, we get $x\B_\varepsilon=\B_\varepsilon x^{-1}.$ Abusing notation, we shall use the terms ``Borel subgroups'' and ``flags'' interchangeably throughout this paper.

Let $\U_\varepsilon=\left[\B_\varepsilon, \B_\varepsilon\right]$ be the maximal unipotent subgroups inside $\B_\varepsilon$. Define \emph{decorated flag varieties}
\[
\mathcal{A}_+:=\G/\U_+ \quad \text{and} \quad \mathcal{A}_-:=\U_-\backslash \G.
\]
The inclusions $\U_\varepsilon\hookrightarrow \B_\varepsilon$ give rise to natural projections
\[
\mathcal{A}_+=\G/\U_+\rightarrow \G/\B_+ \cong \mathcal{B}_+ \quad \text{and} \quad \mathcal{A}_-=\U_-\backslash \G \rightarrow  \B_-\backslash \G \cong \mathcal{B}_-.
\]
We say $\A\in \mathcal{A}_\varepsilon$ is a {\it decorated flag} over $\B\in \mathcal{B}_\varepsilon$ if $\B$ is the image of $\A$ under the above projections.

All $\G$-actions in this paper are left actions unless otherwise specified. For example, $g\in \G$ acts on $\mathcal{A}_-$ by $g.\left(\U_-x\right):= \U_-xg^{-1}$.

The \emph{transposition} is an anti-involution of $\G$ that swaps $\B_+$ and $\B_-$. It induces biregular isomorphisms between (decorated) flag varieties:
\[
\mathcal{B}_+\overset{^t}{\longleftrightarrow} \mathcal{B}_- \quad \text{and} \quad \mathcal{A}_+\overset{^t}{\longleftrightarrow} \mathcal{A}_-.
\]
The images of $\B$ and $\A$ under transposition are denoted by $\B^t$ and $\A^t$ respectively.

\begin{notn} We use superscripts for elements in  $\mathcal{B}_+$, subscripts for elements in  $\mathcal{B}_-$, and parenthesis notations for elements in either flag variety, e.g., 
\begin{enumerate}
   \item Elements of $\mathcal{B}_+$: $\B^0, \B^1, \B^2, \dots$
   \item Elements of $\mathcal{B}_-$: $\B_0, \B_1, \B_2, \dots$
   \item Elements that are in either $\mathcal{B}_+$ or $\mathcal{B}_-$: $\B, \B(0), \B(1), \B(2),\dots$
\end{enumerate}
The same rule applies to decorated flags. 
\end{notn}

In this paper we focus on a pair of Kac-Peterson groups $\G_\sc$ and $\G_\ad$. For semisimple cases, $\G_\sc$ and $\G_\ad$ are the simply connected and adjoint semisimple algebraic groups respectively. In general, when the Cartan matrix $\C$ is not invertible, the construction of $\G_\sc$ and $\G_\ad$ depends on the choices of a lattice $\P\subset \mathfrak{h}^*$ and a basis $\left\{\omega_i\right\}_{i=1}^{\tilde{r}}$ of $\P$. See Appendix A for details.

The center of $\G_\sc$ contains a finite subgroup $\Z$ such that $\G_\ad\cong \G_\sc/\Z$. Note that $\Z\subset \B_\varepsilon$. Therefore the flag varieties $\mathcal{B}_\varepsilon$ associated to either group are isomorphic. For the decorated flag varieties, the covering map $\G_\sc\rightarrow \G_\ad$ induces a $|\Z|$-to-1 covering map $\pi:\mathcal{A}_{\sc,\varepsilon}\rightarrow \mathcal{A}_{\ad,\varepsilon}$ respectively. 

Let $\G$ be either $\G_\sc$ or $\G_\ad$. The group $\G$ admits Bruhat decompositions
\[
\G=\bigsqcup_{w\in \W} \B_+ w\B_+=\bigsqcup_{w\in \W} \B_- w\B_-
\]
and a Birkhoff decomposition
\[
\G=\bigsqcup_{w\in \W} \B_-w\B_+.
\]
Every $x\in \B_-\B_+=\U_-\T\U_+$ admits a unique decomposition (a.k.a. \emph{the Gaussian decomposition})
\[
x=[x]_-[x]_0[x]_+
\]
with $[x]_\varepsilon\in \U_\varepsilon$ and $[x]_0\in \T$. Such an element $x$ is called \emph{Gaussian decomposable}.

The above decompositions induce two $\W$-valued ``distance'' functions and a $\W$-valued ``codistance'' function, which are invariant under $\G$-diagonal actions.

\begin{defn} A pair of flags $\left(x\B_\varepsilon, y\B_\varepsilon\right)$ is of \emph{Tits distance} $d_\varepsilon\left(x\B_\varepsilon, y\B_\varepsilon\right)=w$ if $x^{-1}y\in \B_\varepsilon w\B_\varepsilon$. 

A pair $\left(x\B_-,y\B_+\right)$ is of \emph{Tits codistance} $d\left(x\B_-,y\B_+\right)=w$ if $x^{-1}y\in \B_-w\B_+$. 

A pair  $(\B_0, \B^0)$ is said to be in \emph{general positio}n (or \emph{opposite} to each other) if $d\left(\B_0,\B^0\right)=e$. 
\end{defn}

\begin{notn}\label{tits distance notation} We shall use the following notations to encode the Tits (co)distances between flags:
\begin{enumerate}
    \item $\xymatrix{\B^0\ar[r]^w& \B^1}$ means $d_+\left(\B^0,\B^1\right)=w$.
    \item $\xymatrix{\B_0 \ar[r]^w & \B_1}$ means $d_-\left(\B_0,\B_1\right)=w$.
    \item $\xymatrix{\B_0 \ar@{-}[r]^w & \B^0}$ means $d\left(\B_0,\B^0\right)=w$.
\end{enumerate}
We often omit $w$ in the diagrams if $w=e$. Similar diagrams with decorated flags placed at one or both of the ends imply that the pair of underlying flags are of the indicated Tits (co)distances.
\end{notn}

\begin{lem}\label{undecorated transposition} \begin{enumerate} \item $\xymatrix{\B \ar[r]^w & \B'}$ if and only if $\xymatrix{\B'^t \ar[r]^{w^{-1}} & \B^t}$.
\item $\xymatrix{\B \ar@{-}[r]^w & \B'}$ if and only if $\xymatrix{\B'^t \ar@{-}[r]^{w^{-1}} & \B^t}$.
\end{enumerate}
\end{lem}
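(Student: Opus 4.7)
The proof is essentially a direct unwinding of the definitions, using that transposition is an anti-involution that swaps $\B_+$ and $\B_-$. I will handle both parts uniformly after setting up a small lemma about cosets and double cosets.

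First I would establish the following basic facts about transposition. Write an arbitrary $\B\in\mathcal{B}_\varepsilon$ as a left coset $\B = x\B_\varepsilon$ with $x\in\G$. Since transposition is an anti-involution satisfying $(xy)^t = y^t x^t$ and $\B_\varepsilon^t = \B_{-\varepsilon}$, a short calculation on the underlying Borel subgroup $x\B_\varepsilon x^{-1}$ shows
\[
\B^t \;=\; (x^t)^{-1}\B_{-\varepsilon}
\]
as a left coset in $\mathcal{B}_{-\varepsilon}$. Next, I would verify that the induced anti-automorphism of $\W = \N/\T$ is inversion: transposition preserves $\T$ and sends each Coxeter generator $s_i$ (realized inside the appropriate rank-one $\SL_2$) to itself, so by the anti-homomorphism property it sends a product $s_{i_1}\cdots s_{i_k}$ to $s_{i_k}\cdots s_{i_1} = (s_{i_1}\cdots s_{i_k})^{-1}$. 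Consequently, for any lift $\dot w\in\N$ of $w\in\W$, the element $\dot w^t$ is a lift of $w^{-1}$, and hence
\[
(\B_\varepsilon\, w\, \B_\varepsilon)^t \;=\; \B_{-\varepsilon}\, w^{-1}\, \B_{-\varepsilon}
\quad\text{and}\quad
(\B_-\, w\, \B_+)^t \;=\; \B_-\, w^{-1}\, \B_+.
\]

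With these preparations both parts of the lemma become one-line computations. For part (1), write $\B = x\B_\varepsilon$ and $\B' = y\B_\varepsilon$, so $d_\varepsilon(\B,\B') = w$ iff $x^{-1}y \in \B_\varepsilon w\B_\varepsilon$. Applying transposition and using $(x^{-1}y)^t = y^t(x^t)^{-1}$, this is equivalent to
\[
\bigl((y^t)^{-1}\bigr)^{-1}\cdot (x^t)^{-1} \;=\; y^t(x^t)^{-1} \;\in\; \B_{-\varepsilon}\, w^{-1}\, \B_{-\varepsilon},
\]
which by the coset description of $\B^t$ and $\B'^t$ is exactly $d_{-\varepsilon}(\B'^t,\B^t) = w^{-1}$. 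For part (2), by the notational convention we may assume $\B\in\mathcal{B}_-$ and $\B'\in\mathcal{B}_+$, writing $\B = x\B_-$ and $\B' = y\B_+$; then $d(\B,\B')=w$ iff $x^{-1}y\in \B_- w\B_+$, and the identical manipulation, combined with $(\B_- w\B_+)^t = \B_- w^{-1}\B_+$, gives $d(\B'^t,\B^t)=w^{-1}$. The converse direction in each case follows either by reading the equivalences backwards or by applying the forward direction to $(\B^t,\B'^t)$ and using that $^t$ is an involution.

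There is no serious obstacle here; the only point that requires a moment of care is confirming that $\dot w^t$ is always a lift of $w^{-1}$ (so that the formula $(\B_\varepsilon w\B_\varepsilon)^t = \B_{-\varepsilon}w^{-1}\B_{-\varepsilon}$ is meaningful independent of the choice of representative), and that the identification between left cosets $x\B_\varepsilon$ and right cosets $\B_\varepsilon x^{-1}$ is used consistently on both sides of the transposition. Both of these reduce to the defining properties of a twin Tits system recalled in the appendix.
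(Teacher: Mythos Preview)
Your proof is correct and is precisely the direct unwinding of definitions that the paper has in mind; the paper's own proof consists of the single line ``Obvious from definition.'' You have simply made explicit the two ingredients (that $(x\B_\varepsilon)^t = (x^t)^{-1}\B_{-\varepsilon}$ and that transposition induces inversion on $\W$) which justify that one-liner.
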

\begin{proof} Obvious from definition.
\end{proof}

The following Lemma will be used many times. Its proof has been included in the appendix.

\begin{lem}\label{unique} Let $u, v, w$ be Weyl group elements such that $uv=w$ and $l(u)+l(v)=l(w)$. In each of the following triangles, the black relative position holds if and only if the blue relative position holds. Furthermore, each blue flag is uniquely determined by the pair of black flags.
\[
\begin{tikzpicture}[baseline=0ex]
\node (b) at (0,1) [] {$\B$};
\node [blue] (b') at (1,0) [] {$\B'$};
\node (b'') at (2,1) [] {$\B''$};
\draw [->] (b) -- node [above] {$w$} (b'');
\draw [->,blue] (b) -- node [below left] {$u$} (b');
\draw [->,blue] (b') -- node [below right] {$v$} (b'');
\end{tikzpicture} 
\quad
\begin{tikzpicture}[baseline=0ex]
\node (b_0) at (0,1) [] {$\B^0$};
\node (b^0) at (0,0) [] {$\B_0$};
\node [blue] (b^1) at (2,0) [] {$\B_1$};
\draw [->, blue] (b^0) -- node [below] {$u^{-1}$} (b^1);
\draw [blue] (b^1) -- node [above right] {$w$} (b_0);
\draw (b_0) -- node [left] {$v$} (b^0);
\end{tikzpicture}
\begin{tikzpicture}[baseline=0ex]
\node (b_0) at (0,1) [] {$\B^0$};
\node (b^0) at (0,0) [] {$\B_0$};
\node [blue] (b_-1) at (-2,1) [] {$\B^{-1}$};
\draw [->, blue] (b_-1) -- node [above] {$v^{-1}$} (b_0);
\draw [blue] (b_-1) -- node [below left] {$w$} (b^0);
\draw (b_0) -- node [right] {$u$} (b^0);
\end{tikzpicture}
\]
\[
\begin{tikzpicture}[baseline=0ex]
\node (b_0) at (0,1) [] {$\B^0$};
\node (b^0) at (0,0) [] {$\B_0$};
\node [blue] (b_1) at (2,1) [] {$\B^1$};
\draw [->, blue] (b_0) -- node [above] {$v$} (b_1);
\draw [blue] (b_1) -- node [below right] {$w$} (b^0);
\draw (b_0) -- node [left] {$u$} (b^0);
\end{tikzpicture}
\begin{tikzpicture}[baseline=0ex]
\node (b_0) at (0,1) [] {$\B^0$};
\node (b^0) at (0,0) [] {$\B_0$};
\node [blue] (b^-1) at (-2,0) [] {$\B_{-1}$};
\draw [->, blue] (b^-1) -- node [below] {$u$} (b^0);
\draw [blue] (b^-1) -- node [above left] {$w$} (b_0);
\draw (b_0) -- node [right] {$v$} (b^0);
\end{tikzpicture}
\]
\end{lem}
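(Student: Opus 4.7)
The plan is to reduce each of the five triangles to an inductive argument along a reduced expression, where the base step is a rank-one calculation. The strategy has three pieces: $\G$-equivariance of Tits distances and codistances (to fix one of the black flags at a basepoint); the standard Bruhat product formula $\B_\varepsilon u\B_\varepsilon\cdot\B_\varepsilon v\B_\varepsilon = \B_\varepsilon uv\B_\varepsilon$ (bijective multiplication modulo the middle Borel, when $l(uv)=l(u)+l(v)$); and a rank-one ``movement lemma'' for codistance under a simple reflection.

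For Triangle 1, I would fix $\B=\B_+$, so $\B''=g\B_+$ with $g\in\B_+ w\B_+$; a candidate $\B'=h\B_+$ works iff $g = h\cdot(h^{-1}g)$ with $h\in\B_+ u\B_+$ and $h^{-1}g\in\B_+ v\B_+$, and the Bruhat product formula gives existence and uniqueness of $\B'$ modulo the middle $\B_+$.

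For Triangle 2, I would fix $\B_0=\B_-$ and write $\B^0=g\B_+$ with $g\in\B_- v\B_+$. I would induct on $l(u)$, peeling off $u=su'$ with $l(u)=l(u')+1$ (so also $l(u'v)=l(u')+l(v)$). By induction, there is a unique $\B_1'$ with $d_-(\B_0,\B_1')=(u')^{-1}$ and codistance $(\B_1',\B^0)=u'v$. The rank-one movement lemma then says: within the minimal parabolic $\P_s$ generated by $\B_+$ and a lift of $s$, the lower flags at distance $s$ from $\B_1'$ form a projective line on which the codistance function to $\B^0$ takes the value $s\cdot(u'v)=uv$ at a unique point (because $l(s\cdot u'v)=l(u'v)+1$ by hypothesis). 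Composing the distances $\B_0\to\B_1'\to\B_1$ via Triangle 1 gives $d_-(\B_0,\B_1)=(u')^{-1}s=u^{-1}$, and uniqueness propagates backward through the induction.

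Triangles 3, 4, and 5 reduce to Triangle 2 via the transposition anti-involution of Lemma \ref{undecorated transposition}, which swaps $\mathcal{B}_+\leftrightarrow\mathcal{B}_-$ and inverts Weyl labels on both Tits distances and codistances; a direct check shows this pairs Triangle 2 with Triangle 4 and Triangle 3 with Triangle 5 (up to the relabeling $(u,v,w)\mapsto (v^{-1},u^{-1},w^{-1})$). The remaining independent case (say Triangle 3) is handled by the symmetric induction, this time peeling a simple reflection from the right of $v$. The main obstacle is setting up the rank-one movement lemma in the Kac--Peterson generality: in the finite-type case it reduces to an elementary $\SL_2$ computation on $\mathbb{P}^1$, but in general one must work inside the rank-one minimal parabolic using the twin BN-pair axioms from the appendix.
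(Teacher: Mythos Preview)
Your proposal is correct and broadly parallel to the paper's argument: both handle Triangle~1 via the Bruhat product $\B_\varepsilon u\B_\varepsilon\cdot\B_\varepsilon v\B_\varepsilon=\B_\varepsilon w\B_\varepsilon$ and both reduce the codistance triangles to an induction on length governed by the twin BN-pair axioms. The differences are organizational rather than substantive.

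The paper proves Triangle~1 (with uniqueness in the $v=s_i$ case via a short contradiction, then induction on $l(v)$) and then proves Triangle~3 directly, using the twin Tits axiom $\B_-w\B_+s_i\B_+=\B_-ws_i\B_+$ for $l(ws_i)<l(w)$ recursively to obtain the one-line identity $\B_-u\B_+=\B_-w\B_+v^{-1}\B_+$; existence is immediate, and uniqueness is again by the inductive contradiction argument from Triangle~1. The remaining triangles are declared analogous. Your route instead treats Triangle~2 first via a geometric ``rank-one movement lemma'' on the $\mathbb{P}^1$ inside a minimal parabolic, then pairs $2\leftrightarrow 4$ and $3\leftrightarrow 5$ under transposition. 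Your movement lemma is precisely the geometric content of the (left-sided version of the) twin Tits axiom, so the arguments are equivalent; the paper's formulation is slicker because it stays at the level of double coset identities and avoids introducing the $\mathbb{P}^1$ picture. On the other hand, your explicit use of transposition to match the triangles is more systematic than the paper's ``all other cases are analogous.'' One small caution: since the paper only records the right-sided axiom $\B_-w\B_+s_i\B_+=\B_-ws_i\B_+$, your left-sided movement (peeling $u=su'$) implicitly uses the transposed axiom, so if you want to prove Triangle~2 before invoking transposition you should either derive that symmetric form first or, as the paper does, peel from the $\B_+$ side instead.
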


Every dominant weight $\lambda$ of $\G$ gives rise to  a regular function $\Delta_\lambda$ on $\G$ such that $\Delta_{\lambda}(x)=\lambda([x]_0)$ for every $x\in \U_-\T\U_+$. They induce $\G$-invariant functions
\begin{align*}
\Delta_{\lambda}:\mathcal{A}_{-}\times \mathcal{A}_{+}&\rightarrow \mathbb{A}^1\\
\left(\U_-x,~y\U_+\right) & \mapsto \Delta_{\lambda}\left(xy\right).
\end{align*}
When $\G=\G_{\sc}$, we take the fundamental weights $\omega_1, \ldots, \omega_{\tilde{r}}$, and set $\Delta_i:=\Delta_{\omega_i}$.

The following result is an easy consequence of the fact that $\Delta_{\lambda}$ is invariant under transposition.

\begin{lem}\label{2.23} $\Delta_{\lambda}\left(\A_0,\A^0\right)=\Delta_{\lambda}\left(\left(\A^0\right)^t, \left(\A_0\right)^t\right)$.
\end{lem}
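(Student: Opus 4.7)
The plan is to unravel the definition of $\Delta_\lambda$ on $\mathcal{A}_- \times \mathcal{A}_+$ and reduce the identity to the statement that $\Delta_\lambda(g) = \Delta_\lambda(g^t)$ as a function on $\G$.

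First I would choose lifts: write $\A_0 = \U_- x$ and $\A^0 = y\U_+$ for some $x, y \in \G$, so that by definition $\Delta_\lambda(\A_0, \A^0) = \Delta_\lambda(xy)$. Since transposition is an anti-involution that swaps $\U_+$ with $\U_-$, I can compute
\[
(\A^0)^t = (y\U_+)^t = \U_+^t \, y^t = \U_- y^t \in \mathcal{A}_-, \qquad (\A_0)^t = (\U_- x)^t = x^t\, \U_-^t = x^t \U_+ \in \mathcal{A}_+.
\]
Applying the definition again yields $\Delta_\lambda((\A^0)^t, (\A_0)^t) = \Delta_\lambda(y^t x^t) = \Delta_\lambda((xy)^t)$. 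So the lemma reduces to the claim, flagged by the authors in the sentence before, that $\Delta_\lambda$ is invariant under transposition on $\G$.

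For that claim I would argue directly from the Gaussian decomposition. If $g \in \U_-\T\U_+$ with $g = [g]_- [g]_0 [g]_+$, then applying the anti-involution gives $g^t = [g]_+^t\, [g]_0^t\, [g]_-^t$. Using that transposition restricts to the identity on $\T$ and swaps $\U_\pm$, this factorization is itself the Gaussian decomposition of $g^t$, so $[g^t]_0 = [g]_0$ and therefore $\Delta_\lambda(g^t) = \lambda([g^t]_0) = \lambda([g]_0) = \Delta_\lambda(g)$. Combined with the previous paragraph, the lemma follows. If $xy \notin \U_- \T \U_+$, both sides vanish (since $(xy)^t$ is also not Gaussian decomposable), so the identity still holds.

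I don't anticipate a genuine obstacle here; the only subtle point to keep in mind is that the reduction is well-defined, i.e., independent of the chosen lifts $x$ and $y$, which follows because $\Delta_\lambda$ is $\U_- \times \U_+$-invariant on $\G$ by construction, and the same holds for its transposed form $g \mapsto \Delta_\lambda(g^t)$ with the roles of $\U_-$ and $\U_+$ swapped.
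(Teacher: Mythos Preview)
Your argument is correct and follows exactly the route the paper indicates: the paper gives no proof and simply says the lemma ``is an easy consequence of the fact that $\Delta_\lambda$ is invariant under transposition.'' Your reduction to $\Delta_\lambda(xy)=\Delta_\lambda\big((xy)^t\big)$ via the anti-involution, and your verification of that identity on the Gaussian cell, is precisely what is intended.

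One small correction to the last sentence. The claim that ``if $xy\notin \U_-\T\U_+$, both sides vanish'' is not true for an arbitrary dominant $\lambda$: by the Geiss--Leclerc--Schr\"oer criterion, Gaussian decomposability is equivalent to the non-vanishing of \emph{all} $\Delta_{\omega_i}$, so for example in $\SL_3$ one can have $\Delta_{\omega_1}(g)\neq 0$ with $g\notin \U_-\T\U_+$. The repair is immediate and standard: both $g\mapsto \Delta_\lambda(g)$ and $g\mapsto \Delta_\lambda(g^t)$ are regular functions on $\G$, and they agree on the open dense cell $\U_-\T\U_+$ by your Gaussian-decomposition computation, hence they agree everywhere. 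With this adjustment the proof is complete.
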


A result of Geiss, Leclerc, and Schr\"{o}er (Theorem \ref{gls}) allows us to detect general position of decorated flags based on the $\Delta$ functions.

\begin{thm}\label{gaussian} A pair $(\A_0, \A^0)$ is in general position if and only if $\Delta_{\lambda}\left(\A_0,\A^0\right)\neq 0$ for every dominant $\lambda$.
\end{thm}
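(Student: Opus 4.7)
The plan is to translate the condition on $(\A_0, \A^0)$ into a condition on a single group element and then exploit the Birkhoff decomposition. Choosing representatives $\A_0 = \U_- x$ and $\A^0 = y\U_+$, by definition $\Delta_{\lambda}(\A_0, \A^0) = \Delta_{\lambda}(xy)$, and $(\A_0, \A^0)$ is in general position precisely when $xy$ lies in the open Birkhoff cell $\B_-\B_+ = \U_-\T\U_+$. Thus the theorem reduces to showing that an element $g \in \G$ is Gaussian decomposable if and only if $\Delta_\lambda(g) \neq 0$ for every dominant $\lambda$.

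For the forward direction, I would simply unwind definitions: if $g = [g]_-[g]_0[g]_+$, then $\Delta_\lambda(g) = \lambda([g]_0)$, which is the value of the character $\lambda$ at a torus element, hence lives in $\mathbb{G}_m$ and is in particular nonzero. Note also that on the non-open Birkhoff strata $\Delta_\lambda$ still has to be defined (as a regular function on $\G$), so the content is entirely in the converse.

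For the converse, I would argue by contrapositive and use the representation-theoretic interpretation of $\Delta_\lambda$. Assuming $xy \in \B_- w \B_+$ with $w \neq e$, I would invoke the matrix-coefficient realization $\Delta_\lambda(g) = \langle f_\lambda, g v_\lambda \rangle$ on the integrable highest weight module $V_\lambda$, where $v_\lambda$ is the highest weight vector and $f_\lambda$ is the covector picking out the $\lambda$-weight space. For $g = b_- \dot{w} b_+ \in \B_- w \B_+$, the vector $g v_\lambda$ is supported on weight spaces $(V_\lambda)_\mu$ with $\mu \leq w\lambda$; since $\lambda$ is dominant, $w\lambda \leq \lambda$ with equality iff $w$ stabilizes $\lambda$. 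Choosing $\lambda = \omega_i$ for any $i$ whose simple reflection $s_i$ appears in a reduced expression of $w$ ensures $w\omega_i < \omega_i$, so the $\lambda$-weight component of $g v_\lambda$ vanishes, giving $\Delta_{\omega_i}(\A_0, \A^0) = 0$.

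The main obstacle is technical rather than conceptual: one must justify that the matrix-coefficient formula and the weight-space estimates for $\Delta_\lambda$ carry over to the Kac-Peterson setting, where $V_\lambda$ is in general infinite-dimensional. This is exactly the content of the cited result of Geiss, Leclerc, and Schr\"oer (Theorem \ref{gls}), so the actual write-up should amount to a short reduction to a single group element followed by an appeal to their theorem, together with Lemma \ref{2.23} if one wishes to phrase the argument symmetrically under transposition.
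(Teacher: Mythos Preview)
Your proposal is correct and matches the paper's approach: the paper does not supply a standalone proof but simply attributes the result to Geiss--Leclerc--Schr\"oer (Theorem~\ref{gls}), after the implicit reduction from the pair $(\A_0,\A^0)=(\U_-x,\,y\U_+)$ to the single element $xy$ being Gaussian decomposable. Your additional sketch of the contrapositive via weight-space estimates is accurate (the support of $w$ determines which $\omega_i$ it moves), but as you note, making this rigorous in the Kac--Peterson setting is precisely what the GLS theorem handles; the reference to Lemma~\ref{2.23} is unnecessary here.
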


\begin{rmk}\label{gaussian'} It suffices to check the non-vanishing of a finite set of $\Delta_\lambda$. For example, when $\G=\G_\sc$, it suffices to check $\Delta_i\neq 0$ for all $i$.
\end{rmk}

Every $w\in \W$ admit two special lifts to $\G$ denoted as $\overline{w}$ and $\doverline{w}$.
The following are refined versions of Bruhat and Birkhoff decomposition: 
\[
\G=\bigsqcup_{w\in \W} \U_+\T\overline{w}\U_+=\bigsqcup_{w\in \W}\U_-\doverline{w}\T\U_-=\bigsqcup_{w\in \W} \U_-\T\overline{w}\U_+.
\]
The factor $t\in \T$ is uniquely determined for every $g\in \G$ in the above decompositions. 

\begin{defn} \begin{enumerate}\item A pair of decorated flags $\xymatrix{x\U_+\ar[r]^w & y\U_+}$ is \emph{compatible} if $x^{-1}y\in \U_+\overline{w}\U_+$. 
\item A pair of decorated flags $\xymatrix{\U_-x \ar[r]^w & \U_-y}$ is \emph{compatible} if $xy^{-1}\in \U_-\doverline{w}\U_-$. 
\end{enumerate}
\end{defn}

\begin{lem}\label{unique compatible} For $\xymatrix{\B \ar[r]^w & \B'}$, a decoration on $\B$ uniquely determines a compatible decoration on $\B'$ and vice versa.
\end{lem}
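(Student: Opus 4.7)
The plan is to handle the $\mathcal{B}_+$ case in detail; the $\mathcal{B}_-$ case runs identically, with $\doverline{w}$ and the refined Birkhoff-type decomposition $\G=\bigsqcup_w \U_-\doverline{w}\T\U_-$ playing the roles of $\overline{w}$ and the decomposition $\G=\bigsqcup_w \U_+\T\overline{w}\U_+$. By the $\G$-equivariance of everything in sight, I can translate an arbitrary configuration $\xymatrix{\B\ar[r]^w & \B'}$ with given decoration $\A$ over $\B$ so that $\B=\B_+$ and $\A=\U_+$. The decorations over $\B'$ then correspond to cosets $y\U_+$ with $y$ ranging over $y_0\B_+$ modulo $\U_+$, for any chosen representative $y_0\in \B_+w\B_+$; in particular the set of decorations over $\B'$ is a torsor under $\T\cong\B_+/\U_+$ acting on the right.

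The key input is the refined Bruhat decomposition $\G=\bigsqcup_{w\in\W}\U_+\T\overline{w}\U_+$ already recalled just above the lemma. Using it, I write uniquely $y_0=u_1 t\,\overline{w}\,u_2$ with $u_1,u_2\in\U_+$ and $t\in\T$. A general torsor translate has the form $y=y_0 t'$ with $t'\in\T$. Conjugating $t'$ first past $u_2$ (which preserves $\U_+$) and then past $\overline{w}$ via $\overline{w}\,t'\,\overline{w}^{-1}=w(t')$, and absorbing the resulting unipotent factors, one obtains
\[
y \in u_1\cdot tw(t')\cdot \overline{w}\cdot \U_+ \;\subset\; \U_+\cdot tw(t')\cdot \overline{w}\cdot \U_+.
\]
Therefore $y\in \U_+\overline{w}\U_+$ if and only if $tw(t')=e$, which has the unique solution $t'=w^{-1}(t^{-1})$. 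This simultaneously proves existence and uniqueness of the compatible decoration over $\B'$.

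For the converse direction, that a decoration on $\B'$ determines a compatible decoration on $\B$, I would invoke the transposition anti-involution together with Lemma \ref{undecorated transposition}: the configuration $\xymatrix{\B\ar[r]^w&\B'}$ in $\mathcal{B}_+$ transposes to $\xymatrix{\B'^t\ar[r]^{w^{-1}}&\B^t}$ in $\mathcal{B}_-$, and the compatibility condition is exchanged accordingly (using that transposition swaps the two distinguished lifts and the two refined decompositions). Applying the forward direction already established, now in the $\mathcal{B}_-$ setting with $w^{-1}$, produces the unique compatible decoration over $\B^t$, and transposing back gives the desired unique decoration on $\B$.

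The main technical nuisance I expect is the careful bookkeeping of how $\T$-conjugation interacts with the specific lift $\overline{w}$, and how the pair of lifts $\overline{w}$, $\doverline{w}$ behave under transposition, but no new structural ingredients beyond the refined Bruhat and Birkhoff decompositions quoted from the excerpt are required.
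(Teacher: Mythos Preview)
Your proposal is correct and follows the same idea as the paper: both rest on the uniqueness of the $\T$-factor in the refined Bruhat decomposition $\G=\bigsqcup_w \U_+\T\overline{w}\U_+$. The paper's proof is a single sentence invoking exactly this uniqueness; your argument is a careful unpacking of that sentence.

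One small comment: your treatment of the ``vice versa'' direction via transposition is more circuitous than necessary and flirts with a forward reference (the fact that transposition preserves compatibility is Lemma~\ref{decorated transposition}, stated after the present lemma). A cleaner route is to run the identical argument directly: normalize so that $\A'=\U_+$, write $x_0^{-1}=u_1 t\,\overline{w}\,u_2$ for a representative $x_0$ of $\B$, and observe that $(x_0 t')^{-1}\in\U_+\,(t'^{-1}t)\,\overline{w}\,\U_+$, forcing $t'=t$. This avoids any appeal to transposition and keeps the logical order clean.
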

\begin{proof} It follows from the uniqueness of the $\T$-factor in the refined version of Bruhat decompositions and Birkhoff decomposition.
\end{proof}

The following Lemma is an analogy of the first case of Lemma \ref{unique} for decorated flags.

\begin{lem}\label{unique decorated} Suppose $uv=w$ and $l(u)+l(v)=l(w)$.
\begin{enumerate} 
\item If $\xymatrix{\A \ar[r]^u & \A'}$ and $\xymatrix{\A' \ar[r]^v & \A''}$ are compatible, then so is $\xymatrix{\A \ar[r]^w & \A''}$.
\item If $\xymatrix{\A \ar[r]^w & \A''}$ is compatible, then there is a unique $\A'$ such that $\xymatrix{\A \ar[r]^u & \A'}$ and $\xymatrix{\A' \ar[r]^v & \A''}$ are compatible. 
\end{enumerate}
\end{lem}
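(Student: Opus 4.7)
The plan is to reduce both parts to the key fact that $\overline{uv} = \overline{u}\cdot\overline{v}$ and $\doverline{uv} = \doverline{u}\cdot\doverline{v}$ whenever $l(u) + l(v) = l(uv)$, a standard property of the canonical lifts in the Kac-Peterson group. The argument for decorated flags in $\mathcal{A}_+$ is outlined below; the $\mathcal{A}_-$ case is strictly analogous, using the $\doverline{\phantom{w}}$-lifts in place of the $\overline{\phantom{w}}$-lifts.

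For part (1), write $\A = x\U_+$, $\A' = y\U_+$, $\A'' = z\U_+$. The compatibility hypotheses translate into $x^{-1}y \in \U_+\overline{u}\,\U_+$ and $y^{-1}z \in \U_+\overline{v}\,\U_+$, so that
\[
x^{-1}z \;\in\; \U_+\,\overline{u}\,\U_+\,\overline{v}\,\U_+.
\]
The length-additive hypothesis yields $\overline{u}\,\overline{v} = \overline{w}$, and the standard Bruhat multiplication inclusion $\U_+\,\overline{u}\,\U_+\,\overline{v}\,\U_+ \subseteq \U_+\overline{w}\,\U_+$ then gives $x^{-1}z\in \U_+\overline{w}\U_+$, i.e.\ $\xymatrix{\A\ar[r]^w&\A''}$ is compatible.

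For part (2), Lemma \ref{unique} supplies a unique underlying flag $\B'$ fitting into $\xymatrix{\B\ar[r]^u&\B'\ar[r]^v&\B''}$. Lemma \ref{unique compatible} then produces a unique decoration $\A'$ on $\B'$ such that $\xymatrix{\A\ar[r]^u&\A'}$ is compatible, and a unique decoration $\A'''$ on $\B''$ such that $\xymatrix{\A'\ar[r]^v&\A'''}$ is compatible. Applying part (1) to the chain $\A\to\A'\to\A'''$ produces a compatible $\xymatrix{\A\ar[r]^w&\A'''}$. Since $\xymatrix{\A\ar[r]^w&\A''}$ is compatible by hypothesis and the compatible decoration over a given flag is unique (Lemma \ref{unique compatible}), we conclude $\A''' = \A''$, so $\xymatrix{\A'\ar[r]^v&\A''}$ is indeed compatible. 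Uniqueness of $\A'$ is automatic: any candidate lies over the unique $\B'$ and is pinned down from $\A$ by Lemma \ref{unique compatible}.

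The one non-trivial input is the Bruhat multiplication inclusion $\U_+\overline{u}\,\U_+\,\overline{v}\,\U_+ \subseteq \U_+\overline{w}\,\U_+$ under length-additivity, which is a standard consequence of the twin Tits system axioms together with the multiplicativity of the canonical lift, and is expected to appear in the appendix referenced in Section 2.1. Once this inclusion is granted, everything else is bookkeeping with the uniqueness statements already established in Lemmas \ref{unique} and \ref{unique compatible}.
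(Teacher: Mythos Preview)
Your proof is correct and takes essentially the same approach as the paper: the paper's proof is the one-line remark ``It follows from the fact that $\overline{w}=\overline{u}\,\overline{v}$ and $\doverline{w}=\doverline{u}\,\doverline{v}$,'' and you have simply unpacked this in detail, invoking Lemmas \ref{unique} and \ref{unique compatible} explicitly for the uniqueness in part (2).
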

\begin{proof} It follows from the fact that $\overline{w}=\overline{u}\,\overline{v}$ and $\doverline{w}=\doverline{u}\,\doverline{v}$.
\end{proof}

\begin{defn} A \emph{pinning} is a pair of decorated flags $(\U_-x, y\U_+)$ such that $xy\in \U_-\U_+$.
\end{defn}

\begin{lem}\label{decorated general position} The following conditions  are equivalent:
\begin{enumerate}
\item the pair $(\A_0, \A^0)$ is a pinning.
\item there exists a unique $z\in \G$ such that $(\A_0, \A^0)=(\U_-z^{-1}, z\U_+)$.
\item we have $\Delta_{\lambda}\left(\A_0,\A^0\right)=1$ for every dominant weight $\lambda$.
\end{enumerate}
Moreover, condition (2) implies that the action of $\G$ on the space of pinnings is free and transitive. When $\G=\G_\sc$, condition (3) can be replaced by showing that $\Delta_{\omega_i}\left(\A_0,\A^0\right)=1$ for $1\leq i \leq \tilde{r}$.
\end{lem}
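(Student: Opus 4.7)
The plan is to prove $(2)\Rightarrow(1)\Rightarrow(2)$ and $(1)\Leftrightarrow(3)$ in that order, then deduce the free and transitive action from $(2)$, and finally address the $\G_\sc$ refinement.

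For $(2)\Rightarrow(1)$, I would simply pick representatives $x=z^{-1}$ and $y=z$, so that $xy=e\in\U_-\U_+$; since this condition depends only on the cosets $\U_-x$ and $y\U_+$, it holds for any choice of $x,y$.

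For $(1)\Rightarrow(2)$, given a pinning $(\A_0,\A^0)=(\U_-x,y\U_+)$, write $xy=u_-u_+$ with $u_\pm\in\U_\pm$, and set $z:=yu_+^{-1}=x^{-1}u_-$. A direct calculation gives $z\U_+=yu_+^{-1}\U_+=y\U_+=\A^0$ and $\U_-z^{-1}=\U_-u_-^{-1}x=\U_-x=\A_0$. For uniqueness, if another $z'$ yields the same pair, then $z'z^{-1}\in\U_-$ and $z^{-1}z'\in\U_+$, so $z^{-1}z'\in\U_-\cap\U_+$. The Birkhoff decomposition forces $\U_-\cap\U_+\subset\B_-\cap\B_+\cap\U_+=\T\cap\U_+=\{e\}$, so $z'=z$.

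For $(1)\Leftrightarrow(3)$, apply Theorem \ref{gaussian}: if $(\A_0,\A^0)$ is a pinning, then $xy\in\U_-\U_+$, so $xy$ is Gaussian decomposable with $[xy]_0=e$, and thus $\Delta_\lambda(\A_0,\A^0)=\lambda([xy]_0)=1$ for every dominant weight $\lambda$. Conversely, the hypothesis $\Delta_\lambda=1\neq 0$ for all dominant $\lambda$ together with Theorem \ref{gaussian} places the pair in general position, i.e.\ $xy\in\U_-\T\U_+$, so the Gaussian decomposition yields $xy=[xy]_-[xy]_0[xy]_+$ with $\lambda([xy]_0)=1$ for all dominant $\lambda$. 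Since the dominant weights of $\G$ separate points of $\T$, we conclude $[xy]_0=e$, hence $xy\in\U_-\U_+$.

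For the free and transitive action, observe that condition (2) exhibits every pinning as $g.(\U_-,\U_+)$ where $g.(\U_-x,y\U_+):=(\U_-xg^{-1},gy\U_+)$, so the $\G$-orbit of the base pinning $(\U_-,\U_+)$ exhausts all pinnings. Freeness follows because $g.(\U_-,\U_+)=(\U_-,\U_+)$ forces $g\in\U_+\cap\U_-=\{e\}$, which we already established. Finally, for $\G=\G_\sc$ the fundamental weights $\omega_1,\dots,\omega_{\tilde r}$ form a basis of the weight lattice $\P$, hence separate points of $\T_\sc$, so the vanishing condition in (3) reduces to the finite check $\Delta_{\omega_i}(\A_0,\A^0)=1$ for $1\leq i\leq\tilde r$. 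The only subtle step is the separation of $\T$-points by dominant weights in the general Kac--Peterson setting, but this is precisely the defining property of the weight lattice $\P$ used to construct $\T$.
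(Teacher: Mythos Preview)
Your proof is correct and follows essentially the same route as the paper's: construct $z$ from the Gaussian factorization $xy=u_-u_+$, use $\U_-\cap\U_+=\{e\}$ for uniqueness, and invoke Theorem~\ref{gaussian} together with the fact that dominant weights separate points of $\T$ to handle the equivalence with condition~(3). The only difference is cosmetic --- the paper argues the cycle $(1)\Rightarrow(2)\Rightarrow(3)\Rightarrow(1)$ while you do $(1)\Leftrightarrow(2)$ and $(1)\Leftrightarrow(3)$ --- and there is a small slip in your uniqueness step: from $\U_-z'^{-1}=\U_-z^{-1}$ you should get $z^{-1}z'\in\U_-$ (not $z'z^{-1}\in\U_-$), which then combines directly with $z^{-1}z'\in\U_+$ to give $z^{-1}z'\in\U_-\cap\U_+=\{e\}$.
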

\begin{proof} (1)$\implies$(2). Suppose $(\A_0, \A^0)=({\U_-x, y\U_+})$ is a pinning. Then $xy=[xy]_-[xy]_+\in \U^-\U^+$. Let $z:=x^{-1}[xy]_-=y[xy]_+^{-1}$. Then $\U_-z^{-1}=\U_-[xy]_-^{-1}x=\U_-x$ and $z\U_+=y[xy]_+^{-1}\U_+=y\U_+$. Note that $\U_-z^{-1}=\U_-z'^{-1}$ implies that $z^{-1}z'\in \U_-$ and $z\U_+=z'\U_+$ implies that $z^{-1}z'\in \U_+$. Therefore $z^{-1}z'\in \U_-\cap \U_+=\{e\}$ and $z=z'$. The uniqueness of $z$ follows.

(2)$\implies$(3). This is obvious from the definition of $\Delta_\lambda$.

(3)$\implies$(1). By Theorem \ref{gaussian}, $\Delta_\lambda\left(\A_0,\A^0\right)=1$ implies that $\A_0,\A^0)$ is in general opposition. Let $\A_0=\U_-x$ and $\A^0=y\U_+$. The product $xy$ is Gaussian decomposable, i.e., $xy=[xy]_-[xy]_0[xy]_+$. The condition $1=\Delta_\lambda\left(\A_0,\A^0\right)=\Delta_\lambda([xy]_0)$ implies that $[xy]_0=e$. Therefore $xy\in \U_-\U_+$.
\end{proof}

\begin{cor}\label{induced pinning} There is a one-to-one correspondence between pinnings and opposite pairs of flags in $\mathcal{A}_-\times \mathcal{B}_+$ (resp. $\mathcal{B}_-\times \mathcal{A}_+$) given by the forgetful map.
\end{cor}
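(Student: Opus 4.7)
The plan is to construct the inverse of the forgetful map directly, using the $\T$-torsor structure on the fibers of $\mathcal{A}_+\to\mathcal{B}_+$ together with the characterization of pinnings from Lemma~\ref{decorated general position}. First I would verify the forgetful map is well-defined on the target: by the implication $(1)\Rightarrow(3)$ of Lemma~\ref{decorated general position}, a pinning $(\A_0,\A^0)$ has $\Delta_\lambda(\A_0,\A^0)=1$ for every dominant $\lambda$, and in particular nonzero, so by Theorem~\ref{gaussian} the underlying flags are opposite.

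For the inverse, given an opposite pair $(\A_0,\B^0)\in \mathcal{A}_-\times\mathcal{B}_+$, write $\A_0=\U_-x$ and $\B^0=y\B_+$. Decorations lying over $\B^0$ are a right $\T$-torsor, parametrized as $\A^0=yt\U_+$ for $t\in\T$. Opposition allows the Gaussian decomposition $xy=[xy]_-[xy]_0[xy]_+$, and since $t$ normalizes $\U_+$ one computes
\[
xyt=[xy]_-\bigl([xy]_0 t\bigr)\bigl(t^{-1}[xy]_+t\bigr),
\]
so $[xyt]_0=[xy]_0 t$ and hence $\Delta_\lambda(\A_0,\A^0)=\lambda([xy]_0t)$ for all dominant $\lambda$. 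By condition $(3)$ of Lemma~\ref{decorated general position}, the pair $(\A_0,\A^0)$ is a pinning if and only if $[xy]_0t=e$, which has the unique solution $t=[xy]_0^{-1}$. This produces a unique decoration $\A^0$ over $\B^0$ making $(\A_0,\A^0)$ a pinning, thereby defining the inverse.

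The argument for the other case $\mathcal{B}_-\times\mathcal{A}_+$ is completely symmetric, using the transposition-invariance from Lemma~\ref{2.23} (or simply running the same Gaussian-decomposition computation with the roles of $\U_-$ and $\U_+$ swapped). There is no serious obstacle here—the whole content is the bookkeeping of how the right $\T$-action on a fiber of $\mathcal{A}_+\to\mathcal{B}_+$ translates through the Gaussian decomposition into multiplication on $[xy]_0$—so this is really a direct corollary of Lemma~\ref{decorated general position}.
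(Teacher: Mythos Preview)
Your proof is correct. The route differs slightly from the paper's: the paper invokes characterization (2) of Lemma~\ref{decorated general position}, writing every pinning uniquely as $(\U_-z^{-1}, z\U_+)$, and then checks injectivity of the forgetful map via $z^{-1}z'\in \U_-\cap \B_+=\{e\}$. You instead fix an opposite pair, parametrize the $\T$-torsor of decorations over $\B^0$, and use characterization (3) (via the Gaussian factor $[xy]_0t$) to pin down a unique $t$. Both arguments are essentially the same Gaussian-decomposition bookkeeping viewed from two sides; yours makes the inverse map explicit, while the paper's is marginally shorter. One small remark: your detour through condition (3) and Theorem~\ref{gaussian} is unnecessary---once you have $[xyt]_0=[xy]_0t$, the \emph{definition} of pinning ($xyt\in \U_-\U_+$) already gives $t=[xy]_0^{-1}$ directly, without appealing to the dominant-weight characterization.
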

\begin{proof}  Lemma \ref{decorated general position} asserts that every pinning is of the form $\left(\U_-z^{-1}, z\U_+\right)$. Therefore the forgetful map is surjective. For injectivity, if $\left(\U_-z^{-1},z\B_+\right)=\left(\U_-z'^{-1},z'\B_+\right)$ in $\mathcal{A}_-\times \mathcal{B}_+$, then $z^{-1}z'\in \U_-\cap \B_+=\{e\}$ and hence $z=z'$. A similar proof can be applied to the $\mathcal{B}_-\times \mathcal{A}_+$ cases.
\end{proof}

The next lemma shows that notions of compatibility and pinnings respect the transposition.

\begin{lem}\label{decorated transposition} \begin{enumerate}\item  $\xymatrix{\A \ar[r]^w & \A'}$ is compatible if and only if $\xymatrix{\A'^t\ar[r]^{w^{-1}} & \A^t}$ is compatible. 
\item $\xymatrix{\A \ar@{-}[r] & \A'}$ is a pinning if and only if $\xymatrix{\A'^t\ar@{-}[r] & \A^t}$ is a pinning.
\end{enumerate} 
\end{lem}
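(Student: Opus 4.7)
The plan is to handle the two parts separately, since (1) is really a statement about how transposition interacts with the two Tits lifts $\overline{w}$ and $\doverline{w}$, while (2) follows almost directly from the already-established characterizations of pinnings.

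For part (1), I would split into cases based on whether $\A,\A'$ lie in $\mathcal{A}_+$ or $\mathcal{A}_-$. Consider first the case $\A=x\U_+,\ \A'=y\U_+$ in $\mathcal{A}_+$. Then compatibility of $\xymatrix{\A \ar[r]^w & \A'}$ means $x^{-1}y\in \U_+\overline{w}\U_+$. Under transposition we get $\A^t=\U_-x^t$ and $\A'^t=\U_-y^t$ in $\mathcal{A}_-$, and applying the anti-involution to a decomposition $x^{-1}y=u_1\overline{w}u_2$ with $u_1,u_2\in \U_+$ yields
\[
y^t(x^t)^{-1}=(x^{-1}y)^t=u_2^t\,\overline{w}^t\,u_1^t\in \U_-\,\overline{w}^t\,\U_-.
\]
So the claim reduces to the identity $\overline{w}^t=\doverline{w^{-1}}$, which is a standard property of the two Tits lifts and is collected (together with $\doverline{w}^t=\overline{w^{-1}}$) in the appendix. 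The symmetric case $\A,\A'\in\mathcal{A}_-$ is handled the same way using the second identity. Since transposition is an involution, the ``only if'' direction immediately yields the ``if'' direction by applying the former to $\A'^t,\A^t$ in place of $\A,\A'$.

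For part (2), no additional work on the Tits lifts is needed. By Lemma \ref{decorated general position}(3), the pair $(\A,\A')$ is a pinning if and only if $\Delta_\lambda(\A,\A')=1$ for every dominant weight $\lambda$. Lemma \ref{2.23} gives $\Delta_\lambda(\A,\A')=\Delta_\lambda(\A'^t,\A^t)$, so the pinning condition for $(\A,\A')$ is literally equivalent to the pinning condition for $(\A'^t,\A^t)$.

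The only real obstacle is the Tits lift identity $\overline{w}^t=\doverline{w^{-1}}$ used in part (1). This is proved by induction on $\ell(w)$: for $w=s_i$ a simple reflection, one checks directly from the definitions that $\overline{s}_i^t=\overline{s}_i^{-1}=\doverline{s_i}=\doverline{s_i^{-1}}$, and for general $w$ with reduced expression $w=s_{i_1}\cdots s_{i_k}$, one combines the simple-reflection case with the multiplicativity $\overline{w}=\overline{s}_{i_1}\cdots\overline{s}_{i_k}$ and $\doverline{w^{-1}}=\doverline{s_{i_k}}\cdots\doverline{s_{i_1}}$ (which follows from the refined Bruhat decomposition, exactly as in the proof of Lemma \ref{unique decorated}). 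With this identity in hand the rest of the argument is essentially bookkeeping.
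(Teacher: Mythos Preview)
Your proof is correct and follows essentially the same approach as the paper. The paper's proof of (1) is the one-line ``follows from the fact that $\overline{w}^t=\doverline{w^{-1}}$ and $\doverline{w}^t=\overline{w^{-1}}$,'' which is exactly what you unpack with the case analysis; your added sketch of the Tits-lift identity is a helpful supplement since the paper does not spell it out. For (2) the paper simply says ``trivial,'' presumably meaning directly from the definition (if $xy\in\U_-\U_+$ then $y^tx^t=(xy)^t\in\U_-\U_+$), whereas you route through the $\Delta_\lambda$ characterization via Lemmas~\ref{2.23} and~\ref{decorated general position}; both arguments are equally short and valid.
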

\begin{proof} (1) follows from the fact that $\overline{w}^t=\doverline{w^{-1}}$ and $\doverline{w}^t=\overline{w^{-1}}$. (2) is trivial.
\end{proof}

\subsection{Double Bott-Samelson Cells}
The semigroup $\Br_+$ of \emph{positive braids} is generated by symbols $s_i$ subject to the braid relations 
\begin{equation}
\label{braid.rel.snc}
\underbrace{s_i s_j \dots}_{m_{ij}}=\underbrace{s_j s_i\dots}_{m_{ij}},
\end{equation}
where $m_{ij}=2,3,4,6$ or $\infty$ according to whether $\C_{ij}\C_{ji}$ is $0,1,2,3$ or $\geq 4$.

A \emph{word} for a positive braid $b\in \Br_+$ is a sequence ${\bf i}=\left(i_1, i_2,\dots,i_n\right)$ such that $b=s_{i_1}s_{i_2}\dots s_{i_n}$. Denote by ${\bf H}(b)$ the set of all words for $b$.

For an arbitrary Weyl group element, its reduced words are related by braid relations. Hence, there is a set-theoretic lift $\W\hookrightarrow \Br_+$.

\begin{defn} Let ${\bf i}=(i_1, \ldots, i_n) \in {\bf H}(b)$. A ${\bf i}$-chain of flags is a sequence of flags:
\[
\B(\vec{i}):=\xymatrix{\B(0) \ar[r]^{s_{{i_1}}} & \B(1) \ar[r]^{s_{{i_2}}} & \cdots \ar[r]^{s_{{i_n}}} & \B(n)}.
\]
Denote by $\mathcal{C}({\bf i})$ the set of ${\bf i}$-chains of flags. 
\end{defn}

Lemma \ref{unique} allows us to do local changes to a chain of flags with prescribed relative positions. If $w=u_1u_2=v_1v_2$ in the Weyl group and  $l(w)=l\left(u_1\right)+l\left(u_2\right)=l\left(v_1\right)+l\left(v_2\right)$, then for every chain
$\xymatrix{\B(0) \ar[r]^{u_1} & \B(1) \ar[r]^{u_2} & \B(2)}$,
there is a unique flag $\B(3)$ such that 
$
\xymatrix{\B(0) \ar[r]^{v_1} & \B(3) \ar[r]^{v_2}  & \B(2)}.
$
\emph{Braid moves} are a class of special local changes: whenever the braid relation \eqref{braid.rel.snc} holds, one can change a chain of flags uniquely from
\[
\xymatrix@C=12pt{\dots \ar[r] \B(k) \ar[r]^{s_i} & \B(k+1) \ar[r]^{s_j} & \B(k+2) \ar[r]^(0.4){s_{i}} & \dots   \B\left(k+m_{ij}-1\right) \ar[rr]^{\text{$s_i$ or $s_j$}} & & \B\left(k+m_{ij}\right) \dots }
\]
to 
\[
\xymatrix@C=12pt{\dots \ar[r] \B(k) \ar[r]^{s_j} & \B'(k+1) \ar[r]^{s_i} & \B'(k+2) \ar[r]^(0.4){s_{j}} & \dots   \B'\left(k+m_{ij}-1\right) \ar[rr]^{\text{$s_j$ or $s_i$}} & & \B\left(k+m_{ij}\right) \dots }.
\]

Let ${\bf i},{\bf j}\in {\bf H}(b)$. Let $\tau$ be a sequence of braid moves taking ${\bf i}$ to ${\bf j}$. It further induces a bijection 
\[
\tau_{\bf i}^{\bf j}:~ \mathcal{C}({\bf i}) \longrightarrow \mathcal{C}({\bf j}).
\] 
\begin{thm}\label{bott-samelson uniqueness}
If ${\bf i}={\bf j}$, then $\tau_{\bf i}^{\bf j}$ is the identity map.
\end{thm}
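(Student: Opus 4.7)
My plan is to show that $\tau_{\vec{i}}^{\vec{j}}$ depends only on the pair $(\vec{i},\vec{j})$ and not on the specific sequence $\tau$; the theorem then follows by comparing with the empty sequence in the case $\vec{i} = \vec{j}$.

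First I would verify two elementary properties of braid moves viewed as maps on chain sets. A single braid move at position $p$ replaces the sub-word $(s_i, s_j, s_i, \ldots)$ of length $m = m_{ij}$ by $(s_j, s_i, s_j, \ldots)$; iteratively applying Lemma~\ref{unique} to the affected sub-chain $\B(p) \to \cdots \to \B(p+m)$ shows that the new intermediate flags are uniquely determined by the boundary flags $\B(p), \B(p+m)$ together with the target sub-word. Consequently, a forward braid move followed by its inverse restores the chain (so ``insert-and-cancel'' loops act trivially), and two braid moves whose affected position ranges are disjoint commute (since they modify disjoint sets of flags).

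These two properties reduce the theorem to a combinatorial coherence statement about the word graph $\mathbf{H}(b)$: every closed loop based at $\vec{i}$ can be reduced, through insert-and-cancel cancellations and disjoint-commutation squares, to the trivial loop. This coherence is a consequence of the Garside structure on the positive braid monoid $\mathrm{Br}_+$ (Deligne, Brieskorn--Saito), whose confluent rewriting theory implies that any two sequences of braid moves with the same endpoints are related by the elementary relations identified above.

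The main obstacle is this combinatorial coherence step: verifying that the loops in the word graph are generated by insert-and-cancel pairs and disjoint-commutation squares is nontrivial and requires genuine input from the theory of positive braid monoids. A cleaner alternative --- likely implicit in a Bott-Samelson interpretation --- is to describe the chain set $\mathcal{C}(\vec{i})$ (with fixed initial flag) intrinsically as $\B_+$-orbits in the iterated fiber product $\P_{i_1} \times_{\B_+} \cdots \times_{\B_+} \P_{i_n}$ and observe that the braid-move isomorphisms become the canonical Bott-Samelson identifications attached to $b$; in that formulation the theorem becomes essentially tautological.
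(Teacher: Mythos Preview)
Your approach has a genuine gap at the coherence step. The two relations you isolate --- insert-and-cancel and commutation of braid moves with disjoint position ranges --- do \emph{not} generate all closed loops in the braid-move graph of $b$. Already in type $\mathrm{A}_1\times\mathrm{A}_1\times\mathrm{A}_1$ the six words for $s_1 s_2 s_3$ form a hexagon under commutation moves, and no two of those moves have disjoint position ranges (each move affects two adjacent letter slots, and neighbouring moves share a slot), so your relations cannot contract that hexagon. More generally, every finite rank-$3$ parabolic contributes a Zamolodchikov-type cycle that must be verified separately. Deligne's 1997 coherence theorem for braid actions on categories does give what you want, but its proof \emph{is} the verification of these rank-$3$ cells; the phrase ``confluent rewriting theory'' does not deliver it, and in the general Kac--Moody setting of this paper you would have to check its hypotheses with some care. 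Your Bott--Samelson alternative has the same issue: the fibre-product description still depends on the word, and the ``canonical identifications'' you allude to are precisely the braid-move maps whose coherence is in question.

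The paper's proof avoids coherence altogether by a direct geometric trick. Given a chain $(\B^0,\ldots,\B^n)$, choose any $\B_0$ opposite to $\B^0$ and iteratively build an \emph{opposite chain} $(\B_0,\ldots,\B_n)$ with each $\B_k$ opposite to $\B^k$, using Lemma~\ref{unique}; a local check shows this construction is equivariant under every single braid move. Then one argues by downward induction on $m$: since braid moves fix the endpoints, $\B^n$ and $\B_n$ are unchanged; and if $\B^{m+1}$ and $\B_{m+1}$ are unchanged and $i_{m+1}=j_{m+1}$, then $\B^m$ is the unique flag satisfying $\xymatrix{\B^m \ar[r]^{s_{i_{m+1}}} & \B^{m+1}}$ and $\xymatrix{\B_{m+1}\ar@{-}[r]^{s_{i_{m+1}}} & \B^m}$, so it too is unchanged. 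This is elementary, self-contained, and uniform across all symmetrizable generalized Cartan matrices.
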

\begin{proof} Let  ${\bf i}=(i_1, \ldots, i_n)$ and ${\bf j}=(j_1, \ldots, j_n)$. The map $\tau_{\bf i}^{\bf j}$ takes an ${\bf i}$-chain $\B({\bf i})=\left(\B(0), \ldots, \B(n)\right)$ 
to a ${\bf j}$-chain $\B({\bf j})=\left(\B'(0), \ldots, \B'(n)\right)$. It suffices to prove that if ${i_k}=j_k$ for all $m< k\leq n$ then $\B'(m)=\B(m)$.

Without loss of generality we assume that $\B(i)=\B^i\in \mathcal{B}_+$. Let $\B_0$ be a flag opposite to $\B^0$.
By the second case of Lemma \ref{unique} for $(u, v, w)=(s_i, e, s_i)$, we get a unique $\B_1$ such that 
\[
\begin{tikzpicture}[baseline=0ex]
\node (b_0) at (0,1.5) [] {$\B^0$};
\node (b^0) at (0,0) [] {$\B_0$};
\node [blue] (b^1) at (1.5,0) [] {$\B_1$};
\draw [->, blue] (b^0) -- node [below] {$s_{i_1}$} (b^1);
\draw [blue] (b^1) -- node [above right] {$s_{i_1}$} (b_0);
\draw (b_0) -- node [left] {} (b^0);
\end{tikzpicture}.
\] 
The third case of Lemma \ref{unique} implies that $(\B_1, \B^1)$ is in general position. Repeating the same construction through the whole sequence, we obtain an ${\bf i}$-chain $\B({\bf i})^{op}=\left(\B_0, \B_1, \ldots, \B_n\right)$, called an  \emph{opposite chain} of $\B({\bf i})$, such that  $(\B_k, \B^k)$ is in general position for $0\leq k \leq n$. The chain $\B({\bf i})^{op}$ is uniquely determined by the choice of $\B_0$.

Let us apply a braid move $\tau$ to $\B({\bf i})$, obtaining a $\tau({\bf i})$-chain $\B(\tau({\bf i}))$. 
Applying the same braid move $\tau$ to $\B({\bf i})^{op}$, we claim that the obtained chain is an opposite chain of $\B(\tau({\bf i}))$. We prove the claim for an $\mathrm{A}_2$ type braid move below.  The same argument works for $\mathrm{B}_2$ and $\mathrm{G}_2$ type braid moves. 
\[
\xymatrix@=20pt{
\dots \ar[r]& \B^k \ar[r]^{s_i} \ar@{-}[d] & \B^{k+1} \ar[r]^{s_j} \ar@{-}[d] & \B^{k+2} \ar[r]^{s_i} \ar@{-}[d] & \B^{k+3} \ar[r] \ar@{-}[d] & \dots\\
\dots \ar[r] & \B_k \ar[r]_{s_i} & \B_{k+1} \ar[r]_{s_j} & \B_{k+2} \ar[r]_{s_i} & \B_{k+3} \ar[r] & \dots
}
\]
\[
\downarrow
\]
\[
\xymatrix@=20pt{
\dots \ar[r]& \B^k \ar[r]^{s_j} \ar@{-}[d] & \B'^{k+1} \ar[r]^{s_i} & \B'^{k+2} \ar[r]^{s_j} & \B^{k+3} \ar[r] \ar@{-}[d] & \dots\\
\dots \ar[r] & \B_k \ar[r]_{s_j} & \B'_{k+1} \ar[r]_{s_i} & \B'_{k+2} \ar[r]_{s_j} & \B_{k+3} \ar[r] & \dots
}
\]
Note that a priori it is not obvious that the two pairs of flags in the middle are opposite. On the other hand, we may run the opposite chain construction with the new top chain starting with the same choice of $\B_0$, obtaining the following chain
\[
\xymatrix@=20pt{
\dots \ar[r]& \B^k \ar[r]^{s_j} \ar@{-}[d] & \B'^{k+1} \ar[r]^{s_i} \ar@{-}[d] & \B'^{k+2} \ar[r]^{s_j} \ar@{-}[d] & \B^{k+3} \ar[r] \ar@{-}[d] & \dots\\
\dots \ar[r] & \B_k \ar[r]_{s_j} & \B''_{k+1} \ar[r]_{s_i} & \B''_{k+2} \ar[r]_{s_j} & \B''_{k+3} \ar[r] & \dots
}
\]
To show that $\B'_{k+1}=\B''_{k+1}$, $\B'_{k+2}=\B''_{k+2}$, and $\B_{k+3}=\B''_{k+3}$, it suffices to show $\B_{k+3}=\B''_{k+3}$, since the other two follows from the uniqueness part of Lemma \ref{unique}. Note that $w:=s_i s_j s_i=s_j s_i s_j$ is a Weyl group element, and during the opposite chain construction, $\B_{k+3}$ and $\B''_{k+3}$ are both determined by the relative position relations
\[
\vcenter{\vbox{\xymatrix{\B^k \ar@{-}[d] \ar@{-}[dr]^{w^{-1}} & \\ \B_k \ar[r]_w & \B_{k+3} }}} \quad \quad \text{and} \quad \quad \vcenter{\vbox{\xymatrix{\B^k \ar@{-}[d] \ar@{-}[dr]^{w^{-1}} & \\ \B_k \ar[r]_w & \B''_{k+3} }}}.
\]
Therefore we can conclude $\B_{k+3}=\B''_{k+3}$ from the uniqueness part of Lemma \ref{unique}. This concludes the proof of the claim that the opposite chain construction is braid move equivariant. 

\vskip 2mm

Now after a sequence of braid moves from ${\bf i}$ to ${\bf j}$, we reach a ${\bf j}$-chain $\B({\bf j})= (\B'^0, \ldots, \B'^n)$ and its opposite chain $\B({\bf j})^{op}= (\B'_0, \ldots, \B'_n)$. If $i_k= j_k$ for $m<k\leq n$, we prove $\B^m=\B'^m$ by induction.  

For $m=n$, it is clear since braid moves do not change the flags at both ends of the chains. Suppose inductively we know that $\B^{m+1}=\B'^{m+1}$. Applying the inductive hypothesis to the opposite chain under the same sequence of braid moves we have $\B_{m+1}=\B'_{m+1}$. Since the opposite sequence construction is braid move equivariant, we know that the bottom primed sequence can be constructed from the top primed chain as an opposite sequence. By construction $\B'^m$ is the unique flag such that
$\vcenter{\vbox{\xymatrix{\B'^m \ar[r]^{s_{j_{m+1}}} \ar@{-}[dr]_{s_{j_{m+1}}} & \B'^{m+1} \ar@{-}[d] \\ & \B'_{m+1}}}}
$,
and  $\B^m$ is the unique flag such that
$
\vcenter{\vbox{\xymatrix{\B^m \ar[r]^{s_{i_{m+1}}} \ar@{-}[dr]_{s_{i_{m+1}}} & \B^{m+1} \ar@{-}[d] \\ & \B_{m+1}}}}
$.
Note that $\B'^{m+1}=\B^{m+1}$ and $\B'_{m+1}=\B_{m+1}$. If $i_{m+1}=j_{m+1}$, then $\B'^m=\B^m$ by Lemma \ref{unique}.
\end{proof}

Theorem \ref{bott-samelson uniqueness} implies that $\tau_{\bf i}^{\bf j}$ is canonical and does not depend on the choice of $\tau$. 
This allows us to replace the word $\vec{i}$ by the positive braid $b$.

\begin{defn}
Given a positive braid $b$, the set of \emph{$b$-chains} of flags is defined as
\[
\mathcal{C}(b)=\bigsqcup_{{\bf i}\in {\bf H}(b)} \mathcal{C}({\bf i}) \left/ \mbox{(bijections $\tau_{\bf i}^{\bf j}$)}\right.
\]
where ${\bf i}$ runs through all the words of $b$. We denote a $b$-chain as $\B(b)$. Similarly, let $\mathcal{C}_\pm(b)\subset \mathcal{C}(b)$ be the set of $b$-chains of flags in $\mathcal{B}_\pm$ respectively. 
\end{defn}

Following the proof of Theorem \ref{bott-samelson uniqueness}, braid moves keep the initial and terminal flags of every chain intact. For $\B(b) \in \mathcal{C}(b)$, we denote its \emph{initial flag} by $\B(b)_i$ and its \emph{terminal flag} by $\B(b)_t$. Since braid moves are equivarient under the $\G$-actions on $\vec{i}$-chains, one can lift the $\G$-action to $\mathcal{C}(b)$. 

\begin{defn}\label{conf(B)} Let $b$ and $d$ be  positive braids. The  \emph{double Bott-Samelson cell} $\conf^b_d(\mathcal{B})$ is 
\[
\conf^{b}_{d}(\mathcal{B}):=\G \left\backslash \left\{\left(\B(b), \B(d)\right) \in \mathcal{C}_+(b)\times \mathcal{C}_-(d) \middle | ~ \begin{array}{l}\text{$\B(d)_i$ is opposite to $\B(b)_i$}\\ \text {$\B(d)_t$ is opposite to $\B(b)_t$}
\end{array} 
\right\}. \right.
\]
\end{defn}

After fixing words $\vec{i}=\left(i_1, i_2,\dots, i_m\right)$ and $\vec{j}=\left(j_1, j_2,\dots, j_n\right)$ for $b$ and $d$, one can write a point in $\conf^b_d(\mathcal{B})$ as the $\G$-orbit of a collection of flags
\[
\xymatrix@=10ex{ \B^0 \ar[r]^{s_{i_1}} \ar@{-}[d] & \B^1 \ar[r]^{s_{i_2}} & \B^2 \ar[r] & \dots \ar[r] & \B^{m-1} \ar[r]^{s_{i_m}} & \B^m \ar@{-}[d] \\
\B_0 \ar[r]_{s_{j_1}} & \B_1 \ar[r]_{s_{j_2}} & \B_2 \ar[r] & \dots \ar[r] & \B_{n-1} \ar[r]_{s_{j_n}} & \B_n}
\] 

\begin{defn}
The \emph{decorated double Bott-Samelson cell} $\conf^b_d\left(\mathcal{A}_\sc\right)$ or $\conf^b_d\left(\mathcal{A}_\ad\right)$ is  defined to be the moduli space of configurations of flags that are points in $\conf^b_d(\mathcal{B})$ together with a decorated flag $\A^0$ over $\B^0:= \B(b)_i$ and a decorated flag $\A_n$ over $\B_n=\B(d)_t$.
\[
\xymatrix@=10ex{ \A^0 \ar[r]^{s_{i_1}} \ar@{-}[d] & \B^1 \ar[r]^{s_{i_2}} & \B^2 \ar[r] & \dots \ar[r] & \B^{m-1} \ar[r]^{s_{i_m}} & \B^m \ar@{-}[d] \\
\B_0 \ar[r]_{s_{j_1}} & \B_1 \ar[r]_{s_{j_2}} & \B_2 \ar[r] & \dots \ar[r] & \B_{n-1} \ar[r]_{s_{j_n}} & \A_n}
\]
\end{defn}

\begin{rmk}
Alternatively, one may think of $\conf^b_d\left(\mathcal{A}_\sc\right)$ as having chains of compatible decorated flags along the horizontal edges, with the top chain determined by $\A^0$ and the bottom chain determined by $\A_n$, and think of $\conf^b_d\left(\mathcal{A}_\ad\right)$ as having two pinnings along the vertical edges, with the left pinning determined  by $\A^0$ and the right pinning determined by $\A_n$, as shown below. See Sections \ref{section.ajod} and \ref{section.ajodad} for more details.
\[
\begin{tikzpicture}[scale=0.8]
\foreach \i in {0,1,3,4}
    {
    \node (t\i) at (\i*1.5,1.5) [] {$\bullet$};
    \node (b\i) at (\i*1.5,0) [] {$\bullet$};
    }
\node (t2) at (3,1.5) [] {$\cdots$};
\node (b2) at (3,0) [] {$\cdots$};
\draw (t0) -- (t1) -- (t2) -- (t3) -- (t4) -- (b4) -- (b3) -- (b2) -- (b1) -- (b0) -- (t0);
\draw [dashed] (-0.5,1) rectangle (6.5,2);
\draw [dashed] (-0.5,-0.5) rectangle (6.5,0.5);
\node at (3,-1) [] {$\conf^b_d\left(\mathcal{A}_\sc\right)$};
\end{tikzpicture}\quad \quad \quad 
\begin{tikzpicture}[scale=0.8]
\foreach \i in {0,1,3,4}
    {
    \node (t\i) at (\i*1.5,1.5) [] {$\bullet$};
    \node (b\i) at (\i*1.5,0) [] {$\bullet$};
    }
\node (t2) at (3,1.5) [] {$\cdots$};
\node (b2) at (3,0) [] {$\cdots$};
\draw (t0) -- (t1) -- (t2) -- (t3) -- (t4) -- (b4) -- (b3) -- (b2) -- (b1) -- (b0) -- (t0);
\draw [dashed] (5.5,-0.5) rectangle (6.5,2);
\draw [dashed] (-0.5,-0.5) rectangle (0.5,2);
\node at (3,-1) [] {$\conf^b_d\left(\mathcal{A}_\ad\right)$};
\end{tikzpicture}
\]
\end{rmk}

\begin{defn}
The \emph{framed double Bott-Samelson cell} $\conf^b_d\left(\mathcal{A}_\sc^\fr\right)$ is defined to be the subspace of $\conf^b_d\left(\mathcal{A}_\sc\right)$ whose decorated flags $\A^0$ and $\A_n$ induce a decorated flag over each flag via Lemma \ref{unique compatible} and Corollary \ref{induced pinning}, so that any two consecutive decorated flags on the perimeter are either compatible or forming a pinning. 
\[
\xymatrix@=10ex{ \A^0 \ar[r]^{s_{i_1}} \ar@{-}[d] & \A^1 \ar[r]^{s_{i_2}} & \A^2 \ar[r] & \dots \ar[r] & \A^{m-1} \ar[r]^{s_{i_m}} & \A^m \ar@{-}[d] \\
\A_0 \ar[r]_{s_{j_1}} & \A_1 \ar[r]_{s_{j_2}} & \A_2 \ar[r] & \dots \ar[r] & \A_{n-1} \ar[r]_{s_{j_n}} & \A_n}
\]
\end{defn}

\begin{rmk} Sometimes the indices on the flags in the diagrams of double Bott-Samelson cells may not start from 0, as we will soon see in the next subsection. 
\end{rmk}

There are canonical maps
\begin{equation}\label{maps}
\xymatrix{\conf^b_d\left(\mathcal{A}^\fr_\sc\right)\ar[r]^e& \conf^b_d\left(\mathcal{A}_\sc\right) \ar[d]^\pi & \\
\conf^b_d(\mathcal{B}) & \conf^b_d\left(\mathcal{A}_\ad\right) \ar[l]^q &}
\end{equation}
where the maps $e$ and $q$ are forgetful maps and  $\pi$ is induced by the projection $\pi:\mathcal{A}_\sc\rightarrow \mathcal{A}_\ad$. 

\begin{prop} Let $\mathcal{A}=\mathcal{A}_\sc$ or $\mathcal{A}_\ad$. The forgetful map $\conf^b_d\left(\mathcal{A}\right)\rightarrow \conf^b_d(\mathcal{B})$ makes $\conf^b_d(\mathcal{A})$ a $\T\times \T$-principal bundle over $\conf^b_d(\mathcal{B})$.  
\end{prop}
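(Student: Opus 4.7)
The strategy is to exhibit the $\T\times\T$-action directly, reduce to a concrete normal form by fixing the left pinning, and then verify freeness and transitivity on fibers. The crucial observation is that the projection $\mathcal{A}_\varepsilon\to\mathcal{B}_\varepsilon$ is a $\T$-torsor: since $\mathcal{A}_\varepsilon=\G/\U_\varepsilon$, $\mathcal{B}_\varepsilon=\G/\B_\varepsilon$, and $\B_\varepsilon/\U_\varepsilon\cong\T$, the group $\T$ acts on a decoration $x\U_+$ (resp.\ $\U_-x$) by $t.x\U_+:=xt^{-1}\U_+$ (resp.\ $t.\U_-x:=\U_-tx$) without affecting the underlying flag. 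Applying this to the two ``outer'' decorations yields a $\T\times\T$-action on the space of decorated configurations whose first factor rescales $\A^0$ over $\B^0$ and whose second factor rescales $\A_n$ over $\B_n$, while fixing every other flag in the chain. Because $\T$ acts on the right and $\G$ on the left, this action commutes with the diagonal $\G$-action, so it descends to an action on $\conf^b_d(\mathcal{A})$ that covers the identity on $\conf^b_d(\mathcal{B})$.

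\textbf{Slice via the standard pinning.} I would next use Corollary \ref{induced pinning} and Lemma \ref{decorated general position}(2) to trivialize the $\G$-quotient. In every configuration the pair $(\B_0,\B^0)$ is opposite, hence underlies a unique pinning, and $\G$ acts freely and transitively on pinnings. Therefore each $\G$-orbit has a unique representative with $(\B_0,\B^0)=(\B_-,\B_+)$, realizing $\conf^b_d(\mathcal{B})$ as the subvariety of configurations with this fixed opposite pair, and $\conf^b_d(\mathcal{A})$ as the subvariety of decorated configurations with the same fixed opposite pair. Over this slice, a decoration on $\B^0=\B_+$ is exactly a coset $t\U_+$ with $t\in\T$, and similarly a decoration over $\B_n$ is a point of $\T$, so the forgetful map is identified with projection onto the subvariety where these two $\T$-parameters are forgotten.

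\textbf{Freeness, transitivity, and conclusion.} On each fiber the $\T\times\T$-action is free: if $t.x\U_+=x\U_+$ then $xt^{-1}\U_+=x\U_+$, so $t\in\U_+\cap\T=\{e\}$, and symmetrically for the second factor. It is transitive because any two decorations over the same flag differ by a unique element of $\T$. Combined with the slice description, this shows that the forgetful map $\conf^b_d(\mathcal{A})\to\conf^b_d(\mathcal{B})$ is a $\T\times\T$-torsor; since $\T$ is a split algebraic torus, such a torsor is automatically Zariski locally trivial, so it is a principal $\T\times\T$-bundle. The main (minor) obstacle is bookkeeping for the two versions $\mathcal{A}_\sc$ and $\mathcal{A}_\ad$: one must check that when we pass from $\G_\sc$ to $\G_\ad=\G_\sc/\Z$ the identification $\B_\varepsilon/\U_\varepsilon\cong\T$ continues to hold (which it does, with $\T=\T_\ad=\T_\sc/\Z$, since $\Z\subset\T_\sc$ and $\Z\cap\U_\varepsilon=\{e\}$), so the whole argument runs verbatim for either choice of $\G$.
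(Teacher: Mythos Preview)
Your core argument is exactly the paper's: the proof there is the single sentence ``It follows from that $\T$ acts freely and transitively on fibers of the projection $\mathcal{A}_\pm\rightarrow\mathcal{B}_\pm$,'' which is precisely the observation in your first paragraph.

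There is one slip in your elaboration. In the slice paragraph you assert that the opposite pair $(\B_0,\B^0)$ ``underlies a unique pinning'' and hence that each $\G$-orbit in $\conf^b_d(\mathcal{B})$ has a \emph{unique} representative with $(\B_0,\B^0)=(\B_-,\B_+)$. But Corollary~\ref{induced pinning} matches pinnings with opposite pairs in $\mathcal{A}_-\times\mathcal{B}_+$ or $\mathcal{B}_-\times\mathcal{A}_+$, not in $\mathcal{B}_-\times\mathcal{B}_+$; the stabilizer of $(\B_-,\B_+)$ in $\G$ is $\T$, so the undecorated slice still carries a residual $\T$-action. Consequently your freeness and transitivity checks in the third paragraph are really on the pre-quotient level and do not automatically descend (indeed, for $\G_\sc$ with nontrivial center $\Z$, the antidiagonal $\{(z,z^{-1}):z\in\Z\}\subset\T\times\T$ acts trivially on $\conf^b_d(\mathcal{A}_\sc)$). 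The paper does not engage with this either---it later treats $\conf^b_d(\mathcal{B})$ as a stack quotient (see the point-counting discussion)---so you are not missing anything the paper provides, but the slice argument as written does not quite stand on its own.
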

\begin{proof} It follows from that $\T$ acts freely and transitively on fibers of the projection  $\mathcal{A}_\pm\rightarrow \mathcal{B}_\pm$. 
\end{proof}

\subsection{Reflections and Transposition}\label{simplereflection}

In this section we introduce five natural biregular morphisms between double Bott-Samelson cells: four reflection maps and a transposition map.

Let us start with the left reflection $_i r:\conf^b_{s_i d}(\mathcal{B})\rightarrow \conf^{s_i b}_d(\mathcal{B})$. Recall that elements of $\conf^b_{s_i d}(\mathcal{B})$ are equivalence classes of collections of flags of the following relative pattern
\[
\begin{tikzpicture}
\node (d1) at (0,0) [] {$\B_0$};
\node (d2) at (2,0) [] {$\B_1$};
\node (u1) at (2,2) [] {$\B^0$};
\node (d3) at (4,0) [] {$\dots$};
\node (u2) at (4,2) [] {$\dots$};
\draw [->] (d1) -- node [below] {$s_i$} (d2);
\draw [->] (d2) -- (d3);
\draw [->] (u1) -- (u2);
\draw (u1) -- (d1);
\end{tikzpicture}
\]
By Proposition \ref{unique}, there is a unique Borel subgroup $\B_{-1}$ such that 
\[
\begin{tikzpicture}
\node (d1) at (0,0) [] {$\B_0$};
\node (d2) at (2,0) [] {$\B_1$};
\node (u1) at (2,2) [] {$\B^0$};
\node (d3) at (4,0) [] {$\dots$};
\node (u2) at (4,2) [] {$\dots$};
\node (u0) at (0,2) [] {$\B^{-1}$};
\draw [->] (d1) -- node [below] {$s_i$} (d2);
\draw [->] (d2) -- (d3);
\draw [->] (u1) -- (u2);
\draw [->] (u0) -- node [above] {$s_i$}(u1);
\draw (u0) -- node [left] {$s_i$} (d1);
\end{tikzpicture}
\]
Note that $(\B^{-1}, \B_1)$ is in general position. Hence we  get a configuration belongs to  $\conf^{s_i b}_d(\mathcal{B})$:
\[
\begin{tikzpicture}
\node (u1) at (2,2) [] {$\B^0$};
\node (d2) at (2,0) [] {$\B_1$};
\node (d3) at (4,0) [] {$\dots$};
\node (u2) at (4,2) [] {$\dots$};
\node (u0) at (0,2) [] {$\B^{-1}$};
\draw [->] (d2) -- (d3);
\draw [->] (u1) -- (u2);
\draw [->] (u0) -- node [above] {$s_i$}(u1);
\draw (u0) -- (d2);
\end{tikzpicture}
\]
This defines a \emph{left reflection} 
\[
_i r:\conf_{s_i d}^b(\mathcal{B})\rightarrow \conf_d^{s_i b}(\mathcal{B}).
\]

One can reverse this construction and get another left reflection that is the inverse of $_i r$:
\[
^i r:\conf^{s_i b}_d(\mathcal{B})\rightarrow \conf_{s_i d}^b(\mathcal{B}).
\]

Let us apply the same construction on the right: the flags $\B^{m+1}$ and $\B_{n+1}$ uniquely determine each other and hence we can define two right reflections that are inverse of each other:
\[
\begin{tikzpicture}[baseline=5ex]
\node (u1) at (0,2) [] {$\dots$};
\node (u2) at (2,2) [] {$\B^m$};
\node (d1) at (0,0) [] {$\dots$};
\node (d2) at (2,0) [] {$\B_n$};
\node (d3) at (4,0) [] {$\B_{n+1}$};
\draw [->] (u1) -- (u2);
\draw [->] (d1) -- (d2);
\draw [->] (d2) -- node[below] {$s_i$} (d3);
\draw (u2) -- (d3);
\end{tikzpicture} \quad \longleftrightarrow \quad \begin{tikzpicture}[baseline=5ex]
\node (u1) at (0,2) [] {$\dots$};
\node (u2) at (2,2) [] {$\B^m$};
\node (d1) at (0,0) [] {$\dots$};
\node (d2) at (2,0) [] {$\B_n$};
\node (u3) at (4,2) [] {$\B^{m+1}$};
\draw [->] (u1) -- (u2);
\draw [->] (d1) -- (d2);
\draw [->] (u2) -- node[above] {$s_i$} (u3);
\draw (u3) -- (d2);
\end{tikzpicture}
\]
\[
r_i:\conf_{ds_i}^b(\mathcal{B})\rightarrow \conf^{bs_i}_d(\mathcal{B}), \quad \quad \text{and} \quad \quad r^i:\conf^{bs_i}_d(\mathcal{B})\rightarrow\conf_{ds_i}^b(\mathcal{B}). 
\]

\begin{notn} We adopt the convention of writing the reflection with the index $i$ at one of the four corners indicating the double Bott-Samelson cell from which the reflection map originates.
\end{notn}

For $b=s_{i_1}s_{i_2}\cdots s_{i_p} \in \Br_+$, we set $b^\circ = s_{i_p}\ldots s_{i_2} s_{i_1}$. By composing reflection maps in the four possible directions we get the following four biregular isomorphisms
\[
\xymatrix{\conf^{d^\circ b}_e(\mathcal{B}) & & \conf^{bd^\circ}_e(\mathcal{B}) \\
& \conf^b_d(\mathcal{B}) \ar[ul]_\cong  \ar[ur]^\cong \ar[dl]^\cong \ar[dr]_\cong & \\
\conf^e_{b^\circ d}(\mathcal{B}) & & \conf^e_{db^\circ} (\mathcal{B})
}
\]

The reflection maps on $\conf^b_d(\mathcal{B})$ can be naturally extended to reflection maps on $\conf^b_d\left(\mathcal{A}\right)$. Below we present the construction by using the left reflection $_i r$ as an example. Let us  start with the configuration on the left. First we reflect $\B_0$ from the lower left hand corner to become $\B^{-1}$ at the upper left hand corner; then we find the unique decorated flag $\A^{-1}$ over $\B^{-1}$ such that  $(\A^{-1}, \A^0)$ is compatible (Lemma \ref{unique compatible}); finally we forget the decoration of $\A^0$ and ``downgrade'' it to $\B^0$.

\[
\quad \quad \quad\begin{tikzpicture}[baseline=5ex]
\node (d1) at (0,0) [] {$\B_0$};
\node (d2) at (2,0) [] {$\B_1$};
\node (u1) at (2,2) [] {$\A^0$};
\node (d3) at (4,0) [] {$\dots$};
\node (u2) at (4,2) [] {$\dots$};
\draw [->] (d1) -- node [below] {$s_i$} (d2);
\draw [->] (d2) -- (d3);
\draw [->] (u1) -- (u2);
\draw (u1) -- (d1);
\end{tikzpicture} \quad \quad \rightsquigarrow \quad \quad \begin{tikzpicture}[baseline=5ex]
\node (u1) at (2,2) [] {$\A^0$};
\node (d2) at (2,0) [] {$\B_1$};
\node (d3) at (4,0) [] {$\dots$};
\node (u2) at (4,2) [] {$\dots$};
\node (u0) at (0,2) [] {$\B^{-1}$};
\draw [->] (d2) -- (d3);
\draw [->] (u1) -- (u2);
\draw [->] (u0) -- node [above] {$s_i$}(u1);
\draw (u0) -- (d2);
\end{tikzpicture}
\]
\[
\rightsquigarrow \quad \quad\begin{tikzpicture}[baseline=5ex]
\node (u1) at (2,2) [] {$\A^0$};
\node (d2) at (2,0) [] {$\B_1$};
\node (d3) at (4,0) [] {$\dots$};
\node (u2) at (4,2) [] {$\dots$};
\node (u0) at (0,2) [] {$\A^{-1}$};
\draw [->] (d2) -- (d3);
\draw [->] (u1) -- (u2);
\draw [->] (u0) -- node [above] {$s_i$}(u1);
\draw (u0) -- (d2);
\end{tikzpicture} \quad \quad \rightsquigarrow \quad \quad \begin{tikzpicture}[baseline=5ex]
\node (u1) at (2,2) [] {$\B^0$};
\node (d2) at (2,0) [] {$\B_1$};
\node (d3) at (4,0) [] {$\dots$};
\node (u2) at (4,2) [] {$\dots$};
\node (u0) at (0,2) [] {$\A^{-1}$};
\draw [->] (d2) -- (d3);
\draw [->] (u1) -- (u2);
\draw [->] (u0) -- node [above] {$s_i$}(u1);
\draw (u0) -- (d2);
\end{tikzpicture}
\]

Below is an illustration of the left reflection $^i r$. It moves the decoration before  reflection.
\[
\quad \quad \quad\begin{tikzpicture}[baseline=5ex]
\node (u1) at (2,2) [] {$\B^1$};
\node (d2) at (2,0) [] {$\B_1$};
\node (d3) at (4,0) [] {$\dots$};
\node (u2) at (4,2) [] {$\dots$};
\node (u0) at (0,2) [] {$\A^0$};
\draw [->] (d2) -- (d3);
\draw [->] (u1) -- (u2);
\draw [->] (u0) -- node [above] {$s_i$}(u1);
\draw (u0) -- (d2);
\end{tikzpicture}
 \quad \quad \rightsquigarrow \quad \quad 
 \begin{tikzpicture}[baseline=5ex]
\node (u1) at (2,2) [] {$\A^1$};
\node (d2) at (2,0) [] {$\B_1$};
\node (d3) at (4,0) [] {$\dots$};
\node (u2) at (4,2) [] {$\dots$};
\node (u0) at (0,2) [] {$\A^0$};
\draw [->] (d2) -- (d3);
\draw [->] (u1) -- (u2);
\draw [->] (u0) -- node [above] {$s_i$}(u1);
\draw (u0) -- (d2);
\end{tikzpicture}
\]
\[
\rightsquigarrow \quad \quad\begin{tikzpicture}[baseline=5ex]
\node (u1) at (2,2) [] {$\A^1$};
\node (d2) at (2,0) [] {$\B_1$};
\node (d3) at (4,0) [] {$\dots$};
\node (u2) at (4,2) [] {$\dots$};
\node (u0) at (0,2) [] {$\B^0$};
\draw [->] (d2) -- (d3);
\draw [->] (u1) -- (u2);
\draw [->] (u0) -- node [above] {$s_i$}(u1);
\draw (u0) -- (d2);
\end{tikzpicture} \quad \quad \rightsquigarrow \quad \quad \begin{tikzpicture}[baseline=5ex]
\node (d1) at (0,0) [] {$\B_{-1}$};
\node (d2) at (2,0) [] {$\B_0$};
\node (u1) at (2,2) [] {$\A^1$};
\node (d3) at (4,0) [] {$\dots$};
\node (u2) at (4,2) [] {$\dots$};
\draw [->] (d1) -- node [below] {$s_i$} (d2);
\draw [->] (d2) -- (d3);
\draw [->] (u1) -- (u2);
\draw (u1) -- (d1);
\end{tikzpicture}
\]

The following Lemma implies that the reflection maps can be restricted to $\conf^b_d\left(\mathcal{A}_\sc^\fr\right)$.

\begin{lem} Let $\xymatrix{\A_0  \ar[r]^{s_i} & \A_1}$ and $\xymatrix{\A^0 \ar[r]^{s_i} & \A^1 }$ be  compatible pairs. If either of
\begin{enumerate}
    \item $\xymatrix{\A_0 \ar@{-}[r]^{s_i} &\A^0}$
    \item $\xymatrix{\A_1 \ar@{-}[r]^{s_i} &\A^1}$
\end{enumerate}
is true, then $\left(\A_1,\A^0\right)$ is a pinning if and only if $\left(\A_0,\A^1\right)$ is a pinning.
\end{lem}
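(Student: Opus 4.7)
My plan is to reduce the pinning criterion to the non-vanishing of generalized minors $\Delta_{\omega_j}$ via Lemma \ref{decorated general position}, then compare the minors on $(\A_1,\A^0)$ and $(\A_0,\A^1)$ using the $s_i$-compatibility together with a normalization by the $\G$-action. I will prove the case of hypothesis (1) directly and deduce (2) from it via the transposition anti-involution (Lemmas \ref{undecorated transposition}, \ref{decorated transposition} and \ref{2.23}), which exchanges the two pairs of decorated flags while swapping the two hypotheses.

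Assume hypothesis (1). Combining the codistance $\xymatrix@C=8pt{\B_0 \ar@{-}[r]^{s_i} & \B^0}$ with the distances $\xymatrix@C=8pt{\B_0\ar[r]^{s_i} & \B_1}$ and $\xymatrix@C=8pt{\B^0\ar[r]^{s_i} & \B^1}$, Lemma \ref{unique} yields that $(\B_1,\B^0)$ and $(\B_0,\B^1)$ are both opposite, so the pinning question reduces to whether $\Delta_{\omega_j}=1$ for every fundamental weight $\omega_j$ of $\G_\sc$. Using the $\G$-action I normalize the configuration so that $\A_1=\U_-$ and $\A^0=t\,\U_+$ for some $t\in\T$; in this normalization, $(\A_1,\A^0)$ is a pinning iff $t=e$. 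The compatibility conditions then give $\A_0=\U_-\,\doverline{s_i}\,u$ for some $u\in\U_-$ and $\A^1=t\,u'\,\overline{s_i}\,\U_+$ for some $u'\in\U_+$.

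The key computational step uses the identity $\doverline{s_i}=\overline{s_i}^{-1}$ (which follows from the transposition formula $\overline{w}^t=\doverline{w^{-1}}$ applied to $w=s_i$) together with the intertwining $\overline{s_i}^{-1}\,t=s_i(t)\,\overline{s_i}^{-1}$ to evaluate
\[
\Delta_{\omega_j}(\A_0,\A^1)=\Delta_{\omega_j}\bigl(\overline{s_i}^{-1}\,u\,t\,u'\,\overline{s_i}\bigr).
\]
A rank-one analysis (of which Birkhoff cell the product $\overline{s_i}^{-1}\,u\,t$ lies in) shows that hypothesis (1) is equivalent to $u\in \U_-^{(i)}:=\prod_{\alpha>0,\,\alpha\neq\alpha_i}U_{-\alpha}$. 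Conjugation by $\overline{s_i}$ stabilizes $\U_-^{(i)}$, so this $u$-factor is absorbed by the left $\U_-$-invariance of $\Delta_{\omega_j}$; a second rank-one computation, applied to the highest-weight vector $v_{\omega_j}$, gives $\Delta_{\omega_j}(\overline{s_i}^{-1}\,u'\,\overline{s_i})=1$ for every $j$. Altogether,
\[
\Delta_{\omega_j}(\A_0,\A^1)=\omega_j\bigl(s_i(t)\bigr),\qquad \Delta_{\omega_j}(\A_1,\A^0)=\omega_j(t),
\]
and both are simultaneously $1$ for every $j$ iff $t=e$, since the fundamental weights separate points of $\T_\sc$. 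This establishes the equivalence under (1); the transposition argument sketched above then supplies the case of (2).

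The main obstacle is bookkeeping the lifts $\overline{s_i}$ and $\doverline{s_i}$ and exploiting the identity $\doverline{s_i}\,\overline{s_i}=e$ cleanly; once this identity is in hand, both rank-one reductions are routine and the two separate vanishing conditions collapse to the single condition $t=e$.
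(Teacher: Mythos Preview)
Your argument is correct. Both proofs normalize via the $\G$-action and reduce to a rank-one computation, but they are organized differently. The paper assumes at the outset that $(\A_1,\A^0)$ is a pinning, fixes the representative $(\U_-,\U_+)$, reads off $\A_0=\U_-\doverline{s}_i$ from hypothesis (1) together with Lemma~\ref{unique}, parametrizes $\A^1=e_i(q)\overline{s}_i\U_+$ via Lemma~\ref{moduli of tits codist si}, and finishes with the single identity $\doverline{s}_i\,e_i(q)\,\overline{s}_i=e_{-i}(-q)\in\U_-$; the remaining direction and case (2) are then dismissed by symmetry. Your route keeps a free parameter $t\in\T$, writes $\A_0=\U_-\doverline{s}_i u$ and $\A^1=t u'\overline{s}_i\U_+$ with $u\in\U_-$, $u'\in\U_+$, and evaluates the minors $\Delta_{\omega_j}$ on both pairs: the constraint $u\in\U_-^{(i)}$ coming from (1) lets you absorb $u$ by left $\U_-$-invariance, while the identity $\doverline{s}_i\,e_i(r)\,\doverline{s}_i^{-1}=e_{-i}(-r)$ disposes of $u'$, yielding $\Delta_{\omega_j}(\A_0,\A^1)=\omega_j(s_i(t))$ and $\Delta_{\omega_j}(\A_1,\A^0)=\omega_j(t)$. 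This buys you both implications in one stroke and makes explicit why the ``symmetry'' invoked in the paper actually works (both conditions collapse to $t=e$); the price is a somewhat longer bookkeeping with $\U_-^{(i)}$ and the lifts $\overline{s}_i,\doverline{s}_i$, where the paper gets away with one line. The reduction of case (2) to case (1) by transposition is the same in both approaches.
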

\begin{proof} Due to the symmetry of the cases, it suffices to prove that under (1), $\left(\A_0,\A^1\right)$ is a pinning if $\left(\A_1,\A^0\right)$ is. Note that by Lemma \ref{unique}, we know that $\xymatrix{\A_0\ar@{-}[r] & \A^1}$ and $\xymatrix{\A_1 \ar@{-}[r] & \A^0}$ already. By Lemma \ref{decorated general position} we may assume without loss of generality that $\A_1=\U_-$ and $\A^0=\U_+$. Then Lemma \ref{unique} together with the compatibility condition on $\xymatrix{\A_0\ar[r]^{s_i} & \A_1}$ implies that $\A_0=\U_-\doverline{s}_i$. On the other hand, by Lemma \ref{moduli of tits codist si} we know that $\B^1=e_i(q)\overline{s}_i\B_+$ and hence $\A^1=e_i(q)\overline{s}_i\U_+$. Now to show that $\left(\A_0,\A^1\right)$ is a pinning, we just need to notice that
\[
\U_-\doverline{s}_ie_i(q)\overline{s}_i\U_+=\U_-e_{-i}(-q)\U_+=\U_-\U_+.\qedhere
\]
\end{proof}

By Lemma \ref{undecorated transposition} and Lemma \ref{decorated transposition}, we obtain biregular isomorphisms $\left(-\right)^t$, called the transposition maps,  from the double Bott-Samelson cells associated to $(b, d)$ to the ones associated to $\left(d^\circ, b^\circ\right)$, as illustrated below:  
\[
\xymatrix@=20pt{ \A^0 \ar[r]^{s_{i_1}} \ar@{-}[d] & \B^1 \ar[r]^{s_{i_2}} & \B^2 \ar[r] & \dots \ar[r] & \B^{m-1} \ar[r]^{s_{i_m}} & \B^m \ar@{-}[d] \\
\B_0 \ar[r]_{s_{j_1}} & \B_1 \ar[r]_{s_{j_2}} & \B_2 \ar[r] & \dots \ar[r] & \B_{n-1} \ar[r]_{s_{j_n}} & \A_n}
\]
\vspace{-12pt}
\[
\downarrow \left(-\right)^t
\]
\vspace{-12pt}
\[
\xymatrix@=20pt{ \left(\A_n\right)^t \ar[r]^{s_{j_n}} \ar@{-}[d] & \left(\B_{n-1}\right)^t \ar[r]^{s_{j_{n-1}}} & \left(\B_{n-2}\right)^t \ar[r] & \dots \ar[r] & \left(\B_1\right)^t \ar[r]^{s_{j_1}} & \left(\B_0\right)^t \ar@{-}[d] \\
\left(\B^m\right)^t \ar[r]_{s_{i_m}} & \left(\B^{m-1}\right)^t \ar[r]_{s_{i_{m-1}}} & \left(\B^{m-2}\right)^t \ar[r] & \dots \ar[r] & \left(\B^{1}\right)^t \ar[r]_{s_{i_1}} & \left(\A^0\right)^t}
\]

\begin{prop}\label{trans&ref} We have the following commutation relations:
\begin{enumerate}
    \item $\left(-\right)^t\circ r_i={^ir}\circ \left(-\right)^t$;
    \item $\left(-\right)^t\circ r^i={_ir}\circ \left(-\right)^t$;
    \item $\left(-\right)^t\circ {_ir}={r^i}\circ \left(-\right)^t$;
    \item $\left(-\right)^t\circ {^ir}={r_i}\circ \left(-\right)^t$.
\end{enumerate}
\end{prop}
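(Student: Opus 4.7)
The plan is to prove relation (1) by a direct flag chase and derive the other three as formal consequences. Since $r_i$ and $r^i$ are mutually inverse, $_ir$ and $^ir$ are mutually inverse, and $(-)^t$ is an involution (swapping $(b,d)$ and $(d^\circ,b^\circ)$), the following algebraic manipulations suffice: composing (1) with $_ir$ on the left and $r^i$ on the right yields (2); inverting both sides of (1) yields (3); inverting (2) yields (4). So the whole proposition reduces to (1).

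For (1), take a configuration $C\in\conf^b_{ds_i}(\mathcal{B})$ represented by chains $\B^0\to\cdots\to\B^m$ on top (following $b$) and $\B_0\to\cdots\to\B_n\xrightarrow{s_i}\B_{n+1}$ on the bottom (following $ds_i$), with corner conditions $\B^0$ opposite to $\B_0$ and $\B^m$ opposite to $\B_{n+1}$. By definition, $r_i$ invokes Lemma \ref{unique} to produce the unique flag $\B^{m+1}$ satisfying $\B^m\xrightarrow{s_i}\B^{m+1}$ and $\B^{m+1}$ opposite to $\B_n$; applying $(-)^t$ then reverses every arrow via Lemma \ref{undecorated transposition}, landing in $\conf^{d^\circ}_{s_ib^\circ}(\mathcal{B})$. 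Going the other way, $(-)^t$ first sends $C$ to a configuration in $\conf^{s_id^\circ}_{b^\circ}(\mathcal{B})$ whose upper-left $s_i$-edge is $(\B_{n+1})^t\xrightarrow{s_i}(\B_n)^t$ and whose upper-left corner opposition is $(\B_{n+1})^t$ opposite to $(\B^m)^t$. The map $^ir$ then produces the unique flag $F$ satisfying $F\xrightarrow{s_i}(\B^m)^t$ and $F$ opposite to $(\B_n)^t$. By Lemma \ref{undecorated transposition}, the candidate $(\B^{m+1})^t$ already satisfies both of these conditions (the transition $\B^m\xrightarrow{s_i}\B^{m+1}$ transposes to $(\B^{m+1})^t\xrightarrow{s_i}(\B^m)^t$, and the opposition $\B^{m+1}$ opposite to $\B_n$ transposes to $(\B_n)^t$ opposite to $(\B^{m+1})^t$); by the uniqueness in Lemma \ref{unique}, $F=(\B^{m+1})^t$, and the two compositions agree.

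The decorated version of (1) is handled by the same chase, with two additions: transposition preserves decorations, compatibility, and pinnings by Lemma \ref{decorated transposition}, and the decoration induced by a reflection map on a newly produced flag is uniquely determined by the compatibility requirement (Lemma \ref{unique compatible}, together with Lemma \ref{unique decorated}). Hence each step of the Borel-level chase lifts consistently to the level of decorations, and the same equality of compositions holds on $\conf^b_{ds_i}(\mathcal{A}_\sc)$ and $\conf^b_{ds_i}(\mathcal{A}_\ad)$.

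The main technical obstacle is not conceptual but notational: one must keep track of how flag indices shift, how Tits distances invert under $(-)^t$, and which of the four cells $\conf^b_{ds_i}$, $\conf^{bs_i}_d$, $\conf^{s_id^\circ}_{b^\circ}$, $\conf^{d^\circ}_{s_ib^\circ}$ houses each intermediate configuration. Once the square of cells and the maps between them are drawn cleanly, all the content of the proof reduces to the uniqueness clauses in Lemma \ref{unique} and Lemma \ref{unique compatible}.
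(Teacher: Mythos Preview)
Your approach is correct and is exactly what the paper's one-line proof (``follows directly from the definitions'') amounts to when written out. The reduction of (2)--(4) to (1) via inverses and the involution $(-)^t$ is clean.

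One small slip in the flag chase: you write that $^ir$ produces ``the unique flag $F$ satisfying $F\xrightarrow{s_i}(\B^m)^t$ and $F$ opposite to $(\B_n)^t$.'' Opposition to $(\B_n)^t$ is a \emph{consequence} of the construction, not its defining condition, and by itself it does not pin $F$ down: flags at Tits distance $s_i$ from $(\B^m)^t$ form an $\mathbb{A}^1$ (Corollary~\ref{2.30}), and being opposite to a fixed flag is merely an open condition on that line. The correct defining property, mirroring the paper's construction of $_ir$ via Lemma~\ref{unique}, is $F\xrightarrow{s_i}(\B^m)^t$ together with Tits codistance $s_i$ between $F$ and $(\B_{n+1})^t$. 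Your candidate $(\B^{m+1})^t$ satisfies this as well --- $r_i$ built $\B^{m+1}$ with codistance $s_i$ to $\B_{n+1}$, and Lemma~\ref{undecorated transposition}(2) transports that relation --- so the conclusion $F=(\B^{m+1})^t$ still follows once you invoke the right uniqueness clause.
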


\begin{proof}
It follows directly from  the definitions of the transposition and the reflection morphisms.
\end{proof}

\subsection{Affineness of Decorated Double Bott-Samelson Cells}

\begin{thm}\label{affineness of conf} Both $\conf^b_d(\mathcal{A})$ and $\conf^b_d\left(\mathcal{A}_\sc^\fr\right)$ are affine varieties. In particular, $\conf^b_d\left(\mathcal{A}\right)$ is the non-vanishing locus of a single function, hence it is smooth.
\end{thm}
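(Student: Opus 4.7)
\bigskip

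\noindent\textbf{Proof plan.} The strategy is to exhibit a single explicit affine chart on $\conf^b_d(\mathcal{A})$ that realizes it as the non-vanishing locus of one regular function on an affine space, and then to cut out $\conf^b_d\left(\mathcal{A}_\sc^\fr\right)$ inside this by the pinning equations from Lemma \ref{decorated general position}. Fix words $\vec{i}=(i_1,\dots,i_m)$ and $\vec{j}=(j_1,\dots,j_n)$ for $b$ and $d$.

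\medskip

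\noindent\textbf{Step 1 (kill the $\G$-action).} By Lemma \ref{decorated general position}, $\G$ acts freely and transitively on the space of pinnings. The opposition hypothesis on the left edge $\xymatrix{\A^0\ar@{-}[r] & \B_0}$ together with Corollary \ref{induced pinning} makes $(\A^0,\B_0)$ into a pinning after choosing the unique compatible decoration on $\B_0$. Normalizing by the $\G$-action we may therefore assume once and for all that $\A^0 = \U_+$ and $\B_0 = \B_-$.

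\medskip

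\noindent\textbf{Step 2 (affine coordinates on the two chains).} For each step $\xymatrix{\B\ar[r]^{s_i} & \B'}$, the set of flags $\B'$ at Tits distance $s_i$ from a fixed $\B$ is an affine line parametrized by $q\mapsto e_i(q)\overline{s}_i\B$ (this is the content of the $s_i$-cell in the flag variety used implicitly throughout Section~\ref{flags}). Iterating along the top chain gives parameters $(q_1,\dots,q_m)\in \mathbb{A}^m$, and independently along the bottom chain gives $(p_1,\dots,p_n)\in \mathbb{A}^n$. A decoration $\A_n$ over the resulting $\B_n$ is a $\T$-torsor, which we identify with $\T$ by picking the canonical compatible decoration along $\B_0\to\B_1\to\cdots\to\B_n$ as reference. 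This realizes the stack of configurations with fixed left pinning as the affine variety
\[
X \;:=\; \mathbb{A}^{m}\times \mathbb{A}^{n}\times \T.
\]

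\medskip

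\noindent\textbf{Step 3 (opposition is a single non-vanishing).} The only remaining condition defining $\conf^b_d(\mathcal{A})$ inside $X$ is that $\B^m$ be opposite to $\B_n$. Extend $\A^0=\U_+$ to a compatible decorated chain $\A^0\to\A^1\to\cdots\to\A^m$ via Lemma \ref{unique compatible}. By Theorem \ref{gaussian} and Remark \ref{gaussian'}, opposition of the underlying flags is equivalent to $\Delta_{\omega_i}(\A_n,\A^m)\neq 0$ for $i=1,\dots,\tilde r$. Each $\Delta_{\omega_i}(\A_n,\A^m)$ is a regular function on $X$, so the whole opposition condition is cut out by the non-vanishing of the single regular function
\[
F \;:=\; \prod_{i=1}^{\tilde r}\Delta_{\omega_i}(\A_n,\A^m)\;\in\;\mathcal{O}(X).
\]
Therefore $\conf^b_d(\mathcal{A}) = \{F\neq 0\}\subset X$ is the principal open defined by a single function in an affine variety, hence is affine; since $X$ is smooth it is also smooth. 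This gives the last two sentences of the theorem.

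\medskip

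\noindent\textbf{Step 4 (the framed version).} The framed cell $\conf^b_d\left(\mathcal{A}_\sc^\fr\right)$ sits inside $\conf^b_d(\mathcal{A}_\sc)$ as the locus where the induced pair $(\A^m,\A_n)$ on the right edge is a pinning (the left pinning is automatic by Step 1 combined with Lemma \ref{unique compatible} and Corollary \ref{induced pinning}). By the third characterization in Lemma \ref{decorated general position} this is the closed subvariety defined by the $\tilde r$ equations $\Delta_{\omega_i}(\A_n,\A^m)=1$, which are polynomial in the coordinates of $X$. Hence $\conf^b_d\left(\mathcal{A}_\sc^\fr\right)$ is closed inside an affine variety, and is itself affine.

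\medskip

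\noindent\textbf{Main obstacle.} The delicate point is Step 2: verifying that the $\mathbb{A}^1$-parametrization of a single $s_i$-step glues across the whole chain into a genuine isomorphism of varieties (rather than just a set-theoretic bijection), and that the reference decoration $\A_n$ used to identify the torsor with $\T$ is algebraic in the chain parameters $(p_1,\dots,p_n)$. Once this explicit affine model $X$ is in hand, the passage to $F$ via $\Delta_{\omega_i}$ is straightforward from the results recalled in Section \ref{flags}.
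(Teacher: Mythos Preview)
Your Steps 1--3 are correct and give a valid proof that $\conf^b_d(\mathcal{A})$ is affine, smooth, and a principal open in $\mathbb{A}^{m+n}\times\T$. The route differs from the paper's in one respect: the paper first uses the reflection isomorphisms of Section~\ref{simplereflection} to reduce to $b=e$, then builds only the bottom chain (from the right, normalizing $\A_n=\U_-t$) and imposes the opposition condition on the \emph{left} edge. You instead keep both chains and build from the left, imposing opposition on the right. Both yield the same kind of chart; yours is a bit more self-contained since it does not invoke the biregularity of reflections, while the paper's reduction keeps the bookkeeping to a single chain.

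Step 4, however, has a genuine gap. The framed condition requires that the decoration $\A_0$ obtained by propagating \emph{from $\A_n$} along the bottom chain via Lemma~\ref{unique compatible} makes $(\A_0,\A^0)$ a pinning. In your chart $\A^0=\U_+$ and $\B_0=\B_-$, but $\A_0$ depends on the chain parameters $(p_1,\dots,p_n)$ and on the torus parameter $t$ encoding $\A_n$; it has no reason to equal $\U_-$. What you call ``automatic'' is only the fact that the pair $(\A^0,\B_0)$ \emph{can} be completed to a pinning via Corollary~\ref{induced pinning}, but that completion need not coincide with the decoration propagated from $\A_n$. Indeed the paper explicitly imposes \emph{both} pinning conditions in its last paragraph. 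The fix is immediate and does not affect your conclusion: add the equations $\Delta_{\omega_i}(\A_0,\A^0)=1$ for the left edge alongside the right-edge equations $\Delta_{\omega_i}(\A_n,\A^m)=1$; this is still a closed subscheme of the affine variety $\conf^b_d(\mathcal{A}_\sc)$, hence affine.
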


\begin{proof}
Since reflection maps are biregular, it suffices to prove the Theorem for the cases when $b=e$. By Corollary \ref{decorated general position} every element in $\conf^e_d(\mathcal{A})$ admits a  unique representative that takes the  form 
\[
\begin{tikzpicture}
\node (u) at (0,2) [] {$\U_+$};
\node (d1) at (-3,0) [] {$\B_0$};
\node (d2) at (-1,0) [] {$\B_1$};
\node (d3) at (1,0) [] {$\dots$};
\node (d4) at (3,0) [] {$\U_-t$};
\draw [->] (d1) -- node [below] {$s_{i_1}$} (d2);
\draw [->] (d2) -- node [below] {$s_{i_2}$} (d3);
\draw [->] (d3) -- node [below] {$s_{i_n}$} (d4);
\draw (d1) -- (u) -- (d4);
\end{tikzpicture}
\]

Let us rebuild the above picture by starting with the right edge $\xymatrix{\U_+ \ar@{-}[r] & \U_-t}$, which is parametrized by the maximal torus $\T$.  We find $\B_{n-1}$ that is $s_{i_n}$ away from $\B_-$. By Corollary \ref{2.30}, the space of $\B_{n-1}$ is parametrized by $\mathbb{A}^1$. Then we proceed to find $\B_{n-2}$ that is $s_{i_{n-1}}$ away from $\B_{n-1}$ and so on until  arriving at $\B_0$. The parameter space is isomorphic to $\T\times \mathbb{A}^n$. Let us propagate the decoration from $\U_- t$  to  $\B_k$ such that every $(\A_k, \A_{k-1})$ is compatible 
\[
\begin{tikzpicture}
\node (u) at (0,2) [] {$\U_+$};
\node (d1) at (-3,0) [] {$\A_0$};
\node (d2) at (-1,0) [] {$\dots$};
\node (d3) at (1,0) [] {$\A_{n-1}$};
\node (d4) at (3,0) [] {$\U_-t$};
\draw [->] (d1) -- node [below] {$s_{i_1}$} (d2);
\draw [->] (d2) -- node [below] {$s_{i_2}$} (d3);
\draw [->] (d3) -- node [below] {$s_{i_n}$} (d4);
\draw (d4) -- (u);
\end{tikzpicture}
\]

Finally we require that $(\A_0, \U_+)$ is in general position.  By Theorem \ref{gaussian} and Remark \ref{gaussian'}, it is equivalent to the common non-vanishing locus of a finite collection of regular functions on $\T\times \mathbb{A}^n$, which is equivalent to a single non-vanishing locus of the product of these regular functions. This concludes the proof of the affineness and the non-vanishing locus nature of $\conf^e_d(\mathcal{A})$.

The space $\conf^e_d\left(\mathcal{A}_\sc^\fr\right)$ can be realized as a subvariety of $\conf^e_d\left(\mathcal{A}_\sc\right)$ by imposing the condition that $(\U_+, \U_-t)$ and $(\U_+, \A_0)$ in the previous picture are pinnings. By Corollary \ref{decorated general position}, it is equivalent to setting certain regular functions to be 1, which implies the affineness of $\conf^e_d\left(\mathcal{A}_\sc^\fr\right)$.
\end{proof}

\section{Cluster Structures on Double Bott-Samelson Cells}

\subsection{Triangulations, String Diagrams, and Seeds} 
\label{sec.tri.comb}
In this section we construct seeds that define the cluster structures on double Bott-Samelson cells. The general definition of a seed can be found in the appendix.
Our main tool is the \emph{amalgamation} procedure introduced by Fock and Goncharov \cite{FGamalgamation}.

We start with the definition of a triangulation.

\begin{defn} Let $(b,d)$ be a pair of positive braids. 
A \emph{triangulation} associated to $(b,d)$ is a trapezoid with bases of lengths $m=l(b)$ and $n=l(d)$ together with
\begin{enumerate}
    \item a word ${\bf i}$ of $b$ whose letters label the unit intervals along the top base of the trapezoid;
    \item a word ${\bf j}$ of $d$ whose letters label the unit intervals along the bottom base of the trapezoid;
    \item a collection of line segments called \emph{diagonals}, each of which connects a marked point on the top to a marked point on the bottom, and they divide the trapezoid into triangles. 
\end{enumerate}
\end{defn}

\begin{exmp}\label{triangulation exmp} Below is a triangulation associated to the pair of positive braids $\left(s_1 s_2 s_3, s_3 s_1 s_1 s_3 s_2\right)$.
\[
\begin{tikzpicture}[baseline=5ex]
\foreach \i in {0,...,3}
    {
    \coordinate (u\i) at (2*\i+2,2);
    \node at (u\i) [] {$\bullet$};
    }
\foreach \i in {0,...,5}
    {
    \coordinate (d\i) at (2*\i,0);
    \node at (d\i) [] {$\bullet$};
    }
\draw [-] (u0) -- node [above] {$s_1$} (u1);
\draw [-] (u1) -- node [above] {$s_2$} (u2);
\draw [-] (u2) -- node [above] {$s_3$} (u3);
\draw [-] (d0) -- node [below] {$s_3$} (d1);
\draw [-] (d1) -- node [below] {$s_1$} (d2);
\draw [-] (d2) -- node [below] {$s_1$} (d3);
\draw [-] (d3) -- node [below] {$s_3$} (d4);
\draw [-] (d4) -- node [below] {$s_2$} (d5);
\draw (u0) -- (d0);
\draw (u0) -- (d1);
\draw (u0) -- (d2);
\draw (u0) -- (d3);
\draw (u1) -- (d3);
\draw (u2) -- (d3);
\draw (u3) -- (d3);
\draw (u3) -- (d4);
\draw (u3) -- (d5);
\end{tikzpicture}
\]
\end{exmp}

\begin{prop}\label{triangulation move} Any two triangulations associated to the positive braids $(b,d)$ can be transformed into one another via a sequence of moves of the following types:
\begin{enumerate}
    \item flipping a diagonal within a quadrilateral;
    \item changing the labeling of the unit intervals along the bases locally according to a braid relation.
\end{enumerate}
\end{prop}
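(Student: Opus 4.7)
The plan is to decouple the two types of moves. Move $(2)$ changes only the labels of the unit intervals along the bases, leaving the underlying triangulation intact, whereas move $(1)$ does the opposite, altering the triangulation without touching the labels. Accordingly, the proof splits into two independent steps. First, given two triangulations $T_1, T_2$ of $(b,d)$ with word-labelings $(\vec{i}_1, \vec{j}_1)$ and $(\vec{i}_2, \vec{j}_2)$, I apply braid moves of type $(2)$ to $T_1$ so as to convert its labeling into that of $T_2$. This is possible because any two words of the same positive braid are related by the braid relations \eqref{braid.rel.snc}, and type $(2)$ moves realize precisely these relations locally on the bases.

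After this first step I may assume that $T_1$ and $T_2$ share the same word-labeling $(\vec{i}, \vec{j})$, and it remains to connect them by flips of type $(1)$. Here I would set up a bijection between triangulations of the $(\vec{i}, \vec{j})$-labeled trapezoid and lattice paths from $(0,0)$ to $(m,n)$ with unit horizontal and vertical steps. Every triangle in such a triangulation has either two top vertices and one bottom vertex (call it a \emph{down} triangle $D$), or one top vertex and two bottom vertices (an \emph{up} triangle $U$); no other configuration is possible, since three collinear top or bottom vertices cannot form a triangle. Reading the triangles from left to right produces a word in $\{D, U\}$ containing $m$ letters $D$ and $n$ letters $U$, which I identify with a lattice path using horizontal steps for $D$ and vertical steps for $U$.

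A direct case analysis then shows that two adjacent triangles of the same type share a degenerate ``quadrilateral'' with three collinear vertices, so that their shared diagonal cannot be flipped in the sense of move $(1)$, whereas two adjacent triangles of opposite types form a genuine quadrilateral whose flip interchanges the pattern $DU \leftrightarrow UD$ in the encoded path. Since any two lattice paths from $(0,0)$ to $(m,n)$ are connected by a sequence of adjacent $DU \leftrightarrow UD$ transpositions (a standard bubble-sort argument), this concludes the second step and hence the proposition. The main technical point I anticipate is the case analysis relating flippable quadrilaterals to adjacent $DU/UD$ pairs in the path encoding; it is elementary but should be verified carefully, in particular to confirm the non-flippability in the $DD$ and $UU$ cases and the correct identification of the new triangles in the $DU/UD$ case.
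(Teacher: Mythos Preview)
Your argument is correct. In fact, the paper states this proposition without proof, treating it as an elementary combinatorial fact, so there is no ``paper's own proof'' to compare against.

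Your two-step decoupling is the natural one: first use moves of type~(2) to match the base labelings (possible precisely because any two words for the same positive braid differ by a sequence of braid relations \eqref{braid.rel.snc}), then use moves of type~(1) to connect triangulations sharing the same labeling. The lattice-path encoding in step two is clean and correct. The key observation---that every triangle has exactly one base edge, hence is of type $U$ or $D$, and that a shared diagonal is flippable inside a genuine quadrilateral if and only if the adjacent pair is $DU$ or $UD$---is exactly right; the $DD$ and $UU$ configurations produce three collinear vertices on one base, so no flip is available. Once this is established, connectivity of lattice paths under adjacent $DU\leftrightarrow UD$ transpositions (bubble sort on the multiset word) finishes the argument. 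You might tighten the exposition by actually carrying out the one-line verification that every triangle contains exactly one unit base edge (any triangle with zero base edges would need three top-to-bottom diagonals, forcing four vertices), but the content is all there.
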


The construction of seeds involves one more combinatorial gadget called \emph{string diagram}, which is obtained from a triangulation as follows.

\begin{enumerate}
    \item Lay down $\tilde{r}$ horizontal lines across the trapezoid; call the $i$th line from the top the \emph{$i$th level}.
    \item For each triangle of the form $\begin{tikzpicture}[baseline=2ex]
    \node at (-0.5,0) [] {$\bullet$};
    \node at (0.5,0) [] {$\bullet$};
    \node at (0,1) [] {$\bullet$};
    \draw (-0.5,0) -- node [below] {$s_i$} (0.5,0) -- (0,1) -- cycle;
    \end{tikzpicture}$ we put a \emph{node} labeled by $i$ on the $i$th level within the triangle.
    \item For each triangle of the form $\begin{tikzpicture}[baseline=2ex]
    \node at (-0.5,1) [] {$\bullet$};
    \node at (0.5,1) [] {$\bullet$};
    \node at (0,0) [] {$\bullet$};
    \draw (-0.5,1) -- node [above] {$s_i$} (0.5,1) -- (0,0) -- cycle;
    \end{tikzpicture}$ we put a \emph{node} labeled by $-i$ on the $i$th level within the triangle.
    \item The nodes cut the horizontal lines into line segments called \emph{strings}. Strings with nodes at both ends are called \emph{closed strings}. The rest strings are called \emph{open strings}. 
\end{enumerate}

\begin{exmp}\label{3.4} The blue diagram below is the string diagram associated to the triangulation in Example \ref{triangulation exmp} with $r=3$ and $l=1$.
\[
\begin{tikzpicture}[baseline=5ex, scale=0.7]
\foreach \i in {0,...,3}
    {
    \coordinate (u\i) at (3*\i+3,3);
    \node at (u\i) [] {$\bullet$};
    }
\foreach \i in {0,...,5}
    {
    \coordinate (d\i) at (3*\i,0);
    \node at (d\i) [] {$\bullet$};
    }
\draw [-] (u0) -- node [above] {$s_1$} (u1);
\draw [-] (u1) -- node [above] {$s_2$} (u2);
\draw [-] (u2) -- node [above] {$s_3$} (u3);
\draw [-] (d0) -- node [below] {$s_3$} (d1);
\draw [-] (d1) -- node [below] {$s_1$} (d2);
\draw [-] (d2) -- node [below] {$s_1$} (d3);
\draw [-] (d3) -- node [below] {$s_3$} (d4);
\draw [-] (d4) -- node [below] {$s_2$} (d5);
\draw (u0) -- (d0);
\draw (u0) -- (d1);
\draw (u0) -- (d2);
\draw (u0) -- (d3);
\draw (u1) -- (d3);
\draw (u2) -- (d3);
\draw (u3) -- (d3);
\draw (u3) -- (d4);
\draw (u3) -- (d5);
\node [blue] (1) at (3.2,2.4) [] {$1$};
\node [blue] (2) at (4,2.4) [] {$1$};
\node [blue] (3) at (5.5,2.4) [] {$-1$};
\node [blue] (4) at (8.5,1.8) [] {$-2$};
\node [blue] (5) at (12.5,1.8) [] {$2$};
\node [blue] (6) at (2,1.2) [] {$3$};
\node [blue] (7) at (11,1.2) [] {$3$};
\node [blue] (8) at (9.2,1.2) [] {$-3$};
\draw [blue] (0,2.4) node [left] {1st level} --(1) -- (2) -- (3) -- (15,2.4);
\draw [blue] (0,1.8) node [left] {2nd level}-- (4) -- (5) -- (15,1.8);
\draw [blue] (0,1.2) node [left] {3rd level} -- (6) -- (8) -- (7) -- (15,1.2);
\draw [blue] (0,0.6) node [left] {4th level} -- (15,0.6);
\end{tikzpicture}
\] 
\end{exmp}

\begin{notn} We introduce two ways to denote strings in a string diagram: either by a lower case Roman letter starting from $a,b,c,\dots$, or by a symbol $\binom{i}{j}$ with $1\leq i\leq \tilde{r}$ and $j=0,1,2,\dots$. The symbol $\binom{i}{j}$ indicates that it is the $(j+1)$th string on the $i$th level counting from the left. 
\end{notn}

A seed $\vec{s}$ consists of a finite set $I$ of \emph{vertices}, a subset $I^\uf\subset I$ of \emph{unfrozen vertices}, an  $I\times I$ matrix $\epsilon$ called the \emph{exchange matrix}, and  a collection of positive integers  $\{d_a\}_{a\in I}$ called \emph{multipliers}. The entries of $\epsilon$ are integers unless they are in the submatrix $(I\backslash I^{\uf})\times (I\backslash I^{\uf})$. We further require that $\gcd\left(d_a\right)=1$ and  $\hat{\epsilon}_{ab}:=\epsilon_{ab}d_b^{-1}$ is skewsymmetric. This requirement resembles the data of the extended generalized symmetrizable Cartan matrix:  the $\tilde{r}\times \tilde{r}$ matrix $\C$  is equipped with an integral diagonal matrix $\D=\diag\left(\D_1,\dots, \D_r, 1,\dots, 1\right)$ such that $\gcd\left(\D_i\right)=1$ and $\D^{-1}\C$ is  symmetric.

Now we associate a seed to every string diagram (and hence to  every triangulation). Let $I$  be the set of strings and let $I^\uf$ to be the subset of closed strings. For a string $a$ on the $i$th level we set  $d_a:=\D_i$ for $1\leq i\leq r$ and $d_a:=1$ if $i>r$. The exchange matrix $\epsilon$ is 
\[
\epsilon=\sum_{\text{nodes $n$}}\epsilon^{(n)}.
\]
The matrix $\epsilon^{(n)}$  is defined below for $n$ labeled by $1\leq i\leq r$. If $n$ is labeled by $-i$, then $\epsilon^{(n)}$ is defined in the same way except that its entries are given by the opposite numbers. 

\begin{itemize}
    \item Let $a$ and $b$ be the two strings on the $i$th level with the node $n$ as an end point as below
\[
\begin{tikzpicture}
\node (n) at (0,0) [] {$i$};
\draw (-1,0) -- node [above] {$a$} (n) --  node [above] {$b$} (1,0);
\end{tikzpicture}
\]
Then we define
\[
\epsilon_{ab}^{(n)}=-\epsilon_{ba}^{(n)}=-1.
\]
\item Let $c$ be a string on the $j$th level that intersects with the triangle that  contains $n$. We set
\[
\epsilon^{(n)}_{ac}=-\epsilon^{(n)}_{bc}=-\frac{\C_{ji}}{2}, \hskip 12mm
\epsilon^{(n)}_{ca}=-\epsilon^{(n)}_{cb}=\frac{\C_{ij}}{2}.
\]
\item The rest entries are 0.
\end{itemize}

\begin{rmk} Although $\epsilon^{(n)}$ have entries with denominator 2, when we sum up all nodes $n$,  the entries of the resulting matrix $\epsilon$ are all integers except for those between two open strings.
\end{rmk}

When we perform the moves in Proposition \ref{triangulation move}, the corresponding mutations of  string diagrams and seeds  are described in the following proposition. Its  proof is a direct combinatorial check and will be skipped. See \cite[Theorem 3.15]{FGamalgamation}.

\begin{prop}\label{mutation} (1a) If we flip a diagonal inside a quadrilateral  $\begin{tikzpicture}[baseline=2ex]
\draw (0,0) -- node [below] {$s_i$} (1,0) -- (1,1) -- node [above] {$s_j$} (0,1) -- cycle;
\node at (0,0) [] {$\bullet$};
\node at (0,1) [] {$\bullet$};
\node at (1,0) [] {$\bullet$};
\node at (1,1) [] {$\bullet$};
\end{tikzpicture}$ with $i\neq j$, the corresponding nodes in the string diagram on different levels will slide across each other and the seed remains the same.  

(1b) If we flip a diagonal inside a quadrilateral $\begin{tikzpicture}[baseline=2ex]
\draw (0,0) -- node [below] {$s_i$} (1,0) -- (1,1) -- node [above] {$s_i$} (0,1) -- cycle;
\node at (0,0) [] {$\bullet$};
\node at (0,1) [] {$\bullet$};
\node at (1,0) [] {$\bullet$};
\node at (1,1) [] {$\bullet$};
\draw (0,1) -- (1,0);
\end{tikzpicture} \quad \longleftrightarrow \quad \begin{tikzpicture}[baseline=2ex]
\draw (0,0) -- node [below] {$s_i$} (1,0) -- (1,1) -- node [above] {$s_i$} (0,1) -- cycle;
\node at (0,0) [] {$\bullet$};
\node at (0,1) [] {$\bullet$};
\node at (1,0) [] {$\bullet$};
\node at (1,1) [] {$\bullet$};
\draw (0,0) -- (1,1);
\end{tikzpicture}$, the corresponding two adjacent nodes in the string diagram are switched and the seed is mutated at the vertex corresponding to the closed string between these two nodes.
\[
\begin{tikzpicture}[baseline=0.5ex]
\node (0) at (0,0) [] {$i$};
\node (1) at (1.5,0) [] {$-i$};
\draw (-1,0) -- (0) -- node [above] {$a$} (1) -- (2.5,0);
\end{tikzpicture} \quad \longleftrightarrow \quad \begin{tikzpicture}[baseline=0.5ex]
\node (0) at (0,0) [] {$-i$};
\node (1) at (1.5,0) [] {$i$};
\draw (-1,0) -- (0) -- node [above] {$a$} (1) -- (2.5,0);
\end{tikzpicture} \quad \quad \text{and} \quad \quad \vec{s} \overset{\mu_a}{\longleftrightarrow} \vec{s}'
\]

(2) If we perform a braid move to the labeling of the intervals along one of the bases of the trapezoid, depending on whether it is of Dynkin type $\mathrm{A}_1\times \mathrm{A}_1$, $\mathrm{A}_2$, $\mathrm{B}_2$, or $\mathrm{G}_2$, the corresponding string diagram undergoes changes as described case by case below and the corresponding seed undergoes a sequence of mutations. We will only depict the cases where a braid move takes place along the bottom base of the trapezoid; the top base cases are completely analogous.
\begin{itemize}
    \item $\C_{ij}=\C_{ji}=0$: corresponding nodes  slide across each other and  seed remains the same.
    \[
    \begin{tikzpicture}[baseline=2ex]
    \draw (0,0) -- (0,1) -- (-1,0) -- node [below] {$s_i$} (0,0) -- node [below] {$s_j$} (1,0) -- (0,1);
    \node at (0,0) [] {$\bullet$};
    \node at (1,0) [] {$\bullet$};
    \node at (0,1) [] {$\bullet$};
    \node at (-1,0) [] {$\bullet$};
    \end{tikzpicture} \quad \longleftrightarrow \quad \begin{tikzpicture}[baseline=2ex]
    \draw (0,0) -- (0,1) -- (-1,0) -- node [below] {$s_j$} (0,0) -- node [below] {$s_i$} (1,0) -- (0,1);
    \node at (0,0) [] {$\bullet$};
    \node at (1,0) [] {$\bullet$};
    \node at (0,1) [] {$\bullet$};
    \node at (-1,0) [] {$\bullet$};
    \end{tikzpicture}
    \]
    
    \item $\C_{ij}=\C_{ji}=-1$:
    \[
    \begin{tikzpicture}[baseline=2ex]
    \draw (-0.5,0) -- node [below] {$s_i$} (-1.5,0) -- (0,1) -- (-0.5,0) -- node [below] {$s_j$} (0.5,0) -- (0,1) -- (1.5,0) -- node [below] {$s_i$} (0.5,0);
    \foreach \i in {0,...,3} {
    \node at (-1.5+\i,0) [] {$\bullet$};
    }
    \node at (0,1) [] {$\bullet$};
    \end{tikzpicture}\quad \longleftrightarrow \quad \begin{tikzpicture}[baseline=2ex]
    \draw (-0.5,0) -- node [below] {$s_j$} (-1.5,0) -- (0,1) -- (-0.5,0) -- node [below] {$s_i$} (0.5,0) -- (0,1) -- (1.5,0) -- node [below] {$s_j$} (0.5,0);
    \foreach \i in {0,...,3} {
    \node at (-1.5+\i,0) [] {$\bullet$};
    }
    \node at (0,1) [] {$\bullet$};
    \end{tikzpicture}
    \]
    \[
    \begin{tikzpicture}[baseline=2ex]
    \node (0) at (-1,1) [] {$i$};
    \node (1) at (1,1) [] {$i$};
    \node (2) at (0,0) [] {$j$};
    \draw (-2,1) -- (0) -- node [above] {$a$} (1) -- (2,1);
    \draw (-2,0) -- (2) -- (2,0);
    \end{tikzpicture} \quad \longleftrightarrow \quad \begin{tikzpicture}[baseline=2ex]
    \node (0) at (-1,0) [] {$j$};
    \node (1) at (1,0) [] {$j$};
    \node (2) at (0,1) [] {$i$};
    \draw (-2,0) -- (0) -- node [below] {$a$} (1) -- (2,0);
    \draw (-2,1) -- (2) -- (2,1);
    \end{tikzpicture}
    \]
    \[
    \vec{s} \overset{\mu_a}{\longleftrightarrow} \vec{s}'
    \]
    
\item $\C_{ij}=-2$ and $\C_{ji}=-1$: 
\[
\begin{tikzpicture}[baseline=2ex]
\foreach \i in {0,...,4}
{
\draw (0,1) -- (-2+\i,0);
\node at (-2+\i,0) [] {$\bullet$};
}
\draw (-2,0) -- node [below] {$s_i$} (-1,0) -- node [below] {$s_j$} (0,0) -- node [below] {$s_i$} (1,0) -- node [below] {$s_j$} (2,0);
\node at (0,1) [] {$\bullet$};
\end{tikzpicture} \quad \longleftrightarrow \quad \begin{tikzpicture}[baseline=2ex]
\foreach \i in {0,...,4}
{
\draw (0,1) -- (-2+\i,0);
\node at (-2+\i,0) [] {$\bullet$};
}
\draw (-2,0) -- node [below] {$s_j$} (-1,0) -- node [below] {$s_i$} (0,0) -- node [below] {$s_j$} (1,0) -- node [below] {$s_i$} (2,0);
\node at (0,1) [] {$\bullet$};
\end{tikzpicture} 
\]
\[
\begin{tikzpicture}[baseline=2ex]
\node (0) at (0,1) [] {$i$};
\node (1) at (2,1) [] {$i$};
\node (2) at (1,0) [] {$j$};
\node (3) at (3,0) [] {$j$};
\draw (-1,0) -- (2) -- node [below] {$b$} (3) -- (4,0);
\draw (-1,1) -- (0) -- node [above] {$a$} (1) -- (4,1);
\end{tikzpicture}\quad \longleftrightarrow \quad 
\begin{tikzpicture}[baseline=2ex]
\node (0) at (1,1) [] {$i$};
\node (1) at (3,1) [] {$i$};
\node (2) at (0,0) [] {$j$};
\node (3) at (2,0) [] {$j$};
\draw (-1,0) -- (2) -- node [below] {$b$} (3) -- (4,0);
\draw (-1,1) -- (0) -- node [above] {$a$} (1) -- (4,1);
\end{tikzpicture}
\]
\[
\vec{s} \overset{\mu_a}{\longleftrightarrow} * \overset{\mu_b}{\longleftrightarrow} * \overset{\mu_a}{\longleftrightarrow} \vec{s}'
\]
\item $\C_{ij}=-3$ and $\C_{ji}=-1$:
\[
\begin{tikzpicture}[baseline=2ex,scale=0.8]
\foreach \i in {0,...,6} {
\draw (0,1) -- (-3+\i,0);
\node at (-3+\i, 0) [] {$\bullet$};
}
\draw (-3,0) -- node [below] {$s_i$} (-2,0)-- node [below] {$s_j$} (-1,0) -- node [below] {$s_i$} (0,0) -- node [below] {$s_j$} (1,0) -- node [below] {$s_i$} (2,0) -- node [below] {$s_j$} (3,0);
\node at (0,1) [] {$\bullet$};
\end{tikzpicture}\quad \longleftrightarrow \quad \begin{tikzpicture}[baseline=2ex,scale=0.8]
\foreach \i in {0,...,6} {
\draw (0,1) -- (-3+\i,0);
\node at (-3+\i, 0) [] {$\bullet$};
}
\draw (-3,0) -- node [below] {$s_j$} (-2,0)-- node [below] {$s_i$} (-1,0) -- node [below] {$s_j$} (0,0) -- node [below] {$s_i$} (1,0) -- node [below] {$s_j$} (2,0) -- node [below] {$s_i$} (3,0);
\node at (0,1) [] {$\bullet$};
\end{tikzpicture}
\]
\[
\begin{tikzpicture}[baseline=2ex,scale=0.7]
\node (0) at (0,1) [] {$i$};
\node (1) at (2,1) [] {$i$};
\node (4) at (4,1) [] {$i$};
\node (2) at (1,0) [] {$j$};
\node (3) at (3,0) [] {$j$};
\node (5) at (5,0) [] {$j$};
\draw (-1,0) -- (2) -- node [below] {$c$} (3) -- node [below] {$d$} (5) -- (6,0);
\draw (-1,1) -- (0) -- node [above] {$a$} (1) -- node [above] {$b$} (4) -- (6,1);
\end{tikzpicture}\quad \longleftrightarrow \quad 
\begin{tikzpicture}[baseline=2ex,scale=0.7]
\node (0) at (1,1) [] {$i$};
\node (1) at (3,1) [] {$i$};
\node (4) at (5,1) [] {$i$};
\node (2) at (0,0) [] {$j$};
\node (3) at (2,0) [] {$j$};
\node (5) at (4,0) [] {$j$};
\draw (-1,0) -- (2) -- node [below] {$c$} (3) -- node [below] {$d$} (5) -- (6,0);
\draw (-1,1) -- (0) -- node [above] {$a$} (1) -- node [above] {$b$} (4) -- (6,1);
\end{tikzpicture}
\]
\[
\vec{s} \overset{\mu_d}{\longleftrightarrow} * \overset{\mu_c}{\longleftrightarrow} * \overset{\mu_b}{\longleftrightarrow} * \overset{\mu_a}{\longleftrightarrow} * \overset{\mu_d}{\longleftrightarrow} * \overset{\mu_b}{\longleftrightarrow} * \overset{\mu_d}{\longleftrightarrow} * \overset{\mu_c}{\longleftrightarrow} * \overset{\mu_a}{\longleftrightarrow} * \overset{\mu_d}{\longleftrightarrow} \vec{s}'
\]
\end{itemize}
\end{prop}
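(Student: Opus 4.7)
The plan is to reduce every case to a purely local verification, exploiting two features of the setup: the exchange matrix is an additive sum $\epsilon=\sum_{n}\epsilon^{(n)}$ of node-local contributions, and each triangulation move of Proposition \ref{triangulation move} alters the string diagram only in a bounded region. Thus, for each move it suffices to (i) check that the prescribed local change to the string diagram is indeed what the triangulation move induces, and (ii) compare the sum of $\epsilon^{(n)}$ contributions from the affected nodes in both configurations against the effect of the claimed sequence of mutations, restricted to the strings meeting that region. Strings and nodes lying outside the affected triangles contribute identically on both sides, so they cancel out.

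First I would treat the diagonal flips. In case (1a) the two triangles of the quadrilateral host nodes on distinct levels $i\neq j$, so neither node is an endpoint of a horizontal string on the other's level; the flip merely interchanges their horizontal ordering. Writing out $\epsilon^{(n)}+\epsilon^{(n')}$ in both configurations and pairing terms according to which level-$k$ string is being crossed shows total invariance of $\epsilon$. Case (1b) swaps an $i,-i$ pair of nodes sharing a closed string $a$ between them, and here one verifies directly, using only $\mathsf{C}_{ii}=2$ and $\hat{\epsilon}_{ab}=\epsilon_{ab}d_b^{-1}$, that the new local exchange matrix is $\mu_a(\epsilon)$. The cleanest way is to check the mutation formula entry by entry on the strings adjacent to $a$ and on every string $c$ crossing the quadrilateral, where the signs of $\epsilon_{ac}$ and the parities of nodes in the two configurations conspire precisely as the mutation rule prescribes.

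Next I would dispose of the braid-move cases. For $\mathsf{C}_{ij}=\mathsf{C}_{ji}=0$ the two nodes act on disjoint levels and commute trivially. For $\mathsf{C}_{ij}=\mathsf{C}_{ji}=-1$ ($\mathrm{A}_2$), the swap rearranges an $i,i$ pair on level $i$ joined by the closed string $a$ with a single $j$-node on level $j$; one checks that the induced change equals $\mu_a$ by direct computation on the local seed, using that every diagonal across the hexagon sweeps through the same set of $j$-level strings before and after. The $\mathrm{B}_2$ ($3$ mutations) and especially the $\mathrm{G}_2$ ($10$ mutations) cases are where the genuine work lies: here I would carry out the mutation sequence step by step on the local exchange matrix and match each intermediate seed with the seed of the ``intermediate'' triangulation obtained by partial braid moves, using the skew-symmetrizability of $\hat{\epsilon}$ as a running consistency check. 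The main obstacle is $\mathrm{G}_2$, where the half-integer $\mathsf{C}_{ij}/2$ coefficients and the multipliers $d_a=\mathsf{D}_i$ must be tracked with care through ten successive mutations.

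Finally, once all local checks are complete, the global proposition follows: an arbitrary pair of triangulations related by the moves of Proposition \ref{triangulation move} can be connected by a sequence of such local moves, and the induced changes to string diagrams and seeds compose accordingly. As a sanity check one can compare the outcome with \cite[Theorem 3.15]{FGamalgamation}, whose amalgamation framework produces the same seed by assembling elementary quiver pieces associated to each triangle and predicts precisely the mutation sequences listed above.
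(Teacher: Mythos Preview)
Your proposal is correct and matches the paper's approach exactly: the paper states that the proof ``is a direct combinatorial check and will be skipped'' and refers to \cite[Theorem 3.15]{FGamalgamation}, which is precisely the strategy and reference you outline. Your plan of exploiting the additive decomposition $\epsilon=\sum_n \epsilon^{(n)}$ to localize each verification, then handling the rank-2 braid moves case by case, is the intended argument.
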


The picture at the end of Section 2 shows that on the combinatorial level the transposition map rotates the trapezoid by 180 degrees. The rotation gives rise to a bijection between the triangulations associated to (b,d) and to (d,b). Correspondingly, there is a bijection between the string diagrams, given by a horizontal flip plus a change of sign for every node. 

\begin{exmp} Below are the bijections between triangulations and between string diagrams. Within the two triangulations we point out a pair of corresponding triangles, and within the two string diagrams we point out a pair of corresponding strings.
\[
\begin{tikzpicture}[baseline=5ex]
\path [fill=lightgray] (4,0) -- (2,2) -- (6,0) -- cycle;
\foreach \i in {0,...,3}
    {
    \coordinate (u\i) at (2*\i+2,2);
    \node at (u\i) [] {$\bullet$};
    }
\foreach \i in {0,...,5}
    {
    \coordinate (d\i) at (2*\i,0);
    \node at (d\i) [] {$\bullet$};
    }
\draw [-] (u0) -- node [above] {$s_1$} (u1);
\draw [-] (u1) -- node [above] {$s_2$} (u2);
\draw [-] (u2) -- node [above] {$s_3$} (u3);
\draw [-] (d0) -- node [below] {$s_3$} (d1);
\draw [-] (d1) -- node [below] {$s_1$} (d2);
\draw [-] (d2) -- node [below] {$s_1$} (d3);
\draw [-] (d3) -- node [below] {$s_3$} (d4);
\draw [-] (d4) -- node [below] {$s_2$} (d5);
\draw (u0) -- (d0);
\draw (u0) -- (d1);
\draw (u0) -- (d2);
\draw (u0) -- (d3);
\draw (u1) -- (d3);
\draw (u2) -- (d3);
\draw (u3) -- (d3);
\draw (u3) -- (d4);
\draw (u3) -- (d5);
\end{tikzpicture}
\]
\[
\updownarrow
\]
\[
\begin{tikzpicture}[baseline=5ex]
\path [fill=lightgray] (4,2) -- (6,2) -- (8,0) -- cycle;
\foreach \i in {0,...,3}
    {
    \coordinate (u\i) at (8-2*\i,0);
    \node at (u\i) [] {$\bullet$};
    }
\foreach \i in {0,...,5}
    {
    \coordinate (d\i) at (10-2*\i,2);
    \node at (d\i) [] {$\bullet$};
    }
\draw [-] (u0) -- node [below] {$s_1$} (u1);
\draw [-] (u1) -- node [below] {$s_2$} (u2);
\draw [-] (u2) -- node [below] {$s_3$} (u3);
\draw [-] (d0) -- node [above] {$s_3$} (d1);
\draw [-] (d1) -- node [above] {$s_1$} (d2);
\draw [-] (d2) -- node [above] {$s_1$} (d3);
\draw [-] (d3) -- node [above] {$s_3$} (d4);
\draw [-] (d4) -- node [above] {$s_2$} (d5);
\draw (u0) -- (d0);
\draw (u0) -- (d1);
\draw (u0) -- (d2);
\draw (u0) -- (d3);
\draw (u1) -- (d3);
\draw (u2) -- (d3);
\draw (u3) -- (d3);
\draw (u3) -- (d4);
\draw (u3) -- (d5);
\end{tikzpicture}
\]
\vspace{22pt}
\[
\begin{tikzpicture}
\node (a1) at (1,1.2) [] {$1$};
\node (a2) at (2,1.2) [] {$1$};
\node (a3) at (3,1.2) [] {$-1$};
\node (b1) at (4,0.6) [] {$-2$};
\node (b2) at (7,0.6) [] {$2$};
\node (g1) at (0,0) [] {$3$};
\node (g2) at (5,0) [] {$-3$};
\node (g3) at (6,0) [] {$3$};
\draw (-1,-0.6) node [left] {4th level} -- (8,-0.6);
\draw (-1,0) node [left] {3rd level} -- (g1) -- (g2) -- (g3) -- (8,0);
\draw (-1,0.6) node [left] {2nd level} -- (b1) -- (b2) -- (8,0.6);
\draw (-1,1.2) node [left] {1st level} -- (a1) -- (a2);
\draw [red] (a2) -- (a3);
\draw (a3) -- (8,1.2);
\end{tikzpicture}
\]
\[
\updownarrow
\]
\[
\begin{tikzpicture}
\node (a1) at (6,1.2) [] {$-1$};
\node (a2) at (5,1.2) [] {$-1$};
\node (a3) at (4,1.2) [] {$1$};
\node (b1) at (3,0.6) [] {$2$};
\node (b2) at (0,0.6) [] {$-2$};
\node (g1) at (7,0) [] {$-3$};
\node (g2) at (2,0) [] {$3$};
\node (g3) at (1,0) [] {$-3$};
\draw (-1,-0.6) node [left] {4th level} -- (8,-0.6);
\draw (8,0) -- (g1) -- (g2) -- (g3) -- (-1,0)node [left] {3rd level} ;
\draw (8,0.6) -- (b1) -- (b2) -- (-1,0.6) node [left] {2nd level} ;
\draw (8,1.2) -- (a1) -- (a2);
\draw [red] (a2) -- (a3);
\draw (a3) -- (-1,1.2) node [left] {1st level};
\end{tikzpicture}
\]
\end{exmp}

\begin{prop} Transposition induces a seed isomorphism.
\end{prop}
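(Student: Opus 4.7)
The plan is to reduce the claim to a direct combinatorial verification using the explicit description of seeds in terms of string diagrams. As indicated in the paragraph preceding the proposition, the transposition corresponds geometrically to rotating the trapezoid by $180^\circ$: a triangulation $T$ of $(b,d)$ is sent to a triangulation $T^\vee$ of $(d^\circ, b^\circ)$ in which the roles of top and bottom bases are exchanged and the reading direction along each base is reversed. On the level of string diagrams, the horizontal levels are preserved (each level stays a level, since the levels are symmetric about the horizontal center), but within each level the nodes are enumerated in reverse order; moreover every upward triangle becomes a downward triangle and vice versa, so each node labeled $i$ becomes a node labeled $-i$.

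Starting from this, I would define the candidate seed isomorphism $\phi$ as follows: each string $a$ in the string diagram of $T$ is sent to the corresponding string $\phi(a)$ in the diagram of $T^\vee$ (namely, the mirror-image string on the same level). Three of the four pieces of seed data are then immediate: $\phi$ bijects the vertex set, it sends closed strings to closed strings and open strings to open strings (so it preserves $I^{\uf}$), and since the multiplier $d_a$ depends only on the level of $a$, $\phi$ preserves $\{d_a\}_{a \in I}$.

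The core of the proof is checking that $\phi$ carries the exchange matrix to itself. Because $\epsilon = \sum_n \epsilon^{(n)}$ is built locally node by node, it suffices to verify, for each node $n$ (say labeled $+i$) with the two incident level-$i$ strings $a$ on the left and $b$ on the right, that the contribution $\epsilon^{(n)}$ equals the contribution $\epsilon^{(\phi(n))}$ of its image in $T^\vee$. In $T^\vee$ the image node $\phi(n)$ is labeled $-i$ and has $\phi(b)$ on the left and $\phi(a)$ on the right, since the flip exchanges left and right. Applying the definition of $\epsilon^{(\cdot)}$ in this situation, the sign reversal dictated by the $-i$ label exactly cancels the swap of left and right strings: for the same-level entries, $\epsilon^{(\phi(n))}_{\phi(a)\phi(b)} = -(+1) = -1 = \epsilon^{(n)}_{ab}$, and similarly for $\epsilon^{(n)}_{ba}$. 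For a cross-level string $c$ on level $j$ passing through the triangle that contains $n$, the same cancellation gives $\epsilon^{(\phi(n))}_{\phi(a)\phi(c)} = -(-\C_{ji}/2) \cdot (\text{swap}) = -\C_{ji}/2 = \epsilon^{(n)}_{ac}$, and likewise for the three other entries involving $c$.

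The proof is therefore essentially bookkeeping; I do not anticipate a serious obstacle, only the care required to track how the $180^\circ$ rotation simultaneously swaps left and right of every node and flips every triangle's orientation, and to check that these two effects cancel exactly in the sign conventions fixed for $\epsilon^{(n)}$. Once the two cancellations are confirmed, summing over all nodes gives $\phi^*\epsilon' = \epsilon$, completing the verification that $\phi$ is a seed isomorphism.
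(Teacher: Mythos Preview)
Your proposal is correct and follows exactly the approach of the paper: the paper's proof is the single sentence ``the local exchange matrix $\epsilon^{(n)}$ is invariant under a simultaneous flip of the strings and changing sign of the nodes,'' and you have simply unpacked this claim in detail, verifying the two cancellations (sign of node versus left/right swap) for both same-level and cross-level entries.
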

\begin{proof} Note that the local exchange matrix $\epsilon^{(n)}$ is invariant under a simultaneous flip of the strings and changing sign of the nodes. The proposition follows.
\end{proof}

It will be proved in later sections that the transposition also induces cluster isomorphisms between the associated cluster ensembles.

\subsection{\texorpdfstring{Cluster Poisson Structure on $\conf^b_d\left(\mathcal{A}_\ad\right)$ and $\conf^b_d(\mathcal{B})$}{}}
\label{section.ajod}

In this section we associate a coordinate chart of  $\conf^b_d\left(\mathcal{A}_\ad\right)$ to every seed $\vec{s}$ obtained from a triangulation. 
We show that these coordinate charts are related by the cluster Poisson mutations corresponding to the seed mutations in Proposition \ref{mutation}, 
equipping  $\conf^b_d\left(\mathcal{A}_\ad\right)$ with a natural cluster Poisson structure. The space $\conf^b_d(\mathcal{B})$ inherits a cluster Poisson structure from that of $\conf^b_d\left(\mathcal{A}_\ad\right)$ via the projection  $\conf^b_d\left(\mathcal{A}_\ad\right)\rightarrow \conf^b_d(\mathcal{B})$.

Recall that points in $\conf^b_d\left(\mathcal{A}_\ad\right)$ are configurations of flags of the following form.
\[
\xymatrix@=10ex{ \A^0 \ar[r]^{s_{i_1}} \ar@{-}[d] & \B^1 \ar[r]^{s_{i_2}} & \B^2 \ar[r] & \dots \ar[r] & \B^{m-1} \ar[r]^{s_{i_m}} & \B^m \ar@{-}[d] \\
\B_0 \ar[r]_{s_{j_1}} & \B_1 \ar[r]_{s_{j_2}} & \B_2 \ar[r] & \dots \ar[r] & \B_{n-1} \ar[r]_{s_{j_n}} & \A_n}
\]

When we draw certain diagonals on the trapezoid to make a triangulation, we can view each of the extra diagonals as imposing a general position condition on the underlying undecorated flags it connects. The coordinate system we construct will depend on a choice of such triangulation.

By Corollary \ref{induced pinning} we know that the left edge $\xymatrix{\B_0\ar@{-}[r] & \A^0}$ is equivalent to a pinning, and by Lemma \ref{decorated general position} we may use the $\G$-action to move the whole configuration to a unique representative with $\B_0=\B_-$ and $\A^0=\U_+$. We call such unique representative the \emph{special representative} and we will use it heavily for the discussion below. Note that in this particular representative, the underlying undecorated flag of $\A^0$ is $\B_+$. The key lemma to define the cluster Poisson coordinates is the following.

\begin{lem}\label{lusztig coord} Fix the special representative with $\A^0=\U_+$ and $\B_0=\B_-$. One can associate a unique unipotent element $e_{-i}(p)$ to each triangle of the form $\begin{tikzpicture}[baseline=2ex] \draw (-0.5,1) -- node[above] {$s_i$} (0.5,1)-- (0,0) -- cycle;
\end{tikzpicture}$ and a unique unipotent element $e_i(q)$ to each triangle of the form $\begin{tikzpicture}[baseline=2ex] \draw (-0.5,0) -- node[below] {$s_i$} (0.5,0) -- (0,1) -- cycle;
\end{tikzpicture}$, with $p,q\neq 0$, such that for any diagonal $\xymatrix{\B_k \ar@{-}[r] & \B^l}$ in the triangulation (including the right most edge $\xymatrix{\B_n \ar@{-}[r] & \B^m}$),
\[
\vcenter{\vbox{\xymatrix{\B^l \ar@{-}[d] \\ \B_k}}}=\vcenter{\vbox{\xymatrix{x\B_+ \ar@{-}[d] \\ x\B_-}}}.
\]
where $x$ is the product of unipotent elements $e_{-i}(p)$ and $e_i(q)$ associated to triangles to the left of the diagonal according to their order from left to right on the triangulation.
\end{lem}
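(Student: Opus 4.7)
The plan is to prove the lemma by induction, building up the triangulation one triangle at a time by sweeping from the leftmost edge to the rightmost. For the base case, the leftmost diagonal $\xymatrix{\B_0 \ar@{-}[r] & \A^0}$ has underlying undecorated pair $(\B_+, \B_-)$ in the special representative, which equals $e\cdot(\B_+, \B_-)$ with $x=e$ the empty product, so the lemma holds trivially at this diagonal.

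For the inductive step, suppose the diagonal $\xymatrix{\B_k \ar@{-}[r] & \B^l}$ has underlying pair $(\B^l, \B_k) = x\cdot(\B_+, \B_-)$, where $x$ is the product of unipotent elements assigned to all triangles to its left. If the next triangle to the right has its $s_i$-edge on the bottom (with bottom vertices $\B_k, \B_{k+1}$ satisfying $d_-(\B_k, \B_{k+1})=s_i$ and apex $\B^l$ on top), then translating by $x^{-1}$ reduces the problem to parametrizing flags at Tits distance $s_i$ from $\B_-$ that are opposite to $\B_+$. Invoking Corollary \ref{2.30} applied to the $\SL_2$-subgroup associated with $\alpha_i$, such flags form the image of a bijection $\mathbb{G}_m \to \{\text{such flags}\}$ given by $q \mapsto e_i(q)\B_-$, so there exists a unique $q \in \mathbb{G}_m$ with $\B_{k+1}=xe_i(q)\B_-$. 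The key observation is that $e_i(q)\in\U_+\subset\B_+$, so $\B^l=x\B_+=xe_i(q)\B_+$ is unchanged; setting $x':=xe_i(q)$ therefore yields $(\B^l,\B_{k+1})=x'\cdot(\B_+,\B_-)$, completing the inductive step. The case of a top triangle (with its $s_i$-edge on top) is symmetric: one parametrizes $\B^{l+1}=xe_{-i}(p)\B_+$ with a unique $p\in\mathbb{G}_m$, and uses $e_{-i}(p)\in\U_-\subset\B_-$ to see that $\B_k=x\B_-=xe_{-i}(p)\B_-$ is unchanged, giving $x':=xe_{-i}(p)$.

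The main content to verify is the parametrization step: that flags at Tits distance $s_i$ from $\B_-$ and opposite to $\B_+$ are exactly $\{e_i(q)\B_- : q\in \mathbb{G}_m\}$ (and the analogous statement on top). This reduces to an explicit $\SL_2$-computation inside the rank-one subgroup associated to $\alpha_i$, where $q \mapsto e_i(q)\B_-$ is the standard $\mathbb{A}^1$-parametrization of $\mathbb{P}^1\setminus\{\B_+\}$ that hits $\B_-$ exactly at $q=0$, so restricting to $q\in\mathbb{G}_m$ gives the desired bijection onto $\mathbb{P}^1\setminus\{\B_-,\B_+\}$. Once this is in hand, the unipotent parameter of each triangle is uniquely determined at the moment it is processed, and the resulting $x$ at a given diagonal depends only on the set of triangles to its left (not on the particular sweep order by which they are added), so the assignment is canonical.
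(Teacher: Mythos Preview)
Your proof is correct and follows essentially the same inductive strategy as the paper: translate by $x^{-1}$ to reduce to the standard pair $(\B_+,\B_-)$, parametrize the new flag uniquely, and observe that the unipotent factor lies in the stabilizer of the other flag. The only minor difference is in justifying the parametrization step: the paper argues via Gaussian decomposability that the new flag can be written as $[x]_-\B_+$ with $[x]_-\in\U_-$, then computes $\U_-\cap\B_+s_i\B_+=\U_{-i}$, whereas you invoke the $\SL_2$ picture directly to see that $\{e_i(q)\B_-:q\in\mathbb{G}_m\}$ (resp.\ $\{e_{-i}(p)\B_+:p\in\mathbb{G}_m\}$) is exactly the set of flags at Tits distance $s_i$ from $\B_-$ (resp.\ $\B_+$) and opposite to $\B_+$ (resp.\ $\B_-$).
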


\begin{exmp}\label{3.11} Let us convey the meaning of the above lemma in a concrete example. Consider the following triangulation. Suppose we have fixed the special representative with $\A^0=\U_+$ and $\B_0=\B_-$. Then the lemma claims that there exist unique unipotent elements $e_{-i}(p)$ and $e_i(q)$ associated to the triangles (with $p,q\neq 0$), such that any pair of undecorated flags connected by a diagonal can be obtained by moving the pair $\left(\B_-,\B_+\right)$ by a group element that is the product of those unipotent elements to the left of that diagonal.
\[
\begin{tikzpicture}[scale=0.8]
\node (u0) at (3,3) [] {$\U_+$};
\node (d0) at (0,0) [] {$\B_-$};
\foreach \i in {1,...,3} {
\node (u\i) at (3*\i+3,3) [] {$\B^\i$};
}
\foreach \i in {1,...,4} {
\node (d\i) at (3*\i,0) [] {$\B_\i$};
}
\node (d5) at (15,0) [] {$\A_5$};
\draw (d0) -- (u0) -- (d1);
\draw (d2) -- (u0) -- (d3) -- (u1);
\draw (u2) -- (d3) -- (u3) -- (d4);
\draw (u3) -- (d5);
\draw [->] (u0) -- node [above] {$s_1$} (u1);
\draw [->] (u1) -- node [above] {$s_2$} (u2);
\draw [->] (u2) -- node [above] {$s_3$} (u3);
\draw [->] (d0) -- node [below] {$s_3$} (d1);
\draw [->] (d1) -- node [below] {$s_1$} (d2);
\draw [->] (d2) -- node [below] {$s_1$} (d3);
\draw [->] (d3) -- node [below] {$s_3$} (d4);
\draw [->] (d4) -- node [below] {$s_2$} (d5);
\node at (2,1) [] {$e_3\left(q_1\right)$};
\node at (4,1) [] {$e_1\left(q_2\right)$};
\node at (6,1) [] {$e_1\left(q_3\right)$};
\node at (6,2) [] {$e_{-1}\left(p_1\right)$};
\node at (8,2) [] {$e_{-2}\left(p_2\right)$};
\node at (10,2) [] {$e_{-3}\left(p_3\right)$};
\node at (11,1) [] {$e_3\left(q_4\right)$};
\node at (13,1) [] {$e_2\left(q_5\right)$};
\end{tikzpicture}
\]
For example, $\vcenter{\vbox{\xymatrix{\B^1 \ar@{-}[d] \\ \B_3}}}=\vcenter{\vbox{\xymatrix{x\B_+  \ar@{-}[d] \\ x\B_-}}}$ with $x=e_3\left(q_1\right)e_1\left(q_2\right) e_1\left(q_3\right)e_{-1}\left(p_1\right)$,
and $\vcenter{\vbox{\xymatrix{\B^3 \ar@{-}[d] \\ \B_4}}} =\vcenter{\vbox{\xymatrix{y\B_+\ar@{-}[d] \\ y\B_-}}}$ with $y=e_3\left(q_1\right)e_1\left(q_2\right) e_1\left(q_3\right)e_{-1}\left(p_1\right)e_{-2}\left(p_2\right)e_{-3}\left(p_3\right)e_3\left(q_4\right)$.
\end{exmp}

\noindent\textit{Proof of Lemma \ref{lusztig coord}.} Let us first look at the left most triangle. Without loss of generality we may assume that it is of the following shape (the upside down case is similar).
\[
\begin{tikzpicture}
\node (u) at (0,0) [] {$\B_-$};
\node (d0) at (-1,2) [] {$\B_+$};
\node (d1) at (1,2) [] {$\B^1$};
\draw [->] (d0) -- node[above] {$s_i$} (d1);
\draw (d0) -- (u) -- (d1);
\end{tikzpicture}
\]
We would like to show that $\B^1=e_{-i}(p)\B_+$ for some unique $p\neq 0$. Suppose $\B_1=x\B_+$. Then from $\xymatrix{\B_-\ar@{-}[r] & x\B_+}$ we know that $x$ is Gaussian decomposable, i.e., $x=[x]_-[x]_0[x]_+$. In particular $x\B_+=[x]_-\B_+$, so without loss of generality we may replace $x$ by $[x]_-$ and assume that $x\in \U_-$. Now the top edge also tells us that $x\in \B_+s_i\B_+$. But $\U_-\cap \B_+s_i\B_+=\U_{-i}=\left\{e_{-i}(p) \right\}\cong \mathbb{A}^1_p$. This shows that $x=e_{-i}(p)$ for some $p$, and we know that $p\neq 0$ because $\B_1\neq \B_+$. Also note that for different values of $p$, $e_{-i}(p)\B_+$ are distinct flags. Therefore $\B^1=e_{-i}(p)\B_+$ for some unique $p\neq 0$.

Now we move onto the next triangle. But instead of doing a new argument, we can move the whole configuration by $e_{-i}(-p)$ so that $\B^1$ becomes $\B_+$, and then repeat the same argument above. To be more precise, let us suppose that the second triangle looks like the following.
\[
\begin{tikzpicture}[baseline=5ex,scale=0.8]
\node (u0) at (-1,2) [] {$\B_+$};
\node (u1) at (1,2) [] {$e_{-i}(p)\B_+$};
\node (d0) at (0,0) [] {$\B_-$};
\node (d1) at (2,0) [] {$\B_1$};
\draw [->] (u0) -- node [above] {$s_i$} (u1);
\draw [->] (d0) -- node [below] {$s_j$} (d1);
\draw (u0) -- (d0) -- (u1) -- (d1);
\end{tikzpicture}\quad \quad \rightsquigarrow\begin{array}{c}
     \text{move the whole} \\
     \text{configuration} \\
     \text{by $e_{-i}(-p)$}
\end{array} \rightsquigarrow\quad \quad \begin{tikzpicture}[baseline=5ex,scale=0.8]
\node (u0) at (-1,2) [] {$e_{-i}(-p)\B_+$};
\node (u1) at (1,2) [] {$\B_+$};
\node (d0) at (0,0) [] {$\B_-$};
\node (d1) at (2,0) [] {$e_{-i}(-p)\B_1$};
\draw [->] (u0) -- node [above] {$s_i$} (u1);
\draw [->] (d0) -- node [below] {$s_j$} (d1);
\draw (u0) -- (d0) -- (u1) -- (d1);
\end{tikzpicture}
\]
Then by applying the same argument to the second triangle, we get $e_{-i}(-p)\B_1=e_j(q)\B_-$, and hence $\B_1=e_{-i}(p)e_j(q)\B_-$. 

We can repeat such argument again for the third triangle by first moving the special representative by $\left(e_{-i}(p)e_j(q)\right)^{-1}$, and similarly for the fourth triangle and so on. Each step will produce a unique unipotent element as required by the lemma. \qed

\begin{defn} The non-zero numbers $p_i$ and $q_i$ are called \emph{Lusztig factorization coordinates}. 
\end{defn}

From the construction of $\G_\ad$ (see Appendix \ref{appen.a} for more details), we have a basis $\left\{\omega_i^\vee\right\}_{i=1}^{\tilde{r}}$ for the cocharacter lattice of the maximal torus $\T_\ad$, and we can factor the unipotent elements $e_i(q)$ and $e_{-i}(p)$ as
\[
e_i(q)=q^{\omega_i^\vee}e_i q^{-\omega_i^\vee} \quad \quad \text{and} \quad \quad e_{-i}(p)=p^{-\omega_i^\vee}e_{-i} p^{\omega_i^\vee}.
\]
Therefore instead of multiplying the unipotent group elements $e_{-i}\left(p\right)$ and $e_{j}\left(q\right)$ according to their order in the triangulation, we can split each of these unipotent group elements into three factors and write associate them to different parts of the corresponding string diagram. More precisely, we associate the maximal torus elements with strings and associate $e_{\pm i}$ with the nodes $\pm i$. 
\[
\begin{tikzpicture}[baseline=2ex]
    \node at (-0.5,1) [] {$\bullet$};
    \node at (0.5,1) [] {$\bullet$};
    \node at (0,0) [] {$\bullet$};
    \draw (-0.5,1) -- node [above] {$s_i$} (0.5,1) -- (0,0) -- cycle;
    \node at (0,0.7) [] {$p$};
    \end{tikzpicture}  \quad \rightsquigarrow \quad e_{-i}(p) \quad \rightsquigarrow \quad \begin{tikzpicture}[baseline=-0.5ex]
    \node (0) at (0,0) [] {$e_{-i}$};
    \draw (-2,0) -- node [above] {$p^{-\omega_i^\vee}$} (0) -- node [above] {$p^{\omega_i^\vee}$} (2,0);
    \end{tikzpicture} 
\]
\[
\begin{tikzpicture}[baseline=2ex]
    \node at (-0.5,0) [] {$\bullet$};
    \node at (0.5,0) [] {$\bullet$};
    \node at (0,1) [] {$\bullet$};
    \draw (-0.5,0) -- node [below] {$s_i$} (0.5,0) -- (0,1) -- cycle;
    \node at (0,0.3) [] {$q$};
    \end{tikzpicture}  \quad \rightsquigarrow \quad e_{\alpha}(q) \quad \rightsquigarrow \quad \begin{tikzpicture}[baseline=-0.5ex]
    \node (0) at (0,0) [] {$e_i$};
    \draw (-2,0) -- node [above] {$q^{\omega_i^\vee}$} (0) -- node [above] {$q^{-\omega_i^\vee}$} (2,0);
    \end{tikzpicture} 
\]

We then make the following observation. To a closed string $a$ on the $i$th level there are two maximal torus elements attached, one at each end; we can multiply them and get a maximal torus element of the form $X_a^{\omega_i^\vee}$. To an open string $a$ on the $i$th level on the left side of the string diagram there is one maximal torus element attached, and we can put it in the form $X_a^{\omega_i^\vee}$. Note that these numbers $X_a$ are ratios of Lusztig factorization coordinates and hence they are non-zero. 

Moreover, any ordered product of unipotent group elements on the triangulization diagram is equal to an ordered product of maximal torus elements of the form $X_a^{\omega_i^\vee}$ associated to the strings and the Chevalley generators $e_{\pm i}$ associated to the nodes within the corresponding part of the string diagram, according to their order on the string diagram. Note that since $X^{\omega_i^\vee}e_{\pm j}=e_{\pm j}X^{\omega_i^\vee}$ whenever $i\neq j$, ambigous ordering bewteen factors on different levels does not affect the outcome of the products.

However, the Lusztig factorization coordinates are not enough to define the cluster Poisson coordinates. After finding all the Lusztig factorization coordinates, we can multiply all the unipotent elements associated to the triangles together according to the order of the triangulation and get a group element $g$, and by Lemma \ref{lusztig coord}, the right most edge of the special representative is $\vcenter{\vbox{\xymatrix{\B^m\ar@{-}[d] \\ \B_n}}}=\vcenter{\vbox{\xymatrix{g\B_+ \ar@{-}[d] \\ g\B_-}}}$. Since $\A_n$ is a decoration over $\B_n=g\B_-$, there must be some $t\in \T_\ad$ such that $\A_n=\U_-t^{-1}g^{-1}$. Furthermore, since $\left\{\omega_i^\vee\right\}_{i=1}^{\tilde{r}}$ is a basis for the cocharacter lattice of $\T_\ad$, we can write $t=\prod_{i=1}^{\tilde{r}}t_i^{\omega_i^\vee}$ for $t_i\in \mathbb{G}_m$.

Now for an open string $a$ on the $i$th level on the right side of the string diagram, there may be one maximal torus element coming from its left end point, which we can write as $r_i^{\omega_i^\vee}$. We then define $X_a:=r_i t_i\in \mathbb{G}_m$ to be the number we associate to this open string $a$. 

\begin{defn}\label{X def} We call the numbers $X_a$ the \emph{cluster Poisson coordinates} associated to the seed (string diagram/triangulation) on $\conf^b_d\left(\mathcal{A}_\ad\right)$.
\end{defn}

\begin{exmp} Let us demonstrate the construction of the cluster Poisson coordinates associated to the triangulation in Example \ref{3.11}. Remember that we have $r=3$ and $l=1$.
\[
\begin{tikzpicture}
\node (u0) at (2,2) [] {$\A^0$};
\node (d0) at (0,0) [] {$\B_0$};
\foreach \i in {1,...,3} {
\node (u\i) at (2*\i+2,2) [] {$\B^\i$};
}
\foreach \i in {1,...,4} {
\node (d\i) at (2*\i,0) [] {$\B_\i$};
}
\node (d5) at (10,0) [] {$\A_5$};
\draw (d0) -- (u0) -- (d1);
\draw (d2) -- (u0) -- (d3) -- (u1);
\draw (u2) -- (d3) -- (u3) -- (d4);
\draw (u3) -- (d5);
\draw [->] (u0) -- node [above] {$s_1$} (u1);
\draw [->] (u1) -- node [above] {$s_2$} (u2);
\draw [->] (u2) -- node [above] {$s_3$} (u3);
\draw [->] (d0) -- node [below] {$s_3$} (d1);
\draw [->] (d1) -- node [below] {$s_1$} (d2);
\draw [->] (d2) -- node [below] {$s_1$} (d3);
\draw [->] (d3) -- node [below] {$s_3$} (d4);
\draw [->] (d4) -- node [below] {$s_2$} (d5);
\node at (1.3,0.7) [] {$q_1$};
\node at (2.7,0.7) [] {$q_2$};
\node at (3.9,0.7) [] {$q_3$};
\node at (4.1,1.3) [] {$p_1$};
\node at (5.3,1.3) [] {$p_2$};
\node at (6.7,1.3) [] {$p_3$};
\node at (7.3,0.7) [] {$q_4$};
\node at (8.7,0.7) [] {$q_5$};
\end{tikzpicture}
\]
The string diagram of this triangulation is given in Example \ref{3.4}. By factoring the unipotent group elements and taking in the extra factor of $t=\prod_i t_i^{\omega_i^\vee}$ induced by the decoration $\A_n$, we get the following group elements associated to strings and nodes of the string diagram:
\[
\begin{tikzpicture}[scale=0.7]
\node (a1) at (2,3) [] {$e_1$};
\node (a2) at (4,3) [] {$e_1$};
\node (a3) at (6,3) [] {$e_{-1}$};
\node (b1) at (7,1.5) [] {$e_{-2}$};
\node (b2) at (11,1.5) [] {$e_2$};
\node (g1) at (1,0) [] {$e_3$};
\node (g2) at (8,0) [] {$e_{-3}$};
\node (g3) at (10,0) [] {$e_3$};
\draw (-1.5,-1.5) node [left] {4th} -- node [above] {$X_{\binom{4}{0}}^{\omega_4^\vee}$} (13.5,-1.5);
\draw (-1.5,0) node [left] {3rd} -- node [above] {$X_{\binom{3}{0}}^{\omega_3^\vee}$}  (g1) -- node [above] {$X_{\binom{3}{1}}^{\omega_3^\vee}$} (g2) -- node [above] {$X_{\binom{3}{2}}^{\omega_3^\vee}$} (g3) -- node [above] {$X_{\binom{3}{3}}^{\omega_3^\vee}$} (13.5,0);
\draw (-1.5,1.5) node [left] {2nd} -- node [above] {$X_{\binom{2}{0}}^{\omega_2^\vee}$} (b1) -- node [above] {$X_{\binom{2}{1}}^{\omega_2^\vee}$} (b2) -- node [above] {$X_{\binom{2}{2}}^{\omega_2^\vee}$} (13.5,1.5);
\draw (-1.5,3) node [left] {1st} -- node [above] {$X_{\binom{1}{0}}^{\omega_1^\vee}$} (a1) --node [above] {$X_{\binom{1}{1}}^{\omega_1^\vee}$}  (a2) -- node [above] {$X_{\binom{1}{2}}^{\omega_1^\vee}$} (a3) -- node [above] {$X_{\binom{1}{3}}^{\omega_1^\vee}$} (13.5,3);
\end{tikzpicture}
\]
where
\begin{align*}
X_{\binom{1}{0}}=&q_2  & X_{\binom{1}{1}}=& \frac{q_3}{q_2} & X_{\binom{1}{2}}=&\frac{1}{q_3p_1} &  X_{\binom{1}{3}}=& p_1t_1\\
X_{\binom{2}{0}}=&\frac{1}{p_2} & X_{\binom{2}{1}}=& p_2q_5 & X_{\binom{2}{2}}=&\frac{t_2}{q_5} & & \\
X_{\binom{3}{0}}=&q_1 & X_{\binom{3}{1}}=&\frac{1}{q_1p_3} & X_{\binom{3}{2}}=&p_3q_4 &  X_{\binom{3}{3}}=&\frac{t_3}{q_4}\\
X_{\binom{4}{0}}=& t_4. & & & & & &
\end{align*}

Note that we can recover all the Lusztig factorization coordinates from the cluster Poisson coordinates, and hence we can determine all the underlying undecorated flags in the special representative with $\vcenter{\vbox{\xymatrix{\A^0 \ar@{-}[d] \\ \B_0}}}=\vcenter{\vbox{\xymatrix{\A_+ \ar@{-}[d] \\ \B_-}}}$ using the cluster Poisson coordinates. 
\end{exmp}

Recall that the transposition map induces a natural bijection between string diagrams of $(b,d)$ and string diagrams of $\left(d^\circ, b^\circ\right)$ and a natural bijections between strings inside the two corresponding string diagrams, as well as a seed isomorphism between the associated seeds. Now we would like to lift such correspondence to the level of cluster Poisson coordinate charts.

\begin{prop}\label{3.15} Let $\left(X_a\right)$ and $\left(X'_a\right)$ be the cluster Poisson coordinate charts associated to two corresponding string diagrams under transposition. Then under the transposition morphism $t:\conf^b_d\left(\mathcal{A}_\ad\right)\rightarrow \conf^{d^\circ}_{b^\circ}\left(\mathcal{A}_\ad\right)$, $t^*X'_a=X_a$.
\end{prop}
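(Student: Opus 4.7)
The plan is to verify the equality directly on the Lusztig factorization side. First, fix the special representative of the original configuration with $\A^0 = \U_+$ and $\B_0 = \B_-$; by Lemma~\ref{lusztig coord} this yields elementary unipotent factors $e_{\pm i}(u_k)$ attached to each triangle, a total product $g \in \G_\ad$, and a torus element $t = \prod_i t_i^{\omega_i^\vee} \in \T_\ad$ such that $\A_n = \U_- t^{-1} g^{-1}$. The cluster Poisson coordinates $X_a$ then arise by splitting each elementary unipotent via $e_{\pm i}(u) = u^{\pm \omega_i^\vee} e_{\pm i} u^{\mp \omega_i^\vee}$ and assembling the resulting torus factors at the endpoints of each string.

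Next, apply the transposition, using the three identities that $(\,\cdot\,)^t$ is an anti-involution, $e_{\pm i}(u)^t = e_{\mp i}(u)$, and $s^t = s$ for $s \in \T_\ad$. A direct computation then yields $(\A_n)^t = (g^t)^{-1} t^{-1}\, \U_+$ and $(\B^m)^t = (g^t)^{-1}\, \B_-$ as left cosets. Because $\U_+ \cap \B_- = \{e\}$, the unique element of $\G_\ad$ normalizing this transposed left edge to the pinning $(\U_+, \B_-)$ is $h = tg^t$. Hence the new terminal decorated flag is $\A_n^{\mathrm{new}} = \U_- (g^t)^{-1} t^{-1}$, and applying Lemma~\ref{lusztig coord} to the new special representative will produce the new Lusztig data.

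To extract this new Lusztig factorization, commute $t^{-1}$ leftward through $(g^t)^{-1}$ using the conjugation rule $e_{\pm j}(v)\,t^{-1} = t^{-1}\,e_{\pm j}(t_j^{\pm 1} v)$. This gives $t_{\mathrm{new}} = t$ together with rescaled Lusztig coordinates $u'_k = u_k \cdot t_{|a_k|}^{-\sgn(a_k)}$, where $a_k \in \{\pm i\}$ denotes the signed label of the $k$-th original node. Equivalently, $g_{\mathrm{new}} = t g^t t^{-1}$, from which one verifies that the terminal flags $g_{\mathrm{new}}\B_\pm = tg^t\B_\pm$ match the transported data. Under the horizontal flip of the string diagram (with all node signs reversed), these monomial rescalings exactly compensate for the transfer of the decorative torus contributions $t_i^{\omega_i^\vee}$ between corresponding strings on each level. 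A termwise verification on each of the three string types (closed, left-open, right-open) then shows that the product of torus factors on every pair of corresponding strings agrees, establishing $t^* X'_a = X_a$. The main obstacle is the exponent bookkeeping, which is most transparently handled by first verifying the one-triangle case $\conf^e_{s_i}$ (where $q' = q/t_i$ and $t'_i = t_i$ readily give matching $X$-coordinates $q$ and $t_i/q$) before passing to the general case by induction on the number of triangles.
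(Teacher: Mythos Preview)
Your proposal is correct and follows essentially the same route as the paper. Both arguments identify the normalizing element $h = tg^t$ that brings the transposed configuration into special position, compute that each Lusztig coordinate rescales by $t_i^{\pm 1}$ according to the sign of its node (your formula $u'_k = u_k\,t_{|a_k|}^{-\sgn(a_k)}$ is exactly the paper's $q\mapsto qt_i^{-1}$, $p\mapsto pt_i$), and then check that these rescalings cancel in each $X$-coordinate; the only cosmetic difference is that you package the rescaling globally as $g_{\mathrm{new}} = tg^tt^{-1}$ while the paper works one triangle at a time.
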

\begin{proof} Since cluster Poisson coordinates are computed using Lusztig factorization coordinates, let us first take a look at how transposition changes Lusztig factorization coordinates. Let $g$ be the group element that is the ordered product of the unipotent elements associated to the triangles and let $t=\prod_{i=1}^{\tilde{r}}t_i^{\omega_i^\vee}$ be the maximal torus element used to make $\A_n=\U_-t^{-1}g^{-1}$. Without loss of generality we may assume that the last triangle in the special representative is of the following form 
\[
\begin{tikzpicture}
\node (u) at (0,2) [] {$g\B_+$};
\node (d0) at (-1.5,0) [] {$ge_i(-q)\B_- $};
\node (d1) at (1.5,0) [] {$\U_-t^{-1}g^{-1}$};
\draw (d0) -- (u) -- (d1);
\draw [->] (d0) -- node [below] {$s_i$} (d1);
\node at (0, 0.7) [] {$e_i(q)$};
\end{tikzpicture}
\]
Then under transposition, this triangle is mapped to the following triangle (here $\overset{tg^t}{\rightsquigarrow}$ means move the whole configuration by $tg^t$).
\[
\begin{tikzpicture}[baseline=5ex, scale=0.7]
\node (d) at (0,0) [] {$\B_-g^t$};
\node (u0) at (-2,2) [] {$\left(g^{-1}\right)^tt^{-1}\U_+$};
\node (u1) at (2,2) [] {$\B_+e_{-i}(-q)g^t$};
\draw (u0) -- (d) -- (u1);
\draw [->] (u0) --  node [above] {$s_i$} (u1);
\end{tikzpicture} \quad \overset{tg^t}{\rightsquigarrow} \quad \begin{tikzpicture}[baseline=5ex, scale=0.7]
\node (d) at (0,0) [] {$\B_-$};
\node (u0) at (-1.5,2) [] {$\U_+$};
\node (u1) at (1.5,2) [] {$\B_+e_{-i}(-q)t^{-1}$};
\draw (u0) -- (d) -- (u1);
\draw [->] (u0) --  node [above] {$s_i$} (u1);
\end{tikzpicture} \]
\[
=\quad \begin{tikzpicture}[baseline=5ex, scale=0.7]
\node (d) at (0,0) [] {$\B_-$};
\node (u0) at (-1.5,2) [] {$\U_+$};
\node (u1) at (1.5,2) [] {$ e_{-i}(qt_i^{-1} )B_+$};
\draw (u0) -- (d) -- (u1);
\draw [->] (u0) --  node [above] {$s_i$} (u1);
\end{tikzpicture}
\]
Therefore the change of Lusztig factorization coordinate for such configuration is $q\mapsto qt_i^{-1} $. By similar computations, it is not hard to find that under transposition, the Lusztig factorization coordinates for corresponding triangles in the two triangulations are related by
\[
\begin{tikzpicture}[baseline=5ex]
    \node at (-1,0) [] {$\bullet$};
    \node at (1,0) [] {$\bullet$};
    \node at (0,2) [] {$\bullet$};
    \draw (-1,0) -- node [below] {$s_i$} (1,0) -- (0,2) -- cycle;
    \node at (0,0.7) [] {$q$};
    \end{tikzpicture}
 \quad \overset{\text{transposition}}{\longrightarrow} \quad 
    \begin{tikzpicture}[baseline=5ex]
    \node at (-1,2) [] {$\bullet$};
    \node at (1,2) [] {$\bullet$};
    \node at (0,0) [] {$\bullet$};
    \draw (-1,2) -- node [above] {$s_i$} (1,2) -- (0,0) -- cycle;
    \node at (0,1.3) [] {$qt_i^{-1}$};
    \end{tikzpicture}
\]
\[
 \begin{tikzpicture}[baseline=5ex]
    \node at (-1,2) [] {$\bullet$};
    \node at (1,2) [] {$\bullet$};
    \node at (0,0) [] {$\bullet$};
    \draw (-1,2) -- node [above] {$s_i$} (1,2) -- (0,0) -- cycle;
    \node at (0,1.3) [] {$p$};
    \end{tikzpicture}
 \quad \overset{\text{transposition}}{\longrightarrow} \quad 
    \begin{tikzpicture}[baseline=5ex]
    \node at (-1,0) [] {$\bullet$};
    \node at (1,0) [] {$\bullet$};
    \node at (0,2) [] {$\bullet$};
    \draw (-1,0) -- node [below] {$s_i$} (1,0) -- (0,2) -- cycle;
    \node at (0,0.7) [] {$pt_i $};
    \end{tikzpicture}
\]

Since the cluster Poisson coordinates associated to the closed strings are of the form $p/p$, $q/q$, $pq$ or $(pq)^{-1}$, they do not change under transposition as a result of the formulas above.

Let $a'$ be an open string on the $i$th level on the left of the string diagram after transposition. Then $a'$ corresponds to an open string $a$ on the $i$th level on the right of the string diagram before transposition. If the right end point of $a'$ is a node $i$, then the left end point of $a$ is a node $-i$. Let $p$ be the Lusztig factorization coordinate associated to this node $-i$. From the construction of cluster Poisson coordinates and the coordinate transformation formula for Lusztig factorization coordinates above, we see that 
\[
X_a=pt_i =X_{a'}.
\]
On the other hand, if the right end point of $a'$ is a node $-i$, then the left end point of $a$ is a node $i$. Let $q$ be the Lusztig coordinate associated to this node $i$. Then again we have
\[
X_a=q^{-1}t_i=X_{a'}.
\]

Since if we apply transposition twice we get back the identity map, it follows that $t^*X_{a'}=X_a$ also holds for open strings $a'$ on the right. This finishes the proof.
\end{proof}

\begin{cor} The cluster Poisson coordinates associated to closed strings are unchanged under a change of decorations over $\B^0$ and $\B_n$.
\end{cor}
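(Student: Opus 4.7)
The plan is to exploit the structural observation that each closed‐string cluster Poisson coordinate $X_a$ is a monomial in the Lusztig factorization coordinates $p_k$, $q_k$ alone, with no dependence on the torus parameters $t_i$ that record the decoration $\A_n$. This is visible from Definition \ref{X def} together with the factorizations $e_i(q)=q^{\omega_i^\vee}e_i q^{-\omega_i^\vee}$ and $e_{-i}(p)=p^{-\omega_i^\vee}e_{-i}p^{\omega_i^\vee}$: a closed string on the $i$th level lies between two consecutive nodes, and the torus contributions from the two adjacent triangles combine to a monomial of the form $(p^{\mp 1} q^{\pm 1})^{\omega_i^\vee}$, whereas the $t_i$ only enter the torus factor sitting on the rightmost open string of the $i$th level.

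First I will handle the case of a change of decoration over $\B_n$. Since the special representative is determined by the left pinning $(\B_0,\A^0)$ via Lemma \ref{decorated general position}, modifying $\A_n$ while keeping $(\B_0,\A^0)$ and the whole underlying chain of flags fixed does not alter the special representative; by Lemma \ref{lusztig coord} the Lusztig factorization coordinates $p_k,q_k$ are therefore unchanged. Only the element $t=\prod_i t_i^{\omega_i^\vee}$ expressing the new $\A_n$ is modified, and by the preliminary observation this leaves every closed‐string $X_a$ fixed.

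For a change of decoration over $\B^0$ I will invoke the transposition morphism together with Proposition \ref{3.15}. Transposition sends the decoration $\A^0$ to $(\A^0)^t$, which appears as the rightmost decoration of the transposed configuration in $\conf^{d^\circ}_{b^\circ}(\mathcal{A}_\ad)$. Consequently, changing the decoration over $\B^0$ on the original side corresponds, under $(-)^t$, to changing the decoration over the rightmost flag in the transposed side. By the previous paragraph applied to $\conf^{d^\circ}_{b^\circ}(\mathcal{A}_\ad)$, the closed‐string Poisson coordinates there are invariant under this change, and Proposition \ref{3.15} identifies them with the closed‐string coordinates on the original side, which are therefore also invariant.

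The only real subtlety is that a change of decoration over $\B^0$ does in fact shift the special representative and modify individual Lusztig coordinates $p_k,q_k$ by characters $\alpha_j(t)^{\pm 1}$; a direct approach would require checking that these characters cancel in each closed‐string monomial. Routing the argument through transposition and Proposition \ref{3.15} avoids this bookkeeping entirely, so I expect the proof to be short once the structural observation about closed strings is in place.
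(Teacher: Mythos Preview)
Your proposal is correct and follows essentially the same approach as the paper: the paper's proof also observes that the $\A_n$ case is immediate from the construction (closed-string coordinates are monomials in the $p_k,q_k$ alone, with the $t_i$ appearing only on the right open strings), and then handles the $\A^0$ case by invoking transposition together with Proposition \ref{3.15}. Your write-up simply unpacks the ``obvious from construction'' step in more detail and explicitly flags the subtlety that a direct approach to the $\A^0$ case would require tracking character cancellations, which the transposition argument bypasses.
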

\begin{proof} From the construction of cluster Poisson coordinates it is obvious that the ones associated to closed strings are unaffected by a change of decoration on $\B_n$. But then since the roles of $\B^0$ and $\B_n$ are interchanged under transposition and the cluster Poisson coordinates remain unchanged under transposition, we can deduce that the cluster Poisson coordinates associated to the closed strings are unaffected by a change of decoration on $\B^0$.
\end{proof}

This corollary shows that the subset of cluster Poisson coordinates associated to closed strings actually descend to coordinates on $\conf^b_d(\mathcal{B})$. We call these coordinates the \emph{cluster Poisson coordinates} on $\conf^b_d(\mathcal{B})$ associated to the seed (string diagram/triangulation).

In order to justify the name ``cluster Poisson coordinates'', we need to show that they actually transform as cluster Poisson coordinates, which boils down to showing the cluster Poisson analogue of Proposition \ref{mutation}. 

\begin{prop} \label{x coordinate trans}\begin{enumerate}
    \item If $i\neq j$, then 
    \[
    \begin{tikzpicture}[baseline=2ex]
    \node (a) at (1,1) [] {$i$};
    \node (b) at (0,0) [] {$-j$};
    \draw (-1,0) --node[below] {$X_c$} (b) -- node [below] {$X_d$} (2,0);
    \draw (-1,1) -- node [above] {$X_a$} (a) -- node [above] {$X_b$} (2,1);
    \end{tikzpicture} \quad \longleftrightarrow \quad 
    \begin{tikzpicture}[baseline=2ex]
    \node (a) at (0,1) [] {$i$};
    \node (b) at (1,0) [] {$-j$};
   \draw (-1,0) --node[below] {$X_c$} (b) -- node [below] {$X_d$} (2,0);
    \draw (-1,1) -- node [above] {$X_a$} (a) -- node [above] {$X_b$} (2,1);
    \end{tikzpicture}
    \]
    On the other hand, if we interchange two neighboring nodes of opposite signs on the same level, we get
    \[
    \begin{tikzpicture}[baseline=2ex,scale=0.8]
    \node (a0) at (0,1) [] {$-i$};
    \node (a1) at (2,1) [] {$i$};
    \draw (-1.5,1) node [left] {$i$th level} -- node [above] {$X_a$} (a0) -- node [above] {$X_b$} (a1) -- node [above] {$X_c$} (3.5,1);
    \draw (-1.5,0) node [left] {$j$th level} -- node [below] {$X_d$} (3.5,0);
    \end{tikzpicture} \quad \longleftrightarrow \quad 
    \begin{tikzpicture}[baseline=2ex,scale=0.8]
    \node (a0) at (0,1) [] {$i$};
    \node (a1) at (2,1) [] {$-i$};
    \draw (-1.5,1) node [left] {$i$-level} -- node [above] {$X'_a$} (a0) -- node [above] {$X'_b$} (a1) -- node [above] {$X'_c$} (3.5,1);
    \draw (-1.5,0) node [left] {$j$-level} -- node [below] {$X'_d$} (3.5,0);
    \end{tikzpicture}
    \]
    where
    \[
    X'_a= X_aX_b\left(1+X_b\right)^{-1}, \quad  X'_b=X_b^{-1}, 
    \]
    \[
    X'_c=X_cX_b\left(1+X_b\right)^{-1}, \quad X'_d=X_d\left(1+X_b\right)^{-\C_{ij}}.
    \]
    \item If we perform a braid move to the string diagram, depending on whether it is of Dynkin type $\mathrm{A}_1\times \mathrm{A}_1$, $\mathrm{A}_2$, $\mathrm{B}_2$, or $\mathrm{G}_2$, the cluster Poisson coordinates transform according to the sequences of mutations described in Proposition \ref{mutation} (we will only depict the case where a braid move takes place among nodes labeled by simple roots; the cases where they are labeled by opposite simple roots are completely analogous).
    \begin{itemize}
        \item $\C_{ij}\C_{ji}=0$: 
        \[
    \begin{tikzpicture}[baseline=2ex,scale=0.8]
    \node (a) at (1,1) [] {$i$};
    \node (b) at (0,0) [] {$j$};
    \draw (-1,0) --node[below] {$X_c$} (b) -- node [below] {$X_d$} (2,0);
    \draw (-1,1) -- node [above] {$X_a$} (a) -- node [above] {$X_b$} (2,1);
    \end{tikzpicture} \quad \longleftrightarrow \quad 
    \begin{tikzpicture}[baseline=2ex,scale=0.8]
    \node (a) at (0,1) [] {$i$};
    \node (b) at (1,0) [] {$j$};
   \draw (-1,0) --node[below] {$X_c$} (b) -- node [below] {$X_d$} (2,0);
    \draw (-1,1) -- node [above] {$X_a$} (a) -- node [above] {$X_b$} (2,1);
    \end{tikzpicture}
    \]
    \item $\C_{ij}\C_{ji}=-1$:
    \[
    \begin{tikzpicture}[baseline=2ex,scale=0.8]
    \node (0) at (-1,1) [] {$i$};
    \node (1) at (1,1) [] {$i$};
    \node (2) at (0,0) [] {$j$};
    \draw (-2,1) -- node [above] {$X_a$} (0) -- node [above] {$X_b$} (1) -- node [above] {$X_c$} (2,1);
    \draw (-2,0) -- node [below] {$X_d$} (2) -- node [below] {$X_e$} (2,0);
    \end{tikzpicture} \quad \longleftrightarrow \quad \begin{tikzpicture}[baseline=2ex,scale=0.8]
    \node (0) at (-1,0) [] {$j$};
    \node (1) at (1,0) [] {$j$};
    \node (2) at (0,1) [] {$i$};
    \draw (-2,0) -- node [below] {$X'_d$} (0) -- node [below] {$X'_b$} (1) -- node [below] {$X'_e$} (2,0);
    \draw (-2,1) -- node [above] {$X'_a$} (2) -- node [above] {$X'_c$} (2,1);
    \end{tikzpicture}
    \]
    where
    \[
    X'_a=X_a\left(1+X_b\right), \quad X'_b=X_b^{-1}, \quad X'_c=X_cX_b\left(1+X_b\right)^{-1},
    \]
    \[
    X'_d=X_dX_b\left(1+X_b\right)^{-1}, \quad X'_e=X_e\left(1+X_b\right).
    \]
    \item $\C_{ij}=-2$ and $\C_{ji}=-1$:
    \[
\begin{tikzpicture}[baseline=2ex,scale=0.8]
\node (0) at (0,1) [] {$i$};
\node (1) at (2,1) [] {$i$};
\node (2) at (1,0) [] {$j$};
\node (3) at (3,0) [] {$j$};
\draw (-1,0) -- node [below] {$X_e$} (2) -- node [below] {$X_b$} (3) -- node [below] {$X_f$} (4,0);
\draw (-1,1) -- node [above] {$X_c$} (0) -- node [above] {$X_a$} (1) -- node [above] {$X_d$} (4,1);
\end{tikzpicture}\quad \longleftrightarrow \quad 
\begin{tikzpicture}[baseline=2ex,scale=0.8]
\node (0) at (1,1) [] {$i$};
\node (1) at (3,1) [] {$i$};
\node (2) at (0,0) [] {$j$};
\node (3) at (2,0) [] {$j$};
\draw (-1,0) --  node [below] {$X'_e$} (2) -- node [below] {$X'_b$} (3) -- node [below] {$X'_f$} (4,0);
\draw (-1,1) -- node [above] {$X'_c$} (0) -- node [above] {$X'_a$} (1) -- node [above] {$X'_d$} (4,1);
\end{tikzpicture}
\]
where
\begin{align*}
    X'_a=& X_a\frac{1}{F_b} & X'_b=&\frac{1}{X_a^2X_b}F_a^2 & X'_c=&X_c\frac{F_b}{F_a}\\
    X'_d=& X_dF_a & X'_e=&X_eX_a^2X_b\frac{1}{F_b} & X'_f=&X_fX_b\frac{F_b}{F_a^2}
\end{align*}
and 
\begin{equation}\label{B2 F}
F_a=1+X_b+X_aX_b \quad \quad \quad \quad F_b=1+X_b+2X_aX_b+X_a^2X_b.
\end{equation}
    \item $\C_{ij}=-3$ and $\C_{ji}=-1$:
    \[
\begin{tikzpicture}[baseline=2ex,scale=0.6]
\node (0) at (0,1) [] {$i$};
\node (1) at (2,1) [] {$i$};
\node (4) at (4,1) [] {$i$};
\node (2) at (1,0) [] {$j$};
\node (3) at (3,0) [] {$j$};
\node (5) at (5,0) [] {$j$};
\draw (-1,0) -- node [below] {$X_g$} (2) -- node [below] {$X_c$} (3) -- node [below] {$X_d$} (5) -- node [below] {$X_h$} (6,0);
\draw (-1,1) -- node [above] {$X_e$} (0) -- node [above] {$X_a$} (1) -- node [above] {$X_b$} (4) -- node [above] {$X_f$} (6,1);
\end{tikzpicture}\quad \longleftrightarrow \quad 
\begin{tikzpicture}[baseline=2ex,scale=0.6]
\node (0) at (1,1) [] {$i$};
\node (1) at (3,1) [] {$i$};
\node (4) at (5,1) [] {$i$};
\node (2) at (0,0) [] {$j$};
\node (3) at (2,0) [] {$j$};
\node (5) at (4,0) [] {$j$};
\draw (-1,0) -- node [below] {$X'_g$} (2) -- node [below] {$X'_c$} (3) -- node [below] {$X'_d$} (5) -- node [below] {$X'_h$} (6,0);
\draw (-1,1) -- node [above] {$X'_e$} (0) -- node [above] {$X'_a$} (1) -- node [above] {$X'_b$} (4) -- node [above] {$X'_f$} (6,1);
\end{tikzpicture}
\]
where
\begin{align*}
    X'_a=&X_a\frac{F_d}{F_bF_c} & X'_b=&X_b\frac{F_a}{F_d} & X'_c=&\frac{1}{X_a^3X_b^3X_c^2X_d}\frac{F_a^3}{F_d} & X'_d=&X_c\frac{F_b^3F_c}{F_a^3}\\
    X'_e=&X_e\frac{F_c}{F_a} & X'_f=&X_fF_b & X'_g=&X_a^3X_b^3X_c^2X_dX_g\frac{1}{F_c} &  X'_h=&X_dX_h\frac{F_d}{F_b^3}
\end{align*}
and
\begin{equation}\label{G2 F}
\resizebox{11cm}{!}{\begin{math}\begin{split}
    F_a=&1+X_d+3X_bX_d+3X_b^2X_d+3X_b^2X_cX_d+X_b^3X_d+2X_b^3X_cX_d+X_b^3X_b^2X_d\\
    &+2X_aX_b^2X_cX_d+2X_aX_b^3X_cX_d+2X_aX_b^3X_c^2X_d+X_a^2X_b^3X_c^2X_d,\\
    F_b=&1+X_d+2X_bX_d+X_b^2X_d+X_b^2X_cX_d+X_aX_b^2X_cX_d\\
    F_c=&1+X_d+3X_bX_d+3X_b^2X_d+3X_b^2X_cX_d+X_b^3X_d+2X_b^3X_cX_d+X_b^3X_c^2X_d\\
    &+3X_aX_b^2X_cX_d+3X_aX_b^3X_cX_d
    +3X_aX_b^3X_c^2X_d+3X_a^2X_b^3X_c^2X_d\\
    &+X_a^3X_b^3X_c^2X_d\\
    F_d=&1+2X_d+X_d^2+6X_bX_d+6X_bX_d^2+6X_b^2X_d+15X_b^2X_d^2+3X_b^2X_cX_d+3X_b^2X_cX_d^2\\
    &+2X_b^3X_d+20X_b^3X_d^2+2X_b^3X_cX_d+12X_b^3X_cX_d^2+15X_b^4X_d^2+18X_b^4X_cX_d^2\\
    &+3X_b^4X_c^2X_d^2+6X_b^5X_d^2+12X_b^5X_cX_d^2+6X_b^5X_c^2X_d^2+X_b^6X_d^2+3X_b^6X_cX_d^2\\
    &+3X_b^6X_c^2X_d^2+X_b^6X_c^3X_d^2+3X_aX_b^2X_cX_d+3X_aX_b^2X_cX_d^2+3X_aX_b^3X_cX_d\\
    &+12X_aX_b^3X_cX_d^2+18X_aX_b^4X_cX_d^2+6X_aX_b^4X_c^2X_d^2+12X_aX_b^5X_cX_d^2\\ &+12X_aX_b^5X_c^2X_d^2+3X_aX_b^6X_cX_d^2+6X_aX_b^6X_c^2X_d^2+3X_aX_b^6X_c^3X_d^2+3X_a^2X_b^4X_c^2X_d^2\\
    &+6X_a^2X_b^5X_c^2X_d^2+3X_a^2X_b^6X_c^2X_d^2+3X_a^2X_b^6X_c^3X_d^2+X_a^3X_b^6X_c^3X_d^2.
\end{split}\end{math}}
\end{equation}
    \end{itemize}
The coordinate transformations in the last two cases are written in the form of the factorization formula \eqref{factorization formula x} and are obtained from the corresponding mutation sequences described in Proposition \ref{mutation}.
\end{enumerate}
\end{prop}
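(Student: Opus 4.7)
The plan is to exploit the group-theoretic description of the cluster Poisson coordinates from Definition \ref{X def}: once the special representative with $\A^0 = \U_+$ and $\B_0 = \B_-$ is fixed, the ambient group element assembled from a triangulation factors as an alternating product of torus elements $X_a^{\omega_i^\vee}$ (one per string) and Chevalley generators $e_{\pm i}$ (one per node), laid out along the string diagram. Every local combinatorial move rewrites a bounded segment of this product using a standard identity in $\G$, and the new coordinate transformation is read off by matching the resulting factorization against Definition \ref{X def}. I will carry this out case by case.

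I would first dispose of the trivial subcases. The sliding of two nodes of signs $\pm i$ and $\pm j$ with $i \neq j$ in (1a), as well as the $\mathrm{A}_1 \times \mathrm{A}_1$ braid move in (2), follow from the fact that Chevalley generators on different levels commute with one another and with torus factors on other levels; hence the two factorizations agree term by term and every $X_a$ is preserved. The mutation case in (1a) is the heart of the argument. Locally the product is $e_{-i}(p)\,e_i(q)$ with $X_b = pq$, and the elementary $\mathrm{SL}_2$ identity
$$e_{-i}(p)\,e_i(q) \;=\; e_i\!\left(\tfrac{q}{1+pq}\right)\alpha_i^\vee\!\left(\tfrac{1}{1+pq}\right) e_{-i}\!\left(\tfrac{p}{1+pq}\right)$$
furnishes the swapped Lusztig coordinates together with an excess torus element $\alpha_i^\vee\!\left(1/(1+X_b)\right)$. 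Expanding $\alpha_i^\vee = \sum_{j}\C_{ij}\,\omega_j^\vee$ in the cocharacter basis distributes this excess element across every level-$j$ string it crosses as a factor $(1+X_b)^{-\C_{ij}}$, producing $X'_d$; the accompanying shift of the level-$i$ boundary torus factors yields $X'_a = X_a X_b(1+X_b)^{-1}$ and $X'_c = X_c X_b(1+X_b)^{-1}$, while a direct computation on the middle string gives $X'_b = X_b^{-1}$. The $\mathrm{A}_2$ braid move is settled in exactly the same spirit by the classical rank-$2$ identity $e_i(a)\,e_j(b)\,e_i(c) = e_j\!\left(\tfrac{bc}{a+c}\right) e_i(a+c)\, e_j\!\left(\tfrac{ab}{a+c}\right)$ in $\mathrm{SL}_3$, which requires no excess torus factor.

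The main obstacle is the explicit verification of the $\mathrm{B}_2$ and $\mathrm{G}_2$ braid moves. Rather than writing the corresponding rank-$2$ identities in $\mathrm{Sp}_4$ and the group of type $\mathrm{G}_2$ from scratch, I would exploit the correspondence already spelled out in Proposition \ref{mutation}: a $\mathrm{B}_2$ (respectively $\mathrm{G}_2$) braid move coincides, at the level of seeds, with a specific sequence of three (respectively ten) elementary mutations of the kind treated in the previous paragraph. Iterating the basic mutation rule along these sequences then produces the stated coordinate transformations; the polynomials $F_a, F_b, F_c, F_d$ in \eqref{B2 F} and \eqref{G2 F} emerge as explicit composites of the elementary factors $1+X_\bullet$ generated at each step. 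Collapsing these composites into the displayed closed forms is the only genuinely tedious part of the proof: it reduces to bookkeeping that can be organized by tracking, after each intermediate mutation, the common subexpression of each coordinate, or, alternatively, checked as a polynomial identity in the initial cluster variables by direct expansion.
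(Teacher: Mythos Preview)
Your treatment of the commuting cases and of the diagonal flip and $\mathrm{A}_2$ braid move is correct and essentially identical to the paper's: both reduce everything to standard identities among Chevalley generators (the commutation $e_{-j}e_i=e_ie_{-j}$ for $i\neq j$, the $\mathrm{SL}_2$ swap identity for the flip, and the $\mathrm{A}_2$-type identity $e_i(a)e_j(b)e_i(c)=e_j\!\bigl(\tfrac{bc}{a+c}\bigr)e_i(a+c)e_j\!\bigl(\tfrac{ab}{a+c}\bigr)$).

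For $\mathrm{B}_2$ and $\mathrm{G}_2$ there is a genuine gap. You write that the braid move ``coincides, at the level of seeds, with a specific sequence of three (respectively ten) elementary mutations of the kind treated in the previous paragraph'' and propose to iterate those. But the individual mutations $\mu_a,\mu_b,\ldots$ in Proposition \ref{mutation} are abstract seed mutations; the intermediate seeds do \emph{not} come from any triangulation of a $\mathrm{B}_2$ or $\mathrm{G}_2$ configuration, so they are not instances of the diagonal flip or the $\mathrm{A}_2$ braid move whose group-theoretic meaning you have already established. Iterating the algebraic mutation formula does produce the displayed $F$-polynomials, but that only computes what the cluster side predicts; it does not verify that the \emph{geometric} factorization identity in $\mathrm{Sp}_4$ (or in the group of type $\mathrm{G}_2$) agrees with it. That matching is precisely the content of the proposition, and it requires a separate argument.

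The paper closes this gap via cluster folding together with a computer check, referring to \cite[\S 3.6--3.7]{FGamalgamation}. One embeds $\G_{\mathrm{B}_2}\hookrightarrow\G_{\mathrm{A}_3}$ (respectively $\G_{\mathrm{G}_2}\hookrightarrow\G_{\mathrm{D}_4}$), so that the non-simply-laced braid move lifts to a move between reduced words in the simply-laced group which \emph{does} decompose into a chain of genuine $\mathrm{A}_2$-type braid moves, each already verified group-theoretically; restricting back to the folded subgroup yields the rank-$2$ identity, and one then checks that the resulting simply-laced mutation sequence is cluster-equivalent to the three-step (or ten-step) sequence of Proposition \ref{mutation}. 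Your fallback of ``direct expansion'' would also work, but only if you actually carry out the rank-$2$ group computation you sought to avoid; the mutation sequence alone is not enough.
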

\begin{proof}\begin{enumerate}
    \item The case where $i\neq j$ follows from the Lie group identity
    $e_{-j}e_i=e_i e_{-j}.$
    The other case follows from the Lie group identity
    \[
    e_{-i}X_b^{\omega_i^\vee}e_i=\left(\frac{X_b}{1+X_b}\right)^{\omega_i^\vee}\left(e_i\right) \left(\frac{1}{X_b}\right)^{\omega_i^\vee}\left(e_{-i}\right)\left(\frac{X_b}{1+X_b}\right)^{\omega_i^\vee}\prod_{j\neq i}\left(1+X_b\right)^{-\C_{ij}\omega_j^\vee}.
    \]
    \item The case $\C_{ij}=\C_{ji}=0$ follows from the Lie group identity 
    \[
    e_j e_i=e_i e_j.
    \]
    The case $\C_{ij}=\C_{ji}=-1$ follows from the Lie group identity
    \[
    \resizebox{12cm}{!}{\begin{math}e_i X_b^{\omega_i^\vee} e_j e_i=\left(1+X_b\right)^{\omega_i^\vee}\left(\frac{X_b}{1+X_b}\right)^{\omega_j^\vee}\left(e_j\right)\left(\frac{1}{X_b}\right)^{\omega_j^\vee} \left(e_i e_j\right)\left(1+X_b\right)^{\omega_j^\vee}\left(\frac{X_b}{1+X_b}\right)^{\omega_i^\vee}.\end{math}}
    \]
    The other two cases can be proved by a computer check and the technique of cluster folding. See \cite[Section 3.6, 3.7]{FGamalgamation}.
    \qedhere
\end{enumerate}
\end{proof}

\subsection{\texorpdfstring{Cluster $\mathrm{K}_2$ Structure on $\conf^b_d\left(\mathcal{A}_\sc\right)$ and $\conf^b_d\left(\mathcal{A}_\sc^\fr\right)$}{}}
\label{section.ajodad}

In this section we associate a coordinate chart of the decorated double Bott-Samelson cell $\conf^b_d\left(\mathcal{A}_\sc\right)$ to every seed $\vec{s}$ obtained from a triangulation and show that these coordinate charts are related by cluster $\mathrm{K}_2$ mutations corresponding to the seed mutations stated in Proposition \ref{mutation}, equipping $\conf^b_d\left(\mathcal{A}_\sc\right)$ with a natural cluster $\mathrm{K}_2$ structure. By restricting the cluster $\mathrm{K}_2$ structure on $\conf^b_d\left(\mathcal{A}_\sc\right)$ to the framed double Bott-Samelson cell $\conf^b_d\left(\mathcal{A}_\sc^\fr\right)$ we also obtain a cluster $\mathrm{K}_2$ structure on the latter.

Take a point in $\conf^b_d\left(\mathcal{A}_\sc\right)$. We can propagate the decoration over $\B^0$ horizontally across the top of the trapezoid using Lemma \ref{unique compatible}, equipping each flag on the top base with a decoration so that every adjacent pair is compatible. Similarly we can do the same on the bottom base: propagating the decoration over $\B_n$ horizontally across the bottom and equipping each flag on the bottom base with a decoration.
\[
\xymatrix@=10ex{ \A^0 \ar[r]^{s_{i_1}} \ar@{-}[d] & \A^1 \ar[r]^{s_{i_2}} & \A^2 \ar[r] & \dots \ar[r] & \A^{m-1} \ar[r]^{s_{i_m}} & \A^m \ar@{-}[d] \\
\A_0 \ar[r]_{s_{j_1}} & \A_1 \ar[r]_{s_{j_2}} & \A_2 \ar[r] & \dots \ar[r] & \A_{n-1} \ar[r]_{s_{j_n}} & \A_n}
\]

\begin{defn} Fix a triangulation and consider its corresponding string diagram and seed. Let $a$ be a string on the $i$th level in the string diagram (and hence a vertex in the corresponding seed). Let $\xymatrix{\A_l \ar@{-}[r] & \A^k}$ be a diagonal in the triangulation that $a$ intersects. Let us set
\[
A_a:=\Delta_{\omega_i}\left(\A_l,\A^k\right).
\]
For notation convenience we also adopt the shorthard $\Delta_i(l,k)=\Delta_{\omega_i}\left(\A_l,\A^k\right)$.
\end{defn}

Since a string may cross many different diagonals, the first thing we need to check is the well-defined-ness of $A_a$.

\begin{prop}\label{welldefined} The functions $A_a$ are well-defined.
\end{prop}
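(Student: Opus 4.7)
The plan is to reduce the well-definedness to a purely local check inside each triangle traversed by the string, and then to invoke a basic Lie-theoretic invariance of $\Delta_{\omega_i}$ under certain Weyl group lifts. Concretely, as one travels along a string $a$ on the $i$th level, the set of diagonals it crosses can be linearly ordered, so it is enough to show that $\Delta_{\omega_i}(\A_l,\A^k)$ is unchanged when we pass from one diagonal to the next one across a single triangle $T$. Since $a$ enters and exits $T$ without terminating there, the node inside $T$ must carry a label $\pm j$ with $j\neq i$; otherwise $a$ would end at it. Thus the problem reduces to showing that in every triangle containing a node labeled $\pm j$ with $j\neq i$, the two flag pairs cut out by the string give the same value of $\Delta_{\omega_i}$.

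The key algebraic input is the following: when $j\neq i$, the standard lift $\overline{s}_j$ fixes the highest weight vector $v_{\omega_i}^+$, and likewise $\doverline{s}_j$ fixes the appropriate lowest-weight covector, because $\langle \omega_i,\alpha_j^\vee\rangle = 0$ for $j\neq i$. Writing $\overline{s}_j=\exp(-e_j)\exp(f_j)\exp(-e_j)$, we have $e_jv_{\omega_i}^+=0$ by highest-weight-ness and $f_jv_{\omega_i}^+=0$ since $v_{\omega_i}^+$ sits in a trivial $\mathfrak{sl}_2^{(j)}$-summand when $j\neq i$. Consequently, for any Gaussian-decomposable $g$, one has
\[
\Delta_{\omega_i}(g\overline{s}_j) = \Delta_{\omega_i}(g), \qquad \Delta_{\omega_i}(\doverline{s}_j g) = \Delta_{\omega_i}(g) \qquad (j\neq i).
\]

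Now consider an ``up'' triangle $T$ with top edge $\xymatrix@C=2em{\A^{k-1}\ar[r]^{s_j} & \A^k}$ and opposite vertex $\A_l$, where the string $a$ enters through $(\A_l,\A^{k-1})$ and exits through $(\A_l,\A^k)$. Choose representatives $\A_l=\U_-x$ and $\A^{k-1}=y\U_+$; compatibility of the top edge gives $\A^k=y\overline{s}_ju\U_+$ for some $u\in\U_+$, so
\[
\Delta_{\omega_i}(\A_l,\A^k)=\Delta_{\omega_i}(xy\overline{s}_ju)=\Delta_{\omega_i}(xy\overline{s}_j)=\Delta_{\omega_i}(xy)=\Delta_{\omega_i}(\A_l,\A^{k-1}),
\]
using right $\U_+$-invariance and the boxed identity above. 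The ``down'' triangle case (bottom edge $\xymatrix@C=2em{\A_{l-1}\ar[r]^{s_j}&\A_l}$, opposite vertex $\A^k$) is symmetric: the compatibility of the bottom edge lets us absorb a $\doverline{s}_j$ on the left, which is killed by the corresponding invariance of $\Delta_{\omega_i}$. This handles every interior crossing, and the argument for the vertical boundary edges (when $a$ is an open string) is the same, using the initial or terminal diagonal of the triangulation.

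The only subtlety I foresee is bookkeeping the two orientations of the edge $\xymatrix@C=2em{\A^{k-1}\ar[r]^{s_j}&\A^k}$ versus $\xymatrix@C=2em{\A_{l-1}\ar[r]^{s_j}&\A_l}$ so that the compatibility formulas give a $\overline{s}_j$ on the correct side to be absorbed. Once that is handled, the rest is immediate, and I do not expect any genuine obstacle; the conceptual content is entirely in the observation that strings on level $i$ avoid nodes labeled $\pm i$, so every triangle they cross contributes a Weyl-group lift orthogonal to $\omega_i$.
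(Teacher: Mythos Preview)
Your proof is correct and follows the same reduction as the paper: both localize to a single triangle and use that the node there is labeled $\pm j$ with $j\neq i$, so that $\langle\alpha_j^\vee,\omega_i\rangle=0$. The only difference is in execution: the paper normalizes the triangle so that $\A_l=\U_-t$ and $\A^{k-1}=\U_+$, writes $\A^k=e_j(p)\overline{s}_j\U_+$ explicitly via Lemma~\ref{moduli of tits codist si}, and then applies the $\SL_2$ identity $e_j(p)\overline{s}_j=e_{-j}(p^{-1})p^{\alpha_j^\vee}e_j(-p^{-1})$ to read off $\Delta_{\omega_i}=t^{\omega_i}$; your version packages the same computation as the representation-theoretic statement $\overline{s}_j v_{\omega_i}^+=v_{\omega_i}^+$, which avoids fixing a representative and makes the symmetric ``down'' case immediate.
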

\begin{proof} It suffices to show that for a string $a$ on the $i$th level that intersects both sides of a triangle, the functions $A_a$ obtained by using each of the two sides of the triangle are equal. Without loss of generality we suppose the triangle looks like the following (after fixing a representative), with $t\in \T_\sc$.
\[
\begin{tikzpicture}
\node (u) at (0,0) [] {$\U_-t$};
\node (d1) at (-1,2) [] {$\U_+$};
\node (d2) at (1,2) [] {$\A^1$};
\draw (d1) -- (u) -- (d2);
\draw [->] (d1) -- node[above] {$s_j$} (d2);
\draw (-2,1) -- node [above] {$a$} (2,1);
\node at (-3,1) [] {$i$th level};
\end{tikzpicture}
\]
By Lemma \ref{moduli of tits codist si}, we know that $\A^1=e_j(p)\overline{s}_j\U_+$. Then 
\begin{align*}
\Delta_{\omega_i}\left(\U_-t,\A^1\right)&=\left(te_j(p)\overline{s}_j\right)^{\omega_i}=\left(te_{-j}\left(p^{-1}\right)p^{\alpha_j^\vee}e_j\left(-p^{-1}\right)\right)^{\omega_i}=\left(tp^{\alpha_j^\vee}\right)^{\omega_i}=t^{\omega_i}\\
&=\Delta_{\omega_i}\left(\U_-t,\U_+\right),
\end{align*}
where $\left(tp^{\alpha_j^\vee}\right)^{\omega_i}=t^{\omega_i}$ uses the assumption that $i\neq j$.
\end{proof}

\begin{cor}\label{A to L} For a triangle of either of the following forms, 
\[
\begin{tikzpicture}
\node (u) at (0,3) [] {$\U_+$};
\node (d0) at (-1,0) [] {$\U_-t$};
\node (d1) at (1,0) [] {$\A$};
\draw (d1) -- (u) -- (d0);
\draw [->] (d0) -- node [below] {$s_i$} (d1);
\node (0) at (0,2) [] {$i$};
\draw (-2,2) node [left] {$i$th level}  -- node [above] {$b$} (0) -- node [above] {$c$}  (2,2);
\draw (-2,1) node [left] {$j$th level} -- node [below] {$a_j$} (2,1);
\end{tikzpicture} \quad \quad \quad \quad 
\begin{tikzpicture}
\node (d) at (0,0) [] {$\U_-$};
\node (u0) at (-1,3) [] {$t\U_+$};
\node (u1) at (1,3) [] {$\A$};
\draw (u0) -- (d) -- (u1);
\draw[->] (u0) -- node [above] {$s_i$} (u1);
\node (0) at (0,2) [] {$-i$};
\draw (-2,2) node [left] {$i$th level}  -- node [above] {$b$} (0) -- node [above] {$c$}  (2,2);
\draw (-2,1) node [left] {$j$th level} -- node [below] {$a_j$} (2,1);
\end{tikzpicture}
\]
The underlying undecorated flag $\B$ associated to $\A$ is
\[
\B=e_{\pm i}\left(\frac{\prod_{j \neq i} A_{a_j}^{-\C_{ji}}}{A_{b}A_{c}}\right)\B_\pm,
\]
where the $\pm$ sign depends on the orientation of the triangle the same way as the sign of the corresponding node in the string diagram.
\end{cor}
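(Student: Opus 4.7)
The plan is to prove the formula for the upright triangle (node labeled $+i$); the upside-down case then follows by applying the transposition map and invoking Lemma~\ref{2.23} together with Proposition~\ref{3.15}.

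By Lemma~\ref{lusztig coord} and the $\G$-action, fix a representative in which the upright triangle takes the displayed form with $\U_+$ at the top vertex, $\U_-t$ at the bottom-left for some $t\in\T$, and $\A$ at the bottom-right obtained by propagating $\U_-t$ along the bottom edge via compatibility. The underlying undecorated flag of $\A$ is then $\B=e_i(q)\B_-$, where $q$ is the Lusztig factorization coordinate attached to this triangle. Reading off the diagonal $\U_-t$--$\U_+$ on the left edge gives
\[
A_b \;=\; \Delta_{\omega_i}(\U_-t,\U_+) \;=\; t^{\omega_i},\qquad A_{a_j}\;=\;\Delta_{\omega_j}(\U_-t,\U_+) \;=\; t^{\omega_j}\text{ for }j\ne i.
\]

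The main computation is that of $A_c = \Delta_{\omega_i}(\A,\U_+)$ using the right edge. Writing $\A=\U_- z$, the compatibility $tz^{-1}\in\U_-\doverline{s_i}\U_-$ gives $z = u_2^{-1}\doverline{s_i}^{-1} u_1^{-1} t$ for some $u_1,u_2\in\U_-$, and the flag-over constraint $b:=z\,e_i(q)\in\B_-$ forces the $e_{-i}$-component of $u_1^{-1}$. A direct computation, which takes place inside the rank-one subgroup $\langle e_{\pm i},\alpha_i^\vee\rangle\cong\SL_2$, pins down $[b]_0 = s_i(t)\cdot q^{-\alpha_i^\vee}$, and consequently
\[
A_c \;=\; \Delta_{\omega_i}(b) \;=\; [b]_0^{\omega_i} \;=\; t^{s_i\omega_i}\, q^{-1}.
\]

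Assembling the factors now reduces to arithmetic in the weight lattice. Using $s_i(\omega_i)=\omega_i-\alpha_i$ and the expansion $\alpha_i=\sum_j \C_{ji}\omega_j$ together with $\C_{ii}=2$, one finds $\omega_i+s_i(\omega_i) = -\sum_{j\ne i}\C_{ji}\omega_j$, and therefore
\[
\frac{\prod_{j\ne i} A_{a_j}^{-\C_{ji}}}{A_b\,A_c} \;=\; \frac{t^{-\sum_{j\ne i}\C_{ji}\omega_j}}{t^{\omega_i}\cdot t^{s_i\omega_i}/q} \;=\; \frac{t^{-\sum_{j\ne i}\C_{ji}\omega_j}}{t^{-\sum_{j\ne i}\C_{ji}\omega_j}/q} \;=\; q,
\]
matching the Lusztig coordinate in $\B=e_i(q)\B_-$. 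The chief technical obstacle is identifying $[b]_0$ in the third step: it requires careful bookkeeping of the distinguished lifts $\doverline{s_i}$, the uniqueness of the $\T$-factor in the refined Birkhoff decomposition, and a Chevalley-type factorization for $e_i(q)$ through $\U_-\doverline{s_i}\T\U_-$; however all of this reduces to an elementary $\SL_2$ matrix calculation since only the $\alpha_i$-direction contributes non-trivially, the other torus directions being carried along as the scalars $t^{\omega_j}$.
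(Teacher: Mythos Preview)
Your argument is correct. Both you and the paper reduce the claim to an identity $\omega_i+s_i\omega_i=-\sum_{j\ne i}\C_{ji}\omega_j$ in the weight lattice, but the two proofs differ in how they get there.

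The paper treats the upside-down triangle and recycles the computation already done in the proof of Proposition~\ref{welldefined}: there it was shown (via Lemma~\ref{moduli of tits codist si}) that in the frame with $\U_-t$ at the bottom and $\U_+$ at the top-left, the decorated flag at the top-right is $e_i(p)\overline{s}_i\U_+$. Acting by $t$ to move to the corollary's normalisation, one reads off $A_b=t^{\omega_i}$, $A_c=t^{\omega_i}p$, $A_{a_j}=t^{\omega_j}$, and the formula drops out in two lines. The parameter $p$ here is \emph{not} the Lusztig coordinate but the parameter from Lemma~\ref{moduli of tits codist si}; the paper converts to the Lusztig coordinate $t^{-\alpha_i}p^{-1}$ only at the very end via the $\SL_2$ identity $e_i(p)\overline{s}_i=e_{-i}(p^{-1})p^{\alpha_i^\vee}e_i(-p^{-1})$.

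You instead treat the upright triangle and solve the compatibility condition $tz^{-1}\in\U_-\doverline{s_i}\U_-$ from scratch to pin down the decoration on $\A$, obtaining $[b]_0=s_i(t)\,q^{-\alpha_i^\vee}$ and hence $A_c=t^{s_i\omega_i}q^{-1}$ directly in terms of the Lusztig coordinate $q$. This is more self-contained but also more laborious; the paper's approach is shorter precisely because the relevant $\SL_2$ computation was already absorbed into the proof of Proposition~\ref{welldefined}. One small remark: your appeal to Proposition~\ref{3.15} for the other case is not quite right, as that proposition concerns the Poisson coordinates on $\conf^b_d(\mathcal{A}_\ad)$; Lemma~\ref{2.23} alone (together with the fact that transposition swaps the two triangle orientations) already suffices, or one can simply say the computation is analogous, as the paper does.
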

\begin{proof} We will only show the computation for the case on the right; the case on the left is completely analogous. By comparison we see that we need to act by $t$ to move the triangle in the last proof into the configuration stated in the corollary. Therefore 
\[
\B= te_i(p)\overline{s}_i\B_+=e_i\left(t^{\alpha_i}p\right)\overline{s}_i\B_+=e_{-i}\left(t^{-\alpha_i}p^{-1}\right)\B_+.
\]
But then since $A_b=t^{\omega_i}$, $A_c=t^{\omega_i}p$, and $A_{a_j}=t^{\omega_j}$ for any $j\neq i$, we have
\[
\frac{\prod_{j\neq i}A_{a_j}^{-\C_{ji}}}{A_bA_c}=\frac{\prod_{j\neq i}t^{-\C_{ji}\omega_j}}{pt^{2\omega_i}}=t^{-\sum_j\C_{ji}\omega_j}p^{-1}=t^{-\alpha_i}p^{-1}.\qedhere
\]
\end{proof}

\begin{prop} An assignment of $\mathbb{G}_m$ values to all the functions $\left\{A_a\right\}$ associated to a triangulation recovers a point in $\conf^b_d\left(\mathcal{A}_\sc\right)$. As a corollary, these functions form a torus chart on $\conf^b_d\left(\mathcal{A}_\sc\right)$.
\end{prop}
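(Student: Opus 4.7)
The plan is to invert the map $\Phi\colon \conf^b_d\left(\mathcal{A}_\sc\right) \to \mathbb{G}_m^{|I|}$ sending a configuration to its tuple of $A_a$-values, where $I$ is the set of strings in the string diagram. After normalizing by the free and transitive $\G_\sc$-action on opposite pairs $(\A^0,\B_0)$ to the special representative $\A^0=\U_+$, $\B_0=\B_-$, Lemma~\ref{lusztig coord} together with the proof of Theorem~\ref{affineness of conf} shows that a point of $\conf^b_d\left(\mathcal{A}_\sc\right)$ in this frame is uniquely encoded by the $m+n$ Lusztig factorization coordinates of the triangles and a torus element in $\T_\sc$ encoding the decoration $\A_n$. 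The total dimension $m+n+\tilde r$ matches $|I|$.

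To build the inverse $\Psi$, I would process the triangulation from left to right. The $\mathbb{G}_m$-values on the left open strings satisfy $A_{\binom{i}{0}}=\Delta_{\omega_i}(\A_0,\A^0)$, and since $\{\omega_i\}_{i=1}^{\tilde r}$ spans the character lattice of $\T_\sc$, they uniquely determine the decoration $\A_0$ over $\B_0=\B_-$. Whenever the inductive process reaches a new triangle, its leftmost diagonal carries known decorated flags; acting by $\G_\sc$ to bring that diagonal into the standard form $\xymatrix{\U_+ \ar@{-}[r] & \U_-t^{-1}}$, Corollary~\ref{A to L} reads off the Lusztig factorization coordinate of the triangle as the Laurent monomial $\bigl(\prod_{j\neq i} A_{a_j}^{-\C_{ji}}\bigr)/(A_b A_c)$ in the prescribed $A_a$-values. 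This determines the new flag, whose decoration is fixed by compatibility propagation along the horizontal edge. Iterating across all triangles builds the full configuration, and the final decoration $\A_n$ matches the $A_a$-values on the right open strings by the same weight-basis argument.

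The main subtlety is to verify that $\Psi$ actually lands in $\conf^b_d\left(\mathcal{A}_\sc\right)$ and that $\Phi,\Psi$ are mutually inverse. Well-definedness of $\Psi$ amounts to (i) each Laurent monomial producing a nonzero Lusztig coordinate, which is immediate from the $\mathbb{G}_m$-valued hypothesis, and (ii) general position at every diagonal, which follows from Theorem~\ref{gaussian} applied to the nonzero $\Delta_{\omega_i}$-values that the construction yields along those diagonals. The identity $\Phi\circ\Psi=\Id$ holds because Corollary~\ref{A to L} is an equality and Proposition~\ref{welldefined} guarantees invariance of $A_a$ across every diagonal that $a$ crosses, so the reconstructed configuration reproduces the input values. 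The identity $\Psi\circ\Phi=\Id$ follows from the uniqueness clauses of Lemma~\ref{lusztig coord} and Lemma~\ref{unique compatible}. Since both $\Phi$ and $\Psi$ are given by Laurent monomial formulas between algebraic tori, $\{A_a\}_{a\in I}$ provides a torus chart on $\conf^b_d\left(\mathcal{A}_\sc\right)$, as claimed.
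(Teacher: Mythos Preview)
Your proposal is correct and follows essentially the same approach as the paper: both reconstruct the configuration triangle-by-triangle from left to right, using Corollary~\ref{A to L} to convert the prescribed $A$-values into Lusztig factorization coordinates at each step, and propagating decorations by compatibility. The paper normalizes to $\A_0=\U_-$, $\A^0=t\U_+$ (with $t=\prod_j A_{\binom{j}{0}}^{\alpha_j^\vee}$) rather than your $\A^0=\U_+$, $\B_0=\B_-$, but this is immaterial; your explicit verification of $\Phi\circ\Psi=\Id$ and $\Psi\circ\Phi=\Id$ is slightly more detailed than the paper, which simply asserts the torus-chart corollary after constructing the inverse. One point the paper singles out that you absorb into the phrase ``acting by $\G_\sc$'' is that when consecutive triangles have opposite orientation, the group element bringing the next diagonal into the standard form of Corollary~\ref{A to L} is not just unipotent but carries a maximal-torus factor---worth making explicit, since this is where the argument differs from the purely unipotent recursion in Lemma~\ref{lusztig coord}.
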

\begin{proof} The idea is to mimic the proof of Lemma \ref{lusztig coord} and use the unipotent elements given by Corollary \ref{A to L} to construct the $b$-chain and the $d$-chain of decorated flags. Without loss of generality we may assume that the left most triangle is of the following shape.
\[
\begin{tikzpicture}
\node (u) at (0,0) [] {$\B_0$};
\node (d0) at (-1,2) [] {$\A^0$};
\node (d1) at (1,2) [] {$\B^1$};
\draw [->] (d0) -- node[above] {$s_i$} (d1);
\draw (d0) -- (u) -- (d1);
\end{tikzpicture}
\]
Then we can set $\A_0=\U_-$ and $\A^0=t\U_+$ with $t:=\prod_{j=1}^{\tilde{r}}A_{\binom{j}{0}}^{\alpha_j^\vee}$. Then by Corollary \ref{A to L} we get $\B^1=e_{-i}\left(p\right)\B_+$ with $p=\frac{\prod_{j\neq i}A_{\binom{j}{0}}^{-\C_{ji}}}{A_{\binom{i}{0}}A_{\binom{i}{1}}}$. Then the compatibility condition requires that the decorated flag $\A^1$ is given by $e_{-i}(p)r\U_+$ with $r:=t\left(t^{\alpha_i}p\right)^{-\alpha_i^\vee}$ as a maximal torus element. 

The rest proceeds similar to the proof of Lemma \ref{lusztig coord}, with one slight caveat: when the triangles switch orientation (between upward pointing and downward pointing), we need to move the whole configuration by, not just a unipotent element, but a product of a unipotent element and a maximal torus element so that we may continue to use Corollary \ref{A to L}. For example, suppose the second triangle looks like the picture on the left below. Then we need to move the whole configuration by $r^{-1}e_{-i}(-p)$.
\[
\begin{tikzpicture}[baseline=5ex,scale=0.8]
\node (u0) at (-1,2) [] {$t\U_+$};
\node (u1) at (1,2) [] {$e_{-i}(p)r\U_+$};
\node (d0) at (0,0) [] {$\U_-$};
\node (d1) at (2,0) [] {$\B_1$};
\draw [->] (u0) -- node [above] {$s_i$} (u1);
\draw [->] (d0) -- node [below] {$s_j$} (d1);
\draw (u0) -- (d0) -- (u1) -- (d1);
\end{tikzpicture}
 \rightsquigarrow\begin{array}{c}
     \text{move the whole} \\
     \text{configuration} \\
     \text{by $r^{-1}e_{-i}(-p)$}
\end{array} \rightsquigarrow \begin{tikzpicture}[baseline=5ex,scale=0.7]
\node (u0) at (-1,2) [] {$r^{-1}e_{-i}(-p)t\U_+$};
\node (u1) at (2,2) [] {$\U_+$};
\node (d0) at (0,0) [] {$\U_-r$};
\node (d1) at (3,0) [] {$r^{-1}e_{-i}(-p)\B_1$};
\draw [->] (u0) -- node [above] {$s_i$} (u1);
\draw [->] (d0) -- node [below] {$s_j$} (d1);
\draw (u0) -- (d0) -- (u1) -- (d1);
\end{tikzpicture}
\]

In the end we will get a $b$-chain and a $d$-chain of compatible decorated flags, with their initial flags and terminal flags in general position. Then to get a point in $\conf^b_d\left(\mathcal{A}_\sc\right)$, we simply forget the decorations everywhere except $\A^0$ and $\A_n$.
\end{proof}

\begin{defn} \label{A def} We call the functions $A_a$ the \emph{cluster $\mathrm{K}_2$ coordinates} associated to the seed (string diagram/triangulation) on $\conf^b_d\left(\mathcal{A}_\sc\right)$.
\end{defn}

Since the framed double Bott-Samelson cell $\conf^b_d\left(\mathcal{A}_\sc^\fr\right)$ has compatible decorated flags around the perimeter, it can be seen as the subset of $\conf^b_d\left(\mathcal{A}_\sc\right)$ with all cluster $\mathrm{K}_2$ coordinates associated to open strings set to 1. It then follows that the cluster $\mathrm{K}_2$ coordinates associated to closed strings restrict to a set of coordinates on $\conf^b_d\left(\mathcal{A}_\sc^\fr\right)$, which we call the \emph{cluster $\mathrm{K}_2$ coordinates} on $\conf^b_d\left(\mathcal{A}_\sc^\fr\right)$ associated to the seed (string diagram/triangulation).

In order to justify the name ``cluster $\mathrm{K}_2$ coordinates'', we need to show that they actually transform as cluster $\mathrm{K}_2$ coordinates. 

\begin{prop} \label{a coordinate trans}\begin{enumerate}
    \item If $i\neq j$, then 
    \[
    \begin{tikzpicture}[baseline=2ex,scale=0.8]
    \node (a) at (1,1) [] {$i$};
    \node (b) at (0,0) [] {$-j$};
    \draw (-1,0) --node[below] {$A_c$} (b) -- node [below] {$A_d$} (2,0);
    \draw (-1,1) -- node [above] {$A_a$} (a) -- node [above] {$A_b$} (2,1);
    \end{tikzpicture} \quad \longleftrightarrow \quad 
    \begin{tikzpicture}[baseline=2ex,scale=0.8]
    \node (a) at (0,1) [] {$i$};
    \node (b) at (1,0) [] {$-j$};
   \draw (-1,0) --node[below] {$A_c$} (b) -- node [below] {$A_d$} (2,0);
    \draw (-1,1) -- node [above] {$A_a$} (a) -- node [above] {$A_b$} (2,1);
    \end{tikzpicture}
    \]
    On the other hand, if we interchange two neighboring nodes of opposite signs on the same level, we get
    \[
    \begin{tikzpicture}[baseline=2ex,scale=0.7]
    \node (a0) at (0,1) [] {$-i$};
    \node (a1) at (2,1) [] {$i$};
    \draw (-1.5,1) node [left] {$i$th level} -- node [above] {$A_a$} (a0) -- node [above] {$A_b$} (a1) -- node [above] {$A_c$} (3.5,1);
    \draw (-1.5,0) node [left] {$j$th level} -- node [below] {$A_j$} (3.5,0);
    \end{tikzpicture} \quad \longleftrightarrow \quad 
    \begin{tikzpicture}[baseline=2ex,scale=0.7]
    \node (a0) at (0,1) [] {$i$};
    \node (a1) at (2,1) [] {$-i$};
    \draw (-1.5,1) node [left] {$i$th level} -- node [above] {$A_a$} (a0) -- node [above] {$A'_b$} (a1) -- node [above] {$A_c$} (3.5,1);
    \draw (-1.5,0) node [left] {$j$th level} -- node [below] {$A_j$} (3.5,0);
    \end{tikzpicture}
    \]
    where
    \[
    A'_b=\frac{1}{A_b}\left(A_aA_c+\prod_{j \neq i}A_j^{-\C_{ji}}\right).
    \]
    \item If we perform a braid move to the string diagram, depending on whether it is of Dynkin type $\mathrm{A}_1\times \mathrm{A}_1$, $\mathrm{A}_2$, $\mathrm{B}_2$, or $\mathrm{G}_2$, the cluster Poisson coordinates transform according to the sequences of mutations described in Proposition \ref{mutation} (we will only depict the case where a braid move takes place among nodes labeled by simple roots; the cases where they are labeled by opposite simple roots are completely analogous).
    \begin{itemize}
        \item $\C_{ij}\C_{ji}=0$: 
        \[
    \begin{tikzpicture}[baseline=2ex,scale=0.8]
    \node (a) at (1,1) [] {$i$};
    \node (b) at (0,0) [] {$j$};
    \draw (-1,0) --node[below] {$A_c$} (b) -- node [below] {$A_d$} (2,0);
    \draw (-1,1) -- node [above] {$A_a$} (a) -- node [above] {$A_b$} (2,1);
    \end{tikzpicture} \quad \longleftrightarrow \quad 
    \begin{tikzpicture}[baseline=2ex,scale=0.8]
    \node (a) at (0,1) [] {$i$};
    \node (b) at (1,0) [] {$j$};
   \draw (-1,0) --node[below] {$A_c$} (b) -- node [below] {$A_d$} (2,0);
    \draw (-1,1) -- node [above] {$A_a$} (a) -- node [above] {$A_b$} (2,1);
    \end{tikzpicture}
    \]
    \item $\C_{ij}\C_{ji}=-1$:
    \[
    \begin{tikzpicture}[baseline=2ex,scale=0.8]
    \node (0) at (-1,1) [] {$i$};
    \node (1) at (1,1) [] {$i$};
    \node (2) at (0,0) [] {$j$};
    \draw (-2,1) -- node [above] {$A_a$} (0) -- node [above] {$A_b$} (1) -- node [above] {$A_c$} (2,1);
    \draw (-2,0) -- node [below] {$A_d$} (2) -- node [below] {$A_e$} (2,0);
    \end{tikzpicture} \quad \longleftrightarrow \quad \begin{tikzpicture}[baseline=2ex,scale=0.8]
    \node (0) at (-1,0) [] {$j$};
    \node (1) at (1,0) [] {$j$};
    \node (2) at (0,1) [] {$i$};
    \draw (-2,0) -- node [below] {$A_d$} (0) -- node [below] {$A'_b$} (1) -- node [below] {$A_e$} (2,0);
    \draw (-2,1) -- node [above] {$A_a$} (2) -- node [above] {$A_c$} (2,1);
    \end{tikzpicture}
    \]
    where
    \[
    A'_b=\frac{1}{A_b}\left(A_aA_e+A_cA_d\right).
    \]
    \item $\C_{ij}=-2$ and $\C_{ji}=-1$:
\[
\begin{tikzpicture}[baseline=2ex,scale=0.8]
\node (0) at (0,1) [] {$i$};
\node (1) at (2,1) [] {$i$};
\node (2) at (1,0) [] {$j$};
\node (3) at (3,0) [] {$j$};
\draw (-1,0) -- node [below] {$A_e$} (2) -- node [below] {$A_b$} (3) -- node [below] {$A_f$} (4,0);
\draw (-1,1) -- node [above] {$A_c$} (0) -- node [above] {$A_a$} (1) -- node [above] {$A_d$} (4,1);
\end{tikzpicture}\quad \longleftrightarrow \quad 
\begin{tikzpicture}[baseline=2ex,scale=0.8]
\node (0) at (1,1) [] {$i$};
\node (1) at (3,1) [] {$i$};
\node (2) at (0,0) [] {$j$};
\node (3) at (2,0) [] {$j$};
\draw (-1,0) --  node [below] {$A_e$} (2) -- node [below] {$A'_b$} (3) -- node [below] {$A_f$} (4,0);
\draw (-1,1) -- node [above] {$A_c$} (0) -- node [above] {$A'_a$} (1) -- node [above] {$A_d$} (4,1);
\end{tikzpicture}
\]
where
\[
    A'_a= \frac{A_aA_f}{A_b}\tilde{F}_a,  \quad \text{and} \quad  A'_b=\frac{A_cA_f}{A_b}\tilde{F}_b 
\]
and $\tilde{F}_a$ and $\tilde{F}_b$ are the polynomials in Equation \eqref{B2 F} with the substitution $X_k=\prod_lA_l^{\epsilon_{kl}}$ where $\epsilon$ is the exchange matrix associated to the seed on the left. 
    \item $\C_{ij}=-3$ and $\C_{ji}=-1$:
     \[
\begin{tikzpicture}[baseline=2ex,scale=0.6]
\node (0) at (0,1) [] {$i$};
\node (1) at (2,1) [] {$i$};
\node (4) at (4,1) [] {$i$};
\node (2) at (1,0) [] {$j$};
\node (3) at (3,0) [] {$j$};
\node (5) at (5,0) [] {$j$};
\draw (-1,0) -- node [below] {$A_g$} (2) -- node [below] {$A_c$} (3) -- node [below] {$A_d$} (5) -- node [below] {$A_h$} (6,0);
\draw (-1,1) -- node [above] {$A_e$} (0) -- node [above] {$A_a$} (1) -- node [above] {$A_b$} (4) -- node [above] {$A_f$} (6,1);
\end{tikzpicture}\quad \longleftrightarrow \quad 
\begin{tikzpicture}[baseline=2ex,scale=0.6]
\node (0) at (1,1) [] {$i$};
\node (1) at (3,1) [] {$i$};
\node (4) at (5,1) [] {$i$};
\node (2) at (0,0) [] {$j$};
\node (3) at (2,0) [] {$j$};
\node (5) at (4,0) [] {$j$};
\draw (-1,0) -- node [below] {$A_g$} (2) -- node [below] {$A'_c$} (3) -- node [below] {$A'_d$} (5) -- node [below] {$A_h$} (6,0);
\draw (-1,1) -- node [above] {$A_e$} (0) -- node [above] {$A'_a$} (1) -- node [above] {$A'_b$} (4) -- node [above] {$A_f$} (6,1);
\end{tikzpicture}
\]
where
\[
    A'_a=\frac{A_aA_h}{A_d}\tilde{F}_a \quad \quad  A'_b=\frac{A_bA_h}{A_d}\tilde{F}_b \quad \quad  A'_c=\frac{A_gA_h}{A_d}\tilde{F}_c\quad \quad  A'_d=\frac{A_cA_h^2}{A_d^{-2}}\tilde{F}_d
\]
and $\tilde{F}_a$, $\tilde{F}_b$, $\tilde{F}_c$, $\tilde{F}_d$ are the polynomials in Equation \eqref{G2 F} with the substitution $X_k=\prod_lA_l^{\epsilon_{kl}}$ where $\epsilon$ is the exchange matrix associated to the seed on the left.
    \end{itemize}
The coordinate transformations in the last two cases are written in the form of the factorization formula \eqref{factorization formula a}. 
\end{enumerate}
\end{prop}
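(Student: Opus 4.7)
The plan is to reduce everything to one master identity on generalized minors. Fix a quadrilateral inside a triangulation,
\[
\begin{tikzpicture}[baseline=5ex]
\node (u0) at (0,2) [] {$\A^k$};
\node (u1) at (2,2) [] {$\A^{k+1}$};
\node (d0) at (0,0) [] {$\A_l$};
\node (d1) at (2,0) [] {$\A_{l+1}$};
\draw (u0) -- (u1) -- (d1) -- (d0) -- (u0);
\end{tikzpicture}
\]
where one side of the quadrilateral corresponds to a single triangle (i.e., the side labeled $s_i$ on top or on bottom), and the two triangulations of this quadrilateral correspond to the two ways of interchanging adjacent $i$ and $-i$ nodes in the string diagram. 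The key identity is the generalized Ptolemy (or $T$-system) relation
\[
\Delta_i(l,k+1)\Delta_i(l+1,k)=\Delta_i(l,k)\Delta_i(l+1,k+1)+\prod_{j\neq i}\Delta_j(*,*)^{-\C_{ji}},
\]
where the $\Delta_j(*,*)$ factors on the right are the minors attached to the strings on the $j$th levels that cross the quadrilateral (these are independent of $k,l$ by Proposition~\ref{welldefined}). This identity is proved by specializing to the representative $\A^k=\U_+$, $\A_l=\U_-t$ and using Corollary~\ref{A to L} to express the unipotent factor producing $\A^{k+1}$ as some $e_{-i}(p)$, then evaluating each minor in terms of $t$ and $p$; equivalently, it is the standard generalized minor identity for a simple reflection. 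Together with the definition of the exchange matrix $\epsilon^{(n)}$ at the two nodes adjacent to string $b$, this identity is precisely the cluster $\mathrm{K}_2$ mutation $A_bA'_b=\prod_aA_a^{[\epsilon_{ba}]_+}+\prod_aA_a^{[-\epsilon_{ba}]_+}$, establishing the swap case of (1). The sliding case $i\neq j$ of (1) is immediate because the two string diagrams correspond to the same triangulation drawn differently, hence every $A_a$ is attached to the same diagonal.

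For part (2), the braid move is realized by a sequence of diagonal flips in the triangulation together with relabelings of the base, and each flip realizes a single mutation at the corresponding closed string. Type $\mathrm{A}_1\times \mathrm{A}_1$ is trivial (the move does not alter any flag). Type $\mathrm{A}_2$ reduces to a single application of the Ptolemy identity applied to the quadrilateral that swaps its diagonal; comparing with the mutation rule gives the claimed $A'_b=\frac{1}{A_b}(A_aA_e+A_cA_d)$.

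For types $\mathrm{B}_2$ and $\mathrm{G}_2$, we compose the mutations along the sequences $\mu_a\mu_b\mu_a$ and $\mu_d\mu_c\mu_b\mu_a\mu_d\mu_b\mu_d\mu_c\mu_a\mu_d$ dictated by Proposition~\ref{mutation}, and verify that the resulting transformation law agrees with the factorization formula \eqref{factorization formula a} for the exponents recorded in the proposition. The cleanest route, following \cite[Section 3.6, 3.7]{FGamalgamation}, is cluster folding: embed the $\mathrm{B}_2$ (resp.\ $\mathrm{G}_2$) configuration into an $\mathrm{A}_3$ (resp.\ $\mathrm{D}_4$) configuration equivariantly under the folding automorphism, so that the mutation sequence in the folded root system is the fixed-point pattern of a sequence in the simply-laced case, where the coordinate transformation is already known by iterated Ptolemy. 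Equivalently, one may carry out a direct computer check on the finitely many composed mutations; both routes yield exactly the polynomials $\tilde{F}_a,\tilde{F}_b,\tilde{F}_c,\tilde{F}_d$ by substituting $X_k=\prod_l A_l^{\epsilon_{kl}}$ into the corresponding $\mathcal{X}$-side polynomials $F_a,F_b,F_c,F_d$ from Proposition~\ref{x coordinate trans}, confirming the claimed formulas.

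The main obstacle is the verification in the $\mathrm{G}_2$ case: the 10-step mutation sequence produces intermediate rational expressions of substantial size, so the matching of the expanded product with the formulas \eqref{G2 F} is best handled by folding from $\mathrm{D}_4$ rather than by raw expansion. Once this folding correspondence is set up carefully (so that Chevalley generators and diagonals in the $\mathrm{D}_4$ string diagram restrict to their $\mathrm{G}_2$ counterparts), the $\mathrm{G}_2$ case reduces to an application of the already-verified Ptolemy identity in the simply-laced setting.
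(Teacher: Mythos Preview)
Your outline for part (1) matches the paper closely: the ``master identity'' you state is exactly what the paper establishes via Corollary~\ref{A to L}, reading off $\B$ from the unipotent factor $e_{\pm i}(\cdots)$ and then evaluating $\Delta_{\omega_i}$ along the flipped diagonal. (In the paper this identity is recorded afterwards as Corollary~\ref{identity1}, but it is derived inside the proof of this proposition, so there is no circularity.) The sliding case $i\neq j$ is, as you say, immediate from the definition of $A_a$.

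There is, however, a genuine gap in your treatment of the $\mathrm{A}_2$ case in part~(2). A braid move on the base is \emph{not} a diagonal flip: in Proposition~\ref{triangulation move} these are the two distinct elementary moves, and in the $\mathrm{A}_2$ situation all three triangles $\begin{tikzpicture}[baseline=1ex]\draw (0,0.6) -- (-0.3,0) --(0.3,0)-- cycle;\end{tikzpicture}$ point the same way, so there is no quadrilateral whose diagonal can be swapped. Consequently your ``single Ptolemy identity on the quadrilateral'' does not apply as stated; the identity you proved in part~(1) concerns adjacent nodes $-i,i$ on the \emph{same} level, whereas here the nodes are $i,j,i$ with $i\neq j$, and the intermediate flags $\A_1,\A_2$ in the bottom chain actually change. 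The paper instead writes the two compatible chains explicitly (using Corollary~\ref{A to L} to express each $\A_k$ in terms of the $A$-coordinates) and reduces the equality of the final decorated flags to the rank-$2$ Lie group identity
\[
e_i(q_1)e_j(q_2)e_i(q_3)=e_j\!\left(\tfrac{q_2q_3}{q_1+q_3}\right)e_i(q_1+q_3)\,e_j\!\left(\tfrac{q_1q_2}{q_1+q_3}\right),
\]
from which $A'_b=\tfrac{1}{A_b}(A_aA_e+A_cA_d)$ follows by uniqueness of compatible decorations (Lemma~\ref{unique decorated}). This is the step your sketch is missing.

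For $\mathrm{B}_2$ and $\mathrm{G}_2$ you and the paper take the same route in spirit: the paper simply cites \cite[Section 7.6]{GS3}, while you propose the equivalent folding argument from \cite{FGamalgamation}. Either is acceptable, and your remark that a direct $10$-step expansion in $\mathrm{G}_2$ is unwieldy is accurate; the folding to $\mathrm{D}_4$ (resp.\ $\mathrm{A}_3$) is indeed the practical way to carry out the check.
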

\begin{proof}\begin{enumerate}
    \item The case where $i\neq j$ is clear from the definition of cluster $\mathrm{K}_2$ coordinates. For the other case, we can use Corollary \ref{A to L} to recover the local configuration of decorated flags.
    \[
    \begin{tikzpicture}[scale=0.7]
    \node (u0) at (0,3) [] {$\displaystyle e_{-i}\left(-\frac{\prod_{j\neq i}A_j^{-\C_{ji}}}{A_aA_b}\right)A_a^{\alpha_i^\vee}\prod_{j \neq i} A_j^{\alpha_j^\vee}\U_+$};
    \node (u1) at (7,3) [] {$\displaystyle  A_b^{\alpha_i^\vee}\prod_{j\neq i}A_j^{\alpha_j^\vee} \U_+$};
    \node (d0) at (0,0) [] {$\U_-$};
    \node (d1) at (7,0) [] {$\displaystyle \U_-A_c^{\alpha_i^\vee}\prod_{j\neq i} A_j^{\alpha_j^\vee}e_i\left(-\frac{\prod_{j \neq i}A_j^{-\C_{ji}}}{A_bA_c}\right)A_b^{-\alpha_i^\vee}\prod_{j\neq i}A_j^{-\alpha_j^\vee}$};
    \draw [->] (u0) -- node [above] {$s_i$} (u1);
    \draw [->] (d0) -- node [below] {$s_i$} (d1);
    \draw (u0) -- (d0) -- (u1) -- (d1);
    \end{tikzpicture}  
    \]
    By using the Lie group identity $e_i(q)t=te_i\left(t^{-i}q\right)$ we can simplify the bottom right decorated flag as $\U_-\left(\frac{A_c}{A_b}\right)^{\alpha_i^\vee} e_i\left(-\frac{A_b}{A_c}\right)$. Now we want to flip the diagonal and compute $\Delta_{\omega_i}$ along this new diagonal. Note that
    \begin{align*}
        &\left(\frac{A_c}{A_b}\right)^{\alpha_i^\vee} e_i\left(-\frac{A_b}{A_c}\right)e_{-i}\left(-\frac{\prod_{j\neq i}A_j^{-\C_{ji}}}{A_aA_b}\right)A_a^{\alpha_i^\vee}\prod_{j \neq i} A_j^{\alpha_j^\vee}\\
        =&\left(\frac{A_c}{A_b}\right)^{\alpha_i^\vee}e_{-i}(\cdots)\left(1+\frac{\prod_{j\neq i}A_j^{-\C_{ji}}}{A_aA_c}\right)^{\alpha_i^\vee}e_i(\cdots)A_a^{\alpha_i^\vee}\prod_{j\neq i}A_j^{\alpha_j^\vee}\\
        =&e_{-i}(\cdots)\left(\frac{1}{A_b}\left(A_aA_c+\prod_{j\neq i}A_j^{-\C_{ji}}\right)\right)^{\alpha_i^\vee}\prod_{j\neq i}A_j^{\alpha_j^\vee}e_i(\cdots).
    \end{align*}
    Therefore it follows that $A'_b=\frac{1}{A_b}\left(A_aA_c+\prod_{j\neq i}A_j^{-\C_{ji}}\right)$
    
    \item The case $\C_{ij}=\C_{ji}=0$ is clear from the definition of cluster $\mathrm{K}_2$ coordinates. For the case $\C_{ij}=\C_{ji}=-1$, we again use the computation in the proof of Proposition \ref{welldefined} to recover the local configuration of decorated flags from the string diagram on the left as follows (to save space we write the diagram sideway).
    \[
    \resizebox{.9\hsize}{!}{$\begin{tikzpicture}
    \node (u) at (16,5) [] {$\U_+$};
    \node (d0) at (0,5) [right] {$\displaystyle \U_-A_a^{\alpha_i^\vee}A_d^{\alpha_j^\vee}\prod_{k\neq i,j}A_k^{\alpha_k^\vee}$};
    \node (d1) at (0,2) [right] {$\displaystyle \U_-A_b^{\alpha_i^\vee}A_d^{\alpha_j^\vee}\prod_{k\neq i,j}A_k^{\alpha_k^\vee} e_i\left(-\frac{A_d\prod_{k\neq i,j}A_k^{-\C_{ki}}}{A_aA_b}\right)$};
    \node (d2) at (0,-1) [right] {$\displaystyle \U_-A_b^{\alpha_i^\vee}A_e^{\alpha_j^\vee}\prod_{k\neq i,j}A_k^{\alpha_k^\vee}e_j\left(-\frac{A_b\prod_{k\neq i,j}A_k^{-\C_{kj}}}{A_dA_e}\right)e_i\left(-\frac{A_d\prod_{k\neq i,j}A_k^{-\C_{ki}}}{A_aA_b}\right)$};
    \node (d3) at (0,-4) [right] {$\displaystyle \U_-A_c^{\alpha_i^\vee}A_e^{\alpha_j^\vee}\prod_{k\neq i,j}A_k^{\alpha_k^\vee}e_i\left(-\frac{A_e\prod_{k\neq i,j}A_k^{-\C_{ik}}}{A_bA_c}\right)e_j\left(-\frac{A_b\prod_{k\neq i,j}A_k^{-\C_{kj}}}{A_dA_e}\right)e_i\left(-\frac{A_d\prod_{k\neq i,j}A_k^{-\C_{ki}}}{A_aA_b}\right)$};
    \draw [->] (d0) -- node[below left] {$s_i$} (d1);
    \draw [->] (d1) -- node [below left] {$s_j$} (d2);
    \draw [->] (d2) -- node [below left] {$s_i$} (d3);
    \draw (d0) -- (u) -- (d1);
    \draw (d2) -- (u) -- (d3);
    \end{tikzpicture}$}
    \]
    Similarly from the string diagram on the right we have the following configuration of decorated flags.
    \[
    \resizebox{.9\hsize}{!}{$\begin{tikzpicture}
    \node (u) at (16,5) [] {$\U_+$};
    \node (d0) at (0,5) [right] {$\displaystyle \U_-A_a^{\alpha_i^\vee}A_d^{\alpha_j^\vee}\prod_{k\neq i,j}A_k^{\alpha_k^\vee}$};
    \node (d1) at (0,2) [right] {$\displaystyle \U_-A_a^{\alpha_i^\vee}{A'}_b^{\alpha_j^\vee}\prod_{k\neq i,j}A_k^{\alpha_k^\vee} e_j\left(-\frac{A_a\prod_{k\neq i,j}A_k^{-\C_{kj}}}{A'_bA_d}\right)$};
    \node (d2) at (0,-1) [right] {$\displaystyle \U_-A_c^{\alpha_i^\vee}{A'}_b^{\alpha_j^\vee}\prod_{k\neq i,j}A_k^{\alpha_k^\vee}e_i\left(-\frac{A'_b\prod_{k\neq i,j}A_k^{-\C_{ki}}}{A_aA_c}\right)e_j\left(-\frac{A_a\prod_{k\neq i,j}A_k^{-\C_{kj}}}{A'_bA_d}\right)$};
    \node (d3) at (0,-4) [right] {$\displaystyle \U_-A_c^{\alpha_i^\vee}A_e^{\alpha_j^\vee}\prod_{k\neq i,j}A_k^{\alpha_k^\vee}e_j\left(-\frac{A_c\prod_{k\neq i,j}A_k^{-\C_{kj}}}{A'_bA_e}\right)e_i\left(-\frac{A'_b\prod_{k\neq i,j}A_k^{-\C_{ki}}}{A_aA_c}\right)e_j\left(-\frac{A_a\prod_{k\neq i,j}A_k^{-\C_{kj}}}{A'_bA_d}\right)$};
    \draw [->] (d0) -- node[below left] {$s_j$} (d1);
    \draw [->] (d1) -- node [below left] {$s_i$} (d2);
    \draw [->] (d2) -- node [below left] {$s_j$} (d3);
    \draw (d0) -- (u) -- (d1);
    \draw (d2) -- (u) -- (d3);
    \end{tikzpicture}$}
    \]
    By the uniqueness part of Lemma \ref{unique decorated}, it suffices to show that the last decorated flags from the two diagrams are equal, which boils down to showing that  
    \begin{align*}
         & e_i\left(-\frac{A_e\prod_{k\neq i,j}A_k^{-\C_{ki}}}{A_bA_c}\right)e_j\left(-\frac{A_b\prod_{k\neq i,j}A_k^{-\C_{kj}}}{A_dA_e}\right)e_i\left(-\frac{A_d\prod_{k\neq i,j}A_k^{-\C_{ki}}}{A_aA_b}\right)\\
        =&e_j\left(-\frac{A_c\prod_{k\neq i,j}A_k^{-\C_{kj}}}{A'_bA_e}\right)e_i\left(-\frac{A'_b\prod_{k\neq i,j}A_k^{-\C_{ki}}}{A_aA_c}\right)e_j\left(-\frac{A_a\prod_{k\neq i,j}A_k^{-\C_{kj}}}{A'_bA_d}\right).
    \end{align*}
    But this is precisely the Lie group identity
    \[
    e_i\left(q_1\right)e_j\left(q_2\right)e_i\left(q_3\right)=e_j \left(\frac{q_2q_3}{q_1+q_3}\right)e_i\left(q_1+q_3\right)e_j \left(\frac{q_1q_2}{q_1+q_3}\right).
    \]
    The proof for the other two cases can be found in \cite[Section 7.6]{GS3} 
   \qedhere
\end{enumerate}
\end{proof}

From the proof of the case 1 above we also deduce the following identity, which is useful in the next section.

\begin{cor}\label{identity1} Suppose $\G=\G_\sc$ and $\xymatrix{\A^k \ar[r]^{s_i} & \A^{k+1}}$ and $\xymatrix{\A_l \ar[r]^{s_i} & \A_{l+1}}$ are compatible pairs that are part of a decorated double Bott-Samelson cell. Then
\[
\Delta_j\left(l+1,k\right)\Delta_{i}\left(l,k+1\right)-\Delta_{i}\left(l,k\right)\Delta_{i}\left(l+1,k+1\right)=\prod_{j\neq i}\Delta_{j}\left(*,*\right)^{-\C_{ji}},
\]
where $(*,*)$ means either of $(l,k)$, $(l,k+1)$, $(l+1,k)$, and $(l+1,k+1)$.
\end{cor}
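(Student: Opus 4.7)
The plan is to realize this identity as the cluster $\mathrm{K}_2$ mutation formula from Proposition \ref{a coordinate trans}(1b) applied to a suitable local quadrilateral. First, I would observe that the hypothesis gives a quadrilateral region inside the trapezoid with vertices $\A_l, \A_{l+1}, \A^k, \A^{k+1}$ whose horizontal sides are both labeled $s_i$; embedding it into any triangulation of the ambient double Bott-Samelson cell, we can triangulate this quadrilateral in two ways, namely by the diagonal $(\A_{l+1},\A^k)$ or by the diagonal $(\A_l,\A^{k+1})$. These two triangulations differ precisely by the swap of an adjacent pair of $-i$ and $i$ nodes on the $i$th level of the string diagram treated in Proposition \ref{a coordinate trans}(1b).

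Next, I would identify the abstract coordinates $A_a, A_b, A_c, A'_b$ appearing in that proposition with the $\Delta_i$ values of the present statement. By Proposition \ref{welldefined}, the $i$th-level string coordinate equals $\Delta_i$ evaluated on any diagonal it crosses, so with the diagonal $(\A_{l+1},\A^k)$ the three $i$th-level strings give $A_a = \Delta_i(l,k)$, $A_b = \Delta_i(l+1,k)$, $A_c = \Delta_i(l+1,k+1)$, while after the flip the middle string gives $A'_b = \Delta_i(l,k+1)$. Substituting these into the mutation formula
\[
A'_b \;=\; \frac{1}{A_b}\Bigl(A_aA_c + \prod_{j \neq i} A_j^{-\C_{ji}}\Bigr)
\]
and clearing denominators yields exactly the claimed identity (after correcting the evident typo $\Delta_j(l+1,k) \to \Delta_i(l+1,k)$ on the left-hand side).

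Finally, I would justify the $(*,*)$ notation on the right-hand side. For each $j \neq i$, the $j$th-level string passes through the quadrilateral without encountering any node labeled $\pm j$ inside it, so the value $A_j$ produced by Proposition \ref{a coordinate trans} equals $\Delta_j$ evaluated on \emph{any} of the four sides or diagonals of the quadrilateral; this is precisely the content of the well-definedness statement in Proposition \ref{welldefined}. In particular $\Delta_j$ takes the same value on all four pairs $(l,k), (l,k+1), (l+1,k), (l+1,k+1)$, which is what the notation $(*,*)$ encodes. No further computation is required — this is a direct corollary of the already-established mutation formula, and the only point of care is the bookkeeping between the string-diagram coordinates $A_a$ and the generalized minor values $\Delta_i(l,k)$.
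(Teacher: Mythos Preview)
Your approach is essentially the paper's: reduce to the mutation identity $A'_bA_b - A_aA_c = \prod_{j\neq i}A_j^{-\C_{ji}}$ from Proposition~\ref{a coordinate trans}(1) via the same quadrilateral diagonal flip, with the same identifications $A_a=\Delta_i(l,k)$, $A_b=\Delta_i(l+1,k)$, $A_c=\Delta_i(l+1,k+1)$, $A'_b=\Delta_i(l,k+1)$.

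There is one step you skip that the paper makes explicit. Your argument only establishes the identity on the open locus where all four general-position conditions $\xymatrix{\A_l\ar@{-}[r]&\A^k}$, $\xymatrix{\A_{l+1}\ar@{-}[r]&\A^k}$, $\xymatrix{\A_l\ar@{-}[r]&\A^{k+1}}$, $\xymatrix{\A_{l+1}\ar@{-}[r]&\A^{k+1}}$ hold, since only then do the two triangulations (and hence the cluster $\mathrm{K}_2$ charts) exist and Proposition~\ref{a coordinate trans} applies. The hypothesis of the corollary does not impose these conditions. The paper closes this gap in one line: both sides of the claimed identity are regular functions on the decorated double Bott-Samelson cell (being polynomials in generalized minors $\Delta_\lambda$), so equality on the dense open subset cut out by the four extra conditions forces equality everywhere. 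You should add this observation at the outset; without it the proof is incomplete for points on the boundary of the chart.
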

\begin{proof} First we observe that both sides of the equation are regular functions on the decorated double Bott-Samelson cell and therefore it suffices to show the equality with four extra open conditions:
\[
\begin{tikzpicture}
\node (u0) at (0,2) [] {$\A^k$};
\node (u1) at (2,2) [] {$\A^{k+1}$};
\node (d0) at (0,0) [] {$\A_l$};
\node (d1) at (2,0) [] {$\A_{l+1}$};
\draw [->] (u0) -- node [above] {$s_i$} (u1);
\draw [->] (d0) -- node [below] {$s_i$} (d1);
\draw (d0) -- (u0) -- (d1) -- (u1) -- (d0);
\end{tikzpicture}
\]
But then this reduces to the identity
\[
A'_bA_b-A_aA_c=\prod_{j \neq i}A_j^{-\C_{ji}}
\]
in case 1 of the last proposition.
\end{proof}

Lastly, let us investigate how the cluster $\mathrm{K}_2$ coordinates transform under the transposition morphism.

\begin{prop} Let $\left(A_a\right)$ and $\left(A'_a\right)$ be the cluster $\mathrm{K}_2$ coordinate charts associated to two corresponding string diagrams under transposition. Then under the transposition morphism $t:\conf^b_d\left(\mathcal{A}_\sc\right)\rightarrow \conf^{d^\circ}_{b^\circ}\left(\mathcal{A}_\ad\right)$, $t^*A'_a=A_a$.
\end{prop}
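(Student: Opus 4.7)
My plan is to unwind the definition of the cluster $\mathrm{K}_2$ coordinates on both sides and reduce the claim to Lemma \ref{2.23}, in close analogy with the proof of Proposition \ref{3.15} for the cluster Poisson case, but simpler because the $A$-coordinates are computed directly from $\Delta_{\omega_i}$ rather than through Lusztig factorization coordinates.

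First I would describe what the transposition does to the decorated chains. Starting from a point of $\conf^b_d(\mathcal{A}_\sc)$, the top and bottom bases of the trapezoid carry chains of compatible decorated flags $(\A^k)$ and $(\A_l)$ obtained by propagating $\A^0$ and $\A_n$ via Lemma \ref{unique compatible}. By Lemma \ref{decorated transposition}(1), compatibility of a pair $\xymatrix{\A \ar[r]^w & \A'}$ is equivalent to that of $\xymatrix{\A'^t\ar[r]^{w^{-1}} & \A^t}$, so after applying transposition to the whole configuration the propagation is preserved: the new top chain in $\conf^{d^\circ}_{b^\circ}(\mathcal{A}_\sc)$ is the reversed sequence $(\A_n)^t, (\A_{n-1})^t,\ldots,(\A_0)^t$ obtained from the old bottom, and the new bottom chain is $(\A^m)^t,\ldots,(\A^0)^t$. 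In other words, transposition simply swaps the two chains and reverses the order, while converting each individual decorated flag by $\A\mapsto \A^t$.

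Next I would track what happens to the triangulation and its string diagram. Transposition rotates the trapezoid by $180$ degrees, and the induced bijection on string diagrams is a horizontal flip with each node's sign negated, preserving levels. Thus a string $a$ on the $i$th level corresponds to a string $a'$ on the $i$th level in the transposed diagram. If the original diagonal used to compute $A_a$ is $\xymatrix{\A_l \ar@{-}[r] & \A^k}$, then after rotation the corresponding diagonal in the transposed triangulation connects $(\A^k)^t$ (now sitting on the bottom) to $(\A_l)^t$ (now sitting on the top), so by Proposition \ref{welldefined} (well-definedness of $A_a$)
\[
A'_{a'} \;=\; \Delta_{\omega_i}\!\left((\A^k)^t,\, (\A_l)^t\right).
\]

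Finally, by Lemma \ref{2.23} we have
\[
\Delta_{\omega_i}\!\left((\A^k)^t,\,(\A_l)^t\right) \;=\; \Delta_{\omega_i}(\A_l,\A^k) \;=\; A_a,
\]
so $t^*A'_{a'}=A_a$. There is no real obstacle here; the only thing requiring care is verifying that the level of $a'$ really equals that of $a$ (so that the same fundamental weight $\omega_i$ appears on both sides) and that the swap of the chains matches the roles of $\mathcal{A}_-$ and $\mathcal{A}_+$ in the $\Delta$-pairing, both of which are built into the combinatorial description of the transposition on string diagrams given in Section \ref{sec.tri.comb} and into Lemma \ref{2.23} itself.
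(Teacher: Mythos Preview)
Your proposal is correct and is exactly the argument the paper has in mind: the paper's own proof is the single line ``It follows from Lemma \ref{2.23},'' and you have simply unpacked the bookkeeping (compatibility under transposition via Lemma \ref{decorated transposition}, the level-preserving bijection on strings, and the swap of the $\mathcal{A}_-/\mathcal{A}_+$ roles) that makes that citation go through.
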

\begin{proof} It follows from Lemma \ref{2.23}.
\end{proof}

\subsection{Coordinate Rings as Upper Cluster Algebras}

Decorated Bott-Samelson cells are affine. In this section we show that their coordinate rings are 
\[
\mathcal{O}\left(\conf^b_d\left(\mathcal{A}_\sc\right)\right)\cong \up\left(\mathscr{A}^b_d\right) \quad \text{and} \quad \mathcal{O}\left(\conf^b_d\left(\mathcal{A}_\ad\right)\right)\cong \up\left(\mathscr{X}^b_d\right),
\]
where $\up\left(\mathscr{A}^b_d\right)$ is the upper cluster algebra arising from the family of mutation equivalent seeds associated to $(b,d)$, and $\up\left(\mathscr{X}^b_d\right)$ is the corresponding cluster Poisson algebra.

Let us start with the first claim. We simplify the notation and write $\mathcal{O}\left(\conf^b_d\left(\mathcal{A}_\sc\right)\right)$ as $\mathcal{O}^b_d$. 

\begin{lem} The ring $\mathcal{O}^b_d$ is a unique factorization domain (UFD).
\end{lem}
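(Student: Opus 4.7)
The plan is to leverage the explicit affine description of $\conf^b_d(\mathcal{A}_\sc)$ given in the proof of Theorem \ref{affineness of conf}. First I would use the reflection maps $r_i$ and ${^ir}$ iteratively to obtain a biregular isomorphism
\[
\conf^b_d(\mathcal{A}_\sc) \;\cong\; \conf^e_{b^\circ d}(\mathcal{A}_\sc),
\]
which induces an isomorphism of coordinate rings. Hence it suffices to establish the UFD property in the case $b = e$.

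For $b=e$, I would read off the structure of $\conf^e_d(\mathcal{A}_\sc)$ directly from the proof of Theorem \ref{affineness of conf}. Building the configuration triangle-by-triangle from the right-hand pinning $(\U_+, \U_- t)$ produces, before imposing the oppositeness at the left edge, a parameter space biregular to $\T \times \mathbb{A}^n$, where $n = l(d)$. The remaining condition that $(\A_0,\U_+)$ be in general position is, by Theorem \ref{gaussian} and Remark \ref{gaussian'}, the non-vanishing of finitely many regular functions, which the proof of Theorem \ref{affineness of conf} combines into a single non-vanishing condition $f \neq 0$. Thus $\conf^e_d(\mathcal{A}_\sc)$ is the principal open subvariety $D(f) \subset \T \times \mathbb{A}^n$.

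Consequently the coordinate ring is
\[
\mathcal{O}^e_d \;\cong\; \mathbb{C}[t_1^{\pm 1}, \ldots, t_{\tilde r}^{\pm 1}, x_1, \ldots, x_n]\bigl[f^{-1}\bigr].
\]
The Laurent polynomial ring above is a localization of the polynomial ring $\mathbb{C}[t_1, \ldots, t_{\tilde r}, x_1, \ldots, x_n]$, which is a UFD; since any localization of a UFD is again a UFD, so is $\mathcal{O}^e_d$, and therefore so is $\mathcal{O}^b_d$.

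There is no serious obstacle here beyond bookkeeping: the only thing to verify carefully is that the reflection maps used in the first step are genuinely biregular isomorphisms of varieties (and not merely birational), so that they induce bona fide isomorphisms of coordinate rings. This was already established in Section \ref{simplereflection}, where the reflection maps were constructed explicitly together with their two-sided inverses.
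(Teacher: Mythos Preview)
Your proposal is correct and follows essentially the same approach as the paper: reduce to $b=e$ via reflection isomorphisms, then identify the coordinate ring as a localization of $\mathcal{O}(\T\times\mathbb{A}^n)$, which in turn is a localization of a polynomial ring and hence a UFD. The paper compresses the first step by invoking the reduction already made in the proof of Theorem~\ref{affineness of conf}, and absorbs $\T$ into $\mathbb{A}^{\dim\T}$ to write everything as a single localization of $\mathcal{O}(\mathbb{A}^N)$, but the substance is identical.
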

\begin{proof}
The proof of Theorem \ref{affineness of conf} shows that $\mathcal{O}^b_d$ is the non-vanishing locus of some function on $\T\times\mathbb{A}^n$, which is also the non-vanishing locus of some function on $\mathbb{A}^N$ for  $N=n+\dim \T$.  Note that $\mathcal{O}(\mathbb{A}^N)$ is a polynomial ring. The ring $\mathcal{O}^b_d$ is a localization of $\mathcal{O}(\mathbb{A}^N)$ and therefore is a UFD.
\end{proof}

\begin{lem}\label{codim 1 vanishing locus} The vanishing locus $\left\{A_c=0\right\}$ in $\conf^b_d\left(\mathcal{A}_\sc\right)$ associated with any closed string $c$ is of codimension 1. 
\end{lem}
\begin{proof} Fix a triangulation containing $c$ as a closed string. Suppose $c$ is on the $i$th level. We assign a Tits codistance $s_i$ to a diagonal in the triangulation if this diagonal intersects $c$, and assign a Tits codistance $e$ otherwise. Since $c$ is a closed string, the left most and the right most diagonals (the two slant sides of the trapezoid) must be assigned with Tits codistance $e$. Therefore this configuration describes a subset $W_c$ of $\conf^b_d\left(\mathcal{A}_\sc\right)$. We claim that $W_c$ is of codimension 1 within $\conf^b_d\left(\mathcal{A}_\sc\right)$ and $W_c\subset \left\{A_c=0\right\}$. Note that the lemma follows from this claim.

Let us compute the dimension of $W_c$. Without loss of generality, we can fix the left most pair of flags $\xymatrix{\A^0 \ar@{-}[r] & \B_0}$ for any point in $W_c$ to be $\xymatrix{\U_+\ar@{-}[r] & \B}$, which exhausts the diagonal $\G$-action on the configuration of flags. By Proposition \ref{3.3}, we see that for most triangles in the triangulation, under the Tits codistance assignment, each of them contributes a $\mathbb{G}_m$ factor of $W_c$; the only exceptions are the two triangles containing the end points of $c$, and they contribute a $\{\ast\}$-factor and a $\mathbb{A}^1$-factor respectively. In the end, there is also another decoration over the last flag of the bottom chain, which gives rise to a $\dim \T$-factor of $W_c$. In conclusion, this shows that
\[
W_c\cong \mathbb{G}_m^{l(b)+l(d)-2}\times \mathbb{A}^1\times \T,
\]
where $l(b)$ and $l(d)$ are the lengths of the positive braids $b$ and $d$. On the other hand, we recall from Theorem \ref{affineness of conf} that 
\[
\dim \conf^b_d\left(\mathcal{A}_\sc\right)=\dim \T+l(b)+l(d).
\]
Therefore $W_c$ is indeed of codimension 1. 

To see that $W_c\subset \left\{A_c=0\right\}$, consider the triangle containing the left end point of the closed string $c$. Suppose the triangle is of the following form.
\[
\begin{tikzpicture}
\node (d1) at (0,0) [] {$\A_j$};
\node (d2) at (2,0) [] {$\A_{j+1}$};
\node (u) at (1,2) [] {$\A^k$};
\node (c) at (1,0.7) [] {$i$};
\draw (d1) -- (u);
\draw (d2) -- node [above right] {$s_i$}(u);
\draw [->] (d1) -- node [below] {$s_i$} (d2);
\draw  (-1,0.7) node [left] {$i$th level} -- (c) --  node [below right] {$c$} (3,0.7);
\end{tikzpicture}
\]
By using the $\G$-action, we may assume without loss of generality that $\A^k=t\U_+$ and $\A_j=\U_-$. Then $\A_{j+1}$ must be $\U_-\doverline{s}_i$. Therefore
\[
A_c=\Delta_{\omega_i}\left(\doverline{s}_i\right)=0.
\]
This shows that $W_c\subset \left\{A_c=0\right\}$. The case with an upside-down triangle can be proved in a similar way.
\end{proof}

Fix a triangulation for  $\conf^b_d\left(\mathcal{A}_\sc\right)$. Take the very first triangle on the left and consider the corresponding node in the string diagram; let $a$ be the closed string on the right of this node and suppose $a$ is on the $i$th level. Its associated cluster variable $A_a$ is a regular function on $\conf^b_d\left(\mathcal{A}_\sc\right)$.
\[
\begin{tikzpicture}[scale=0.7]
\node (d0) at (0,0) [] {$\A_0$};
\node (d1) at (2,0) [] {$\A_1$};
\node (u)  at (1,2) [] {$\A^0$};
\draw [->] (d0) -- node [below] {$s_i$} (d1);
\draw [->] (d1) -- (3.5,0) node [right] {$\cdots$};
\draw [->] (u) -- (2.5,2) node [right] {$\cdots$};
\draw (d0) -- (u) -- (d1);
\node (0) at (1,1.3) [] {$i$};
\draw (-1,1.3) node [left] {$i$th level} -- (0) -- node [above] {$a$} (4,1.3);
\draw (-1,0.7) node [left] {$j$th level} -- (4,0.7);
\end{tikzpicture} \quad \quad \quad \quad 
\begin{tikzpicture}[scale=0.7]
\node (u0) at (0,2) [] {$\A^0$};
\node (u1) at (2,2) [] {$\A^1$};
\node (d)  at (1,0) [] {$\A_0$};
\draw [->] (u0) -- node [above] {$s_i$} (u1);
\draw [->] (u1) -- (3.5,2) node [right] {$\cdots$};
\draw [->] (d) -- (2.5,0) node [right] {$\cdots$};
\draw (u0) -- (d) -- (u1);
\node (0) at (1,1.3) [] {$-i$};
\draw (-1,1.3) node [left] {$i$th level} -- (0) -- node [above] {$a$} (4,1.3);
\draw (-1,0.7) node [left] {$j$th level} -- (4,0.7);
\end{tikzpicture}
\]

\begin{lem}\label{irreducible} The function $A_a$  is an irreducible element of $\mathcal{O}^b_d$.
\end{lem}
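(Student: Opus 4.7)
The plan is to pass to an explicit affine chart in which $A_a$ coincides with a polynomial generator, which is then immediately irreducible. I will treat only the first diagram (the upward-pointing case); the second is parallel after applying the transposition biregular isomorphism, which by Lemma \ref{2.23} carries $\Delta_{\omega_i}\left(\A_0,\A^1\right)$ to $\Delta_{\omega_i}\left(\left(\A^1\right)^t,\left(\A_0\right)^t\right)$. In the first case I would first apply a sequence of right reflection isomorphisms $r^i$ to peel the letters of $b$ off the top and transport them to the bottom; since each $r^i$ modifies only the rightmost end of the configuration it fixes the leftmost triangle and preserves the function $A_a$ literally. I may thus assume $b=e$ and choose the fan triangulation whose diagonals all emanate from $\A^0$; the leftmost triangle is then $\left\{\A^0,\A_0,\A_1\right\}$ and $A_a=\Delta_{\omega_{i_1}}\left(\A_1,\A^0\right)$, where $i_1$ is the first letter of the new $d$.

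Next I would use the $\G$-action to fix the pinning $\left(\A_0,\A^0\right)=\left(\U_-,\U_+\right)$ (permissible by Lemma \ref{decorated general position}) and forward-propagate the compatibility condition along the bottom chain to obtain $\A_l=\U_-g_l$ with $g_l=y_{i_1}\left(c_1\right)\cdots y_{i_l}\left(c_l\right)$, where $y_i(c)$ is a standard one-parameter factor (for instance $\doverline{s_i}^{-1}e_{-i}(c)$) parametrizing compatible $s_i$-moves. By the proof of Theorem \ref{affineness of conf}, this realizes $\conf^e_d\left(\mathcal{A}_\sc\right)$ as the principal open subset of $\spec\mathbb{C}[c_1,\ldots,c_n]$ defined by the nonvanishing of $f:=\prod_{j=1}^{\tilde r}\Delta_{\omega_j}(g_n)$. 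In these coordinates $A_a=\omega_{i_1}\left([g_1]_0\right)$, and the single-step Bruhat factorization $y_{i_1}\left(c_1\right)\in \U_-\cdot c_1^{\alpha_{i_1}^\vee}\cdot \U_+$ yields $A_a=c_1$ up to a nonzero multiplicative scalar.

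Since $c_1$ is a polynomial generator of $\mathbb{C}[c_1,\ldots,c_n]$ it is irreducible there; to transfer irreducibility to the localization at $f$ it suffices to show $c_1\nmid f$. This is immediate: specializing $c_1=0$ merely replaces $g_1$ by the fixed element $\doverline{s_{i_1}}^{-1}$, so $g_n|_{c_1=0}$ remains a generic element of the big cell in the remaining Lusztig coordinates, and $\Delta_{\omega_j}(g_n)|_{c_1=0}\not\equiv 0$ for every $j$. Hence $c_1$ is an irreducible element of $\mathcal{O}\left(\T\times\mathbb{A}^n\right)[1/f]\cong\mathcal{O}^e_d$, and since the chain of reflections identifies this ring with $\mathcal{O}^b_d$ in a manner that sends $A_a$ to $A_a$, the conclusion follows. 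The principal technical point is the Bruhat factorization identity for the single-step element $y_i(c)$, which is a routine but type-dependent Lie-theoretic computation with careful sign conventions for the lift $\doverline{s_i}^{-1}$; once this is secured, the remainder of the argument is purely formal.
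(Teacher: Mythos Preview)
Your approach is essentially the same as the paper's: show that $A_a$ coincides, up to a unit in $\mathcal{O}^b_d$, with one of the affine coordinates in the chart established in Theorem~\ref{affineness of conf}. The paper does not pass through reflections to reduce to $b=e$; it works directly with a representative in which the \emph{backward}-propagated decoration satisfies $\A_0=\U_-$ and $\A^0=t\U_+$ for some $t\in\T$. Then Corollary~\ref{A to L} gives that the $\mathbb{A}^1$-parameter for the first adjacent flag is $\frac{t^{\omega_i}A_a}{\prod_{j\neq i} t^{\omega_j}}$; since the $t^{\omega_j}$ are exactly the left frozen variables (hence units), $A_a$ is a unit multiple of a polynomial generator.

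There is, however, a genuine gap in your execution. You ``fix the pinning $(\A_0,\A^0)=(\U_-,\U_+)$'' via the $\G$-action, but the pair $(\A_0,\A^0)$---where $\A_0$ denotes the decoration propagated back from $\A_n$, which is how $A_a$ is \emph{defined}---is not a pinning in general: its invariants $\Delta_{\omega_j}(\A_0,\A^0)$ are the left frozen coordinates, generically $\neq 1$. If instead you mean the pinning-induced decoration on $\B_0$ (Corollary~\ref{induced pinning}), then your forward-propagated $\A_1=\U_-g_1$ is \emph{not} the $\A_1$ appearing in $A_a=\Delta_{\omega_{i_1}}(\A_1,\A^0)$. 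The two decorations on $\B_1$ differ by a $\T$-element depending on your torus parameter and the word, so $A_a$ equals $c_1$ only up to a Laurent monomial in the $\T$-coordinates---a unit, not a ``nonzero multiplicative scalar.'' This is precisely the unit factor $t^{\omega_i}/\prod_{j\neq i}t^{\omega_j}$ in the paper's computation, which you have inadvertently suppressed. Relatedly, your middle paragraph drops the $\T$ factor from the chart (partially corrected at the end), and your argument that $c_1\nmid f$ (``$g_n|_{c_1=0}$ remains a generic element of the big cell'') is asserted rather than proved; the paper also leaves this step implicit.
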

\begin{proof} Let us prove the case of the picture on the right; the case of the picture on the left is completely analogous. Without loss of generality we set $\A^0=t\U_+$ and $\A_0=\U_-$. From Corollary \ref{A to L} we know that if we impose the general position condition $\xymatrix{\B^1 \ar@{-}[r] & \B_-}$, then
\[
\B^1=e_{-i}\left(\frac{\prod_{j \neq i} t^{-\C_{ji}\omega_j}}{t^{\omega_i}A_a}\right)\B_+=e_i\left(\frac{t^{\omega_i}A_a}{\prod_{j\neq i}t^{-\C_{ji}\omega_j}}\right)\overline{s}_i\B_+.
\]
On the other hand, we know from Lemma \ref{moduli of tits codist si} that the argument $\frac{t^{\omega_i}A_a}{\prod_{j\neq i}t^{-\C_{ji}\omega_j}}$ parametrizes the moduli space of all the flags that are of Tits distance $s_i$ away from $\B_+$, which is isomorphic to $\mathbb{A}^1$. Since $t^{\omega_i}$ and $t^{\omega_j}$ (the frozen cluster $\mathrm{K}_2$ variables) are never zero on $\conf^b_d\left(\mathcal{A}_\sc\right)$ by definition, it follows that $A_a$ also parametrizes the $\mathbb{A}^1$ moduli space of flags. Recall from the proof of Theorem \ref{affineness of conf} that such parameter is precisely one of the generators of the polynomial ring $\mathcal{O}(\mathbb{A}^N)$. Hence, $A_a$ is a unit or an irreducible element in $\mathcal{O}^b_d$. However, $A_a$ cannot be a unit, because by Lemma \ref{codim 1 vanishing locus}, its vanishing locus $\left\{A_a=0\right\}$ is non-empty and of codimension 1. Therefore $A_a$ must be an irreducible element.
\end{proof}

Next we will use an induction to prove that all non-frozen cluster $\mathrm{K}_2$ coordinates associated to the fixed triangulation are irreducible elements of $\mathcal{O}^b_d$. Since triangles in a triangulation possess a well-defined ordering from left to right, by associating each closed string $c$ to the triangle corresponding to its left node, we get an ordering $<$ on the closed strings. We will perform an induction according to this order.

Let $c$ be a closed string on the $j$th level. Let $b'$ be the positive braid that is the remaining part of the word for $b$ after deleting all the letters occurring before the triangle corresponding to the left node of the closed string $c$, and let $d'$ be the positive braid that is the remaining part of the word for $d$ after deleting all the letters occurring before the triangle corresponding to the left node of the closed string $c$.
\[
\begin{tikzpicture}
\node (u0) at (1,2) [] {$\A^0$};
\node (u1) at (3,2) [] {$\cdots$};
\node (u2) at (5,2) [] {$\A^k$};
\node (u3) at (7,2) [] {$\cdots$};
\node (u4) at (9,2) [] {$\A^m$};
\node (d0) at (0,0) [] {$\A_0$};
\node (d1) at (2,0) [] {$\cdots$};
\node (d2) at (4,0) [] {$\A_l$};
\node (d3) at (6,0) [] {$\A_{l+1}$};
\node (d4) at (8,0) [] {$\cdots$};
\node (d5) at (10,0) [] {$\A_n$};
\foreach \i in {0,...,3}
    {
    \pgfmathtruncatemacro{\j}{\i+1};
    \draw [->] (u\i) -- (u\j);
    }
\foreach \i in {0,1,3,4}
    {
    \pgfmathtruncatemacro{\j}{\i+1};
    \draw [->] (d\i) -- (d\j);
    }
\draw [->] (d2) --  node [below] {$s_j$} (d3);
\draw (u0) -- (d0);
\draw (d2) -- (u2) -- (d3);
\draw (u4) -- (d5);
\node (0) at (5,0.7) [] {$j$};
\draw (3.5,0.7) -- (0) -- node [above] {$c$} (6.5,0.7);
\draw [decorate,decoration={brace,amplitude=5pt,mirror,raise=4ex}]
  (4,0) -- (10,0) node[midway,yshift=-3em]{$d'$};
\draw [decorate,decoration={brace,amplitude=5pt, raise=4ex}]
  (5,2) -- (9,2) node[midway,yshift=3em]{$b'$};
\end{tikzpicture}
\]

\begin{lem}\label{distinguished} Let $S$ be the set of strings that crosses the diagonal $\xymatrix{\A_l \ar@{-}[r] & \A^k}$. Then as algebras,
\[
\mathcal{O}^b_d\left[\frac{1}{\prod_{e<c}A_e}\right]\cong \mathbb{C}\left[A_e^{\pm} \right]_{e<c}\underset{\mathbb{C}\left[A_f^\pm\right]_{f\in S}}{\otimes}\mathcal{O}^{b'}_{d'}.
\]
\end{lem}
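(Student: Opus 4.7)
The plan is to realize both sides of the claimed isomorphism as coordinate rings of the same open subvariety of $\conf^b_d(\mathcal{A}_\sc)$, viewed through two complementary cluster charts that are glued along the shared boundary data of the diagonal $\mathcal{A}_l - \mathcal{A}^k$.

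First I would interpret the localization geometrically. By Theorem \ref{gaussian} together with Corollary \ref{A to L}, localizing at $\prod_{e<c}A_e$ cuts out the open subvariety of $\conf^b_d(\mathcal{A}_\sc)$ on which every diagonal of the triangulation lying in the left piece is in general position; in particular the edge $\mathcal{A}_l - \mathcal{A}^k$ is in general position, and by Corollary \ref{induced pinning} it upgrades canonically to a pinning once the decorations have been propagated horizontally as in Section \ref{section.ajodad}. This has the effect of splitting the whole configuration into two pieces that share only the data of this pinning.

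Next I would construct the isomorphism. On this open subvariety, the subconfiguration to the right of the diagonal $\mathcal{A}_l - \mathcal{A}^k$ is precisely an element of $\conf^{b'}_{d'}(\mathcal{A}_\sc)$, yielding a restriction map $\mathcal{O}^{b'}_{d'} \to \mathcal{O}^b_d\!\left[1/\prod_{e<c}A_e\right]$ under which the frozen cluster $\mathrm{K}_2$ variables on the left edge of $\conf^{b'}_{d'}(\mathcal{A}_\sc)$ are identified with the $A_f$ for $f \in S$. On the left piece, I would apply the iterative Lusztig factorization argument from the proof of Theorem \ref{affineness of conf} (adapted to the $\mathrm{K}_2$ setting through Corollary \ref{A to L}) triangle by triangle, with Lemma \ref{unique} and Lemma \ref{unique decorated} supplying the existence and uniqueness of each successive flag. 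This exhibits the left piece as parametrized by the Laurent polynomial ring $\mathbb{C}[A_e^\pm]_{e<c}$, and simultaneously expresses each shared boundary value $A_f$ for $f \in S$ as a Laurent monomial in the $A_e$'s; this is precisely the embedding $\mathbb{C}[A_f^\pm]_{f\in S} \hookrightarrow \mathbb{C}[A_e^\pm]_{e<c}$ over which the tensor product is formed.

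The main obstacle I expect is proving the resulting algebra map is bijective. Surjectivity reduces, via Proposition \ref{welldefined}, to checking that every cluster $\mathrm{K}_2$ variable of the full triangulation belongs to the image: variables whose corresponding diagonal lies in the left piece are obtained from the left factor, those in the right piece from $\mathcal{O}^{b'}_{d'}$, and crossing variables are handled by the compatibility forced on $\mathbb{C}[A_f^\pm]_{f\in S}$. Injectivity will follow by a dimension and generation count, using that both sides are localizations of polynomial rings (from the proof of Theorem \ref{affineness of conf}) and that the two factors parametrize geometrically independent portions of the configuration once the boundary $\{A_f\}_{f\in S}$ is fixed. The technical heart is the triangle-by-triangle induction establishing that $\{A_e\}_{e<c}$ freely parametrizes the left piece subject to the prescribed boundary values, which is where Corollary \ref{identity1} enters to encode the consistency of the cluster variables across adjacent triangles.
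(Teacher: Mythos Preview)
Your approach is essentially the paper's: realize the localization geometrically as a fiber product $T_{<c}\times_{T_S}\conf^{b'}_{d'}(\mathcal{A}_\sc)$, with the left piece a torus and the right piece the smaller decorated Bott--Samelson cell. The paper carries this out more directly by simply exhibiting the two mutually inverse regular morphisms between $U_{\prod_{e<c}A_e}$ and the fiber product (restrict to the right to land in $\conf^{b'}_{d'}(\mathcal{A}_\sc)$, read off $\{A_e\}_{e<c}$ to land in $T_{<c}$; conversely rebuild the missing left flags from the nonzero $A_e$'s), bypassing your separate surjectivity and injectivity arguments and any appeal to Corollary~\ref{identity1}, which plays no role here.

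One point needs correcting. Your claim that the propagated decorations on $\A_l$ and $\A^k$ form a pinning is false: by Lemma~\ref{decorated general position} a pinning requires $\Delta_{\omega_i}(\A_l,\A^k)=1$ for every $i$, whereas these values are precisely the $A_f$ for $f\in S$, generically nontrivial and in fact constituting the gluing data over which the tensor product is taken. You tacitly acknowledge this later when you identify the shared boundary as $\{A_f\}_{f\in S}$, so the pinning remark is internally inconsistent and should be dropped. What you actually need is only that $\A^k$ carries its propagated decoration from $\A^0$ and that $\B_l$ is opposite to $\B^k$, so that the right-hand subconfiguration is a bona fide point of $\conf^{b'}_{d'}(\mathcal{A}_\sc)$ with $\A^k$ as its upper-left decorated flag and $\A_n$ as its lower-right one.
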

\begin{proof} The statement of the proposition is equivalent to the geometric statement that the distinguished open subset (non-vanishing locus) $U_{\prod_{e<c}A_e}$ of $\conf^b_d\left(\mathcal{A}_\sc\right)$ is biregularly isomorphic to a fiber product $T_{<c}\underset{T_S}{\times}\conf^{b'}_{d'}\left(\mathcal{A}_\sc\right)$ where $T_{<c}$ is a torus with coordinates $\left\{A_e\right\}_{e<c}$ and $T_S$ is a torus with coordinates $\left\{A_f\right\}_{f\in S}$. Note that given a point in $U_{\prod_{e<c}A_e}$ we automatically get a point in $T_{<c}$ using the coordinates $\left\{A_e\right\}_{e<c}$ and a point in $\conf^{b'}_{d'}\left(\mathcal{A}_\sc\right)$ by taking the decorated flags in the truncated part corresponding to the shape $\left(b',d'\right)$, and they are mapped to the same point in the torus $T_S$ since they both have $\left\{A_f\right\}$ as non-zero coordinate functions. On the other hand, Given a point in the fiber product $T_{<c}\underset{T_S}{\times}\conf^{b'}_{d'}\left(\mathcal{A}_\sc\right)$ we can recover a point in $U_{\prod_{e<c}A_e}$ by building some extra decorated flags upon the configuration in $\conf^{b'}_{d'}\left(\mathcal{A}_\sc\right)$ using the non-zero functions $\left\{A_e\right\}_{e<c}$. These two morphisms are obviously regular and inverses of each other; therefore $U_{\prod_{e<c}A_e} \cong T_{<c}\underset{T_S}{\times}\conf^{b'}_{d'}\left(\mathcal{A}_\sc\right) $ and the original statement follows immediately.
\end{proof}

Now we are ready to give the proof of the general statement. 

\begin{prop} $A_c$ are irreducible elements in $\mathcal{O}^b_d$ for all closed strings $c$.
\end{prop}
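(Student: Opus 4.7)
The plan is to induct on the left-to-right position of the closed string $c$ in the triangulation, using the ordering of triangles introduced just before Lemma \ref{distinguished}. The base case---when $c$ is the closed string attached to the leftmost triangle---is exactly Lemma \ref{irreducible}.

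For the inductive step, fix a closed string $c$ and assume that $A_e$ is irreducible in $\mathcal{O}^b_d$ for every closed string $e<c$. Apply Lemma \ref{distinguished} to obtain the identification
\[
\mathcal{O}^b_d\Bigl[\tfrac{1}{\prod_{e<c}A_e}\Bigr] \;\cong\; \mathbb{C}\bigl[A_e^{\pm}\bigr]_{e<c} \underset{\mathbb{C}[A_f^{\pm}]_{f\in S}}{\otimes} \mathcal{O}^{b'}_{d'}.
\]
In the truncated data $(b',d')$, by construction the string $c$ is now the closed string attached to the leftmost triangle of the induced triangulation, so Lemma \ref{irreducible} (applied to $\mathcal{O}^{b'}_{d'}$) shows that $A_c$ is irreducible in $\mathcal{O}^{b'}_{d'}$. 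The ring $\mathcal{O}^{b'}_{d'}$ is itself a UFD by the same proof given for $\mathcal{O}^b_d$, so $(A_c)$ is a prime ideal there. The subring inclusion $\mathbb{C}[A_f^{\pm}]_{f\in S}\hookrightarrow \mathbb{C}[A_e^{\pm}]_{e<c}$ is a Laurent-polynomial extension (one adjoins, as Laurent variables, the $A_e$ that do not correspond to strings of $S$), hence faithfully flat; this preserves primeness of $(A_c)$ under the tensor product. Therefore $A_c$ is prime---in particular irreducible---in the localization $\mathcal{O}^b_d[1/\prod_{e<c}A_e]$.

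It remains to descend this irreducibility to $\mathcal{O}^b_d$ itself. Here I would use the standard UFD fact: if $R$ is a UFD, $g\in R$, and $f\in R$ is irreducible in $R[1/g]$ and shares no irreducible factor with $g$ in $R$, then $f$ is irreducible in $R$. The hypothesis that $\mathcal{O}^b_d$ is a UFD was established just before Lemma \ref{irreducible}, and the inductive hypothesis gives the irreducibility of each $A_e$ with $e<c$, so what I need is that $A_c$ is not an associate of any such $A_e$. This is a geometric statement---that the zero locus of $A_c$ is a divisor distinct from the zero locus of each $A_e$---which I would verify by exhibiting a point of $\conf^b_d(\mathcal{A}_\sc)$ where $A_c$ vanishes while every $A_e$ ($e<c$) is nonzero, constructed directly from the parametrization underlying the proofs of Theorem \ref{affineness of conf} and Lemma \ref{irreducible}.

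The main obstacle is this last descent: one must be careful that the subalgebra $\mathbb{C}[A_f^\pm]_{f\in S}$ does not secretly identify $A_c$ with some combination of the $A_e$ (which it does not, since $c\notin S$), and that the coprimality check is rigorous. A conceptually cleaner alternative I would consider in parallel is to bypass the coprimality step altogether by arguing at the level of prime Weil divisors on the smooth variety $\conf^b_d(\mathcal{A}_\sc)$: the vanishing locus of $A_c$ is an irreducible hypersurface (as seen from its $\mathbb{A}^1$-parametrization in Lemma \ref{irreducible} applied inside the open subset $\prod_{e<c}A_e\neq 0$), and on a smooth affine UFD, distinct prime divisors correspond to non-associate irreducibles, which yields the result directly.
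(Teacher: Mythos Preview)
Your inductive setup and the use of Lemma~\ref{distinguished} to establish irreducibility of $A_c$ in the localization $\mathcal{O}^b_d\bigl[1/\prod_{e<c}A_e\bigr]$ match the paper exactly. The gap is in the descent step. The ``standard UFD fact'' you quote requires that $A_c$ share no irreducible factor with $\prod_{e<c}A_e$, i.e.\ that $A_e\nmid A_c$ for each $e<c$; this is strictly stronger than ``$A_c$ is not an associate of $A_e$.'' Your proposed verification---a point with $A_c=0$ and every $A_e\neq 0$---establishes only that $A_c$ is not a unit in the localization, which you already know from its irreducibility there. It does \emph{not} rule out $A_e\mid A_c$: for instance $A_c$ could factor in $\mathcal{O}^b_d$ as $F\cdot A_e$ with $F$ a non-unit irreducible, and such an $A_c$ certainly vanishes at points where $F=0$ and $A_e\neq 0$. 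Your Weil-divisor alternative has the same defect: irreducibility of $V(A_c)$ on the open locus $\{\prod_{e<c}A_e\neq 0\}$ says nothing about possible additional components of $V(A_c)$ lying inside some $V(A_e)$.

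What is actually needed, for each fixed $e<c$, is a point where $A_e=0$ but $A_c\neq 0$. The paper produces such a point via the transposition isomorphism $\conf^b_d(\mathcal{A}_\sc)\cong\conf^{d^\circ}_{b^\circ}(\mathcal{A}_\sc)$, which reverses the left-to-right ordering on closed strings. Under transposition the image $e'$ of $e$ now lies to the \emph{right} of the image $c'$ of $c$, so Lemma~\ref{distinguished} applied on the transposed cell with respect to $e'$ gives an open set on which every $A_{g'}$ with $g'<e'$ (in particular $A_{c'}$) is invertible, while $A_{e'}$ is the free $\mathbb{A}^1$-parameter of Lemma~\ref{irreducible} in the truncated cell and can be set to zero. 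Transporting back gives the required point. This transposition trick is the one idea missing from your outline.
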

\begin{proof} We will do an induction based on the order $<$ on closed strings. Lemma \ref{irreducible} takes care of the base case. Now inductively suppose $A_e$ are irreducible for all closed strings $e<c$. By Lemma \ref{irreducible} we know that $A_c$ is an irreducible element of $\mathcal{O}^{b'}_{d'}$. Then $A_c=1\otimes A_c$ is also an irreducible element in $ \mathbb{C}\left[A_e^{\pm} \right]_{e<c}\underset{\mathbb{C}\left[A_f^\pm\right]_{f\in S}}{\otimes}\mathcal{O}^{b'}_{d'}\cong \mathcal{O}^b_d\left[\frac{1}{\prod_{e<c}A_e}\right]$ (which is a UFD as well because it is a localization of a UFD). This means that the factorization of $A_c$ in $\mathcal{O}^b_d$ must be of the form 
\begin{equation}\label{factorization}
A_c=F\prod_{e<c}A_e^{n_e}
\end{equation}
for some irreducible element $F$. Now to prove that $A_c$ is indeed irreducible in $\mathcal{O}^b_d$, it suffices to prove that $n_e=0$ for all $e<c$. 

For each $e<c$, consider the subset $W_e$ associated with the closed string $e$ as constructed in the proof of Lemma \ref{codim 1 vanishing locus}. We claim that $W_e\subset \left\{A_e=0\ \text{but} \ A_c\neq 0\right\}$. If the left end point of $c$ lies on the right of the right end point of $e$, then the claim is obvious since any diagonal crossing $c$ is assigned with a general position condition. On the other hand, if the left end point of $c$ lies on the left of the right end point of $e$, we consider the triangle containing the left end point of $e$. Note that in this case, the level of $c$ must be distinct from the level of $e$. By symmetry, let us assume that it looks like the following.
\[
\begin{tikzpicture}
\node (d1) at (0,0) [] {$\A_l$};
\node (d2) at (2,0) [] {$\A_{l+1}$};
\node (u) at (1,2) [] {$\A^k$};
\node (c) at (1,0.7) [] {$j$};
\draw (d1) -- node [left] {$s_i$} (u);
\draw (d2) -- node [right] {$s_i$}(u);
\draw [->] (d1) -- node [below] {$s_j$} (d2);
\draw  (-1,0.7) node [left] {$j$th level} -- (c) --  node [below right] {$c$} (3,0.7);
\draw (-1,1.3) node [left] {$i$th level} -- node [above] {$e$} (3,1.3);
\end{tikzpicture}
\]
By using the $\G$-action, we can fix $\A^k=\overline{s}_it\U_+$ for some $t\in \T$, $\A_l=\U_-$, and $\A_{l+1}=\U_-\doverline{s}_je_{-j}(q)$ for some $q\neq 0$. Since $i\neq j$, we have
\[
A_c=\Delta_{\omega_j}\left(\doverline{s}_je_{-j}\overline{s}_it\U_+\right)=\Delta_{\omega_j}\left(q^{\alpha_j^\vee}\left(s_i(t)\right)\overline{s}_i\right)\neq 0.
\]
This shows that $W_e\subset \left\{A_e=0 \ \text{but} \ A_c\neq 0\right\}$, which implies that $\left\{A_e=0 \ \text{but} \ A_c\neq 0\right\}$ is non-empty. Therefore we can conclude that $n_e=0$ in \eqref{factorization}.
\end{proof}

Now since $A_c$ associated to closed strings are all irreducible elements in $\mathcal{O}^b_d$, their vanishing loci $D_c:=\left\{A_c=0\right\}$ are all irreducible divisors. Furthermore, we deduce the following corollary from the proposition above.

\begin{cor}\label{order} For any two distinct closed strings $c$ and $e$, $\codim\left(D_c\cap D_e\right)\geq 2$ in $\conf^b_d\left(\mathcal{A}_\sc\right)$ and $\ord_{D_c}A_e=0$.
\end{cor}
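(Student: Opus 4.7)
Both statements of the corollary follow once we establish the second, $\ord_{D_c}(A_e) = 0$, for every pair of distinct closed strings. Indeed, since $\conf^b_d(\mathcal{A}_\sc)$ is smooth by Theorem \ref{affineness of conf} and the divisors $D_c, D_e$ are irreducible, the inequality $D_c \neq D_e$ already forces $\codim(D_c \cap D_e) \geq 2$; and $\ord_{D_c}(A_e) = 0$ says that $A_e$ is not identically zero on $D_c$, so in particular $D_c \neq D_e$.

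The plan is to exhibit a point $p \in D_c$ at which $A_e$ does not vanish, by passing to the cluster chart obtained by mutating at $c$. When $c$ lies between two adjacent opposite-sign nodes, this is realized directly by the diagonal flip of Proposition \ref{mutation}(1b); otherwise, a short sequence of braid moves from Proposition \ref{mutation}(2) first brings $c$ into such a position and then a flip mutates it. In either situation the corresponding cluster variable transformation in Proposition \ref{a coordinate trans} yields a new cluster $\mathrm{K}_2$ coordinate $A'_c$, regular on $\conf^b_d(\mathcal{A}_\sc)$, satisfying an exchange relation
\[
A_c A'_c \;=\; M_+ + M_-,
\]
where $M_+$ and $M_-$ are two distinct monomials, with non-negative exponents, in $\{A_a\}_{a \in I,\, a \neq c}$; in particular neither monomial involves $A_c$ or $A'_c$.

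The cluster $\mathrm{K}_2$ coordinates $\{A_a\}_{a \neq c} \cup \{A'_c\}$ of the mutated seed carve out a second open torus chart $T' \cong (\mathbb{G}_m)^{|I|}$ inside $\conf^b_d(\mathcal{A}_\sc)$, on which all of these coordinates are nonvanishing, and on $T'$ we have $A_c = (M_+ + M_-)/A'_c$. Since $M_+$ and $M_-$ are distinct non-zero monomials in the torus coordinates, the hypersurface $\{M_+ + M_- = 0\} \subset T'$ is non-empty; any point $p$ of this hypersurface satisfies $A_c(p) = 0$ and $A_e(p) \neq 0$ for every $e \neq c$, giving the desired point in $D_c \setminus D_e$.

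The main technical point is checking that the envisioned mutation at $c$ is always available and that the resulting $A'_c$ is genuinely regular on $\conf^b_d(\mathcal{A}_\sc)$; both come down to a case analysis according to how $c$ sits inside the string diagram and which rule of Proposition \ref{a coordinate trans} applies, with the only delicate situation being closed strings between two same-sign nodes, which need to be untangled using braid moves of type $\C_{ij}\C_{ji}\neq 0$. Once this assembly is in place, the torus-chart argument above closes the proof.
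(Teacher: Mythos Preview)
Your approach contains a genuine gap. You claim that any closed string $c$ can be brought, via braid moves, into a position between two adjacent opposite-sign nodes, after which the mutation at $c$ is realized by a diagonal flip. This is false in general. Take for instance $b=e$ and $d=s_1^n$ (or more generally any $\conf^e_d$): every triangle in the unique triangulation has the same orientation, so every node in the string diagram carries a positive label. No diagonal flip from Proposition~\ref{mutation}(1b) is available (those require a $\nabla$--$\Delta$ pair), and braid moves from Proposition~\ref{mutation}(2) never change the sign of a node. Hence no closed string in such a configuration can ever sit between opposite-sign nodes, and Proposition~\ref{a coordinate trans} does not supply the exchange relation you need.

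Without that geometric realization, what you actually require is that the once-mutated variable $A'_c$ is regular on $\conf^b_d(\mathcal{A}_\sc)$ and that the mutated torus $T'$ lies inside the variety. That is precisely the content of Lemma~\ref{oncemutation}, whose proof explicitly invokes Corollary~\ref{order} (to know that the functions $\Delta_{\omega_h}(\A_k,\A^0)$ for $h\neq i$ are invertible along $D_c$). So your argument, as written, is circular.

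The paper's proof avoids this entirely: it simply observes that $\mathcal{O}^b_d$ is a UFD and that the preceding proposition shows each $A_c$ is an irreducible element. If $\ord_{D_c}A_e>0$ then $A_c\mid A_e$; since $A_e$ is irreducible too, they would be associates, which the argument in that proposition (showing $n_e=0$ in the factorization of $A_c$) already rules out. No mutation and no torus-chart construction is needed at this stage.
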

\begin{proof} First note that the codimension statement follows from the order statement. This is because $\ord_{D_c}A_e=0$ implies that $A_e$ is invertible along $D_c$ and hence $D_c\setminus D_e$ is open in $D_c$; but then since $D_c$ and $D_e$ are both irreducible, this is equivalent to $\codim\left(D_c\cap D_e\right)\geq 2$.

To compute the order of $A_e$ along $D_c$, note that $\ord_{D_c}A_e\geq 0$ because $A_e$ is a regular function on $\conf^b_d\left(\mathcal{A}_\sc\right)$. On the other hand, $\ord_{D_c}A_e>0$ implies that $A_e$ is a multiple of $A_c$, which is impossible as we saw from the proof of last proposition. 
\end{proof}

Our strategy to show $\mathcal{O}^b_d\cong \up\left(\mathscr{A}^b_d\right)$ is analogous to Berenstein, Fomin, and Zelevinsky's proof that the coordinate ring of double Bruhat cells is an upper cluster algebra, which relies on the following theorem.

\begin{thm}[\cite{BFZ}, Corollary 1.9] Fix an initial seed $\vec{s}_0$ and let $\vec{A}_0:=\left\{A_a\right\}$ be the corresponding initial $\mathrm{K}_2$ cluster. Let $\vec{A}_c$ be the $\mathrm{K}_2$ cluster obtained from $\vec{A}$ via a single mutation in the direction of $c$. If the restriction of the exchange matrix $\left.\epsilon_{;\vec{s}_0}\right|_{I^\uf\times I}$ is full-ranked, then upper cluster algebra $\up(\mathcal{A})$ is equal to the intersection
\[
\mathbb{C}\left[\vec{A}^\pm_0\right]\cap \bigcap_{\text{$c$ non-frozen}} \mathbb{C}\left[\vec{A}_c^\pm\right]\subset \Frac\left(\mathbb{C}\left[\vec{A}_0\right]\right).
\]
\end{thm}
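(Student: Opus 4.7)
The inclusion $\up(\mathcal{A}) \subseteq \mathbb{C}[\vec{A}_0^\pm] \cap \bigcap_c \mathbb{C}[\vec{A}_c^\pm]$ is immediate from the definition of the upper cluster algebra as the intersection of $\mathbb{C}[\vec{A}(\vec{s})^\pm]$ over every seed $\vec{s}$ mutation-equivalent to $\vec{s}_0$. The work lies in the reverse inclusion, and my plan is to prove it by establishing that the ``one-step upper bound'' on the right-hand side is independent of the choice of seed within the mutation class. Once that seed-independence holds, any element of the one-step upper bound at $\vec{s}_0$ lies in $\mathbb{C}[\vec{A}(\vec{s})^\pm]$ for every $\vec{s}$, and hence in $\up(\mathcal{A})$. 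A straightforward induction along the exchange graph then reduces seed-independence to the single-step identity $\mathcal{U}(\vec{s}_0) = \mathcal{U}(\mu_k(\vec{s}_0))$ for each $k \in I^{\uf}$, where $\mathcal{U}$ denotes the one-step upper bound.

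To prove this single-step identity, I would fix $k \in I^{\uf}$, set $\vec{s}_1 := \mu_k(\vec{s}_0)$, and take $f \in \mathcal{U}(\vec{s}_0)$. That $f$ is Laurent in the cluster of $\vec{s}_1$ is automatic, since the ring $\mathbb{C}[\vec{A}_k^\pm]$ already appears in the definition of $\mathcal{U}(\vec{s}_0)$. What remains is to check $f \in \mathbb{C}[\mu_j(\vec{A}_1)^\pm]$ for every $j \in I^{\uf}$: the case $j = k$ reverts to $\vec{s}_0$ and is trivial, so the genuine content lies in the two-step case $j \neq k$, where the cluster in question is $\mu_j\mu_k(\vec{A}_0)$.

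The main obstacle, and the place where the full-rank hypothesis on $\epsilon|_{I^{\uf}\times I}$ enters decisively, is controlling the denominator of $f$ in this two-step cluster. The plan is to work inside $\mathbb{C}[\vec{A}_0^\pm]$, where $f$ is already a Laurent polynomial, and to track how the binomial exchange relations $A_c A_c' = \hat{P}_c$ transform its denominator. Membership in $\mathcal{U}(\vec{s}_0)$ translates, via the substitution $A_c' = \hat{P}_c/A_c$, into the statement that the denominator of $f$ is coprime to $\hat{P}_c$ up to units for every $c \in I^{\uf}$. The full-rank condition then ensures that the exponent vectors occurring in the family $\{\hat{P}_c\}_{c \in I^{\uf}}$ are sufficiently linearly independent for simultaneous coprimeness with each individual $\hat{P}_c$ to upgrade to coprimeness with the combined two-step exchange expression governing $\mu_j\mu_k$. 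This valuation-theoretic comparison, executed in detail in \cite{BFZ}, is the technical heart of the argument.

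Once the two-step Laurent property is in hand, invariance of $\mathcal{U}$ under a single unfrozen mutation follows, iteration along the exchange graph yields $\mathcal{U}(\vec{s}_0) \subseteq \up(\mathcal{A})$, and the theorem is established.
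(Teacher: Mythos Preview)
The paper does not prove this theorem at all: it is quoted as \cite[Corollary 1.9]{BFZ} and used as a black box, so there is no ``paper's own proof'' to compare against. Your proposal is not a self-contained proof either---you explicitly defer the technical heart (the two-step coprimality/valuation argument) back to \cite{BFZ}---but as an outline it correctly reproduces the Berenstein--Fomin--Zelevinsky strategy: define the one-step upper bound $\mathcal{U}(\vec{s})$, prove it is invariant under a single mutation using coprimality of the exchange binomials (which is where the full-rank hypothesis on $\epsilon|_{I^\uf\times I}$ is used), and then propagate along the exchange graph to conclude $\mathcal{U}(\vec{s}_0)=\up(\mathscr{A})$.

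One point worth sharpening: in \cite{BFZ} the full-rank condition is used precisely to guarantee that the exchange binomials $\hat P_c$ are pairwise coprime and that each $\hat P_c$ is not a perfect power, and it is this \emph{coprimality} (rather than a direct ``linear independence of exponent vectors'') that drives the two-step Laurent argument. Your description of this step is a bit vague, but since you are ultimately citing \cite{BFZ} for it anyway, this is not a gap so much as an imprecision.
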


In geometric terms, the full-rank condition on $\left.\epsilon_{;\vec{s}_0}\right|_{I^\uf\times I}$ is equivalent to saying that the canonical map $p:T_{\mathscr{A};\vec{s}_0}\rightarrow T_{\mathscr{X}^\uf;\vec{s}_0^\uf}$ is surjective (see Appendix \ref{app B} for definition), and the intersection of Laurent polynomial rings is precisely the coordinate ring of the union of the corresponding seed tori $\spec\mathbb{C}\left[\mathbf{A}_0^\pm\right]\cup \bigcup_c \spec\mathbb{C}\left[\mathbf{A}_c^\pm\right]$.

Going back to the decorated double Bott-Samelson cell $\conf^b_d\left(\mathcal{A}_\sc\right)$. Since we are free to choose any seed in the mutation equivalent family as the initial seed, for the sake of simplicity let us fix our initial seed to be one that is associated to a triangulation in which all triangles of the form $\begin{tikzpicture}[baseline=2ex]
    \node at (-0.5,0) [] {$\bullet$};
    \node at (0.5,0) [] {$\bullet$};
    \node at (0,1) [] {$\bullet$};
    \draw (-0.5,0) -- (0.5,0) -- (0,1) -- cycle;
    \end{tikzpicture}$ come before triangles of the form $\begin{tikzpicture}[baseline=2ex]
    \node at (-0.5,1) [] {$\bullet$};
    \node at (0.5,1) [] {$\bullet$};
    \node at (0,0) [] {$\bullet$};
    \draw (-0.5,1) -- (0.5,1) -- (0,0) -- cycle;
    \end{tikzpicture}$, i.e., a triangulation that looks like the following.

\[
\begin{tikzpicture}
\foreach \i in {0,1,2}
    {
    \node (d\i) at (2*\i,0) [] {$\A_\i$};
    }
\node (d3) at (6,0) [] {$\cdots$};
\node (d4) at (8,0) [] {$\A_n$};
\node (u0) at (0,2) [] {$\A^0$};
\node (u1) at (2,2) [] {$\cdots$};
\node (u2) at (4,2) [] {$\A^{m-2}$};
\node (u3) at (6,2) [] {$\A^{m-1}$};
\node (u4) at (8,2) [] {$\A^m$};
\draw [->] (d0) -- (d1);
\draw [->] (d1) -- (d2);
\draw [->] (d2) -- (d3);
\draw [->] (d3) -- (d4);
\draw [->] (u0) -- (u1);
\draw [->] (u1) -- (u2);
\draw [->] (u2) -- (u3);
\draw [->] (u3) -- (u4);
\draw (u0) -- (d0);
\draw (u0) -- (d1);
\draw (u0) -- (d2);
\draw (u0) -- (d4);
\draw (d4) -- (u2);
\draw (d4) -- (u3);
\draw (d4) -- (u4);
\end{tikzpicture}
\]
Note that any closed string $c$ in the corresponding string diagram lies in a part that is of one of the following three forms.
\[
\begin{tikzpicture}
\node (u) at (0,2) [] {$\A^0$};
\node (d0) at (-4,0) [] {$\A_{k-1}$};
\node (d2) at (0,0) [] {$\cdots$};
\node (d1) at (-2,0) [] {$\A_k$};
\node (d3) at (2,0) [] {$\A_l$};
\node (d4) at (4,0) [] {$\A_{l+1}$};
\draw [->] (d0) -- node [below] {$s_i$} (d1);
\draw [->] (d1) -- (d2);
\draw [->] (d2) -- (d3);
\draw (d0) -- (u) -- (d1);
\draw (d3) -- (u) -- (d4);
\draw [->] (d3) -- node [below] {$s_i$} (d4);
\node (0) at (-2,0.7) [] {$i$};
\node (1) at (2,0.7) [] {$i$};
\draw (-4,0.7) -- (0) -- node [above] {$c$} (1) -- (4,0.7);
\end{tikzpicture}
\]
\begin{equation}\label{three cases}
\begin{tikzpicture}[baseline=2ex]
\node (u0) at (1,2) [] {$\A^0$};
\node (d0) at (0,0) [] {$\A_{k-1}$};
\node (d1) at (2,0) [] {$\A_k$};
\node (u1) at (3,2) [] {$\cdots$};
\node (d2) at (4,0) [] {$\cdots$};
\node (d3) at (6,0) [] {$\A_n$};
\node (u2) at (5,2) [] {$\A^l$};
\node (u3) at (7,2) [] {$\A^{l+1}$};
\node (0) at (1,1) [] {$i$};
\node (1) at (6,1) [] {$-i$};
\draw [->] (d0) -- node [below] {$s_i$} (d1);
\draw [->] (d1) -- (d2);
\draw [->] (d2) -- (d3);
\draw [->] (u0) -- (u1);
\draw [->] (u1) -- (u2);
\draw [->] (u2) -- node [above] {$s_i$} (u3);
\draw (d0) -- (u0) -- (d1);
\draw (u2) -- (d3) -- (u3);
\draw (0,1) -- (0) -- node [above] {$c$} (1) -- (7,1);
\end{tikzpicture}
\end{equation}
\[
\begin{tikzpicture}
\node (u) at (0,0) [] {$\A^0$};
\node (d0) at (-4,2) [] {$\A_{k-1}$};
\node (d2) at (0,2) [] {$\cdots$};
\node (d1) at (-2,2) [] {$\A_k$};
\node (d3) at (2,2) [] {$\A_l$};
\node (d4) at (4,2) [] {$\A_{l+1}$};
\draw [->] (d0) -- node [above] {$s_i$} (d1);
\draw [->] (d1) -- (d2);
\draw [->] (d2) -- (d3);
\draw (d0) -- (u) -- (d1);
\draw (d3) -- (u) -- (d4);
\draw [->] (d3) -- node [above] {$s_i$} (d4);
\node (0) at (-2,1.3) [] {$-i$};
\node (1) at (2,1.3) [] {$-i$};
\draw (-4,1.3) -- (0) -- node [below] {$c$} (1) -- (4,1.3);
\end{tikzpicture}
\]

The following lemma is essentially equivalent to Zelevinsky's result on double Bruhat cells \cite[Lemma 3.1 (4)]{Zel}. But since the decorated flag language we use is significantly different from what is in his proof, we rephrase his proofs below for the purpose of completeness.

\begin{lem}\label{oncemutation} For a triangulation chosen as above and any closed string $c$ in the corresponding string diagram, the once-mutated cluster $\mathrm{K}_2$ variable $A'_c$ (as an element in $\Frac\left(\mathcal{O}^b_d\right)$ a priori) belongs to $\mathcal{O}^b_d$ and $\conf^b_d\left(\mathcal{A}_\sc\right)$ contains the seed torus $\spec\mathbb{C}\left[\mathbf{A}_c^\pm\right]$.
\end{lem}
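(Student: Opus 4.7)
The plan is to carry out a case analysis on the three local configurations of $c$ shown in \eqref{three cases}. By Proposition \ref{3.15}, transposition preserves $\mathrm{K}_2$-coordinates up to the $180^\circ$ rotation of the string diagram, which reverses all node signs, so Case 3 reduces to Case 1 under transposition, and it suffices to treat Cases 1 and 2.

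In each case I will first write down the cluster exchange relation $A_c A'_c = M_+ + M_-$, with $M_\pm$ determined by summing the local exchange matrix contributions $\epsilon^{(n_l)}$ and $\epsilon^{(n_r)}$ of the two bounding nodes of $c$ according to the recipe in Section \ref{sec.tri.comb}. In Case 2 the nodes carry opposite signs $+i$ and $-i$ and the monomials include nontrivial (possibly half-integer) frozen contributions from every level-$j$ string crossing the region between the two bounding triangles of $c$. In Case 1 both nodes carry label $+i$, and a crucial cancellation occurs: any level-$j$ string that passes through both bounding triangles contributes to $\epsilon_{\bullet c}$ with opposite signs from the two nodes and drops out, leaving only the two level-$i$ neighbors of $c$ and the level-$j$ strings that terminate strictly in between.

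The heart of the argument, and its main technical obstacle, is to realize $A'_c = (M_+ + M_-)/A_c$ as a regular function on $\conf^b_d(\mathcal{A}_\sc)$ rather than merely a rational function. Although the mutation at $c$ is generally non-local (it spans the entire chain of decorated flags between the two bounding triangles of $c$), I plan to identify $A'_c$ explicitly with a single $\Delta_{\omega_i}$-value on a pair of decorated flags coming from the configuration. This will be done by propagating the decorations of $\A^0$ and $\A_n$ inward through the intervening chain via Lemma \ref{unique compatible}, then applying the Plücker-like identity of Corollary \ref{identity1} iteratively along the $s_i$-edges of the bottom chain (and, in Case 2, the corresponding $s_i$-edge of the top chain) so as to collapse $M_+ + M_-$ into the factored form $A_c \cdot \Delta_{\omega_i}(\ast, \ast)$. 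Since $\Delta$-values are manifestly regular on $\conf^b_d(\mathcal{A}_\sc)$, this yields $A'_c \in \mathcal{O}^b_d$. The main difficulty is carrying out this induction carefully in Case 1, where the same-sign endpoint structure of $c$ obscures the geometric meaning of the mutation and one cannot reduce it to a single diagonal flip.

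For the inclusion $\conf^b_d(\mathcal{A}_\sc) \supset \spec\mathbb{C}[\mathbf{A}_c^\pm]$, once $A'_c$ is known to be regular, consider the open subset
\[
U_c := \{A'_c \neq 0,\ A_a \neq 0 \text{ for all } a \neq c\} \subset \conf^b_d(\mathcal{A}_\sc).
\]
On $U_c$ the exchange relation gives $A_c = (M_+ + M_-)/A'_c$ as a regular function, so the morphism $U_c \to \spec\mathbb{C}[\mathbf{A}_c^\pm]$ sending a point to $(A'_c, (A_a)_{a \neq c})$ is well-defined. Combined with the initial torus chart $\spec\mathbb{C}[\mathbf{A}_0^\pm] \hookrightarrow \conf^b_d(\mathcal{A}_\sc)$ coming from Definition \ref{A def} and the standard cluster mutation isomorphism between the two seed tori, this morphism identifies $U_c$ with $\spec\mathbb{C}[\mathbf{A}_c^\pm]$, completing the proof.
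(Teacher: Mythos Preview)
Your overall strategy (case analysis plus Corollary \ref{identity1}) mirrors the paper's, but two genuine gaps remain.

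\textbf{Regularity of $A'_c$.} Your plan to ``collapse $M_+ + M_-$ into the factored form $A_c \cdot \Delta_{\omega_i}(\ast,\ast)$'' by propagating the decorations of $\A^0$ and $\A_n$ \emph{inward} will not succeed: $A'_c$ is in general not a single generalized minor of flags already present in the configuration. Consider the simplest instance of Case 1, where $d$ contains two consecutive $s_i$'s with nothing in between; the exchange relation reads $A_cA'_c = A_a + A_b$, and there is no pair of existing decorated flags whose $\Delta_{\omega_i}$ equals $(A_a+A_b)/A_c$. The paper's key device is to extend the configuration \emph{outward} by choosing an auxiliary decorated flag $\A^{-1}$ with $\xymatrix{\A^{-1}\ar[r]^{s_i}&\A^0}$ compatible and $\xymatrix{\A_{k-1}\ar@{-}[r]&\A^{-1}}$ in general position. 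After multiplying the exchange relation by the invertible monomial $\prod_{h\notin S\cup\{i\}}A_h^{-\C_{hi}}$, two applications of Corollary \ref{identity1} (at the pairs $(\A_{k-1},\A_k)$ and $(\A_l,\A_{l+1})$ against $(\A^{-1},\A^0)$) produce the regular expression \eqref{**}. In Case 2 one needs both $\A^{-1}$ and an analogous $\A_{n+1}$, and the result is the $3\times 3$ determinant \eqref{*}, again not a single minor. Without these auxiliary flags your induction has no place to terminate.

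\textbf{Seed torus inclusion.} Your argument is circular. Defining $U_c\subset\conf^b_d(\mathcal{A}_\sc)$ and exhibiting a map $U_c\to\spec\mathbb{C}[\mathbf{A}_c^\pm]$ does not show the map is surjective; invoking the ``standard cluster mutation isomorphism'' only covers the locus where $A_c\neq 0$, since the mutation $\mu_c$ is birational and undefined precisely where $M_++M_-=0$. What must be shown is that \emph{any} assignment of nonzero values to $\{A'_c\}\cup\{A_a\}_{a\neq c}$ arises from an actual configuration, including when $M_++M_-=0$ (forcing $A_c=0$). The paper does this constructively: using the auxiliary $\A^{-1}$ again, it builds the flags $\A_k,\dots,\A_l$ from the triangulation \eqref{***}, then solves \eqref{**} for $\Delta_i(l+1,-1)$ (an $\mathbb{A}^1$-value, possibly zero) to determine $\A_{l+1}$, and finally checks that $\xymatrix{\A_{l+1}\ar@{-}[r]&\A^0}$ holds. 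This geometric reconstruction on the $A_c=0$ locus is the substantive content of the second claim and cannot be bypassed.
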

\begin{proof} Let us consider the top case first.
\[
\begin{tikzpicture}
\node (u) at (0,4) [] {$\A^0$};
\node (d0) at (-4,0) [] {$\A_{k-1}$};
\node (d2) at (0,0) [] {$\cdots$};
\node (d1) at (-2,0) [] {$\A_k$};
\node (d3) at (2,0) [] {$\A_l$};
\node (d4) at (4,0) [] {$\A_{l+1}$};
\draw [->] (d0) -- node [below] {$s_i$} (d1);
\draw [->] (d1) -- (d2);
\draw [->] (d2) -- (d3);
\draw (d0) -- (u) -- (d1);
\draw (d3) -- (u) -- (d4);
\draw [->] (d3) -- node [below] {$s_i$} (d4);
\node (0) at (-1.5,2) [] {$i$};
\node (1) at (1.5,2) [] {$i$};
\draw (-4,2) node [left] {$i$th level} -- node [above] {$a$} (0) -- node [above] {$c$} (1) -- node [above] {$b$} (4,2);
\draw (-4,1) node [left] {$j$th level} -- node [above] {$l_j$} (-0.5,1);
\draw (0.5,1) -- node [above] {$r_j$} (4,1);
\draw (-4,3) node [left] {$h$th level} -- node [above] {$h$} (4,3);
\end{tikzpicture}
\]
Let $S$ denote the set of indices $j$ whose corresponding simple reflections $s_j$ occur between the two $s_i$. Let $l_j$ and $r_j$ denote the strings on level $j$ going across the triangles corresponding to these two $s_i$. Then the cluster $\mathrm{K}_2$ mutation formula says that 
\[
A'_c=\frac{1}{A_c}\left(A_a\prod_{j\in S} A_{r_j}^{-\C_{ji}}+A_b\prod_{j\in S}A_{l_j}^{-\C_{ji}}\right)
\]
as a rational function on $\conf^b_d\left(\mathcal{A}_\sc\right)$. Note that $A'_c$ is obviously regular outside of the divisor $D_c$. 

Along the divisor $D_c$ we need to do a small trick. Recall from Corollary \ref{order} that 
\[
\ord_{D_c}\Delta_{\omega_h}\left(\A_k,\A^0\right)=0
\]
for all $h\neq i$. Let us multiply both sides of the cluster $\mathrm{K}_2$ mutation formula by the product $\prod_{h\notin S\cup \{i\}} A_h^{-\C_{hi}}$. Now fix a decorated flag $\A^{-1}$ such that $\xymatrix{\A^{-1}\ar[r]^{s_i} & \A^0}$ is a compatible pair and $\xymatrix{\A_{k-1} \ar@{-}[r] & \A^{-1}}$ are in general position (not necessarily compatible). Such a decorated flag exists because the decorated double Bott-Samelson cell associated to the triangle $\begin{tikzpicture}[baseline=2ex]
\node (u0) at (0,1) [] {$\A^{-1}$};
\node (u1) at (2,1) [] {$\A^0$};
\node (d) at (1,0) [] {$\A_{k-1}$};
\draw [->] (u0) -- node[above] {$s_i$} (u1);
\draw (u0) -- (d) --(u1);
\end{tikzpicture}$ is not empty. Then Corollary \ref{identity1} implies that
\begin{align*}
    A'_c\prod_{h\notin S\cup \{i\}} A_h^{-\C_{hi}}=& \frac{1}{A_c}\left(A_a\left(\Delta_i\left(l+1,-1\right)\Delta_{i}\left(l,0\right)-\Delta_{i}\left(l,-1\right)\Delta_{i}\left(l+1,0\right)\right)\right.\\
    &\left.\quad +A_b\left(\Delta_{i}\left(k,-1\right)\Delta_{i}\left(k-1,0\right)-\Delta_{i}\left(k-1,-1\right)\Delta_{i}\left(k,0\right)\right)\right)\\
    =&\frac{1}{A_c}\left(A_a\Delta_{i}\left(l+1,-1\right) A_c - A_a \Delta_{i}\left(l,-1\right) A_b \right.\\
    &\left.\quad +A_b\Delta_{i}\left(k,-1\right)A_a-A_b\Delta_{i}\left(k-1,-1\right)A_c\right)\\
    =&A_a\Delta_{i}\left(l+1,-1\right)-A_b\Delta_{i}\left(k-1,-1\right).
\end{align*}
The last equality was due to the fact that $\Delta_{i}\left(l,-1\right)=\Delta_{i}\left(k,-1\right)$ because there is no more $s_i$ between the two $s_i$. Therefore we conclude that\footnote{It is worth mentioning that $A'_c$ is independent of the choice of the decorated flag $\A^{-1}$.}
\begin{equation}\label{**}
A'_c=\frac{A_a\Delta_{i}\left(l+1,-1\right)-A_b\Delta_{i}\left(k-1,-1\right)}{\prod_{h\notin S\cup \{i\}} A_h^{-\C_{hi}}}.
\end{equation}
Note that the order of vanishing of the denominator $\prod_{h\notin S\cup \{i\}} A_h^{-\C_{hi}}$ is zero. Therefore $A'_c$ is a regular function by the standard codimension 2 argument.

To show that $\conf^b_d\left(\mathcal{A}_\sc\right)$ contains $\spec\mathbb{C}\left[\mathbf{A}_c^\pm\right]$, it suffices to show that we can construct a configuration in $\conf^b_d\left(\mathcal{A}_\sc\right)$ for any assignment of non-zero numbers to $\left\{A'_c\right\}\cup \left\{A_a\right\}_{a\neq c}$. When the assignment of numbers satisfy
\[
A_a\prod_{j\in S} A_{r_j}^{-\C_{ji}}+A_b\prod_{j\in S}A_{l_j}^{-\C_{ji}}\neq 0,
\]
we can reproduce $A_c$ from these numbers and we get a unique point in the complement of the divisor $D_c$. 

When the above non-vanishing condition is not satisfied, then $A_c=0$ and we need to do a small trick similar to the one we did in proving $A'_c$ is regular. First we observe that by using the cluster $\mathrm{K}_2$ coordinates on the left of $A_c$ (including the ones associated to the left open strings and closed strings that are $<c$) we can build a unique configuration $\begin{tikzpicture}[baseline=2ex]
\node (u) at (0,1) [] {$\A^0$};
\node (d0) at (-1.5,0) [] {$\A_0$};
\node (d1) at (0,0) [] {$\cdots$};
\node (d2) at (1.5,0) [] {$\A_{k-1}$};
\draw (d0) -- (u) -- (d2);
\draw [->] (d0) -- (d1);
\draw [->] (d1) -- (d2);
\end{tikzpicture}$. 

Next we fix a decorated flag $\A^{-1}$ the same way as before. Let us now consider the cluster $\mathrm{K}_2$ coordinate chart associated to the following triangulation.
\begin{equation}\label{***}
\begin{tikzpicture}[baseline=5ex]
\node (u0) at (-1,2) [] {$\A^{-1}$};
\node (d0) at (-4,0) [] {$\A_{k-1}$};
\node (d1) at (-2,0) [] {$\A_k$};
\node (d2) at (0,0) [] {$\cdots$};
\node (d3) at (2,0) [] {$\A_l$};
\draw (u0) -- (d0);
\draw [->] (d0) -- node [below] {$s_i$} (d1);
\draw [->] (d1) -- (d2);
\draw [->] (d2) -- (d3);
\draw (u0) -- (d1);
\draw (u0) -- (d3);
\end{tikzpicture}
\end{equation}
We claim that the non-zero values of $\left\{A'_c\right\}\cup \left\{A_a\right\}_{a\neq c}$ can produce non-zero cluster $\mathrm{K}_2$ coordinates associated to the above triangulation.

First let us apply Corollary \ref{identity1} to $\xymatrix{\A^{-1} \ar[r]^{s_i} & \A^0}$ and $\xymatrix{\A_{k-1} \ar[r]^{s_i} & \A_k}$; the assumption $A_c=\Delta_{i}\left(k,0\right)=0$ reduces the identity in Corollary \ref{identity1} to
\[
\Delta_{i}\left(k,-1\right)A_a=\prod_{j\neq i} A_{l_j}^{-\C_{ji}}.
\]
Since both $A_a$ and $A_{l_j}$ are assumed to be non-zero, we can solve for $\Delta_{i}\left(k,-1\right)$ using the above identity, and the result is still non-zero. This shows $\xymatrix{\A_k\ar@{-}[r] & \A^{-1}}$ and produces non-zero values for the cluster $\mathrm{K}_2$ coordinates along this diagonal.

Let $p$ be an integer with $k\leq p\leq l$. By applying Proposition \ref{welldefined} to the triangle $\begin{tikzpicture}[baseline=2ex]
\node (u0) at (-1,1) [] {$\A^{-1}$};
\node (u1) at (1,1) [] {$\A^0$};
\node (d) at (0,0) [] {$\A_p$};
\draw (u0) -- (d) -- (u1);
\draw [->] (u0) -- node [above] {$s_i$} (u1);
\end{tikzpicture}$, we know that for any $j \neq i$, $\Delta_{j}\left(p,-1\right)=\Delta_{j}\left(p,0\right)\neq 0$. This combined with the fact that $\Delta_{i}\left(p,-1\right)=\Delta_{i}\left(k,-1\right)\neq 0$ proves that $\xymatrix{\A_p\ar@{-}[r] & \A^{-1}}$ and gives non-zero values for all cluster $\mathrm{K}_2$ coordinates along these diagonals. Using these non-zero cluster $\mathrm{K}_2$ coordinates in the triangulation \eqref{***}, we can uniquely construct decorated flags $\A_k, \A_{k+1}, \dots, \A_l$.

Next we rewrite Equation \eqref{**} as
\[
\Delta_{i}\left(l+1,-1\right)=\frac{A'_c\displaystyle\prod_{h\notin S\cup \{i\}}A_h^{-\C_{hi}}+A_b\Delta_{i}\left(k-1,-1\right)}{A_a}.
\]
Note that everything on the right is given already (including $\Delta_{i}\left(k-1,-1\right)$ from the choice of $\A^{-1}$). Therefore we can compute $\Delta_{i}\left(l+1,-1\right)$ using this equation. While this equation does not guarantee that $\Delta_{i}\left(l+1,-1\right)$ is non-zero, it is nevertheless a number in $\mathbb{A}^1$ since the denominator on the right hand side is non-zero. Recall from Lemma \ref{irreducible} that $\Delta_{i}\left(l+1,-1\right)$ parametrizes all the Borel subgroups that are of Tits distance $s_i$ away from $\A_l$; therefore it can be used to uniquely determine $\A_{l+1}$. Furthermore, this decorated flag $\A_{l+1}$ must satisfy $\Delta_{j}\left(l+1,0\right)=\Delta_{j}\left(l+1,-1\right)=A_{r_j}\neq 0$ for all $j \neq i$ and $\Delta_{i}\left(j+1,0\right)=A_b\neq 0$. Therefore we know that $\xymatrix{\A_{l+1}\ar@{-}[r] & \A^0}$.

Once we have the pair of decorated flags $\xymatrix{\A_{l+1}\ar@{-}[r]& \A^0}$, we can then use the cluster $\mathrm{K}_2$ coordinates on the right of $A_c$ to build the remaining decorated flags $\begin{tikzpicture}[baseline=2ex]
\node (u0) at (0,1) [] {$\A^0$};
\node (u1) at (1.5,1) [] {$\cdots$};
\node (u2) at (3,1) [] {$\A^m$};
\node (d0) at (0,0) [] {$\A_{l+1}$};
\node (d1) at (1.5,0) [] {$\cdots$};
\node (d2) at (3,0) [] {$\A_n$};
\draw [->] (u0) -- (u1);
\draw [->] (u1) -- (u2);
\draw [->] (d0) -- (d1);
\draw [->] (d1) -- (d2);
\draw (u0) -- (d0);
\draw (u2) -- (d2);
\end{tikzpicture}$. This finishes the reconstruction of the configuration from the non-zero numerical assignments to the cluster $\mathrm{K}_2$ variables $\left\{A'_c\right\}\cup \left\{A_a\right\}_{a\neq c}$.

Next let us consider the middle case. 
\[
\begin{tikzpicture}[scale=0.7]
\node (u0) at (2,5) [] {$\A^0$};
\node (d0) at (0,0) [] {$\A_{k-1}$};
\node (d1) at (4,0) [] {$\A_k$};
\node (u1) at (6,5) [] {$\cdots$};
\node (d2) at (8,0) [] {$\cdots$};
\node (d3) at (12,0) [] {$\A_n$};
\node (u2) at (10,5) [] {$\A^l$};
\node (u3) at (14,5) [] {$\A^{l+1}$};
\node (0) at (2,2) [] {$i$};
\node (1) at (12,2) [] {$-i$};
\draw [->] (d0) -- node [below] {$s_i$} (d1);
\draw [->] (d1) -- (d2);
\draw [->] (d2) -- (d3);
\draw [->] (u0) -- (u1);
\draw [->] (u1) -- (u2);
\draw [->] (u0) -- (d3);
\draw [->] (u2) -- node [above] {$s_i$} (u3);
\draw (d0) -- (u0) -- (d1);
\draw (u2) -- (d3) -- (u3);
\draw (0,2) node [left] {$i$th level} -- node[above] {$a \quad \quad $} (0) -- node [above] {$c$} (1) -- node [above] {$b$} (14,2);
\draw (0,3) node [left] {$h$th level} -- node [above] {$l_h$} (4,3);
\node at (0,4) [left] {$g$th level};
\draw (3,4) -- node [above] {$m_g$} (6,4);
\node at (0,1) [left] {$j$th level};
\draw (11,1) -- node [above] {$r_j$} (13,1);
\end{tikzpicture}
\]

Let $S_-$ denote the set of indices $h$ whose corresponding simple reflections $s_h$ occur after the last $s_i$ along the bottom edge of the trapezoid, and let $S_+$ denote the set of indices $j$ whose corresponding simple reflections $s_j$ occur before the first $s_i$ along the top edge of the trapezoid. Let $l_h$ denote the string on level $h\in S_-$, going across the triangle corresponding to the left $s_i$; let $r_j$ denote the string on level $j\in S_+$, going across the triangle corresponding to the right $s_i$; let $m_g$ denote the string on level $g\in S_+\cap S_-$, going across the diagonal separating the upward pointing and downward pointing triangles. Then the cluster $\mathrm{K}_2$ mutation formula says that
\begin{equation}\label{****}\resizebox{12cm}{!}{\begin{math}
A'_c=\frac{1}{A_c}\left(A_aA_b\prod_{g\in S_+\cap S_-}A_{m_g}^{-\C_{gi}} +\left(\prod_{h\in S_-}A_{l_h}^{-\C_{hi}}\right)\left(\prod_{j\in S_+}A_{r_j}^{-\C_{ji}}\right)\left(\prod_{g\notin S_-\cup S_+\cup\{i\} }A_{m_g}^{-\C_{gi}}\right)\right)\end{math}}
\end{equation} 
Note that $A'_c$ is obviously regular outside of the divisor $D_c$.

Along the divisor $D_c$ we again need to do a small trick. First note that by Corollary \ref{order}, 
\[
\resizebox{12cm}{!}{\begin{math}
\ord_{D_c}\left(\prod_{g\notin \left(S_+\cap S_-\right)\cup \{i\}}A_{m_g}^{-\C_{gi}}\right)=\ord_{D_c}\left(\left(\prod_{h\notin S_-\cup \{i\}}A_{l_h}^{-\C_{hi}}\right)\left(\prod_{j\in S_-\setminus S_+}A_{r_j}^{-\C_{ji}}\right)\right)=0.\end{math}}
\]
Let us call this product $M$. We multiply both sides of Equation \eqref{****} by $M$ and then try to do some simplification. Fix a decorated flag $\A^{-1}$ such that $\xymatrix{\A^{-1} \ar[r]^{s_i} & \A^0}$ is a compatible pair and $\xymatrix{\A_{k-1} \ar@{-}[r] & \A^{-1}}$ (not necessarily compatible) and fix a decorated flag $\A_{n+1}$ such that $\xymatrix{\A_n \ar[r]^{s_i} & \A_{n+1}}$ is a compatible pair and $\xymatrix{\A^{l+1} \ar@{-}[r] & \A_{n+1}}$ are in general position (not necessarily compatible). Then by Corollary \ref{identity1} we have
\[
M\prod_{g\in S_+\cap S_-}A_{m_g}^{-\C_{gi}}=\Delta_{i}\left(n+1,-1\right) \Delta_{i}\left(n,0\right)-\Delta_{i}\left(n,-1\right)\Delta_{i}\left(n+1,0\right),
\]
\begin{align*}
& M\left(\prod_{h\in S_-}A_{l_h}^{-\C_{hi}}\right)\left(\prod_{j\in S_+}A_{r_j}^{-\C_{ji}}\right)\left(\prod_{g\notin S_-\cup S_+\cup\{i\} }A_{m_g}^{-\C_{gi}}\right)\\
=& M \left(\prod_{h\in S_-}A_{l_h}^{-\C_{hi}}\right)\left(\prod_{j \notin \left(S_-\setminus S_+\right)\cup \{i\}}A_{r_j}^{-\C_{ji}}\right)\\
=&\left(\prod_{h\notin S_-\cup \{i\}}A_{l_h}^{-\C_{hi}}\right)\left(\prod_{j\in S_-\setminus S_+}A_{r_j}^{-\C_{ji}}\right)\left(\prod_{h\in S_-}A_{l_h}^{-\C_{hi}}\right)\left(\prod_{j\notin \left(S_-\setminus S_+\right)\cup \{i\}}A_{r_j}^{-\C_{ji}}\right)\\
=&\left(\prod_{h\neq i}\Delta_{h}\left(k,0\right)^{-\C_{hi}}\right)\left(\prod_{j\neq i}\Delta_{j}\left(n,l\right)^{-\C_{ji}}\right)\\
=&\left(\Delta_{i}\left(k,-1\right)\Delta_{i}\left(k-1,0\right)-\Delta_{i}\left(k-1,-1\right)\Delta_{i}\left(k,0\right)\right)\\
&\left(\Delta_{i}\left(n+1,l\right)\Delta_{i}\left(n,l+1\right)-\Delta_{i}\left(n,l\right)\Delta_{i}\left(n+1,l+1\right)\right).
\end{align*}
Plugging these into $M$ times Equation \eqref{****} and remembering 
\begin{align*}
\Delta_{i}\left(k,-1\right)=&\Delta_{i}\left(n,-1\right),\\
\Delta_{i}\left(n+1,0\right)=&\Delta_{i}\left(n+1,l\right),\\
\Delta_{i}\left(k,0\right)=\Delta_{i}\left(n,0\right)=&\Delta_{i}\left(n,l\right)=A_c,
\end{align*}
we get that
\begin{align*}
MA'_c=&A_aA_b\Delta_{i}(n,0)\Delta_{i}\left(n+1,-1\right)-A_b\Delta_{i}\left(k-1,-1\right)\Delta_{i}\left(n+1,l\right)\\
&-A_a\Delta_{i}\left(k,-1\right)\Delta_{i}\left(n+1,l+1\right)+ A_c\Delta_{i}\left(k-1,-1\right) 
 \Delta_{i}\left(n+1,l+1\right)\\
=& \det \begin{pmatrix} A_a &  \Delta_{i}\left(n+1,l\right) &  A_c \\
 \Delta_{i}\left(k-1,-1\right)&   \Delta_{i}\left(n+1,-1\right) &  \Delta_{i}\left(k,-1\right)\\
0 &  \Delta_{i}\left(n+1,l+1\right) & A_b
\end{pmatrix}.
\end{align*}
Note that along the divisor $D_c$, the upper right hand corner of the last matrix vanishes. Therefore we can conclude that along the divisor $D_c$,
\begin{equation}\label{*}
A'_c=\frac{\det \begin{pmatrix} A_a &  \Delta_{i}\left(n+1,l\right) &  0 \\
 \Delta_{i}\left(k-1,-1\right)&   \Delta_{i}\left(n+1,-1\right) &  \Delta_{i}\left(k,-1\right)\\
0 &  \Delta_{i}\left(n+1,l+1\right) & A_b
\end{pmatrix}}{\displaystyle\prod_{g\notin \left(S_+\cap S_-\right)\cup \{i\}}A_{m_g}^{-\C_{gi}}}
\end{equation}
is a regular function as well.

To show that $\conf^b_d\left(\mathcal{A}_\sc\right)$ contains $\spec\mathbb{C}\left[\mathbf{A}_c^\pm\right]$, it suffices to show that we can construct a configuration in $\conf^b_d\left(\mathcal{A}_\sc\right)$ for any assignment of non-zero numbers to $\left\{A'_c\right\}\cup \left\{A_a\right\}_{a\neq c}$. Most of the arguments are similar to the top case, so we will be brief. There is nothing to show when 
\[
A_aA_b\prod_{g\in S_+\cap S_-}A_{m_g}^{-\C_{gi}} +\left(\prod_{h\in S_-}A_{l_h}^{-\C_{hi}}\right)\left(\prod_{j\in S_+}A_{r_j}^{-\C_{ji}}\right)\left(\prod_{g\notin S_-\cup S_+\cup\{i\} }A_{m_g}^{-\C_{gi}}\right)\neq 0
\]
since we can already recover a non-zero value for $A_c$. When this non-vanishing condition fails, $A_c=0$ and we have to do a small trick again. In fact, we only need to focus on the parallelogram $\begin{tikzpicture}[baseline=2ex]
\node (u0) at (0,1) [] {$\A^0$};
\node (u1) at (1.5,1) [] {$\cdots$};
\node (u2) at (3,1) [] {$\A^{l+1}$};
\node (d0) at (0,0) [] {$\A_{k-1}$};
\node (d1) at (1.5,0) [] {$\cdots$};
\node (d2) at (3,0) [] {$\A_n$};
\draw [->] (u0) -- (u1);
\draw [->] (u1) -- (u2);
\draw [->] (d0) -- (d1);
\draw [->] (d1) -- (d2);
\draw (u0) -- (d0);
\draw (u2) -- (d2);
\end{tikzpicture}$ since everything outside can be constructed from the given values of cluster $\mathrm{K}_2$ variables as usual. Therefore it suffices to show that starting from a given $\xymatrix{\A^0\ar@{-}[r] & \A_{k-1}}$ we can construct the rest of the decorated flags in the parallelogram based on the non-zero values of the given cluster $\mathrm{K}_2$ variables. 

With the decorated flag $\A^{-1}$, we can find unique decorated flags $\A_k, \A_{k+1}, \dots, \A_n$ using the $\mathrm{K}_2$ cluster associated to the following triangle.
\[
\begin{tikzpicture}
\node (u0) at (-1,2) [] {$\A^{-1}$};
\node (d0) at (-4,0) [] {$\A_{k-1}$};
\node (d1) at (-2,0) [] {$\A_k$};
\node (d2) at (0,0) [] {$\cdots$};
\node (d3) at (2,0) [] {$\A_n$};
\draw (u0) -- (d0);
\draw [->] (d0) -- node [below] {$s_i$} (d1);
\draw [->] (d1) -- (d2);
\draw [->] (d2) -- (d3);
\draw (u0) -- (d1);
\draw (u0) -- (d3);
\end{tikzpicture}
\]
With the decorated flag $\A_{n+1}$, we can find unique decorated flags $\A^1, \A^2, \dots, \A^l$ using the $\mathrm{K}_2$ cluster associated to the following triangle.
\[
\begin{tikzpicture}
\node (u0) at (-1,-2) [] {$\A_{n+1}$};
\node (d0) at (-4,0) [] {$\A^0$};
\node (d1) at (-2,0) [] {$\A^1$};
\node (d2) at (0,0) [] {$\cdots$};
\node (d3) at (2,0) [] {$\A^l$};
\draw (u0) -- (d0);
\draw [->] (d0) -- node [above] {$s_i$} (d1);
\draw [->] (d1) -- (d2);
\draw [->] (d2) -- (d3);
\draw (u0) -- (d1);
\draw (u0) -- (d3);
\end{tikzpicture}
\]

To determine the last decorated flag $\A^{l+1}$, we need to compute $\Delta_{i}\left(n+1,l+1\right)$ using Equation \eqref{*}; note that the numerical value of everything else in that equation is already given, and the coefficient of $\Delta_{i}\left(n+1,l+1\right)$ is $A_a\Delta_{i}\left(k,-1\right)$, which is non-zero. Therefore we get $\Delta_{i}\left(n+1,l+1\right)$ as an $\mathbb{A}^1$ number, which combined with $\xymatrix{\A_{n+1} \ar@{-}[r] & \A^l}$ determines the decorated flag $\A^{l+1}$ uniquely.

We will omit the proof for the bottom case because it is completely analogous to the top case.
\end{proof}

\begin{prop}\label{codimension 2} The union $U:=\spec\mathbb{C}\left[\mathbf{A}_0^\pm\right]\cup \bigcup_c\spec\mathbb{C}\left[\mathbf{A}_c^\pm\right]$ is of codimension at least 2 in $\conf^b_d\left(\mathcal{A}_\sc\right)$.
\end{prop}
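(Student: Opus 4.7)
The strategy is to analyze the complement $Z := \conf^b_d(\mathcal{A}_\sc)\setminus U$ and show it lies in a finite union of codimension-$2$ subsets. First, I would identify which divisors can actually contribute to $Z$. Recall from the proof of Lemma \ref{irreducible} that for an open string $a$ (a \emph{frozen} vertex), the coordinate $A_a$ is a pinning parameter of the form $t^{\omega_i}$ that is by definition non-vanishing on $\conf^b_d(\mathcal{A}_\sc)$. Hence $D_a=\emptyset$ for every frozen $a$, and only the divisors $D_c$ for $c\in I^{\uf}$ (closed strings) can appear. Writing out the condition for a point $p$ to miss every seed torus, one gets
\[
Z \;\subseteq\; \bigcup_{c\in I^{\uf}} D_c \;\cap\; \bigcap_{c\in I^{\uf}}\Bigl(D'_c\cup\bigcup_{a\in I^{\uf},\,a\ne c}D_a\Bigr),
\]
so that $Z\cap D_c \subseteq (D_c\cap D'_c)\cup \bigcup_{a\ne c}(D_c\cap D_a)$ for each non-frozen $c$.

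The pairwise intersections among the original divisors $D_c$ are handled by Corollary \ref{order}, which gives $\codim(D_c\cap D_a)\ge 2$ for any two distinct closed strings $a,c$. Thus the entire proof reduces to showing $\codim(D_c\cap D'_c)\ge 2$ for every non-frozen $c$, i.e.\ that $A'_c$ does \emph{not} vanish identically on the irreducible divisor $D_c$.

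The cleanest way to get this is to prove that $A'_c$ is itself an irreducible element of $\mathcal{O}^b_d$. The initial triangulation we fixed (with all upward triangles preceding all downward ones) is just one among the family of triangulations obtained by the moves of Proposition \ref{triangulation move}, and by Proposition \ref{mutation} the once-mutated seed $\vec{s}_c$ arises from some such triangulation. Running the exact sequence of arguments used before Corollary \ref{order} (base case Lemma \ref{irreducible}, plus the induction along closed strings using Lemma \ref{distinguished}) on a triangulation that realizes $\vec{s}_c$, one obtains that every cluster variable of $\vec{s}_c$, and in particular $A'_c$, is irreducible in $\mathcal{O}^b_d$. Since the exchange relation $A_cA'_c=M_++M_-$ with $M_\pm$ monomials in $\{A_a\}_{a\ne c}$ shows $A'_c\ne A_c$ in $\mathcal{O}^b_d$, the divisors $D_c$ and $D'_c$ are distinct irreducibles, so $\codim(D_c\cap D'_c)\ge 2$, completing the proof.

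The main obstacle is the irreducibility of $A'_c$: one must check that Lemmas \ref{irreducible} and \ref{distinguished} go through verbatim for a triangulation adapted to $\vec{s}_c$ (in particular, that the ordering of closed strings is compatible with the induction). This is essentially formal once one observes that the constructions in those lemmas are intrinsic to triangulations rather than to our particular initial choice; the rest of the argument is a matter of bookkeeping with Corollary \ref{order}.
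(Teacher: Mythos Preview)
Your set-theoretic reduction to showing $\codim(D_c\cap D'_c)\ge 2$ is fine, but the key step---that $A'_c$ is irreducible because ``the once-mutated seed $\vec{s}_c$ arises from some triangulation''---is not correct. Proposition \ref{mutation} only says that \emph{certain} mutations (namely the diagonal flip in case (1b) and the braid-move sequences in case (2)) are realized by triangulation moves; it does \emph{not} say that every single mutation $\mu_c$ of a triangulation seed yields another triangulation seed. In the specific initial triangulation chosen before \eqref{three cases}, the closed strings in the ``top'' and ``bottom'' cases sit between two nodes of the \emph{same} sign on level $i$. Mutating there is neither a diagonal flip (which requires adjacent nodes $i,-i$) nor, in general, a braid move (which requires a very specific pattern of intervening nodes and Cartan entries). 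So there is no triangulation whose associated seed is $\mu_c(\vec{s}_0)$ in those cases, and Lemmas \ref{irreducible} and \ref{distinguished} simply do not apply to $A'_c$.

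The paper avoids this problem entirely: rather than trying to prove $A'_c$ irreducible, it uses the second half of Lemma \ref{oncemutation}, which explicitly constructs configurations with $A_c=0$ while all $A_a$ ($a\neq c$) and $A'_c$ are nonzero. This exhibits a nonempty open subset $D_c\cap U_c\subset D_c$; since $D_c$ is irreducible, the complement $D_c\cap U_c^c$ (which contains $Z\cap D_c$) has codimension $\ge 1$ in $D_c$, hence $\ge 2$ in $\conf^b_d(\mathcal{A}_\sc)$. This single observation handles both $D_c\cap D'_c$ and all the $D_c\cap D_a$ at once, without any appeal to irreducibility of $A'_c$ or to Corollary \ref{order}.
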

\begin{proof} Let $U_0:=\spec \mathbb{C}\left[\mathbf{A}_0^\pm\right]$ and let $U_c:=\spec\mathbb{C}\left[\mathbf{A}_c^\pm\right]$. Note that
\begin{align*}
    \conf^b_d\left(\mathcal{A}_\sc\right)\setminus U=&\left(\bigcup_{\text{$c$ unfrozen}} D_c\right) \cap \left(\left(\bigcup_{\text{$e$ unfrozen}} U_e\right)^c\right)\\
    =&\left(\bigcup_{\text{$c$ unfrozen}} D_c\right)\cap \left(\bigcap_{\text{$e$ unfrozen}}\left(U_e\right)^c\right)\\
    =&\bigcup_{\text{$c$ unfrozen}}\left(D_c\cap \left(\bigcap_{\text{$e$ unfrozen}}\left(U_e\right)^c\right)\right)\\
    \subset &\bigcup_{\text{$c$ unfrozen}} \left(D_c\cap\left(U_c\right)^c\right).
    \end{align*}
From the proof of the Lemma \ref{oncemutation} we see that $D_c\cap U_c$ is a non-empty open subset of $D_c$. Since $D_c$ is irreducible, $D_c\cap\left(U_c\right)^c$ must be at least codimension 1 inside $D_c$ and hence at least codimension 2 inside $\conf^b_d\left(\mathcal{A}_\sc\right)$.
\end{proof}

To prove $\mathcal{O}^b_d\cong \up\left(\mathscr{A}^b_d\right)$, we still need to verify the surjectivity condition on the canonical map $p$. We will do so by realizing the map $p:T_{\mathscr{A};\vec{s}_0}\rightarrow T_{\mathscr{X}^\uf;\vec{s}_0^\uf}$ as a restriction of the composition $\conf^b_d\left(\mathcal{A}_\sc\right)\rightarrow \conf^b_d\left(\mathcal{A}_\ad\right)\rightarrow \conf^b_d(\mathcal{B})$. 

\begin{prop}\label{3.38} Consider the surjective map $\pi:\conf^b_d\left(\mathcal{A}_\sc\right)\rightarrow \conf^b_d\left(\mathcal{A}_\ad\right)$. For any closed string $c$, 
\[
\pi^*\left(X_c\right)=\prod_a A_a^{\epsilon_{ca}}
\]
where $\epsilon_{ca}$ is the exchange matrix for the given seed.
\end{prop}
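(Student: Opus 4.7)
My plan is to verify the identity in a single conveniently chosen cluster chart by a direct Lusztig-coordinate calculation. Both $\pi^*X_c$ and $\prod_aA_a^{\epsilon_{ca}}$ are rational functions on $\conf^b_d(\mathcal{A}_\sc)$ that are regular and non-vanishing on the cluster $\mathrm{K}_2$ torus chart determined by a given seed, so it suffices to verify equality on one such torus chart.

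I would fix the triangulation described in Section 3.4 in which every up-pointing triangle precedes every down-pointing triangle. In this triangulation, every closed string $c$ on level $i$ falls into one of the three local patterns shown in Equation 3.14, so the verification reduces to three case-by-case checks. I would then work in the special representative on $\conf^b_d(\mathcal{A}_\sc)$ with $\A^0=\U_+$ and $\A_0=\U_-$; the Lusztig factorization coordinates $q_k$ and $p_l$ from Lemma 3.10, together with the torus element $t=\prod_jt_j^{\alpha_j^\vee}$ that encodes the decoration on $\A_n$, determine every undecorated flag of the configuration. Pushing this representative forward under $\pi$ produces the analogous special representative on the adjoint side used in Definition 3.13.

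By Corollary 3.26 each $A_a$ is an explicit monomial in the $q$'s, $p$'s, and $t_j$'s, and by Definition 3.13 each $X_c$ is also an explicit monomial in the same variables; the proposition thereby reduces to an algebraic identity between monomials. For case (I) of Equation 3.14, both endpoints of $c$ are $+i$ nodes and $X_c=q_R/q_L$. Expanding the right-hand side using the local description $\epsilon=\sum_n\epsilon^{(n)}$, the contributions from $n_L$ and $n_R$ produce the level-$i$ factors $A_{c_L}/A_{c_R}$, while the contributions on levels $j\neq i$ combine with Corollary 3.26 to yield exactly the remaining $A_{a_j}^{\pm\C_{ji}}$ factors of $q_R/q_L$. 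Case (III) is the mirror of case (I), and case (II) has one $+i$ endpoint and one $-i$ endpoint, so that $X_c=1/(q_Lp_R)$; here the torus factors $t_j$ appear on both sides and match.

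The main obstacle is the bookkeeping in case (II), in which $c$ crosses from the bottom region up to the top region and passes through many intermediate triangles. Each intermediate triangle whose node lies on a level $\ell\neq i$ contributes a half-integer entry $\pm\C_{i\ell}/2$ to $\epsilon_{c,a}$ for open strings $a$ on level $\ell$, and these must combine with the contributions from the two endpoint triangles to yield the integer exponents in the Lusztig-coordinate expression of $X_c$. Verifying this combinatorial identity relies on the symmetrizability relation $\D_i\C_{ij}=\D_j\C_{ji}$ and on a careful accounting of which strings on each level $\ell$ pass through which of the triangles traversed by $c$.
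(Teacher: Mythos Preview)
Your plan coincides with the paper's argument: fix the same triangulation in which all up-pointing triangles precede the down-pointing ones, split into the three local patterns \eqref{three cases}, and for each pattern use Corollary~\ref{A to L} to express the Lusztig factorization coordinates at the two endpoints of $c$ as monomials in the $A_a$'s, from which the monomial expression of $\pi^*X_c$ drops out. Cases (I) and (III) proceed exactly as you sketch.

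Your account of case (II), however, misidentifies where the work lies. For a closed string $c$ the entries $\epsilon_{ca}$ are already integers (half-integers occur only between two open strings), the symmetrizability relation is never invoked, and the decoration torus $t=\prod_j t_j^{\alpha_j^\vee}$ does not enter $X_c$ since $X_c$ is invariant under change of decoration. What actually distinguishes case (II) is that applying Corollary~\ref{A to L} at the $-i$ endpoint requires first translating the configuration by the maximal-torus element $\prod_j A_{m_j}^{\alpha_j^\vee}$ built from the cluster variables along the central diagonal $\xymatrix{\A_n\ar@{-}[r]&\A^0}$ (this translation occurs precisely when the triangle orientation flips); pushing this element to the right contributes an extra factor $\prod_j A_{m_j}^{\C_{ji}}$ to the Lusztig coordinate $p$, and then $\pi^*X_c=1/(pq_L)=\prod_a A_a^{\epsilon_{ca}}$ is immediate. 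A minor side remark: you cannot in general arrange $\A^0=\U_+$ and $\A_0=\U_-$ simultaneously outside the framed locus; the paper takes $\A^0=\U_+$ and $\B_0=\B_-$ instead.
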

\begin{proof} Note that it suffices to prove this statement on one seed. Let us again use the seed as in Lemma \ref{oncemutation}.
\[
\begin{tikzpicture}
\foreach \i in {0,1,2}
    {
    \node (d\i) at (2*\i,0) [] {$\B_\i$};
    }
\node (d3) at (6,0) [] {$\cdots$};
\node (d4) at (8,0) [] {$\B_{n-1}$};
\node (d5) at (10,0) [] {$\A_n$};
\node (u0) at (0,2) [] {$\A^0$};
\node (u1) at (2,2) [] {$\B^1$};
\node (u2) at (4,2) [] {$\cdots$};
\node (u3) at (6,2) [] {$\B^{m-2}$};
\node (u4) at (8,2) [] {$\B^{m-1}$};
\node (u5) at (10,2) [] {$\B^m$};
\draw [->] (d0) -- (d1);
\draw [->] (d1) -- (d2);
\draw [->] (d2) -- (d3);
\draw [->] (d3) -- (d4);
\draw [->] (d4) -- (d5);
\draw [->] (u0) -- (u1);
\draw [->] (u1) -- (u2);
\draw [->] (u2) -- (u3);
\draw [->] (u3) -- (u4);
\draw [->] (u4) -- (u5);
\draw (u0) -- (d0);
\draw (u0) -- (d1);
\draw (u0) -- (d2);
\draw (u0) -- (d4);
\draw (u0) -- (d5);
\draw (d5) -- (u1);
\draw (d5) -- (u3);
\draw (d5) -- (u4);
\draw (d5) -- (u5);
\end{tikzpicture}
\]
Then the closed strings again come in three types, as described in \eqref{three cases}. Let us first look at the top case. 
\[
\begin{tikzpicture}
\node (u) at (0,3) [] {$\A^0$};
\node (d0) at (-4,0) [] {$\B_{k-1}$};
\node (d2) at (0,0) [] {$\cdots$};
\node (d1) at (-2,0) [] {$\B_k$};
\node (d3) at (2,0) [] {$\B_l$};
\node (d4) at (4,0) [] {$\B_{l+1}$};
\draw [->] (d0) -- node [below] {$s_i$} (d1);
\draw [->] (d1) -- (d2);
\draw [->] (d2) -- (d3);
\draw (d0) -- (u) -- (d1);
\draw (d3) -- (u) -- (d4);
\draw [->] (d3) -- node [below] {$s_i$} (d4);
\node (0) at (-1.5,1.7) [] {$i$};
\node (1) at (1.5,1.7) [] {$i$};
\draw (-4,1.7) node[left] {$i$th level} -- node[above] {$a$} (0) -- node [above] {$c$} (1) -- node[above] {$b$} (4,1.7);
\draw (-4,0.7) node [left] {$j$th level} -- node [above] {$l_j$} (-1,0.7);
\draw (1,0.7) -- node [above] {$r_j$} (4,0.7);
\end{tikzpicture}
\]
To work out the cluster Poisson coordinate $X_c$, we need to first compute the two Lusztig factorization coordinates at the two ends of the closed string $c$. Consider the triangle containing the left end point. By acting by some $u\in \U_+$ we can move the chosen representative configuration into a configuration with $\B_{k-1}=\B_-$ and $\A^0=\U_+$. Since the cluster $\mathrm{K}_2$ coordinates are invariant under the $\G$-action, by Corollary \ref{A to L} we see that the unipotent element that moves $\B_{k-1}$ to $\B_k$ is $e_i\left(\displaystyle\frac{\prod_{j\neq i}A_{l_j}^{-\C_{ji}}}{A_aA_c}\right)$ and hence the Lusztig factorization coordinate on the left is $q=\frac{\prod_{j\neq i}A_{l_j}^{-\C_{ji}}}{A_aA_c}$. By a similar argument, we get that the Lusztig factorization coordinate on the right is $q'=\frac{\prod_{j\neq i}A_{r_j}^{-\C_{ji}}}{A_bA_c}$. Therefore we obtain
\[
\pi^*\left(X_c\right)=\frac{q'}{q}=\frac{A_a\prod_{j\neq i}A_{r_j}^{-\C_{ji}}}{A_b\prod_{j\neq i}A_{l_j}^{-\C_{ji}}}=\prod_a A_a^{\epsilon_{ca}}.
\]

For the middle case, the Lusztig factorization coordinate on the left can be computed the same as above, which is $q=\frac{\prod_{j\neq i}A_{l_j}^{-\C_{ji}}}{A_aA_c}$.
\[
\begin{tikzpicture}[baseline=2ex]
\node (u0) at (1,3) [] {$\A^0$};
\node (d0) at (0,0) [] {$\B_{k-1}$};
\node (d1) at (2,0) [] {$\B_k$};
\node (u1) at (3,3) [] {$\cdots$};
\node (d2) at (4,0) [] {$\cdots$};
\node (d3) at (6,0) [] {$\A_n$};
\node (u2) at (5,3) [] {$\B^l$};
\node (u3) at (7,3) [] {$\B^{l+1}$};
\node (0) at (1,2) [] {$i$};
\node (1) at (6,2) [] {$-i$};
\draw [->] (d0) -- node [below] {$s_i$} (d1);
\draw [->] (d1) -- (d2);
\draw [->] (d2) -- (d3);
\draw [->] (u0) -- (u1);
\draw [->] (u1) -- (u2);
\draw [->] (u2) -- node [above] {$s_i$} (u3);
\draw (d0) -- (u0) -- (d1);
\draw (u2) -- (d3) -- (u3);
\draw (u0) -- (d3);
\draw (-1,2) node[left] {$i$th level} -- node[above] {$a$} (0) -- node [above] {$c$} (1) -- node[above] {$b$} (8,2);
\draw (-1,1) node [left] {$j$th level} --  node [above] {$\quad l_j$} (2,1);
\draw (3,1)  -- node [above] {$m_j$} (4.5,1);
\draw (5.5,1) -- node [above] {$r_j$} (8,1);
\end{tikzpicture}
\]
However, the Lusztig factorization coordinate on the right is slightly more complicated to compute. Recall from Corollary \ref{A to L}, when triangles switch orientation, one also needs to move the whole configuration by the maximal torus element $t:=\prod_{j=1}^{\tilde{r}}\A_{m_j}^{\alpha_j^\vee}$. But then in the computation of cluster Poisson coordinates on $\conf^b_d\left(\mathcal{A}_\ad\right)$, we do not allow maximal torus elements appearing in the middle; therefore we need to push this extra factor of $t$ all the way to the right, resulting in an extra $t^{\alpha_i}$ factor (the same technique was also used in the proof of Proposition \ref{3.15}). Therefore the Lusztig factorization coordinate $p$ is
\[
p=\frac{\prod_{j\neq i}A_{r_j}^{-\C_{ji}}}{A_bA_c}t^{\alpha_i}=\frac{\prod_{j\neq i}A_{r_j}^{-\C_{ji}}}{A_bA_c}\frac{A_c^2}{\prod_{j\neq i}A_{m_j}^{-\C_{ji}}}=\frac{A_c\prod_{j\neq i}A_{r_j}^{-\C_{ji}}}{A_b\prod_{j\neq i}A_{m_j}^{-\C_{ji}}}.
\]
Now by the definition of cluster Poisson coordinates, we find that
\[
\pi^*\left(X_c\right)=\frac{1}{pq}=\frac{A_aA_b\prod_{j\neq i}A_{m_j}^{-\C_{ji}}}{\left(\prod_{j\neq i}A_{l_j}^{-\C_{ji}}\right)\left(\prod_{j\neq i}A_{r_j}^{-\C_{ji}}\right)}=\prod_a A_a^{\epsilon_{ca}}.
\]

The bottom case can be either computed analogous to the top case or obtained from the top case using the transposition morphism as an intertwiner.
\end{proof}

\begin{prop}\label{3.39} Consider the seed as in Lemma \ref{oncemutation}. Let $m_j$ be the string that passes through the diagonal $\xymatrix{\A_n \ar@{-}[r] & \A^0}$. For an open string $f$ on the $i$th next to a node $n$, denote the string that crosses the vertical line at $n$ on the $j$th level by $a_j$ and denote the string on the other side of $f$ by $b$. Then we have
\[
\pi^*\left(X_f\right)=\left\{\begin{array}{ll} 
\frac{\prod_{j\neq i}A_{a_j}^{-\C_{ji}}}{A_b A_f} & \text{if $\xymatrix{\ar@{-}[r]^f & i \ar@{-}[r]^b & }$ or $\xymatrix{\ar@{-}[r]^b & -i \ar@{-}[r]^f &}$};\\
& \\ 
\frac{A_b\prod_{j\neq i}A_{m_j}^{-\C_{ji}}}{A_f\prod_{j\neq i}A_{a_j}^{-\C_{ji}}} & \text{if $\xymatrix{\ar@{-}[r]^f & -i \ar@{-}[r]^b & }$ or $\xymatrix{\ar@{-}[r]^b & i \ar@{-}[r]^f &}$}.
\end{array} \right.
\]
If $f$ is a string on the $i$th level that is open on both ends (which typically happens for $i>r$), then we have
\[
\pi^*\left(X_f\right)=\prod_{j=1}^{\tilde{r}} A_{m_j}^{\C_{ji}}
\]
In particular, these formulas are all in the form
\[
\pi^*\left(X_f\right)=\left(\prod_{\text{$c$ closed}} A_c^{\epsilon_{fc}}\right)\cdot \left(\text{Laurent monomial in $A_g$ for open string $g$}\right)
\]
\end{prop}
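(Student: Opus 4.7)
The plan is to proceed by case analysis in parallel with the proof of Proposition~\ref{3.38}, using the same initial seed whose triangulation has all upward-pointing triangles preceding all downward-pointing triangles. Fixing the special representative $\A^0=\U_+$, $\B_0=\B_-$, I would compute $\pi^*(X_f)$ from Definition~\ref{X def} by expressing each relevant Lusztig factorization coordinate in terms of $A$-coordinates via Corollary~\ref{A to L}, together with the $t^{\alpha_i}$ adjustment for downward Lusztig coordinates derived in the middle-case portion of Proposition~\ref{3.38} (which arises from pushing the transition torus $t=\prod_j A_{m_j}^{\alpha_j^\vee}$ through the downward unipotents).

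I would distinguish five cases based on the node type (if any) adjacent to $f$ on level $i$: (A) $f$ left-open with right-adjacent node $+i$; (B) $f$ left-open with right-adjacent node $-i$; (C) $f$ right-open with left-adjacent node $-i$; (D) $f$ right-open with left-adjacent node $+i$; (E) $f$ two-sided open. The central structural observation is that in the chosen seed, Cases (B), (D), (E) each force $f$ to cross the central diagonal from $\A_n$ to $\A^0$, so that $f=m_i$ in those three cases.

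In Cases (A) and (C), $X_f$ is essentially a single Lusztig coordinate. For (A), $X_f=q^{\text{loc}}$ as given by Corollary~\ref{A to L} directly. For (C), the $t^{\alpha_i}$ correction appearing in $p^{\ad}=p^{\text{loc}}\cdot t^{\alpha_i}$ cancels exactly against the decoration factor $t_i$ entering $X_f=p^{\ad}\cdot t_i$. Both cases yield the simple formula $\prod_{j\neq i}A_{a_j}^{-\C_{ji}}/(A_b A_f)$. For Cases (B), (D), (E), substituting $A_{m_i}=A_f$ and applying the identity $\prod_j A_{m_j}^{\C_{ji}}=A_f^2/\prod_{j\neq i}A_{m_j}^{-\C_{ji}}$ reduces the computed expressions to the stated forms. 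The final assertion about the factorization $\pi^*(X_f)=(\prod_{c\text{ closed}}A_c^{\epsilon_{fc}})\cdot(\text{Laurent monomial in open }A_g)$ then follows by matching the exponents of closed-string variables in each case against the local node contributions $\epsilon^{(n)}$ defining the exchange matrix (Section~\ref{sec.tri.comb}).

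The main technical obstacle is the bookkeeping of torus factors: the transition torus $t$ contributes simultaneously as a multiplicative correction to downward Lusztig coordinates and as the decoration component $t_i$ in right-open cases, and keeping track of how these two roles interact --- producing the cancellations in Cases (A), (C) and the residual $A_{m_j}$-dependence in Cases (B), (D), (E) --- is the heart of the computation.
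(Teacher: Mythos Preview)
Your proposal is correct and follows the same line as the paper for the left-open cases (A) and (B) and for the two-sided open case (E): compute the Lusztig factorization coordinate via Corollary~\ref{A to L} exactly as in the proof of Proposition~\ref{3.38}, and in case (E) read off $\pi^*(X_f)=t^{\alpha_i}=\prod_j A_{m_j}^{\C_{ji}}$ directly. Your structural observation that $f=m_i$ in cases (B), (D), (E) is also correct and is what makes the bookkeeping tractable there.

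Where you diverge from the paper is in the right-open cases (C) and (D). You propose a direct computation, carrying the transition torus $t=\prod_j A_{m_j}^{\alpha_j^\vee}$ through the downward unipotents and combining the resulting $t^{\alpha_i}$ correction with the decoration factor $t_i$ coming from Definition~\ref{X def}. This works, but it is exactly the ``bookkeeping of torus factors'' you flag as the main obstacle, and it requires you to identify explicitly how the $\ad$-decoration component $t_i$ pulls back to a monomial in the $A$-coordinates under $\pi$. The paper sidesteps this entirely: it handles the right-open cases by invoking the transposition morphism (Proposition~\ref{3.15}), which exchanges left-open and right-open strings while preserving both cluster Poisson and cluster $\mathrm{K}_2$ coordinates, so the right-open formulas are obtained for free from the left-open ones already established. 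Your route is more self-contained (it does not rely on Proposition~\ref{3.15}); the paper's route is shorter and avoids the delicate torus bookkeeping altogether.
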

\begin{proof} The cases where $f$ is on the left side of the string diagram with a node attached can be obtained the same way as in the proof of the last proposition. The cases where $f$ is on the right side of the string diagram with a node attached can then be obtained from the cases on the left via the transposition morphism. It remains to show the case where $f$ is open on both ends, for which
\[
\pi^*\left(X_f\right)=t^{\alpha_i}=t^{\sum_{j=1}^{\tilde{r}}\C_{ji}\omega_j }=\prod_{j=1}^{\tilde{r}} A_{m_j}^{\C_{ji}}.\qedhere
\]
\end{proof}

Our chosen triangulation determines a cluster Poisson seed torus $T_\mathscr{X}$ on $\conf^b_d\left(\mathcal{A}_\ad\right)$ a cluster $\mathrm{K}_2$ seed torus $T_\mathscr{A}$ on $\conf^b_d\left(\mathcal{A}_\sc\right)$. Since these two seed tori are both cut out by the same general position conditions on the underlying undecorated flags, the surjective map $\pi:\conf^b_d\left(\mathcal{A}_\sc\right)\rightarrow \conf^b_d\left(\mathcal{A}_\ad\right)$ restricts to a surjective map $\pi:T_\mathscr{A}\rightarrow T_\mathscr{X}$. 

Moreover, the formulas from Proposition \ref{3.38} and Proposition \ref{3.39} showed that $\pi$ fits in the following commutative diagram (where the maps $e,f,p$, and $q$ are defined in Appendix \ref{app B}):
\[
\xymatrix{T_{\mathscr{A}^\uf} \ar[r]^e \ar[dr]_f & T_\mathscr{A} \ar[d]^\pi \ar[dr]^p & \\
& T_\mathscr{X} \ar[r]_q & T_{\mathscr{X}^\uf}}
\]
In particular, this diagram is just a restriction of the maps in Diagram \eqref{maps} to some open subsets corresponding to our choice of triangulation.

Since $p=q\circ \pi$ and both $q$ and $\pi$ are surjective, we know that $p$ is surjective as well. By combining the surjectivity of $p$ and the codimension 2 condition (Proposition \ref{codimension 2}, we finally prove our first claim at the beginning of the section.

\begin{thm}\label{3.42} $\mathcal{O}^b_d\cong \up\left(\mathscr{A}^b_d\right)$.
\end{thm}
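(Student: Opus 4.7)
The plan is to invoke the Berenstein-Fomin-Zelevinsky criterion (Corollary 1.9 of \cite{BFZ}) to realize $\up\left(\mathscr{A}^b_d\right)$ as the coordinate ring of the union of seed tori
\[
U:=\spec\mathbb{C}\left[\mathbf{A}_0^\pm\right]\cup \bigcup_{\text{$c$ unfrozen}} \spec\mathbb{C}\left[\mathbf{A}_c^\pm\right]\subset \conf^b_d\left(\mathcal{A}_\sc\right),
\]
and then to identify $\mathcal{O}(U)$ with $\mathcal{O}^b_d$ via a codimension-two extension argument. All the hard geometric inputs have already been assembled in the preceding subsection, so the remaining task is essentially bookkeeping.

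The first step is to verify the full-rank hypothesis of the BFZ criterion, which is equivalent to the surjectivity of the canonical map $p:T_{\mathscr{A};\vec{s}_0}\rightarrow T_{\mathscr{X}^\uf;\vec{s}_0^\uf}$. I would read this off from the commutative diagram displayed just before the theorem: since $p=q\circ \pi$, and Propositions \ref{3.38} and \ref{3.39} show that $\pi:T_\mathscr{A}\rightarrow T_\mathscr{X}$ is the restriction of the natural surjection $\conf^b_d\left(\mathcal{A}_\sc\right)\rightarrow \conf^b_d\left(\mathcal{A}_\ad\right)$ (which is surjective since $\mathcal{A}_\sc\to \mathcal{A}_\ad$ is), while $q$ is the projection of cluster Poisson tori forgetting the frozen coordinates and therefore surjective, one concludes that $p$ is surjective. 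Applying the BFZ criterion then gives
\[
\up\left(\mathscr{A}^b_d\right)=\mathbb{C}\left[\mathbf{A}_0^\pm\right]\cap \bigcap_{\text{$c$ unfrozen}}\mathbb{C}\left[\mathbf{A}_c^\pm\right]=\mathcal{O}(U).
\]

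The second step is to establish the equality $\mathcal{O}(U)=\mathcal{O}^b_d$. One inclusion $\mathcal{O}^b_d\hookrightarrow \mathcal{O}(U)$ is immediate from Lemma \ref{oncemutation}, which asserts that each torus $\spec\mathbb{C}\left[\mathbf{A}_c^\pm\right]$ is an open subvariety of $\conf^b_d\left(\mathcal{A}_\sc\right)$ and that the mutated variables $A'_c$ are all regular on $\conf^b_d\left(\mathcal{A}_\sc\right)$. For the reverse inclusion $\mathcal{O}(U)\hookrightarrow \mathcal{O}^b_d$, I would invoke the algebraic Hartogs principle: by Theorem \ref{affineness of conf} the variety $\conf^b_d\left(\mathcal{A}_\sc\right)$ is smooth (hence normal), and by Proposition \ref{codimension 2} its complement $\conf^b_d\left(\mathcal{A}_\sc\right)\setminus U$ has codimension at least $2$, so any regular function on $U$ extends uniquely to all of $\conf^b_d\left(\mathcal{A}_\sc\right)$. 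Chaining the two equalities yields $\mathcal{O}^b_d\cong \up\left(\mathscr{A}^b_d\right)$.

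The only place where any real work is needed is in checking that the two sides of the BFZ identity are being taken in compatible ambient fraction fields; this amounts to observing that the initial cluster $\mathbf{A}_0$ is a transcendence basis for $\Frac\left(\mathcal{O}^b_d\right)$ (because $T_{\mathscr{A};\vec{s}_0}$ is an open subset of the irreducible variety $\conf^b_d\left(\mathcal{A}_\sc\right)$), so the embedding $\mathcal{O}^b_d\hookrightarrow \mathbb{C}\left[\mathbf{A}_0^\pm\right]$ extends to an isomorphism of fraction fields, and both intersections in the BFZ formula can be carried out inside $\Frac\left(\mathcal{O}^b_d\right)$. Once this identification is in place, the two-line argument above closes the proof; no further cluster-algebraic verification is required.
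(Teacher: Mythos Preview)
Your proposal is correct and follows essentially the same approach as the paper: verify the full-rank hypothesis of the BFZ criterion via the factorization $p=q\circ\pi$ with both factors surjective, then combine the resulting identification $\up\left(\mathscr{A}^b_d\right)=\mathcal{O}(U)$ with the codimension-two estimate (Proposition \ref{codimension 2}) and the smoothness of $\conf^b_d\left(\mathcal{A}_\sc\right)$ (Theorem \ref{affineness of conf}) to invoke algebraic Hartogs. Your write-up is somewhat more explicit than the paper's one-line summary, but the logic is identical.
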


Let us look at the other claim at the beginning of this subsection, $\mathcal{O}\left(\conf^b_d\left(\mathcal{A}_\ad\right)\right)\cong \up \left(\mathscr{X}^b_d\right)$. We will use a technique similar to the proof of \cite[Theorem 2.15]{SWgrass}. 

First note that from the commutation diagram above we get an injective algebra homomorphism $\pi^*:\up\left(\mathscr{X}^b_d\right)\rightarrow \up\left(\mathscr{A}^b_d\right)$ (see Proposition \ref{pi} and Propositoin \ref{surjective pi}). In particular, an element $F\in \Frac\left(\up\left(\mathscr{X}^b_d\right)\right)$ is in $\up\left(\mathscr{X}^b_d\right)$ if and only if $\pi^*(F)$ is in $\up\left(\mathscr{A}^b_d\right)$.

\begin{thm}\label{3.43} $\mathcal{O}\left(\conf^b_d\left(\mathcal{A}_\ad\right)\right)\cong \up\left(\mathscr{X}^b_d\right)$.
\end{thm}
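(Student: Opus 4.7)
The plan is to deduce this from Theorem \ref{3.42} via the projection $\pi:\conf^b_d(\mathcal{A}_\sc)\to\conf^b_d(\mathcal{A}_\ad)$ and the biconditional noted just before the statement, namely that $F\in\Frac(\up(\mathscr{X}^b_d))$ lies in $\up(\mathscr{X}^b_d)$ if and only if $\pi^*(F)\in\up(\mathscr{A}^b_d)$. This biconditional follows from Propositions \ref{3.38} and \ref{3.39}, which furnish the commutative diagram relating $p$, $q$, $\pi$ and establish surjectivity of $p$.

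First I would verify that $\pi$ is a finite étale $\Z$-Galois cover. This is immediate from the paper's construction: $\G_\ad\cong\G_\sc/\Z$ with $\Z$ finite central, and $\Z$ acts freely on $\mathcal{A}_{\sc,\varepsilon}=\G_\sc/\U_\pm$ since $\Z\cap\U_\pm=\{e\}$. Hence $\pi$ is a $|\Z|$-to-$1$ étale cover, and we have
\[
\mathcal{O}\left(\conf^b_d(\mathcal{A}_\ad)\right)=\mathcal{O}\left(\conf^b_d(\mathcal{A}_\sc)\right)^\Z,
\]
with the analogous statement for fraction fields.

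Next, for the inclusion $\mathcal{O}\left(\conf^b_d(\mathcal{A}_\ad)\right)\subseteq\up(\mathscr{X}^b_d)$: given any $F\in\mathcal{O}\left(\conf^b_d(\mathcal{A}_\ad)\right)$, the pullback $\pi^*(F)$ lies in $\mathcal{O}\left(\conf^b_d(\mathcal{A}_\sc)\right)=\up(\mathscr{A}^b_d)$ by Theorem \ref{3.42}. Viewing $F$ as a rational function on $\conf^b_d(\mathcal{A}_\ad)$, it restricts to a regular function on every cluster Poisson seed torus (an open subset of $\conf^b_d(\mathcal{A}_\ad)$), so $F\in\Frac(\up(\mathscr{X}^b_d))$. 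The biconditional then forces $F\in\up(\mathscr{X}^b_d)$.

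For the reverse inclusion $\up(\mathscr{X}^b_d)\subseteq\mathcal{O}\left(\conf^b_d(\mathcal{A}_\ad)\right)$: take $F\in\up(\mathscr{X}^b_d)$, a priori a rational function on $\conf^b_d(\mathcal{A}_\ad)$. The biconditional combined with Theorem \ref{3.42} gives $\pi^*(F)\in\up(\mathscr{A}^b_d)=\mathcal{O}\left(\conf^b_d(\mathcal{A}_\sc)\right)$. Since $\pi^*(F)$ is by construction $\Z$-invariant, the identification $\mathcal{O}\left(\conf^b_d(\mathcal{A}_\ad)\right)=\mathcal{O}\left(\conf^b_d(\mathcal{A}_\sc)\right)^\Z$ yields $F\in\mathcal{O}\left(\conf^b_d(\mathcal{A}_\ad)\right)$, completing the proof. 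There is no real obstacle here beyond assembling the already-proven ingredients; the cluster-theoretic content sits in Theorem \ref{3.42} and in the exchange-matrix identities of Propositions \ref{3.38} and \ref{3.39}, and the only genuinely new input is the elementary observation that $\pi$ is a finite étale Galois quotient, so regularity transfers between the two cells along $\pi^*$.
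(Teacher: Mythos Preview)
Your proposal is correct and follows essentially the same route as the paper: both reduce to Theorem \ref{3.42} via the biconditional coming from Propositions \ref{pi} and \ref{surjective pi} (established through Propositions \ref{3.38}--\ref{3.39}), and both handle the two inclusions by pushing and pulling regularity along $\pi^*$. The only cosmetic difference is in the reverse inclusion: the paper simply says ``since $\pi$ is surjective, regularity of $\pi^*(F)$ implies regularity of $F$'' (implicitly using that $\pi$ is finite \'etale, hence faithfully flat, so descent applies), whereas you spell this out as $\mathcal{O}\left(\conf^b_d(\mathcal{A}_\ad)\right)=\mathcal{O}\left(\conf^b_d(\mathcal{A}_\sc)\right)^{\Z}$ and observe that $\pi^*(F)$ is automatically $\Z$-invariant---this is the same fact in Galois-theoretic clothing, and arguably makes the descent step more transparent.
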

\begin{proof} Since $\conf^b_d\left(\mathcal{A}_\ad\right)$ contains a cluster Poisson seed torus $T_\mathscr{X}$ as an open dense subset, we have $\Frac\left(\mathcal{O}\left(\conf^b_d\left(\mathcal{A}_\ad\right)\right)\right)\cong \Frac\left(\mathcal{O}\left(T_\mathscr{X}\right)\right)=\Frac\left(\up\left(\mathscr{X}^b_d\right)\right)$, which contains both $\mathcal{O}\left(\conf^b_d\left(\mathcal{A}_\ad\right)\right)$ and $\up\left(\mathscr{X}^b_d\right)$ as subalgebras. Now consider the following commutative diagram of algebras.
\[
\xymatrix{\Frac\left(\mathcal{O}\left(\conf^b_d\left(\mathcal{A}_\ad\right)\right)\right) \ar@{^(->}[d]_{\pi^*}\ar[r]^(0.6){\chi}_(0.6){\cong} &\Frac\left(\up\left(\mathscr{X}^b_d\right)\right)\ar@{^(->}[d]_{\pi^*}\\
\Frac\left(\mathcal{O}\left(\conf^b_d\left(\mathcal{A}_\sc\right)\right)\right) \ar[r]^(0.6){\cong}_(0.6){\alpha} & \Frac\left(\up\left(\mathscr{A}^b_d\right)\right)}
\]

Suppose $f$ is a regular function on $\conf^b_d\left(\mathcal{A}_\ad\right)$. Then we know that $\chi(f)\in \Frac\left(\up\left(\mathscr{X}^b_d\right)\right)$. To show that $\chi(f)$ is actually in $\up\left(\mathscr{X}^b_d\right)$, we only need to show $\pi^*\circ\chi(f)\in \up\left(\mathscr{A}^b_d\right)$ by Proposition \ref{pi}. But this is true because $\pi^*\circ \chi(f)=\alpha\circ \pi^*(f)$ is in the image of the composition 
\[
\mathcal{O}\left(\conf^b_d\left(\mathcal{A}_\ad\right)\right) \overset{\pi^*}{\longrightarrow} \mathcal{O}\left(\conf^b_d\left(\mathcal{A}_\sc\right)\right) \overset{\alpha}{\longrightarrow} \up\left(\mathscr{A}^b_d\right).
\]

On the other hand, suppose $F$ is an element in $\up\left(\mathscr{X}^b_d\right)$. Then we know that $\chi^{-1}(F)$ is a rational function on $\conf^b_d\left(\mathcal{A}_\ad\right)$. Since $\pi:\conf^b_d\left(\mathcal{A}_\sc\right)\rightarrow \conf^b_d\left(\mathcal{A}_\ad\right)$ is surjective, to show the regularity of $\chi^{-1}(F)$, it suffices to show that $\pi^*\circ \chi^{-1}(F)$ is a regular function on $\conf^b_d\left(\mathcal{A}_\sc\right)$. But this is true because $\pi^*\circ \chi^{-1}(F)=\alpha^{-1}\circ \pi^*(F)$ is in the image of the composition
\[
\up\left(\mathscr{X}^b_d\right)\overset{\pi^*}{\longrightarrow}
 \up\left(\mathscr{A}^b_d\right)  \overset{\alpha^{-1}}{\longrightarrow} \mathcal{O}\left(\conf^b_d\left(\mathcal{A}_\sc\right)\right). \qedhere
\]
\end{proof}

\section{Donaldson-Thomas Transformation of Bott-Samelson Cells}

In this section we will show that the cluster Donaldson-Thomas transformation exists on the unfrozen Poisson cluster algebra $\up\left(\mathscr{X}^b_d\right)^\uf$, and realize it as a biregular morphism on the undecorated double Bott-Samelson cell $\conf^b_d(\mathcal{B})$. We will first show it for the case $\left(\mathscr{X}^e_b\right)^\uf$ by constructing a maximal green sequence, whose existence implies the existence of cluster Donaldson-Thomas sequence (see Appendix \ref{app B} for more details), and then deduce the general cases $\left(\mathscr{X}^b_d\right)^\uf$ by using the reflection maps defined in Subsection \ref{simplereflection}.

\subsection{A Maximal Green Sequence for \texorpdfstring{$\left(\mathscr{X}^e_b\right)^\uf$}{}} Let us consider the cluster Poisson algebra $\up\left(\mathscr{X}^e_b\right)^\uf$ associated to the undecorated double Bott-Samelson cell $\conf^e_b(\mathcal{B})$. Our goal is to construct a maximal green sequence.

We should first fix an initial seed. Due to the fact that the positive braid upstairs is the identity $e$, the trapezoid is in fact a triangle, and there is only one available triangulation in this case, which is the following.
\[
\tikz{
\foreach \i in {0,...,3}
    {
    \node (\i) at (2*\i,0) [] {$\B^\i$};
    }
\node (4) at (8,0) [] {$\cdots$};
\node (5) at (10,0) [] {$\B^n$};
\foreach \i in {0,...,3}
    {
    \pgfmathtruncatemacro{\j}{\i+1};
    \draw [->] (\i) -- node [below] {$s_{j_\j}$} (\j);
    }
\draw [->] (4) -- node [below] {$s_{j_n}$} (5);
\node (u) at (5,3) [] {$\B_0$};
\draw (0) -- (u) -- (1);
\draw (2) -- (u) -- (3);
\draw (5) -- (u);
}
\]

Since all the triangles are of the shape $\begin{tikzpicture}[baseline=2ex]\draw (0,1) -- (-0.5,0) --  node [below] {$s_j$} (0.5,0) -- cycle;\end{tikzpicture}$, the corresponding string diagram only has simple roots at the nodes. To better demonstrate the idea, let us use the word 
\[
(2,1,3,2,1,3,1,3,2,2,1)
\]
as a running example. The string diagram looks like the following (we will only draw the closed strings since open strings are not part of the unfrozen seed.
\[
\tikz{
\node (a1) at (1,2) [] {$1$};
\node (b1) at (0,1) [] {$2$};
\node (g1) at (2,0) [] {$3$};
\node (b2) at (3,1) [] {$2$};
\node (a2) at (4,2) [] {$1$};
\node (g2) at (5,0) [] {$3$};
\node (a3) at (6,2) [] {$1$};
\node (g3) at (7,0) [] {$3$};
\node (b3) at (8,1) [] {$2$};
\node (b4) at (9,1) [] {$2$};
\node (a4) at (10,2) [] {$1$};
\draw (a1) -- (a2) -- (a3) --(a4);
\draw (b1) -- (b2) -- (b3) -- (b4);
\draw (g1) -- (g2) -- (g3);
}
\]

Recall that the closed strings become vertices of the seed $\vec{s}^\uf$, and the associated exchange matrix $\epsilon$ has two types of non-vanishing entries: $\pm 1$ for neighboring vertices on the same horizontal level, and $\pm \C_{ji}$ between nearby vertices on the $i$th and $j$th levels. Since the magnitude of these entries are determined by the horizontal levels of the vertices, the only extra data we need to record are the signs. We will hence use arrows of the form $\begin{tikzpicture}[baseline=-0.5ex]\node (a) at (0,0) [] {$a$}; \node (b) at (2,0) [] {$b$}; \draw [->] (a) -- (b);\end{tikzpicture}$ to denote the first case with $\epsilon_{ab}=-\epsilon_{ba}=1$, and use arrow $\begin{tikzpicture}[baseline=-0.5ex]\node (a) at (0,0) [] {$a$}; \node (b) at (2,0) [] {$b$}; \draw [->,decorate,decoration={snake,amplitude=.4mm,segment length=2mm,post length=1mm}] (a) -- (b); \end{tikzpicture}$ to denote the second case with $\epsilon_{ab}>0$ and $\epsilon_{ba}<0$. 

Using such notation, our running example gives rise to a seed that can be described as follows.
\[
\tikz{
\node (a1) at (2.5,3) [] {$\binom{1}{1}$};
\node (a2) at (5,3)  [] {$\binom{1}{2}$};
\node (a3) at (8,3) [] {$\binom{1}{3}$};
\node (b1) at (1.5,1.5) [] {$\binom{2}{1}$};
\node (b2) at (5.25,1.5) [] {$\binom{2}{2}$};
\node (b3) at (8.5,1.5) [] {$\binom{2}{3}$};
\node (c1) at (4, 0) [] {$\binom{3}{1}$};
\node (c2) at (7,0) [] {$\binom{3}{2}$};
\draw [<-] (a1) -- (a2);
\draw [<-] (a2) -- (a3);
\draw [<-] (b1) -- (b2);
\draw [<-] (b2) -- (b3);
\draw [<-] (c1) -- (c2);
\draw [->,decorate,decoration={snake,amplitude=.4mm,segment length=2mm,post length=1mm}] (b1) -- (a1);
\draw [->,decorate,decoration={snake,amplitude=.4mm,segment length=2mm,post length=1mm}] (a1) -- (b2);
\draw [->,decorate,decoration={snake,amplitude=.4mm,segment length=2mm,post length=1mm}] (b2) -- (a3);
\draw [->,decorate,decoration={snake,amplitude=.4mm,segment length=2mm,post length=1mm}] (b1) -- (c1);
\draw [->,decorate,decoration={snake,amplitude=.4mm,segment length=2mm,post length=1mm}] (c1) -- (b2);
\draw [->,decorate,decoration={snake,amplitude=.4mm,segment length=2mm,post length=1mm}] (a1) -- (c1);
\draw [->,decorate,decoration={snake,amplitude=.4mm,segment length=2mm,post length=1mm}] (c1) -- (a2);
\draw [->,decorate,decoration={snake,amplitude=.4mm,segment length=2mm,post length=1mm}] (a2) -- (c2);
\draw [->,decorate,decoration={snake,amplitude=.4mm,segment length=2mm,post length=1mm}] (c2) -- (a3);
}
\]

We are now ready to describe a maximal green sequence for seeds of such form. First for each entry $i_k$ of the word $\vec{i}=\left(i_1,i_2,\dots, i_n\right)$, we define 
\[
t_k:=\#\left\{l \ \middle| \ \text{$k<l\leq n$ and $i_l=i_k$}\right\}.
\]
Note that if $i_k$ is the first occurrence of the letter $i$ in the word $\vec{i}$, then there are exactly $t_k$ many closed strings on level $i$.

Next for each $i_k$ we define a mutation sequence 
\[
L_k:=\mu_{\binom{i_k}{t_k}}\circ \dots \circ \mu_{\binom{i_k}{2}}\circ \mu_{\binom{i_k}{1}}.
\]
In particular, if $i_k$ is the last occurrence of the letter $i$ in the word $\vec{i}$, then $L_k$ is trivial. We then claim the following.

\begin{thm}\label{mgs} The mutation sequence
\[
L_n\circ L_{n-1}\circ \dots \circ L_2\circ L_1
\]
is a maximal green sequence.
\end{thm}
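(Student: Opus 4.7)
The plan is to prove by induction on $k$ that after applying $L_k \circ L_{k-1} \circ \cdots \circ L_1$, the framed quiver (in the sense of principal coefficients) has a specific combinatorial shape. First I set up the framed quiver $\tilde{Q}$ of the initial seed, with a frozen companion vertex $v'$ for each mutable vertex $v$ and an arrow $v \to v'$. Recall that a mutation $\mu_v$ is \emph{green} precisely when $v$ has a positive $c$-vector with respect to the current framed quiver, and a maximal green sequence is one in which every mutation is green and every terminal $c$-vector is negative.

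The central inductive claim is the following: after performing $L_k \circ \cdots \circ L_1$, the $c$-vectors of the closed strings on level $i$ have been flipped to be negative standard basis vectors precisely for those that correspond to positions $\leq k$ of the word with $i_k = i$; all other $c$-vectors are still positive standard basis vectors. Moreover, the unfrozen part of the mutated quiver, restricted to the still-green vertices, matches the initial quiver associated to the truncated word $(i_{k+1}, \ldots, i_n)$ up to the arrows involving frozen (and already ``processed'') vertices. Granting this claim the theorem follows immediately: each new mutation inside $L_{k+1}$ is performed at a vertex with positive $c$-vector (hence green), and at the end of $L_n$ all $c$-vectors are negative, which is by definition what it means for the sequence to be maximally green.

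For the inductive step, I would analyze $L_k$ by decomposing it into the individual mutations $\mu_{\binom{i_k}{j}}$ for $j=1,\ldots, t_k$. Each such mutation is analogous to case (1b) of Proposition \ref{mutation}: the effect on the string diagram is to swap two neighboring nodes on level $i_k$. Iterating this for $j=1,\ldots,t_k$ transports the node at position $k$ on level $i_k$ past all subsequent nodes on the same level, which at the level of the unfrozen quiver is equivalent to erasing the $k$-th occurrence of $s_{i_k}$ from the word. The verification that these intermediate mutations are all green and that the $c$-vectors at other vertices remain positive standard basis vectors can then be done by direct computation, using the mutation formula for $c$-vectors and the explicit local shape of the quiver recorded by the snake arrow conventions at the start of this section.

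The main obstacle is managing the inter-level snake arrows encoded by the off-diagonal Cartan entries $\C_{ij}, \C_{ji}$: mutating at a vertex on level $i_k$ modifies the arrows between vertices on different levels, so one must verify that the $c$-vectors on other levels are not disturbed in a way that breaks green-ness at a later stage. The key observation that should save the argument is that the $c$-vectors only record the signs of the frozen arrows, and the sequence $L_k$ is designed so that cross-level effects along the way cancel out when summed over $j=1,\ldots,t_k$. In the simply-laced case this is a straightforward check; for the non-simply-laced cases ($\mathrm{B}_2$, $\mathrm{G}_2$) one needs a more careful accounting of the multiplicities that appear in the braid-move mutation sequences recorded in Proposition \ref{mutation}, and this is where most of the bookkeeping effort will go.
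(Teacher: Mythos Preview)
Your overall strategy—tracking $c$-vectors via principal coefficients and inducting on $k$—matches the paper's approach, but the inductive invariant you state is incorrect and the key mechanism is mis-identified.

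Your claim that after $L_k\circ\cdots\circ L_1$ the red vertices have \emph{negative standard basis} $c$-vectors fails already in the single-level example $b=s_1^4$. With mutable vertices $\binom{1}{1},\binom{1}{2},\binom{1}{3}$, after $L_1=\mu_{\binom{1}{3}}\mu_{\binom{1}{2}}\mu_{\binom{1}{1}}$ the red vertex $\binom{1}{3}$ has $c$-vector $-(e_1+e_2+e_3)$, and after $L_2$ the red vertex $\binom{1}{2}$ has $c$-vector $-(e_2+e_3)$; only at the very end do all $c$-vectors become $\pm e_j$. So the invariant you propose does not persist, and the inductive step cannot be carried through as written.

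The paper's invariant (Proposition~\ref{general}) is different: at \emph{every} step of the mutation sequence one has (i) $\epsilon_{ab'}=0$ whenever $a$ and $b$ lie on different levels, and (ii) $\epsilon_{ab'}\in\{-1,0,1\}$ when $a,b$ lie on the same level. Statement~(i) is the crux, and it holds not by a cancellation over each cycle $L_k$ as you suggest, but by a pointwise directional observation: at the moment one mutates a green vertex $a$, both the arrow $a\to a'$ and every cross-level (squiggly) arrow incident to $a$ point \emph{away} from $a$, so mutation creates no new arrow between a frozen companion and a vertex on another level. This reduces the colour tracking to the single-level case (Proposition~\ref{observation}). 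The argument uses only the \emph{direction} of cross-level arrows, not their weights $-\C_{ij}$, so it is uniform in the Cartan data; the expectation that $\mathrm{B}_2$ and $\mathrm{G}_2$ require separate bookkeeping is a signal that the right mechanism has not been found.
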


In the running example, the mutation sequence described in Theorem \ref{mgs} is 
\begin{align*}& L_{11}\circ L_{10}\circ L_9\circ L_8\circ L_7\circ L_6\circ L_5\circ L_4\circ L_3\circ L_2\circ L_1\\
=& L_9 \circ L_7\circ L_6\circ L_5\circ L_4\circ L_3\circ L_2\circ L_1\\
=&\left(\mu_{\binom{2}{1}}\right)\circ\left(\mu_{\binom{1}{1}}\right)\circ \left(\mu_{\binom{3}{1}}\right)\circ\left(\mu_{\binom{1}{2}}\circ \mu_{\binom{1}{1}}\right)\circ \left(\mu_{\binom{2}{2}}\circ \mu_{\binom{2}{1}}\right)\circ\left(\mu_{\binom{3}{2}}\circ \mu_{\binom{3}{1}}\right)\\
&\circ \left(\mu_{\binom{1}{3}}\circ \mu_{\binom{1}{2}}\circ \mu_{\binom{1}{1}}\right)\circ \left(\mu_{\binom{2}{3}}\circ \mu_{\binom{2}{2}}\circ \mu_{\binom{2}{1}}\right).
\end{align*}

\begin{prop}\label{procedure} In terms of triangles in the triangulation, the mutation sequence $L_k$ is equivalent to a reflection map $_{i_k}r$ followed by a change of coordinates corresponding a sequence of mutations that moves the new triangle $\begin{tikzpicture}[baseline=2ex]\draw (0,0) -- (-0.5,1) -- node [above] {$s_i$} (0.5,1) -- cycle;\end{tikzpicture}$ to the right across all triangles of the shape $\begin{tikzpicture}[baseline=2ex] \draw (-0.5,0) -- (0.5,0) -- (0,1) -- cycle;
\end{tikzpicture}$
\end{prop}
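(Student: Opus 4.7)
The proof proceeds by induction on $k$. The inductive hypothesis is that after $L_1\circ\cdots\circ L_{k-1}$ the triangulation has the upward-pointing triangles $s_{i_k},\ldots,s_{i_n}$ forming a fan on the bottom-left hinged at the left-most top vertex, and the downward-pointing triangles $s_{i_1},\ldots,s_{i_{k-1}}$ forming a fan on the top-right hinged at the right-most bottom vertex, joined by a single diagonal between the two hubs. The base case $k=1$ is the initial triangulation described at the beginning of the section.

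For the inductive step, one first applies the reflection ${}_{i_k}r$: this biregular isomorphism moves the left-most bottom letter $s_{i_k}$ to the top, producing in the natural triangulation a new downward triangle labeled $s_{i_k}$ attached at the upper-left corner. Combinatorially this simply converts the left-most $+i_k$ node on level $i_k$ into a $-i_k$ node at the same horizontal position, leaving the rest of the string diagram intact; the resulting triangulation reads, from left to right, the new down-triangle $s_{i_k}$, then the remaining upward triangles $s_{i_{k+1}},\ldots,s_{i_n}$, then the old downward triangles $s_{i_1},\ldots,s_{i_{k-1}}$. One then slides this new downward triangle to the right across each of the $n-k$ intervening upward triangles via successive diagonal flips. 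By Proposition \ref{mutation}, a flip in a quadrilateral with top side $s_{i_k}$ and bottom side $s_{i_\ell}$ is a free change of coordinates (case (1a)) when $i_\ell\neq i_k$, but is a single cluster mutation (case (1b)) at the closed string linking the two $s_{i_k}$-nodes when $i_\ell=i_k$. Since the $s_{i_k}$-labeled upward triangles among $s_{i_{k+1}},\ldots,s_{i_n}$ are precisely the $t_k$ later occurrences of $i_k$ in the word, the slide contributes exactly $t_k$ mutations. Moreover, just before the $j$th such mutation the $-i_k$ node has passed $j-1$ of the $+i_k$ nodes on level $i_k$, so the mutation occurs at the $j$th closed string from the left on level $i_k$, namely $\binom{i_k}{j}$. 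Hence the mutations proceed in the order $\mu_{\binom{i_k}{1}},\mu_{\binom{i_k}{2}},\ldots,\mu_{\binom{i_k}{t_k}}$, reproducing $L_k$.

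The principal technical difficulty is twofold. First, one must verify that the reflection together with the canonical relabeling of vertices between the old and new seeds introduces no spurious mutation; this follows from the explicit formulas for the reflection in Section \ref{simplereflection}, since the combined effect of sign-flipping a single node and relabeling is compatible with the subsequent slide mutations. Second, one must track the labels $\binom{i_k}{j}$ of closed strings through the intervening free flips of case (1a): such flips shuffle a level-$i_k$ node past a node on a different level without ever reordering any two nodes on level $i_k$ among themselves, so the relative left-to-right ordering of the level-$i_k$ nodes, and hence the labeling $\binom{i_k}{1},\ldots,\binom{i_k}{t_k}$, is preserved throughout the slide. After all $t_k$ slide mutations the new downward triangle $s_{i_k}$ has reached its place immediately to the left of the old downward triangles, yielding exactly the triangulation described by the inductive hypothesis for $k+1$ and completing the induction.
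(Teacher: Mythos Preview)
Your inductive framework and the counting of mutations via Proposition \ref{mutation} are correct and more explicit than what the paper writes. The overall decomposition into ``reflect, then slide'' is exactly the paper's approach.

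The gap is in your justification of the reflection step. You assert that ${}_{i_k}r$ ``converts the left-most $+i_k$ node \dots\ into a $-i_k$ node at the same horizontal position, leaving the rest of the string diagram intact,'' and that this ``follows from the explicit formulas for the reflection in Section \ref{simplereflection}.'' But Section \ref{simplereflection} only defines ${}_{i}r$ as a biregular map between configuration spaces; it says nothing about cluster Poisson coordinates. What must actually be shown is that the unfrozen cluster Poisson coordinates are \emph{preserved} under ${}_{i}r$, i.e.\ that the Lusztig factorization coordinate $q$ attached to the left-most upward triangle becomes the Lusztig factorization coordinate $q^{-1}$ attached to the reflected downward triangle (so that the ratio defining each $X_c$ is unchanged). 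This is the entire content of the paper's proof: one fixes the special representative with the left triangle equal to
\[
\begin{tikzpicture}[baseline=4ex]
\node (u0) at (0,1.5) [] {$\B_+$};
\node (d0) at (-1,0) [] {$e_i(q)\B_-$};
\node (d1) at (1,0) [] {$\B_-$};
\draw [->] (d0) -- node [below] {$s_i$} (d1);
\draw (d0) -- (u0) -- (d1);
\end{tikzpicture}
\]
and checks via the identity $e_i(-q)e_{-i}(q^{-1})=\overline{s}_i q^{-\alpha_i^\vee}e_i(q)$ that the reflected flag is $e_{-i}(q^{-1})\B_+$, so the new Lusztig coordinate is indeed $q^{-1}$. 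Without this computation, the claim that the reflection ``introduces no spurious mutation'' is unsupported: a priori ${}_{i}r$ is only biregular, and a biregular map between varieties carrying cluster structures need not respect the coordinates. Once you supply this computation, your argument is complete and in fact more detailed than the paper's on the combinatorial side.
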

\begin{proof} Recall that if we start with a cluster Poisson coordinate chart on the left, then the reflection morphism maps it to the cluster Poisson coordinate chart on the right.
\[
\begin{tikzpicture}[baseline=4ex]
\node (u0) at (0,1.5) [] {$\B^0$};
\node (u1) at (1,1.5) [] {$\cdots$};
\node (d0) at (-0.5,0) [] {$\B_0$};
\node (d1) at (0.5,0) [] {$\B_1$};
\node (d2) at (1.5,0) [] {$\cdots$};
\draw [->] (u0) -- (u1);
\draw [->] (d0) -- node [below] {$s_i$} (d1);
\draw (d0) -- (u0) -- (d1);
\draw [->] (d1) -- (d2);
\end{tikzpicture} \quad \quad \overset{_ir}{\longrightarrow} \quad \quad 
\begin{tikzpicture}[baseline=4ex]
\node (u) at (-1,1.5) [] {$\B^{-1}$};
\node (u0) at (0,1.5) [] {$\B^0$};
\node (u1) at (1,1.5) [] {$\cdots$};
\node (d0) at (-0.5,0) [] {$\B_1$};
\node (d1) at (0.5,0) [] {$\cdots$};
\draw [->] (u0) -- (u1);
\draw [->] (d0) -- (d1);
\draw (u) -- (d0) -- (u0);
\draw [->] (u) -- node [above] {$s_i$} (u0);
\end{tikzpicture}
\]
In particular, these two cluster Poisson coordinates have isomorphic unfrozen seeds because the sign change of the left most node of the string diagram does not change the exchange matrix entries between closed strings. Now it remains to show that cluster Poisson coordinates pull back to their corresponding cluster Poisson coordinates via $_ir^*$. But by the definition of cluster Poisson coordinates, it suffices to show that if the triangle in the picture on the left is $\begin{tikzpicture}[baseline=4ex]
\node (u0) at (0,1.5) [] {$\B_+$};
\node (d0) at (-1,0) [] {$e_i(q)\B_-$};
\node (d1) at (1,0) [] {$\B_-$};
\draw [->] (d0) -- node [below] {$s_i$} (d1);
\draw (d0) -- (u0) -- (d1);
\end{tikzpicture}$, then the triangle in the picture on the right is
$\begin{tikzpicture}[baseline=4ex]
\node (u) at (-2,1.5) [] {$e_{-i}\left(q^{-1}\right)\B_+$};
\node (u0) at (0,1.5) [] {$\B_+$};
\node (d0) at (-1,0) [] {$\B_-$};
\draw [->] (u) -- node [above] {$s_i$} (u0);
\draw (u) -- (d0) -- (u0);
\end{tikzpicture}$. By the definition of the reflection map $_ir$, it suffices to show that $\xymatrix{e_i(q)\B_- \ar@{-}[r]^{s_i} & e_{-i}\left(q^{-1}\right)B_+}$. But this follows from the following computation:
\begin{align*}
\B_-e_i(q)^{-1}e_{-i}\left(q^{-1}\right)\B_+=&\B_-e_i(-q)e_{-i}\left(q^{-1}\right)\B_+\\
=&\B_-\overline{s}_i q^{-\alpha_i^\vee}e_i(q)\B_+\\
=&\B_-\overline{s}_i\B_+.\qedhere
\end{align*}
\end{proof}

The last proposition shows that the mutation sequence in Theorem \ref{mgs} defines a biregular map that is a composition of left reflections:
\[
\left({_{j_n}r}\right)\circ \left({_{j_{n-1}}r}\right)\circ \dots\circ  \left({_{j_1}r}\right):\conf_b^e(\mathcal{B})\rightarrow\conf^{b^\circ}_e(\mathcal{B}),
\]
where $s_{j_1}s_{j_2}\dots s_{j_n}=b$.

Now it remains to prove Theorem \ref{mgs}. For simplicity, let us first investigate what happens when $b=\left(s_1, s_1, s_1, s_1\right)$.

It is not hard to see that the initial seed looks like the following.
\[
\begin{tikzpicture}
\node (1) at (0,0) [] {$a$};
\node (2) at (2,0) [] {$b$};
\node (3) at (4,0) [] {$c$};
\draw [<-] (1) -- (2);
\draw [<-] (2) -- (3);
\end{tikzpicture}
\]

Recall from Appendix \ref{app B} that we can use auxiliary frozen vertices (principal coefficients) to keep track of the coloring of the vertices. We adopt the convention of labeling the auxiliary frozen vertex connecting to vertex $a$ in the initial seed by $a'$. 
\[
\begin{tikzpicture}
\node [circle, fill=green, inner sep=0pt,minimum size=25pt] (1) at (0, 0) [] {$\binom{1}{1}$}; 
\node [circle, fill=green, inner sep=0pt,minimum size=25pt] (2) at (2, 0) [] {$\binom{1}{2}$};
\node [circle, fill=green, inner sep=0pt,minimum size=25pt] (3) at (4, 0) [] {$\binom{1}{3}$}; 
\node  [circle, fill=lightgray, inner sep=0pt,minimum size=25pt] (f1) at (0, -1.5) [] {$\binom{1}{1}'$}; 
\node [circle, fill=lightgray, inner sep=0pt,minimum size=25pt] (f2) at (2, -1.5) [] {$\binom{1}{2}'$};
\node [circle, fill=lightgray, inner sep=0pt,minimum size=25pt] (f3) at (4, -1.5) [] {$\binom{1}{3}'$}; 
\foreach \i in {1,2,3}
    {
    \draw [->] (\i) -- (f\i);
    }
\draw [<-] (1) -- (2);
\draw [<-] (2) -- (3);
\end{tikzpicture}
\]

The sequence of mutations for such a seed described in Theorem \ref{mgs} is then $\mu_a\circ\mu_b\circ\mu_a\circ\mu_c\circ\mu_b\circ \mu_a$, and the seed after each mutation looks like the following (going from left to right across each row, and go from the top row to the bottom row). 
\[
\begin{tikzpicture}
\foreach \i in {1,2,3}
    {
    \node [circle, fill=lightgray, inner sep=0pt,minimum size=25pt] (f\i) at (2*\i,-1.5) [] {$\binom{1}{\i}'$};
    }
\node [circle, fill=red, inner sep=0pt,minimum size=25pt] (1) at (2,0) [] {$\binom{1}{1}$};
\node [circle, fill=green, inner sep=0pt,minimum size=25pt] (2) at (4,0) [] {$\binom{1}{2}$};
\node [circle, fill=green, inner sep=0pt,minimum size=25pt] (3) at (6,0) [] {$\binom{1}{3}$};
\draw [->] (1) -- (2);
\draw [->] (3) -- (2);
\draw [->] (3) -- (f3);
\draw [->] (2) -- (f1);
\draw [->] (2) -- (f2);
\draw [->] (f1) -- (1);
\end{tikzpicture}
\quad \quad 
\tikz{
\foreach \i in {1,2,3}
    {
    \node [circle, fill=lightgray, inner sep=0pt,minimum size=25pt] (f\i) at (2*\i,-1.5) [] {$\binom{1}{\i}'$};
    }
\node [circle, fill=green, inner sep=0pt,minimum size=25pt] (1) at (2,0) [] {$\binom{1}{1}$};
\node [circle, fill=red, inner sep=0pt,minimum size=25pt] (2) at (4,0) [] {$\binom{1}{2}$};
\node [circle, fill=green, inner sep=0pt,minimum size=25pt] (3) at (6,0) [] {$\binom{1}{3}$};
\draw [->] (2) -- (1);
\draw [->] (2) -- (3);
\draw [->] (3) -- (f2);
\draw [->] (f2) -- (2);
\draw [->] (f1) -- (2);
\draw [->] (3) -- (f3);
\draw [->] (1) -- (f2);
\draw [->] (3) -- (f1);
} 
\]
\[
\tikz{
\foreach \i in {1,2,3}
    {
    \node [circle, fill=lightgray, inner sep=0pt,minimum size=25pt] (f\i) at (2*\i,-1.5) [] {$\binom{1}{\i}'$};
    }
\node [circle, fill=green, inner sep=0pt,minimum size=25pt] (1) at (2,0) [] {$\binom{1}{1}$};
\node [circle, fill=green, inner sep=0pt,minimum size=25pt] (2) at (4,0) [] {$\binom{1}{2}$};
\node [circle, fill=red, inner sep=0pt,minimum size=25pt] (3) at (6,0) [] {$\binom{1}{3}$};
\draw [<-] (1) -- (2);
\draw [<-] (2) -- (3);
\draw [->] (f1) -- (3);
\draw [->] (f2) -- (3);
\draw [->] (f3) -- (3);
\draw [->] (2) -- (f3);
\draw [->] (1) -- (f2);
} \quad \quad
\tikz{
\foreach \i in {1,2,3}
    {
    \node [circle, fill=lightgray, inner sep=0pt,minimum size=25pt] (f\i) at (2*\i,-1.5) [] {$\binom{1}{\i}'$};
    }
\node [circle, fill=red, inner sep=0pt,minimum size=25pt] (1) at (2,0) [] {$\binom{1}{1}$};
\node [circle, fill=green, inner sep=0pt,minimum size=25pt] (2) at (4,0) [] {$\binom{1}{2}$};
\node [circle, fill=red, inner sep=0pt,minimum size=25pt] (3) at (6,0) [] {$\binom{1}{3}$};
\draw [->] (1) -- (2);
\draw [->] (3) -- (2);
\draw [->] (f1) -- (3);
\draw [->] (f2) -- (3);
\draw [->] (f3) -- (3);
\draw [->] (2) -- (f3);
\draw [->] (2) -- (f2);
\draw [->] (f2) -- (1);
}
\]
\[
\tikz{
\foreach \i in {1,2,3}
    {
    \node [circle, fill=lightgray, inner sep=0pt,minimum size=25pt] (f\i) at (2*\i,-1.5) [] {$\binom{1}{\i}'$};
    }
\node [circle, fill=green, inner sep=0pt,minimum size=25pt] (1) at (2,0) [] {$\binom{1}{1}$};
\node [circle, fill=red, inner sep=0pt,minimum size=25pt] (2) at (4,0) [] {$\binom{1}{2}$};
\node [circle, fill=red, inner sep=0pt,minimum size=25pt] (3) at (6,0) [] {$\binom{1}{3}$};
\draw [<-] (1) -- (2);
\draw [<-] (3) -- (2);
\draw [->] (f1) -- (3);
\draw [->] (f3) -- (2);
\draw [->] (f2) -- (2);
\draw [->] (1) -- (f3);
}\quad \quad 
\tikz{
\foreach \i in {1,2,3}
    {
    \node [circle, fill=lightgray, inner sep=0pt,minimum size=25pt] (f\i) at (2*\i,-1.5) [] {$\binom{1}{\i}'$};
    }
\node [circle, fill=red, inner sep=0pt,minimum size=25pt] (1) at (2,0) [] {$\binom{1}{1}$};
\node [circle, fill=red, inner sep=0pt,minimum size=25pt] (2) at (4,0) [] {$\binom{1}{2}$};
\node [circle, fill=red, inner sep=0pt,minimum size=25pt] (3) at (6,0) [] {$\binom{1}{3}$};
\draw [<-] (2) -- (1);
\draw [<-] (3) -- (2);
\draw [->] (f3) -- (1);
\draw [->] (f2) -- (2);
\draw [->] (f1) -- (3);
}
\]

The case of a longer positive braid $b=\left(s_1,s_1,\dots,s_1\right)$ can be done in a similar way. Before we go into cases with more than one letter, let us first make the following observation for this case.

\begin{prop}\label{observation} If $b=\left(s_1,s_1,\dots, s_1\right)$, then the following is true for the sequence of mutations given by Theorem \ref{mgs}
\begin{itemize}
    \item $\epsilon_{ab'}$ is $-1$, $0$, or $1$ for any two vertices $a$ and $b$.
    \item In each cycle, the first mutation we do changes the color of the first green vertex to red (counting from left to right), and then each mutation we do moves this red color to the right while restoring the green color for the vertices behind.
    \item At the end of each cycle, all red vertices are to the right of all green vertices, and the separation point is the last node with a simple root labelling (counting from left to right).
    \[
    \tikz{
    \node (0) at (0,0) [] {$1$};
    \node (1) at (2,0) [] {$\cdots$};
    \node (2) at (4,0) [] {$1$};
    \node (3) at (6,0) [] {$-1$};
    \node (4) at (8,0) [] {$\cdots$};
    \node (5) at (10,0) [] {$-1$};
    \draw [green] (0) -- (1) -- (2);
    \draw [red] (2) -- (3) -- (4) -- (5);
    }
    \]
    \item After going through the mutation sequence in Theorem \ref{mgs} completely, the final seed looks like the following.
    \[
    \tikz[scale=0.8]{
    \node (4) at (6,1.5) [] {$\cdots$};
    \node at (6,0) [] {$\cdots$};
    \foreach \i in {1,2,3}
        {
        \node [circle, fill=red, inner sep=0pt,minimum size=25pt] (\i) at (2*\i-2,1.5) [] {$\binom{1}{\i}$};
        \node [circle, fill=lightgray, inner sep=0pt,minimum size=25pt] (\i') at (2*\i-2,0) [] {$\binom{1}{\i}'$};
        }
    \node [circle, fill=red, inner sep=0pt,minimum size=25pt] (5) at (8,1.5) [] {\footnotesize{$\binom{1}{n-2}$}};
    \node [circle, fill=red, inner sep=0pt,minimum size=25pt] (6) at (10,1.5) [] {\footnotesize{$\binom{1}{n-1}$}};
    \node [circle, fill=red, inner sep=0pt,minimum size=25pt] (7) at (12,1.5) [] {$\binom{1}{n}$};
    \node [circle, fill=lightgray, inner sep=0pt,minimum size=25pt] (5') at (8,0) [] {\footnotesize{$\binom{1}{n-2}'$}};
    \node [circle, fill=lightgray, inner sep=0pt,minimum size=25pt] (6') at (10,0) [] {\footnotesize{$\binom{1}{n-1}'$}};
    \node [circle, fill=lightgray, inner sep=0pt,minimum size=25pt] (7') at (12,0) [] {$\binom{1}{n}'$};
    \foreach \i in {1,...,6}
        {
        \pgfmathtruncatemacro{\j}{\i+1};
        \draw [<-] (\j) -- (\i);
        }
    \draw [->] (1') -- (7);
    \draw [->] (2') -- (6);
    \draw [->] (3') -- (5);
    \draw [->] (5') -- (3);
    \draw [->] (6') -- (2);
    \draw [->] (7') -- (1);
    }
    \]
\end{itemize}
\end{prop}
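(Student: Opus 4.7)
The proof proceeds by direct induction on the mutation step, maintaining an explicit invariant describing the seed after each individual mutation. Write $n$ for the number of closed strings (so $b$ consists of $n+1$ copies of $s_1$); label the closed strings $\binom{1}{1}, \ldots, \binom{1}{n}$ from left to right, and denote their principal coefficients by $\binom{1}{1}', \ldots, \binom{1}{n}'$. The claim to be proved inductively is that after the $j$-th mutation of cycle $L_k$ (for $1 \leq k \leq n$ and $1 \leq j \leq n+1-k$), the seed satisfies: \emph{(a)} the red closed-string vertices are exactly $\binom{1}{j}$ together with $\binom{1}{n-k+2}, \ldots, \binom{1}{n}$, and all others are green; \emph{(b)} the arrows among closed strings form the chain
\[
\binom{1}{1} \leftarrow \cdots \leftarrow \binom{1}{j} \to \binom{1}{j+1} \leftarrow \cdots \leftarrow \binom{1}{n-k+2} \to \cdots \to \binom{1}{n}
\]
(with appropriate truncations at the boundary cases $j=1$, $j=n$, or $j=n+1-k$); and \emph{(c)} each entry $\epsilon_{a,b'}$ lies in $\{-1,0,1\}$, given by an explicit combinatorial formula in $(k,j)$.

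The base case (``before any mutation of $L_1$'', i.e.\ the initial seed described just before the proposition) is immediate. For the inductive step \emph{within} a single cycle, I pass from state $(k,j)$ to $(k,j+1)$ by applying $\mu_{\binom{1}{j+1}}$ and check via the standard quiver mutation rule that the predicted updates in (a) and (b) hold: the two arrows $\binom{1}{j} \to \binom{1}{j+1}$ and $\binom{1}{j+1} \leftarrow \binom{1}{j+2}$ both reverse, flipping the red token from $\binom{1}{j}$ back to green and promoting $\binom{1}{j+1}$ to red; the length-two path compositions through $\binom{1}{j+1}$ adjust each $\epsilon_{a,b'}$ by at most one, preserving $|\epsilon_{a,b'}|\leq 1$ and maintaining the predicted formula in (c). For the inductive step \emph{between} cycles, the end state of $L_k$ (where $j=n+1-k$) has a ``merging'' arrow $\binom{1}{n+1-k}\to \binom{1}{n+2-k}$ that joins the pre-existing red-red chain $\binom{1}{n+2-k}\to \cdots \to \binom{1}{n}$ into a longer chain $\binom{1}{n+1-k}\to \cdots \to \binom{1}{n}$, which matches the invariant at the state ``before $L_{k+1}$ begins''.

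Once the invariant is established, each of the four assertions of the proposition follows immediately. Bullet (1) is part (c). Bullet (2) describes the within-cycle evolution from $(k,1)$ to $(k,n+1-k)$: the first mutation of each cycle turns the leftmost green vertex red, and each subsequent mutation shifts the red token one position to the right while restoring the previous position to green. Bullet (3) specializes to the end-of-cycle state $j=n+1-k$, in which the red vertices are exactly $\binom{1}{n+1-k},\ldots,\binom{1}{n}$, precisely the closed strings to the right of the last $+1$-labeled node in the string diagram as depicted. Bullet (4) specializes to the terminal state $(k,j)=(n,1)$, where (a) gives all vertices red, (b) gives arrows $\binom{1}{1}\to \binom{1}{2}\to \cdots \to \binom{1}{n}$, and the explicit formula in (c) evaluates to $\binom{1}{j}' \to \binom{1}{n+1-j}$.

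The main obstacle is the bookkeeping for the principal coefficient arrows in part (c). While the vertex colors and the closed-string arrows evolve in a simple local manner, each mutation can create or destroy arrows between closed strings and principal coefficients through length-two path compositions, and the accumulated shifts over successive cycles produce a nontrivial rearrangement (for instance, the shift pattern $\binom{1}{i}\to \binom{1}{i+1}'$ visible in the first diagram of the example). I plan to handle this by isolating an auxiliary lemma giving the explicit $(k,j)$-dependent formula for the c-vector of each closed-string vertex, broken down into four cases (the current red token, an ``old'' red, a green vertex to the left of $j$, or a green vertex between $j$ and the old red block). Preservation of this formula under a single mutation then reduces to a finite case check.
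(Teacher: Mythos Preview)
Your proposal is correct, and the invariant you state does hold and is verifiable by the local mutation rule; the worked example in the paper (the six pictures preceding the proposition) matches your description at every intermediate state $(k,j)$.

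However, the paper takes a different inductive route: it inducts on the \emph{number of vertices} $n$ in the seed rather than on the mutation step. The idea (which the paper leaves implicit in its one-line proof) is that after the first complete cycle $L_1$, the rightmost vertex $\binom{1}{n}$ is red and effectively decoupled from the subsequent mutations, while the restriction of the seed to $\binom{1}{1},\dots,\binom{1}{n-1}$, together with their new arrows to the principal coefficients, is isomorphic to the initial principal-coefficient seed on $n-1$ vertices up to a relabeling of the frozen vertices; the remaining cycles $L_2,\dots,L_n$ then act on this sub-seed exactly as the full sequence for the $(n-1)$-vertex case. This buys a cleaner global structure (in particular, bullet~(4)'s reversal pattern $\binom{1}{j}'\to\binom{1}{n+1-j}$ falls out directly from the recursive relabeling) at the cost of having to verify the ``decoupling after one cycle'' statement once. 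Your approach trades this structural reduction for an explicit step-by-step invariant, which makes the bookkeeping for part~(c) heavier---as you yourself flag---but requires no creative observation beyond the mutation rule. Either argument is adequate here; if you carry yours out, the auxiliary lemma you propose for the $c$-vectors is the right organizing device, and the case check it reduces to is routine.
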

\begin{proof} It follows from an induction on the number of vertices in the seed.
\end{proof}

Now let us turn to the more complicated cases with more than one letter. We start with the following proposition.

\begin{prop}\label{cycle} If a seed comes from a triangulation, then between the $i$th level and the $j$th level, the exchange matrix can always be depicted locally by oriented cycle using the arrow notation as below (plus some degenerate cases).
\[
\tikz[scale=0.5]{
\node (d1) at (2,0) [] {$-j$};
\node (d2) at (3.5,0) [] {$\cdots$};
\node (d3) at (5,0) [] {$-j$};
\node (u1) at (2,1.5) [] {$i$};
\node (u2) at (3.5,1.5) [] {$\cdots$};
\node (u3) at (5,1.5) [] {$i$};
\draw (0,0) -- (d1) -- (d2) -- (d3) -- (7,0);
\draw (0,1.5) -- (u1) -- (u2) -- (u3) -- (7,1.5);
} \quad \quad \quad \quad 
\tikz[scale=0.5]{
\node (0) at (0,1.5) [] {$\bullet$};
\node (1) at (2,1.5) [] {$\cdots$};
\node (2) at (4,1.5) [] {$\bullet$};
\node (3) at (4,0) [] {$\bullet$};
\node (4) at (2,0) [] {$\cdots$};
\node (5) at (0,0) [] {$\bullet$};
\draw [<-] (0) -- (1);
\draw [<-] (1) -- (2);
\draw [<-] (3) -- (4);
\draw [<-] (4) -- (5);
\draw [->,decorate,decoration={snake,amplitude=.4mm,segment length=2mm,post length=1mm}] (3) -- (2);
\draw [->,decorate,decoration={snake,amplitude=.4mm,segment length=2mm,post length=1mm}] (0) -- (5);
}
\]
\end{prop}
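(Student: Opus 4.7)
The plan is to restrict the exchange quiver $\epsilon$ of the seed associated to the triangulation to the vertices living on levels $i$ and $j$, and verify that this restricted subquiver locally takes the oriented cycle shape claimed. Since $\epsilon=\sum_n\epsilon^{(n)}$ is a sum of local contributions over the nodes of the string diagram, the only nodes contributing to entries between levels $i$ and $j$ are those labeled $\pm i$ or $\pm j$; nodes on any other level $k\neq i,j$ produce neither arrows along level $i$ or $j$ nor arrows between the two levels.

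First I would analyze the arrows along each of the two levels separately. The straight arrows within level $i$ arise from nodes $\pm i$: a node labeled $+i$ with adjacent strings $a,b$ on level $i$ gives $\epsilon^{(n)}_{ab}=-1$, $\epsilon^{(n)}_{ba}=1$, and a node labeled $-i$ gives the opposite signs. Under the standing local assumption of the proposition (nodes on level $i$ are $+i$ and nodes on level $j$ are $-j$), this fixes the orientations along each level: the closed strings on level $i$ are consecutively connected by arrows pointing leftward, and those on level $j$ by arrows pointing rightward, reproducing the horizontal edges of the cycle on the right. Degenerate cases arise when two consecutive nodes of opposite sign sit on the same level so that the closed string between them has no adjacent closed neighbors on one side; these are exactly the cases where a cycle edge collapses.

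Next I would analyze the wavy cross-level arrows. A node $+i$ on level $i$ produces $\epsilon^{(n)}_{ac}=-\C_{ji}/2$ and $\epsilon^{(n)}_{ca}=\C_{ij}/2$ for each closed string $c$ on level $j$ crossing the triangle of $n$. Since $\C_{ij},\C_{ji}\le 0$ for $i\neq j$, this orients a wavy arrow from $a$ (left of $n$ on level $i$) toward such $c$, and from $c$ toward $b$ (right of $n$ on level $i$). A symmetric computation for a node $-j$ on level $j$ shows that its contribution points from the right string on level $j$ up to the level-$i$ string crossing its triangle, and from that level-$i$ string down to the left string on level $j$. Crucially, inside a region of the trapezoid bounded on the left and right by nodes $\pm i$ or $\pm j$, all closed strings on level $j$ between these nodes contribute the same cross-level entries to a fixed level-$i$ string, so the net effect of the two bounding nodes is exactly two wavy arrows forming the left and right sides of the cycle. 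Summing the half-integer contributions from the pair of nodes bounding the region produces the integer entries $\pm\C_{ij}$ required.

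The remaining step is a bookkeeping case analysis over the signs of the two bounding nodes (four sign patterns in total, with further subcases for when one of the bounding features is the boundary of the trapezoid, an open string, or a collision of same-level nodes). In each case one checks that the horizontal arrows supplied by the bounding nodes together with the wavy arrows they induce close up into the oriented quadrilateral cycle shown on the right. The main obstacle is really this combinatorial bookkeeping: keeping track of signs $(\pm i,\pm j)$ at each of the two bounding nodes, handling the degeneracies listed above, and verifying that in every configuration the same cyclic orientation results. Once these cases are exhausted the proposition follows.
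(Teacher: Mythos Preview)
Your approach is correct and is essentially the same as the paper's, just made explicit. The paper's proof is a single line: ``This statement follows from seed amalgamation,'' pointing back to the construction $\epsilon=\sum_n\epsilon^{(n)}$ built triangle by triangle in Section~\ref{sec.tri.comb}. Your proposal unpacks exactly this: restrict to levels $i$ and $j$, observe that only nodes labeled $\pm i,\pm j$ contribute, and check that the half-integer cross-level contributions telescope so that only the two bounding nodes of each region survive, yielding the oriented cycle. One small imprecision: the sentence ``all closed strings on level $j$ between these nodes contribute the same cross-level entries to a fixed level-$i$ string'' is not quite the right formulation; the correct mechanism is that for a fixed pair of strings (one on each level), the $\pm\C_{ji}/2$ contributions from the interior nodes cancel in pairs, leaving only the two boundary contributions which sum to the integer $\pm\C_{ji}$. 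With that corrected, your case analysis is exactly what ``follows from amalgamation'' amounts to.
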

\begin{proof} This statement follows from seed amalgamation. 
\end{proof}

\begin{rmk} Please be aware that degenerate cases include situations like the following two.
\[
\tikz[scale=0.7]{
\node (2) at (3,1.5) [] {$\cdots$};
\foreach \i in {0,1}
    {
    \node (\i) at (\i*2,1.5) [] {$i$};
    \pgfmathtruncatemacro{\j}{\i+3};
    \node (\j) at (\j*2-2,1.5) [] {$i$};
    }
\foreach \i in {0,...,3}
    {
    \pgfmathtruncatemacro{\j}{\i+1};
    \draw (\i) -- (\j);
    }
\node (b1) at (1,0) [] {$j$};
\node (b2) at (5,0) [] {$j$};
\draw (b1) -- (b2);
}
\quad \quad \quad \quad 
\tikz[scale=0.7]{
\node (u1) at (0,1.5) [] {$\bullet$};
\node (u2) at (2,1.5) [] {$\cdots$};
\node (u3) at (4,1.5) [] {$\bullet$};
\node (d) at (2,0) [] {$\bullet$};
\draw [<-] (u1) -- (u2);
\draw [<-] (u2) -- (u3);
\draw [->,decorate,decoration={snake,amplitude=.4mm,segment length=2mm,post length=1mm}] (d) -- (u3);
\draw [->,decorate,decoration={snake,amplitude=.4mm,segment length=2mm,post length=1mm}] (u1) -- (d);
}
\]
\vspace{0.5cm}
\[
\tikz[scale=0.7]{
\node (2) at (3,0) [] {$\cdots$};
\foreach \i in {0,1}
    {
    \node (\i) at (\i*2,0) [] {$-j$};
    \pgfmathtruncatemacro{\j}{\i+3};
    \node (\j) at (\j*2-2,0) [] {$-j$};
    }
\foreach \i in {0,...,3}
    {
    \pgfmathtruncatemacro{\j}{\i+1};
    \draw (\i) -- (\j);
    }
\node (b1) at (1,1.5) [] {$-i$};
\node (b2) at (5,1.5) [] {$-i$};
\draw (b1) -- (b2);
}
\quad \quad \quad \quad 
\tikz[scale=0.7]{
\node (u1) at (0,0) [] {$\bullet$};
\node (u2) at (2,0) [] {$\cdots$};
\node (u3) at (4,0) [] {$\bullet$};
\node (d) at (2,1.5) [] {$\bullet$};
\draw [->] (u1) -- (u2);
\draw [->] (u2) -- (u3);
\draw [->,decorate,decoration={snake,amplitude=.4mm,segment length=2mm,post length=1mm}] (d) -- (u1);
\draw [->,decorate,decoration={snake,amplitude=.4mm,segment length=2mm,post length=1mm}] (u3) -- (d);
}
\]
\end{rmk}

\begin{prop}\label{general} If one mutate according to Theorem \ref{mgs}, then the followings are true.
\begin{itemize}
    \item $\epsilon_{ab'}=0$ for vertices $a$ and $b$ on different levels.
    \item $\epsilon_{ab'}$ is either $-1$, $0$, or $1$ for vertices $a$ and $b$ on the same level.
    \item Within each cycle, the mutations only change colors of vertices within the level (say the $i$th) on which the mutations are taking places, and they change the colors the same way as the case of positive braids $\left(s_i,s_i,\dots, s_i\right)$ (Proposition \ref{observation}).
\end{itemize} 
\end{prop}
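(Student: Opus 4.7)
The plan is to prove this by strong induction on the number of elementary mutations performed within the sequence $L_n \circ \cdots \circ L_1$ from Theorem \ref{mgs}. The inductive hypothesis will simultaneously track: (i) the three conclusions (1)--(3) of the proposition at the current intermediate seed; (ii) the structural statement that between any two horizontal levels, the sub-exchange matrix restricted to unfrozen vertices still has the oriented cycle form of Proposition \ref{cycle} (including the degenerate shapes); and (iii) a classification of the green-vs.-red colors on each level that matches the pattern determined by Proposition \ref{observation} applied to that level individually. Claim (3) of the proposition is in fact a direct consequence of (i) and (ii): if the only nonzero $\epsilon_{ab'}$ entries lie inside a single level, then a mutation at a vertex on level $i_k$ can only toggle the color of other vertices on the same level $i_k$.

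The first reduction is geometric. By Proposition \ref{procedure}, applying $L_k$ corresponds to the reflection $_{i_k}r$ followed by a chain of local ``upside-down-vs-right-side-up triangle'' mutations, and every elementary mutation inside $L_k$ occurs at a vertex $v$ lying on level $i:=i_k$. Combined with inductive hypothesis (ii), this immediately renders conclusions (1)--(2) vacuous for any pair of levels in which neither is $i$, since entries of $\epsilon$ not touching level $i$ are unchanged. The induction therefore reduces to analyzing (a) the intra-level-$i$ evolution and (b) the cross-level interaction between level $i$ and any other level $\ell$.

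The intra-level part of the analysis is a direct reduction to the one-letter case: inductive hypothesis (i) guarantees that intra-level frozen arrows are supported only on level $i$, so the $i$-th strand with its attached auxiliary frozen vertices evolves as an isolated subquiver, and we may invoke Proposition \ref{observation} verbatim. This gives conclusions (1), (2), (3) restricted to level $i$ and also determines the red/green color transition point on level $i$ after each $L_k$.

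The main obstacle is the cross-level part. Consider a mutation at a vertex $v$ on level $i$ and an entry $\epsilon_{a b'}$ with $a$ on level $\ell \neq i$. The update rule adds $\tfrac{1}{2}(|\epsilon_{a v}|\epsilon_{v b'} + \epsilon_{a v}|\epsilon_{v b'}|)$ to $\epsilon_{ab'}$. If $b$ itself lies on level $\ell\neq i$, then hypothesis (i) forces $\epsilon_{vb'}=0$, and the update term vanishes trivially. The hard case is when $b$ also lies on level $i$, so that $\epsilon_{vb'}$ may be nonzero while $\epsilon_{av}$ is controlled by the cycle structure of hypothesis (ii). There the required vanishing must come from a sign cancellation: $\epsilon_{av}$ and $\epsilon_{vb'}$ must have opposite signs (or one must vanish) so that $|\epsilon_{av}|\epsilon_{vb'}+\epsilon_{av}|\epsilon_{vb'}|=0$. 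I expect the verification to reduce, case by case, to a short finite list keyed by the cycle/degenerate shape between level $i$ and level $\ell$ and by the position of $v$ relative to the current red/green transition on level $i$ (as dictated by Proposition \ref{observation}). Concretely, the inward- and outward-pointing wavy arrows of the cross-level cycle carry opposite signs, and the frozen-arrow pattern on level $i$ inherited from the single-letter model is precisely what is needed for these two signs to cancel against each other. Once this cancellation is checked for the small catalog of local configurations, the inductive step goes through and all three claims follow.
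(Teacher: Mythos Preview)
Your approach is correct and essentially the same as the paper's: induction on the number of mutations, reducing the intra-level evolution to Proposition~\ref{observation} and handling the cross-level vanishing by checking that each mutation creates no new arrow between a cross-level unfrozen vertex and an auxiliary frozen vertex. The paper replaces your anticipated case-by-case sign cancellation with a single observation---at the mutating vertex $v$, the cross-level wavy arrows and the arrows to auxiliary frozen vertices all point \emph{outward} (the former because, via Proposition~\ref{procedure}, every intermediate seed still comes from a triangulation, so the local string-diagram picture around $v$ forces it; the latter by the inductively known greenness of $v$)---so there is no length-two path through $v$ to worry about, and your explicit tracking of hypothesis~(ii) is subsumed by that appeal to Propositions~\ref{procedure} and~\ref{cycle}.
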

\begin{proof} We do an induction on the number of mutations. The base case is trivial. For the inductive step, let us first prove the claim ``$\epsilon_{ab'}=0$ for vertices $a$ and $b$ on different levels''. At the beginning of each cycle, the vertex we pick for step (1) always looks like the following.
\[
\tikz[scale=0.7]{
\node at (-2,3) [] {$\vdots$};
\node (a2) at (-2,1.5) [] {$i$};
\node (a1) at (-4,1.5) [] {$i$};
\node at (-2,0) [] {$\vdots$};
\draw (0,1.5) -- (a2) -- node [above] {$a$} (a1);
} \quad \quad \quad \quad
\tikz[scale=0.7]{
\node [circle, fill=green, inner sep=0pt,minimum size=15pt] (a) at (0,0) {$a$};
\node [circle, fill=lightgray, inner sep=0pt,minimum size=15pt] (r) at (-1,0.5) {};
\node (l) at (2,0) [] {$\vdots$};
\node (u) at (1,1.5) [] {$\cdots$};
\node (d) at (1,-1.5) [] {$\cdots$};
\draw [->] (l) -- (a);
\draw [->,decorate,decoration={snake,amplitude=.4mm,segment length=2mm,post length=1mm}] (a) -- (u);
\draw [->,decorate,decoration={snake,amplitude=.4mm,segment length=2mm,post length=1mm}] (a) -- (d);
\draw [->] (a) -- (r);
}
\]
Note that the squiggly arrows going across different levels always go away from vertex $a$. Therefore when we mutate at $a$, no new arrow will be added between the auxiliary frozen vertices connected to $a$ and other vertices on other levels.

When we are in the middle of a cycle, we know from Proposition \ref{cycle} that the part of the seed we are mutating must look like the following.
\[
\tikz[scale=0.7]{
\node (a1) at (-2,0) [] {$i$};
\node (a2) at (-4,0) [] {$-i$};
\node (a3) at (-6,0) [] {$i$};
\node at (-4,1.5) [] {$\vdots$};
\node at (-4,-1.5) [] {$\vdots$};
\draw (0,0) -- (a1) -- node [above] {$a$} (a2) -- node [above] {$b$} (a3) -- (-8,0);
} \quad \quad \quad \quad
\tikz[scale=0.7]{
\node [circle, fill=green, inner sep=0pt,minimum size=15pt] (a) at (-2,0) [] {$a$};
\node [circle, fill=red, inner sep=0pt,minimum size=15pt] (b) at (-4,0) [] {$b$};
\node [circle, fill=lightgray, inner sep=0pt,minimum size=15pt] (f) at (-3,0.5) [] {};
\node (l) at (0,0) [] {$\vdots$};
\node (u) at (-2,1.5) [] {$\cdots$};
\node (d) at (-2,-1.5) [] {$\cdots$};
\draw [->] (l) -- (a);
\draw [->] (b) -- (a);
\draw [->,decorate,decoration={snake,amplitude=.4mm,segment length=2mm,post length=1mm}] (a) -- (u);
\draw [->,decorate,decoration={snake,amplitude=.4mm,segment length=2mm,post length=1mm}] (a) -- (d);
\draw [->] (a) -- (f);
}
\]
Again, the squiggly arrows going across different levels always point away from vertex $a$. Therefore when we mutate at $a$, still no new arrow will be added between auxiliary frozen vertices connected to $a$ and other vertices on other levels. Therefore the claim that $\epsilon_{cd'}=0$ for vertices $c$ and $d$ on different levels remains true.

But then since the color of the vertex $c$ is recorded by $\epsilon_{cd'}$, we know that the color change must only occur at the $i$th level. The other two claims then follow immediately Proposition \ref{observation}.
\end{proof}

\textit{Proof of Theorem \ref{mgs}}. From our last proposition we know that on each horizontal level the colors of the vertices change the same way as the single letter case; therefore it is true that Theorem \ref{mgs} mutates at green vertices only and eventually all green vertices turn red. \qed

\subsection{Cluster Donaldson-Thomas Transformation on \texorpdfstring{$\conf^b_d(\mathcal{B})$}{}} Now we have obtained a maximal green sequence for the seed $\vec{i}_\tau$ associated to one triangulation of $\conf^e_b(\mathcal{B})$, so we are very close to getting a cluster Donaldson-Thomas transformation; all we need in addition is an appropriate seed isomorphism which permutes the rows of the $c$-matrix, making it into $-\mathrm{id}$. 

Recall from Proposition \ref{procedure} that the maximal green sequence given in Theorem \ref{mgs} can be thought of as a sequence of left reflections. So after completing the mutation sequence in Theorem \ref{mgs}, we get a triangulation for $\conf^{b^\circ}_e(\mathcal{B})$. This implies that we need some cluster isomorphism that maps $\conf^{b^\circ}_e(\mathcal{B})$ back to $\conf_b^e(\mathcal{B})$. One obvious choice is the transposition map $(-)^t$ from Section \ref{simplereflection}. We know from Proposition \ref{3.15} that the transposition map is indeed a cluster isomorphism, and combinatorially it is indeed a seed isomorphism that maps the seed after the maximal green sequence back to the initial seed with the property that the $c$-matrix is permuted into $-\mathrm{id}$.

Combining the transposition map with the maximal green sequence from last subsection we can now describe the cluster Donaldson-Thomas transformation on the double Bott-Samelson cell $\conf_b^e(\mathcal{B})$ geometrically, which is the composition
\[
\begin{tikzpicture}
\node (1) at (0,0) [] {$\DT:\conf_b^e(\mathcal{B})$};
\node (2) at (5,0) [] {$\conf^{b^\circ}_e(\mathcal{B})$};
\node (3) at (10,0) [] {$\conf_b^e(\mathcal{B}).$};
\draw [->] (1) -- node [above] {\footnotesize{$\begin{array}{c} \text{a sequence of} \\ \text{left reflections}\end{array}$}} (2);
\draw [->] (2) --  node [above] {\footnotesize{transposition}} (3);
\end{tikzpicture}
\]

\begin{exmp} Let us do another example to state in more details how the cluster Donaldson-Thomas transformation works. Take $b=s_1 s_2 s_3$ and the cluster Donaldson-Thomas transformation on $\conf_b^e(\mathcal{B})$ is given by the following operations.
\[
\begin{tikzpicture}[baseline=5ex,scale=0.7]
\foreach \i in {0,...,3}
    {
    \node (\i) at (1.5*\i,0) [] {$\bullet$};
    }
\draw (0) -- node [below] {$s_1$} (1);
\draw (1) -- node [below] {$s_2$} (2);
\draw (2) -- node [below] {$s_3$} (3);
\node (u) at (2.25,2) [] {$\bullet$};
\draw (0) -- (u) -- (1);
\draw (2) -- (u) -- (3);
\end{tikzpicture}
\quad \overset{_1 r}{\longrightarrow} \quad 
\begin{tikzpicture}[baseline=5ex,scale=0.7]
\node (u0) at (-0.75,2) [] {$\bullet$};
\node (u1) at (0.75,2) [] {$\bullet$};
\node (d0) at (-1.5,0) [] {$\bullet$};
\node (d1) at (0,0) [] {$\bullet$};
\node (d2) at (1.5,0) [] {$\bullet$};
\draw [->] (d0) -- node [below] {$s_2$} (d1);
\draw [->] (d1) -- node [below] {$s_3$} (d2);
\draw [->] (u0) -- node [above] {$s_1$} (u1);
\draw (u0) -- (d0) -- (u1) -- (d1);
\draw (u1) -- (d2);
\end{tikzpicture}\quad  \overset{\begin{array}{c}\text{\footnotesize{change of}}\\ \text{\footnotesize{coordinates}}\end{array}}{\longrightarrow} \quad  
\begin{tikzpicture}[baseline=5ex,scale=0.7]
\node (u0) at (-0.75,2) [] {$\bullet$};
\node (u1) at (0.75,2) [] {$\bullet$};
\node (d0) at (-1.5,0) [] {$\bullet$};
\node (d1) at (0,0) [] {$\bullet$};
\node (d2) at (1.5,0) [] {$\bullet$};
\draw [->] (d0) -- node [below] {$s_2$} (d1);
\draw [->] (d1) -- node [below] {$s_3$} (d2);
\draw [->] (u0) -- node [above] {$s_1$} (u1);
\draw (d2) -- (u0) -- (d1);
\draw (u0) -- (d0);
\draw (u1) -- (d2);
\end{tikzpicture}
\]
\[
\overset{_2 r}{\longrightarrow} \quad \begin{tikzpicture}[baseline=5ex,scale=0.7]
\node (u0) at (-0.75,0) [] {$\bullet$};
\node (u1) at (0.75,0) [] {$\bullet$};
\node (d0) at (-1.5,2) [] {$\bullet$};
\node (d1) at (0,2) [] {$\bullet$};
\node (d2) at (1.5,2) [] {$\bullet$};
\draw [->] (d0) -- node [above] {$s_2$} (d1);
\draw [->] (d1)-- node [above] {$s_1$} (d2);
\draw [->] (u0) -- node [below] {$s_3$} (u1);
\draw (u0) -- (d1) -- (u1);
\draw (u0) -- (d0);
\draw (u1) -- (d2);
\end{tikzpicture}\quad \overset{\begin{array}{c}\text{\footnotesize{change of}}\\ \text{\footnotesize{coordinates}}\end{array}}{\longrightarrow} \quad
\begin{tikzpicture}[baseline=5ex,scale=0.7]
\node (u0) at (-0.75,0) [] {$\bullet$};
\node (u1) at (0.75,0) [] {$\bullet$};
\node (d0) at (-1.5,2) [] {$\bullet$};
\node (d1) at (0,2) [] {$\bullet$};
\node (d2) at (1.5,2) [] {$\bullet$};
\draw [->] (d0) -- node [above] {$s_2$} (d1);
\draw [->] (d1) -- node [above] {$s_1$} (d2);
\draw [->] (u0) -- node [below] {$s_3$} (u1);
\draw (d1) -- (u1) -- (d0);
\draw (u0) -- (d0);
\draw (u1) -- (d2);
\end{tikzpicture}
\]
\[
\overset{_3 r}{\longrightarrow} \quad 
\begin{tikzpicture}[baseline=5ex,scale=0.7]
\foreach \i in {0,...,3}
    {
    \node (\i) at (1.5*\i,2) [] {$\bullet$};
    }
\draw (0) -- node [above] {$s_3$} (1);
\draw (1) -- node [above] {$s_2$} (2);
\draw (2) -- node [above] {$s_1$} (3);
\node (u) at (2.25,0) [] {$\bullet$};
\draw (0) -- (u) -- (1);
\draw (2) -- (u) -- (3);
\end{tikzpicture} \quad \overset{\text{transposition}}{\longrightarrow} \quad 
\begin{tikzpicture}[baseline=5ex,scale=0.7]
\foreach \i in {0,...,3}
    {
    \node (\i) at (1.5*\i,0) [] {$\bullet$};
    }
\draw (0) -- node [below] {$s_1$} (1);
\draw (1) -- node [below] {$s_2$} (2);
\draw (2) -- node [below] {$s_3$} (3);
\node (u) at (2.25,2) [] {$\bullet$};
\draw (0) -- (u) -- (1);
\draw (2) -- (u) -- (3);
\end{tikzpicture}
\]
\end{exmp}

Let us now look at the general case for double Bott-Samelson cell $\conf^b_d(\mathcal{B})$. By a similar argument as Proposition \ref{procedure} one can prove that reflection maps $r^i$ are more than just biregular maps: they are cluster isomorphisms that preserve the cluster structures on the undecorated double Bott-Samelson cells. To be more precise, suppose $b=s_{i_1}s_{i_2}\dots s_{i_m}$. Consider the biregular map $\eta:=r^{i_1}\circ r^{i_2}\circ \dots\circ r^{i_m}$. This is not only a biregular map from $\conf^b_d(\mathcal{B})$ onto $\conf^e_{db^{\circ}}(\mathcal{B})$, but also a cluster isomorphism between $\up\left(\mathscr{X}^b_d\right)^\uf$ and $\up\left(\mathscr{X}^e_{db^\circ}\right)^\uf$, whose cluster Donaldson-Thomas transformation has been found in the previous sections.

Since $\eta$ is a cluster isomorphism, by a result of Goncharov and Shen (Theorem \ref{central}), we know that the cluster Donaldson-Thomas transformation on $\conf^b_d(\mathcal{B})$ can be obtained from that of $\conf^e_{db^\circ}(\mathcal{B})$ as
\[
\DT_{\conf^b_d(\mathcal{B})}=\eta^{-1}\circ \DT_{\conf^e_{db^\circ}(\mathcal{B})}\circ \eta.
\]

Let us draw a picture to describe what such composition really does. We begin with a triangulation associated to the pair $(b,d)$ where the first set of triangles are of the shape $\tikz[baseline=2ex]{\draw (0,1) -- (-0.5,0) --  (0.5,0) -- cycle;}$ (corresponding to letters of $d$) and the second set of triangles are of the shape $\tikz[baseline=2ex]{\draw (0,0) -- (-0.5,1) --  (0.5,1) -- cycle;}$ (corresponding to letters of $b$).
\[
\begin{tikzpicture}[baseline=2.5ex]
\draw (0,0) -- (1,0) -- (1.5,0.75) -- (0.5,0.75) -- cycle;
\draw (0.5,0.75) -- (1,0);
\node at (0.5,0.3) [] {$d$};
\node at (1, 0.45) [] {$b$};
\end{tikzpicture} \quad \overset{\text{\footnotesize{sequence of $r^i$}}}{\longrightarrow}\quad \begin{tikzpicture}[baseline=2.5ex]
\draw (0,0) -- (1.5,0) -- (0.75,0.75) -- cycle;
\draw (0.75,0.75) -- (0.75,0);
\node at (0.55,0.3) [] {$d$};
\node at (0.95,0.3) [] {$b^\circ$};
\end{tikzpicture}\quad 
\overset{\text{\footnotesize{sequence of $_ir$}} }{\longrightarrow} \quad
\begin{tikzpicture}[baseline=2.5ex]
\draw (0,0.75) -- (1.5,0.75) -- (0.75,0) -- cycle;
\draw (0.75,0.75) -- (0.75,0);
\node at (0.55,0.45) [] {$b$};
\node at (0.95,0.45) [] {$d^\circ$};
\end{tikzpicture}
\]
\[
\overset{\text{\footnotesize{transposition}}}{\longrightarrow}\quad 
\begin{tikzpicture}[baseline=2.5ex]
\draw (0,0) -- (1.5,0) -- (0.75,0.75) -- cycle;
\draw (0.75,0.75) -- (0.75,0);
\node at (0.55,0.3) [] {$d$};
\node at (0.95,0.3) [] {$b^\circ$};
\end{tikzpicture}
\overset{\text{\footnotesize{sequence of $r_i$}} }{\longrightarrow}\begin{tikzpicture}[baseline=2.5ex]
\draw (0,0) -- (1,0) -- (1.5,0.75) -- (0.5,0.75) -- cycle;
\draw (0.5,0.75) -- (1,0);
\node at (0.5,0.3) [] {$d$};
\node at (1, 0.45) [] {$b$};
\end{tikzpicture}
\]

There are some further simplifications that we can perform: first, we can use Proposition \ref{trans&ref} to turn each $r_i \circ (-)^t$ into $(-)^t\circ {^ir}$; then we can cancel the left reflections $_ir$ and $^ir$, which yields the following simplified version.
\[
\begin{tikzpicture}[baseline=2.5ex]
\draw (0,0) -- (1,0) -- (1.5,0.75) -- (0.5,0.75) -- cycle;
\draw (0.5,0.75) -- (1,0);
\node at (0.5,0.3) [] {$d$};
\node at (1, 0.45) [] {$b$};
\end{tikzpicture} \quad \overset{\begin{array}{c} \text{\footnotesize{sequence of $r^i$}} \\  \text{\footnotesize{on the $b$ part}} \\ \text{\footnotesize{and sequence of $_ir$}} \\
\text{\footnotesize{on the $d$ part}}\end{array}}{\longrightarrow}\quad \begin{tikzpicture}[baseline=2.5ex]
\draw (0,0.75) -- (1,0.75) -- (1.5,0) -- (0.5,0) -- cycle;
\draw (0.5,0) -- (1,0.75);
\node at (0.5,0.45) [] {$d^\circ$};
\node at (1,0.3) [] {$b^\circ$};
\end{tikzpicture}
\]
\[
= \quad 
\begin{tikzpicture}[baseline=2.5ex]
\draw (0,0) -- (1,0) -- (1.5,0.75) -- (0.5,0.75) -- cycle;
\draw (0.5,0.75) -- (1,0);
\node at (0.5,0.3) [] {$b^\circ$};
\node at (1, 0.45) [] {$d^\circ$};
\end{tikzpicture} \quad \overset{\text{\footnotesize{transposition}}}{\longrightarrow}
\quad \begin{tikzpicture}[baseline=2.5ex]
\draw (0,0) -- (1,0) -- (1.5,0.75) -- (0.5,0.75) -- cycle;
\draw (0.5,0.75) -- (1,0);
\node at (0.5,0.3) [] {$d$};
\node at (1, 0.45) [] {$b$};
\end{tikzpicture}
\]

In conclusion, we have proved the following theorem.

\begin{thm}\label{4.9} The cluster Donaldson-Thomas transformation on $\conf^b_d(\mathcal{B})$ is the composition of a series of reflection maps on both sides and the transposition map as described above.
\end{thm}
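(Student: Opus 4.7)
The plan is to assemble the pieces already established in the preceding sections in two stages. First I would handle the special case $\conf^e_b(\mathcal{B})$, where the triangulation is forced and the seed has a canonical form. By Theorem \ref{mgs}, the mutation sequence $L_n \circ \cdots \circ L_1$ is a maximal green sequence for the unfrozen seed associated to this triangulation, and by Proposition \ref{procedure} this maximal green sequence is realized geometrically by the composition of left reflections $_{j_n}r \circ \cdots \circ _{j_1}r$, taking $\conf^e_b(\mathcal{B})$ to $\conf^{b^\circ}_e(\mathcal{B})$. A maximal green sequence only yields the cluster Donaldson-Thomas transformation after composing with a seed isomorphism that sends the resulting seed back to the initial one and permutes the $c$-matrix into $-\mathrm{Id}$. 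Here the natural candidate is the transposition map $(-)^t \colon \conf^{b^\circ}_e(\mathcal{B}) \to \conf^e_b(\mathcal{B})$ from Section \ref{simplereflection}; the combinatorial description of transposition (horizontal flip plus sign change of nodes) together with the explicit form of the seed after the maximal green sequence (Proposition \ref{observation}, extended via Proposition \ref{general}) shows that it produces exactly the required sign-reversal of the $c$-vectors. This establishes the theorem for $\conf^e_b(\mathcal{B})$.

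For the general case I would reduce to the previous one via the right reflection maps. Writing $b = s_{i_1}\cdots s_{i_m}$, the composition $\eta := r^{i_1} \circ r^{i_2}\circ \cdots \circ r^{i_m}$ is a biregular isomorphism $\conf^b_d(\mathcal{B}) \to \conf^e_{db^\circ}(\mathcal{B})$, and — using an argument parallel to Proposition \ref{procedure} — it upgrades to a cluster isomorphism on the unfrozen cluster Poisson varieties. Since the Donaldson-Thomas transformation is functorial under cluster isomorphisms (Theorem \ref{central} of Goncharov-Shen), we have
\[
\DT_{\conf^b_d(\mathcal{B})} = \eta^{-1}\circ \DT_{\conf^e_{db^\circ}(\mathcal{B})}\circ \eta.
\]
Expanding the right hand side with the formula from the base case and using the intertwining relations of Proposition \ref{trans&ref} (specifically $r_i\circ(-)^t = (-)^t\circ {^ir}$ and its three siblings) to push the transposition past the trailing reflections, adjacent pairs $_ir \circ {^ir}$ and $r_i \circ r^i$ cancel as inverses. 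The surviving composition is precisely the one depicted by the trapezoid pictures immediately preceding the theorem statement.

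The main technical obstacle, as I see it, is the verification that the right reflections $r^i$ are cluster Poisson isomorphisms (not merely biregular). This requires checking, in parallel with the argument of Proposition \ref{procedure}, that the change of triangulation induced by $r^i$ differs from a right-side reflection-of-nodes by a sequence of mutations whose effect on cluster Poisson coordinates matches the Lusztig factorization computation used to define $r^i$. Once this is in place, the remainder is combinatorial bookkeeping: tracking how the left and right reflections interlace with transposition and invoking the cancellation identities from Proposition \ref{trans&ref} to collapse the composition into the advertised form.
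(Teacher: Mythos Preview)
Your proposal is correct and follows essentially the same approach as the paper: establish the DT transformation on $\conf^e_b(\mathcal{B})$ via the maximal green sequence of Theorem \ref{mgs} (realized as left reflections by Proposition \ref{procedure}) composed with transposition, then conjugate by the cluster isomorphism $\eta$ built from right reflections and simplify using Proposition \ref{trans&ref}. You have also correctly isolated the one point the paper treats lightly—namely, that the right reflections $r^i$ are cluster isomorphisms on the unfrozen Poisson level—which the paper dispatches by the same ``argument parallel to Proposition \ref{procedure}'' that you suggest.
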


\begin{rmk} When we compute $\DT_{\conf^b_d(\mathcal{B})}=\eta^{-1}\circ \DT_{\conf^e_{db^\circ}(\mathcal{B})}\circ \eta$ we chose to work with the cluster transformation $\eta:\conf^b_d(\mathcal{B})\rightarrow\conf^e_{db^\circ}(\mathcal{B})$ defined as a composition of reflections $r^i$; alternatively one can work with the cluster transformation $\eta':\conf^b_d(\mathcal{B})\rightarrow\conf^e_{b^\circ d}(\mathcal{B})$ defined as a composition of reflections $^ir$. The resulting cluster Donaldson-Thomas transformation $\DT_{\conf^b_d(\mathcal{B})}$ would be the same due to the uniqueness of cluster Donaldson-Thomas transformation. Moreover, we could have studied $\DT_{\conf^b_e(\mathcal{B})}$ instead of $\DT_{\conf^e_b(\mathcal{B})}$ and use it to compute the general case; the result would also be the same.
\end{rmk}

The existence of the cluster Donaldson-Thomas transformation is a part of a sufficient condition (Theorem \ref{sufficient condition}) of the Fock-Goncharov cluster duality conjecture (Conjecture \ref{duality conjecture}). In the cluster ensemble $\left(\conf^b_d\left(\mathcal{A}_\sc\right), \conf^b_d\left(\mathcal{A}_\ad\right)\right)$, we already proved the surjectivity of the map $p:\conf^b_d\left(\mathcal{A}_\sc\right)\rightarrow \conf^b_d(\mathcal{B})$ from the proof of Theorem \ref{3.42}, and we just constructed the cluster Donaldson-Thomas transformation explicitly on $\conf^b_d(\mathcal{B})$. Therefore we obtain the following result.

\begin{thm}\label{4.11} The Fock-Goncharov cluster duality conjecture \ref{duality conjecture} holds for the cluster ensemble $\left(\conf^b_d\left(\mathcal{A}_\sc\right), \conf^b_d\left(\mathcal{A}_\ad\right)\right)$.
\end{thm}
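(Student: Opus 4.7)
The plan is to verify the two hypotheses of the sufficient condition for the Fock--Goncharov duality conjecture (the criterion of Gross--Hacking--Keel--Kontsevich reformulated by Goncharov--Shen as Theorem~\ref{sufficient condition}), namely (i) the canonical map $p:\mathscr{A}\to \mathscr{X}$ of cluster varieties is surjective on cluster tori, and (ii) the cluster Donaldson--Thomas transformation $\DT$ exists and is a cluster transformation. Once both are in place the standard GHKK machinery produces the canonical bases on $\mathcal{O}\left(\conf^b_d\left(\mathcal{A}_\sc\right)\right)$ and $\mathcal{O}\left(\conf^b_d\left(\mathcal{A}_\ad\right)\right)$ parametrized by the integral tropical points of the Langlands dual cluster varieties.

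For (i), I would cite the commutative diagram assembled in the proof of Theorem~\ref{3.42}: the surjectivity of $\pi:\conf^b_d\left(\mathcal{A}_\sc\right)\to \conf^b_d\left(\mathcal{A}_\ad\right)$ (a principal $\T\times\T$-bundle composed with a covering) together with the surjectivity of the forgetful map $q:\conf^b_d\left(\mathcal{A}_\ad\right)\to\conf^b_d(\mathcal{B})$ gives surjectivity of $p=q\circ\pi$ when restricted to the seed tori, which is exactly the full-rank/surjectivity hypothesis required on the exchange matrix.

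For (ii), I would invoke Theorem~\ref{4.9}, which realizes $\DT_{\conf^b_d(\mathcal{B})}$ as an explicit composition of reflection maps $r^i$, $_ir$, and the transposition $(-)^t$. By Corollary~\ref{4.13} (and Proposition~\ref{3.15}) each of these morphisms is a (quasi-)cluster transformation; their composition is therefore a cluster transformation, which is precisely the content of the DT condition. Combining (i) and (ii) with Theorem~\ref{sufficient condition} yields Conjecture~\ref{duality conjecture} for the ensemble $\left(\conf^b_d\left(\mathcal{A}_\sc\right),\conf^b_d\left(\mathcal{A}_\ad\right)\right)$.

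The only delicate point, and the one I would take care to spell out, is bookkeeping: the sufficient condition in the GHKK/Goncharov--Shen form is stated for the unfrozen part $\mathscr{X}^\uf$ of the Poisson cluster variety, whereas $\DT$ in Theorem~\ref{4.9} has been constructed on $\conf^b_d(\mathcal{B})$ itself (identified with $\up(\mathscr{X}^b_d)^\uf$ via Theorem~\ref{3.43}). I would therefore briefly check that the identifications between $\conf^b_d(\mathcal{B})$ and $\mathscr{X}^{b,\uf}_d$ of Section~\ref{section.ajod} intertwine the geometric $\DT$ map with the cluster-combinatorial DT transformation produced by the maximal green sequence of Theorem~\ref{mgs}; after that identification the hypotheses of the sufficient condition are verified verbatim, and the theorem follows.
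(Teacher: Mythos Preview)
Your proposal is essentially correct and follows the same two-step verification of Theorem~\ref{sufficient condition} that the paper uses: surjectivity of $p$ from the proof of Theorem~\ref{3.42}, and cluster-ness of $\DT$ on $\conf^b_d(\mathcal{B})$ from Theorem~\ref{4.9}.

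One small cleanup: you should not invoke Corollary~\ref{4.13} here. In the paper's logical order that corollary comes \emph{after} Theorem~\ref{4.11} and concerns the \emph{decorated} cells, where the reflection maps are only quasi-cluster. What condition~(ii) needs is that $\DT$ is a genuine cluster transformation on the \emph{unfrozen} part $\mathscr{X}^\uf\cong\conf^b_d(\mathcal{B})$, and this is exactly the content of Theorem~\ref{4.9} itself (the reflection maps on the undecorated level are shown there, via Proposition~\ref{procedure} and its right-reflection analogue, to be honest cluster isomorphisms, not merely quasi-cluster). So drop the appeal to Corollary~\ref{4.13}; Theorem~\ref{4.9} alone suffices, and your argument then matches the paper's proof verbatim.
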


\subsection{Reflection Maps as Quasi-Cluster Transformations}

So far we are considering the Donaldson-Thomas Transformation on the undecorated double Bott-Samelson cell $\conf^b_d(\mathcal{B})$, which can be seen as the unfrozen part of a cluster Poisson variety $\mathscr{X}^b_d$ (up to codimension 2). In this section we would like to investigate the lift of the Donaldson-Thomas transformation to the decorated double Bott-Samelson cell $\conf^b_d(\mathcal{A})$. Since the Donaldson-Thomas transformation on $\conf^b_d(\mathcal{B})$ can be broken down into a sequence of reflection maps followed by a transposition, we can define the lift of the Donaldson-Thomas transformation to $\conf^b_d(\mathcal{B})$ to $\conf^b_d(\mathcal{A})$ to be the same composition of reflection maps and transposition on $\conf^b_d(\mathcal{A})$ according to Section \ref{simplereflection}.

The next question is whether such lift is a cluster transformation. Unfortunately the answer is no: in general the reflection maps between decorated double Bott-Samelson cells are not cluster isomorphisms because the underlying seeds are not isomorphic. However, the following weaker statement is true (see Definition \ref{quasi cluster trans} for the definition of quasi-cluster transformation).

\begin{prop} Reflection maps are quasi-cluster transformations.
\end{prop}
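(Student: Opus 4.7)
The plan is to reduce the claim to a single reflection map and then verify the quasi-cluster property by a direct comparison of the seeds associated to suitably chosen triangulations on the source and target.

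First I would reduce the number of cases. By Proposition \ref{trans&ref}, each of the four reflection maps can be conjugated into any of the others via the transposition map $(-)^t$, and we have already shown that $(-)^t$ is a cluster isomorphism on both the $\mathcal{X}$-side (Proposition \ref{3.15}) and the $\mathcal{A}$-side. Hence it suffices to prove that one of the four reflections, say the left reflection $_ir:\conf^b_{s_id}(\mathcal{A})\to \conf^{s_ib}_d(\mathcal{A})$, is a quasi-cluster transformation. Since $_ir$ and $^ir$ are mutual inverses, the claim for $^ir$ follows automatically.

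Next I would choose compatible triangulations so that the induced seed isomorphism is as local as possible. Pick a triangulation $\tau$ of the trapezoid for $(b,s_id)$ whose leftmost triangle is $\begin{tikzpicture}[baseline=0ex]\draw (0,0.5)-- (-0.25,0) -- (0.25,0)-- cycle;\end{tikzpicture}$ corresponding to the initial letter $s_i$ of the bottom word. Under $_ir$ this triangulation is in natural bijection with a triangulation $\tau'$ of the trapezoid for $(s_ib, d)$ whose leftmost triangle is the flipped triangle $\begin{tikzpicture}[baseline=0ex]\draw (0,0)-- (-0.25,0.5) -- (0.25,0.5)-- cycle;\end{tikzpicture}$ with $s_i$ on the top edge. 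The associated string diagrams are identical except that the leftmost node on level $i$ changes sign from $+i$ to $-i$, and the remaining triangulations to the right are unchanged.

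Then I would compare the two seeds. A direct inspection of the local exchange matrix contribution $\epsilon^{(n)}$ associated to this single node (defined in Section \ref{sec.tri.comb}) shows that it only couples the two leftmost open strings on level $i$ (and the open strings on other levels crossing through the triangle) to the adjacent closed string. In particular, the submatrix of the exchange matrix restricted to $I^{\uf}\times I^{\uf}$ is the same in both $\vec{s}_\tau$ and $\vec{s}_{\tau'}$; only entries involving the leftmost open strings differ, and they differ by an overall sign flip consistent with the sign change of the node. This is precisely the data encoded by a quasi-cluster isomorphism of seeds: the unfrozen parts agree on the nose, while the frozen rows of the exchange matrix differ by a controlled integer shift.

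Finally I would verify the coordinate side of the quasi-cluster property. For a closed string $c$, the cluster $\mathrm{K}_2$ coordinate $A_c=\Delta_{\omega_j}(\A_l,\A^k)$ associated to a diagonal in $\tau$ is carried by $_ir^*$ to the corresponding coordinate $A'_c=\Delta_{\omega_j}(\A'_l,\A'^{k})$ in $\tau'$, because the reflection map leaves all the decorated flags lying to the right of the leftmost triangle intact, up to the canonical relabeling. Thus $_ir^*(A'_c)=A_c$ for every unfrozen vertex. For the leftmost open strings, the decoration at the upper-left corner changes from $\A^0$ to $\A^{-1}$ (the unique decorated flag lifting $\B^{-1}$ compatibly with $\A^0$), and a short calculation using the identities of Corollary \ref{A to L} and Lemma \ref{unique compatible} shows that $_ir^*(A'_f)$ equals a Laurent monomial in the frozen $\Delta$-coordinates of the source chart. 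Together with the analogous (and simpler) computation on the $\mathcal{X}$-side using the Lusztig factorization formalism of Proposition \ref{3.15}, this identifies $_ir$ with a seed isomorphism on the unfrozen seeds composed with multiplication by a Laurent monomial in the frozen variables --- which is the definition of a quasi-cluster transformation.

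The main technical obstacle I anticipate is bookkeeping the shift of frozen variables on the leftmost column: one has to identify precisely which $\Delta_{\omega_j}$ from the source chart expresses $\Delta_{\omega_j}(\B_0,\A^{-1})$ in the target chart, and verify that the resulting monomial matches the frozen correction prescribed by the sign-flip of the exchange matrix entries. The computations in the proofs of Proposition \ref{welldefined} and Corollary \ref{A to L} provide exactly the ingredients needed to carry out this check.
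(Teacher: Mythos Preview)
Your overall strategy---reduce to a single reflection via transposition, pick triangulations that differ only by the leftmost triangle, and compare the resulting seeds and coordinates---is the same as the paper's (the paper works with the right reflection $r^i$ instead of $_ir$, but that is immaterial). Your observation that the two string diagrams differ only in the sign of the leftmost node, so that the $I^{\uf}\times I^{\uf}$ block of the exchange matrix is unchanged, is correct and matches the paper's reasoning.

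The gap is in your final sentence. You conclude that the reflection is ``a seed isomorphism on the unfrozen seeds composed with multiplication by a Laurent monomial in the frozen variables --- which is the definition of a quasi-cluster transformation.'' That is \emph{not} the definition used in the paper (Definition~\ref{quasi cluster trans}): a seed quasi-isomorphism is a lattice automorphism $\sigma^*:N\to N$ which, beyond restricting to a seed isomorphism on the unfrozen sublattice, must \emph{preserve the skewsymmetric form} $\{\cdot,\cdot\}$. Equivalently, on the $\mathscr{X}$-side the map must be Poisson. Not every map that fixes the unfrozen coordinates and rescales the frozen ones monomially satisfies this; it is a genuine constraint that must be checked. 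The paper carries out this check explicitly: after showing that $r^{i*}(X'_i)=X_i^{-1}$ and $r^{i*}(X'_j)=X_jX_i^{-\C_{ij}}$ for $j\neq i$, it verifies by direct computation that $\{r^{i*}X'_a,r^{i*}X'_b\}=\{X'_a,X'_b\}$ for all pairs $a,b$, and only then concludes that $r^i$ is quasi-cluster. Your proposal neither states this condition nor outlines how to verify it; the phrase ``analogous (and simpler) computation on the $\mathcal{X}$-side'' does not capture what is actually needed.

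A secondary point: the paper treats the $\mathscr{A}$-side as \emph{induced} from the $\mathscr{X}$-side via the dual lattice map, and then checks that the geometrically defined reflection on $\conf^b_d(\mathcal{A}_{\sc})$ matches this induced action (this is where the formula $r^{i*}(A'_i)=A_i^{-1}\prod_{j\neq i}A_j^{-\C_{ji}}$ is verified). Your plan to compute the $A$-coordinate change directly is fine, but to conclude ``quasi-cluster'' you must still tie it to a form-preserving lattice map on $N$, which is most naturally done on the Poisson side first.
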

\begin{proof} Since left and right reflections maps are intertwined by a transposition map, which is a cluster isomorphism even on the decorated double Bott-Samelson cell level, it suffices to only show the claim for right reflection maps. But then since $r^i$ and $r_i$ are inverses of each other and inverse of a quasi-cluster transformation is also a quasi-cluster transformation, it suffices to only consider $r^i$.

Let us first consider the adjoint case $r^i:\conf^{bs_i}_d\left(\mathcal{A}_\ad\right)\rightarrow \conf^b_{ds_i}\left(\mathcal{A}_\ad\right)$. Suppose we start with the following cluster coordinate chart. Then $r^i$ should produce the last configuration according to its definition.
\begin{align*}
&\begin{tikzpicture}[baseline=2ex]
    \node (u0) at (1,1.5) [] {$\cdots$};
    \node (d0) at (1,0) [] {$\cdots$};
    \node (u1) at (2,1.5) [] {$\B_+$};
    \node (d1) at (2,0) [] {$\U_-t^{-1}$};
    \node (u2) at (4,1.5) [] {$e_{-i}(p)\B_+$};
    \draw [->] (u0) -- (u1);
    \draw [->] (u1) --  node [above] {$s_i$} (u2);
    \draw [->] (d0) -- (d1);
    \draw (u1) -- (d1) -- (u2);
\end{tikzpicture} \quad \quad \overset{\begin{array}{c} \text{\footnotesize{reflect the}}\\\text{\footnotesize{underlying}} \\ \text{\footnotesize{flag}}\end{array}}{\longrightarrow} \quad \quad \begin{tikzpicture}[baseline=2ex]
    \node (u0) at (1,1.5) [] {$\cdots$};
    \node (d0) at (1,0) [] {$\cdots$};
    \node (u1) at (2,1.5) [] {$\B_+$};
    \node (d1) at (2,0) [] {$\U_-t^{-1}$};
    \node (d2) at (4,0) [] {$e_{i}\left(p^{-1}\right)\B_-$};
    \draw [->] (u0) -- (u1);
    \draw [->] (d1) --  node [below] {$s_i$} (d2);
    \draw [->] (d0) -- (d1);
    \draw (d1) -- (u1) -- (d2);
\end{tikzpicture}
\\
& \overset{\begin{array}{c} \text{\footnotesize{find}} \\ \text{\footnotesize{compatible}} \\ \text{\footnotesize{decoration}}\end{array}}{\longrightarrow} \quad 
 \begin{tikzpicture}[baseline=2ex]
    \node (u0) at (1,1.5) [] {$\cdots$};
    \node (d0) at (1,0) [] {$\cdots$};
    \node (u1) at (2,1.5) [] {$\B_+$};
    \node (d1) at (2,0) [] {$\U_-t^{-1}$};
    \node (d2) at (6,0) [] {$\U_-t^{-1}\left(pt^{\alpha_i}\right)^{\alpha_i^\vee}e_{i}\left(-p^{-1}\right)$};
    \draw [->] (u0) -- (u1);
    \draw [->] (d1) --  node [below] {$s_i$} (d2);
    \draw [->] (d0) -- (d1);
    \draw (d1) -- (u1) -- (d2);
\end{tikzpicture}\\
&\overset{\begin{array}{c} \text{\footnotesize{forget the}} \\ \text{\footnotesize{decoration}} \\ \text{\footnotesize{on $\U_-$}}\end{array}}{\longrightarrow} \quad \quad  \begin{tikzpicture}[baseline=2ex]
    \node (u0) at (1,1.5) [] {$\cdots$};
    \node (d0) at (1,0) [] {$\cdots$};
    \node (u1) at (2,1.5) [] {$\B_+$};
    \node (d1) at (2,0) [] {$\B_-$};
    \node (d2) at (6,0) [] {$\U_-t^{-1}\left(pt^{\alpha_i}\right)^{\alpha_i^\vee}e_{i}\left(-p^{-1}\right)$};
    \draw [->] (u0) -- (u1);
    \draw [->] (d1) --  node [below] {$s_i$} (d2);
    \draw [->] (d0) -- (d1);
    \draw (d1) -- (u1) -- (d2);
\end{tikzpicture}
\end{align*}
According to the construction of the Lusztig factorization coordinates and the cluster Poisson coordinates, the last picture above indicates that the Lusztig factorization corresponding to the given triangulation before $r^i$ ends at $\dots e_{-i}(p)t$, which corresponds to the following cluster Poisson coordinates; note that the dots represent the factors that possibly come from the other ends of the strings (note that the squiggly arrow only denotes the sign of the exchange matrix).
\[
\begin{tikzpicture}
    \node (0) at (2,1.5) [] {$-i$};
    \draw (0,1.5) node [left] {$i$th level} -- node (1) [above] {$\dots p^{-1}$} (0) --  node (2) [above] {$pt^{\alpha_i}$} (4,1.5);
    \draw (0,0) node [left] {$j$th level} -- node (3) [above] {$\dots t^{\alpha_j}$} (4,0);
    \draw [red, ->] (1) to [bend left] (2);
    \draw [red, ->,decorate,decoration={snake,amplitude=.4mm,segment length=2mm,post length=1mm}] (2) -- (3);
    \draw [red, ->,decorate,decoration={snake,amplitude=.4mm,segment length=2mm,post length=1mm}] (3) -- (1);
\end{tikzpicture}
\]
After applying $r^i$, the Lusztig factorization ends at $\dots e_i\left(p^{-1}\right)t\left(pt^{\alpha_i}\right)^{-\alpha_i^\vee}$, which corresponds to the following cluster Poisson coordinates.
\[
\begin{tikzpicture}
    \node (0) at (2,1.5) [] {$i$};
    \draw (0,1.5) node [left] {$i$th level} -- node (1) [above] {$\dots p^{-1}$} (0) --  node (2) [above] {$\left(pt^{\alpha_i}\right)^{-1}$} (4,1.5);
    \draw (0,0) node [left] {$j$th level} -- node (3) [above] {$\dots t^{\alpha_j}\left(pt^{\alpha_i}\right)^{-\C_{ij}}$} (4,0);
    \draw [red, <-] (1) to [bend left] (2);
    \draw [red, <-,decorate,decoration={snake,amplitude=.4mm,segment length=2mm,post length=1mm}] (2) -- (3);
    \draw [red, <-,decorate,decoration={snake,amplitude=.4mm,segment length=2mm,post length=1mm}] (3) -- (1);
\end{tikzpicture}
\]
By comparing the two, we see that the pull-back of most cluster Poisson coordinates under $r^i$ remains unchanged; the only cluster Poisson coordinates that are different are the ones associated to the open strings on the right side of the string diagram. In particular, the last frozen variable $X_i$ on the $i$th level gets inverted:
\[
r^{i*}\left(X'_i\right)=X_i^{-1},
\]
and the last frozen variable $X_j$ on any other level (say $j$th) are rescaled:
\[
r^{i*}\left(X'_j\right)=X_jX_i^{-\C_{ij}}.
\]
Since $\left.r^i\right|_{\conf^{bs_i}_d(\mathcal{B})}$ is already known to be a cluster transformation, to show that $r^i:\conf^{bs_i}_d\left(\mathcal{A}_\ad\right)\rightarrow \conf^b_{ds_i}\left(\mathcal{A}_\ad\right)$ is a quasi-cluster transformation, we only need to show that $r^i$ is a Poisson map, which boils down to showing 
\[
\left\{ r^{i*}X'_a, r^{i*}X'_b\right\}=\left\{ X'_a, X'_b\right\}
\] 
for any two strings $a$ and $b$. From the string diagram of $\conf^{bs_i}_d\left(\mathcal{A}_\ad\right)$ we see that $X_i$ only has non-vanishing Poisson bracket with the variable associated to the closed string to the left of it (call it $X_c$) and the frozen variables $X_j$ on the other levels. Therefore any change to Poisson brackets of cluster Poisson variables can only occur on $\left\{X'_c,X'_i\right\}$, $\left\{X'_c,X'_j\right\}$, and $\left\{X'_i,X'_j\right\}$. Since the Poisson structure on a cluster Poisson variety is known to be log canonical, we apply log to simplify the computation:
\begin{align*}
\left\{\log \left(r^{i*}X'_c\right),\log \left(r^{i*}X'_i\right)\right\}&=\left\{\log X_c,\log X_i^{-1}\right\}=-\left\{\log X_c,\log X_i\right\}=-\frac{1}{\D_i}\\
&=\left\{\log X'_c,\log X'_i\right\}\\
\left\{\log \left(r^{i*}X'_c\right),\log \left(r^{i*}X'_j\right)\right\}&=\left\{\log X_c,\log \left(X_jX_i^{-\C_{ij}}\right)\right\}\\
&=\left\{\log X_c,\log X_j\right\}-\C_{ij}\left\{\log X_c,\log X_i\right\}\\
&=\frac{\C_{ji}}{2\D_j}-\frac{\C_{ij}}{\D_i}+\E=\frac{\C_{ji}}{2\D_j}-\frac{\C_{ji}}{\D_j}+\E=-\frac{\C_{ji}}{2\D_j}+\E\\
&=\left\{\log X'_c,\log X'_j\right\}\\
\left\{\log \left(r^{i*}X'_i\right),\log \left(r^{i*}X'_j\right)\right\}&=\left\{\log X_i^{-1}, \log \left(X_jX_i^{-\C_{ij}}\right)\right\}=-\left\{\log X_i,\log X_j\right\}\\
&=-\left(-\frac{\C_{ji}}{\D_j}\right)=\frac{\C_{ji}}{\D_j}=\left\{\log X'_i,\log X'_j\right\}.
\end{align*}
Here $\E$ accounts for the contribution from the earlier parts of the diagram, and they are invariant under right reflections. This finishes proving that the reflection map $r^i$ is a quasi-cluster transformation in the adjoint case.

Next let us consider the simply connected case. By definition, the action of a quasi-cluster transformation on a cluster $\mathrm{K}_2$ variety is induced from its action on a cluster Poisson variety. If we express the action of $r^i$ on our chosen seed above using the character lattice $N$ of $T_\mathscr{X}$, it can be expressed as the following matrix, which consists of an identity matrix for the most part except for the submatrix with rows and columns corresponding to the frozen variables on the right.
\[
\begin{pmatrix}
\mathrm{id} & 0 & 0 & \cdots & 0 \\
0 & -1 & 0 &  \cdots & 0 \\
0 & -\C_{ij} & 1 & \cdots & 0\\
\vdots & \vdots & \vdots & \ddots & 0\\
0 & -\C_{ik} & 0 & \cdots & 1
\end{pmatrix}
\begin{matrix} \\ \text{$i$th} \\ \text{$j$th} \\ \vdots \\ \text{$k$th}\end{matrix}
\]
By Lemma \ref{lem A.32}, the induced quasi-cluster transformation action on the cluster $\mathrm{K}_2$ variety should then be acting on the character lattice $M$ of $T_\mathscr{A}$ by the matrix that is the transpose inverse of the one above with $\C_{ij}$ replaced by $\C_{ji}$, which is
\[
\begin{pmatrix}
\mathrm{id} & 0 & 0 & \cdots & 0 \\
0 & -1 & -\C_{ji} &  \cdots & -\C_{ki} \\
0 & 0 & 1 & \cdots & 0\\
\vdots & \vdots & \vdots & \ddots & 0\\
0 & 0 & 0 & \cdots & 1
\end{pmatrix}
\begin{matrix} \\ \text{$i$th} \\ \text{$j$th} \\ \vdots \\ \text{$k$th}\end{matrix}
\]
Therefore to show that $r^i:\conf^{bs_i}_d\left(\mathcal{A}_\sc\right)\rightarrow \conf^{b}_{ds_i}\left(\mathcal{A}_\sc\right)$ is a quasi-cluster transformation, it suffices to show that all cluster $\mathrm{K}_2$ coordinates remain the same except the last frozen variable $A'_i$ on the $i$th level, which should transform by
\[
r^{i*}\left(A'_i\right)=A_i^{-1}\prod_{j\neq i} A_j^{-\C_{ji}}.
\]

So let us verify this transformation formula geometrically. By a similar computation as in the adjoint case, the reflection map turns the following configuration on the left to the one on the right in the simply connected case.
\[
\begin{tikzpicture}[baseline=4ex,scale=0.75]
    \node (u0) at (0.5,1.5) [] {$\cdots$};
    \node (d0) at (0.5,0) [] {$\cdots$};
    \node (u1) at (2,1.5) [] {$\U_+$};
    \node (d1) at (2,0) [] {$\U_-t^{-1}$};
    \node (u2) at (5,1.5) [] {$e_{-i}(p)p^{-\alpha_i^\vee}\U_+$};
    \draw [->] (u0) -- (u1);
    \draw [->] (u1) --  node [above] {$s_i$} (u2);
    \draw [->] (d0) -- (d1);
    \draw (u1) -- (d1) -- (u2);
\end{tikzpicture}  \overset{r^i}{\longrightarrow} 
\begin{tikzpicture}[baseline=4ex,scale=0.75]
    \node (u0) at (0.5,1.5) [] {$\cdots$};
    \node (d0) at (0.5,0) [] {$\cdots$};
    \node (u1) at (2,1.5) [] {$\U_+$};
    \node (d1) at (2,0) [] {$\U_-t^{-1}$};
    \node (d2) at (6,0) [] {$\U_-t^{-1}\left(pt^{\alpha_i}\right)^{\alpha_i^\vee}e_{i}\left(-p^{-1}\right)$};
    \draw [->] (u0) -- (u1);
    \draw [->] (d1) --  node [below] {$s_i$} (d2);
    \draw [->] (d0) -- (d1);
    \draw (d1) -- (u1) -- (d2);
\end{tikzpicture}
\]
Since the cluster $\mathrm{K}_2$ coordinates are computed by evaluating the $\left(\U_-,\U_+\right)$-invariant function $\Delta_{\omega_j}$ and $\inprod{\alpha_i^\vee}{\omega_j}=\delta_{ij}$, we see that the only cluster $\mathrm{K}_2$ coordinate that changes under $r^i$ is $A'_i$:
\[
r^{i*}\left(A'_i\right)=\Delta_{\omega_i}\left(t^{-1}\left(pt^{\alpha_i}\right)^{\alpha_i^\vee}\right)=pt^{\alpha_i-\omega_i}.
\]
On the other hand, by computation we get
\[
\resizebox{12cm}{!}{\begin{math}
A_i^{-1}\prod_{j\neq i}A_j^{-\C_{ji}}=\left(p^{-\alpha_i^\vee}t^{-1}\right)^{-\omega_i}\prod_{j\neq i}\left(\left(p^{-\alpha_i^\vee}t^{-1}\right)^{\omega_j}\right)^{-\C_{ji}}=pt^{\omega_i}t^{\sum_{j\neq i}\C_{ji}\omega_j}=pt^{\alpha_i-\omega_i},\end{math}}
\]
which agrees with $r^{i*}\left(A'_i\right)$, as predicted by the transformation formula.
\end{proof}

Since reflection maps are quasi-cluster transformations on the decorated double Bott-Samelson cells and the transposition map is a cluster isomorphism, it follows that the Donaldson-Thomas transformation on decorated double Bott-Samelson cells, as a composition of such maps, is also a quasi-cluster transformation.

\begin{cor}\label{4.13} The Donaldson-Thomas transformation on any decorated double Bott-Samelson cell is a quasi-cluster transformation. It then follows that the Donaldson-Thomas transformation on $\conf^b_d\left(\mathcal{A}_\ad\right)$ is a Poisson map.
\end{cor}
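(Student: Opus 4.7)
The plan is to assemble the corollary directly from the two ingredients already in hand: the explicit factorization of $\DT$ established in Theorem \ref{4.9} and the quasi-cluster nature of the individual reflection maps proved in the preceding Proposition. Concretely, Theorem \ref{4.9} realizes $\DT$ on $\conf^b_d(\mathcal{B})$ as a composition
\[
\DT = (-)^t \circ \bigl(\text{sequence of $r^i$ on the $b$-side}\bigr)\circ \bigl(\text{sequence of $_ir$ on the $d$-side}\bigr),
\]
and by construction of the reflection and transposition maps in Section \ref{simplereflection}, exactly the same composition makes sense on the decorated cells $\conf^b_d(\mathcal{A}_\sc)$ and $\conf^b_d(\mathcal{A}_\ad)$, lifting the undecorated $\DT$ along the forgetful maps \eqref{maps}.

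The first step is to observe that the class of quasi-cluster transformations is closed under composition: this is immediate from Definition \ref{quasi cluster trans}, since the defining property (that the pullback of cluster coordinates differs from cluster coordinates by a character of the corresponding torus, compatibly with the underlying cluster Poisson transformation on $\mathscr{X}$) is preserved under composing two such maps. Next, I would invoke the Proposition just proved to conclude that every $r^i$ and $_ir$ (and their inverses $r_i$, $^ir$) is a quasi-cluster transformation between the appropriate decorated cells. For the transposition $(-)^t$, Proposition \ref{3.15} together with the analogous statement in Section \ref{section.ajodad} shows that it is in fact a genuine cluster isomorphism, hence a fortiori a quasi-cluster transformation. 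Combining these, the composition defining $\DT$ on $\conf^b_d(\mathcal{A})$ is a quasi-cluster transformation.

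For the second assertion, recall that any quasi-cluster transformation on the cluster Poisson side is by definition a Poisson map: in each cluster Poisson coordinate chart it acts on the coordinates by a cluster Poisson mutation composed with a monomial transformation coming from a lattice automorphism preserving the skew form $\hat\epsilon$, and both types of maps preserve the log-canonical Poisson bracket. Applying this to $\DT$ on $\conf^b_d\left(\mathcal{A}_\ad\right)$ yields the Poisson statement.

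The proof has essentially no obstacle beyond bookkeeping; the main subtlety worth spelling out is that the lifted reflection maps on $\conf^b_d(\mathcal{A})$ do not intertwine isomorphic seeds (so they are not cluster isomorphisms) and yet act on the frozen variables by the explicit monomial formulas $r^{i*}(X'_i)=X_i^{-1}$, $r^{i*}(X'_j)=X_jX_i^{-\C_{ij}}$ (and the corresponding $\mathrm{K}_2$ formula $r^{i*}(A'_i)=A_i^{-1}\prod_{j\ne i}A_j^{-\C_{ij}}$). One needs to check that the product of the lattice automorphisms arising from all the reflections in the factorization of $\DT$ is well-defined as a single lattice automorphism compatible with the final seed after transposition; this follows because transposition canonically identifies the seed at the end of the reflection sequence with the initial seed, and under this identification the accumulated monomial twist is an automorphism of the frozen part of the fixed initial seed.
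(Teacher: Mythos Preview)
Your proposal is correct and follows essentially the same approach as the paper: factor $\DT$ via Theorem \ref{4.9} as reflections followed by transposition, use the preceding Proposition to see each reflection is quasi-cluster, use Proposition \ref{3.15} (and its $\mathrm{K}_2$ analogue) to see transposition is a cluster isomorphism, and conclude by closure of quasi-cluster transformations under composition; the Poisson statement then follows from Definition \ref{quasi cluster trans}. The paper's argument is exactly this, stated in one sentence; your final paragraph about compatibility of the accumulated lattice automorphisms is extra bookkeeping the paper leaves implicit, but it is harmless and correct.
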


Using the fact that reflection maps are quasi-cluster transformations, we can also prove the following sufficient condition on the equality between upper cluster algebras and cluster algebras (Theorem \ref{upper=ordinary}).

\begin{thm}\label{up=ord} The upper cluster algebra $\mathcal{O}\left(\conf^b_d\left(\mathcal{A}_\sc\right)\right)$ coincides with its cluster algebra.
\end{thm}
\begin{proof} By Theorem \ref{3.42}, it suffices to show that $\mathcal{O}\left(\conf^b_d\left(\mathcal{A}_\sc\right)\right)$ is generated by cluster variables (with invertible frozen cluster variables). From Lemma \ref{lem A.32} we know that a quasi-cluster transformation modifies unfrozen cluster $\mathrm{K}_2$ variables by Laurent monomials in the frozen cluster $\mathrm{K}_2$ variables, and maps the frozen cluster $\mathrm{K}_2$ variables to Laurent monomials in the frozen cluster $\mathrm{K}_2$ variables. Therefore we can apply a sequence of reflection maps to reduce the question to proving $\mathcal{O}\left(\conf^e_d\left(\mathcal{A}_\sc\right)\right)$ is generated by cluster variables for any positive braid $d$. 
\[
\begin{tikzpicture}[scale=.8]
\node (u) at (0,2) [] {$\U_+$};
\node (d1) at (-3,0) [] {$\B_0$};
\node (d2) at (-1,0) [] {$\B_1$};
\node (d3) at (1,0) [] {$\dots$};
\node (d4) at (3,0) [] {$\U_-t$};
\draw [->] (d1) -- node [below] {$s_{i_1}$} (d2);
\draw [->] (d2) -- node [below] {$s_{i_2}$} (d3);
\draw [->] (d3) -- node [below] {$s_{i_n}$} (d4);
\draw (d1) -- (u) -- (d4);
\end{tikzpicture}
\]
Recall from Theorem \ref{affineness of conf} that any point in $\conf^b_d\left(\mathcal{A}_\sc\right)$ can be represented by a unique representative as shown above, and $\conf^b_d\left(\mathcal{A}_\sc\right)$ is the non-vanishing locus of a single function $f$ in $\T\times \mathbb{A}^n$, where $\T$ captures the freedom of the maximal torus element $t$ and the coordinates of $\mathbb{A}^n$ are parametrizing the relative positions between each pair of adjacent flags in the bottom chain. Note that by definition, the function $f$ is the product of the frozen cluster $\mathrm{K}_2$ variables on the left, and the torus factor $\T$ is parametrized by the frozen cluster $\mathrm{K}_2$ variables on the right. By assumption, the frozen cluster $\mathrm{K}_2$ variables are invertible in the cluster algebra, too. Therefore it remains to show that the coordinates of the affine space factor $\mathbb{A}^n$ are polynomials in unfrozen cluster $\mathrm{K}_2$ variables and Laurent polynomials in frozen cluster $\mathrm{K}_2$ variables.

First, from the proof of Lemma \ref{irreducible} we know that the first affine parameter of the pair $\xymatrix{\B_0\ar[r]^{s_{i_1}} & \B_1}$ is a monomial in unfrozen cluster $\mathrm{K}_2$ variables and a Laurent monomial in frozen cluster $\mathrm{K}_2$ variables. Next we can apply the quasi-cluster transformation $r^{i_1}\circ {_{i_1}r}$ to move first letter $s_{i_1}$ to the end of the $d$-chain. Then Lemma \ref{irreducible} implies again that the affine parameter of the pair $\xymatrix{\B_1\ar[r]^{s_{i_2}} & \B_2}$ is a monomial in unfrozen cluster $\mathrm{K}_2$ variables (not necessarily of the initial seed) and a Laurent monomial in frozen cluster $\mathrm{K}_2$ variables. The same argument can be applied to the rest of the affine parameters and we can conclude that $\mathcal{O}\left(\conf^e_d\left(\mathcal{A}_\sc\right)\right)$ is generated by cluster $\mathrm{K}_2$ variables (with invertible frozen variables).
\end{proof}

\section{Periodicity of DT transformations and Zamolodchikov's Periodicity Conjecture} \label{PeriodicityConje}

In this section we prove the periodicity of $\DT$ transformations for a family of double Bott-Samelson cells. We give a new geometric proof of Zamolodchikov's periodicity conjecture. 

Below $\G$ is a semisimple algebraic group. The longest Weyl group element $w_0$ can be uniquely lifted to a positive braid. Its square $\Omega:=w_0^2$ is a central element in the braid group $\Br$.  Let $b=s_{i_1}s_{i_2}\ldots s_{i_n}$ be a positive braid. Recall that $b^\circ= s_{i_n}\ldots s_{i_2}s_{i_1}$.

\begin{thm}\label{5.1} Let $b$ and $d$ be positive braids such that $\left( db^\circ\right)^m=\Omega^{n}$. The cluster Donaldson-Thomas transformation of ${\conf^b_d(\mathcal{B})}$ is of a finite order dividing $2(m+n)$.
\end{thm}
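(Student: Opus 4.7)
The plan is to reduce to the case $b = e$, realize the Donaldson--Thomas transformation geometrically as a ``shift plus transposition'' on a bi-infinite chain of flags, and extract the order $2(m+n)$ from the braid-theoretic identity $c^m = \Omega^n$. First I would invoke the reflection isomorphism $\eta: \conf^b_d(\mathcal{B}) \to \conf^e_{db^\circ}(\mathcal{B})$ built from right-reflection maps $r^i$; by the construction of $\DT$ in Theorem \ref{4.9} and the uniqueness of the cluster Donaldson--Thomas transformation under cluster conjugation, $\eta$ intertwines $\DT_{\conf^b_d(\mathcal{B})}$ with $\DT_{\conf^e_{db^\circ}(\mathcal{B})}$. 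Hence it suffices to prove $\DT^{2(m+n)} = \mathrm{id}$ on $\conf^e_c(\mathcal{B})$, where $c := db^\circ$ satisfies $c^m = \Omega^n$.

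Fix a word $c = s_{j_1}\cdots s_{j_l}$ of length $l$. By Theorem \ref{4.9}, on $\conf^e_c(\mathcal{B})$ the map factors as $\DT = T \circ R$, where $R := {_{j_l}r}\circ \cdots \circ {_{j_1}r}$ is the sequence of left reflections moving the bottom chain to the top and $T$ is the transposition. Starting from a representative $(\B^0; \B_0 \to \cdots \to \B_l)$, each $_{j_k}r$ builds a unique flag $\B^{-k}$ via Lemma \ref{unique}, producing a new top chain $\B^{-l} \to \cdots \to \B^0$. Iterating this extension indefinitely on both sides (using also the inverse right reflections $r^i$), one constructs a bi-infinite chain $(\B(k))_{k \in \mathbb{Z}}$ of flags whose edge pattern is periodic with period $l$ matching the word of $c$. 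The decisive braid-theoretic fact is that a chain of type $\Omega = w_0^2$ returns to its starting flag, because $\Omega$ is central in the braid group and acts trivially on the undecorated flag variety. The identity $c^m = \Omega^n$ therefore forces $\B(k + ml) = \B(k)$ for every $k$, so the bi-infinite chain is $(ml)$-periodic on the flag level while realizing $n$ full $\Omega$-twists per period.

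In this bi-infinite model the transposition $T$ acts as an involution (roughly swapping $\B(k)$ with $\B(-k)^t$ relative to a suitable origin) while $R^2$ acts as a shift of the ``observation window'' by $l$ positions. The subgroup generated by these two operations is dihedral-like, and once one quotients by the $(ml)$-periodicity the orbit of a configuration under $\DT$ lies in a finite set. I would conclude by verifying that $\DT^{2(m+n)}$ acts trivially on every representative by tracking the shifts and transpositions: $\DT^{2m}$ shifts by the full period $ml$ (which returns the chain to itself by the $c^m = \Omega^n$ identity) while contributing $2m$ transpositions that cancel since $T^2 = \mathrm{id}$, leaving a residual monodromy from $\Omega^n$; the additional $2n$ applications of $\DT$ then absorb this twist through the interplay of $T$ with $\Omega$. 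The hard part is the precise numerology---showing that $\DT$ generates a rotation of exact step $\frac{1}{m+n}$ of a full revolution and not merely a divisor thereof, and that the residual $\Omega^n$-monodromy is exactly cancelled by $2n$ further applications of $\DT$. This requires delicate bookkeeping of how $T$, the shift from $R^2$, and the $\Omega^n$-twist combine on the bi-infinite configuration, and is the step that depends most critically on the braid identity $c^m = \Omega^n$.
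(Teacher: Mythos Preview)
Your reduction to the case $b=e$ via reflection maps is correct and matches the paper, as does your identification of $\DT$ on $\conf^e_c(\mathcal{B})$ as the composition of the left-reflection sequence $R$ with the transposition $T$. The bi-infinite-chain picture is also morally in the right direction. However, the proposal contains a genuine gap at the crucial step.

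Your claim that ``a chain of type $\Omega = w_0^2$ returns to its starting flag, because $\Omega$ is central in the braid group and acts trivially on the undecorated flag variety'' is false. The element $\Omega$ equals $e$ in the Weyl group, but a chain of flags of \emph{braid} type $\Omega$ (i.e.\ a $w_0$-chain followed by another $w_0$-chain) has no reason to close up: for a fixed starting flag there is a huge moduli of such chains, almost none of which return. Consequently your assertion $\B(k+ml)=\B(k)$ is unjustified, and in fact the correct period of the extended configuration (in units of $\DT^2$) is $m+n$, not $m$. The ``residual $\Omega^n$-monodromy'' you allude to is exactly the discrepancy, but your sketch of how the extra $2n$ applications of $\DT$ absorb it is too vague to constitute an argument; you yourself flag this numerology as the hard part and leave it open.

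The paper supplies the two ingredients you are missing. First, it introduces an isomorphism $\tau:\mathcal{B}_-\to\mathcal{B}_+$ (sending $\B_-g$ to $g^{-1}\overline{w}_0\B_+$) under which the general-position relation becomes a $w_0$-arrow; after this conversion the entire configuration lives in a single flag variety and becomes a \emph{closed} loop of braid type $b\Omega$. This eliminates the awkward alternation between $\mathcal{B}_+$ and $\mathcal{B}_-$ that the transposition causes in your bi-infinite picture, and makes precise the sense in which $\DT^2$ is the braid move $\Omega b\to b\Omega$ on this closed loop. Second, rather than arguing by periodicity, the paper concatenates several copies of this closed loop and applies two different sequences of braid moves---one pushing the $\Omega$'s to the right (forward $\DT^2$'s), one pushing the $b$'s to the right and then invoking $b^m=\Omega^n$ (backward $\DT^2$'s). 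Both sequences end at chains of the same braid type, so by Theorem~\ref{bott-samelson uniqueness} (the canonicity of braid moves on chains) the resulting flags coincide. Reading off which flags are forced to agree yields $\B_{(0)}=\B_{(2(m+n))}$ and hence $\DT^{2(m+n)}=\mathrm{id}$. It is this appeal to Theorem~\ref{bott-samelson uniqueness}, rather than any direct periodicity of a single chain, that closes the argument.
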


\begin{exmp} Let $\G=\PGL_7$. Its Weyl group is the symmetric group $S_7$. Take the element $u=(1~2~3~7~6~5)\in S_7$ in cycle notation.
The length of $u$ is 7. Its lift to  $\Br$ satisfies $u^3 =w_0$. Let $d=u^n \in \Br$. Then $d^6= \Omega^{n}$. The order of the DT transformation of ${\conf^e_{d}(\mathcal{B})}$ divides $12+2n$.
\end{exmp}

\begin{proof} The cluster Donaldson-Thomas transformations on $\conf^b_d(\mathcal{B})$ and $\conf^e_{ db^\circ}(\mathcal{B})$ are intertwined by a sequence of reflection maps on the right, hence they share the same order.  It suffices to consider the cases $\conf^e_b(\mathcal{B})$ with $b^m=\Omega^{n}$. 
Let $b=s_{i_1}s_{i_2}\dots s_{i_l}$. We start with a configuration
\begin{equation}
\label{DT2.pattern18}
\begin{gathered}
\begin{tikzpicture}[scale=.8]
\node (u) at (0,2) [] {$\B_0$};
\node (0) at (-4,0) [] {$\B^0$};
\node (1) at (-2,0) [] {$\B^1$};
\node (2) at (0,0) [] {$\dots$};
\node (3) at (2,0) [] {$\B^{l-1}$};
\node (4) at (4,0) [] {$\B^l$};
\draw [->] (0) -- node [below] {$s_{i_1}$} (1);
\draw [->] (1) -- node [below] {$s_{i_2}$} (2);
\draw [->] (2) -- node [below] {$s_{i_{l-1}}$} (3);
\draw [->] (3) -- node [below] {$s_{i_l}$} (4);
\draw (0) -- (u) -- (4); 
\end{tikzpicture} 
\end{gathered}
\end{equation}
The cluster DT transformation of $\conf_b^e(\mathcal{B})$  is a sequence of left reflections followed by transposition. Concretely, we obtain a collection of Borel subgroups $\B_1, \dots, \B_l$ by the reflections $^ir$ 
\[
\begin{tikzpicture}[scale=.8]
\node (0) at (-4,2) [] {$\B_l$};
\node (1) at (-2,2) [] {$\B_{l-1}$};
\node (2) at (0,2) [] {$\dots$};
\node (3) at (2,2) [] {$\B_1$};
\node (4) at (4,2) [] {$\B_0$};
\node (d) at (0,0) [] {$\B^l$};
\draw [->] (0) -- node [above] {$s_{i_l}$} (1);
\draw [->] (1) -- node [above] {$s_{i_{l-1}}$} (2);
\draw [->] (2) -- node [above] {$s_{i_2}$} (3);
\draw [->] (3) -- node [above] {$s_{i_1}$} (4);
\draw (0) -- (d) -- (4);
\end{tikzpicture}
\]
and then apply the transposition map to get a new configuration in $\conf_b^e(\mathcal{B})$
\begin{equation}
\label{DT1.pattern19}
    \begin{gathered}
\begin{tikzpicture}[scale=.8]
\node (u) at (0,2) [] {$\left(\B^l\right)^t$};
\node (0) at (-4,0) [] {$\B_0^t$};
\node (1) at (-2,0) [] {$\B_1^t$};
\node (2) at (0,0) [] {$\dots$};
\node (3) at (2,0) [] {$\B_{l-1}^t$};
\node (4) at (4,0) [] {$\B_l^t$};
\draw [->] (0) -- node [below] {$s_{i_1}$} (1);
\draw [->] (1) -- node [below] {$s_{i_2}$} (2);
\draw [->] (2) -- node [below] {$s_{i_{l-1}}$} (3);
\draw [->] (3) -- node [below] {$s_{i_l}$} (4);
\draw (0) -- (u) -- (4); 
\end{tikzpicture}
\end{gathered}
\end{equation}
Let us  apply the cluster DT transformation to \eqref{DT1.pattern19} again, obtaining
\begin{equation}
\label{DT2.pattern20}
    \begin{gathered}
\begin{tikzpicture}[scale=.8]
\node (u) at (0,2) [] {$\B_l$};
\node (0) at (-4,0) [] {$\B^l$};
\node (1) at (-2,0) [] {$\B_{l+1}^t$};
\node (2) at (0,0) [] {$\dots$};
\node (3) at (2,0) [] {$\B_{2l-1}^t$};
\node (4) at (4,0) [] {$\B_{2l}^t$};
\draw [->] (0) -- node [below] {$s_{i_1}$} (1);
\draw [->] (1) -- node [below] {$s_{i_2}$} (2);
\draw [->] (2) -- node [below] {$s_{i_{l-1}}$} (3);
\draw [->] (3) -- node [below] {$s_{i_l}$} (4);
\draw (0) -- (u) -- (4); 
\end{tikzpicture}
\end{gathered}
\end{equation}

To better describe the patterns, we introduce the following notations for Borel subgroups in $\mathcal{B}_+$ \[\B_{(0)}:=\tau(\B^0), \hskip 7mm \B_{(1)}:=\B_0, \hskip 7mm \B_{(2)}:=\tau(\B^l), \hskip 7mm \B_{(3)}:=\B_l, 
\hskip 5mm \B_{(4)}:=\B_{2l},\] 
where $\tau$ is a natural isomorphism from $\mathcal{B}_-$ to $\mathcal{B}_+$
that takes $\B_-g$ to $g^{-1}\overline{w}_0\B_+$.
Recall the automorphism $*$ on $\mathcal{B}_+$ by
$\B \mapsto \B^*:=\tau(\B^t).$ 
The configurations \eqref{DT2.pattern18}-\eqref{DT2.pattern20} can be rewritten as
\begin{equation}
\label{dt20191203}
\begin{gathered}
\begin{tikzpicture}[baseline=5ex]
\node (u) at (0,2) [] {$\B_{(1)}$};
\node (0) at (-1,0) [] {$\B_{(0)}$};\
\node (1) at (1,0) [] {$\B_{(2)}$};
\draw [->] (u) -- node [above left] {$w_0$} (0);
\draw [dashed, ->] (0) -- node [below] {$b$} (1);
\draw [->] (1) -- node [above right] {$w_0$} (u);
\end{tikzpicture}  \quad \overset{\DT}{\longmapsto} \quad 
\begin{tikzpicture}[baseline=5ex]
\node (u) at (0,2) [] {$\B_{(2)}^*$};
\node (0) at (-1,0) [] {$\B_{(1)}^*$};
\node (2) at (1,0) [] {$\B_{(3)}^*$};
\draw [->] (u) -- node [above left] {$w_0$} (0);
\draw [dashed,->] (0) -- node [below] {$b$} (1);
\draw [->] (1) -- node [above right] {$w_0$} (u);
\end{tikzpicture}  \quad \overset{\DT}{\longmapsto} \quad 
\begin{tikzpicture}[baseline=5ex]
\node (u) at (0,2) [] {$\B_{(3)}$};
\node (0) at (-1,0) [] {$\B_{(2)}$};
\node (2) at (1,0) [] {$\B_{(4)}$};
\draw [->] (u) -- node [above left] {$w_0$} (0);
\draw [dashed,->] (0) -- node [below] {$b$} (1);
\draw [->] (1) -- node [above right] {$w_0$} (u);
\end{tikzpicture}
\end{gathered}
\end{equation}
Here in each configuration we abbreviate the bottom $b$-chain into a single dashed arrow. We adopt the same convention in the rest of the proof.

Let us cut the first and the last triangles in \eqref{dt20191203} at $\B_{(2)}$. We claim that the obtained chains \[\xymatrix{\B_{(2)} \ar[r]^{w_0} & \B_{(1)} \ar[r]^{w_0} & \B_{(0)} \ar@{-->}[r]^b & \B_{(2)}}, \hskip 11mm 
\xymatrix{\B_{(2)} \ar@{-->}[r]^b & \B_{(4)} \ar[r]^{w_0} & \B_{(3)} \ar[r]^{w_0} & \B_{(2)}}.\] are equivalent under the braid moves from $\Omega b$ to $b \Omega$.
Indeed, let $\B_1$ be the unique flag such that $\xymatrix{\B_{(1)} \ar[r]^{s_{i_1}^*} & \B_1 \ar[r]^{w_0s_{i_1}} & \B_{(0)}}$. The first left reflection in the first $\DT$ is 
\[
^{i_1}r:\quad \begin{tikzpicture}[baseline=5ex,scale=0.8] 
\node (u) at (0,2) [] {$\B_{(1)}$};
\node (0) at (-3,0) [] {$\B_{(0)}$};
\node (1) at (-1,0) [] {$\tau(\B^1)$};
\node (2) at (1,0) [] {$\dots$};
\node (3) at (3,0) [] {$\B_{(2)}$};
\draw [->] (u) -- node [above left] {$w_0$}(0);
\draw [->] (0) -- node [below] {$s_{i_1}$} (1);
\draw [->] (1) -- node [below] {$s_{i_2}$} (2);
\draw [->] (2) -- node [below] {$s_{i_l}$} (3);
\draw [->] (3) -- node [above right] {$w_0$} (u);
\end{tikzpicture}\quad \longmapsto\quad
\begin{tikzpicture}[baseline=5ex,scale=0.8]
\node (u1) at (-1,2) [] {$\B_1$};
\node (u2) at (1,2) [] {$\B_{(1)}$};
\node (d1) at (-2,0) [] {$\tau(\B^1)$};
\node (d2) at (0,0) [] {$\dots$};
\node (d3) at (2,0) [] {$\B_{(2)}$};
\draw [->] (u2) -- node [above] {$s_{i_1}^*$} (u1);
\draw [->] (u1) -- node [above left] {$w_0$} (d1);
\draw [->] (d1) -- node [below] {$s_{i_2}$} (d2);
\draw [->] (d2) -- node [below] {$s_{i_l}$} (d3);
\draw [->] (d3) -- node [above right] {$w_0$} (u2);
\end{tikzpicture}
\]
Let us cut them at $\B_{(2)}$. The obtained chains are equivalent under the braid moves  $w_0 s_{i_1}=s_{i_1}^\ast w_0$:  
\begin{align*}
&\xymatrix{\B_{(2)} \ar[r]^{w_0} & \B_{(1)} \ar[r]^{w_0} & \B_{(0)} \ar@{-->}[r]^b & \B_{(2)}}\\
=\quad &\xymatrix{\B_{(2)} \ar[r]^{w_0} & \B_{(1)}   \ar[r]^{w_0} & \B_{(0)} \ar[r]^{s_{i_1}} & \tau(\B^1)  \ar@{-->}[r]^{s_{i_1}^{-1}b} & \B_{(2)}}\\
=\quad &\xymatrix{\B_{(2)} \ar[r]^{w_0} & \B_{(1)} \ar[r]^{s_{i_1}^*} & \B_1 \ar[r]^{w_0} & \tau(\B^1) \ar@{-->}[r]^{s_{i_1}^{-1}b} & \B_{(2)}}
\end{align*}
Repeat the same procedure for the rest of the left reflections. Eventually the braid moves from $w_0b$ to $b^\ast w_0$ turn the chain $\xymatrix{\B_{(2)} \ar[r]^{w_0} & \B_{(1)} \ar[r]^{w_0} & \B_{(0)} \ar@{-->}[r]^b & \B_{(2)}}$ 
into $\xymatrix{\B_{(2)} \ar[r]^{w_0}& \B_{(1)} \ar@{-->}[r]^{b^*} & \B_{(3)} \ar[r]^{w_0} & \B_{(2)}}$. 
Similarly, the second DT turns the latter into $\xymatrix{\B_{(2)} \ar@{-->}[r]^b & \B_{(4)} \ar[r]^{w_0} & \B_{(3)} \ar[r]^{w_0} & \B_{(2)}}$ by braid moves.

Applying the transformation ${\rm DT}^2$ recursively, we obtain the configurations
\[
\begin{tikzpicture}[baseline=5ex]
\node (u) at (0,2) [] {$\B_{(2k-1)}$};
\node (0) at (-1,0) [] {$\B_{(2k-2)}$};
\node (1) at (1,0) [] {$\B_{(2k)}$};
\draw [->] (u) -- node [above left] {$w_0$} (0);
\draw [->, dashed] (0) -- node [below] {$b$} (1);
\draw [->] (1) -- node [above right] {$w_0$} (u);
\end{tikzpicture}  \quad \overset{\DT^2}{\longrightarrow} \quad 
\begin{tikzpicture}[baseline=5ex]
\node (u) at (0,2) [] {$\B_{(2k+1)}$};
\node (0) at (-1.5,0) [] {$\B_{(2k)}$};
\node (2) at (1.5,0) [] {$\B_{(2k+2)}$};
\draw [->] (u) -- node [above left] {$w_0$} (0);
\draw [->,dashed] (0) -- node [below] {$b$} (1);
\draw [->] (1) -- node [above right] {$w_0$} (u);
\end{tikzpicture}  
\]
Let us concatenate $\max\{m,n\}+1$ copies of the chain $\xymatrix@=12pt{\B_{(2m)} \ar@{-->}[r]^b &\B_{(2m+2)} \ar[r]^{w_0} & \B_{(2m+1)} \ar[r]^{w_0} & \B_{(2m)}}$. We apply $\DT^2$ to move $\Omega=w_0^2$ to the right. The braid moves $(b\Omega)^{n+1}= b^{n+1} \Omega^{1+n}$ give rise to
\begin{align*}
&\xymatrix@=12pt{\dots   \ar@{-->}[r]^b & \B_{(2m+2)} \ar[r]^{w_0} & \B_{(2m+1)} \ar[r]^{w_0}& \B_{(2m)} \ar@{-->}[r]^b & \B_{(2m+2)} \ar[r]^{w_0} & \B_{(2m+1)} \ar[r]^{w_0} & \B_{(2m)}}\\
=& \xymatrix@=12pt{\dots \ar[r]^{w_0} & \B_{(2m+2)} \ar@{-->}[r]^b & \B_{(2m+4)} \ar[r]^{w_0} & \B_{(2m+3)} \ar[r]^{w_0} & \B_{(2m+2)} \ar[r]^{w_0} & \B_{(2m+1))} \ar[r]^{w_0} & \B_{(2m)} }\\
=& \dots\\
=& \xymatrix@=12pt{ \dots \ar@{-->}[r]^(0.4)b & \B_{(2(m+n))} \ar@{-->}[r]^b & \B_{(2(m+n)+2)} \ar[r]^{w_0} & \B_{(2(m+n)+1)} \ar[r]^{w_0} & \B_{(2(m+n))} \ar@{-->}[r]^{\Omega^{n}} & \B_{(2m)}} .
\end{align*}
Conversely, we apply $\DT^{-2}$ to move $b$ to the right, and obtain 
\begin{align*}
&\xymatrix@=12pt{\dots \ar@{-->}[r]^{b} & \B_{(2m+2)} \ar[r]^{w_0} & \B_{(2m+1)} \ar[r]^{w_0} & \B_{(2m)} \ar@{-->}[r]^b & \B_{(2m+2)} \ar[r]^{w_0} & \B_{(2m+1)} \ar[r]^{w_0} & \B_{(2m)} }\\
=& \xymatrix@=12pt{\dots \ar[r]^{w_0} & \B_{(2m-1)} \ar[r]^{w_0} & \B_{(2m-2)} \ar@{-->}[r]^b & \B_{(2m)} \ar[r]^{w_0} & \B_{(2m-1)} \ar[r]^{w_0} & \B_{(2m-2)} \ar@{-->}[r]^b & \B_{(2m)} }\\
=& \xymatrix@=12pt{\dots \ar[r]^{w_0} & \B_{(2m-4)} \ar@{-->}[r]^b & \B_{(2m-2)} \ar[r]^{w_0} & \B_{(2m-3)} \ar[r]^{w_0} & \B_{(2m-4)} \ar@{-->}[r]^b & \B_{(2m-2)} \ar@{-->}[r]^b & \B_{(2m)} }\\
=& \dots\\
=&\xymatrix@=12pt{ \dots \ar[r]^{w_0} & \B_{(0)} \ar@{-->}[r]^b & \B_{(2)} \ar[r]^{w_0} & \B_{(1)} \ar[r]^{w_0} & \B_{(0)} \ar@{-->}[r]^b & \B_{(2)} \ar@{-->}[r]^b & \B_{(4)} \ar@{-->}[r]^b & \dots \ar@{-->}[r]^b & \B_{(2m)}}\\
= & \xymatrix@=12pt{ \dots \ar[r]^{w_0} & \B_{(0)} \ar@{-->}[r]^b & \B_{(2)} \ar[r]^{w_0} & \B_{(1)} \ar[r]^{w_0} & \B_{(0)} \ar@{-->}[r]^{\Omega^{n}} & \B_{(2m)}}.
\end{align*}
Here the last step uses the condition that $b^m=\Omega^{n}$.

Let us compare the two final chains under  the above braid moves. By Theorem \ref{bott-samelson uniqueness}, we get 
\begin{align*}
&\xymatrix@=12pt{ \B_{(0)} \ar@{-->}[r]^b & \B_{(2)} \ar[r]^{w_0} & \B_{(1)} \ar[r]^{w_0} & \B_{(0)}}\\
=&\xymatrix@=12pt{ \B_{(2(m+n))} \ar@{-->}[r]^b & \B_{(2(m+n)+2)} \ar[r]^{w_0} & \B_{(2(m+n)+1)} \ar[r]^{w_0} & \B_{(2(m+n))}},
\end{align*}
which concludes the proof.
\end{proof}

\paragraph{Zamolodchikov's periodicity conjecture.}
A $Y$-system is a system of algebraic recurrence equations associated with a pair of Dynkin diagrams. The periodicity conjecture, first formulated by Zamolodchikov in his study of thermodynamic Bethe ansatz, asserts that all solutions to this system are of period dividing the double of the sum of the Coxeter numbers of underlying Dynkin diagrams. This periodicity property plays a significant rule in conformal field theory and statistical mechanics. It was first settled by Keller \cite{Kelperiod} in full generality by using highly nontrivial techniques including cluster algebras and their additive categorification. We refer to \textit{loc. cit.} for an introduction to the periodicity conjecture. 

Let us reformulate the periodicity conjecture in terms of cluster mutations. Let $\Delta$ be a Dynkin diagram with the Cartan Matrix $\C$. A bipartite coloring on $\Delta$ gives rise to a seed with the same vertex set $I$, multipliers $\left\{d_a\right\}$, and the exchange matrix 
\[
\epsilon_{ab}=\left\{\begin{array}{ll}
c(a)\C_{ba} & \text{if $a\neq b$},\\
0 & \text{otherwise}.\end{array}\right.
\]
where $c(a)=1$ if the vertex $a$ is colored black and $c(a)=-1$ if it is colored white.

Given two bipartite Dynkin diagrams $\Delta$ and $\Delta'$, their square product $\Delta\square\Delta'$ is defined to be a seed with vertex set  $I\times I'$ and exchange matrix
\[
\epsilon_{\left(i,i'\right),\left(j,j'\right)}=\left\{\begin{array}{ll}
    -c\left(i'\right)\epsilon_{ij} & \text{if $i'=j'$},  \\
    c(i)\epsilon_{i'j'} & \text{if $i=j$}, \\
    0 & \text{otherwise}.
\end{array}\right.
\]

\begin{exmp} The quiver $\mathrm{D}_4\square \mathrm{A}_3$ is as follows.
\[
\begin{tikzpicture}
\node (0) at (0,1) [] {$\bullet$};
\node (1) at (0,2) [] {$\circ$};
\node (2) at (-1,0.4) [] {$\circ$};
\node (3) at (1,0.4) [] {$\circ$};
\draw [<-] (0) -- (1);
\draw [<-] (0) -- (2);
\draw [<-] (0) -- (3);
\node at (0,-1) [] {$\mathrm{D}_4$};
\end{tikzpicture}
\quad \quad \quad 
\begin{tikzpicture}
\node (0) at (0,1) [] {$\circ$};
\node (1) at (1,1) [] {$\bullet$};
\node (2) at (2,1) [] {$\circ$};
\draw [->] (0) -- (1);
\draw [->] (2) -- (1);
\node at (1,-1) [] {$\mathrm{A}_3$};
\end{tikzpicture}
\quad \quad \quad
\begin{tikzpicture}
    \node (0) at (0.3,0) [] {$\bullet$};
    \node (1) at (1.3,0) [] {$\circ$};
    \node (2) at (2.3,0) [] {$\bullet$};
    \node (3) at (0,1) [] {$\circ$};
    \node (4) at (1,1) [] {$\bullet$};
    \node (5) at (2,1) [] {$\circ$};
    \node (6) at (0,2) [] {$\bullet$};
    \node (7) at (1,2) [] {$\circ$};
    \node (8) at (2,2) [] {$\bullet$};
    \node (9)  at (-0.2,0.3) [] {$\bullet$};
    \node (10) at (0.8,0.3) [] {$\circ$};
    \node (11) at (1.8,0.3) [] {$\bullet$};
    \draw [->] (1) -- (0);
    \draw [->] (1) -- (2);
    \draw [<-] (3) -- (0);
    \draw [<-] (1) -- (4);
    \draw [<-] (5) -- (2);
    \draw [->] (3) -- (4);
    \draw [<-] (3) -- (6);
    \draw [->] (5) -- (4);
    \draw [->] (7) -- (6);
    \draw [<-] (7) -- (4);
    \draw [->] (7) -- (8);
    \draw [<-] (5) -- (8);
    \draw [<-] (3) -- (9);
    \draw [<-] (10) -- (4);
    \draw [<-] (5) -- (11);
    \draw [->] (10) -- (9);
    \draw [->] (10) -- (11);
    \node at (1,-1) [] {$\mathrm{D}_4\square \mathrm{A}_3$};
    \end{tikzpicture}
\]
\end{exmp}

Let $\tau=\tau_-\circ\tau_+$, where $\tau_+$ is a sequence of mutations at the black vertices of $\Delta\square\Delta'$, and $\tau_-$ is a sequence of mutations at white ones. Note that $\tau$ preserves the quiver $\Delta\square\Delta'$. Following \cite{FZysystem}, the mutation sequence $\tau$ gives rise to the {\it Zamolodchikov transformation} $\Za$ on the cluster Poisson variety $\mathcal{X}_{\Delta\square\Delta'}$. Let $h$ and $h'$ be the Coxeter numbers of $\Delta$ and $\Delta'$ respectively. By \cite[Lemma 2.4]{Kelperiod}, Zamolodchikov's periodicity conjecture is equivalent to the identity 
\begin{equation}
\label{ZaPeriodicity}
\Za^{h+h'}=\Id
\end{equation}

Below we give a new geometric proof of \eqref{ZaPeriodicity} for $\Delta\square \mathrm{A}_n$.
Let $\G$ be a group of type $\Delta$ and let $\mathcal{B}$ be its flag variety. We fix a bipartite coloring on $\Delta$ and  set
\[
b:=\underbrace{s_{b_1}s_{b_2}\dots s_{b_l}}_\text{black vertices} \quad \quad \text{and} \quad \quad w:=\underbrace{s_{w_1}s_{w_2}\dots s_{w_m}}_\text{white vertices}.
\]
Let 
 \[
 p= \underbrace{wbw\dots }_\text{$n+1$ factors} \quad \text{and} \quad q= \underbrace{bwb\dots }_\text{$n+1$ factors}
 \]
Recall the construction of seeds for double Bott-Samelson cells. The quiver associated to the following triangulation of $\conf^{p}_{q}(\mathcal{B})$ is  $\Delta\square \mathrm{A}_n$.
\[
\xymatrix{\B^0 \ar[r]^w \ar@{-}[d] \ar@{-}[dr] & \B^1 \ar@{-}[d] \ar@{-}[dr] \ar[r]^b &  \B^2 \ar[r]^w  \ar@{-}[d] \ar@{-}[dr] & \B^3 \ar[r]^b \ar@{-}[d] & \dots \ar[r] & \B^{n+1} \ar@{-}[d]\\
\B_0 \ar[r]_b & \B_1 \ar[r]_w & \B_2 \ar[r]_b & \B_3 \ar[r]_w & \dots \ar[r] & \B_{n+1}
}
\]
Therefore $\conf^p_q(\mathcal{B})$ is birationally isomorphic to the cluster Poisson variety $\mathcal{X}_{\Delta\square \mathrm{A}_n}.$
\begin{lem} The transformation $\Za$ acts on $\conf^p_q(\mathcal{B})$ as
\[
\vcenter{\vbox{\xymatrix{ \B^0 \ar[r]^w \ar@{-}[d] & \B^1 \ar[r]^b & \B^2 \ar[r]^w & \B^3 \ar[r]^b & \dots \ar[r] & \B^{n-1} \ar[r] & \B^n \ar[r] & \B^{n+1} \ar@{-}[d] \\
 \B_0 \ar[r]_b  & \B_1 \ar[r]_w & \B_2 \ar[r]_b & \B_3 \ar[r]_w & \dots \ar[r] &\B_{n-1} \ar[r] & \B_n \ar[r] & \B_{n+1}
}}} \quad \mapsto 
\]
\[
\vcenter{\vbox{\xymatrix{ \B^{-2} \ar[r]^w \ar@{-}[d] & \B^{-1} \ar[r]^b & \B^0 \ar[r]^w & \B^1 \ar[r]^b & \dots \ar[r] & \B^{n-3} \ar[r] & \B^{n-2} \ar[r] & \B^{n-1} \ar@{-}[d] \\
 \B_2 \ar[r]_b  & \B_3 \ar[r]_w & \B_4 \ar[r]_b & \B_5 \ar[r]_w & \dots \ar[r] &\B_{n+1} \ar[r] & \B_{n+2} \ar[r] & \B_{n+3}
}}}
\]
where $\B^{-2}$ and $\B^{-1}$ are the unique flags obtained by reflecting $\B_1$ and $\B_0$ on the left, and $\B_{n+2}$ and $\B_{n+3}$ are  obtained by reflecting $\B^{n+1}$ and $\B^n$ on the right. In other words, $\Za$ is equivalent to the composition of a Coxeter sequence of reflections on the left from bottom to top with another Coxeter sequence of reflections on the right from top to bottom.
\end{lem}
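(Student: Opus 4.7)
The plan is to decompose $\Za = \tau_-\circ\tau_+$ into its individual cluster mutations and translate each one geometrically via Proposition~\ref{mutation}. In the displayed triangulation of $\conf^p_q(\mathcal{B})$, every closed string $\binom{i}{j}$ is enclosed in a quadrilateral whose top and bottom edges both carry the same simple reflection $s_i$; this is forced by the way the alternating braids $p=wbw\ldots$ and $q=bwb\ldots$ line up. Hence Proposition~\ref{mutation} case (1b) tells us that each mutation in $\Za$ is a diagonal flip of the corresponding matching quadrilateral. Since same-coloured vertices of $\Delta\square\mathrm{A}_n$ are pairwise non-adjacent, all mutations within $\tau_+$ commute pairwise (and likewise within $\tau_-$), so we are free to perform each of the two halves in any convenient order.

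I would then choose to perform these flips in an order that groups them by horizontal level and sweeps along each level from left to right. By Proposition~\ref{procedure}, a maximal block of flips at all closed strings on a single horizontal level, read left-to-right, realises a left reflection $_ir$ followed by a cluster-coordinate change that slides the newly created triangle across the trapezoid. Reading the same block right-to-left instead realises a right reflection $r^i$ sliding the triangle in the opposite direction. Decomposing $\tau_+$ in this manner produces, on the left side of the trapezoid, a left reflection $_{b_k}r$ for each letter $s_{b_k}$ of $b$, and on the right side a right reflection $r^{b_k}$ for each such letter; running $\tau_-$ afterwards contributes the analogous reflections indexed by the letters of $w$. Assembled together these take $\B_1,\B_0$ to the new flags $\B^{-2},\B^{-1}$ on the upper left, and take $\B^{n+1},\B^n$ to $\B_{n+3},\B_{n+2}$ on the lower right, while preserving every interior flag.

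The main obstacle is purely combinatorial bookkeeping: one must verify that the order in which the reflections appear on the left is the Coxeter order corresponding to reading the letters of $b$ before those of $w$ from the bottom up, and that the symmetric order is produced on the right (top down). This reduces to tracking, flip by flip, how each diagonal move at the boundary matches a single reflection $_ir$ or $r^i$ as packaged by Proposition~\ref{procedure}. Because the mutations inside each of $\tau_+$ and $\tau_-$ commute, the choice of ordering within each half is irrelevant; it is the relative order of $\tau_+$ and $\tau_-$ that determines whether one sees $b$ before $w$ or vice versa on each side, which is precisely the asymmetry captured in the statement of the lemma.
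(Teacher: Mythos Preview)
There is a genuine gap. The two pillars of your argument — the opening claim that in the displayed triangulation every closed string sits inside an $s_i$–$s_i$ quadrilateral, and the subsequent appeal to Proposition~\ref{procedure} — both fail in this setting.

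First, in the staircase triangulation of $\conf^p_q(\mathcal{B})$ shown just before the lemma, each quadrilateral has \emph{different} labels on its top and bottom edges ($w$ above $b$, then $b$ above $w$, alternating). So Proposition~\ref{mutation}(1a), not (1b), governs every diagonal of the initial picture. A closed string on level $i$ lies between two $\pm i$ nodes that are \emph{not} adjacent in this triangulation (there are $\pm j$ nodes from the other colour in between); to realise its mutation as a single diagonal flip you must first perform case-(1a) flips to bring those nodes together. The identification ``mutation $=$ flip of the displayed quadrilateral'' is therefore not available as stated.

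Second, and more seriously, Proposition~\ref{procedure} packages a reflection out of a sweep through \emph{all} closed strings on one horizontal level, in left-to-right order, starting from the fan triangulation of $\conf^e_b(\mathcal{B})$. In $\tau_+$ each level receives mutations only at its black-coloured vertices — exactly half of them, at alternating positions — while the other half arrive only in $\tau_-$. Since mutations at consecutive closed strings on the same level do not commute, you cannot reorder $\tau_-\circ\tau_+$ into level-by-level left-to-right sweeps without breaking the $\tau_+$-then-$\tau_-$ structure. Hence there is no valid way to invoke Proposition~\ref{procedure} here.

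The paper's proof avoids Proposition~\ref{procedure} entirely. It identifies $\tau_+$ directly with the collection of diagonal flips at the quadrilaterals where (after case-(1a) rearrangement) a $b$-triangle from the top meets a $b$-triangle from the bottom, and then observes that the remaining operations needed to reach the shifted configuration — reflecting the leftmost $b$-block up, the rightmost $b$-block down, and flipping the residual diagonals — are all mutation-free: the reflections are cluster isomorphisms on the unfrozen part, and the residual flips are case-(1a) since the adjacent triangles now carry different labels. Repeating the same analysis for $\tau_-$ with $w$ in place of $b$ yields the second pair of reflections and returns the triangulation to its original shape. This is the bookkeeping you should carry out in place of the appeal to Proposition~\ref{procedure}.
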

\begin{proof} The colors of the closed strings (vertices of the seed $\Delta\square \mathrm{A}_n$) look like the following
\[
\begin{tikzpicture}
    \draw[gray] (0,0) -- (0,2) -- (2,0) -- (2,2) -- (4,0) -- (4,2) -- (6,0) -- (6,2) -- (8,0) ;
    \foreach \i in {0,2}
        {
        \draw [gray] (\i*2,0) -- node [below] {$b$} (\i*2+2,0);
        \draw [gray] (\i*2,2) -- node [above] {$w$} (\i*2+2,2);
        \draw [gray] (\i*2+2,0) -- node [below] {$w$} (\i*2+4,0);
        \draw [gray] (\i*2+2,2) -- node [above] {$b$} (\i*2+4,2);
        }
    \node at (-0.5,0.6) [left] {black vertices in $\Delta$};
    \node at (-0.5,1.4) [left] {white vertices in $\Delta$};
    \node (b0) at (2,1.4) [] {$\bullet$};
    \node (w0) at (2,0.6) [] {$\circ$};
    \node (b1) at (4,1.4) [] {$\circ$};
    \node (w1) at (4,0.6) [] {$\bullet$};
    \node (b2) at (6,1.4) [] {$\bullet$};
    \node (w2) at (6,0.6) [] {$\circ$};
    \draw [<-] (b0) -- (b1);
    \draw [<-] (b2) -- (b1);
    \draw [<-] (b2) -- (8,1.4);
    \draw [->,decorate,decoration={snake,amplitude=.4mm,segment length=2mm,post length=1mm}] (b0) -- (w0);
    \draw [->,decorate,decoration={snake,amplitude=.4mm,segment length=2mm,post length=1mm}] (w1) -- (b1);
    \draw [->,decorate,decoration={snake,amplitude=.4mm,segment length=2mm,post length=1mm}] (b2) -- (w2);
    \draw [<-] (w1) -- (w0);
    \draw [<-] (w1) -- (w2);
    \draw [<-] (8,0.6) -- (w2);
    \node at (8,1) [] {$\cdots$};
\end{tikzpicture}
\]
Note that black vertices in the quiver $\Delta\square \mathrm{A}_n$ correspond to strings cut out by triangles with the same base labelings. Mutations at all the black vertices of $\Delta\square \mathrm{A}_n$ corresponds to a collection of diagonal flips within a quadrilateral with the same base labelings (either $b$ or $w$). This turns the above triangulation to the left one below.
\[
\begin{tikzpicture}
    \draw (0,0) -- (0,1) -- (1,0);
    \draw (0,1) -- (2,0) -- (1,1) -- (3,0) -- (2,1) -- (4,0) -- (3,1);
    \foreach \i in {0,2}
        {
        \draw  (\i,0) -- node [below] {$b$} (\i+1,0);
        \draw  (\i,1) -- node [above] {$w$} (\i+1,1);
        \draw  (\i+1,0) -- node [below] {$w$} (\i+2,0);
        \draw  (\i+1,1) -- node [above] {$b$} (\i+2,1);
        }
    \node at (4,0.5) [] {$\cdots$};
\end{tikzpicture}\quad \quad \quad
\begin{tikzpicture}
    \draw (1,0) -- (-1,1) -- (2,0) -- (0,1) -- (3,0) -- (1,1) -- (4,0) -- (2,1);
    \draw  (-1,1) -- node[above] {$b$}(0,1);
    \draw  (2,0) -- node [below] {$b$} (3,0);
    \foreach \i in {0,2}
        {
        \draw  (\i,1) -- node [above] {$w$} (\i+1,1);
        \draw  (\i+1,0) -- node [below] {$w$} (\i+2,0);
        \draw  (\i+1,1) -- node [above] {$b$} (\i+2,1);
        }
    \node at (4,0.5) [] {$\cdots$};
\end{tikzpicture}
\]
Now reflect the leftmost collection of triangles with base labeled by a $b$-chain to the top and the rightmost collection of triangles (with base labeled by either a $b$-chain or a $w$-chain) to the bottom, and then flip the rest of the non-flipped diagonals.
The resulting triangulation is depicted by the above right picture. Since these reflections and the diagonal flips only involve triangles that are labeled differently, no mutation is involved. By shifting the top chain to the right by two units we get the following triangulation on the left.
\[
\begin{tikzpicture}
    \draw (0,0) -- (0,1) -- (1,0) -- (1,1) -- (2,0) -- (2,1) -- (3,0) -- (3,1) -- (4,0) ;
    \foreach \i in {0,2}
        {
        \draw  (\i,0) -- node [below] {$w$} (\i+1,0);
        \draw  (\i,1) -- node [above] {$b$} (\i+1,1);
        \draw  (\i+1,0) -- node [below] {$b$} (\i+2,0);
        \draw  (\i+1,1) -- node [above] {$w$} (\i+2,1);
        }
    \node at (4,0.5) [] {$\dots$};
\end{tikzpicture} \quad \quad \quad \begin{tikzpicture}
    \draw (0,0) -- (0,1) -- (1,0);
    \draw (0,1) -- (2,0) -- (1,1) -- (3,0) -- (2,1) -- (4,0) -- (3,1);
    \foreach \i in {0,2}
        {
        \draw  (\i,0) -- node [below] {$w$} (\i+1,0);
        \draw  (\i,1) -- node [above] {$b$} (\i+1,1);
        \draw  (\i+1,0) -- node [below] {$b$} (\i+2,0);
        \draw  (\i+1,1) -- node [above] {$w$} (\i+2,1);
        }
    \node at (4,0.5) [] {$\cdots$};
\end{tikzpicture}
\]
Similarly, mutating at the white vertices of $\Delta\square \mathrm{A}_n$ corresponds to another collection of diagonal flips, which turns the above left triangulation to the above right one.
Let us again reflect the leftmost collection of triangles to the top and the rightmost collection of triangles to the bottom, and flip the rest of the diagonals. The resulting triangulation coincides with the initial one. 

In the whole process there are two sequences of reflections on the left from bottom to top (one for a $b$-chain and the other for a $w$-chain) and two similar sequences of reflections on the right from top to bottom, which is the action described in the lemma.
\end{proof}

\begin{cor}\label{5.9} $\Za^{h+n+1}=\Id$.
\end{cor}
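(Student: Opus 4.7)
The strategy is to deduce the corollary from Theorem~\ref{5.1} applied to $\conf^p_q(\mathcal{B})$.

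First I would establish the braid-group identities needed for the hypothesis of Theorem~\ref{5.1}. Since $b$ and $w$ are each products of mutually commuting simple reflections, $b^\circ=b$ and $w^\circ=w$, and a short case check on the parity of $n+1$ produces $qp^\circ=(bw)^{n+1}$. The bipartite Coxeter element $bw$ has order $h$ in the Weyl group, so $(bw)^h=e$ in $\W$; the length check $l((bw)^h)=rh=2|\Phi^+|=l(\Omega)$ lifts this identity to the positive braid monoid as $(bw)^h=\Omega$. Combining these, $(qp^\circ)^h=\Omega^{n+1}$, which is exactly the hypothesis of Theorem~\ref{5.1} with $m=h$ and the $n$ there replaced by $n+1$, yielding $\DT^{2(h+n+1)}=\Id$ on $\conf^p_q(\mathcal{B})$.

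To upgrade this $\DT$-periodicity to $\Za^{h+n+1}=\Id$, I would adapt the argument of Theorem~\ref{5.1}'s proof itself---not via the $\DT^2$-shift but directly via the $\Za$-shift. By the preceding lemma, each $\Za$ is a single Coxeter-element shift of the universal chain of flags produced by iterating the reflection maps: after $h+n+1$ applications the top shifts backward by the braid $(wb)^{h+n+1}$ and the bottom forward by $(bw)^{h+n+1}$. Using $(bw)^h=\Omega$, both shifts split into an $\Omega$-part and a $(bw)^{n+1}=qp^\circ$-part (respectively $(wb)^{n+1}$). The $\Omega$-part, being a lift of $w_0^2=e\in\W$, is absorbed by the diagonal $\G$-action on the configuration. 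The remaining $qp^\circ$-piece on the bottom and its counterpart on the top match the braids spanned by the original window exactly, and are absorbed by the braid-move equivalence that underlies Theorem~\ref{bott-samelson uniqueness} and the closure step of Theorem~\ref{5.1}'s proof. Together these identify the window after $h+n+1$ iterations of $\Za$ with the original window up to $\G$-action, giving $\Za^{h+n+1}=\Id$.

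The main obstacle is making this final closure fully precise. The proof of Theorem~\ref{5.1} carries out an analogous braid-move and $\G$-equivalence argument for the $\DT^2$-dynamics, where the shift per iteration is one full $qp^\circ$-piece; for $\Za$ the step size is smaller---a single Coxeter element per iteration---so the accumulated $\Omega$-shift arises only after every $h$ iterations rather than every iteration. The delicate point is to verify that at exactly $h+n+1$ iterations both ends of the $\conf^p_q(\mathcal{B})$ window close up simultaneously, with the two perpendicularity conditions in the trapezoid matching up after the accumulated twisting, so that the order of $\Za$ truly divides $h+n+1$ and not a proper multiple thereof.
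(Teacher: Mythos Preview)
Your braid identities are correct, but invoking Theorem~\ref{5.1} only yields $\DT^{2(h+n+1)}=\Id$, and $\Za$ is not $\DT$: the Donaldson--Thomas transformation involves the transposition map, while by the preceding lemma $\Za$ is a pure sequence of reflections. So the first paragraph, while valid, does not feed into the conclusion, and the real content must lie in your second paragraph.

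There the gap you flag is exactly what the paper resolves, by a move you are missing. Rather than tracking the top and bottom chains of $\conf^p_q(\mathcal{B})$ separately, the paper first passes via the right-reflection isomorphism $R$ to $\conf^e_{qp^\circ}(\mathcal{B})$, where a point is a single $(bw)^{n+1}$-chain $\B_{(0)}\to\cdots\to\B_{(n+1)}$ together with two $w_0$-arrows back to $\B_{(0)}$ encoding the two general-position conditions. Since $c^h=\Omega=w_0^2$ as positive braids, Lemma~\ref{unique} refines those two $w_0$-arrows into $h$ further $c$-arcs, closing the configuration into a cycle of $h+n+1$ arcs. After conjugation by $R$, the transformation $\Za$ becomes the one-step rotation of this cycle, and periodicity is immediate. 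Your claim that the $\Omega$-part is ``absorbed by the diagonal $\G$-action'' is not how this works: $\Omega$ is nontrivial in $\Br_+$, and the extra $c^h$ worth of flags is genuine new data, supplied precisely by the two vertical edges of the trapezoid. That is what makes both ends close up simultaneously and eliminates the delicate bookkeeping you anticipate.
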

\begin{proof} Let $R$ be the isomorphism from $\conf^p_q(\mathcal{B})$ to $\conf^e_{qp^\circ}(\mathcal{B})$ by reflections on the right. Element in $\conf^e_{qp^\circ}(\mathcal{B})$ are described as
\[
\xymatrix{& & &\B_{2n+3} \ar@{->}[dlll]_{w_0} & & \\
\B_0 \ar[r]_b & \B_1 \ar[r]_w & \B_2 \ar[r]_b & \dots \ar[r]_w & \B_{2n} \ar[r]_b& \B_{2n+1} \ar[r]_w & \B_{2n+2} \ar@{->}[ulll]_{w_0}}
\]
Consider the Coxeter element $c=bw$. We have $c^h=w_0^2$ as positive braids. Therefore the chain $\xymatrix{\B_{2n+2} \ar[r]^{w_0} & \B_{2n+3} \ar[r]^{w_0} & \B_0}$ can be written as $h$ copies of $c$ chains as below, where $\B_{(i)}=\B_{2i}$ for $0\leq i\leq n+1$.
\[
\begin{tikzpicture}
    \foreach \i in {0,...,3}
    {
    \node (\i) at +(180+36*\i:3) [] {$\B_{(\i)}$};
    }
    \node (4) at +(180+36*4:3) [] {$\vdots$};
    \node (5) at +(180+36*5:3) [] {$\B_{(n+1)}$};
    \node (6) at +(180+36*6:3) [] {$\B_{(n+2)}$};
    \node (7) at +(180+36*7:3) [] {$\B_{(n+3)}$};
    \node (8) at +(180+36*8:3) [] {$\cdots$};
    \node (9) at +(180+36*9:3) [] {$\B_{(n+h)}$};
    \foreach \i in {0,...,8}
    {
    \pgfmathtruncatemacro{\j}{\i+1};
    \draw [->] (\i) -- (\j);
    }
    \draw [->] (9) -- (0);
    \foreach \i in {0,...,9}
    {
    \node at +(198+36*\i:3.2) [] {$c$};
    }
\end{tikzpicture}
\]
After conjugation by $R$, the transformation $\Za$ acts on $\conf^e_{qp^\circ}(\mathcal{B})$ by rotating the above circle clockwisely by one step. Therefore $\Za^{h+n+1}=\Id$.
\end{proof}

As in \cite[Sec.5.7]{Keldilog}, Zamolodchikov's periodicity implies that
\[
\left(\DT_{\Delta\square \Delta'}\right)^{m}=\mathrm{Id},\hskip 10mm \mbox{where } m=\frac{2(h+h')}{{\rm gcd}(h, h')}.
\]
We close this section by  presenting a direct proof of the above identity for $\Delta\square \mathrm{A}_n$. 
 
\begin{cor}\label{5.3}$(\DT_{\Delta\square \mathrm{A}_n})^m=\Id$ where $m=\frac{2(h+n+1)}{\gcd(h,n+1)}$.
\end{cor}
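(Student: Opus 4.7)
The plan is to apply Theorem \ref{5.1} directly to the double Bott-Samelson cell $\conf^p_q(\mathcal{B})$, which has already been identified in Section \ref{PeriodicityConje} as a birational model of the cluster Poisson variety $\mathcal{X}_{\Delta\square \mathrm{A}_n}$. The essential computation is to express $qp^\circ$ as a neat power of a Coxeter element, and then to invoke the classical identity relating that power to $\Omega$. Because $b$ and $w$ are products of pairwise commuting simple reflections, they satisfy $b^\circ = b$ and $w^\circ = w$; reversing the alternating word $p = wbw\cdots$ of $n+1$ letters gives $p^\circ = p$ when $n+1$ is odd and $p^\circ = q$ when $n+1$ is even. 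A short check in either parity yields
$$qp^\circ \;=\; c^{n+1},$$
where $c := bw$ is the bipartite Coxeter element of $\Delta$.

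Next I would invoke the classical fact that $c^h = \Omega$ in the positive braid monoid $\Br_+$ for any bipartite Coxeter element of a finite-type Weyl group. (Both sides have length $rh = 2\ell(w_0) = \ell(\Omega)$; since $c^h$ lies in the pure braid group and the central ray in $\Br_+$ of that length is generated by $\Omega$, equality is forced.) Setting $d := \gcd(h, n+1)$, this upgrades the relation $qp^\circ = c^{n+1}$ to
$$(qp^\circ)^{h/d} \;=\; c^{\,h(n+1)/d} \;=\; (c^h)^{(n+1)/d} \;=\; \Omega^{(n+1)/d}.$$

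With this relation in hand, Theorem \ref{5.1} applied to $(b,d) = (p,q)$ with $m = h/d$ and with the role of $n$ in the theorem played by $(n+1)/d$ yields that the order of $\DT$ on $\conf^p_q(\mathcal{B})\cong \mathcal{X}_{\Delta\square\mathrm{A}_n}$ divides
$$2\Bigl(\tfrac{h}{d}+\tfrac{n+1}{d}\Bigr) \;=\; \frac{2(h+n+1)}{\gcd(h,n+1)}.$$

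The only nontrivial input is the identity $qp^\circ = c^{n+1}$, which merely requires the two-case check on the parity of $n+1$. The sharpening from the naive bound $2(h+n+1)$ (which one gets by applying Theorem \ref{5.1} in the form $m = h$, $n_{\mathrm{thm}} = n+1$) to the claimed bound is exactly the gain obtained by extracting the $d$-th ``braid root'' $c$ of the relation $c^h = \Omega$; this is the only conceptual step beyond the previous results and is where the factor of $\gcd(h,n+1)$ enters.
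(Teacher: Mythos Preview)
Your argument is correct and coincides with the paper's: both show $qp^\circ = c^{\,n+1}$ with $c = bw$, use the identity $c^h = \Omega$, and then apply Theorem~\ref{5.1} with the pair $(h/d,\,(n+1)/d)$ for $d = \gcd(h,n+1)$. One small correction: your parenthetical justification of $c^h = \Omega$ is not valid as written---being a positive braid of length $rh$ lying in the kernel of $\Br_+ \to \W$ does not by itself force centrality or equality with $\Omega$---so you should simply cite this as a known fact (as the paper does earlier in the section) rather than offer that sketch.
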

\begin{proof}
In the braid group we have
\[
(qp^\circ)^{\frac{h}{{\rm gcd}(h, n+1)}}= \left(\underbrace{bwbwbw\dots bw}_\text{$n+1$ copies of $bw$}\right)^{\frac{h}{{\rm gcd}(h, n+1)}}=c^{\frac{h(n+1)}{{\rm gcd}(h, n+1)}}= \Omega^{\frac{(n+1)}{{\rm gcd}(h, n+1)}}
\]
By Theorem \ref{5.1}, the order of the DT transformation of $\conf^{p}_{q}(\mathcal{B})$ divides $\frac{2(h+n+1)}{{\rm gcd}(h, n+1)}$.
\end{proof}

\section{Points Counting and Positive Braid Closures}

\subsection{Points Counting over Finite Field}\label{computation}

The proof of Theorem \ref{affineness of conf} has shown that the space $\conf^b_d(\mathcal{A})$ can be realized as the non-vanishing locus of an integral polynomial given by Theorem \ref{gaussian}.  Therefore  $\conf^b_d(\mathcal{A})$  is well-defined over any finite field $\mathbb{F}_q$.  
In this section we present an algorithm\footnote{This algorithm has been suggested to us by J.H. Lu.} counting its $\mathbb{F}_q$ points
\[
f^b_d(q):=\left|  \conf^b_d(\mathcal{A})(\mathbb{F}_q)\right|.
\]

Let $u$ and $v$ be Weyl group elements and  $s_i$ a simple reflection. We set
\begin{equation}\label{eq 6.1}
P_i^{u, v}=\left\{\begin{array}{ll}
    q & \text{if $v=s_iu$ and $l\left(s_iu\right)<l(u)$,} \\
    q-1 & \text{if $v=u$ and $l\left(s_iu\right)>l(u)$,} \\
    1 & \text{if $v=s_iu$ and $l\left(s_iu\right)>l(u)$,}\\
    0 & \text{otherwise.}
\end{array}\right.
\end{equation}

\begin{thm} 
\label{vqbiboqef.counting}
Let $\vec{i}=(i_1, \ldots, i_n)$ be a word of $db^\circ$. Then 
\begin{equation}
\label{foruma.vqbiboqef.counting}
f^b_d(q)= (q-1)^{\tilde{r}} \sum_{(u_1, \ldots, u_{n-1})\in \W^{n-1}} \prod_{k=1}^{n} P_{i_k}^{u_{k-1}, u_{k}},
\end{equation}
where $u_0=u_n=e$ and $\tilde{r}=\dim \T$.
\end{thm}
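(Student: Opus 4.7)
The plan is to reduce to $b=e$ by reflections and then count flag chains by tracking a Tits codistance.

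The reflection maps of Section~\ref{simplereflection} are biregular isomorphisms defined by algebraic formulas over $\mathbb{Z}$, hence induce bijections on $\mathbb{F}_q$-points; composing one right reflection for each letter of $b$ yields $\conf^b_d(\mathcal{A})\cong \conf^e_{db^\circ}(\mathcal{A})$, so it suffices to prove \eqref{foruma.vqbiboqef.counting} when $b=e$ and $\vec i=(i_1,\ldots,i_n)$ is a word for the positive braid $d$. By Lemma~\ref{decorated general position} and Corollary~\ref{induced pinning} the group $\G$ acts freely and transitively on pinnings, so we pick the unique representative with $\A^0=\U_+$ and $\B_0=\B_-$. A point of $\conf^e_d(\mathcal{A})$ then amounts to a chain
\[
\xymatrix{\B_-=\B_0 \ar[r]^{s_{i_1}} & \B_1 \ar[r]^{s_{i_2}} & \cdots \ar[r]^{s_{i_n}} & \B_n}
\]
in $\mathcal{B}_-$ together with a decoration $\A_n$ over $\B_n$ opposite to $\A^0$. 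Since over $\mathbb{F}_q$ the decorations over a fixed flag form a $\T(\mathbb{F}_q)$-torsor and the opposition depends only on the underlying flag, we have $f^b_d(q)=(q-1)^{\tilde r}\,N_d(q)$, where $N_d(q)$ is the number of such chains with $\B_n$ opposite to $\B_+$.

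Stratify the chains by the Tits codistances $u_k:=d(\B_k,\B_+)\in\W$; the opposition conditions at the two ends force $u_0=u_n=e$, and we obtain
\[
N_d(q)=\sum_{(u_1,\ldots,u_{n-1})\in\W^{n-1}}\prod_{k=1}^n N^{u_{k-1},u_k}_{i_k},
\]
where $N^{u,v}_i$ denotes the number of flags $\B'$ with $d_-(\B,\B')=s_i$ and $d(\B',\B_+)=v$, given any fixed flag $\B$ with $d(\B,\B_+)=u$. The theorem thus reduces to the local identity $N^{u,v}_i=P^{u,v}_i$.

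To prove this local identity, observe that $N^{u,v}_i$ is invariant under the diagonal $\G$-action on pairs $(\B,\A^0)$ and hence depends only on $u$, so we may take $\B=\B_-$ and $\A^0=\overline u\B_+$. The flags $\B'$ at Tits distance $s_i$ from $\B_-$ form the affine line $\B_- s_i\B_-/\B_-\cong\mathbb{A}^1$, giving $q$ flags. For each, the codistance $d(\B',\B_+)=v$ is read off from the Birkhoff stratum of a group element built from $\overline{s}_i$ and $\overline u$. Using the Bruhat product rule $\B_+s_i\B_+\cdot\B_+u\B_+\subseteq \B_+s_iu\B_+\cup\B_+u\B_+$ and intersecting with the Birkhoff strata through $\overline u$, a direct case analysis yields: when $l(s_iu)>l(u)$, the $q-1$ generic parameter values give $v=u$ and the single exceptional value gives $v=s_iu$; when $l(s_iu)<l(u)$, all $q$ values give $v=s_iu$. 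This matches $P^{u,v}_i$ exactly. The main technical subtlety is the bookkeeping of the specific Weyl lifts $\overline w$ used in the Bruhat/Birkhoff intersections, which one handles by working inside the rank-two subgroup generated by the root subgroups involved; the rest is routine Bruhat--Tits combinatorics, and combining with the transfer-matrix identity gives \eqref{foruma.vqbiboqef.counting}.
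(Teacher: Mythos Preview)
Your proposal is correct and follows essentially the same approach as the paper: reduce to $b=e$ via reflections, fix the representative with $\A^0=\U_+$ and $\B_0=\B_-$, stratify the chain by Tits codistances $u_k$ to the top flag, and reduce to a local count that is precisely the paper's Lemma~\ref{3.3}. The only minor difference is that the paper proves the local identity by an explicit root-subgroup computation (writing $e_i(p)u=ue_\alpha(p')$ with $\alpha=u^{-1}(\alpha_i)$) rather than invoking a Bruhat product rule, which is a cleaner route since the relevant statement concerns Birkhoff cells rather than $\B_+$-double cosets.
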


\begin{proof} Since $\conf_d^b(\mathcal{A})\stackrel{\sim}{=}\conf_{db^\circ}^e(\mathcal{A})$, it suffices to count  $\mathbb{F}_q$-points of the latter space. Note that every configuration in $\conf_{db^\circ}^e(\mathcal{A})$ has a unique representative such that the pair of flags associated to its left side is $\xymatrix{\U_+\ar@{-}[r] & \B_-}$. Then let us fix a triangulation by taking diagonals from the top vertex to all the bottom vertices, and
label all the diagonals by Weyl group elements indicating the Tits codistances between the top flag and the bottom flags. It gives rise to a decomposition of the space $\conf_{db^\circ}^e(\mathcal{A})$.  Moving across the triangulation from left to right, we associate the number of possible configurations to each triangle based on Lemma \ref{3.3}: $q$ for $\mathbb{A}^1$, $q-1$ for $\mathbb{G}_m$, $1$ for $\{*\}$, and $0$ for $\emptyset$. In the end there is another decoration $\A_n$ over the last flag $\B_n$, which gives another multiple of $(q-1)^{\tilde{r}}=\left| \T\left(\mathbb{F}_q\right)\right|$. Taking the summation over all possible cases, we get \eqref{foruma.vqbiboqef.counting} 
\end{proof}

Inspired by a recent result of Galashin and Lam \cite{GL} on computing the $\mathbb{F}_q$-point count of positroid cells, we relate the computation of $f^b_d(q)$ with Hecke algebras.

Let us first briefly recall the definition of Hecke algebra $\mathcal{H}$ associated with a generalized Cartan matrix $\C$. As a non-commutative algebra, $\mathcal{H}$ is generated over $\mathbb{Z}\left[q^{\pm 1}\right]$ by the elements $\left\{T_i\right\}_{i\in \S}$, where $\S$ is the set of Coxeter generators. The generators $T_i$ satisfy the usual braid relations \eqref{braid rel} imposed by the Cartan matrix as well as the following identity
\begin{equation}\label{hecke def relation}
\left(T_i+q\right)\left(T_i-1\right)=0 \quad \quad \forall i\in \S.
\end{equation}
Note that \eqref{hecke def relation} implies that
\begin{equation}\label{T_s inverse}
T_i^{-1}=q^{-1}T_i+\left(1-q^{-1}\right) \quad \quad \forall i\in \S.
\end{equation}

For any positive braid $b\in \Br^+$ with braid word $\left(i_1,i_2,\dots, i_l\right)$, we define 
\[
T_b:=T_{i_1}T_{i_2}\cdots T_{i_l}.
\]
Note that this is well-defined since the generators $T_i$ also satisfy the braid relations. Since any Weyl group element $w\in \W$ defines a unique positive braid via any reduced word of $w$, we define $T_w$ to be the corresponding product of the generators $T_i$ according to the reduced word. In particular, as a consequence of \eqref{hecke def relation}, for any Weyl group elements $u,v\in \W$ with $v=s_iu$, we have
\begin{equation} \label{Hecke identity}
T_iT_u=\left\{\begin{array}{ll}
    T_v & \text{if $l(v)>l(u)$}, \\
    (1-q)T_u+qT_v & \text{if $l(v)<l(u)$}. 
\end{array}\right.
\end{equation}

It is known that $\left\{T_w\right\}_{w\in \W}$ forms a linear basis of $\mathcal{H}$, i.e., $\mathcal{H}\cong \bigoplus_{w\in \W}\mathbb{Z}\left[q^{\pm 1}\right]T_w$. We define a $\mathbb{Z}\left[q^{\pm 1}\right]$-linear map $\epsilon:\mathcal{H}\rightarrow \mathbb{Z}\left[q^{\pm 1}\right]$ by
\[
\epsilon\left(T_w\right)=\left\{\begin{array}{ll}
    1 & \text{if $w=e$},  \\
    0 & \text{otherwise}. 
\end{array}\right.
\]

\begin{cor} Let $\vec{i}=(i_1, \ldots, i_n)$ be a word of $db^\circ$ and let $\tilde{r}:=\dim \T$. Then
\[
f^b_d(q)=(q-1)^{\tilde{r}}q^n\epsilon\left(T_{db^\circ}^{-1}\right).
\]
\end{cor}
\begin{proof} By comparison with \eqref{foruma.vqbiboqef.counting}, it suffices to show that 
\[
q^n\epsilon\left(T_{db^\circ}^{-1}\right)=\sum_{(u_1, \ldots, u_{n-1})\in \W^{n-1}} \prod_{k=1}^{n} P_{i_k}^{u_{k-1}, u_{k}}.
\]
Without loss of generality let us assume that $b=e$. Let $\left(i_1,i_2,\dots, i_n\right)$ be a braid word for $d$. Then $T_d^{-1}=T_{i_n}^{-1}T_{i_{n-1}}^{-1}\cdots T_1^{-1}$. By substituting in \eqref{T_s inverse}, we get
\[
q^n\left(T_d^{-1}\right)=\left(T_{i_n}+(q-1)\right)\left(T_{i_{n-1}}+(q-1)\right)\cdots \left(T_{i_1}+(q-1)\right).
\]

For each $0\leq k\leq n$, define $h^k_w(q)$ to be the coefficients in the expansion 
\[
\left(T_{i_k}+(q-1)\right)\cdots \left(T_{i_1}+(q-1)\right)=\sum_{w\in \W}h^k_w(q)T_w
\] 
We claim that for any $u_k\in \W$, 
\begin{equation}\label{induction claim}
\sum_{(u_1,\dots, u_{k-1})\in \W^{k-1}}\prod_{l=1}^kP_{i_l}^{u_l,u_{l-1}}=h^k_{u_k}(q),
\end{equation}
and the theorem will follow from this claim and the definition of the linear map $\epsilon$.

We will do an induction on $k$. The claim is trivial for the base case $k=0$. For the inductive step, we define $u:=u_k$ and $v:=s_{i_{k+1}}u$. If $l(v)>l(u)$, we have
\[
    \left(T_{i_{k+1}}+(q-1)\right)h^k_u(q)T_u=h^k_u(q)T_v+(q-1)h^k_u(q)T_u,
\]
which covers the middle two cases in \eqref{eq 6.1}. On the other hand, if $l(v)<l(u)$, then by \eqref{Hecke identity} we have
\[
\left(T_{i_{k+1}}+(q-1)\right)h^k_u(q)T_u=h^k_u(q)\left((1-q)T_u+qT_v+(q-1)T_u\right)=h^k_u(q)qT_v,
\]
which covers the first case in \eqref{eq 6.1}. By combining these cases, we see that \eqref{induction claim} remains true for $k+1$ and hence the induction is finished.
\end{proof}

The group $\T\times \T$ acts on $\conf^b_d(\mathcal{A})$ by altering the decorations $\A^0$ and $\A_n$. It induces a transitive $\T\times \T$-action on each fiber of the projection $\pi:\conf^b_d(\mathcal{A})\rightarrow \conf^b_d(\mathcal{B})$. As stacks we have the isomorphism
\[
\conf^b_d(\mathcal{A}) \left/\left( \T\times \T\right)\right. \stackrel{\sim}{=} \conf^b_d(\mathcal{B})
\]
Therefore the number of $\mathbb{F}_q$-points of $\conf^b_d(\mathcal{B})$ as a stack is
\[
g^b_d(q):=\left|\conf^b_d(\mathcal{B})(\mathbb{F}_q)\right| = \frac{\left| \conf^b_d(\mathcal{A})(\mathbb{F}_q)\right|}{\left| \T\times \T(\mathbb{F}_q)\right|}= \frac{f^b_d(q)}{(q-1)^{2\tilde{r}}}
\]
Note that in general $g^b_d(q)$ is a rational function with possible poles are at $q=1$.
We include the code of a python program that computes $g^b_d(q)$ for positive braid closures in Appendix \ref{A.4}.

\begin{exmp} Let $\G=\SL_2$, $b=e$, and $d=s_1s_1s_1$. Let us fix a triangulation given by drawing diagonals from the top vertex to all the vertices at the bottom. Below are all the possible cases of different Tits codistances.
\[
\begin{tikzpicture}[scale=0.8]
\draw (-3,0) -- (0,2) -- (3,0) -- node [below] {$s_1$} (1,0) -- node [below] {$s_1$} (-1,0) -- node [below] {$s_1$} (-3,0);
\draw (0,2) -- (-1,0);
\draw (0,2) -- (1,0);
\node at (0,2) [] {$\bullet$};
\foreach \i in {-3,-1,1,3}
    {
    \node at (\i,0) [] {$\bullet$};
    }
\end{tikzpicture}
\]
\[
\begin{tikzpicture}[scale=0.8]
\draw (-3,0) -- (0,2) -- (3,0) -- node [below] {$s_1$} (1,0) -- node [below] {$s_1$} (-1,0) -- node [below] {$s_1$} (-3,0);
\draw (0,2) -- node [below left] {$s_1$} (-1,0);
\draw (0,2) -- (1,0);
\node at (0,2) [] {$\bullet$};
\foreach \i in {-3,-1,1,3}
    {
    \node at (\i,0) [] {$\bullet$};
    }
\end{tikzpicture}
\quad \quad \quad 
\begin{tikzpicture}[scale=0.8]
\draw (-3,0) -- (0,2) -- (3,0) -- node [below] {$s_1$} (1,0) -- node [below] {$s_1$} (-1,0) -- node [below] {$s_1$} (-3,0);
\draw (0,2) -- (-1,0);
\draw (0,2) -- node [below right] {$s_1$} (1,0);
\node at (0,2) [] {$\bullet$};
\foreach \i in {-3,-1,1,3}
    {
    \node at (\i,0) [] {$\bullet$};
    }
\end{tikzpicture}
\]
\[
\begin{tikzpicture}[scale=0.8]
\draw (-3,0) -- (0,2) -- (3,0) -- node [below] {$s_1$} (1,0) -- node [below] {$s_1$} (-1,0) -- node [below] {$s_1$} (-3,0);
\draw (0,2) -- node [below left] {$s_1$} (-1,0);
\draw (0,2) -- node [below right] {$s_1$} (1,0);
\node at (0,2) [] {$\bullet$};
\foreach \i in {-3,-1,1,3}
    {
    \node at (\i,0) [] {$\bullet$};
    }
\end{tikzpicture}
\]
The first case gives $(q-1)^3$, the fourth case gives $0$, and each of the rest  gives $q(q-1)$. Therefore 
\begin{align*}
f^b_d(q)&=\left((q-1)^3+2q(q-1)\right)(q-1)=q^4-2q^3+2q^2-2q+1.\\
g^b_d(q)&=\frac{f^b_d(q)}{(q-1)^2}=q^2+1.
\end{align*}
\end{exmp}

\begin{rmk} This Example coincides with Example 6.38 in \cite{STZ}. In next section,  we show that $\conf^b_d(\mathcal{B})$ is isomorphic to the moduli space $\mathcal{M}_1\left(\Lambda^b_d\right)$  of microlocal rank one sheaves in \textit{loc.cit.}.
\end{rmk}

\subsection{Legendrian links and Microlocal Rank-1 Sheaves}
\label{cadsbhvo}
Let us briefly recall some basic definitions about Legendrian links. The space $\mathbb{R}^3$ is equipped with the standard contact structure from the 1-form $\alpha= y {\rm d}x -{\rm d}z$. A {\it Legendrian link} in $\mathbb{R}^3$ is a link $\Lambda$ such that the restriction of $\alpha$ to $\Lambda$ vanishes. Two links are {\it Legendrian isotopic} if there is an isotopy between them that preserves the property of being Lengendrian at every stage. A Legendrian link $\Lambda$ can be visualized by its image under the {\it front projection} $\pi_F$ from $\mathbb{R}^3$  to the $xz$-plane. The constraint $\alpha |_\Lambda=0$ implies that the $y$ coordinate of $\Lambda$ is determined by the slope of its front projection. 

Shende, Treumann, and Zaslow \cite{STZ} have associated to every Legendrian link $\Lambda$ a category ${\bf Sh}_{\Lambda}^\bullet(\mathbb{R}^2)$ of constructible sheaves on the $xz$ plane with singular support controlled by the front projection of $\Lambda$. Using a theorem of Guillermou-Kashiwara-Schapira \cite{GKS}, they prove that the category ${\bf Sh}_{\Lambda}^\bullet(\mathbb{R}^2)$ is invariant under Legendrian isotopies. As a consequence, the moduli space $\mathcal{M}_1(\Lambda)$ of {\it microlocal rank one} sheaves in ${\bf Sh}_{\Lambda}^\bullet(\mathbb{R}^2)$ are Legendrian link invariants. 

\vskip 2mm

In this section we investigate Legendrian links arising from a pair $(b,d)$ of positive braids of Dynkin type $\mathrm{A}_r$. Let $\vec{i}$ and $\vec{j}$ be reduced words of $b$ and $d$ respectively. Associated to $(\vec{i}, \vec{j})$ is a Legendrian link $\Lambda_{\vec{j}}^{\vec{i}}$, whose front projection is described by the following steps.
\begin{enumerate}
\item We draw $2r+2$ many horizontal strands on the $xz$-plane. 
\item The top $r+1$ strands have crossings encoded by $\vec{i}$, and the bottom $r+1$ stands have crossings encoded by $\vec{j}$.
\item We close up both ends of the strands by cusps. 
\end{enumerate}

\begin{exmp} Let $r=2$. Let $\vec{i}=(1,2)$ and $\vec{j}=(1)$. The front projection $\pi_F(\Lambda_{\vec{j}}^{\vec{i}})$ of $\Lambda_{\vec{j}}^{\vec{i}}$ is 
\[
\begin{tikzpicture}
\begin{knot}[
consider self intersections, 
ignore endpoint intersections=false, 
]
\strand (0,0) -- (4,0) to [out=0,in=180] (7,1.5) to [out=180,in=0] (4,3) to [out=180,in=0] (2,2.5) to [out=180, in=0] (0,2) to [out=180, in=0] (-1,1.5) to [out=0,in=180] (0,1) to [out=0,in=180] (2,0.5) to [out=0, in=180] (4,0.5) to [out=0, in =180] (6,1.5) to [out=180,in=0] (4,2.5) to [out=180, in=0] (2,3) to [out=180, in =0] (0,3) to [out=180, in=0] (-3,1.5) to [out=0,in=180] (0,0);
\strand (0,0.5) to [out=0,in=180] (2,1) to (4,1) to [out=0, in=180] (5,1.5) to [out=180, in=0] (4,2) to (2,2) to [out=180, in=0] (0,2.5) to [out=180, in=0] (-2,1.5) to [out=0,in=180] (0,0.5);
\flipcrossings{2,3};
\end{knot}
\node at (1,0.75) [above] {$s_1$};
\node at (1,2.25) [below] {$s_1$};
\node at (3,2.75) [above] {$s_2$};
\end{tikzpicture}
\]

\end{exmp}

 Connected components of the compliment of  $\pi_F(\Lambda_{\vec{j}}^{\vec{i}})$  are called {\it faces}. We use $f_{\rm in}$ to denote the face enclosed by the $(r+1)$-th and the $(r+2)$-th strands, and  $f_{\rm out}$ to denote the unbounded face.  Crossings and cusps of $\pi_F(\Lambda_{\vec{j}}^{\vec{i}})$ cut its strands into segments called {\it edges}.  Two faces are said to be \emph{neighboring} if they are separated by an edge $e$. Note that one of the neighboring faces is \emph{above} $e$ and the other is \emph{below} $e$ with respect to the  $z$-direction in the $xz$ plane.

Let us present an equivalent working definition of microlocal rank one sheaves associated to  $\Lambda_{\vec{j}}^{\vec{i}}$. See \cite{STZ} for the original definition.

\begin{defn}\label{microlocal} A \emph{microlocal rank-1 sheaf} $\mathcal{F}$ associated to $\Lambda^\vec{i}_\vec{j}$ consists of the following data 
\begin{itemize}
    \item assigned to every face $f$ is a finite dimensional vector space $V_f$ is over a field $\mathds{k}$; 
    \item assigned to every edge $e$ is a full-rank linear map $\phi_e:V_f\rightarrow V_g$, where $f$ and $g$ are neighboring faces separated by $e$, with $f$ sitting below $e$
\end{itemize}
such that 
\begin{itemize}
    \item the dimensions of vector spaces assigned to any neighboring faces differ by $1$;
    \item $\dim V_{f_\text{in}}=r+1$ and $\dim V_{f_\text{out}}=0$;
    
    \item for every crossing illustrated on the left below, the following sequence is exact
    \begin{equation}
    \label{vfdhoqwd}
    \xymatrix{V_s\ar[rr]^{\left(\phi_{sw},\phi_{se}\right)} & & V_w\oplus V_e \ar[rr]^{\phi_{wn}-\phi_{en}} & & V_n}
    \end{equation}
    \item for every cusp illustrated on the right below, $\phi_{fg}\circ \phi_{gf}=\mathrm{id}_{V_f}$.
\end{itemize}
\[
\begin{tikzpicture}
\draw (-2,-1) to [out=0,in=-135] (0,0) to [out=45,in=180] (2,1);
\draw (-2,1) to [out=0,in =135] (0,0) to [out=-45,in=180] (2,-1);
\node (s) at (0,-1.5) [] {$V_s$};
\node (e) at (1.5,0) [] {$V_e$};
\node (w) at (-1.5,0) [] {$V_w$};
\node (n) at (0,1.5) [] {$V_n$};
\draw [->] (s) -- node [below right] {$\phi_{se}$} (e);
\draw [->] (s) -- node [below left] {$\phi_{sw}$} (w);
\draw [->] (e) -- node [above right] {$\phi_{en}$} (n);
\draw [->] (w) --  node [above left] {$\phi_{wn}$} (n);
\end{tikzpicture} \quad \quad \quad \quad
\begin{tikzpicture}
\draw (2,1.5) to [out=-135, in =0] (-1,0) to [out=0, in =135] (2,-1.5);
\node (f) at (-2,0) [] {$V_f$};
\node (g) at (2,0) [] {$V_g$};
\draw [->] (f) to [out=-45, in=-135] node [below] {$\phi_{fg}$} (g);
\draw [->] (g) to [out=135,in =45] node [above] {$\phi_{gf}$} (f);
\end{tikzpicture}
\]
\end{defn}

One can think of a microlocal rank-1 sheaf as a quiver representation, and two microlocal rank-1 sheaves are \emph{isomorphic} if they are isomorphic as quiver representations. Let $\mathcal{M}_1(\Lambda^{\vec{i}}_{\vec{j}})$  be the moduli space of isomorphism classes of microlocal rank-1 sheaves associated to $\Lambda^\vec{i}_\vec{j}$. The space $\mathcal{M}_1(\Lambda^{\vec{i}}_{\vec{j}})$ is invariant under Legendrian isotopies.

\begin{figure}
\begin{center}
\begin{tikzpicture}
\draw (0,0) -- (2,0);
\draw (0,1) -- (2,1);
\draw (0,2) -- (2,2);
\draw (0,3) -- (2,3);
\node (0) at (1,-0.5) [] {$V_0=V_{f_{\text{in}}}$};
\node (1) at (1,0.5) [] {$V_1$};
\node (2) at (1,1.5) [] {$\vdots$};
\node (3) at (1,2.5) [] {$V_r$};
\node (4) at (1,3.5) [] {$V_{r+1}=V_{f_{\text{out}}}$};
\draw [->] (0) -- (1);
\draw [->] (1) -- (2);
\draw [->] (2) -- (3);
\draw [->] (3) -- (4);
\end{tikzpicture}\quad \quad \quad \quad
\begin{tikzpicture}
\draw (0,0) -- (2,0);
\draw (0,1) -- (2,1);
\draw (0,2) -- (2,2);
\draw (0,3) -- (2,3);
\node (0) at (1,-0.5) [] {$V_{r+1}=V_{f_{\text{out}}}$};
\node (1) at (1,0.5) [] {$V_r$};
\node (2) at (1,1.5) [] {$\vdots$};
\node (3) at (1,2.5) [] {$V_1$};
\node (4) at (1,3.5) [] {$V_0=V_{f_{\text{in}}}$};
\draw [->] (0) -- (1);
\draw [->] (1) -- (2);
\draw [->] (2) -- (3);
\draw [->] (3) -- (4);
\end{tikzpicture}
\quad \quad \quad \quad
\begin{tikzpicture}
\draw (4,4) to [out =180, in=0] (0,2)  to [out=0,in=180] (4,0);
\draw (4,3.5) to [out=180,in=0] (1.5,2) to [out=0,in=180] (4,0.5);
\draw (4,3) to [out=180, in=0] (3,2) to [out=0,in=180] (4,1);
\node (0) at (-0.5,2) [] {$V_0$};
\node (1) at (1.25,2) [] {$V_1$};
\node (2) at (2.5,2) [] {$\cdots$};
\node (3) at (4,2) [] {$V_{r+1}$};
\draw [->] (0) to [out=-45,in=-135] (1); 
\draw [->] (1) to [out=-45,in=-135] (2); 
\draw [->] (2) to [out=-45,in=-135] (3); 
\draw [->] (3) to [out=135,in=45] (2); 
\draw [->] (2) to [out=135,in=45] (1); 
\draw [->] (1) to [out=135,in=45] (0); 
\end{tikzpicture}
\end{center}
\caption{Flags obtained from microlocal rank one sheaves.}
\label{dnciweqcn}
\end{figure}
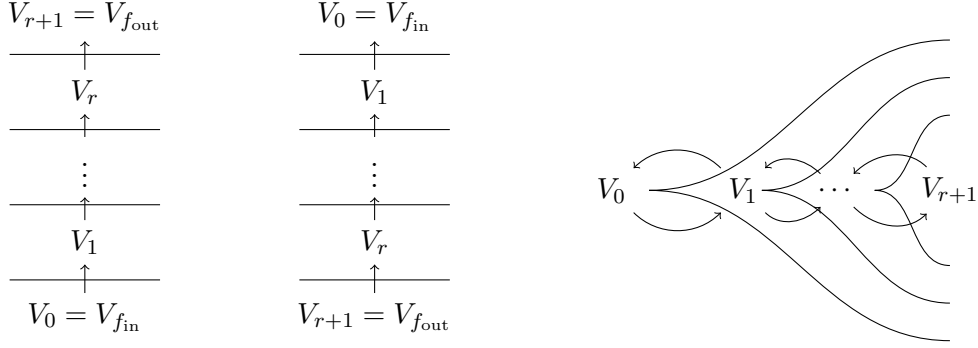

\begin{thm} 
\label{caosdbvco}
Let $\vec{i}$ and $\vec{j}$ be reduced words of positive braids $b$ and $d$ respectively. There exists a natural isomorphism from $\mathcal{M}_1(\Lambda^{\vec{i}}_{\vec{j}})$ to $\conf_d^b(\mathcal{B})$.
\end{thm}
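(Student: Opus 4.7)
The plan is to build a natural bijection by reading off flags from vertical slices of the front diagram. Fix a microlocal rank-1 sheaf $\mathcal{F}$ and set $V := V_{f_{\rm in}}$, a vector space of dimension $r+1$. For a generic vertical slice (missing all crossings and cusps), the faces crossed above $f_{\rm in}$ form a chain $f_{\rm in}, u_1, \ldots, u_r, f_{\rm out}$, and the edge maps going up are surjections since the target dimension drops by one at each step. Taking successive kernels produces a decreasing complete flag $V \supset \ker\phi_{e_1} \supset \cdots \supset 0$ in $V$; symmetrically, the faces below $f_{\rm in}$ produce an increasing complete flag in $V$ via successive images of injective edge maps. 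Up to the action of $\mathrm{GL}(V)$ on $V$, these two flags define a point of $\mathcal{B}_+\times\mathcal{B}_-$.

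Next I will track how these two flags change as the slice is swept horizontally. Between neighboring crossings or cusps the flags are constant, so changes can only happen at crossings. At a crossing of type $s_{i_k}$ in the top half, the exactness of the square \eqref{vfdhoqwd} translates, upon taking kernels of the surjections, into the assertion that the new upper flag differs from the old one exactly at the $i_k$-th step, i.e.\ they are of Tits distance $s_{i_k}$; the analogous statement holds in the bottom half. Reading slices from left to right across the top therefore produces a chain $\B(b) \in \mathcal{C}_+(b)$ and across the bottom produces a chain $\B(d) \in \mathcal{C}_-(d)$, both living inside the fixed ambient space $V$.

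The general-position conditions at both ends come from the cusps. At the leftmost slice, the cusp identity $\phi_{fg}\phi_{gf}=\mathrm{id}_{V_f}$ forces each upper surjection to admit a splitting matching the paired lower injection, and an induction on the nesting of cusps shows that the successive kernels from above and successive images from below split $V$ into complementary lines, i.e.\ the two flags at that slice are opposite. The same argument handles the rightmost slice. This yields a well-defined point $[\B(b),\B(d)] \in \conf^b_d(\mathcal{B})$. A morphism of microlocal sheaves acts by change of basis on $V$, which matches the diagonal $\G$-action, so we get a well-defined map $\Phi:\mathcal{M}_1\left(\Lambda^{\vec{i}}_{\vec{j}}\right) \to \conf^b_d(\mathcal{B})$.

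For the inverse, starting from $[\B(b),\B(d)]$, I will use the pair of flags at each horizontal position to reconstruct $\mathcal{F}$: assign to each face the appropriate subquotient of $V$ formed from the top and bottom filtrations at that location, and define edge maps as the natural inclusions, projections, or splittings coming from opposition. By Lemma \ref{unique} the Tits distance conditions pin down the dimensions and maps uniquely, making the exact-square condition at each crossing automatic; opposition at the two ends makes the cusp identity automatic as well. The two procedures are mutually inverse by construction, and the map $\Phi$ is an isomorphism of moduli spaces. The main technical hurdle is the careful local dictionary: showing that exact squares of vector spaces in \eqref{vfdhoqwd} correspond bijectively to pairs of flags at Tits distance $s_i$, and that the cusp splitting identity corresponds bijectively to opposition of flags. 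Once this local correspondence is in place, the global statement follows by gluing across the planar diagram.
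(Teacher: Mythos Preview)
Your approach is correct and essentially identical to the paper's: both arguments read off a complete flag in $V_{f_{\rm in}}$ from each vertical slice by taking kernels of the surjections above and images of the injections below, then use the exactness at a crossing to deduce Tits distance $s_i$ and the cusp identity to deduce opposition of the two end-flags. The paper is slightly more explicit in naming the filtration pieces (writing $U_i=\mathrm{im}(V_i\to\cdots)$ and $W_i=\ker(\cdots\to V_i)$ and checking $U_i\cap W_i=0$ directly), while you give a bit more detail on the inverse construction; these are presentational differences rather than a genuinely different route.
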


\begin{proof} Let $\mathcal{F}$ be a a microlocal rank one sheaf. Let us align the cusps of $\pi_F(\Lambda^\vec{i}_\vec{j})$ on a horizontal line, which separates the $xz$ plane into two halves. We slice up the bottom half vertically at the crossings. 
For each slice, the sheave $\mathcal{F}$ gives rise to $r+2$ vector spaces with full-ranked linear maps connecting them. See the first graph in Figure \ref{dnciweqcn}. 
Let 
\[
U_i:={\rm Im}(V_i\rightarrow \ldots \rightarrow V_{r+1}) 
\]
By Definition \ref{microlocal}, every local linear map is a codimension 1 inclusion. 
Therefore we obtain a complete flag $\B =\left(0=U_0\subset U_1\subset \dots \subset U_{r+1}=V_{f_{\rm in}}\right)$.

Let $\B=\left(U_0\subset U_1\subset \dots\subset U_r \subset U_{r+1}\right)$ and $\B'=\left(U'_0\subset U'_1\subset \dots \subset U'_r\subset U_{r+1}'\right)$ be  flags associated to  two adjacent slices sharing a crossing on the $j$th level. By the exactness condition \eqref{vfdhoqwd}, we get $U_j\neq U_j'$ and $U_i=U'_i$ if $i\neq j$. Therefore $\xymatrix{\B\ar[r]^{s_j}&\B'}$ in  $\mathcal{B}_-$. Since the crossings of the strands in the bottom half are encoded by $\vec{j}$, by associating flags to the slices, we get a $\vec{j}$-chain 
\[
\xymatrix{\B_0 \ar[r]^{s_{j_1}} & \B_1\ar[r]^{s_{j_2}} & \dots \ar[r]^{s_{j_n}} & \B_n}.
\]

Similarly let us slice up the top half vertically at the crossings. For each slice, we get the data as shown on the  the second graph in Figure \ref{dnciweqcn}). The dimension condition forces $\dim V_i=i$ and the linear maps are all surjective. Let 
\[
W_i:=\ker\left(V_{r+1}\rightarrow V_r\rightarrow \dots \rightarrow V_i\right).
\]
It determines a complete flag $\B=\left(0=W_{r+1}\subset W_r\subset \dots W_0=V_{f_{\text{in}}} \right)$. Similarly, a crossing on the $i$th level at the top half imposes the condition that two adjacent flags are of Tits codistance $s_i$ in the flag variety $\mathcal{B}_+$. Therefore  the top half of a microlocal rank-1 sheaf $\mathcal{F}$  gives rise to an $\vec{i}$-chain 
\[
\xymatrix{\B^0\ar[r]^{s_{i_1}} & \B^1\ar[r]^{s_{i_2}} & \dots \ar[r]^{s_{i_m}} & \B^m}.
\]

Now we have obtained  an $\vec{i}$-chain and a $\vec{j}$-chain from  $\mathcal{F}$. From the construction of the braid closure, we know that the left cusps are nested as in the third graph of Figure \ref{dnciweqcn}. By Definition \ref{microlocal}, the composition $V_i\rightarrow V_{r+1}\rightarrow V_i$ is the identity map $\mathrm{id}_{V_i}$. Therefore
\[
U_i \cap W_i := {\rm im}(V_i\rightarrow V_{r+1}) \cap {\rm ker}(V_{r+1}\rightarrow V_i)=0
\]
Therefore the flags $\B^0$ and $\B_0$ are in general position. Similarly the flags $\B^m$ and $\B_n$ are in general position. Putting them together, we get a configuration 
\[
\xymatrix{\B^0 \ar[r]^{s_{i_1}} \ar@{-}[d] &\B^1 \ar[r]^{s_{i_2}} & \dots \ar[r]^{s_{i_m}} &   \B^m \ar@{-}[d] \\ \B_0 \ar[r]_{s_{j_1}} & \B_1 \ar[r]_{s_{j_2}} & \dots \ar[r]_{s_{j_n}} & \B_n}
\]
This defines a map 
\[
\mathcal{M}_1(\Lambda^\vec{i}_\vec{j})\rightarrow \conf^b_d(\mathcal{B}),
\]
It is not hard to show that this construction can be reversed to get an isomorphism class of microlocal rank one sheaves from a point in $\conf^b_d(\mathcal{B})$. 
\end{proof}

Theorem \ref{caosdbvco} is a slight generalization of \cite[Prop 1.5]{STZ}. It shows that $\mathcal{M}_1(\Lambda^{\vec{i}}_{\vec{j}})$ is equipped with a natural cluster Poisson structure. As a direct consequence, we obtain that 

\begin{cor}\label{link_inv'} The space $\conf^b_d(\mathcal{B})$ (as an algebraic stack) and $g^b_d(q)$ are Legendrian link invariants for closures of positive braids $(b,d)$.
\end{cor}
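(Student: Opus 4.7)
The plan is to reduce the claim to the Legendrian invariance of the Shende--Treumann--Zaslow moduli space $\mathcal{M}_1(\Lambda)$ via Theorem \ref{caosdbvco}. First, I would invoke Theorem \ref{caosdbvco}, which produces a natural isomorphism $\mathcal{M}_1\!\left(\Lambda^{\vec{i}}_{\vec{j}}\right)\cong \conf^b_d(\mathcal{B})$ for any choice of reduced words $\vec{i}$ and $\vec{j}$ of the positive braids $b$ and $d$. Since the construction of the isomorphism is entirely local (one reads off flags slice-by-slice from the cusps and crossings), different choices of words produce canonically identified moduli spaces, so the right-hand side is genuinely independent of $\vec{i},\vec{j}$.

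Next, I would appeal to the theorem of Guillermou--Kashiwara--Schapira \cite{GKS}, as applied in \cite[Theorem 1.1]{STZ}, which guarantees that the category ${\bf Sh}_\Lambda^\bullet(\mathbb{R}^2)$, and hence its moduli space $\mathcal{M}_1(\Lambda)$ of microlocal rank-1 sheaves, depends only on the Legendrian isotopy class of $\Lambda$. Combining this with the isomorphism of Theorem \ref{caosdbvco}, if $\Lambda^{\vec{i}}_{\vec{j}}$ and $\Lambda^{\vec{i}'}_{\vec{j}'}$ are Legendrian isotopic, then we obtain a chain of isomorphisms
\[
\conf^b_d(\mathcal{B})\;\cong\; \mathcal{M}_1\!\left(\Lambda^{\vec{i}}_{\vec{j}}\right)\;\cong\;\mathcal{M}_1\!\left(\Lambda^{\vec{i}'}_{\vec{j}'}\right)\;\cong\;\conf^{b'}_{d'}(\mathcal{B}).
\]
In particular, the stack $\conf^b_d(\mathcal{B})$ depends only on the Legendrian isotopy class $\Lambda^b_d$ of the positive braid closure, which is exactly the first assertion.

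For the second assertion, I would observe that $g^b_d(q)$ was defined as the orbifold count
\[
g^b_d(q)=\bigl|\conf^b_d(\mathcal{B})(\mathbb{F}_q)\bigr|=\frac{f^b_d(q)}{(q-1)^{2r}},
\]
and this count is manifestly a stacky invariant computed purely from the groupoid of $\mathbb{F}_q$-points. Since the algebraic stack $\conf^b_d(\mathcal{B})$ has already been shown to be a Legendrian link invariant, the rational function $g^b_d(q)$ is likewise a Legendrian link invariant.

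The only potential subtlety, and arguably the ``main obstacle'' in making the argument rigorous, is checking that the identifications in Theorem \ref{caosdbvco} are compatible across all Legendrian isotopies, not merely across braid moves. This reduces to the known fact that the GKS quantization is canonical up to isomorphism, so the induced isomorphisms on moduli spaces are well-defined; no new input is required beyond what is already cited.
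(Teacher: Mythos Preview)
Your proposal is correct and follows exactly the route the paper intends: the corollary is stated there as a direct consequence of Theorem~\ref{caosdbvco} together with the Guillermou--Kashiwara--Schapira/STZ invariance of $\mathcal{M}_1(\Lambda)$, and the paper does not even spell out a separate proof. Your added remark that $g^b_d(q)$ inherits invariance because it is a stacky point count of an already-invariant object is precisely the intended reasoning.
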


\renewcommand{\thesection}{A}

\section{Appendix}

\subsection{Basics about Kac-Peterson Groups}
\label{appen.a}

In this appendix we collect the necessary information on Kac-Peterson Groups (a.k.a. \emph{minimal} Kac-Moody groups). We will mostly follow S. Kumar \cite{Kum}.

A \emph{generalized Cartan matrix} is a matrix $\C=(\C_{ij})$ whose diagonal entries are all 2, and whose off-diagonal entries are non-positive integers, such that $\C_{ij}=0$ if and only if $\C_{ji}=0$. 

A \emph{realization} of an $r\times r$ generalized Cartan matrix $\C$ is a quadruple $\left(\mathfrak{h},\mathfrak{h}^*,\Pi^\vee, \Pi\right)$ such that 
\begin{itemize}
    \item $\mathfrak{h}$ and $\mathfrak{h}^*$ are dual complex vector spaces of dimension $\tilde{r}:=r+l$, where $l$ is the corank of $\C$;
    \item $\Pi=\left\{\alpha_1,\dots, \alpha_r\right\}\subset \mathfrak{h}^*$ and $\Pi^\vee=\left\{\alpha_1^\vee,\dots, \alpha_r^\vee\right\}\subset \mathfrak{h}$ are linearly independent subsets;
    \item $\inprod{\alpha_i^\vee}{\alpha_j}=\C_{ij}$ for $i, j=1, \ldots, r$.
\end{itemize}
Every $\C$ admits a unique up to isomorphism realization (\cite[Prop. 1.1]{Kac}).

The Kac-Moody algebra $\mathfrak{g}_\C$ associated to $\C$  is a  Lie algebra, with the generators  $E_{i}, E_{-i}~(i=1, \ldots, r)$ and $\mathfrak{h}$, and the  relations
\begin{equation}
\label{rel.kac-moody}
   \left\{\begin{array}{ll}
    \left[H,H'\right]=0 &  (H, H'\in \mathfrak{h}), \\
    \left[H,E_{i}\right]=\inprod{H}{\alpha_i}E_{i}, & \\
    \left[H,E_{- i}\right]=-\inprod{H}{\alpha_i}E_{i} & (i=1,\ldots r;~ H\in \mathfrak{h}),\\
\left[E_{i}, E_{- j}\right]= \delta_{ij}\alpha_i^\vee& (i,j=1,\ldots, r),\\
    \ad_{E_{ i}}^{1-\C_{ij}}E_{ j}=0, &\\
    \ad_{E_{- i}}^{1-\C_{ij}}E_{-j}=0 & (i\neq j). 
\end{array}\right.
\end{equation}

From now on, let $\C$ be a \emph{symmetrizable} generalized Cartan matrix, that is, there is an invertible diagonal matrix $\D$ such that $\D^{-1}\C$ is symmetric.
The matrix $\D$ may be chosen such that its diagonal entries are positive integers with $\gcd=1$. Let $(\mathfrak{h}, \mathfrak{h}^\ast, \Pi^\vee, \Pi)$ be a realization of $\C$. We further fix once and for all a lattice $\P\subset \mathfrak{h}^*$ with a basis $\left\{\omega_1,\dots, \omega_{\tilde{r}}\right\}$ such that $\Pi\subset \P$ and \[\inprod{\alpha_i^\vee}{\omega_j}=\delta_{ij} \hskip 7mm \text{for $i=1,\ldots, r$ and $j=1, \ldots, \tilde{r}$.}
\]
The lattice $\P$ is called the \emph{weight lattice}. A weight $\lambda\in \P$ is \emph{dominant} if $\inprod{\alpha_i^\vee}{\lambda}\geq 0$ for every  $\alpha_i^\vee\in \Pi$. Denote by $\P_+$ the set of dominant weights. 

The elements  $\omega_1, \ldots, \omega_{\tilde{r}}$ are called the \emph{fundamental} weights. 
They extend  $\C$  to a $\tilde{r}\times \tilde{r}$ matrix 
\[
\tilde{\C}=(\C_{ij})=\begin{pmatrix}
\C &\D\A\\
\A & 0\\
\end{pmatrix}
\]
such that $\alpha_j=\sum_{i=1}^{\tilde{r}}{\C}_{ij}\omega_i$ for $j=1, \ldots, r$.

\begin{lem} The matrix $\tilde{\C}$ is invertible.
\end{lem}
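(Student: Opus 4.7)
The plan is to factor $\tilde{\C}$ through a diagonal matrix and then exploit the nondegeneracy of the canonical invariant form on a symmetrizable realization.

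First, observe that $\tilde{\C} = \tilde{\D}\,\tilde{S}$, where $\tilde{\D} := \diag(\D_1, \ldots, \D_r, 1, \ldots, 1)$ is invertible and
\[
\tilde{S} := \tilde{\D}^{-1}\tilde{\C} = \begin{pmatrix} \D^{-1}\C & \A^T \\ \A & 0 \end{pmatrix}.
\]
Here the top-right block $\D\A$ of $\tilde{\C}$ is to be read as $\D\A^T$, the unique interpretation compatible with the symmetry of $\tilde{S}$ (which in turn follows from the symmetrizability hypothesis $(\D^{-1}\C)^T = \D^{-1}\C$). Since $\det\tilde{\D}\neq 0$, invertibility of $\tilde{\C}$ reduces to invertibility of the symmetric matrix $\tilde{S}$.

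To show $\tilde{S}$ is invertible, I would identify it with the Gram matrix, in the basis $\{\omega_1, \ldots, \omega_{\tilde r}\}$, of the canonical nondegenerate symmetric invariant bilinear form on $\mathfrak{h}^*$, dualized from the standard form on $\mathfrak{h}$ associated to a symmetrizable realization (Kac, Theorem~2.2). Matching block entries: the top-left block $\D^{-1}\C$ is the Gram matrix of the restriction of the form to $\spn\,\Pi$, the off-diagonal blocks $\A$ and $\A^T$ record the pairings between a chosen complement of $\spn\,\Pi^\vee$ in $\mathfrak{h}$ and the simple roots, and the bottom-right block vanishes because the extended fundamental weights $\omega_{r+1},\ldots,\omega_{\tilde r}$ are selected so that the corresponding complement is isotropic. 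Since the invariant form is nondegenerate, $\det\tilde{S}\neq 0$, and consequently $\tilde{\C}$ is invertible.

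The main obstacle is the careful matching of the blocks of $\tilde{S}$ with the Gram matrix of the invariant form, which depends on the precise convention used to extend $\Pi^\vee$ to a basis of $\mathfrak{h}$ and to define $\omega_{r+1},\ldots,\omega_{\tilde r}$. A purely direct kernel argument---assume $\tilde{S}\binom{x}{y}=0$, so $\D^{-1}\C\,x + \A^T y=0$ and $\A x=0$, and pair the first equation with $x$ to get $x^T(\D^{-1}\C)x=0$---is blocked by the fact that $\D^{-1}\C$ is indefinite for general symmetrizable Cartan matrices, so one cannot conclude $x\in\ker\C$ purely from $x^T(\D^{-1}\C)x=0$. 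Ultimately the extra input needed is precisely the nondegeneracy of the canonical invariant form, which is therefore the cleanest ingredient to invoke.
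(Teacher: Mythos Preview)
Your strategy—relate $\tilde{\C}$ to the Gram matrix of Kac's nondegenerate invariant form and conclude—is sound in spirit and genuinely different from the paper's argument, but your specific identification is off. With the complement $\mathfrak{h}''=\spn\{\alpha_{r+1}^\vee,\ldots,\alpha_{\tilde r}^\vee\}$ (the tail of the basis dual to $\{\omega_i\}$) and Kac's normalization $\epsilon_i=\D_i$, the Gram matrix of the form on $\mathfrak{h}$ in the basis $\{\alpha_i^\vee\}_{i=1}^{\tilde r}$ is
\[
G=\begin{pmatrix}\C\D & \D\A^T\\ \A\D & 0\end{pmatrix}=\tilde{\C}\tilde{\D},
\]
not $\tilde S=\tilde{\D}^{-1}\tilde{\C}$; and the Gram matrix of the dual form on $\mathfrak{h}^*$ in the basis $\{\omega_i\}$ is $G^{-1}$, again not $\tilde S$. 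So your block-matching paragraph does not check out as written. Of course, nondegeneracy of $G=\tilde{\C}\tilde{\D}$ still yields $\det\tilde{\C}\ne 0$, so the plan is easily repaired once you compute the right Gram matrix. Note also that the zero bottom-right block is not a feature of how the $\omega_j$ are chosen in the paper; it is how the paper \emph{defines} $\tilde{\C}$, and in your argument it is matched by \emph{choosing} $\mathfrak{h}''$ to be the span of the extended coroots.

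By contrast, the paper's proof is entirely elementary and avoids invoking Kac's Theorem~2.2. It uses two inputs: (i) the first $r$ columns of $\tilde{\C}$ are independent because $\Pi$ is, and (ii) symmetrizability $\tilde{\D}^{-1}\tilde{\C}=(\tilde{\D}^{-1}\tilde{\C})^T$ transfers this to independence of the first $r$ rows. Column-reducing the first $r$ columns so that the top $r$ entries of $l$ of them vanish then exhibits $\tilde{\C}'$ in block anti-triangular form
\[
\tilde{\C}'=\begin{pmatrix}0 & \C' & \D\A^T\\ \L & * & 0\end{pmatrix},
\]
where (i) forces the $l\times l$ block $\L$ to be invertible and (ii) forces the $r\times r$ block $(\C'\;\;\D\A^T)$ to be invertible; hence $\det\tilde{\C}'\ne 0$. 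Your route is more conceptual and explains \emph{why} the symmetrized matrix should be nondegenerate, but it imports a nontrivial external theorem; the paper's route is self-contained linear algebra, and in particular bypasses the normalization headaches you flagged.
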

\begin{proof} 
The matrix $\C$ is of corank $l$. We may apply elementary column transformations to the first $r$ columns of $\tilde{\C}$, and obtain a matrix
\[
\tilde{\C'}=\begin{pmatrix}
0 & \C' &\D\A\\
\L & \ast & 0\\
\end{pmatrix}.
\]
 The first $r$ column vectors of $\tilde{\C}$ are linearly independent because $\Pi$ is a linearly independent subset of $\mathfrak{h}^\ast$. Therefore the $l\times l$ submatrix $\L$ is invertible. Meanwhile the matrix $\tilde{\C}$ is symmetrizable. Therefore the first $r$ row vectors   of $\tilde{\C}$ are linearly independent, and the $r\times r$ submatrix $(\C'~~\D\A)$ of $\tilde{\C}'$ is invertible. It follows then $\tilde{\C'}$ (and hence $\tilde{\C}$) is invertible. 
\end{proof}

Using the matrix $\tilde{\C}$ we can extend $\Pi$ to a basis $\left\{\alpha_i\right\}_{i=1}^{\tilde{r}}$ of $\mathfrak{h}^*$ such that
\[
\alpha_j=\sum_{i=1}^{\tilde{r}}\C_{ij}\omega_i.
\]
Let $\left\{\alpha_i^\vee\right\}_{i=1}^{\tilde{r}}$ and $\left\{\omega_i^\vee\right\}_{i=1}^{\tilde{r}}$ be, respectively, the dual basis of $\left\{\omega_i\right\}_{i=1}^{\tilde{r}}$ and  $\left\{\alpha_i\right\}_{i=1}^{\tilde{r}}$. Then 
\[
\alpha_i^\vee=\sum_{j=1}^{\tilde{r}}\C_{ij} \omega_j^\vee
\hskip 7mm 
\text{and}
\hskip 7mm
\inprod{\alpha_i^\vee}{\alpha_j}=\C_{ij} \quad (i, j =1, \ldots, \tilde{r}).
\]
Define $\Q:=\bigoplus_{i=1}^{\tilde{r}} \mathbb{Z}\alpha_i \subset \P$.  The quotient group  $\P/\Q$ is a finite abelian group of order $|\det(\tilde{\C})|$.

We define two algebraic tori
\[
\T_\sc:=\Hom\left(\P,\mathbb{G}_m\right) \quad \text{and} \quad \T_\ad:=\Hom\left(\Q,\mathbb{G}_m\right).
\]
Both $\T_\sc$ and $\T_\ad$ have $\mathfrak{h}$  as their Lie algebras. The embedding $\Q\subset \P$ induces a surjective homomorphism from $\T_\sc$ to $\T_\ad$, whose kernel $\Z$ is isomorphic to  $\P/\Q$.

 The \emph{Kac-Peterson group} $\G_\sc$ (resp. $\G_\ad$) is generated by $\T_\sc$ (resp. $\T_\ad$) and the one-parameter groups
\[
\U_i:=\left\{\exp\left(pE_{i}\right) \ \middle| \ p\in \mathbb{G}_a\right\}, \hskip 7mm i\in \{\pm1, \ldots, \pm r\},
\]
 with relations determined by \eqref{rel.kac-moody}.  They are also known as the \emph{minimal} Kac-Moody groups, in the sense that they are constructed by only exponentiating the \emph{real} root spaces of $\mathfrak{g}_\C$.
The group $\Z$ is contained in the center of $\G_\sc$. The surjection from $\T_\sc$ to $\T_\ad$ induces an $|\Z|$-to-1 covering  map
\[
\pi: \G_\sc\longrightarrow \G_\ad.
\]
 We refer the reader to \cite[Section 7.4]{Kum} for more details on Kac-Peterson groups. 
 
\begin{notn}
We will write $e_i(p)$ instead of $\exp\left(pE_{i}\right)$ and omit the argument $p$ if $p=1$.
 Let $\T$ be either $\T_\sc$ or $\T_\ad$. For a character $\lambda$ of $\T$ and $t\in \T$, we set $t^\lambda:=\lambda(t)$. For a cocharacter $\lambda^\vee$ of $\T$ and $p\in \mathbb{G}_m$, we set $p^{\lambda^\vee}:=\lambda^\vee(p)$.
\end{notn}

Let $\G$ be either $\G_\sc$ or $\G_\ad$. 
Let $\N$ be the normalizer of $\T$ in $\G$.  The \emph{Weyl group} 
$\W:= \N/ \T$ is generated by $\S:=\left\{s_i\right\}_{i=1}^r$ with the relations $s_i^2=1$ for all $i$ together with the \emph{braid relations}
\begin{equation}\label{braid rel}
\underbrace{s_is_j\dots}_{m_{ij}}=\underbrace{s_js_i\dots}_{m_{ij}} \hskip 10mm \forall i\neq j
\end{equation}
where $m_{ij}=2,3,4,6$ or $\infty$ according to whether $\C_{ij}\C_{ji}$ is $0,1,2,3$ or $\geq 4$. The elements
\begin{equation}\label{s_i}
\overline{s}_i:=e_i^{-1}e_{-i}e_i^{-1} \quad \text{and} \quad \doverline{s}_i:=e_ie_{-i}^{-1}e_i.
\end{equation}
are both coset representatives of $s_i\in \N/\T$.
They satisfy the braid relations, and therefore determine two natural representatives for every $w\in \N/\T$, which are denoted as $\overline{w}$ and $\doverline{w}$. 

Let $\U_+$ (resp. $\U_-$) be the subgroup of $\G$ generated by $\U_1, \ldots, \U_r$ (resp. $\U_{-1}, \ldots, \U_{-r}$). Define the \emph{Borel} subgroups 
\[
\B_+:= \U_+\T, \hskip 10mm \B_-:=\U_-\T.
\]
The \emph{transposition} $g \mapsto g^t$ is an involutive anti-automorphism of $\G$ such that
\[
e_{i}(p)^t:=e_{-i}(p) \hskip 7mm \forall i \in {\pm1, \ldots, \pm r}; \hskip 10mm \quad h^t=h \quad \forall h \in \T.
\]
The transposition swaps $\B_+$ and $\B_-$.

Recall the definition of \emph{Tits system} in \cite[Section 5.1]{Kum}. The tuple $\left(\G, \B_+, \B_-, \N, \S\right)$ form a \emph{twin Tits system}, that is,
\begin{itemize}
    \item the quadruples $(\G, \B_+, \N, \S)$  and $(\G, \B_-, \N, \S)$ are Tits systems;
    \item if $l(ws_i)<l(w)$ then $\B_-w\B_+s_i\B_+=\B_-ws_i\B_+$;
    \item $\B_-s_i\cap \B_+=\emptyset$ for $i=1, \ldots, r$.
\end{itemize}
From the twin Tits system we obtain two Borel decompositions and a Birkhoff decomposition
\[
\G=\bigsqcup_{u\in \W}\B_+ u\B_+= \bigsqcup_{v\in \W}\B_- v\B_-,
\hskip 14mm 
\G=\bigsqcup_{w\in \W}\B_-w\B_+.
\]
Recall the flag varieties $\mathcal{B}_+=\G/\B_+$ and $\mathcal{B}_-=\G/\B_-$. As in Section \ref{flags}, the above decompositions induce Tits distance function $d_\pm:\mathcal{B}_\pm\times \mathcal{B}_\pm\rightarrow \W$ and a Tits codistance function $d:\mathcal{B}_+\times\mathcal{B}_-\rightarrow \W$. The quintuple $\left(\B_\pm, d_\pm, d\right)$ is an example of \emph{twin buildings}. 

\vspace{11pt}

\noindent \textit{Proof of Lemma \ref{unique}.} 
For the first case, without loss of generality let us assume that $\B, \B', \B''\in \mathcal{B}_+$. From the assumption $uv=w$ and $l(u)+l(v)=l(w)$ we get 
\[
\B_+w\B_+=\B_+u\B_+v\B_+.
\]
Therefore if $\xymatrix{\B \ar[r]^u & \B' \ar[r]^v &\B''}$ then $\xymatrix{\B\ar[r]^w & \B''}$. Conversely, if $\xymatrix{\B \ar[r]^w & \B''}$ then there exists a flag $\B'$ such that $\xymatrix{\B \ar[r]^u & \B' \ar[r]^v &\B''}$. It remains to show the uniqueness of $\B'$.

Assume $v=s_i$. Let $\B'''$ satisfy $\xymatrix{\B \ar[r]^u & \B''' \ar[r]^{s_i} &\B''}$. Note that $\B_+s_i\B_+s_i\B_+=\B_+\sqcup \B_+s_i\B_+$. If $\B'\neq \B'''$, then we get $\xymatrix{\B' \ar[r]^{s_i} & \B'''}$. Putting all the flags together, we get
\[
\begin{tikzpicture}
\node (0) at (-1,0) [] {$\B$};
\node (1) at (2,0.7) [] {$\B'$};
\node (2) at (3.5,0) [] {$\B''$};
\node (3) at (2,-0.7) [] {$\B'''$};
\draw [->, red] (0) --  node [above] {$u$} (1);
\draw [->, blue] (0) -- node [below] {$u$} (3);
\draw [->] (1) -- node [above] {$s_i$} (2);
\draw [->] (3) -- node [below] {$s_i$} (2);
\draw [->, red] (1) -- node [left] {$s_i$} (3);
\end{tikzpicture}
\]
Since $l(us_i)=l(u)+1$,  from the red arrows we get $\xymatrix{\B \ar[r]^{us_i} & \B'''}$, which contradicts with the blue arrow $\xymatrix{\B \ar[r]^u & \B'''}$. Therefore $\B'=\B'''$.
For general $v$, we first fix a reduced word $v=s_{i_1}s_{i_2} \dots s_{i_l}$. By the above discussion there is a unique $\B'''$ such that $\xymatrix{\B \ar[r]^{ws_{i_l}} & \B''' \ar[r]^{s_{i_l}} & \B''}$. Then we focus on $\xymatrix{\B \ar[r]^{ws_{i_l}} & \B'''}$. The uniqueness of $\B'$ follows by induction on the length of $v$.

For the third case, using the second condition of twin Tits system recursively, we get 
\[
\B_-u\B_+=\B_-w\B_+v^{-1}\B_+.
\]
Therefore if $\xymatrix{\B_0 \ar@{-}[r]^w & \B^{-1} \ar[r]^{v^{-1}}& \B^0}$ then $\xymatrix{\B_0 \ar@{-}[r]^u & \B^0}$. Conversely,  if $\xymatrix{\B_0 \ar@{-}[r]^u & \B^0}$ then there exists a $\B^{-1}$ such that $\xymatrix{\B_0 \ar@{-}[r]^w & \B^{-1} \ar[r]^{v^{-1}}& \B^0}$. The uniqueness of $\B^{-1}$ follows from the same inductive method as in the proof of the first. 

All other cases are analogous to the third case.
\qed

\vspace{11pt}

Next let us investigate the space of flags that are of Tits distance $s_i$ from a fixed flag.

\begin{lem}\label{moduli of tits codist si}  If $\xymatrix{\B_+\ar[r]^{s_i}& \B}$, then $\B=e_i(q)\overline{s}_i\B_+$ for some $q\in \mathbb{A}^1$.
\end{lem}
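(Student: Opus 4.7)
The goal is to parametrize the $\B_+$-orbit of the flag $\overline{s}_i\B_+$, and the strategy is to reduce to a single one-parameter subgroup using the standard Levi-type factorization of $\U_+$ relative to $s_i$.

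First I would unwind the definition of Tits distance: $\xymatrix{\B_+ \ar[r]^{s_i} & \B}$ means $\B = g\B_+$ for some $g \in \B_+ s_i \B_+ = \U_+\T\,\overline{s}_i\,\B_+ = \U_+ \overline{s}_i \B_+$, where I absorbed $\T$ into $\B_+$ from the right. So every such $\B$ has the form $u\,\overline{s}_i\B_+$ with $u \in \U_+$.

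Next I would invoke the standard factorization $\U_+ = \U_i \cdot \U_+^{(i)}$, where
\[
\U_+^{(i)} := \U_+ \cap \overline{s}_i\,\U_+\,\overline{s}_i^{-1}
\]
is the subgroup generated by the positive real root subgroups $\U_\beta$ with $\beta \neq \alpha_i$; this is a general fact about Kac-Peterson groups, cf.\ \cite[Sec.~6.1]{Kum}. Write $u = e_i(q)\cdot u'$ with $u' \in \U_+^{(i)}$. Then $\overline{s}_i^{-1} u' \overline{s}_i \in \U_+ \subset \B_+$, so
\[
u\,\overline{s}_i\B_+ \;=\; e_i(q)\,\overline{s}_i\bigl(\overline{s}_i^{-1} u' \overline{s}_i\bigr)\B_+ \;=\; e_i(q)\,\overline{s}_i\B_+.
\]
This gives the desired surjection $\mathbb{A}^1_q \twoheadrightarrow \{\B \mid \xymatrix@C=2ex{\B_+ \ar[r]^{s_i} & \B}\}$.

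Finally I would verify injectivity. Suppose $e_i(q)\overline{s}_i\B_+ = e_i(q')\overline{s}_i\B_+$. Then $\overline{s}_i^{-1}e_i(q-q')\overline{s}_i \in \B_+$. Using the identity $\overline{s}_i^{-1}e_i(p)\overline{s}_i = e_{-i}(-p)$ (a direct consequence of \eqref{s_i} inside the $\SL_2$-subgroup generated by $e_{\pm i}$), this element lies in $\U_{-i}$. But $\U_{-i} \cap \B_+ = \{e\}$, which forces $q = q'$. The main technical input is the factorization $\U_+ = \U_i \cdot \U_+^{(i)}$; in the Kac-Moody setting this is not entirely trivial (the factor $\U_+^{(i)}$ is an infinite-dimensional pro-unipotent group when $\C$ is non-finite), but it is part of the foundational structure theory and will be quoted rather than reproved.
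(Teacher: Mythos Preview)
Your proof is correct and follows essentially the same approach as the paper's: both arguments invoke the Levi-type factorization from \cite[\S 6.1]{Kum} (the paper writes it as $\B_+ = \U_i\,\Q_i$ with $\Q_i = \B_+ \cap s_i\B_+ s_i$, you write the equivalent $\U_+ = \U_i\cdot\U_+^{(i)}$) to conclude that only the $\U_i$-factor survives in the coset $\B_+s_i\B_+/\B_+$. Your added injectivity check is not required by the statement as phrased (it only claims existence of some $q$), though it is implicitly used in Corollary~\ref{2.30}.
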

\begin{proof} The space of flags of Tits distance $s_i$ to $\B_+$ is the quotient $\left.\left(\B_+s_i\B_+\right)\right/\B_+$. 
By Lemma 6.1.3 of \cite{Kum}, we get $\B_+=\U_i\Q_i$, where the subgroup $\Q_i=\B_+\cap {s}_i\B_+s_i$. Therefore 
\[
\left.\left(\B_+s_i\B_+\right)\right/\B_+=\left.\left(\U_i\Q_is_i\B_+\right)\right/\B_+=\left.\left(\U_is_i\Q_i\B_+\right)\right/\B_+=\left.\left(\U_is_i\B_+\right)\right/\B_+=\left\{e_i(q)\overline{s}_i\B_+\right\}. \qedhere
\]
\end{proof}

\begin{cor}\label{2.30} Let $\B$ be in either $\mathcal{B}_+$ or $\mathcal{B}_-$. The space of flags of Tits distance $s_i$ away from $\B$ is isomorphic to $\mathbb{A}^1$.
\end{cor}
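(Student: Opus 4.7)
The plan is to reduce the corollary to Lemma \ref{moduli of tits codist si} by transitivity of the $\G$-action, and then verify that the parametrization by $\mathbb{A}^1$ from that lemma is a bijection (not merely a surjection).

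First, for $\B \in \mathcal{B}_+$, since $\G$ acts transitively on $\mathcal{B}_+$ (the orbit of $\B_+$ under $\G$), choose $g \in \G$ such that $g\B_+ = \B$. The $\G$-action preserves Tits distance, so multiplication by $g$ gives a $\G$-equivariant bijection between the space of flags at Tits distance $s_i$ from $\B_+$ and the corresponding space for $\B$. By Lemma \ref{moduli of tits codist si}, the former equals $\{e_i(q)\overline{s}_i\B_+ \mid q \in \mathbb{A}^1\}$. For $\B \in \mathcal{B}_-$, the analogous argument applies: $\G$ acts transitively on $\mathcal{B}_-$, so we reduce to $\B = \B_-$, and the same proof as Lemma \ref{moduli of tits codist si} (swapping $\B_+$ with $\B_-$ and using $\U_-\Q_i = \B_-$ where $\Q_i = \B_-\cap \overline{s}_i\B_-\overline{s}_i^{-1}$) gives the parametrization $\{e_{-i}(q)\doverline{s}_i\B_- \mid q \in \mathbb{A}^1\}$.

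The remaining point is to check that the surjection $\mathbb{A}^1 \twoheadrightarrow \{\text{flags of Tits distance } s_i \text{ from } \B_+\}$, $q\mapsto e_i(q)\overline{s}_i\B_+$, is actually a bijection (and a scheme-theoretic isomorphism). For injectivity, suppose $e_i(q)\overline{s}_i\B_+ = e_i(q')\overline{s}_i\B_+$; then $e_i(q-q') \in \overline{s}_i\B_+\overline{s}_i^{-1}$. Using $\overline{s}_i\U_i\overline{s}_i^{-1} = \U_{-i}$, one computes $\U_i \cap \overline{s}_i\B_+\overline{s}_i^{-1} = \{e\}$, forcing $q = q'$. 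Regularity of the inverse map follows from the fact that $\U_i \cong \mathbb{A}^1$ (via $q \mapsto e_i(q)$) acts freely on its orbit through $\overline{s}_i\B_+$, giving a biregular isomorphism $\U_i \xrightarrow{\sim} \U_i\overline{s}_i\B_+/\B_+$.

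The main obstacle, such as it is, is the bijectivity verification in the previous paragraph; Lemma \ref{moduli of tits codist si} as stated only establishes surjectivity of the parametrization, so the injectivity must be handled separately, though it reduces to an elementary computation inside the twin Tits system using the relation $\overline{s}_i\U_i\overline{s}_i^{-1} = \U_{-i}$. Once this is in place, the transitivity reduction makes the corollary immediate.
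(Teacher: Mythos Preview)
Your proposal is correct and follows exactly the intended route: the paper states Corollary~\ref{2.30} without proof, treating it as immediate from Lemma~\ref{moduli of tits codist si} via transitivity of the $\G$-action, and you have simply spelled out that reduction together with the injectivity of $q\mapsto e_i(q)\overline{s}_i\B_+$ that the lemma's phrasing leaves implicit. One small typo: in your $\mathcal{B}_-$ case you wrote $\U_-\Q_i=\B_-$ where you clearly mean $\U_{-i}\Q_i'=\B_-$ with $\Q_i'=\B_-\cap s_i\B_-s_i$, matching your parametrization $e_{-i}(q)\doverline{s}_i\B_-$.
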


\begin{prop}\label{3.3} Let $u, v\in \W$  and let  $s_i$ be a simple reflection.  
Fix a pair $\xymatrix{\B_0 \ar@{-}[r]^u & \B^0}$. Then the space of flags $\B$ that fits into either of the triangles
\[
\begin{tikzpicture}[baseline=20]
\node (u) at (0,2) [] {$\B^0$};
\node (d0) at (-1,0) [] {$\B_0$};
\node (d1) at (1,0) [] {$\B$};
\draw (d0) -- node [above left] {$u$} (u) -- node [above right] {$v$} (d1);
\draw [->] (d0) -- node [below] {$s_i$} (d1); 
\end{tikzpicture} \quad \quad \text{and} \quad \quad \begin{tikzpicture}[baseline=20]
\node (u) at (0,0) [] {$\B_0$};\node (d0) at (-1,2) [] {$\B$};\node (d1) at (1,2) [] {$\B^0$};\draw (d0) -- node [below left] {$v$} (u) -- node [below right] {$u$} (d1);\draw [->] (d0) -- node [above] {$s_i$} (d1); \end{tikzpicture}
\]
is isomorphic to 
\[
\left\{\begin{array}{ll}
    \mathbb{A}^1 & \text{if $v=s_iu$ and $l\left(s_iu\right)<l(u)$,} \\
    \mathbb{G}_m & \text{if $v=u$ and $l\left(s_iu\right)>l(u)$,} \\
    \{*\} & \text{if $v=s_iu$ and $l\left(s_iu\right)>l(u)$,}\\
    \emptyset & \text{otherwise.}
\end{array}\right.
\]
\end{prop}
\begin{proof} 
By symmetry, we will only prove the first case. Without loss of generality let $\B^0=u\B_+$ and $\B_0=\B_-$. The set of flags of Tits distance $s_i$ to $\B_-$ is  
\begin{equation}
\label{wfrge}
\left\{\B_-e_i(p) \mid p\in \mathbb{G}_m \right\} \sqcup \left\{  \B_-s_i\right\}
\end{equation}

If $l\left(s_iu\right)< l(u)$, then we obtain the the first case by  Lemma \ref{unique} and Corollary \ref{2.30}.

If $l\left(s_iu\right)> l(u)$, then the root $\alpha:=u^{-1}\left(\alpha_i\right)$ is positive. Therefore
\[
\B_-e_i(p)u\B_+= \B_-u e_\alpha(p')\B_+ = \B_- u \B_+.
\] 
Among all the flags in \eqref{wfrge}, only $\B_-s_i$ is of Tits codistance $s_iu$ away from $\B_-u$, from which we arrive at the third case.  The rest are of Tits codistance $u$, which proves the second case.
\end{proof}

Now let us focus on $\G_\sc$. Let $V_\lambda$ denote the irreducible representation of $\G_\sc$ of highest weight $\lambda\in \P_+$. Let $\mathcal{O}\left[\G_\sc\right]$ be the algebra generated by the matrix coefficients of $V_\lambda$, $\lambda\in \P_+$. 

\begin{thm}[Kac-Peterson, {\cite[Theorem 1]{KP}}] \label{Kac-Peterson} Consider the $\G_\sc\times \G_\sc$-action on $\mathcal{O}\left[\G_\sc\right]$ by \[\left(\left(g_1,g_2\right).f\right)(g):=f\left(g_1^{-1}gg_2\right).
\]
Then as $\G_\sc\times \G_\sc$-modules, \[\mathcal{O}\left[\G_\sc\right]\cong \bigoplus_{\lambda\in \P_+}V^*_\lambda\otimes V_\lambda.
\]
\end{thm}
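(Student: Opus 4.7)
The plan is to construct an explicit $\G_\sc \times \G_\sc$-equivariant map
\[
\Phi : \bigoplus_{\lambda \in \P_+} V_\lambda^* \otimes V_\lambda \longrightarrow \mathcal{O}\left[\G_\sc\right]
\]
via matrix coefficients and to show it is a bijection. For each $\lambda \in \P_+$ I would define
\[
\Phi_\lambda(\xi \otimes v)(g) := \xi(gv), \qquad \xi \in V_\lambda^*,~v \in V_\lambda,~g \in \G_\sc,
\]
and verify by a one-line computation that, equipping $V_\lambda^* \otimes V_\lambda$ with the outer tensor product action, $\Phi_\lambda$ intertwines this action with the prescribed one on $\mathcal{O}\left[\G_\sc\right]$; summing over $\lambda$ gives the $\G_\sc \times \G_\sc$-equivariant map $\Phi$.

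Surjectivity of $\Phi$ is essentially tautological: by the very definition of $\mathcal{O}\left[\G_\sc\right]$ as the algebra generated by matrix coefficients of the $V_\lambda$, every element of $\mathcal{O}\left[\G_\sc\right]$ is a linear combination of functions in the image of $\Phi$.

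For injectivity I would proceed in two steps. First, each $\Phi_\lambda$ is injective. If $\sum_i \xi_i \otimes v_i$ lies in $\ker(\Phi_\lambda)$ with the $v_i$ linearly independent, then $\sum_i \xi_i(gv_i) = 0$ for every $g \in \G_\sc$; since $V_\lambda$ is an integrable irreducible highest weight module with finite-dimensional $\T$-weight spaces, a Jacobson-density-type argument applied to the image of $\G_\sc$ in $\End(V_\lambda)$ shows that one can realize arbitrary finite-rank operators on $V_\lambda$ as linear combinations of group elements, forcing each $\xi_i$ to vanish. Second, the subspaces $\Phi_\lambda(V_\lambda^* \otimes V_\lambda)$ for distinct $\lambda$ are linearly independent. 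Here I would use the restricted left $\T$-action: the $\T$-weights of $\Phi_\lambda(V_\lambda^* \otimes V_\lambda)$ are precisely the weights of $V_\lambda^*$, whose highest weight is $-w\lambda$ for $w$ the longest element acting on the $\T$-weight polytope of $V_\lambda$, and the assignment $\lambda \mapsto -\lambda$ (or equivalently the highest weight of $V_\lambda^*$) is injective on $\P_+$. Thus distinct $\lambda$'s yield subspaces with disjoint highest $\T$-weights, and the sum is direct.

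The main obstacle is the density step for a single $\lambda$: one must promote irreducibility of $V_\lambda$ as a $\G_\sc$-module to the statement that linear combinations of group elements surject onto finite-rank endomorphisms of $V_\lambda$. This is nontrivial in the Kac-Peterson setting but follows from the construction of $V_\lambda$ as an integrable highest weight $\mathfrak{g}_\C$-module together with the exponential action of the real root subgroups $\U_{\pm i}$, which allow one to move between weight spaces and to act irreducibly within each finite-dimensional weight space via the Cartan $\T$. Once the density input is available, the $\T$-weight separation across distinct $\lambda$ is routine, and Theorem A.3 follows.
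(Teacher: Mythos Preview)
The paper does not supply a proof of this theorem; it is quoted from Kac--Peterson \cite{KP} as an input. So there is no ``paper's own proof'' to compare against, and your proposal should be judged on its own.

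There is a genuine gap in your surjectivity step. You write that surjectivity is ``essentially tautological'' because $\mathcal{O}[\G_\sc]$ is defined as the algebra generated by matrix coefficients. But the image of your map $\Phi$ is only the \emph{linear span} of matrix coefficients, not the subalgebra they generate. Surjectivity is therefore equivalent to the statement that a \emph{product} of matrix coefficients is again a (finite) linear combination of matrix coefficients of irreducible integrable highest weight modules. Concretely, if $c_1$ is a matrix coefficient of $V_\lambda$ and $c_2$ one of $V_\mu$, then $c_1c_2$ is a matrix coefficient of $V_\lambda\otimes V_\mu$; to put it back in the image of $\Phi$ you need $V_\lambda\otimes V_\mu$ to decompose as a direct sum of various $V_\nu$ with $\nu\in\P_+$. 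This complete reducibility of tensor products of integrable highest weight modules is true in the symmetrizable Kac--Moody setting, but it is a substantial theorem (due to Kac), not a tautology, and it is exactly the content that makes the Kac--Peterson result nontrivial.

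Your linear-independence argument also needs repair in the Kac--Moody generality: you invoke ``the longest element'' of the Weyl group, which does not exist when $\W$ is infinite, so $V_\lambda^*$ need not be a highest weight module at all. A cleaner route is to observe that each $V_\lambda^*\otimes V_\lambda$ is irreducible as a $\G_\sc\times\G_\sc$-module, and that for $\lambda\neq\mu$ these irreducibles are non-isomorphic (distinguished, e.g., by the $\T\times\T$-weight of the line $\xi_\lambda\otimes v_\lambda$ where $v_\lambda$ is a highest weight vector and $\xi_\lambda$ the dual functional); hence their images under the injective maps $\Phi_\lambda$ form an independent family.
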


Let $^{\U_-}\mathcal{O}\left[\G_\sc\right]$ be the subring of left $\U_-$ invariant functions. By Theorem \ref{Kac-Peterson}, we get  \[^{\U_-}\mathcal{O}\left[\G_\sc\right]\cong \bigoplus_{\lambda\in \P_+}V_\lambda.\]
Let  $\Delta_\lambda\in V_\lambda$ be the unique highest weight vector such that $\Delta_\lambda(e)=1$. 
Given $\lambda, \mu\in \P_+$, the product $\Delta_\lambda\Delta_\mu$ is a highest weight vector in $V_{\lambda+\mu}$ and satisfy the normalization condition. Therefore
\[
\Delta_{\lambda+\mu}=\Delta_\lambda\Delta_\mu.
\]

\begin{thm}[Geiss-Leclerc-Schr\"{o}er \cite{GLS}, 7.2]\label{gls} An element $x\in \G_\sc$ is Gaussian decomposable if and only if $\Delta_{\omega_i}\left(x\right)\neq 0$ for all fundamental weights $\omega_i$. 
\end{thm}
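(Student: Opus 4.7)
The strategy is to realize each $\Delta_{\omega_i}$ as a matrix coefficient of the integrable highest weight representation $V_{\omega_i}$ of $\G_\sc$ and combine this with the Birkhoff decomposition $\G_\sc=\bigsqcup_{w\in\W}\B_-w\B_+$ supplied by the twin Tits system.

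\emph{Forward implication.} When $x=[x]_-[x]_0[x]_+\in\U_-\T\U_+$, the stated isomorphism ${}^{\U_-}\mathcal{O}[\G_\sc]\cong\bigoplus_{\lambda\in\P_+}V_\lambda$ together with the highest weight property of $\Delta_\lambda$ yield three invariances: left $\U_-$-invariance, right $\U_+$-invariance (as a highest weight vector under the right regular action, annihilated by the positive nilpotent part), and the $\T$-weight condition $\Delta_\lambda(gt)=t^\lambda\Delta_\lambda(g)$. Applied to $x$, these force $\Delta_{\omega_i}(x)=[x]_0^{\omega_i}\in\mathbb{G}_m$, which is non-zero.

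\emph{Converse.} Suppose $\Delta_{\omega_i}(x)\neq 0$ for every $i=1,\dots,\tilde r$. By the Birkhoff decomposition there is a unique $w\in\W$ with $x\in\B_-\overline{w}\B_+$; write $x=u_-t_-\overline{w}\,t_+u_+$ with $u_\pm\in\U_\pm$ and $t_\pm\in\T$. I realize $\Delta_{\omega_i}(g)=\xi_{\omega_i}(g\cdot v_{\omega_i})$, where $v_{\omega_i}$ is a highest weight vector of $V_{\omega_i}$ and $\xi_{\omega_i}\in V_{\omega_i}^*$ is the linear functional that picks out the $\omega_i$-weight component. Combining left $\U_-$-invariance, right $\U_+$-invariance, and the $\T$-weight formula reduces the computation to
\[
\Delta_{\omega_i}(x) = t_+^{\omega_i}\,\xi_{\omega_i}\!\left(t_-\overline{w}\cdot v_{\omega_i}\right) = t_+^{\omega_i}\,t_-^{w\omega_i}\,\xi_{\omega_i}\!\left(\overline{w}\cdot v_{\omega_i}\right).
\]
Since $\overline{w}\cdot v_{\omega_i}$ lies in the weight space $V_{\omega_i}(w\omega_i)$, and $\xi_{\omega_i}$ annihilates every weight space other than $V_{\omega_i}(\omega_i)$, the non-vanishing of $\Delta_{\omega_i}(x)$ forces $w\omega_i=\omega_i$. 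Imposing this for every $i=1,\dots,\tilde r$ makes $w$ fix a basis of the weight lattice $\P$, hence of $\mathfrak{h}^*$. The $\W$-action on $\mathfrak{h}^*$ is faithful (visible through $\alpha_j=\sum_i\C_{ij}\omega_i$), so $w=e$, and therefore $x\in\B_-\B_+=\U_-\T\U_+$.

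\emph{Main obstacle.} The only substantive point is to justify the matrix-coefficient manipulation in the Kac--Peterson (possibly infinite-dimensional) setting: one needs $V_{\omega_i}$ to carry a well-behaved weight decomposition, and the lifts $\overline{s}_i=e_i^{-1}e_{-i}e_i^{-1}$ to act on $V_{\omega_i}$ in a Weyl-group-compatible way, mapping the highest weight line into $V_{\omega_i}(w\omega_i)$. These properties are standard for integrable highest weight modules over symmetrizable Kac--Moody algebras, and together with the Birkhoff decomposition from the twin Tits system $(\G_\sc,\B_+,\B_-,\N,\S)$ recalled in Appendix~A the argument above goes through without further complication. As a byproduct, one also recovers Theorem~\ref{gaussian} and Remark~\ref{gaussian'}, since $\Delta_\lambda=\prod_i\Delta_{\omega_i}^{n_i}$ whenever $\lambda=\sum_i n_i\omega_i\in\P_+$.
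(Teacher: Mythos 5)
The paper does not prove Theorem \ref{gls} at all: it is imported from Geiss--Leclerc--Schr\"oer \cite{GLS}, so there is no internal argument to compare against. Your proof is correct, and it is essentially the standard argument (the same circle of ideas as in \cite{GLS}, and as in Fomin--Zelevinsky's criterion for semisimple groups): realize $\Delta_{\omega_i}$ as the matrix coefficient $g\mapsto\xi_{\omega_i}(g\cdot v_{\omega_i})$, put $x$ in a Birkhoff cell $\U_-\T\overline{w}\U_+$, and use the extremal-weight property $\overline{w}\cdot v_{\omega_i}\in V_{\omega_i}(w\omega_i)$ to see that the coefficient vanishes unless $w\omega_i=\omega_i$, whence $w=e$ and $x\in\B_-\B_+$.

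Three points are worth tightening. First, the identification of $\Delta_{\omega_i}$ with your matrix coefficient should be pinned to the paper's normalization: $g\mapsto\xi_{\omega_i}(gv_{\omega_i})$ is left $\U_-$-invariant, is a highest weight vector of weight $\omega_i$ for the right regular action, and equals $1$ at $e$, hence coincides with the $\Delta_{\omega_i}$ of Appendix A; here $\xi_{\omega_i}$ lies in the graded dual of $V_{\omega_i}$, so this is a genuine matrix coefficient. Second, for $i>r$ one has $\inprod{\alpha_j^\vee}{\omega_i}=0$ for all $j\leq r$, so $V_{\omega_i}$ is one-dimensional and the hypothesis $\Delta_{\omega_i}(x)\neq 0$ is vacuous for these $i$; your argument still closes because $w\omega_i=\omega_i$ holds automatically for such $i$, but this deserves an explicit sentence, since you need $w$ to fix all $\tilde r$ of the $\omega_i$ (not just $\omega_1,\dots,\omega_r$, which need not span $\mathfrak{h}^*$ when $\C$ is degenerate) before concluding that $w$ acts trivially on $\mathfrak{h}^*$. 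Third, faithfulness of the $\W$-action on $\mathfrak{h}^*$ is not really ``visible'' from $\alpha_j=\sum_i\C_{ij}\omega_i$; the clean justification is the standard fact that any $w\neq e$ sends some simple real root to a negative root, which is incompatible with $w$ acting as the identity. With these clarifications the argument is complete and self-contained given standard facts about integrable highest weight modules.
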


\subsection{Basics about Cluster Algebras}\label{app B}

We include here the basic facts about cluster algebras that will be needed.

\begin{defn} A \emph{seed} is a quadruple $\vec{s}=\left(I, I^\uf, \epsilon_{ab}, \left\{d_a\right\}_{a\in I}\right)$ satisfying the following properties:
\begin{enumerate}
\item $I$ is a finite set and $I^\uf\subset I$;
\item $\epsilon_{ab}$ is a $\mathbb{Q}$-valued matrix with $\epsilon_{ab}\in \mathbb{Z}$ unless $(a,b)\in I^{uf}\times I^{uf}$; 
\item $\left\{d_a\right\}$ is a collection of positive integers with $\gcd\left(d_a\right)=1$ such that the matrix
$
\hat{\epsilon}_{ab}:=\epsilon_{ab}d_b^{-1}
$
is skewsymmetric.
\end{enumerate}
Elements of $I$ are called \emph{vertices}, elements of $I^\uf$ are called \emph{unfrozen vertices}, and elements of $I\setminus I^\uf$ are called \emph{frozen vertices}. The matrix $\epsilon$ is called the \emph{exchange matrix} and  $d_a$ are called \emph{multipliers}.
\end{defn}

\begin{defn} Given a seed $\vec{s}$ and an unfrozen vertex $c\in I^\uf$, a \emph{mutation} at $c$ produces a new seed $\vec{s}'=\mu_c\vec{s}=\left(I', I'^\uf, \epsilon_{ab}', \{d'_a\}\right)$ with $I'=I$, $I'^\uf=I^\uf$, $d'_a=d_a$, and 
\[
\epsilon'_{ab}=\left\{\begin{array}{ll} -\epsilon_{ab} & \text{if $c\in \{a,b\}$},\\
\epsilon_{ab}+\left[\epsilon_{ac}\right]_+\left[\epsilon_{cb}\right]_+-\left[-\epsilon_{ac}\right]_+\left[-\epsilon_{cb}\right]_+ & \text{if $c\notin\{a,b\}$},\end{array}\right.
\]
where $[x]_+:=\max\{x,0\}$.
 Seeds obtained by a sequence of mutations on $\vec{s}$ are said to be \emph{mutation equivalent} to $\vec{s}$.
\end{defn}

Let $\mathbb{T}$ be an $\left|I^\uf\right|$-regular tree. The edges of $\mathbb{T}$ are labeled by elements of $I^\uf$ such that the labeling of edges connecting to the same vertex are distinct. Every mutation is involutive: $\mu_c^2\vec{s}=\vec{s}$. Therefore we can associate the vertices of $\mathbb{T}$ with seeds from a mutation equivalent family, such that any two vertices associated to a pair of seeds related by a mutation $\mu_c$ are joint by an edge labeled by $c$.  The decorated tree $\mathbb{T}$ is called the \emph{mutation tree} of $\vec{s}$. 

We assign to each vertex $\vec{s}$ of $\mathbb{T}$  two split $|I|$-dimensional algebraic tori: a \emph{$\mathrm{K}_2$ seed torus} $T_{\mathscr{A};\vec{s}}$ and a \emph{Poisson seed torus} $T_{\mathscr{X};\vec{s}}$. The tori $T_{\mathscr{A};\vec{s}}$ and $T_{\mathscr{X};\vec{s}}$ are equipped with coordinate systems $\left\{A_{a;\vec{s}}\right\}_{a\in I}$ and  $\left\{X_{a;\vec{s}}\right\}_{a\in I}$ respectively. We  often drop the subscript $;\vec{s}$ if it is obvious or not important.
The torus $T_{\mathscr{A};\vec{s}}$ admits a canonical 2-form 
\[
\Omega=\sum_{a,b}\hat{\epsilon}_{ab;\vec{s}} \frac{dA_{a;\vec{s}}}{A_{a;\vec{s}}}\wedge \frac{dA_{b;\vec{s}}}{A_{b;\vec{s}}},
\]
The torus $T_{\mathscr{X};\vec{s}}$ admits a canonical Poisson structure determined by the bivector field
\[
\Pi=\sum_{a,b}\hat{\epsilon}_{ab;\vec{s}}X_{a;\vec{s}}X_{b;\vec{s}}\frac{\partial}{\partial X_{a;\vec{s}}}\wedge \frac{\partial }{\partial X_{b;\vec{s}}}.
\]

For every edge $\xymatrix{\vec{s} \ar@{-}[r]^c & \vec{s}'}$ in $\mathbb{T}$,  the associated  seed tori are related by the transition maps:
\[
\begin{tikzpicture}[baseline=-0.5ex] 
\node (s) at (0,0) [] {$T_{\mathscr{A};\vec{s}}$};
\node (s') at (2,0) [] {$T_{\mathscr{A};\vec{s}'}$};
\draw [dashed, <->] (s) --  node [above] {$\mu_c$} (s');
\end{tikzpicture} \quad \text{and} \quad \begin{tikzpicture}[baseline=-0.5ex] 
\node (s) at (0,0) [] {$T_{\mathscr{X};\vec{s}}$};
\node (s') at (2,0) [] {$T_{\mathscr{X};\vec{s}'}$};
\draw [dashed, <->] (s) --  node [above] {$\mu_c$} (s');
\end{tikzpicture}
\]
In terms of the cluster coordinates, the transition maps are expressed as:
\begin{equation}\label{a mutation}
\mu_c^*\left(A_{a;\vec{s}'}\right):=\left\{\begin{array}{ll}
 A_{c;\vec{s}}^{-1}\left(\prod_b A_{b;\vec{s}}^{\left[-\epsilon_{cb;\vec{s}}\right]_+}\right)\left(1+\prod_b A_{b;\vec{s}}^{\epsilon_{cb;\vec{s}}}\right) & \text{if $a=c$,}\\
 A_{a;\vec{s}} & \text{if $a\neq c$,}
 \end{array}\right.
\end{equation}
\begin{equation}\label{x mutation}
\mu_c^*\left(X_{a;\vec{s}'}\right):=\left\{\begin{array}{ll} X_{c;\vec{s}}^{-1} & \text{if $a=c$,}\\
X_{a;\vec{s}}X_{c;\vec{s}}^{\left[\epsilon_{ac;\vec{s}}\right]_+}\left(1+X_{c;\vec{s}}\right)^{-\epsilon_{ac;\vec{s}}}& \text{if $a\neq c$.}\end{array}\right.
\end{equation}
The maps $\mu_c$ preserve the  2-form $\Omega$ and the  bivector field $\Pi$.

Let $\vec{s}$ and $\vec{s}'$ be any two not necessarily adjacent vertices on $\mathbb{T}$. By composing the transition maps along the unique path connecting them, we get birational maps $\mu_{\vec{s}\rightarrow \vec{s}'}:T_{\mathscr{A};\vec{s}}\dashrightarrow T_{\mathscr{A};\vec{s}'}$ and $\mu_{\vec{s}\rightarrow \vec{s}'}:T_{\mathscr{X};\vec{s}}\dashrightarrow T_{\mathscr{X};\vec{s}'}$. Fomin and Zelevinsky \cite{FZIV} proved the following \emph{factorization formulas} for the pull-backs of cluster coordinates:
\begin{align}
\mu_{\vec{s}\rightarrow \vec{s}'}^*\left(A_{a;\vec{s}'}\right)=&\left(\prod_b A_{b;\vec{s}}^{g_{ab;\vec{s}\rightarrow \vec{s}'}}\right)\left(\left.F_{a;\vec{s}\rightarrow \vec{s}'}\right|_{X_{b;\vec{s}}=\prod_cA_{c;\vec{s}}^{\epsilon_{bc;\vec{s}}}}\right), \label{factorization formula a}\\
\label{factorization formula x}
\mu_{\vec{s}\rightarrow \vec{s}'}^*\left(X_{a;\vec{s}'}\right)=&\left(\prod_b X_{b;\vec{s}}^{c_{ab;\vec{s}\rightarrow \vec{s}'}}\right)\left(\prod_b \left(F_{b;\vec{s}\rightarrow \vec{s}'}\right)^{\epsilon_{ab;\vec{s}'}}\right),
\end{align}
where  $c_{ab;\vec{s}\rightarrow \vec{s}'}$ and $g_{ab;\vec{s}\rightarrow \vec{s}'}$ are  $I\times I$ matrices with integer entries, and $F_{a;\vec{s}\rightarrow \vec{s}'}$ are polynomials in the initial cluster Poisson coordinates $\left\{X_{a;\vec{s}}\right\}$. They are called the \emph{$c$-matrix}, the \emph{$g$-matrix}, and the \emph{$F$-polynomials} associated to the mutation map $\mu_{\vec{s}\rightarrow \vec{s}'}$ respectively. We have the following properties:
\begin{enumerate}
    \item (matrix identities) $\epsilon_{;\vec{s}'}g_{;\vec{s}\rightarrow \vec{s}'}=c_{;\vec{s}\rightarrow \vec{s}'}\epsilon_{;\vec{s}}$;
    \item (sign coherence) row vectors of $c$-matrices and column vectors of $g$-matrices are \emph{sign coherent}, that is, their entries are either all non-negative or all non-positive.
    \item (constant term) $F$-polynomials all have a constant term 1;
    \item (positivity) $F$-polynomials all have positive integer coefficients;
\end{enumerate}
In addition, the following properties are immediate consequences of the factorization formulas:
\begin{enumerate}
    \item[(5)] (Laurent phenomenon) $\mu_{\vec{s}\rightarrow \vec{s}'}\left(A_{a;\vec{s}'}\right)$ is a Laurent polynomial;
    \item[(6)] $c_{ab;\vec{s}\rightarrow \vec{s}'}=\ord_{X_{b;\vec{s}}}\mu_{\vec{s}\rightarrow \vec{s}'}^*\left(X_{a;\vec{s}'}\right)$, where $\ord_xf$ yields the lowest degree of $x$ in $f$ if $f$ is a polynomial and $\ord_x\left(\frac{f}{g}\right):=\ord_xf-\ord_xg$.
\end{enumerate}

Let us fix an initial seed $\vec{s}$.
The sign coherence of $c$-vectors allows us to assign a color to each vertex $a\in I$ in a seed $\vec{s}'$: we say $a$ is green if $c_{ab;\vec{s}\rightarrow \vec{s}'}\geq 0$ for all $b$, and red  otherwise. Note that a mutation at the vertex $c$ changes its color, but it may change the colors of other vertices as well. 

From the above definition all vertices of the initial seed $\vec{s}$ are green. A sequence of mutations that turns all vertices red is called a \emph{reddening sequence}, and a reddening sequence consisting of mutations only in the direction of green vertices is called a \emph{maximal green sequence}.

There is a combinatorial way to compute the $c$-matrix using principal coefficients. Given a seed $\vec{s}=\left(I, I^\uf, \epsilon_{;\vec{s}}, \left\{d_a\right\}\right)$, we define the corresponding \emph{seed with principal coefficients} as 
\[
\vec{s}_\prin=\left(I\sqcup I, I^\uf\sqcup \emptyset, \begin{pmatrix} \epsilon_{;\vec{s}} & \mathrm{id} \\ -\mathrm{id} & 0 \end{pmatrix}, \left\{d_a\right\}\sqcup \left\{d_a\right\}\right).
\]
By applying the sequence of mutations $\mu_{\vec{s}\rightarrow \vec{s}'}$ to $\vec{s}_\prin$, we obtain a seed with exchange matrix $\begin{pmatrix}\epsilon_{;\vec{s}'} & c_{;\vec{s}\rightarrow \vec{s}'} \\ * & *\end{pmatrix}$, whose upper right hand corner is precisely the $c$-matrix we need.

\begin{defn} 
\label{cdvneq0}
Fix an initial seed $\vec{s}$ in  $\mathbb{T}$. We define the \emph{upper cluster algebra} to be
\[
\up \left(\mathscr{A}\right):=\bigcap_\vec{s'}\mu_{\vec{s}\rightarrow \vec{s'}}^*\left(\mathcal{O}\left(T_{\mathscr{A};\vec{s'}}\right)\right)\subset \Frac \left(\mathcal{O}\left(T_{\mathscr{A};\vec{s}}\right)\right),
\]
and  define the \emph{cluster Poisson algebra} to be
\[
\up \left(\mathscr{X}\right):=\bigcap_\vec{s}\mu_{\vec{s}\rightarrow \vec{s'}}^*\left(\mathcal{O}\left(T_{\mathscr{X};\vec{s'}}\right)\right)\subset \Frac \left(\mathcal{O}\left(T_{\mathscr{X};\vec{s}}\right)\right).
\]
Note that $\mathcal{O}\left(T_{\mathscr{X};\vec{s'}}\right)$ is a Poisson algebra for each seed $\vec{s'}$ and the mutation maps preserve the Poisson structure. Therefore $\up\left(\mathscr{X}\right)$ is naturally a Poisson algebra.
\end{defn}

The algebras $\up \left(\mathscr{A}\right)$ and $\up \left(\mathscr{X}\right)$ do not depend on the choice of an initial seed, because all mutation maps are algebra isomorphisms on the fields of fractions and $\mu_c^2=\mathrm{id}$.

\begin{defn} The geometric counterparts of $\up(\mathscr{A})$ and $\up(\mathscr{X})$ are the \emph{cluster $\mathrm{K}_2$ variety} $\mathscr{A}$ and the \emph{cluster Poisson variety}\footnote{The cluster Poisson variety $\mathscr{X}$ is not a variety in the traditional sense as it often fails to be separated.} $\mathscr{X}$,  obtained by gluing the seed tori via the transition maps $\mu$:
\[
\mathscr{A}=\left.\bigsqcup_\vec{s} T_{\mathscr{A};\vec{s}}\right/\left\{\mu_c\right\} \quad \text{and} \quad \mathscr{X}=\left.\bigsqcup_\vec{s} T_{\mathscr{X};\vec{s}}\right/\left\{\mu_c\right\}
\]
In particular, $\mathcal{O}(\mathscr{A})= \up(\mathscr{A})$ and $\mathcal{O}(\mathscr{X})=\up(\mathscr{X})$.
\end{defn}

Since the mutation maps between $T_{\mathscr{A};\vec{s}}$ preserve the canonical 2-forms $\Omega$, these 2-forms can be glued into a canonical 2-form $\Omega$ on the cluster $\mathrm{K}_2$ variety $\mathscr{A}$. Similarly, since the mutation maps between $T_{\mathscr{X};\vec{s}}$ are Poisson maps, the cluster Poisson variety $\mathscr{X}$ is naturally a Poisson variety.

\begin{defn} Let $\vec{s}$ and $\vec{s}'$ be two seeds on $\mathbb{T}$. A \emph{seed isomorphism} $\sigma^*:\vec{s}\rightarrow \vec{s}'$ is a bijection $\sigma^*:I\rightarrow I$ such that $\sigma^*\left(I^\uf\right)=I^\uf$, $d_a=d_{\sigma^*(a)}$, and $\epsilon_{ab;\vec{s}}=\epsilon_{\sigma^*(a)\sigma^*(b);\vec{s}'}$.
\end{defn}

\begin{defn}
 A seed isomorphism $\sigma^*:\vec{s}\rightarrow \vec{s}'$ induces algebra automorphisms $\sigma^*$ on $\up(\mathscr{A})$ and $\up(\mathscr{X})$ defined by
\[
\sigma^*\left(A_{a;\vec{s}}\right):=A_{\sigma^*(a);\vec{s}'} \quad \text{and} \quad \sigma^*\left(X_{a;\vec{s}}\right):=X_{\sigma^*(a);\vec{s}'}.
\]
Abusing notation, we still denote by $\sigma$ the induced biregular automorphisms of the corresponding cluster varieties. Such automorphisms are called \emph{cluster transformations}. 
\end{defn}

A cluster transformation pulls back cluster variables to cluster variables according to the factorization formulas \eqref{factorization formula a} and \eqref{factorization formula x}. Therefore it makes sense to define the $c$-matrix, the $g$-matrix, and the $F$-polynomials of a cluster transformation $\sigma$ with respect to a choice of initial seed $\vec{s}$; we denote them by $c_{;\sigma;\vec{s}}$, $g_{;\sigma;\vec{s}}$ and $F_{;\sigma;\vec{s}}$ respectively. 

The following criteria  determine when a cluster transformation is trivial.

\begin{thm}[\cite{GS2,GHKK,CHL}]\label{trivial cluster trans}
Let $\sigma$ be a cluster transformation on $\mathscr{A}$ and $\mathscr{X}$.
The followings statements are equivalent:
\begin{itemize}
    \item $\sigma$ acts trivially on $\mathscr{A}$;
    \item $\sigma$ acts trivially on $\mathscr{X}$;
    \item the $c$-matrix with respect to one (and equivalently any) seed is the identity matrix;
    \item the $g$-matrix with respecct to one (and equivalently any) seed is the identity matrix.
\end{itemize}
\end{thm}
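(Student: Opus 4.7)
The plan is to prove the theorem by establishing the chain
\[
(\text{trivial on }\mathscr{A}) \iff (g_{;\sigma;\vec{s}} = \mathrm{id}) \iff (c_{;\sigma;\vec{s}} = \mathrm{id}) \iff (\text{trivial on }\mathscr{X}),
\]
where the middle equivalence is the heart of the matter and forces the associated $F$-polynomials to be constantly $1$ as a byproduct. By symmetry between $\vec{s}$ and $\sigma(\vec{s})$ the choice of initial seed is immaterial, so once the equivalences are known for one seed they follow for all seeds.

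First I would extract the combinatorial data from the action on each side. The factorization formula \eqref{factorization formula a} writes
\[
\sigma^{*}(A_{a;\sigma(\vec{s})}) \;=\; \Bigl(\prod_{b} A_{b;\vec{s}}^{g_{ab}}\Bigr)\,\bigl(F_{a}\bigr|_{X_{b}=\prod_{c} A_{c;\vec{s}}^{\epsilon_{bc}}}\bigr).
\]
Because $F_{a}$ has positive integer coefficients and constant term $1$ (properties (3)–(4) in the appendix), its substitution is a sum of Laurent monomials with coefficient $1$ on the distinguished monomial and positive coefficients elsewhere. Uniqueness of Laurent expansion then shows that $\sigma^{*}(A_{a;\sigma(\vec{s})}) = A_{a;\vec{s}}$ for every $a$ is equivalent to the simultaneous conditions $g_{ab} = \delta_{ab}$ and $F_{a} \equiv 1$. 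The exactly analogous argument using \eqref{factorization formula x} shows triviality on $\mathscr{X}$ is equivalent to $c_{;\sigma;\vec{s}} = \mathrm{id}$ together with $F_{;\sigma;\vec{s}} \equiv 1$. Thus the outer equivalences reduce to proving that each of $c = \mathrm{id}$ or $g = \mathrm{id}$ alone forces both the matrices to be identity and every $F$-polynomial to be the constant $1$.

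Next I would link the tropical data $c$ and $g$. The matrix identity $\epsilon_{;\sigma(\vec{s})}\, g_{;\sigma;\vec{s}} = c_{;\sigma;\vec{s}}\, \epsilon_{;\vec{s}}$ (property (1)) gives $g = \mathrm{id} \Leftrightarrow c = \mathrm{id}$ immediately when $\epsilon$ is non-degenerate. In general I would combine this identity with the Nakanishi–Zelevinsky tropical duality
\[
D^{-1}\bigl(g_{;\sigma;\vec{s}}\bigr)^{T} D \;=\; \bigl(c_{;\sigma^{-1};\sigma(\vec{s})}\bigr)^{-1},\qquad D=\mathrm{diag}(d_{a}),
\]
and the sign coherence of $c$-vectors and $g$-vectors (property (2)): a sign-coherent integer matrix whose product with a skew-symmetrizable integer matrix via the above relation is the identity is itself forced to be a signed permutation, and the condition that $c_{;\sigma;\vec{s}}$ has nonnegative $c$-vectors (i.e.\ all vertices are green in the initial seed) pins this permutation down to the identity.

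The main obstacle is the implication \emph{$c_{;\sigma;\vec{s}} = \mathrm{id}$ (equivalently $g_{;\sigma;\vec{s}} = \mathrm{id}$) forces $F_{;\sigma;\vec{s}} \equiv 1$}. The tropical data of the cluster transformation agrees with the identity, and what must be ruled out is a non-trivial $F$-polynomial correction compatible with this trivial tropical behavior. Here I would invoke the scattering diagram / theta function machinery of Gross–Hacking–Keel–Kontsevich \cite{GHKK}: theta functions $\vartheta_{m}$ on $\mathscr{A}$ are uniquely determined by their $g$-vector $m$, so a cluster transformation with $g$-matrix equal to the identity must fix every theta function arising from the initial seed; since these theta functions include all cluster monomials and generate $\up(\mathscr{A})$ as a vector space (under the injectivity assumption, which holds in our setting by the affineness and upper cluster algebra description of $\conf^{b}_{d}(\mathcal{A}_{\sc})$), the transformation acts trivially on $\mathscr{A}$, hence by the first paragraph $F \equiv 1$. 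The $\mathscr{X}$ side is handled either by running the dual theta-function argument on the Langlands-dual cluster variety or by transporting the conclusion across the canonical map $p : \mathscr{A} \to \mathscr{X}$ using surjectivity, closing the equivalence.
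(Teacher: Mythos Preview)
The paper does not prove this theorem; it is stated in the appendix with citations to \cite{GS2,GHKK,CHL} and no accompanying argument. So there is no proof in the paper to compare your sketch against.

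Your outline correctly locates the decisive external input, but the final paragraph contains a real gap: you appeal to the theta functions spanning $\up(\mathscr{A})$ ``under the injectivity assumption, which holds in our setting by the affineness \ldots of $\conf^b_d(\mathcal{A}_\sc)$''. Theorem~\ref{trivial cluster trans} is a general statement about arbitrary cluster varieties, so its proof cannot invoke properties peculiar to double Bott--Samelson cells; as written, your argument only establishes the theorem in that special case. The fix is simpler than what you wrote: if $g_{;\sigma;\vec{s}}=\mathrm{id}$ then each $A_{a;\sigma(\vec{s})}$ has $g$-vector $e_a$, the same as $A_{a;\vec{s}}$, and the result you actually need from \cite{GHKK,CHL} is just that distinct cluster variables have distinct $g$-vectors, which forces $A_{a;\sigma(\vec{s})}=A_{a;\vec{s}}$ for every $a$ and hence triviality on $\mathscr{A}$ --- no spanning statement and no surjectivity of $p$ required. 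Two smaller imprecisions: on the $\mathscr{X}$ side the $F$-polynomials appear in \eqref{factorization formula x} with exponents $\epsilon_{ab;\vec{s}'}$, so triviality on $\mathscr{X}$ yields $c=\mathrm{id}$ together with $\prod_b F_b^{\epsilon_{ab}}=1$, not $F\equiv 1$ directly (the two sides are not ``exactly analogous''); and the detour through sign coherence and ``signed permutations'' is unnecessary, since the Nakanishi--Zelevinsky identity $G^{T}D=DC^{-1}$ for the same initial seed already gives $g=\mathrm{id}\Leftrightarrow c=\mathrm{id}$ immediately.
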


By Theorem \ref{trivial cluster trans}, the group of cluster transformations on $\mathscr{A}$ coincides with the group of cluster transformations on $\mathscr{X}$. We call this group the \emph{cluster modular group} and denote it by $\mathscr{G}$.

\begin{defn} Fix a seed $\vec{s}=\left(I,I^\uf,\epsilon_{ab},\left\{d_a\right\}\right)$. Let $d=\lcm\left\{d_a\right\}$. We define the \emph{Langlands dual seed} $\vec{s}^\vee:=\left(I,I^\uf, -\epsilon_{ba}, \left\{d/d_a\right\}\right)$, and the \emph{chiral dual seed}  $\vec{s}^\circ:=\left(I,I^\uf,-\epsilon_{ab},\left\{d_a\right\}\right)$.
\end{defn}

The following facts are easy to check:
\begin{enumerate}
    \item[(a)] $\vec{s}^{\vee\vee}=\vec{s}$ and $\vec{s}^{\circ \circ}=\vec{s}$;
    \item[(b)] $\mu_c\vec{s}^\vee=\left(\mu_c\vec{s}\right)^\vee$ and $\mu_c\vec{s}^\circ=\left(\mu_c\vec{s}\right)^\circ$;
    \item[(c)] the followings are equivalent for a bijection $\sigma^*:I\rightarrow I$ 
    \begin{itemize}
        \item $\sigma^*:\vec{s}\rightarrow \vec{s}'$ is a seed isomorphism;
        \item $\sigma^*:\vec{s}^\vee\rightarrow \vec{s}'^\vee$ is a seed isomrophism;
        \item $\sigma^*:\vec{s}^\circ\rightarrow \vec{s}'^\circ$ is a seed isomorphism.
    \end{itemize}  
\end{enumerate}

Thanks to (b), the mutation trees of $\vec{s}$,  $\vec{s}^\vee$ and $\vec{s}^\circ$ are naturally isomorphic. We can define the Langlands dual versions and chiral dual versions of cluster algebras and cluster varieties the same way as before, and we will denote them with superscripts $^\vee$ and $^\circ$ respectively.  Nakanishi and Zelevinsky \cite{NZ} proved the following  \emph{tropical duality} relating the $c$-matrix and $g$-matrix associated to the same sequence of mutations when applied to Langlands dual seeds:
\begin{equation}\label{tropical duality}
c_{;\vec{s}\rightarrow \vec{s}'}^t=g_{;\vec{s}^\vee\rightarrow \vec{s}'^\vee}^{-1};
\end{equation}
they also proved the following identity relating the $c$-matrices associated to the opposite sequence of mutations when applied to chiral dual seeds:
\begin{equation}\label{reverse chiral}
c_{;\vec{s}\rightarrow \vec{s}'}^{-1}=c_{;\vec{s}'^\circ\rightarrow \vec{s}^\circ}
\end{equation}

It follows from (c) and the tropical duality that a cluster transformation is trivial if and only if the corresponding cluster transformation on the Langlands dual (resp. chiral dual) is trivial. Thus the Langlands dual (resp. chiral dual) cluster modular groups are isomorphic, i.e., $\mathscr{G}\cong \mathscr{G}^\vee\cong \mathscr{G}^\circ$.

\vskip 2mm

Cluster varieties $\mathscr{A}$ and $\mathscr{X}$ are positive spaces, i.e., they are equipped with a semifield of positive rational functions $\mathbb{Q}_+(\mathscr{A})$ and  $\mathbb{Q}_+(\mathscr{X})$ respectively. Given a semifield $S$,  we define the set of $S$-points:
\[
\mathscr{A}(S):=\Hom_\text{semifield}\left(\mathbb{Q}_+(\mathscr{A}),S\right), \hskip 10mm  \mathscr{X}(S):=\Hom_\text{semifield}\left(\mathbb{Q}_+(\mathscr{X}),S\right).
\]

Let $\mathbb{Z}^t=(\mathbb{Z}, \min, +)$ is the semifield of tropical integers.
Fock and Goncharov  proposed the following conjecture on Langlands dual cluster varieties.

\begin{conj}[{\cite[Conj. 4.1]{FGensemble}}]\label{duality conjecture} The coordinate ring $\mathcal{O}(\mathscr{A})$ admits a basis $\mathscr{G}$-equivariantly parametrized by $\mathscr{X}^\vee\left(\mathbb{Z}^t\right)$, and $\mathcal{O}(\mathscr{X})$ admits a basis $\mathscr{G}$-equivariantly parametrized by $\mathscr{A}^\vee\left(\mathbb{Z}^t\right)$.
\end{conj}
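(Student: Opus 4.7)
\bigskip

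\noindent\textbf{Proof proposal for Theorem~\ref{4.11}.}

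The plan is to invoke the sufficient condition of Gross--Hacking--Keel--Kontsevich (referenced as Theorem~\ref{sufficient condition} in the appendix), which reduces the Fock--Goncharov duality conjecture to the verification of two hypotheses on the cluster ensemble $\bigl(\conf^b_d(\mathcal{A}_{\sc}),\conf^b_d(\mathcal{A}_{\ad})\bigr)$: first, that the canonical map $p:\mathscr{A}^b_d\to\mathscr{X}^b_d$ (realized here as the composition $\conf^b_d(\mathcal{A}_{\sc})\xrightarrow{\pi}\conf^b_d(\mathcal{A}_{\ad})\xrightarrow{q}\conf^b_d(\mathcal{B})$) is surjective onto the unfrozen cluster Poisson variety, and second, that a cluster Donaldson--Thomas transformation exists, i.e.\ is realized as an element of the cluster modular group.

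The first hypothesis is already in hand from the proof of Theorem~\ref{3.42}. More precisely, in that argument we established the commutative diagram
\[
\xymatrix{T_{\mathscr{A}^\uf} \ar[r]^{e} \ar[dr]_{f} & T_{\mathscr{A}} \ar[d]^{\pi} \ar[dr]^{p} & \\ & T_{\mathscr{X}} \ar[r]_{q} & T_{\mathscr{X}^\uf}}
\]
together with surjectivity of both $\pi$ and $q$ on appropriate seed tori, so that $p=q\circ\pi$ is surjective as required. I would simply quote this part of the proof of Theorem~\ref{3.42} and then transfer the conclusion from the open seed torus to the global map using the fact proved in Proposition~\ref{3.38} and Proposition~\ref{3.39} that the pull-back $\pi^*X_a$ is a Laurent monomial in the cluster $\mathrm{K}_2$ coordinates with the correct exchange-matrix exponents on the closed strings.

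For the second hypothesis, I would cite Theorem~\ref{4.9}, which identifies the Donaldson--Thomas transformation of $\conf^b_d(\mathcal{B})$ explicitly as the composition of a sequence of reflection maps $r^{i}$ (or $_{i}r$) followed by the transposition $(-)^t$. By Proposition~\ref{procedure} each reflection step corresponds to an explicit sequence of cluster Poisson mutations on the associated triangulation-seed, and the transposition is a seed isomorphism by Proposition~\ref{3.15}; thus the composition is a cluster transformation and belongs to the cluster modular group $\mathscr{G}$. Combined with the characterization of cluster DT transformations via reddening sequences, the maximal green sequence constructed in Theorem~\ref{mgs} (reduced to the base case $\conf^e_b(\mathcal{B})$ and propagated to the general $(b,d)$ case by reflections) certifies that this cluster transformation is indeed the cluster Donaldson--Thomas transformation of $\bigl(\mathscr{A}^b_d,\mathscr{X}^b_d\bigr)^{\uf}$.

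With both hypotheses verified, Theorem~\ref{sufficient condition} applies and yields theta-function bases of $\mathcal{O}(\conf^b_d(\mathcal{A}_{\sc}))$ and $\mathcal{O}(\conf^b_d(\mathcal{A}_{\ad}))$ parametrized $\mathscr{G}$-equivariantly by the integral tropical points $\mathscr{X}^\vee(\mathbb{Z}^t)$ and $\mathscr{A}^\vee(\mathbb{Z}^t)$ respectively, which is exactly Conjecture~\ref{duality conjecture}. The only delicate point that I anticipate requires care is matching the GHKK hypotheses, which are stated for principal-coefficient seeds and cluster varieties obtained by gluing seed tori along mutations, with our setup, where the frozen data are encoded by open strings and the double Bott--Samelson cells contain extra ``boundary'' data that must be compatible with $p$; the formulas from Propositions~\ref{3.38}--\ref{3.39} together with the codimension-two estimate of Proposition~\ref{codimension 2} already give everything needed, so this matching should be routine rather than genuinely difficult.
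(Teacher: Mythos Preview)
Your proposal is correct and follows essentially the same route as the paper. The paper's argument for Theorem~\ref{4.11} is the short paragraph immediately preceding its statement: it invokes Theorem~\ref{sufficient condition}, notes that the surjectivity of $p$ was established in the proof of Theorem~\ref{3.42}, and that the cluster Donaldson--Thomas transformation was constructed in Theorem~\ref{4.9}; your write-up expands on exactly these two ingredients with appropriate cross-references and adds no genuinely new ideas.
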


\begin{defn} For a seed $\vec{s}=\left(I,I^\uf,\epsilon,\left\{d_a\right\}\right)$ with $\gcd\left\{d_a\right\}_{a\in I^\uf}=1$, we define its associated \emph{unfrozen seed} to be $\vec{s}^\uf=\left(I^\uf, I^\uf, \epsilon|_{I^\uf\times I^\uf}, \left\{d_a\right\}_{a\in I^\uf}\right)$.
\end{defn}

Let $\mathscr{A}^\uf$ and $\mathscr{X}^\uf$ be the cluster varieties constructed from an unfrozen seed associated to a seed that defines $\mathscr{A}$ and $\mathscr{X}$. Then among their seed tori we can define four maps
\begin{align*}
    e:T_{\mathscr{A}^\uf;\vec{s}^\uf} & \rightarrow T_{\mathscr{A};\vec{s}} & f:T_{\mathscr{A}^\uf;\vec{s}^\uf}&\rightarrow T_{\mathscr{X};\vec{s}}\\
    e^*\left(A_{a;\vec{s}}\right)&:=\left\{\begin{array}{ll} A_{a;\vec{s}^\uf} & \text{if $a\in I^\uf$},\\
1 & \text{if $a\notin I^\uf$,}
\end{array}\right. & f^*\left(X_{a;\vec{s}}\right)&:=\prod_{b\in I^\uf}A_{b;\vec{s}^\uf}^{\epsilon_{ab;\vec{s}}}\\
    p:T_{\mathscr{A};\vec{s}}& \rightarrow T_{\mathscr{X}^\uf;\vec{s}^\uf} & q:T_{\mathscr{X};\vec{s}}& \rightarrow T_{\mathscr{X}^\uf;\vec{s}^\uf}\\
     p^*\left(X_{a;\vec{s}^\uf}\right)&:=\prod_{b\in I}A_{b;\vec{s}}^{\epsilon_{ab;\vec{s}}} & q^*\left(X_{a;\vec{s}^\uf}\right)&:=X_{a;\vec{s}}.
\end{align*}
By direct computation one can verify that these four maps commute with the mutation maps $\mu$. Therefore we can glue them together and obtain four regular maps in the following commutative diagram.
\[
\xymatrix{\mathscr{A}^\uf \ar[r]^e \ar[dr]_f & \mathscr{A} \ar[dr]^p & \\
& \mathscr{X} \ar[r]_q & \mathscr{X}^\uf}
\]

Let $\mathscr{G}$ be the cluster modular group associated to the cluster varieties $\mathscr{A}$ and $\mathscr{X}$, and let $\mathscr{G}^\uf$ be the cluster modular group associated to the cluster varieties $\mathscr{A}^\uf$ and $\mathscr{X}^\uf$. It then follows from the above formulas that $\mathscr{G}$ is a subgroup of $\mathscr{G}^\uf$.

\begin{prop}\label{pi} Let $\pi:T_{\mathscr{A};\vec{s}}\rightarrow T_{\mathscr{X};\vec{s}}$ be a group homomorphism of algebraic tori for some fixed seed $\vec{s}$ such that $f=\pi\circ e$ and $p=q\circ \pi$. Then $\pi$ induces group homomorphisms of algebraic tori between seed tori associated to any other seed in the mutation equivalent family. Moreover, they glue into a well-defined regular map $\pi:\mathscr{A}\rightarrow \mathscr{X}$.
\end{prop}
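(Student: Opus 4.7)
\medskip

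\noindent\emph{Proof plan.} Since $\pi$ is a homomorphism of split tori, the pullback of any character is a Laurent monomial in the initial cluster coordinates. Write
\[
\pi_{\vec{s}}^*\left(X_{a;\vec{s}}\right)=\prod_{b\in I}A_{b;\vec{s}}^{m_{ab;\vec{s}}}, \qquad m_{ab;\vec{s}}\in\mathbb{Z}.
\]
The first step is to translate the two compatibility conditions in the hypothesis into constraints on the matrix $(m_{ab;\vec{s}})$. Comparing with the explicit formulas defining $e,f,p,q$, the identity $p=q\circ\pi$ forces $m_{ab;\vec{s}}=\epsilon_{ab;\vec{s}}$ for every $a\in I^\uf$ and every $b\in I$, while the identity $f=\pi\circ e$ forces $m_{ab;\vec{s}}=\epsilon_{ab;\vec{s}}$ for every $b\in I^\uf$ and every $a\in I$. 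Thus $m$ agrees with $\epsilon$ everywhere except possibly on the frozen-by-frozen block, where $\epsilon$ is allowed to be non-integral while $m$ is integral.

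The second step is to show that, for every unfrozen vertex $c\in I^\uf$, the square
\[
\begin{tikzcd}
T_{\mathscr{A};\vec{s}}\ar[r,"\pi_{\vec{s}}"]\ar[d,dashed,"\mu_c"'] & T_{\mathscr{X};\vec{s}}\ar[d,dashed,"\mu_c"]\\
T_{\mathscr{A};\vec{s}'}\ar[r,"\pi_{\vec{s}'}"'] & T_{\mathscr{X};\vec{s}'}
\end{tikzcd}
\]
commutes as birational maps, and that the matrix $m_{;\vec{s}'}$ obtained by the mutated representation satisfies the same compatibility $m_{ab;\vec{s}'}=\epsilon_{ab;\vec{s}'}$ whenever $a$ or $b$ lies in $I^\uf$. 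Concretely, one pulls back $X_{a;\vec{s}'}$ along the two sides of the square using the mutation formulas \eqref{a mutation} and \eqref{x mutation}. The case $a=c$ collapses immediately because $\epsilon_{cc;\vec{s}}=0$. For $a\neq c$, matching powers of $A_{c;\vec{s}}$ yields $m_{ac;\vec{s}'}=-m_{ac;\vec{s}}$, matching powers of $A_{b;\vec{s}}$ for $b\neq c$ yields
\[
m_{ab;\vec{s}'}-m_{ab;\vec{s}}=[\epsilon_{ac;\vec{s}}]_+\,\epsilon_{cb;\vec{s}}+[-\epsilon_{cb;\vec{s}}]_+\,\epsilon_{ac;\vec{s}},
\]
and matching the $(1+\prod_b A_b^{\epsilon_{cb}})$-factors is automatic thanks to the identity $m_{cb;\vec{s}}=\epsilon_{cb;\vec{s}}$ (which holds because $c\in I^\uf$). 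The crucial point is that the right-hand side above equals $[\epsilon_{ac;\vec{s}}]_+[\epsilon_{cb;\vec{s}}]_+-[-\epsilon_{ac;\vec{s}}]_+[-\epsilon_{cb;\vec{s}}]_+$, as one checks by substituting $y=[y]_+-[-y]_+$ into $[x]_+y$ and adding. Hence $m$ transforms under mutation at $c$ by exactly the same formula as $\epsilon$. This proves both the commutativity of the square and the persistence of the compatibility condition into $\vec{s}'$.

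The third and final step is gluing. Iterating the previous step along any path in the mutation tree $\mathbb{T}$ shows that $\pi_{\vec{s}}$ is transported by the transition maps to a well-defined tori homomorphism $\pi_{\vec{s}'}:T_{\mathscr{A};\vec{s}'}\to T_{\mathscr{X};\vec{s}'}$ for every seed $\vec{s}'$, and that the resulting collection $\{\pi_{\vec{s}'}\}$ is compatible with all mutations. Because $\mathscr{A}$ and $\mathscr{X}$ are by definition obtained from their seed tori by gluing along mutations, the family $\{\pi_{\vec{s}'}\}$ descends to a single regular map $\pi:\mathscr{A}\to\mathscr{X}$, and the relations $f=\pi\circ e$ and $p=q\circ \pi$ persist globally since they hold on a Zariski-dense open torus. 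The main technical obstacle in this plan is the bookkeeping of the mutation calculation in step two; everything else is formal, and the $[\,]_+$-identity that makes the mutation rule for $m$ match that for $\epsilon$ is the one delicate combinatorial input.
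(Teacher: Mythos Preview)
Your proposal is correct. The paper's own proof is considerably terser: it observes that the hypotheses pin down $\pi^*(X_a)$ completely for unfrozen $a$ (namely $\pi^*(X_a)=\prod_b A_b^{\epsilon_{ab}}$, which is $p^*$ and already commutes with mutations) and determine the unfrozen part of $\pi^*(X_a)$ for frozen $a$, leaving only a frozen monomial $M_a$ as freedom; it then asserts that carrying $M_a$ along unchanged produces compatible maps at all seeds. The actual verification of this compatibility is deferred to the coordinate-free description in Remark~\ref{A26}, where one checks that the fixed linear map $\pi^*:N\to M$ intertwines the two mutation formulas (using $\pi^*(e_c)=p^*(e_c)$ for $c\in I^\uf$).

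Your route is the explicit coordinate computation that Remark~\ref{A26} summarizes abstractly: you track the matrix $m_{ab}$ directly and verify that the commutativity of the square forces $m$ to obey the same mutation rule as $\epsilon$, via the identity $[x]_+y+[-y]_+x=[x]_+[y]_+-[-x]_+[-y]_+$. This is the same content, just unpacked; your approach makes the mechanism visible (in particular, it shows transparently that the frozen--frozen block of $m$ changes by exactly the same amount as the frozen--frozen block of $\epsilon$ would, so the difference $m-\epsilon$ on that block is mutation-invariant), whereas the paper's approach hides the bookkeeping behind the lattice formalism.
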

\begin{proof} The conditions $p=\pi\circ e$ and $f=q\circ \pi$ imply that the only freedom in defining $\pi$ is the frozen factor (the first factor) in 
\[
\pi^*\left(X_{a;\vec{s}}\right):=\left(\prod_{b\in I\setminus I^\uf} A_{b;\vec{s}}^{\cdots} \right)\left(\prod_{c\in I^\uf}A_{c;\vec{s}}^{\epsilon_{ac;\vec{s}}}\right)=\left(\prod_{b\in I\setminus I^\uf} A_{b;\vec{s}}^{\cdots} \right)f^*\left(X_{a;\vec{s}}\right)
\]
for each frozen vertex $a$. But such factor need not change under mutation: we can just define $\pi^*\left(X_{a;\vec{s}'}\right):=\left(\prod_{b\in I\setminus I^\uf} A_{b;\vec{s}'}^{\cdots} \right)f^*\left(X_{a;\vec{s}'}\right)$ for any other seed $\vec{s}'$ and such maps $\pi$ automatically commute with the mutation maps.
\end{proof}

\begin{rmk}\label{A26} There is another way to describe the map $\pi$. Let $M$ and $N$ be the character lattice of $T_{\mathscr{A};\vec{s}}$ and $T_{\mathscr{X};\vec{s}}$ respectively. Then $\pi$ induces a linear map of character lattices $\pi^*:N\rightarrow M$. The cluster Poisson coordinates $\left\{X_{a;\vec{s}}\right\}$ correspond to a basis $\left\{e_{a;\vec{s}}\right\}$ of $N$, and the skewsymmetric matrix $\hat{\epsilon}_{ab}$ defines a skewsymmetric form $\{\cdot,\cdot\}$ on $N$. We can view a seed mutation $\vec{s}'=\mu_c\vec{s}$ as a change of basis on the lattice $N$ given by
\[
e_{a;\vec{s}'}=\left\{\begin{array}{ll}
    -e_{c;\vec{s}} & \text{if $a=c$}, \\
    e_{a;\vec{s}}+\left[\epsilon_{ac;\vec{s}}\right]_+e_{c;\vec{s}} & \text{if $a\neq c$}.
\end{array}\right.
\]
By viewing the seed tori $T_{\mathscr{A};\vec{s}'}$ and $T_{\mathscr{X};\vec{s}'}$ as algebraic tori with character lattices $M$ and $N$, we can redefine the mutation map $\mu_c$ between the corresponding seed tori by the following coordinate-free formula:
\[
\mu_c^*\left(X^n\right)=X^n\left(1+X^{e_{c;\vec{s}}}\right)^{-\left\{n, d_ce_{c;\vec{s}}\right\}},
\]
\[
\mu_c^*\left(A^m\right)=A^m\left(1+A^{p^*\left(e_{c;\vec{s}}\right)}\right)^{-\inprod{m}{d_ce_{c;\vec{s}}}},
\]
where $X^n$ is the character function corresponding to $n\in N$ and $A^m$ is the character function corresponding to $m\in M$. By using this description, it is not hard to verify by computation that the following diagram commutes
\[
\xymatrix{T_{\mathscr{A};\vec{s}}\ar@{-->}[r]^{\mu_c}\ar[d]_\pi & T_{\mathscr{A};\vec{s}}\ar[d]^\pi \\
T_{\mathscr{X};\vec{s}} \ar@{-->}[r]_{\mu_c} & T_{\mathscr{X};\vec{s}}}
\]
where the two vertical maps $\pi$ are induced by the same linear map $\pi^*:N\rightarrow M$.
\end{rmk}

\begin{prop}\label{surjective pi} If in addition to the assumption of Proposition \ref{pi}, the map $\pi:T_{\mathscr{A};\vec{s}}\rightarrow T_{\mathscr{X};\vec{s}}$ is surjective, then the resulting regular map $\pi:\mathscr{A}\rightarrow \mathscr{X}$ is also surjective (and equivalently the induced algebra homomorphism $\pi^*:\up(\mathscr{X})\rightarrow \up(\mathscr{A})$ is injective), and $F\in \Frac\left(\up(\mathscr{X})\right)$ is in $\up(\mathscr{X})$ if and only if $\pi^*(F)$ is in $\up(\mathscr{A})$.
\end{prop}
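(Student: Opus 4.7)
The plan is first to establish surjectivity of $\pi$ at every seed torus, then to glue to deduce global surjectivity, and finally to prove the equivalent ``iff'' statement on upper cluster algebras via a torus-level descent argument.

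Using the coordinate-free description of $\pi$ from Remark \ref{A26}, the map $\pi$ between any pair of seed tori $T_{\mathscr{A};\vec{s}'}\to T_{\mathscr{X};\vec{s}'}$ is induced by the same linear map $\pi^*:N\to M$ on character lattices, independent of the chosen seed. For algebraic tori, surjectivity of the group homomorphism $\pi:T_{\mathscr{A};\vec{s}}\to T_{\mathscr{X};\vec{s}}$ is equivalent to injectivity of $\pi^*:N\to M$ on character lattices. Since mutation acts on $N$ and $M$ only by change of basis, this injectivity propagates to every seed $\vec{s}'$, so each restriction $\pi:T_{\mathscr{A};\vec{s}'}\to T_{\mathscr{X};\vec{s}'}$ is surjective. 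Because $\mathscr{X}=\bigcup_{\vec{s}'}T_{\mathscr{X};\vec{s}'}$, the global map $\pi:\mathscr{A}\to\mathscr{X}$ is surjective, and dually $\pi^*:\up(\mathscr{X})\to\up(\mathscr{A})$ is injective.

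The forward direction of the last assertion is immediate: if $F\in\up(\mathscr{X})$, then $\pi^*(F)\in\pi^*(\up(\mathscr{X}))\subset\up(\mathscr{A})$. For the backward direction, suppose $F\in\Frac(\up(\mathscr{X}))$ with $\pi^*(F)\in\up(\mathscr{A})$. Because $\pi$ commutes with every mutation map, for each seed $\vec{s}'$ one has $\pi^*(\mu_{\vec{s}\to\vec{s}'}^*F)=\mu_{\vec{s}\to\vec{s}'}^*(\pi^*F)\in\mathcal{O}(T_{\mathscr{A};\vec{s}'})$. The whole backward direction therefore reduces to the following torus-level descent claim: if $G\in\Frac(\mathcal{O}(T_{\mathscr{X};\vec{s}'}))$ satisfies $\pi^*(G)\in\mathcal{O}(T_{\mathscr{A};\vec{s}'})$, then $G\in\mathcal{O}(T_{\mathscr{X};\vec{s}'})$.

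To prove the descent claim, identify $\mathcal{O}(T_{\mathscr{X};\vec{s}'})=\mathbb{C}[N]$ and $\mathcal{O}(T_{\mathscr{A};\vec{s}'})=\mathbb{C}[M]$, and let $K:=M/\pi^*(N)$. The kernel of $\pi$ is the diagonalizable group $\mathrm{Hom}(K,\mathbb{G}_m)$, acting on $\mathbb{C}[M]$ by $t\cdot X^m=t^{[m]}X^m$ where $[m]$ denotes the class of $m$ in $K$. The ring of invariants is exactly $\mathbb{C}[\pi^*(N)]=\pi^*\mathbb{C}[N]$. Any $G\in\Frac(\mathbb{C}[N])$ has $\pi^*(G)$ invariant under the kernel, so if in addition $\pi^*(G)\in\mathbb{C}[M]$, then $\pi^*(G)\in\mathbb{C}[\pi^*(N)]=\pi^*\mathbb{C}[N]$. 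The injectivity of $\pi^*$ then forces $G\in\mathbb{C}[N]=\mathcal{O}(T_{\mathscr{X};\vec{s}'})$, as desired.

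The main obstacle is the backward implication, which requires the torus-level descent. Once one writes down the short exact sequence $0\to N\to M\to K\to 0$ and uses the standard correspondence between sub-tori and quotient character lattices, the invariant-theoretic identification $\mathbb{C}[M]^{\ker\pi}=\mathbb{C}[\pi^*(N)]$ is immediate, and the rest of the argument is formal.
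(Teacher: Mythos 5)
Your proof is correct and follows essentially the same route as the paper: surjectivity at one seed torus gives injectivity of $\pi^*:N\to M$, which by the coordinate-free description of Remark \ref{A26} propagates to every seed and glues to global surjectivity, after which membership in $\up(\mathscr{X})$ is checked seed by seed. The only difference is that where the paper dispatches the backward implication with the terse observation that $\pi^*$ ``maps monomials to monomials,'' you make this descent precise via the invariance of $\pi^*(F)$ under $\ker\pi$ and the identification $\mathcal{O}\left(T_{\mathscr{A};\vec{s}'}\right)^{\ker\pi}=\pi^*\mathcal{O}\left(T_{\mathscr{X};\vec{s}'}\right)$, which is a legitimate and complete way to fill in that step.
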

\begin{proof} If $\pi:T_{\mathscr{A};\vec{s}}\rightarrow T_{\mathscr{X};\vec{s}}$ is surjective for one seed $\vec{s}$, then $\pi^*:N\rightarrow M$ is injective, which implies that the induced maps $\pi:T_{\mathscr{A};\vec{s}'}\rightarrow T_{\mathscr{X};\vec{s}'}$ are surjective for any seed $\vec{s}'$ in the mutation equivalent family by the above observation. Then it follows that $\pi:\mathscr{A}\rightarrow \mathscr{X}$ is also surjective.

Lastly, since $\pi^*:T_{\mathscr{A},\vec{s}}\rightarrow T_{\mathscr{X},\vec{s}}$ maps monomials to monomials for every $\vec{s}$, $F$ is a Laurent polynomial on $T_{\mathscr{X},\vec{s}}$ for all $\vec{s}$ if and only if $\pi^*(F)$ is a Laurent polynomial on $T_{\mathscr{A},\vec{s}}$ for all $\vec{s}$, which implies that $F\in \up\left(\mathscr{X}\right)$ if and only if $\pi^*(F)\in \left(\mathscr{A}\right)$.
\end{proof}

Next we will make use of the alternative description of a seed in Remark \ref{A26} to define  quasi-cluster transformations. A more detailed discussion can be found in \cite{Fra,GS3}.

\begin{defn} Let $\vec{s}$ and $\vec{s}'$ be two seeds on $\mathbb{T}$. Let $N$ be the lattice as described in Remark \ref{A26}, with a skewsymmetric form $\{\cdot, \cdot\}$. Let $\left\{e_{a;\vec{s}}\right\}$ and $\left\{e_{a;\vec{s}'}\right\}$ be the bases of $N$ corresponding to the seeds $\vec{s}$ and $\vec{s}'$ respectively. Then a lattice isomorphism $\sigma^*:N\rightarrow N$ is said to be a \emph{seed quasi-isomorphism} if 
\begin{enumerate}
    \item there exists a seed isomorphism $\sigma^*:\vec{s}^\uf\rightarrow \vec{s}'^\uf$ between the unfrozen seeds such that $\sigma^*\left(e_{a;\vec{s}}\right)=e_{\sigma(a);\vec{s}'}$ for all $a\in I^\uf$;
    \item $\sigma$ preserves the skewsymmetric form $\{\cdot,\cdot\}$ on $N$.
\end{enumerate}
Note that a seed quasi-isomorphism $\sigma^*$ also induces a lattice isomorphism $M\rightarrow M$ because $M$ is dual to $N$ up to a rescaling\footnote{The rescaling is non-trivial only in the skew-symmetrizable cases; see \cite[Appendix A]{GHKK} for more details.}. We abuse notation and denote the induced automorphism on $M$ by $\sigma^*$.
\end{defn}

\begin{defn}\label{quasi cluster trans} A seed quasi-isomorphism $\sigma^*$ naturally induces an automorphism $\sigma^*$ on $\up(\mathscr{A})$ and $\up(\mathscr{X})$ defined by
\[
\sigma^*\left(A_{;\vec{s}}^m\right):=A_{;\vec{s}'}^{\sigma^*(m)} \quad\quad \text{and}\quad \quad \sigma^*\left(X_{;\vec{s}}^n\right):=X_{;\vec{s}'}^{\sigma^*(n)};
\]
we call such automorphisms \emph{quasi-cluster transformations}. In turn such automorphisms also define biregular automorphisms on the corresponding cluster varieties, which we also call \emph{quasi-cluster transformations}. Since a seed quasi-isomorphism preserves the skewsymmetric form $\{\cdot,\cdot\}$, a quasi-cluster transformation on $\up(\mathscr{X})$ is automatically a Poisson automorphism.
\end{defn}

\begin{lem}\label{lem A.32} Let $\sigma$ be a quasi-cluster transformation corresponding to a lattice isomorphism $\sigma^*$ between two seeds $\vec{s}$ and $\vec{s}'$. Suppose $m$ and $n$ are two integer matrices such that
\[
\sigma^*\left(A_{i;\vec{s}}\right)=\prod_jA_{j;\vec{s}'}^{m_{ij}} \quad \quad \text{and} \quad \quad \sigma^*\left(X_{i;\vec{s}}\right)=\prod_jX_{j;\vec{s}'}^{n_{ij}}.
\]
Then $md'=d(n^t)^{-1}$, where $d$ and $d'$ are the multiplier matrices associated with the seed $\vec{s}$ and $\vec{s}'$, respectively, and $t$ denotes the transposition of a matrix.
\end{lem}
\begin{proof} By definition, we have $\sigma^*\left(e_i\right)=\sum_jn_{ij}e'_j$ for the linear isomorphism $\sigma^*:N\rightarrow N$. On the other hand, we know from \cite[Appendix A]{GHKK} that the two $f$-bases of $M$ are given by $f_i=e_i^*/d_i$ and $f'_j=e'^*_j/d'_j$, respectively. It follows from linear algebra that the induced map $\sigma^*:M\rightarrow M$ is given by
\[
\sigma^*(f_i)=\sum_jd_i^{-1}(n^{-1})_{ji}d'_jf'_j.\qedhere
\]
\end{proof}

Since every cluster transformation is a quasi-cluster transformation, the quasi-cluster transformations form a group that contains the cluster modular group $\mathscr{G}$.

\begin{exmp} Below is a simple example that demonstrates the difference between a cluster transformation and a quasi-cluster transformation. Consider the following quivers which differ by one single mutation.
\[
\begin{tikzpicture}[baseline=2ex]
\node (1) at (0,1) [] {$\bullet$};
\node (2) at (-0.75,0) [] {$\Box$};
\node (3) at (0.75,0) [] {$\Box$};
\foreach \i in {1,2,3}
    {
    \node at (\i) [above] {$\i$};
    }
\draw [->] (1) -- (2);
\draw [->] (3) -- (1);
\end{tikzpicture}
\quad \quad \overset{\mu_1}{\longrightarrow}\quad \quad
\begin{tikzpicture}[baseline=2ex]
\node (1) at (0,1) [] {$\bullet$};
\node (2) at (-0.75,0) [] {$\Box$};
\node (3) at (0.75,0) [] {$\Box$};
\foreach \i in {1,2,3}
    {
    \node at (\i) [above] {$\i$};
    }
\draw [->] (2) -- (1);
\draw [->] (1) -- (3);
\draw [->] (3) -- (2);
\end{tikzpicture}
\]
Note that these two quivers are not isomorphic so they cannot possibly define a cluster transformation. On the other hand, if we define 
\[
\sigma^*\left(e_1\right):=e'_1, \quad \sigma^*\left(e_2\right):=e'_1+e'_3, \quad \sigma^*\left(e_3\right):=e'_2,
\]
we see that $\sigma^*$ defines a seed quasi-isomorphism. Therefore $\sigma^*$ induces a quasi-cluster transformation on $\up(\mathscr{A})$ and $\up(\mathscr{X})$ which acts by
\begin{align*}
    \sigma^*\left(X_1\right)&=X'_1=\frac{1}{X_1} & \sigma^*\left(A_1\right)&=\frac{A'_1}{A'_3}=\frac{A_2+A_3}{A_1A_3}\\
    \sigma^*\left(X_2\right)&=X'_1X'_3=\frac{X_3}{1+X_3} & \sigma^*\left(A_2\right)&=A'_3=A_3\\
    \sigma^*\left(X_3\right)&=X'_2=X_2\left(1+X_1\right) &\sigma^*\left(A_3\right)&=A'_2=A_2.
\end{align*}
\end{exmp}

The cluster Poisson algebra $\up\left(\mathscr{X}^\uf\right)$ admits a natural extension into a Poisson algebra of formal series. Goncharov and Shen defined a unique Poisson automorphism on such extension called the \emph{Donaldson-Thomas transformation}. In all known cases the Donaldson-Thomas transformation preserves $\up\left(\mathscr{X}^\uf\right)$ and hence descends to a Poisson automorphism on $\up\left(\mathscr{X}^\uf \right)$. In most known cases the Donaldson-Thomas transformation is a central element in the cluster modular group $\mathscr{G}^\uf$, and when this happens we say that the Donaldson-Thomas transformation is \emph{cluster}. 

In the reverse direction, there is an easy way to check whether a cluster transformation is the Donaldson-Thomas transformation by using the $c$-matrix, and we will use it as the working definition in this paper.

\begin{defn} A cluster transformation in $\mathscr{G}^\uf$ is the cluster Donaldson-Thomas transformation if its $c$-matrix (with respect to any choice of seed) is $-\mathrm{id}$.
\end{defn}

The following theorem justifies the omission of the phrase ``with respect to any choice of seed''.

\begin{thm}[{\cite[Theorem 3.6]{GS2}}]\label{central} If $\sigma$ is a cluster transformation with $c_{;\sigma;\vec{s}}=-\mathrm{id}$, then $c_{;\sigma;\vec{s}'}=-\mathrm{id}$ for any seed $\vec{s}'$ mutation equivalent to $\vec{s}$.  
\end{thm}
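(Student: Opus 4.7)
The plan is to reduce the statement to a global property of the transformation $\sigma$ and to exploit the good compositional behavior of $g$-vectors under mutation, rather than working directly with $c$-matrices. First, I would apply the Nakanishi--Zelevinsky tropical duality \eqref{tropical duality} to reformulate the hypothesis: $c_{;\sigma;\vec{s}}=-\mathrm{id}$ is equivalent to $g_{;\sigma^\vee;\vec{s}^\vee}=-\mathrm{id}$ for the Langlands-dual cluster transformation, because $(-\mathrm{id})^{t}=(-\mathrm{id})^{-1}=-\mathrm{id}$. By the isomorphism $\mathscr{G}\cong \mathscr{G}^\vee$, it therefore suffices to prove the parallel statement that a cluster transformation $\tau:=\sigma^\vee$ whose $g$-matrix at a single seed equals $-\mathrm{id}$ has $g$-matrix $-\mathrm{id}$ at every mutation-equivalent seed.

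Next, I would recall that as an element of the cluster modular group, $\tau$ is realized by a seed isomorphism $\tau^{\ast}:\widetilde{\vec{s}^\vee}\to\vec{s}^\vee$ for some $\widetilde{\vec{s}^\vee}$ mutation-equivalent to $\vec{s}^\vee$ via a mutation sequence $\mu$. For any other seed $\vec{s}'^\vee=\nu(\vec{s}^\vee)$, the transformation $\tau$ is likewise realized by the seed isomorphism $\tau^{\ast}:\widetilde{\vec{s}'^\vee}\to\vec{s}'^\vee$, where $\widetilde{\vec{s}'^\vee}$ is obtained from $\widetilde{\vec{s}^\vee}$ by the ``transported'' mutation sequence $\tau^{-1}(\nu)$. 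The $g$-matrix of $\tau$ at $\vec{s}'^\vee$ can then be expressed as a product of $g$-matrices associated with the path
\[
\widetilde{\vec{s}'^\vee}\;\xrightarrow{\tau^{-1}(\nu)^{-1}}\;\widetilde{\vec{s}^\vee}\;\xrightarrow{\mu^{-1}}\;\vec{s}^\vee\;\xrightarrow{\nu}\;\vec{s}'^\vee,
\]
so that telescoping gives $g_{;\tau;\vec{s}'^\vee}$ in terms of $g_{;\tau;\vec{s}^\vee}=-\mathrm{id}$ and the $g$-matrices of $\nu$ and of its transport $\tau^{-1}(\nu)$.

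The crucial step is therefore to show that, under the assumption $g_{;\tau;\vec{s}^\vee}=-\mathrm{id}$, the transported mutation $\tau^{-1}(\nu)$ has $g$-matrix at $\widetilde{\vec{s}^\vee}$ equal to that of $\nu$ at $\vec{s}^\vee$, so that the product collapses to $-\mathrm{id}$. I would establish this by induction on the length of $\nu$, reducing to a single mutation $\mu_c$ and using the Nakanishi--Zelevinsky recursions for $g$-vectors together with the sign coherence of $c$-vectors: the hypothesis $g_{;\tau;\vec{s}^\vee}=-\mathrm{id}$ forces $\tau$ to exchange the ``all-green'' and ``all-red'' components of the cluster complex in a precisely controlled way, which rigidly pins down how $\tau$ transports every single mutation.

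The main obstacle will be justifying the sign-coherent telescoping in the last step: a priori the hypothesis $g_{;\tau;\vec{s}^\vee}=-\mathrm{id}$ is only a linear statement at one chart, but extracting from it a coherent statement about how $\tau$ acts on every mutation requires the global sign coherence of $c$- and $g$-vectors (so that no accidental cancellations can occur between $F$-polynomial contributions). A conceptually cleaner route, which ultimately underlies the result, is to identify $\sigma$ with the Donaldson--Thomas automorphism of Kontsevich--Soibelman: the DT-automorphism is defined on the whole formal cluster Poisson algebra without reference to any seed, and whenever it is realized by a cluster transformation its $c$-matrix is automatically $-\mathrm{id}$ in every chart; then Theorem \ref{trivial cluster trans} upgrades ``$c$-matrix $-\mathrm{id}$ at one seed'' into a characterization that pins $\sigma$ down to $\DT$, whence the conclusion is immediate.
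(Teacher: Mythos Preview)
The paper does not supply its own proof of this statement: it is quoted verbatim from \cite[Theorem 3.6]{GS2} and used as a black box in Appendix~\ref{app B}. So there is no in-paper argument to compare against, and I assess the proposal on its own merits.

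Your first route via tropical duality is a sound reformulation, and you correctly isolate the crux: one needs that the $g$-matrix of the transported path $\tau^{-1}(\nu)$ matches that of $\nu$ so that the telescoping product collapses. However, this is precisely the nontrivial content of the theorem, and the sketch does not explain how sign coherence or the Nakanishi--Zelevinsky recursion actually forces it. Sign coherence controls individual $c$-vectors but does not by itself give a composition law of the form $c_{;\sigma;\mu_k\vec s}=M_k^{-1}\,c_{;\sigma;\vec s}\,M_k$; in general $c$-matrices of compositions are governed by the piecewise-linear tropical action, not by matrix multiplication, and the linearity only holds on the positive cone of a given chart. So as written the inductive step is a restatement of the goal rather than a proof.

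Your second route has a circularity. Theorem~\ref{trivial cluster trans} applies to \emph{cluster} transformations; to invoke it you would need to compare $\sigma$ with another cluster transformation whose $c$-matrix at $\vec s$ is $-\mathrm{id}$ and argue that the quotient has $c$-matrix $\mathrm{id}$. But (i) the formal Kontsevich--Soibelman DT automorphism is not a priori a cluster transformation, so it cannot be fed into Theorem~\ref{trivial cluster trans}; and (ii) even granting a second cluster $\sigma'$ with $c_{;\sigma';\vec s}=-\mathrm{id}$, concluding $c_{;\sigma^{-1}\sigma';\vec s}=\mathrm{id}$ again requires a composition rule for $c$-matrices, which is the same missing ingredient as above. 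The argument in \cite{GS2} does pass through the tropicalized action $\sigma^t$ on $\mathscr{X}(\mathbb{Z}^t)$ and its interaction with the canonical chart-independent involution $\mathscr{X}\to\mathscr{X}^\circ$ given by $X_a\mapsto X_a^{-1}$ (cf.\ \eqref{reverse chiral}); what makes that work is establishing, independently of any seed, that $\sigma^t$ coincides with the globally defined negation $\mathscr{X}(\mathbb{Z}^t)\to\mathscr{X}^\circ(\mathbb{Z}^t)$ followed by the chiral identification, and this global identification is the step your outline does not supply.
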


Recall that the $c$-matrix of a sequence of mutations can be computed using principal coefficients. It follows by definition of the cluster Donaldson-Thomas transformation that any two isomorphic seeds that give rise to the cluster transformation are related by a reddening sequence. On the the other hand, it is known that a reddening sequence always produces a seed that is isomorphic to the original seed and one can choose an isomorphism such that the associated $c$-matrix is $-\mathrm{id}$. Therefore we have the following implications:
\begin{equation}\label{max green}
\begin{tikzpicture}[baseline=-5ex]
\node (0) at (0,0) [] {$\begin{array}{c} \text{a maximal green}\\ \text{sequence exists}\end{array}$};
\node (1) at (7,0) [] {$\begin{array}{c} \text{a reddening}\\ \text{sequence exists}\end{array}$};
\node (2) at (7,-2) [] {$\begin{array}{c} \text{ Donaldson-Thomas }\\ \text{transformation is  cluster}\end{array}$};
\node (3) at (0,-2) [] {$\begin{array}{c} \text{Donaldson-Thomas}\\ \text{transformation is rational}\end{array}$};
\draw [-implies,double equal sign distance] (0) -- (1);
\draw [implies-implies,double equal sign distance] (1) -- (2);
\draw [-implies,double equal sign distance] (2) -- (3);
\end{tikzpicture}
\end{equation}

Let us explain the importance of Donaldson-Thomas transformations. First, in the skewsymmetric case (where all $d_a=1$),  the Donaldson-Thomas transformation encodes the Donaldson-Thomas invariants of an associated 3d Calabi-Yau category of dg modules (see \cite{GS2}). Second, by combining results of Goncharov and Shen \cite{GS2} and results of Gross, Hacking, Keel, and Kontsevich \cite{GHKK}, one obtain the following sufficient conditions for the cluster duality conjecture.

\begin{thm}[{\cite[Prop.8.28]{GHKK}}]\label{sufficient condition} Conjecture \ref{duality conjecture} holds if the following two conditions hold:
\begin{enumerate}
    \item The Donaldson-Thomas transformation of $\mathscr{X}^\uf$ is cluster;
    \item The algebra homomorphism $p^*:\up\left(\mathscr{X}^\uf\right)\rightarrow \up\left(\mathscr{A}\right)$ is injective.
\end{enumerate}
\end{thm}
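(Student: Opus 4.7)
The plan is to follow the scattering-diagram / theta-function machinery of Gross--Hacking--Keel--Kontsevich. For the $\mathscr{A}$-side of Conjecture~\ref{duality conjecture}, I would construct, for each tropical point $m\in \mathscr{X}^\vee(\mathbb{Z}^t)$, a candidate basis element $\vartheta_m$ defined a priori as a formal sum over broken lines in a consistent scattering diagram $\mathfrak{D}^{\mathscr{A}}$ built from the initial seed data. The proof then reduces to establishing four properties: (i) only finitely many broken lines contribute to $\vartheta_m$ when read off in a fixed cluster chart, so $\vartheta_m$ is an actual Laurent polynomial there; (ii) these polynomials glue to an element of $\up(\mathscr{A})=\mathcal{O}(\mathscr{A})$; (iii) the collection $\{\vartheta_m\}_{m\in \mathscr{X}^\vee(\mathbb{Z}^t)}$ is a $\mathbb{C}$-basis of $\mathcal{O}(\mathscr{A})$; and (iv) the map $m\mapsto \vartheta_m$ intertwines the actions of the cluster modular group $\mathscr{G}$ on $\mathscr{X}^\vee(\mathbb{Z}^t)$ and on $\mathcal{O}(\mathscr{A})$.

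The first hypothesis, that $\mathrm{DT}_{\mathscr{X}^\uf}$ is a cluster transformation, would be used as follows. By the implications in \eqref{max green}, the existence of a cluster $\mathrm{DT}$ is equivalent to the existence of a reddening sequence, and GHKK translate this into two concrete statements about $\mathfrak{D}^{\mathscr{A}}$: the $g$-fan $\Delta^+$ (the union of chambers cut out by iterated mutation) is \emph{dense} in the support of $\mathfrak{D}^{\mathscr{A}}$, and the walls of $\mathfrak{D}^{\mathscr{A}}$ admit a finite decomposition modulo any $\mathfrak{m}$-adic truncation. These two consequences collapse the broken-line sum for $\vartheta_m$ into a finite sum in every cluster chart whose $g$-chamber contains $m$, establishing (i) and (ii). Property (iii) follows because the theta functions form a topological basis of the full theta algebra, and the finiteness output by the reddening sequence upgrades this to an honest $\mathbb{C}$-basis of $\up(\mathscr{A})$. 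Property (iv) is automatic from the functoriality of the scattering diagram and of broken lines under seed isomorphisms, combined with the identification $\mathscr{G}\cong\mathscr{G}^\vee$ coming from tropical duality \eqref{tropical duality}.

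The second hypothesis, injectivity of $p^*\colon \up(\mathscr{X}^\uf)\hookrightarrow \up(\mathscr{A})$, is what transports the result from $\mathscr{A}$ to $\mathscr{X}^\uf$ (and then, by Langlands/chiral duality symmetry of the hypotheses, to the full $\mathscr{X}$ side of the conjecture). The scattering diagram on $\mathscr{X}^\uf$ is obtained from $\mathfrak{D}^{\mathscr{A}_{\mathrm{prin}}}$ by a quotient that corresponds geometrically to $p$, and the $\mathscr{X}^\uf$ theta functions are the pushforwards of the $\mathscr{A}_{\mathrm{prin}}$ theta functions. Injectivity of $p^*$ guarantees that no linear relations are created under this pushforward, so the linear independence of the theta basis on $\mathscr{A}_{\mathrm{prin}}$ descends to a basis of $\mathcal{O}(\mathscr{X}^\uf)$ parametrized by $\mathscr{A}^\vee(\mathbb{Z}^t)$; the same DT-hypothesis, applied now to $\mathscr{A}^\vee$ via tropical duality, takes care of the remaining polynomiality claims.

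The main obstacle is step~(i): proving that the a priori formal series $\vartheta_m$ is literally a Laurent polynomial in every cluster chart. This is exactly the deep positivity/finiteness assertion in GHKK (their Proposition~8.28), and it is precisely here that the hypothesis ``$\mathrm{DT}$ is cluster'' is indispensable—without a reddening sequence the $g$-fan $\Delta^+$ need not fill out the support of $\mathfrak{D}^{\mathscr{A}}$, and broken-line sums can genuinely be infinite. Once step~(i) is in hand, everything else (gluing across charts, the basis property, $\mathscr{G}$-equivariance, and the $\mathscr{X}$-side via $p^*$) follows from the formal machinery of scattering diagrams.
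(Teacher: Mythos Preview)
The paper does not give its own proof of this statement: it is quoted in Appendix~\ref{app B} as a background result, with an explicit citation to \cite[Prop.~8.28]{GHKK}, and no argument is supplied. So there is nothing in the paper to compare your proposal against.

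Your sketch is broadly faithful to the GHKK strategy (scattering diagrams, broken lines, theta functions), but a few of the technical assertions are not quite right and would need to be corrected in an actual write-up. First, the consequence of a reddening sequence is not that the $g$-fan is ``dense in the support of $\mathfrak{D}^{\mathscr{A}}$''; rather, a reddening sequence produces a seed whose cluster chamber lies in the all-negative orthant, and this is what GHKK feed into their ``Enough Global Monomials'' criterion to conclude that every theta function is a genuine Laurent polynomial. Second, your account of how injectivity of $p^*$ is used is slightly off: in GHKK the relevant hypothesis is full rank of $\left.\epsilon\right|_{I^{\mathrm{uf}}\times I}$ (equivalently surjectivity of $p$, equivalently injectivity of $p^*$), and it enters already on the $\mathscr{A}$-side to guarantee that the upper cluster algebra coincides with the ring generated by theta functions, not only to transport results to $\mathscr{X}^{\mathrm{uf}}$. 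Finally, the passage to the full $\mathscr{X}$-side (with frozen variables) is not a formal ``Langlands/chiral symmetry'' step; it requires the separate argument in GHKK relating $\mathscr{X}$ to a quotient of $\mathscr{A}_{\mathrm{prin}}$. None of these are fatal, but each would need a precise reference into \cite{GHKK} rather than the heuristic you have now.
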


\subsection{Python Code for Computing \texorpdfstring{$g^b_d(q)$ for Positive Braid Closures}{}}\label{A.4}

\begin{verbatim}
from sympy import *
q = symbols('q')

# The Weyl group associated to Dynkin type A_(n-1) is the symmetric 
# group S_n. We denote elements of S_n by an n-tuple u consisting of
# integers from 0 to n. Below are left/right multiplication of u by 
# a simple reflection s_k

def left_mult(k,u):
    u[k-1],u[k] = u[k],u[k-1]
    return(u)

def right_mult(k,u):
    for i in range(len(u)):
        if u[i] == k-1:
            u[i] = k
        elif u[i] == k:
            u[i] = k-1
    return(u)

# Below is an algorithm that converts an integer 0<=m<2^l into an l 
# tuple of 0s and 1s corresponding to the binary expansion of m.

def binary(m,l):
    output = []
    while l != 0:
        a = m//2**(l-1)
        m = m-a*2**(l-1)
        l -= 1
        output.append(a)
    return(output)

# Now we are ready to define the function g^b_d(q). 
# The computation needs the input of a triangulation t. However, 
# the output should not depend on t.
# Let l=l(b)+l(d)
# The input of b and d will specify the choice of words. We then 
# represent a triangulation t by an l-tuple of 0s and 1s, with 0 
# representing a nabla-shaped triangle and 1 representing a Delta-
# shaped triangle.

def g(n,b,d,t):
    
# First we should test that t is actually a triangulation for (b,d).

    if t.count(0) != len(b) or t.count(1) != len(d):
        return("Error: Wrong Triangulation!")

# Next we combine b and d into a l-tuple "braid" of pairs (s,k), 
# where s=0,1 indicates the orientation of the triangle 
# and k indicates the simple reflection.

    braid = []
    for i in range(len(t)):
        if t[i] == 0:
            braid.append([t[i],b.pop(0)])
        else:
            braid.append([t[i],d.pop(0)])

# The variable "polynomial" will be the final output g^b_d(q)*(q-1)^(n-1)
    
    polynomial=0
    
# When we go across the triangulation from left to right, at each
# step we can either multiply or not multiply s_k.
# Therefore at most we only need to consider 2^l number of cases.
# We go through all these cases using a parameter 0<=m<2^l
# By converting this parameter m into a binary expression of length l, 
# we exhaust all possible cases.
# 1 means the multiplication takes place, 
# and 0 means the multiplication does not take place.

    for m in range(2**len(t)):
        occurrence = binary(m,len(t))
        
# Start with the left most edge being of Tits codistance u=e.

        u = [j for j in range(n)]
    
# The l-tuple "factors" collects the factors coming from each 
# triangle in the triangulation.
        
        factor = []

# We now go from left to right across the triangulation, and
# impose Tits codistance conditions u according to the binary 
# expression of m.
# Note that letters of b correspond to right multiplications
# and letters of d correspond to left multiplications.
# The computation is governed by Lemma 6.1.

        for i in range(len(t)):
            if braid[i][0] == 1:
                if occurrence[i] == 0:
                    if u[braid[i][1]-1] > u[braid[i][1]]:
                        break
                elif occurrence[i] == 1:
                    if u[braid[i][1]-1] > u[braid[i][1]]:
                        factor.append(q)
                    else:
                        factor.append(1)
                    u = left_mult(braid[i][1],u)
                else:
                    factor.append(q-1)
            else:
                if occurrence[i] == 0:
                    if u.index(braid[i][1]-1) > u.index(braid[i][1]):
                        break
                elif occurrence[i] == 1:
                    if u.index(braid[i][1]-1) > u.index(braid[i][1]):
                        factor.append(q)
                    else:
                        factor.append(1)
                    u = right_mult(braid[i][1],u)
                else:
                    factor.append(q-1)
        
                    
        if u == [j for j in range(n)]:
            term = 1
            for p in factor:
                term *= p
            polynomial += term
                    
    return(cancel(polynomial/(q-1)**(n-1)))
\end{verbatim}

\begin{Backmatter}

\paragraph{Acknowledgements} We are grateful to A.B. Goncharov for introducing us to cluster ensembles and the amalgamation of cluster varieties. We thank J.H. Lu for suggesting a method to count $\mathbb{F}_q$-points of double Bott-Samelson cells and for helpful conversations on generalized double Bruhat cells. We  thank  H. Gao and E. Zaslow for valuable discussions on  Legendrian links and microlocal rank-1 sheaves. L.S. is supported by the Collaboration Grant for Mathematicians from the Simons Foundation, \#711926.

\paragraph{Conflicts of interest}

None.

\bibliographystyle{alphaurl-a}

\bibliography{biblio}

\begin{thebibliography}{GHKK18}

\bibitem[Abr96]{Ab}
Peter Abramenko.
\newblock {\em Twin buildings and applications to {S}-arithmetic groups},
  volume 1641 of {\em Lecture Notes in Mathematics}.
\newblock Springer-Verlag, Berlin, 1996.
\newblock \href {https://doi.org/10.1007/BFb0094079}
  {\path{doi:10.1007/BFb0094079}}.

\bibitem[BFZ05]{BFZ}
Arkady Berenstein, Sergey Fomin, and Andrei Zelevinsky.
\newblock Cluster algebras. {III}. {U}pper bounds and double {B}ruhat cells.
\newblock {\em Duke Math. J.}, 126(1):1--52, 2005.
\newblock \href {http://arxiv.org/abs/math/0305434}
  {\path{arXiv:math/0305434}}, \href
  {https://doi.org/10.1215/S0012-7094-04-12611-9}
  {\path{doi:10.1215/S0012-7094-04-12611-9}}.

\bibitem[BS58]{BS}
Raoul Bott and Hans Samelson.
\newblock Applications of the theory of {M}orse to symmetric spaces.
\newblock {\em Amer. J. Math.}, 80:964--1029, 1958.
\newblock \href {https://doi.org/10.2307/2372843} {\path{doi:10.2307/2372843}}.

\bibitem[CHL18]{CHL}
Peigen Cao, Min Huang, and Fang Li.
\newblock A conjecture on $c$-matrix of cluster algebras.
\newblock {\em Nagoya Mathematical Journal}, pages 1--10, 2018.
\newblock \href {http://arxiv.org/abs/1702.01221} {\path{arXiv:1702.01221}}.

\bibitem[Dem74]{Dem}
Michel Demazure.
\newblock D\'{e}singularisation des vari\'{e}t\'{e}s de {S}chubert
  g\'{e}n\'{e}ralis\'{e}es.
\newblock {\em Ann. Sci. \'{E}cole Norm. Sup. (4)}, 7:53--88, 1974.
\newblock URL: \url{http://www.numdam.org/item?id=ASENS_1974_4_7_1_53_0}.

\bibitem[EL19]{EL}
Balázs Elek and Jiang-Hua Lu.
\newblock {Bott–Samelson Varieties and Poisson Ore Extensions}.
\newblock {\em International Mathematics Research Notices}, 2019.
\newblock \href {http://arxiv.org/abs/1601.00047} {\path{arXiv:1601.00047}},
  \href {https://doi.org/10.1093/imrn/rnz127} {\path{doi:10.1093/imrn/rnz127}}.

\bibitem[FG06]{FGamalgamation}
Vladimir Fock and Alexander Goncharov.
\newblock Cluster {$\mathscr{X}$}-varieties, amalgamation, and {P}oisson-{L}ie
  groups.
\newblock In {\em Algebraic geometry and number theory}, volume 253 of {\em
  Progr. Math.}, pages 27--68. Birkh\"{a}user Boston, Boston, MA, 2006.
\newblock \href {http://arxiv.org/abs/math/0508408}
  {\path{arXiv:math/0508408}}, \href
  {https://doi.org/10.1007/978-0-8176-4532-8_2}
  {\path{doi:10.1007/978-0-8176-4532-8_2}}.

\bibitem[FG09a]{FGensemble}
Vladimir Fock and Alexander Goncharov.
\newblock Cluster ensembles, quantization and the dilogarithm.
\newblock {\em Ann. Sci. \'{E}c. Norm. Sup\'{e}r. (4)}, 42(6):865--930, 2009.
\newblock \href {http://arxiv.org/abs/math/0311245}
  {\path{arXiv:math/0311245}}, \href {https://doi.org/10.24033/asens.2112}
  {\path{doi:10.24033/asens.2112}}.

\bibitem[FG09b]{FGrep}
Vladimir Fock and Alexander Goncharov.
\newblock The quantum dilogarithm and representations of quantum cluster
  varieties.
\newblock {\em Invent. Math.}, 175(2):223--286, 2009.
\newblock \href {http://arxiv.org/abs/math/0702397}
  {\path{arXiv:math/0702397}}, \href
  {https://doi.org/10.1007/s00222-008-0149-3}
  {\path{doi:10.1007/s00222-008-0149-3}}.

\bibitem[Fra16]{Fra}
Chris Fraser.
\newblock Quasi-homomorphisms of cluster algebras.
\newblock {\em Adv. in Appl. Math.}, 81:40--77, 2016.
\newblock \href {http://arxiv.org/abs/1509.05385} {\path{arXiv:1509.05385}},
  \href {https://doi.org/10.1016/j.aam.2016.06.005}
  {\path{doi:10.1016/j.aam.2016.06.005}}.

\bibitem[FZ99]{FZ}
Sergey Fomin and Andrei Zelevinsky.
\newblock Double {B}ruhat cells and total positivity.
\newblock {\em J. Amer. Math. Soc.}, 12(2):335--380, 1999.
\newblock \href {http://arxiv.org/abs/math/9802056}
  {\path{arXiv:math/9802056}}, \href
  {https://doi.org/10.1090/S0894-0347-99-00295-7}
  {\path{doi:10.1090/S0894-0347-99-00295-7}}.

\bibitem[FZ02]{FZI}
Sergey Fomin and Andrei Zelevinsky.
\newblock Cluster algebras. {I}. {F}oundations.
\newblock {\em J. Amer. Math. Soc.}, 15(2):497--529, 2002.
\newblock \href {http://arxiv.org/abs/math/0104151}
  {\path{arXiv:math/0104151}}, \href
  {https://doi.org/10.1090/S0894-0347-01-00385-X}
  {\path{doi:10.1090/S0894-0347-01-00385-X}}.

\bibitem[FZ03]{FZysystem}
Sergey Fomin and Andrei Zelevinsky.
\newblock {$Y$}-systems and generalized associahedra.
\newblock {\em Ann. of Math. (2)}, 158(3):977--1018, 2003.
\newblock \href {http://arxiv.org/abs/hep-th/0111053}
  {\path{arXiv:hep-th/0111053}}, \href
  {https://doi.org/10.4007/annals.2003.158.977}
  {\path{doi:10.4007/annals.2003.158.977}}.

\bibitem[FZ07]{FZIV}
Sergey Fomin and Andrei Zelevinsky.
\newblock Cluster algebras. {IV}. {C}oefficients.
\newblock {\em Compos. Math.}, 143(1):112--164, 2007.
\newblock \href {http://arxiv.org/abs/math/0602259}
  {\path{arXiv:math/0602259}}, \href
  {https://doi.org/10.1112/S0010437X06002521}
  {\path{doi:10.1112/S0010437X06002521}}.

\bibitem[GHKK18]{GHKK}
Mark Gross, Paul Hacking, Sean Keel, and Maxim Kontsevich.
\newblock Canonical bases for cluster algebras.
\newblock {\em J. Amer. Math. Soc.}, 31(2):497--608, 2018.
\newblock \href {http://arxiv.org/abs/1411.1394} {\path{arXiv:1411.1394}},
  \href {https://doi.org/10.1090/jams/890} {\path{doi:10.1090/jams/890}}.

\bibitem[GKS12]{GKS}
St\'{e}phane Guillermou, Masaki Kashiwara, and Pierre Schapira.
\newblock Sheaf quantization of {H}amiltonian isotopies and applications to
  nondisplaceability problems.
\newblock {\em Duke Math. J.}, 161(2):201--245, 2012.
\newblock \href {http://arxiv.org/abs/1005.1517} {\path{arXiv:1005.1517}},
  \href {https://doi.org/10.1215/00127094-1507367}
  {\path{doi:10.1215/00127094-1507367}}.

\bibitem[GL20]{GL}
Pavel Galashin and Thomas Lam.
\newblock Positroids, knots, and {$q,t$}-{C}atalan numbers.
\newblock Preprint, 2020.
\newblock \href {http://arxiv.org/abs/2012.09745} {\path{arXiv:2012.09745}}.

\bibitem[GLS11]{GLS}
Christof Gei\ss, Bernard Leclerc, and Jan Schr\"{o}er.
\newblock Kac-{M}oody groups and cluster algebras.
\newblock {\em Adv. Math.}, 228(1):329--433, 2011.
\newblock \href {http://arxiv.org/abs/1001.3545} {\path{arXiv:1001.3545}},
  \href {https://doi.org/10.1016/j.aim.2011.05.011}
  {\path{doi:10.1016/j.aim.2011.05.011}}.

\bibitem[GS18]{GS2}
Alexander Goncharov and Linhui Shen.
\newblock Donaldson-{T}homas transformations of moduli spaces of {G}-local
  systems.
\newblock {\em Adv. Math.}, 327:225--348, 2018.
\newblock \href {http://arxiv.org/abs/1602.06479} {\path{arXiv:1602.06479}},
  \href {https://doi.org/10.1016/j.aim.2017.06.017}
  {\path{doi:10.1016/j.aim.2017.06.017}}.

\bibitem[GS19]{GS3}
Alexander Goncharov and Linhui Shen.
\newblock Quantum geometry of moduli spaces of local systems and representation
  theory.
\newblock Preprint, 2019.
\newblock \href {http://arxiv.org/abs/1904.10491} {\path{arXiv:1904.10491}}.

\bibitem[GSV10]{GSVcp}
Michael Gekhtman, Michael Shapiro, and Alek Vainshtein.
\newblock {\em Cluster algebras and {P}oisson geometry}, volume 167 of {\em
  Mathematical Surveys and Monographs}.
\newblock American Mathematical Society, Providence, RI, 2010.
\newblock \href {https://doi.org/10.1090/surv/167}
  {\path{doi:10.1090/surv/167}}.

\bibitem[GY18]{GoY}
K.~Goodearl and M.~Yakimov.
\newblock Cluster algebra structures on {P}oisson nilpotent algebras.
\newblock Preprint, 2018.
\newblock \href {http://arxiv.org/abs/1801.01963} {\path{arXiv:1801.01963}}.

\bibitem[Han73]{Han}
H.~C. Hansen.
\newblock On cycles in flag manifolds.
\newblock {\em Math. Scand.}, 33:269--274 (1974), 1973.
\newblock \href {https://doi.org/10.7146/math.scand.a-11489}
  {\path{doi:10.7146/math.scand.a-11489}}.

\bibitem[Kac83]{Kac}
Victor~G. Kac.
\newblock {\em Infinite-dimensional {L}ie algebras}, volume~44 of {\em Progress
  in Mathematics}.
\newblock Birkh\"{a}user Boston, Inc., Boston, MA, 1983.
\newblock An introduction.
\newblock \href {https://doi.org/10.1007/978-1-4757-1382-4}
  {\path{doi:10.1007/978-1-4757-1382-4}}.

\bibitem[Kel11]{Keldilog}
Bernhard Keller.
\newblock On cluster theory and quantum dilogarithm identities.
\newblock In {\em Representations of algebras and related topics}, EMS Ser.
  Congr. Rep., pages 85--116. Eur. Math. Soc., Z\"{u}rich, 2011.
\newblock \href {http://arxiv.org/abs/1102.4148} {\path{arXiv:1102.4148}},
  \href {https://doi.org/10.4171/101-1/3} {\path{doi:10.4171/101-1/3}}.

\bibitem[Kel13]{Kelperiod}
Bernhard Keller.
\newblock The periodicity conjecture for pairs of {D}ynkin diagrams.
\newblock {\em Ann. of Math. (2)}, 177(1):111--170, 2013.
\newblock \href {http://arxiv.org/abs/1001.1531} {\path{arXiv:1001.1531}},
  \href {https://doi.org/10.4007/annals.2013.177.1.3}
  {\path{doi:10.4007/annals.2013.177.1.3}}.

\bibitem[KP83]{KP}
Victor~G. Kac and Dale~H. Peterson.
\newblock Regular functions on certain infinite-dimensional groups.
\newblock In {\em Arithmetic and geometry, {V}ol. {II}}, volume~36 of {\em
  Progr. Math.}, pages 141--166. Birkh\"{a}user Boston, Boston, MA, 1983.
\newblock
  \href{http://www-math.mit.edu/~kac/not-easily-available/regularfunctions.pdf}{www-math.mit.edu/~kac/not-easily-available/regularfunctions.pdf}.

\bibitem[KS08]{KS}
Maxim Kontsevich and Yan Soibelman.
\newblock Stability structures, motivic {D}onaldson-{T}homas invariants and
  cluster transformations.
\newblock Preprint, 2008.
\newblock \href {http://arxiv.org/abs/0811.2435} {\path{arXiv:0811.2435}}.

\bibitem[Kum02]{Kum}
Shrawan Kumar.
\newblock {\em Kac-{M}oody groups, their flag varieties and representation
  theory}, volume 204 of {\em Progress in Mathematics}.
\newblock Birkh\"{a}user Boston, Inc., Boston, MA, 2002.
\newblock \href {https://doi.org/10.1007/978-1-4612-0105-2}
  {\path{doi:10.1007/978-1-4612-0105-2}}.

\bibitem[LM17]{LM}
Jiang-Hua Lu and Victor Mouquin.
\newblock On the {$T$}-leaves of some {P}oisson structures related to products
  of flag varieties.
\newblock {\em Adv. Math.}, 306:1209--1261, 2017.
\newblock \href {http://arxiv.org/abs/1511.02559} {\path{arXiv:1511.02559}},
  \href {https://doi.org/10.1016/j.aim.2016.11.008}
  {\path{doi:10.1016/j.aim.2016.11.008}}.

\bibitem[LS16]{LamS}
T.~Lam and D.~Speyer.
\newblock Cohomology of cluster varieties. {I}. locally acyclic case.
\newblock Preprint, 2016.
\newblock \href {http://arxiv.org/abs/1604.06843} {\path{arXiv:1604.06843}}.

\bibitem[Mou19]{Mou}
V.~Mouquin.
\newblock Local {P}oisson groupoids over mixed product {P}oisson structures and
  generalised double {B}ruhat cells.
\newblock Preprint, 2019.
\newblock \href {http://arxiv.org/abs/1908.04044} {\path{arXiv:1908.04044}}.

\bibitem[Mul14]{Muller_acyclic}
Greg Muller.
\newblock {$\mathcal A=\mathcal U$} for locally acyclic cluster algebras.
\newblock {\em SIGMA Symmetry Integrability Geom. Methods Appl.}, 10:Paper 094,
  8, 2014.
\newblock \href {http://arxiv.org/abs/1111.4468} {\path{arXiv:1111.4468}},
  \href {https://doi.org/10.3842/SIGMA.2014.094}
  {\path{doi:10.3842/SIGMA.2014.094}}.

\bibitem[NZ12]{NZ}
Tomoki Nakanishi and Andrei Zelevinsky.
\newblock On tropical dualities in cluster algebras.
\newblock In {\em Algebraic groups and quantum groups}, volume 565 of {\em
  Contemp. Math.}, pages 217--226. Amer. Math. Soc., Providence, RI, 2012.
\newblock \href {http://arxiv.org/abs/1101.3736} {\path{arXiv:1101.3736}},
  \href {https://doi.org/10.1090/conm/565/11159}
  {\path{doi:10.1090/conm/565/11159}}.

\bibitem[STWZ19]{STWZ}
Vivek Shende, David Treumann, Harold Williams, and Eric Zaslow.
\newblock Cluster varieties from {L}egendrian knots.
\newblock {\em Duke Math. J.}, 168(15):2801--2871, 2019.
\newblock \href {http://arxiv.org/abs/1512.08942} {\path{arXiv:1512.08942}},
  \href {https://doi.org/10.1215/00127094-2019-0027}
  {\path{doi:10.1215/00127094-2019-0027}}.

\bibitem[STZ17]{STZ}
Vivek Shende, David Treumann, and Eric Zaslow.
\newblock Legendrian knots and constructible sheaves.
\newblock {\em Invent. Math.}, 207(3):1031--1133, 2017.
\newblock \href {http://arxiv.org/abs/1402.0490} {\path{arXiv:1402.0490}},
  \href {https://doi.org/10.1007/s00222-016-0681-5}
  {\path{doi:10.1007/s00222-016-0681-5}}.

\bibitem[SW20]{SWgrass}
Linhui Shen and Daping Weng.
\newblock Cyclic sieving and cluster duality for {G}rassmannian.
\newblock {\em SIGMA, Special Issue on Cluster Algebras}, 16, 2020.
\newblock \href {http://arxiv.org/abs/1803.06901} {\path{arXiv:1803.06901}}.

\bibitem[Wen20]{Wengdb}
Daping Weng.
\newblock {D}onaldson-{T}homas transformation of double {B}ruhat cells in
  semisimple {L}ie groups.
\newblock {\em Ann. Sci. {\'{E}}c. Norm. Sup{\'{e}}r.}, 53:291--352, 2020.
\newblock \href {http://arxiv.org/abs/1611.04186} {\path{arXiv:1611.04186}}.

\bibitem[Wen21]{Weng}
Daping Weng.
\newblock {D}onaldson-{T}homas transformation of {G}rassmannian.
\newblock {\em Adv. Math.}, 383:107721, 2021.
\newblock \href {http://arxiv.org/abs/1603.00972} {\path{arXiv:1603.00972}},
  \href {https://doi.org/10.1016/j.aim.2021.107721}
  {\path{doi:10.1016/j.aim.2021.107721}}.

\bibitem[Wil13]{Wilkacmoody}
Harold Williams.
\newblock Cluster ensembles and {K}ac-{M}oody groups.
\newblock {\em Adv. Math.}, 247:1--40, 2013.
\newblock \href {http://arxiv.org/abs/1210.2533} {\path{arXiv:1210.2533}},
  \href {https://doi.org/10.1016/j.aim.2013.07.008}
  {\path{doi:10.1016/j.aim.2013.07.008}}.

\bibitem[WY07]{WY}
Ben Webster and Milen Yakimov.
\newblock A {D}eodhar-type stratification on the double flag variety.
\newblock {\em Transform. Groups}, 12(4):769--785, 2007.
\newblock \href {http://arxiv.org/abs/math/0607374}
  {\path{arXiv:math/0607374}}, \href
  {https://doi.org/10.1007/s00031-007-0061-8}
  {\path{doi:10.1007/s00031-007-0061-8}}.

\bibitem[Zel00]{Zel}
Andrei Zelevinsky.
\newblock Connected components of real double {B}ruhat cells.
\newblock {\em Internat. Math. Res. Notices}, 21:1131--1154, 2000.
\newblock \href {http://arxiv.org/abs/math/0003231}
  {\path{arXiv:math/0003231}}, \href
  {https://doi.org/10.1155/S1073792800000568}
  {\path{doi:10.1155/S1073792800000568}}.

\end{thebibliography}

\end{Backmatter}

\end{document}